\author{Geoffrey Powell}
\title[Relative nonhomogeneous Koszul duality for PROPs]{Relative nonhomogeneous Koszul duality for PROPs associated to nonaugmented operads}
\address{Univ Angers, CNRS, LAREMA, SFR MATHSTIC, F-49000 Angers, France}
\email{Geoffrey.Powell@math.cnrs.fr}
\urladdr{https://math.univ-angers.fr/~powell/}
\keywords{Koszul duality; relative nonhomogeneous Koszul duality; operad; PROP}
\subjclass[2000]{16S37, 16E45}
\newtheorem{THM}{Theorem}
\newtheorem{PROP}[THM]{Proposition}
\newtheorem{thm}{Theorem}[section]
\newtheorem{prop}[thm]{Proposition}
\newtheorem{cor}[thm]{Corollary}
\newtheorem{lem}[thm]{Lemma}
\theoremstyle{definition}
\newtheorem{defn}[thm]{Definition}
\newtheorem{exam}[thm]{Example}
\theoremstyle{remark}
\newtheorem{rem}[thm]{Remark}
\newtheorem*{rem*}{Remark}
\newtheorem{nota}[thm]{Notation}
\newtheorem{hyp}[thm]{Hypothesis}
\renewcommand{\phi}{\varphi}
\renewcommand{\hom}{\mathrm{Hom}}
\newcommand{\sym}{\mathfrak{S}}
\newcommand{\fs}{{\mathbf{FS}}}
\newcommand{\kmod}{\kk\dash\modules}
\newcommand{\calc}{\mathcal{C}}
\newcommand{\cald}{\mathcal{D}}
\newcommand{\nat}{\mathbb{N}}
\newcommand{\zed}{\mathbb{Z}}
\newcommand{\op}{^\mathrm{op}}
\newcommand{\kring}{\mathbb{K}}
\newcommand{\fb}{\mathbf{FB}}
\newcommand{\finj}{{\mathbf{FI}}}
\newcommand{\id}{\mathrm{Id}}
\newcommand{\triv}{\mathsf{triv}}
\newcommand{\sgn}{\mathsf{sgn}}
\newcommand{\fin}{\mathbf{FA}}
\newcommand{\n}{\mathbf{n}}
\newcommand{\rmult}{\diamond}
\newcommand{\qhat}{\hat{q}}
\newcommand{\grad}{\mathsf{gr}^F}
\newcommand{\rgrad}{\mathsf{gr}^G}
\newcommand{\dgmod}{\mathbf{dgMod}}
\newcommand{\lbdgmod}{\dgmod^{\mathsf{lbd}}}
\newcommand{\kk}{\mathbbm{k}}
\newcommand{\ofb}{\odot_{\fb}}
\newcommand{\obf}{\odot_{\fb\op}}
\newcommand{\dash}{\hspace{-.25em}-\hspace{-.25em}}
\newcommand{\modules}{\mathsf{mod}}
\newcommand{\bmu}{\mathbbm{1}}
\newcommand{\opd}{\mathscr{O}}
\newcommand{\ppd}{\mathscr{P}}
\newcommand{\uppd}{\ppd^{\mathrm{u}}}
\newcommand{\cat}{\mathsf{Cat}}
\newcommand{\U}{\mathscr{U}}
\newcommand{\com}{\mathfrak{Com}}
\newcommand{\lie}{\mathfrak{Lie}}
\newcommand{\ucom}{\com^{\mathrm{u}}}
\newcommand{\ihom}{\underline{\hom}}
\newcommand{\e}{\mathfrak{e}}
\newcommand{\vk}{{}^\vee \! K}
\newcommand{\fsbase}{\fs^{\mathsf{ord}}}
\newcommand{\wt}[1]{^{(#1)}}
\newcommand{\ls}{^\mathfrak{L}}
\newcommand{\rs}{^\mathfrak{R}}
\newcommand{\dg}[1]{^{[#1]}}
\newcommand{\qd}{^\perp}
\newcommand{\desusp}{^{\mathfrak{L}\ddag}}
\newcommand{\susp}{^{\mathfrak{R}\ddag}}
\newcommand{\cn}{\ : \ }
\newcommand{\holbd}{\mathscr{H}^{\mathsf{lbd}}}
\newcommand{\g}{\mathfrak{g}}
\numberwithin{equation}{section}
\begin{document}

\begin{abstract}
The purpose of this paper is to show how Positselski's relative nonhomogeneous Koszul duality theory applies when studying the linear category underlying the PROP associated to a (non-augmented) operad of a certain form, in particular assuming that the reduced part of the operad is binary quadratic. In this case, the linear category has both a left augmentation and a right augmentation (corresponding to different units), using  Positselski's terminology.  

The general theory provides two associated linear differential graded (DG) categories; indeed, in this framework, one can work entirely within the DG  realm, as opposed to the curved setting required for Positselski's general theory. Moreover,  DG modules over these DG categories are related by adjunctions. 

When the reduced part of the operad is Koszul (working over a field of characteristic zero), the relative Koszul duality theory shows that there is a Koszul-type equivalence between the appropriate homotopy categories of DG modules. This gives a form of Koszul duality relationship between the above DG categories. 

This is illustrated by the case of the operad encoding unital, commutative associative algebras, extending the classical Koszul duality between commutative associative algebras and Lie algebras. In this case, the associated linear category is the linearization of the category of finite sets and all maps. The relative nonhomogeneous Koszul duality theory relates its derived category to the respective homotopy categories of modules over two explicit linear DG categories.
\end{abstract}

\maketitle

\section{Introduction}
\label{sect:intro}

The purpose of this paper is to exploit Positselski's relative nonhomogeneous Koszul duality theory \cite{MR4398644} in a  special case arising from operad theory. Whereas Positselski's general theory requires that one works in the curved differential setting, the framework considered here has the advantage that one can stay entirely within the differential graded (DG) realm. 

We consider an operad $\uppd$ over a field $\kk$ (of characteristic zero for this Introduction) with the property that $\uppd (0) = \uppd (1) = \kk$. For example, one can take $\uppd$ to be  $\ucom$, which encodes unital, commutative associative algebras.
 Define $\ppd$ to be the reduced suboperad of $\uppd$, namely the suboperad supported on positive arities. Writing $\U$ for the unique operad such that $\U(0) = \U (1) = \kk$ and $\U(t)=0$ for $t>1$, $\U$ is a sub operad of $\uppd$ and this fits into a commutative diagram of inclusions of operads
\[
\xymatrix{
I
\ar[r]
\ar[d]
&
\U 
\ar[d]
\\
\ppd 
\ar[r]
&
\uppd,
}
\] 
in which $I$ is the unit operad. At the level of the underlying symmetric sequences, one has $\uppd \cong \U \circ \ppd$, where $\circ$ is the operadic composition product.

One can pass to the associated PROPs, using the functor $\cat (-)$, retaining just the underlying $\kk$-linear category of the PROP. For example, $\cat I$ is equivalent to $\kk \fb$, the $\kk$-linearization of the category $\fb$ of finite sets and bijections, and $\cat \U$ is equivalent to $\kk\finj$, where $\finj$ is the category of finite sets and injections. This yields the commutative diagram of $\kk$-linear functors that are the identity on objects: 
\[
\xymatrix{
\kk \fb 
\ar[r]
\ar[d]
&
\kk \finj 
\ar[d]
\\
\cat \ppd 
\ar[r] 
&
\cat \uppd.
}
\] 

Now, the composition of $\cat \uppd$ induces an isomorphism of $\kk \finj \boxtimes \cat \ppd \op$-modules (where $\boxtimes$ is the exterior tensor product):
\[
\kk \finj \otimes_{\kk \fb} \cat \ppd \stackrel{\cong}{\rightarrow} 
\cat \uppd. 
\]
For example, in the case $\uppd = \ucom$,  $\ppd$ is the operad $\com$ that encodes commutative associative algebras; there are equivalences of categories 
$
\cat \com  \cong  \kk \fs$ and 
$\cat \ucom  \cong  \kk \fin$,
where $\fs$ is the category of finite sets and surjections, and $\fin$ that of finite sets  and all maps. The above commutative diagram identifies in this case with 
\[
\xymatrix{
\kk \fb 
\ar[r]
\ar[d]
&
\kk \finj 
\ar[d]
\\
\kk \fs \ar[r]
&
\kk \fin,
}
\]
induced by the inclusions of the wide subcategories $\fb$, $\finj$, $\fs$ of $\fin$. The isomorphism $
\kk \finj \otimes_{\kk \fb} \kk \fs 
\stackrel{\cong}{\rightarrow}
\kk \fin
$ 
simply reflects the fact that any map can be factorized uniquely as a surjection followed by the inclusion of its image as a subset.

The starting point for this work is the fact that $\kk \finj$ is {\em Koszul} over $\kk\fb$. Indeed, this Koszul property extends the BGG correspondence relating modules over symmetric algebras and modules over exterior algebras.  (A word of warning for those familiar with Koszul duality of operads: $\U$ is not {\em binary} quadratic, although it is {\em quadratic}.) Moreover, the canonical augmentation $\kk \finj \rightarrow \kk \fb$ induces a right augmentation $\cat \uppd \rightarrow \cat \ppd$, using the terminology of Positselski. 

The grading of $\kk \finj$ induces a filtration $G_\bullet \cat \uppd$ which exhibits $\cat \uppd$ as a nonhomogeneous quadratic $\kk$-linear category over $\cat \ppd$. In particular, one has the associated graded $\rgrad \cat \uppd$ that is homogeneous quadratic. One can then form the (right) quadratic dual, $(\rgrad \cat \uppd) \qd$; this has underlying bimodule
\[
\cat \ppd \otimes_{\kk \fb} \kk \finj\qd,
\]
where $\kk \finj \qd$ denotes the quadratic dual of $\kk \finj$ (considered as augmented over $\kk \fb$). 
This lets us apply Positselski's relative nonhomogeneous Koszul duality (working over $\cat \ppd$), staying within the DG realm. 

This leads to an adjunction 
\begin{eqnarray}
\label{eqn:adj_BGG_intro}
(\rgrad \cat \uppd)\qd \dash  \dgmod 
\rightleftarrows 
 \cat \uppd \dash  \dgmod 
\end{eqnarray}
between the categories of DG modules over the respective DG $\kk$-linear categories, induced by an explicit dualizing object $\vk (\cat \uppd)$. This restricts to an adjunction between the respective categories of `locally bounded' DG objects and 
 one has the following relative form of the BGG correspondence. 

\begin{THM}
(Corollary \ref{cor:relative_BGG}.)
The adjunction (\ref{eqn:adj_BGG_intro}) induces an equivalence of categories,
 \[
\holbd ((\rgrad \cat \uppd)\qd ) 
\stackrel{\simeq}{\rightleftarrows}
\holbd (\cat \uppd ),
\]
where $\holbd(-)$ denotes the homotopy category of locally bounded objects.
\end{THM}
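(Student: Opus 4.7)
The plan is to deduce this corollary as a direct application of the main relative nonhomogeneous Koszul duality equivalence that is the climax of the general theory developed earlier in the paper. That theorem, in its DG-adapted form (avoiding Positselski's curved setting), takes as input a nonhomogeneous quadratic $\kk$-linear category $B$ over a base $R$ such that the associated graded $\rgrad B$ is homogeneous quadratic and Koszul over $R$, and produces an explicit dualizing object $\vk(B)$ whose associated adjunction between $(\rgrad B)\qd\dash\dgmod$ and $B\dash\dgmod$ restricts to an equivalence on locally bounded homotopy categories. The corollary then amounts to checking that the PROP $B = \cat\uppd$ over the base $R = \cat\ppd$ meets the hypotheses of this theorem and that the dualizing object is the $\vk(\cat\uppd)$ constructed in the preceding section.

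The nonhomogeneous quadratic structure on $\cat\uppd$ over $\cat\ppd$, together with its filtration $G_\bullet$, has already been installed in the setup leading to the statement, so the substantive point is the Koszulness of $\rgrad\cat\uppd$ over $\cat\ppd$. For this, I would invoke the composition isomorphism
\[
\cat\uppd \;\cong\; \kk\finj \otimes_{\kk\fb} \cat\ppd
\]
of $\kk\finj\boxtimes\cat\ppd\op$-modules. Because the filtration $G_\bullet$ on $\cat\uppd$ is induced from the arity grading on the $\kk\finj$ tensorand, the associated graded inherits an analogous tensor decomposition, and Koszulness over $\cat\ppd$ reduces to the (absolute) Koszulness of $\kk\finj$ as an augmented $\kk\fb$-algebra, which is the BGG-type input recorded in the introduction. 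The identification of the right quadratic dual as $\cat\ppd \otimes_{\kk\fb} \kk\finj\qd$ is then a formal consequence of this decomposition, confirming that the adjunction produced by the general theory is precisely (\ref{eqn:adj_BGG_intro}).

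The main obstacle, apart from the BGG-type Koszulness already assumed, is verifying that the equivalence genuinely restricts to $\holbd$ on both sides: one must check that the twisted bar and cobar constructions built from $\vk(\cat\uppd)$ preserve local boundedness, and that the counit and unit of the adjunction are quasi-isomorphisms on locally bounded DG modules. This is exactly the convergence issue that the locally bounded hypothesis is designed to resolve, since in each fixed bidegree the twisting differentials involve only finitely many non-zero terms, so the standard spectral-sequence/filtration-by-weight argument used in the proof of the general equivalence theorem applies verbatim. Once these verifications are assembled, the corollary follows immediately from the general theorem applied to $(B,R) = (\cat\uppd, \cat\ppd)$.
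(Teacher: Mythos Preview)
Your proposal correctly identifies the key ingredients --- the tensor decomposition $\cat\uppd \cong \kk\finj \otimes_{\kk\fb} \cat\ppd$, the absolute Koszulness of $\kk\finj$ over $\kk\fb$, and the need to check preservation of local boundedness --- but it is framed around invoking a general relative nonhomogeneous Koszul equivalence theorem from Part~\ref{part:one}. No such theorem is actually stated and proved there: Section~\ref{sect:koszul} is explicitly described as an outline and motivation, and the remark following Theorem~\ref{thm:koszul} says that in Part~\ref{part:two} the relative Koszul properties are established \emph{directly}, not by applying the general results. So your black box does not exist in the paper as written.

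The paper's route is more elementary and avoids this issue entirely. The underlying functors of the relative adjunction are identified as $\kk\finj \otimes_\fb -$ and $\ihom_{\kk\fb}(\kk\finj\qd,-)$ with twisted differentials, and the crucial observation (the analogue of Remark~\ref{rem:adj_unit_counit_R_C_case}) is that these twisted differentials depend only on the underlying $\kk\finj\qd$- and $\kk\finj$-module structures, not on the full $(\rgrad\cat\uppd)\qd$- or $\cat\uppd$-module structures. Consequently the unit and counit of the \emph{relative} adjunction, viewed merely as morphisms of DG $\kk\fb$-modules, are literally the same maps as in the \emph{absolute} $\kk\finj$ adjunction. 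Theorem~\ref{thm:BGG} then gives that they are weak equivalences on locally bounded objects, which is Theorem~\ref{thm:adj_unit_vk_cat_uppd}, and the corollary follows. Your spectral-sequence verification of Koszulness of $\rgrad\cat\uppd$ over $\cat\ppd$ is thus replaced by a direct pointwise identification with the absolute case --- a shorter and more transparent argument that sidesteps the need for any general machinery.
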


In the case $\uppd = \U$, so that $\ppd = I$, this recovers a form of the BGG correspondence. The latter can be compared with the Fourier transform given by Sam and Snowden \cite{MR3430359}, which exploits the fact that $\kk \finj$ is `Koszul self-dual' together with vector space duality, to give an anti self-equivalence of the bounded derived category of $\kk \finj$-modules (imposing further finiteness hypotheses).

To develop the other side of the picture, we suppose that $\ppd$ is a finitely-generated binary quadratic operad. Then the above framework can be reflected: the canonical augmentation $\cat \ppd \rightarrow \kk \fb$ induces a left augmentation $\cat \uppd \rightarrow \kk \finj$ and we can again apply Positselski's relative nonhomogeneous Koszul duality (working over $\kk \finj$). 

The weight grading of $\cat \ppd$ induces a filtration $F_\bullet \cat \uppd$, exhibiting $\cat \uppd$ as a filtered $\kk$-linear category;  $\cat \uppd$ is nonhomogeneous quadratic over $\kk \finj$  with respect to this filtration and one has the associated graded $\grad \cat \uppd$ that is homogeneous quadratic. One can then form the (left) quadratic dual  $(\grad \cat \uppd)\qd$ with respect to $\kk \finj$, and this has underlying bimodule 
\[
(\cat \ppd)\qd \otimes_{\kk \fb} \kk \finj.
\]
This is equipped with a DG  structure, as above. 

In this case, there is an adjunction:
\begin{eqnarray}
\label{eqn:adj_ppd_intro}
(\cat \uppd)\dash\dgmod 
\rightleftarrows 
(\grad \cat \uppd)\qd \dash \dgmod 
\end{eqnarray}
induced by an explicit dualizing complex $K^\vee (\cat \uppd)$. This restricts to an adjunction between the respective categories of `locally bounded' DG objects. When the operad $\ppd$ is Koszul, one obtains an equivalence of the associated homotopy categories:

\begin{THM}
(Theorem \ref{thm:relative_lbd_equivalence_cat_ppd}.)
If $\kk$ is a field of characteristic zero and  $\ppd$ is Koszul, the adjunction (\ref{eqn:adj_ppd_intro}) induces an equivalence between the respective homotopy categories of locally bounded objects 
\[
\holbd(\cat \uppd)
\stackrel{\simeq}{\rightleftarrows}
\holbd((\grad \cat \uppd)\qd).
\]
\end{THM}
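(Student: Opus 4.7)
The strategy is to apply Positselski's relative nonhomogeneous Koszul duality theorem \cite{MR4398644}: the adjunction (\ref{eqn:adj_ppd_intro}), its restriction to locally bounded DG objects, and the dualizing complex $K^\vee(\cat \uppd)$ are provided unconditionally by the general machinery set up earlier. What the equivalence criterion of that theorem additionally requires is Koszulity of the associated graded $\grad \cat \uppd$ as a nonnegatively-graded $\kk$-linear category with degree-zero part $\kk \finj$. Once this is in place, the equivalence of homotopy categories on $\holbd$ follows immediately from the general statement. The task therefore reduces to deducing this categorical Koszulity from the hypothesis that $\ppd$ is Koszul as a binary quadratic operad.

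First, I would identify the associated graded. The filtration $F_\bullet \cat \uppd$ is defined through the weight grading of $\cat \ppd$, while the decomposition $\cat \uppd \cong \kk\finj \otimes_{\kk\fb} \cat \ppd$ records the absence of any interaction between $\kk\finj$ and $\cat \ppd$ beyond the bimodule structure. Consequently
\[
\grad \cat \uppd \ \cong \ \kk\finj \otimes_{\kk \fb} \cat \ppd,
\]
with the grading inherited from the weight grading on $\cat \ppd$. In characteristic zero, Koszulity of the operad $\ppd$ is equivalent to Koszulity of $\cat \ppd$ (with its weight grading) as an augmented $\kk$-linear category over $\kk\fb$: this is the standard comparison between operadic and PROP-level Koszul duality, made possible by the semisimplicity of $\kk[\sym_n]$-modules. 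It then suffices to propagate this Koszulity along the inclusion $\kk \fb \hookrightarrow \kk \finj$. Since every injection factors uniquely as the inclusion of its image followed by a bijection, $\kk \finj$ is free, hence flat, as a right $\kk\fb$-module, so $\kk\finj \otimes_{\kk\fb} (-)$ is exact. Applying it to the Koszul resolution of $\kk\fb$ as a $\cat \ppd$-module yields an acyclic resolution of $\kk\finj$ by $\grad\cat\uppd$-modules of Koszul form, and the quadratic dual transforms compatibly to the already-recorded $(\grad \cat \uppd)\qd \cong (\cat \ppd)\qd \otimes_{\kk\fb} \kk\finj$. Thus $\grad \cat \uppd$ is Koszul over $\kk \finj$, and Positselski's theorem supplies the equivalence.

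The main obstacle is the bookkeeping required by Positselski's framework: one must verify that the nonhomogeneous quadratic structure on $\cat\uppd$ over $\kk\finj$ determined by $F_\bullet$ is indeed the one whose Koszulity hypothesis corresponds to the base-changed Koszul complex produced above, and that the dualizing complex $K^\vee(\cat \uppd)$ governing the adjunction (\ref{eqn:adj_ppd_intro}) is identified in the expected way with the base-change of the operadic Koszul datum. These are compatibility checks rather than substantive arguments; the characteristic-zero hypothesis enters only through the operadic-to-linear Koszul comparison, while the locally-bounded refinement is an automatic consequence of the general theorem once Koszulity of $\grad \cat \uppd$ has been verified.
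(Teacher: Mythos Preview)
Your approach is essentially correct but proceeds by a different route than the paper, and one step is looser than you suggest.

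The paper does not invoke Positselski's general triangulated equivalence theorems at all. Instead it observes (Lemma \ref{lem:identify_adjoints_R_C} specialised here, and the discussion around Proposition \ref{prop:(co)unit_Kvee_C}) that the underlying functors of the adjunction identify with $(\cat \ppd)\qd \otimes_\fb^{e} -$ and $\ihom_{\kk\fb}^{e}(\cat \ppd,-)$, and that the adjunction unit and counit, viewed merely as morphisms of DG $\kk\fb$-modules, coincide with those of the \emph{absolute} adjunction for $\cat \ppd$ over $\kk\fb$. The relative equivalence therefore follows immediately from the absolute one (Theorem \ref{thm:absolute_lbd_equivalence}), which is exactly the operadic Koszul statement for $\ppd$. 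No separate verification of relative Koszulity is needed, and no black-box citation of \cite{MR4398644} is used.

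Your route---base-changing Koszulity of $\cat \ppd$ along the flat extension $\kk\fb \hookrightarrow \kk\finj$---is precisely Theorem \ref{thm:koszul} of the paper, which is stated there as motivation but explicitly \emph{not} used in the proofs (see the Remark following Remark \ref{rem:rgrad_A_Koszul}). The approach is sound, but your final sentence is where the slack lies: Positselski's Theorems 6.14 and 7.11 produce equivalences of co/contra-derived categories, not of $\holbd$. The locally bounded refinement is not an automatic consequence of those theorems; it requires the truncation and reduction-to-$\kk\sym_s$ argument carried out in the proof of Theorem \ref{thm:BGG}. So if you pursue your route you still owe that argument, whereas the paper's reduction to the absolute case gets it for free from Theorem \ref{thm:absolute_lbd_equivalence}.
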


The category $\holbd (\cat \uppd)$ can be considered as  the `locally bounded derived category' of $\cat \uppd$-modules. The above two Theorems thus give two different models for this via relative nonhomogeneous Koszul duality.
Moreover, putting the above adjunctions together, one has an explicit adjunction 
\begin{eqnarray}
\label{eqn:adj_intro}
(\rgrad \cat \uppd)\qd \dash \dgmod 
\rightleftarrows 
(\grad \cat \uppd)\qd \dash\dgmod.
\end{eqnarray}
between the respective categories of DG modules and  the following:

\begin{THM}
\label{THM:bikoszul_equivalence}
(Theorem \ref{thm:composite_holbd_equivalence}.)
Suppose that $\kk$ is a field of characteristic zero and that $\ppd$ is Koszul. Then the adjunction (\ref{eqn:adj_intro}) induces an equivalence between the respective homotopy categories of locally bounded objects
\[
\holbd ((\rgrad \cat \uppd)\qd)
\stackrel{\simeq}{\rightleftarrows}
\holbd ((\grad \cat \uppd)\qd).
\]
\end{THM}

Heuristically, one can think of this result as giving a Koszul duality relationship between $(\rgrad \cat \uppd)\qd$ and $(\grad \cat \uppd)\qd$. For example, $\kk \fb$ can be considered as a DG $(\rgrad \cat \uppd)\qd$-module and the unit of the above adjunction identifies as 
\[
\kk \fb \rightarrow \ihom _{\kk \fb} ((\grad \cat \uppd)\qd, (\grad \cat \uppd)\qd),
\]  
where the codomain is equipped with an explicit twisted differential. Theorem \ref{THM:bikoszul_equivalence} includes the fact that this is a weak equivalence, thus giving an explicit homological relationship between $(\grad \cat \uppd)\qd$ and $(\grad \cat \uppd)\qd$. This generalizes one of the classical Koszul complexes that arises in classical Koszul duality for binary quadratic operads, going back to Ginzburg and Kapranov \cite{MR1301191}.

To be more in keeping with classical operadic Koszul duality for operads, we replace $(\grad \cat \uppd)\qd$ by its `desuspension', here denoted $\big((\grad \cat \uppd)\qd\big)\desusp$ (the sheering functor $(-)\desusp$ is reviewed in Appendix \ref{sect:sheer}). The underlying bimodule can be expressed in terms of $\cat \ppd^!$, where $\ppd^!$ is the Koszul dual of $\ppd$:
 \[
 \big((\grad \cat \uppd)\qd\big)\desusp \cong (\cat \ppd^!) \op \otimes_\fb \kk \finj\desusp,
 \]
where $\kk \finj\desusp$ is a signed counterpart of $\kk \finj$.

One can pass to (co)homology. In particular, restricting to degree zero (with respect to the appropriate grading),  this yields the interesting $\kk$-linear categories
\begin{eqnarray*}
&&
H^0 ((\rgrad \cat \uppd)\qd) \\
&&
H_0 (((\grad \cat \uppd)\qd)\desusp)
\end{eqnarray*}
that are respectively a subcategory of $\cat \ppd$ and a quotient category of $(\cat \ppd^!)\op$.

This theory applies to the case $\uppd = \ucom$, since $\com$ is Koszul, with $\com^!$ the Lie operad $\lie$. This case is analysed in Section \ref{sect:ucom}, where the DG $\kk$-linear categories $(\rgrad \kk \fin)\qd$ and $(\grad \kk\fin)\qd$ are identified (recall that $\cat \ucom\cong \kk \fin$). 

The DG category $(\rgrad \kk \fin)\qd$ is the most familiar; it appeared implicitly in \cite{MR4518761}:

\begin{PROP}
(Proposition \ref{prop:rgrad_kk_fin_qd}.)
The DG $\kk$-linear category $(\rgrad \kk \fin)\qd$ has underlying $\kk \fb$-bimodule 
$
\kk \fs \otimes_\fb \kk \finj\qd, 
$ 
with cohomological grading inherited from $\kk \finj\qd$. 

The differential is the Koszul complex differential associated to the right $\kk \finj$-module structure of $\kk \fs$ induced by the right augmentation $\kk \fin \rightarrow \kk \fs$.
\end{PROP}

The DG category $(\grad \kk\fin)\qd$ as constructed here is, at first sight, less familiar. However, it is shown to be  related to the Chevalley-Eilenberg complex with coefficients in $\cat \lie$; more precisely:

\begin{THM}
(Theorem \ref{thm:identify_with_CE}.)
The opposite of the DG $\kk$-linear category $\big((\grad \kk \fin)\qd\big)\desusp$ is isomorphic as a complex of right $\cat \lie$-modules to the Chevalley-Eilenberg complex of $\lie$ with coefficients in $\cat \lie$, which has underlying object 
$
(\kk \finj ^\ddag) \op
\otimes_\fb 
\cat \lie.
$ 
\end{THM}

On the $\com$ side, the $\kk$-linear subcategory of $\kk \fs \cong \cat \com$ given by $H^0((\rgrad \kk \fin)\qd)$ corresponds to the category studied  in \cite{MR4518761} (implicitly) and in \cite{P_filterFS}. On the $\lie$ side, $H_0 (((\grad \kk \fin)\qd) \desusp)$ is the quotient category of $(\cat \lie)\op$ that is the main player in \cite{MR4696223}.
 These results not only explain how these structures fit into the relative nonhomogeneous Koszul duality framework, but also {\em  why} the Chevalley-Eilenberg complex with coefficients in $\cat \lie$ arises in this theory.

\bigskip
{\bf Organization of the paper.} Most of the first part of the  paper is devoted to explaining Positselski's theory in the presence of either a left of right augmentation,  specializing to the situation in which there is a commutative diagram of unital, associative ring maps
\[
\xymatrix{
\kring 
\ar[r]
\ar[d]
&
C 
\ar[d]
\\
R 
\ar[r]
&
A}
\]
with the property that the multiplication of $A$ induces an isomorphism of $R \otimes C\op$-modules $R \otimes_\kring C \stackrel{\cong}{\rightarrow } A$. 

This general theory is developed in Part \ref{part:one}; in particular, this explains why the structures considered are DG structures (namely, that the curvature that appears in Positselski's work is zero). It culminates with Section \ref{sect:koszul}, which gives an outline of Koszulity for nonhomogeneous quadratic rings, a foretaste for the applications. 

Part \ref{part:two} is devoted to the applications to the operadic framework outlined above. One aim is to make the constructions explicit, also recalling the (absolute) Koszul duality results for $\kk \finj$ and for $\cat \ppd$ in a way that is suitable for generalization to the relative case. 
\tableofcontents

\part{The general theory}
\label{part:one}

\section{Left (or right) augmented rings}
\label{sect:leftaug}

The aim of this section is to review the basic theory of left augmented rings. This will be used when applying Positselski's relative Koszul duality to ensure that one remains within the DG setting, thus considerably simplifying the theory. The corresponding notion of a right augmented ring is outlined briefly in Section \ref{subsect:right_aug}.

\subsection{Basics}

We review the theory of left augmented rings over a fixed unital, associative ring $R$, following \cite[Section 3.8]{MR4398644}. Unadorned tensor products are understood to be $\otimes_\zed$. 

\begin{rem}
\label{rem:bimodules}
Recall that, for unital, associative rings $R_1$, $R_2$, a left $R_1$, right $R_2$ bimodule is equivalent to a (left) $R_1 \otimes R_2\op$-module. (Here, by convention, all modules are {\em left} modules unless otherwise specified; a right $R$-module is the same thing as a $R\op$-module.)
\end{rem}

\begin{defn}
\label{defn:left_aug}
A left augmented ring over $R$ is an associative unital ring $A$ equipped with a {\em unit} $\eta : R \hookrightarrow A$ (a morphism of unital rings) and  a {\em left augmentation} $\epsilon : A \rightarrow R$ such that 
\begin{enumerate}
\item 
$\epsilon$ is a morphism of left $R$-modules; 
\item 
$\epsilon \circ \eta = \id_R$; 
\item 
$\ker \epsilon$ is a left ideal of $A$, in particular it has the structure of a (non-unital) associative algebra.
\end{enumerate}
\end{defn}

It follows that $R$ has a unique left $A$-module structure such that $\epsilon : A \rightarrow R$ is a morphism of left $A$-modules. 

There is a direct sum decomposition of left $R$-modules 
$
A \cong R \oplus \ker \epsilon
$ 
induced by $\eta$ and $\epsilon$. With respect to this splitting, the restriction of the multiplication of $A$ gives 
\[
\ker \epsilon \otimes R \rightarrow A \cong R \oplus \ker \epsilon.
\]
This yields the two component  structure maps: 
\begin{eqnarray}
\rmult &:& \ker \epsilon \otimes R \rightarrow \ker \epsilon
\\
q &:& \ker \epsilon \otimes R \rightarrow R.
\end{eqnarray}

\begin{rem}
\label{rem:products}
We reserve the notation $xy$  for the product of the elements $x$ and $y$ in $A$. 
 For example, for $x \in \ker \epsilon$ and $r \in R$, $xr$  denotes the product formed in $A$, which need not belong to $\ker \epsilon$. Using the isomorphism $A \cong R \oplus \ker \epsilon$ one has the fundamental relation:
 \begin{eqnarray}
 \label{eqn:prod_relation}
 xr = q(x \otimes r) + x \rmult r.  
 \end{eqnarray}
(This differs from the notation used by Positselski in \cite[Chapter 3]{MR4398644} for example, where the product is denoted by $\ast$ and the right multiplication (denoted here by $\diamond$) without symbol.)
\end{rem}

Clearly one has:

\begin{lem}
\label{lem:ker_epsilon_bimodule}
\ 
\begin{enumerate}
\item
The map $\rmult$ together with the restriction of the left $A$-module structure makes $\ker \epsilon$ into an $A \otimes R\op$-module, hence an $R$-bimodule, by restriction. 
\item 
The product $\ker \epsilon \otimes \ker \epsilon \rightarrow \ker \epsilon$ is a morphism of left $A$-modules, hence of left $R$-modules, by restriction.
\end{enumerate}
\end{lem}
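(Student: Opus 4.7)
The plan is to derive both parts of the lemma from the associativity of the multiplication in $A$, exploited via the splitting $A \cong R \oplus \ker\epsilon$ of left $R$-modules induced by $\eta$ and $\epsilon$. The uniform strategy will be: take a standard associativity identity in $A$, expand each side using the fundamental relation (\ref{eqn:prod_relation}), and then separately extract the $R$- and $\ker\epsilon$-components in the splitting.

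For part (1), I would first verify the right $R$-action axioms for $\rmult$. Unitality $x \rmult 1_R = x$ is immediate from $\epsilon(x) = 0$, which forces $q(x\otimes 1_R) = 0$. The associativity $(x \rmult r) \rmult r' = x \rmult (rr')$ should be obtained from the identity $(x\eta(r))\eta(r') = x\eta(rr')$ in $A$, by expanding both sides via (\ref{eqn:prod_relation}) and reading off the $\ker\epsilon$-component; the $R$-component of the same identity simultaneously produces a cocycle-type relation for $q$ that will be useful later in the theory. The compatibility of $\rmult$ with the left $A$-action on $\ker\epsilon$ (which is simply the restriction of left multiplication in $A$, well defined by the left ideal property) is established by the analogous projection of the identity $a(x\eta(r)) = (ax)\eta(r)$ onto the $\ker\epsilon$-summand. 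The $R$-bimodule structure on $\ker\epsilon$ then follows by restriction of the $A \otimes R\op$-module structure along $\eta: R \hookrightarrow A$.

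For part (2), $\ker\epsilon$ inherits the product of $A$, which takes values in $\ker\epsilon$ by the left ideal property (applied with first factor in $A$ and second factor in $\ker\epsilon$, in particular when both factors lie in $\ker\epsilon$). Left $A$-linearity $a(xy) = (ax)y$ is then just the associativity in $A$, and left $R$-linearity is obtained by restriction along $\eta$.

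The technical point I expect to require the most care is the $A$-$R$-bimodule compatibility in part (1). The splitting $A \cong R \oplus \ker\epsilon$ is only a splitting of left $R$-modules, not of left $A$-modules, so in projecting $a(x\eta(r)) = (ax)\eta(r)$ onto the $\ker\epsilon$-summand one must track carefully how left multiplication by a general $a \in A$ interacts with the $\eta$-image of the $R$-component $q(x\otimes r)$ of $x\eta(r)$. I expect that the essential input will be the left $A$-linearity of $\epsilon$ with respect to the induced left $A$-module structure on $R$, which should control the $\eta\epsilon$-correction terms arising on the two sides of the identity so that, after projection, they match to give precisely $(ax)\rmult r = a(x \rmult r)$.
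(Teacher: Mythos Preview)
The paper gives no proof (the lemma is introduced by ``Clearly one has:''). Your plan for part~(2) and for the $R$-bimodule portion of part~(1) is correct and goes through exactly as you outline.

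The step you flag as delicate --- the full $A$-$R$-bimodule compatibility $(ax)\rmult r = a(x\rmult r)$ for arbitrary $a\in A$ --- will in fact fail, because the $A\otimes R\op$-module claim in part~(1) is overstated. Running your projection argument with $a = y\in\ker\epsilon$: from $(yx)\eta(r) = y(x\eta(r))$ one expands $x\eta(r) = \eta(q(x\otimes r)) + x\rmult r$ and then $y\,\eta(q(x\otimes r)) = \eta\big(q(y\otimes q(x\otimes r))\big) + y\rmult q(x\otimes r)$; projecting to $\ker\epsilon$ yields
\[
(yx)\rmult r \;=\; y(x\rmult r) \;+\; y\rmult q(x\otimes r),
\]
which is exactly relation~(\ref{item:g}) of Lemma~\ref{lem:properties_q} below, and the correction term $y\rmult q(x\otimes r)$ is nonzero in general (e.g.\ in the Weyl algebra $\kk\langle t,w\rangle/(wt-tw-1)$ over $R=\kk[t]$, left-augmented by $w\mapsto 0$, one computes $(w^2)\rmult t - w(w\rmult t) = w$). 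The $\eta\epsilon$-correction you anticipate controls only the $R$-component of the identity, not this extra $\ker\epsilon$-contribution. What survives --- and what the paper actually uses thereafter --- is the $R$-bimodule structure, which your argument does establish upon restricting to $a\in\eta(R)$, where the offending term vanishes.
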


\begin{rem}
By definition, $\ker \epsilon$ is a sub left $R$-module of $A$; it is not in general a sub $R$-bimodule of $A$.
\end{rem}

As observed above, the product of $A$ restricts to an associative product on $\ker \epsilon$. The associativity of the product on $A \cong \ker \epsilon \oplus R$ is  encoded in the following properties of $q$ (using the $R$-bimodule structure of $\ker \epsilon$).

\begin{lem}
\label{lem:properties_q}
The morphism $q : \ker \epsilon \otimes R \rightarrow R$ satisfies the following properties, for all $x,y \in \ker \epsilon$ and $r, s \in R$:
\begin{enumerate}
\item 
\label{item:a}
$q (rx \otimes s) = r q (x \otimes s)$, i.e., $q$ is a morphism of left $R$-modules; 
\item 
\label{item:b}
$q (x \otimes rs) = q (x \rmult r \otimes s) + q (x \otimes r) s$;
\item 
\label{item:e}
$x (ry) - (x \rmult r) y = q (x \otimes r )y$;
\item 
\label{item:g}
$(xy)\rmult r = x(y \rmult r) + x \rmult q(y \otimes r)$;
\item 
\label{item:h}
$q (x \otimes q (y \otimes r) ) = q (xy \otimes r)$, encoding the left $\ker \epsilon$-module structure on $R$.
\end{enumerate}
\end{lem}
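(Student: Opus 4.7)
The plan is to derive all five identities from a single source: the associativity of the multiplication in $A$, combined with the fundamental relation (\ref{eqn:prod_relation}) applied to expand products of the form $xr$ for $x \in \ker \epsilon$, $r \in R$, and the direct sum decomposition $A \cong R \oplus \ker \epsilon$ which lets us compare $R$- and $\ker\epsilon$-components of any identity.

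First, I would record two ``type'' observations used throughout: because $\ker\epsilon$ is a left $A$-module by Lemma \ref{lem:ker_epsilon_bimodule}, one has $rx, ry \in \ker\epsilon$ and $xy \in \ker\epsilon$ whenever $x,y\in\ker\epsilon$ and $r\in R$; and the product $xy$ of two elements of $\ker\epsilon$ agrees in $A$ with the internal product in $\ker\epsilon$ (no $R$-component appears). Item (\ref{item:a}) then follows by applying (\ref{eqn:prod_relation}) to $(rx)s$ and comparing with $r(xs)=rq(x\otimes s)+r(x\rmult s)$ on the two sides of the splitting; the $R$-component yields exactly $q(rx\otimes s)=rq(x\otimes s)$.

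For items (\ref{item:b}) and (\ref{item:h})--(\ref{item:g}) I would pick the right associativity instance and expand both sides via (\ref{eqn:prod_relation}) in each variable. Specifically, write
\[
x(rs) \;=\; q(x\otimes rs) + x\rmult(rs),
\qquad
(xr)s \;=\; \bigl(q(x\otimes r)+x\rmult r\bigr)s \;=\; q(x\otimes r)s + q(x\rmult r\otimes s) + (x\rmult r)\rmult s,
\]
and equate $R$- and $\ker\epsilon$-components; the $R$-part is (\ref{item:b}). Similarly, from $(xy)r=x(yr)$ one expands the right-hand side as $xq(y\otimes r)+x(y\rmult r)$, then applies (\ref{eqn:prod_relation}) once more to $xq(y\otimes r)=q(x\otimes q(y\otimes r))+x\rmult q(y\otimes r)$; comparing against $(xy)r=q(xy\otimes r)+(xy)\rmult r$ and using that $x(y\rmult r)\in\ker\epsilon$ gives (\ref{item:h}) from the $R$-part and (\ref{item:g}) from the $\ker\epsilon$-part simultaneously.

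Finally, item (\ref{item:e}) is the easiest: $ry$ lies in $\ker\epsilon$, so $x(ry)$ is an ordinary product in the subalgebra $\ker\epsilon$, while $(xr)y = q(x\otimes r)y + (x\rmult r)y$ by (\ref{eqn:prod_relation}); associativity in $A$ gives $x(ry)=(xr)y$ and the identity drops out. No single step is really an obstacle — the whole argument is a disciplined bookkeeping exercise — but the one point worth watching is that in items (\ref{item:b}), (\ref{item:g}), (\ref{item:h}) one must resist the temptation to identify $xr$ with $x\rmult r$: the scalar $q(x\otimes r)\in R$ is precisely the obstruction, and keeping its contribution separate is what produces the nontrivial ``twist'' terms on the right-hand sides.
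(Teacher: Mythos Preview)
Your proposal is correct and follows essentially the same route as the paper: each identity is obtained by choosing the appropriate associativity instance among the triple products $R\otimes\ker\epsilon\otimes R$, $\ker\epsilon\otimes R\otimes R$, $\ker\epsilon\otimes R\otimes\ker\epsilon$, $\ker\epsilon\otimes\ker\epsilon\otimes R$, expanding via the fundamental relation (\ref{eqn:prod_relation}), and reading off the $R$- or $\ker\epsilon$-component. Your write-up is in fact more explicit than the paper's, which only names the triple and the projection for each item; your observation that (\ref{item:g}) and (\ref{item:h}) arise simultaneously as the two components of the single associativity $(xy)r=x(yr)$ matches the paper's treatment exactly.
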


\begin{proof}
These identities are deduced from the associativity of the product of $A$ as follows:
\begin{enumerate}
\item 
consider the restriction of the product $R \otimes \ker \epsilon \otimes R \rightarrow A$ and then project to $R$ (the other projection to $\ker \epsilon$ encodes the $R$-bimodule structure of $\ker \epsilon$);
\item 
use $\ker \epsilon \otimes R \otimes R \rightarrow A$ and the projection to $R$, together with the fundamental identity (\ref{eqn:prod_relation}); (the other projection yields the associativity of the right $R$-module structure of $\ker \epsilon$);
\item 
consider $\ker \epsilon \otimes R \otimes \ker \epsilon \rightarrow A$, which maps to $\ker \epsilon$, again using (\ref{eqn:prod_relation});
\item 
consider $\ker \epsilon \otimes \ker \epsilon \otimes R \rightarrow A$ composed with the projection to $\ker \epsilon$, again in conjunction with (\ref{eqn:prod_relation});
\item 
compose $\ker \epsilon \otimes \ker \epsilon \otimes R \rightarrow A$ with the projection to $R$. 
\end{enumerate}
\end{proof}

\begin{rem}
\ 
\begin{enumerate}
\item 
The properties given in the Lemma are analogous to the self-consistency equations given in \cite[Proposition 3.14]{MR4398644} restricted to the case when the curvature $h$ (as defined in {\em loc. cit.}) is zero. (Compare the relations (a), (b), (e), (g), (h) of that Proposition, respectively.) The Lemma is more general in the sense that it applies without restriction on $x, y \in \ker \epsilon$. 
\item 
Point (\ref{item:g}) shows that the product $\ker \epsilon \otimes \ker \epsilon \rightarrow \ker \epsilon$ is not in general a morphism of right $R$-modules and (\ref{item:e}) shows that this product does not in general factor through $\ker \epsilon \otimes_R \ker \epsilon$.
\end{enumerate}
\end{rem}

Now, $\hom_R (\ker \epsilon, R) $ has the structure of an $R $-bimodule, since $\ker \epsilon$ does, by Lemma \ref{lem:ker_epsilon_bimodule}.  Then, since $q$ is a morphism of left $R$-modules by Lemma \ref{lem:properties_q}, it has adjoint 
\[
\qhat : R \rightarrow \hom_R (\ker \epsilon, R).
\]

The second property of Lemma \ref{lem:properties_q} is equivalent to the first statement of the following:

\begin{lem}
\label{lem:qhat_derivation}
The morphism $\qhat : R \rightarrow \hom_R (\ker \epsilon, R)$ is a derivation, using the $R$-bimodule structure of $\hom_R (\ker \epsilon, R) $.  In particular, $q (x \otimes 1)=0$, for all $x \in \ker \epsilon$,  where $1$ is the unit of $R$.
\end{lem}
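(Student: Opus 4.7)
The plan is to identify the adjoint $\qhat$ explicitly, spell out the $R$-bimodule structure on $\hom_R(\ker \epsilon, R)$, and verify that the Leibniz rule reduces verbatim to property (\ref{item:b}) of Lemma \ref{lem:properties_q}. By definition of the adjoint, $\qhat(r)(x) = q(x \otimes r)$; this is left $R$-linear in $x$ precisely by property (\ref{item:a}).

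First I would fix the $R$-bimodule structure on $\hom_R(\ker \epsilon, R)$. Since $\ker \epsilon$ is an $R$-bimodule (Lemma \ref{lem:ker_epsilon_bimodule}), for $f \in \hom_R(\ker \epsilon, R)$ and $r, s \in R$ we set
\[
(r \cdot f)(x) := f(x \rmult r), \qquad (f \cdot s)(x) := f(x) \, s.
\]
Routine checks (using only that $\ker \epsilon$ is an $R$-bimodule and that $R$ acts on itself) show that $r \cdot f$ and $f \cdot s$ remain left $R$-linear, that these actions satisfy the associativity axioms for left and right actions respectively, and that they commute, so $\hom_R(\ker \epsilon, R)$ is indeed an $R$-bimodule.

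Next I would evaluate the desired derivation identity $\qhat(rs) = r \cdot \qhat(s) + \qhat(r) \cdot s$ on an arbitrary $x \in \ker \epsilon$. The left-hand side equals $q(x \otimes rs)$, while the right-hand side unfolds, using the conventions above, to $\qhat(s)(x \rmult r) + \qhat(r)(x) \, s = q(x \rmult r \otimes s) + q(x \otimes r)\, s$. These two expressions agree precisely by property (\ref{item:b}) of Lemma \ref{lem:properties_q}, so $\qhat$ is a derivation.

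Finally, the consequence $q(x \otimes 1) = 0$ follows from the standard derivation calculation on the unital ring $R$: applying the Leibniz rule to $1 = 1\cdot 1$ gives $\qhat(1) = 1 \cdot \qhat(1) + \qhat(1) \cdot 1 = 2\,\qhat(1)$, whence $\qhat(1) = 0$, i.e.\ $q(x \otimes 1) = 0$ for all $x \in \ker \epsilon$. There is no real obstacle; the only subtlety is choosing the bimodule structure on $\hom_R(\ker \epsilon, R)$ so that the two summands of (\ref{item:b}) line up on the correct sides as the two terms of the Leibniz rule.
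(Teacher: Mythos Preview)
Your proposal is correct and follows the same approach as the paper: the text immediately preceding the lemma simply asserts that property (\ref{item:b}) of Lemma \ref{lem:properties_q} is equivalent to $\qhat$ being a derivation, and you have spelled out precisely this equivalence. Your deduction of $q(x\otimes 1)=0$ from the derivation property is also fine (alternatively one can read it off directly from the fundamental relation (\ref{eqn:prod_relation}) with $r=1$).
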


\subsection{Left augmentations and distributive laws}
\label{subsect:distrib}

We now specialize to the case where $A$ is generated by $R$ together with a ring $C$, assuming that these satisfy a distributive law, working over a unital associative ring $\kring$. 
 
 More precisely, we suppose given morphisms of unital associative rings: 
\[
\xymatrix{
\kring 
\ar@{^(->}[r]
\ar@{^(->}[d]
&
C 
\ar[d]
\\
R
\ar[r]_{\eta_A}
&
A.
}
\]
In particular $A$ is a $\kring $-bimodule and the product of $A$ is given by a morphism of $\kring$-bimodules $
A \otimes_\kring A 
\rightarrow 
A $.

 In addition, we suppose that the following holds:

\begin{hyp}
\label{hyp:A}
The product of $A$ induces an isomorphism
 $R \otimes_\kring C 
\stackrel{\cong}{\rightarrow}
A$ of $R \otimes C\op$-modules.
\end{hyp}

\begin{rem}
\label{rem:hyp_A}
Under this hypothesis, the product of $A$ induces an  exchange map (or distributive law) $\lambda_C \cn C \otimes_\kring R \rightarrow R \otimes_\kring C$, which is a morphism of $\kring$-bimodules. This, together with the ring structures of $R$ and $C$, determines the ring structure of $A$. 
\end{rem}

\begin{lem}
\label{lem:R_C}
Suppose that $C$ is augmented, with augmentation $\epsilon_C : C \rightarrow \kring$,  and write $C_+$ for the augmentation ideal. 
 Then $A$ is left augmented with respect to $\epsilon_A := \id_R \otimes_\kring \epsilon_C : R \otimes_\kring C \rightarrow R$ and  $\ker \epsilon_A \cong R \otimes_\kring C_+$ as a left $R$-module. 
 Moreover, $\epsilon_A$ is a morphism of $R \otimes \kring\op$-modules.
\end{lem}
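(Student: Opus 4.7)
The plan is to leverage Hypothesis \ref{hyp:A} to identify $A$ with $R \otimes_\kring C$ and then verify directly the conditions of Definition \ref{defn:left_aug}. Since $C$ is augmented, $\epsilon_C : C \to \kring$ is split by the unit $\kring \hookrightarrow C$, yielding a decomposition $C \cong \kring \oplus C_+$ of $\kring$-bimodules. Tensoring with $R$ over $\kring$ produces
\[
A \cong R \otimes_\kring C \cong R \oplus (R \otimes_\kring C_+)
\]
as left $R$-modules, under which $\epsilon_A$ is identified with the projection onto the first summand. This immediately gives the left $R$-module isomorphism $\ker \epsilon_A \cong R \otimes_\kring C_+$ and shows that $\epsilon_A$ is a morphism of left $R$-modules. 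The identity $\epsilon_A \circ \eta_A = \id_R$ follows from $\epsilon_C(1_C) = 1_\kring$.

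The main point to verify is that $\ker \epsilon_A$ is a left ideal of $A$. Since $A$ is generated as a ring by the images of $R$ and $C$, it suffices to check closure under left multiplication by each. Closure under $R$ is immediate, since $\ker \epsilon_A$ has already been identified as a sub left $R$-module. For left multiplication by $c \in C$, I would use the distributive law $\lambda_C : C \otimes_\kring R \to R \otimes_\kring C$ from Remark \ref{rem:hyp_A}: for $r \otimes c_+ \in R \otimes_\kring C_+$,
\[
c \cdot (r \otimes c_+) = \lambda_C(c \otimes r) \cdot (1 \otimes c_+),
\]
and writing $\lambda_C(c \otimes r) = \sum_i r_i \otimes c_i$, this equals $\sum_i r_i \otimes c_i c_+$. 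Since $C_+ = \ker \epsilon_C$ is a two-sided ideal of $C$, each $c_i c_+$ lies in $C_+$, so the result is in $R \otimes_\kring C_+$, as required.

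Finally, for the right $\kring$-linearity of $\epsilon_A$, the right $\kring$-action on $A \cong R \otimes_\kring C$ comes from right multiplication by $\kring \hookrightarrow C$ on the second tensor factor, so the required identity $\epsilon_A(r \otimes ck) = \epsilon_A(r \otimes c)\, k$ reduces to $\epsilon_C(ck) = \epsilon_C(c)\, k$, which holds because $\epsilon_C$ is a morphism of unital rings restricting to the identity on $\kring$. I do not anticipate a serious obstacle; the main subtlety is bookkeeping for the various bimodule structures, in particular the interaction between the right $R$-multiplication in $A$ (encoded by $\lambda_C$) and the $\kring$-balanced tensor product used to describe $A$.
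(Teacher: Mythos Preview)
Your argument is correct and follows essentially the same route as the paper's proof: both identify $\ker\epsilon_A$ with $R\otimes_\kring C_+$ via the splitting of $\epsilon_C$, and both reduce the left-ideal check to closure under left multiplication by $C$ (or $C_+$), using the exchange law together with the fact that $C_+$ is an ideal of $C$. The only cosmetic difference is that the paper phrases the key step via associativity of the product $C_+\otimes R\otimes_\kring C_+ \to R\otimes_\kring C\otimes_\kring C_+ \to R\otimes_\kring C_+$, whereas you write it elementwise using $\lambda_C$.
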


\begin{proof}
It is clear that $\epsilon_A$ is a morphism of left $R$-modules which is a retract of $\eta_A : R \rightarrow A$. The identification of  $\ker \epsilon_A $ as a left $R$-module is immediate.  

It remains to check that $\ker \epsilon_A$ is a left $A$-ideal. This is a straightforward consequence of associativity of the product of $A$. Indeed, it clearly suffices to show that product restricted to $C_+  \otimes \ker \epsilon_A$ maps to $R \otimes_\kring C_+$. Now $C_+ \otimes  \ker \epsilon_A \cong C_+ \otimes (R \otimes _\kring C_+)$. The product on the first two factors induces $C_+ \otimes R \otimes_\kring C_+  \rightarrow R \otimes_\kring C  \otimes_\kring C_+$. Now, since $C_+$ is a left $C$-ideal, the product on the second two factors of the latter expression  maps to $C_+$. By associativity, this yields the assertion.

The final statement is immediate from the fact that $\epsilon_C$ is a morphism of $\kring$-bimodules.
\end{proof}

\begin{rem}
\label{rem:qbar}
In this context, the structure morphisms $\rmult$ and $q$ are derived from the restriction of $\lambda_C$:
\[
C_+ \otimes_\kring R \rightarrow R \otimes_\kring C \cong R \ \oplus \  R \otimes_\kring C_+.
\]
This yields the two components 
\begin{eqnarray*}
\overline{\rmult} &:& C_+\otimes_\kring R \rightarrow R \otimes_\kring C_+ \\
\overline{q} &:& C_+ \otimes_\kring R \rightarrow R
\end{eqnarray*}
which are both morphisms of $\kring$-bimodules. 

They determine $\rmult$ and $q$ respectively, using the identification $\ker \epsilon_A \cong R \otimes_\kring C_+$. Namely, $\rmult$ is the composite 
\[
R \otimes_\kring C_+\otimes_\kring R
\stackrel{\id_R \otimes \overline{\rmult}}{\longrightarrow}
R \otimes_\kring R\otimes_\kring C_+
 \stackrel{\mu_R \otimes \id_{C_+}}{\longrightarrow}
  R \otimes_\kring C_+,
 \]
 where $\mu_R$ denotes the product of $R$; $q$ is the composite:
 \[
R \otimes_\kring C_+\otimes_\kring R
\stackrel{\id_R \otimes \overline{q}}{\longrightarrow}
R \otimes_\kring R
\stackrel{\mu_R}{\rightarrow} 
R.
\]
\end{rem}

The morphism $\overline{q}$ of Remark \ref{rem:qbar} can also be identified as follows:

\begin{lem}
\label{lem:qbar_restriction}
The morphism $\overline{q}: C_+ \otimes_\kring R \rightarrow R$  is the restriction of the left $A$-module structure of $R$ (given by the augmentation $\epsilon_A$) along  $C_+ \subset C  \rightarrow A$.
\end{lem}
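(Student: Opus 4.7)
The plan is to unwind both sides of the identity using the explicit description of the multiplication in $A \cong R \otimes_\kring C$ given by the distributive law $\lambda_C$.

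First I would recall the left $A$-module structure on $R$: since $R$ carries the unique left $A$-module structure for which $\epsilon_A$ is $A$-linear, the action can be described explicitly by $a \cdot r = \epsilon_A(a \eta_A(r))$ for $a \in A$ and $r \in R$ (take $\epsilon_A(a \eta_A(r)) = a \cdot \epsilon_A(\eta_A(r)) = a \cdot r$ since $\epsilon_A \circ \eta_A = \id_R$). Thus it suffices to show that, for $c \in C_+$ and $r \in R$, the element $\epsilon_A\bigl((1 \otimes c)(r \otimes 1)\bigr) \in R$ coincides with $\overline{q}(c \otimes r)$.

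Next I would compute the product $(1 \otimes c)(r \otimes 1)$ in $A = R \otimes_\kring C$ using Remark \ref{rem:hyp_A}. By construction of the ring structure from the distributive law, the product of $1 \otimes c$ with $r \otimes 1$ is obtained by applying $\lambda_C$ to $c \otimes r \in C \otimes_\kring R$ and then multiplying with the outer $R$- and $C$-factors, both of which are $1$; hence
\[
(1 \otimes c)(r \otimes 1) \;=\; \lambda_C(c \otimes r) \;\in\; R \otimes_\kring C.
\]
Since $c \in C_+$, the element $\lambda_C(c \otimes r)$ lies in the image of $C_+ \otimes_\kring R$ under $\lambda_C$, and the decomposition $C \cong \kring \oplus C_+$ gives the splitting $R \otimes_\kring C \cong R \oplus R \otimes_\kring C_+$ used in Remark \ref{rem:qbar}. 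By definition, the projection of $\lambda_C(c \otimes r)$ onto the $R$-summand is precisely $\overline{q}(c \otimes r)$.

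Finally I would observe that $\epsilon_A = \id_R \otimes \epsilon_C$ is exactly this projection onto the $R$-summand (since $\epsilon_C$ kills $C_+$ and is the identity on $\kring$). Combining the three steps yields
\[
c \cdot r \;=\; \epsilon_A\bigl((1 \otimes c)(r \otimes 1)\bigr) \;=\; \overline{q}(c \otimes r),
\]
which is the stated identity. There is no real obstacle here beyond bookkeeping: the proof is a direct unwinding of definitions, and the only thing to be careful about is distinguishing the inclusion $C \hookrightarrow A$, $c \mapsto 1 \otimes c$, from the inclusion $R \hookrightarrow A$, $r \mapsto r \otimes 1$, when computing products via $\lambda_C$.
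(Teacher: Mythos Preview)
Your proof is correct and is precisely the natural unwinding of the definitions. The paper states this lemma without proof, treating it as an immediate verification; your argument makes explicit exactly the steps one would carry out, using the description of $\epsilon_A = \id_R \otimes_\kring \epsilon_C$ from Lemma~\ref{lem:R_C} and the definition of $\overline{q}$ from Remark~\ref{rem:qbar}.
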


\subsection{Right augmentations}
\label{subsect:right_aug}

One has the  notion of a right augmented ring over $R$, given by the following counterpart of Definition \ref{defn:left_aug}:

\begin{defn}
\label{defn:right_aug}
A right augmented ring over $R$ is an associative unital ring $A$ equipped with a unit $\eta : R \hookrightarrow A$ and  a {\em right augmentation} $\epsilon : A \rightarrow R$ such that 
\begin{enumerate}
\item 
$\epsilon$ is a morphism of right $R$-modules; 
\item 
$\epsilon \circ \eta = \id_R$; 
\item 
$\ker \epsilon$ is a right ideal of $A$.
\end{enumerate}
\end{defn}

The basic results on left augmentations go through for right augmentations, {\em mutatis mutandis}. Alternatively, one can exploit passage to the opposite ring, using:

\begin{prop}
\label{prop:left/right_aug}
The pair $(\eta : R \rightarrow A, \ \epsilon : A \rightarrow R)$ is a left augmentation for $A$ over $R$ if and only if $(\eta\op : R\op \rightarrow A\op, \ \epsilon\op : A\op \rightarrow R\op)$ is a right augmentation for $A\op$ over $R\op$.
\end{prop}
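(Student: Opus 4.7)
The plan is to verify each of the three conditions of Definition \ref{defn:right_aug} for the pair $(\eta\op, \epsilon\op)$ against $A\op$ over $R\op$, showing it corresponds bijectively to the matching condition of Definition \ref{defn:left_aug} for $(\eta, \epsilon)$ over $R$. The main point is that the underlying abelian groups of $A\op$ and $R\op$ coincide with those of $A$ and $R$; only the multiplications are reversed. In particular, the underlying set maps of $\eta\op$ and $\epsilon\op$ are the same as those of $\eta$ and $\epsilon$, and the identity $\epsilon\op \circ \eta\op = \id_{R\op}$ is literally the identity $\epsilon \circ \eta = \id_R$. So the second axiom transfers with no work.

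Next I would invoke the following standard facts about passage to opposite rings, all of which are self-dual under $(-)\op$:
\begin{enumerate}
\item A map $f \cn R \to S$ is a morphism of unital rings if and only if $f\op \cn R\op \to S\op$ is; this rests on $f\op(x \cdot_{R\op} y) = f(yx) = f(y)f(x) = f\op(x) \cdot_{S\op} f\op(y)$ together with preservation of the unit.
\item Left $R$-module structures on an abelian group are in bijection with right $R\op$-module structures; hence $\epsilon$ is a morphism of left $R$-modules if and only if $\epsilon\op$ is a morphism of right $R\op$-modules.
\item A subset $I \subseteq A$ is a left ideal of $A$ if and only if, regarded as a subset of $A\op$, it is a right ideal of $A\op$; applied to $I = \ker \epsilon = \ker \epsilon\op$, this gives the third axiom.
\end{enumerate}

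Combining these three observations gives the equivalence of conditions (1)--(3) of the two definitions, proving the proposition in both directions. There is no real obstacle: the statement is a direct formal consequence of the self-dualities inherent in the $(-)\op$ construction, and the proof amounts to little more than a dictionary.
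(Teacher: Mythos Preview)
Your proposal is correct. The paper does not give a proof of this proposition at all, treating it as an immediate formal consequence of the definitions; your argument spells out exactly the dictionary (unital ring maps, left/right module maps, left/right ideals under $(-)\op$) that makes it so.
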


\begin{exam}
\label{exam:right_aug_ARC}
Suppose that  $\kring$, $R$, $C$, $A$ are as in Section \ref{subsect:distrib} and that  Hypothesis \ref{hyp:A} holds. The analogue of Lemma \ref{lem:R_C} yields a right augmentation over $C$. Namely, if $R$ is augmented with augmentation $\epsilon _R \cn R \rightarrow \kring$, then $A$ is right augmented over $C$ with respect to $\id_C \otimes_\kring \epsilon_R$.
\end{exam}

\section{Quadratic structures}
\label{sect:quad}

This section reviews the basic theory of  homogeneous and nonhomogeneous quadratic algebras. We also treat the case where, for a homogenous quadratic algebra $C$ over $\kring$ and $\kring \rightarrow R$,  $R \otimes_\kring C$ carries the structure of a {\em nonhomogeneous} quadratic algebra.

Some projectivity hypotheses are required when applying quadratic duality and considering Koszul duality, motivating the introduction of: 

\begin{defn}
\label{defn:left_fg_proj}
(Compare \cite[Definition 1.4]{MR4398644}.)
Let $B$ be a $\nat$-graded ring with $B_0=R$. The graded ring $B$ is 
\begin{enumerate}
\item 
$\ell$-left finitely-generated projective (for $\ell \in \nat$) if $B_n$ is finitely-generated projective as a  left $R$-module for each $n \leq \ell$; 
\item 
left finitely-generated projective if each $B_n$ is finitely-generated projective as a  left $R$-module.
\end{enumerate}  
The conditions $\ell$-right finitely-generated projective and right finitely-generated projective are defined analogously.  
\end{defn}

\subsection{Homogeneous quadraticity for $C$}

\begin{nota}
\label{nota:tensor_alg} 
For $W$ a $\kring$-bimodule, $T_\kring (W)$ denotes the tensor algebra on $W$, with underlying $\kring$-bimodule defined recursively by $T^0_\kring (W): = \kring$ and $T^{n+1}_\kring(W) := W \otimes_\kring T^n_\kring (W)$. 
The product $T_\kring (W) \otimes T_\kring (W) \rightarrow T_\kring (W)$  is given by concatenation.
\end{nota}

\begin{lem}
\label{lem:projectivity}
Suppose that $W$ is projective (respectively finitely-generated projective) as a left $\kring$-module, then, for each $n \in \nat$, $T^n _\kring (W)$ is projective (resp. finitely-generated projective)  as a left $\kring$-module. 

Hence $T_\kring (W)$ is left finitely-generated projective as an $\nat$-graded ring if and only if $W$ is finitely-generated projective as a left $\kring$-module.
\end{lem}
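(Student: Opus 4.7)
The plan is to proceed by induction on $n$. The base case $n=0$ is immediate since $T^0_\kring(W) = \kring$ is free of rank one (hence finitely-generated projective) as a left $\kring$-module. For the inductive step it suffices to establish the following auxiliary fact: \emph{if $V$ is a $\kring$-bimodule that is projective (respectively finitely-generated projective) as a left $\kring$-module, and $M$ is a left $\kring$-module with the same property, then $V \otimes_\kring M$ is projective (respectively finitely-generated projective) as a left $\kring$-module.} Applying this with $V = W$ and $M = T^{n-1}_\kring(W)$ then propagates the conclusion from $n-1$ to $n$.

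For the projectivity half of the auxiliary fact, I would invoke the tensor-hom adjunction
\[
\hom_\kring (V \otimes_\kring M, -) \cong \hom_\kring (M, \hom_\kring (V, -)),
\]
where $\hom_\kring (V, -) \colon \kmod \to \kmod$ is equipped with the left $\kring$-module structure induced by the right $\kring$-action on $V$. Left projectivity of $V$ makes this functor exact (exactness with values in $\kmod$ agrees with exactness into abelian groups). Composing with $\hom_\kring (M, -)$, which is exact by left projectivity of $M$, shows that $\hom_\kring (V \otimes_\kring M, -)$ is exact, so $V \otimes_\kring M$ is left projective. For the finite-generation half, take finite left-$\kring$-generating sets $\{w_i\}_{i=1}^k$ of $V$ and $\{m_j\}_{j=1}^l$ of $M$, and observe by a direct computation --- expanding $w \otimes m$ in the $w_i$, pushing the right $\kring$-action across the tensor via $w s \otimes m' = w \otimes s m'$, and then re-expanding the result in the $w_i$ once more --- that the finite set $\{w_i \otimes m_j\}$ generates $V \otimes_\kring M$ as a left $\kring$-module.

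The biconditional in the second statement then follows: the ``if'' direction is immediate from the induction, and the ``only if'' direction uses $T^1_\kring(W) = W$. No step is a real obstacle; the only subtlety worth flagging is that, in the non-commutative setting, a left-projective $\kring$-bimodule need not be a bimodule direct summand of a free bimodule, so the naive ``direct summand of $\kring^n$'' argument does not apply degreewise to the tensor product, and the tensor-hom adjunction is used to circumvent this.
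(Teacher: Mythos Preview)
Your proof is correct. The paper states this lemma without proof, treating it as standard; your inductive argument via the tensor--hom adjunction is a clean and complete justification, and your remark about why the naive direct-summand argument fails in the noncommutative setting is well observed.
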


Suppose  that $C$ is a homogeneous quadratic ring over $\kring$, in the sense of \cite[Chapter 1]{MR4398644},  i.e., $C$ is augmented with augmentation $\epsilon_C$ and 
\begin{enumerate}
\item 
there is a $\kring$-bimodule $W \subset \ker \epsilon_C$ that generates $C$ over $\kring$, with induced surjection 
\begin{eqnarray}
\label{eqn:T_to_C}
T_\kring (W) \twoheadrightarrow C;
\end{eqnarray}
\item 
$C$ inherits an $\nat$-grading from the tensor algebra $T_\kring (W)$, with $C_0 = \kring$, $C_1 = W$;
\item 
the kernel of (\ref{eqn:T_to_C}) is the two-sided ideal generated by the $\kring$-bimodule $I_C \subseteq W  \otimes_\kring W$ given by the kernel of the multiplication:
\[
I_C := \ker \big( 
W \otimes_\kring W \rightarrow C_2
\big).
\]
\end{enumerate}
The algebra $C$ is determined by the quadratic datum $(\kring; W, I_C)$.

\begin{rem}
\label{rem:aug_ideal_C}
The  augmentation ideal $\ker \epsilon_C$ identifies as the sub $C$-bimodule of positive degree elements, 
$C_+:=\bigoplus_{n>0}  C_n$.
\end{rem}

\begin{lem}
\label{lem:proj_W}
Suppose that $(\kring; W, I_C)$ is a quadratic datum, where $W$ and $C_2:= W\otimes_\kring W/ I_C$ are both left $\kring$-projective. Then $I_C$ is left $\kring$-projective and there is a direct sum decomposition of projective left $\kring$-modules 
$
W\otimes_\kring W \cong I_C \  \oplus \  C_2$.

If, in addition, $W$ is finitely-generated projective as a left $\kring$-module, then so are both $I_C$ and $C_2$. 
\end{lem}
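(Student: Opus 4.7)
The plan is to exploit the defining short exact sequence of left $\kring$-modules
\[
0 \to I_C \to W \otimes_\kring W \to C_2 \to 0,
\]
which comes directly from the definition $I_C = \ker(W \otimes_\kring W \to C_2)$. The proof then reduces to two standard facts about projective modules: a summand of a projective module is projective, and a short exact sequence ending in a projective module splits.

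First I would invoke Lemma \ref{lem:projectivity} with $n=2$: since $W$ is left $\kring$-projective, so is $T^2_\kring(W) = W \otimes_\kring W$. Next, because $C_2$ is assumed left $\kring$-projective, the above short exact sequence splits as left $\kring$-modules, producing the stated direct sum decomposition $W \otimes_\kring W \cong I_C \oplus C_2$. As a direct summand of the projective left $\kring$-module $W \otimes_\kring W$, the bimodule $I_C$ is then automatically projective as a left $\kring$-module.

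For the final assertion, assume in addition that $W$ is finitely-generated projective as a left $\kring$-module. Then the second clause of Lemma \ref{lem:projectivity} gives that $W \otimes_\kring W$ is finitely-generated projective on the left. As a quotient of a finitely-generated module, $C_2$ is itself finitely generated, and being projective by hypothesis it is finitely-generated projective. Similarly, $I_C$ is a direct summand of the finitely-generated projective module $W \otimes_\kring W$, hence finitely-generated projective.

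There is no serious obstacle; the argument is purely module-theoretic once the relevant short exact sequence is in place, and the only input beyond general nonsense is Lemma \ref{lem:projectivity} to ensure that $W \otimes_\kring W$ has the same projectivity and finite-generation properties as $W$.
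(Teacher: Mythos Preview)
Your argument is correct and is exactly the standard one; the paper itself states this lemma without proof, treating it as an elementary consequence of the projectivity hypotheses and Lemma~\ref{lem:projectivity}.
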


We extract the hypotheses of Lemma \ref{lem:proj_W} as the following Definition:

\begin{defn}
\label{defn:left_(fg)_proj_qdatum}
A quadratic datum $(\kring; W, I_C)$ is $2$-left projective (respectively $2$-left finitely-generated projective) if  
$W$ and $W\otimes_\kring W/ I_C$ are both left $\kring$-projective (resp. finitely-generated projective).
\end{defn}

For later usage, we introduce the following: 

\begin{nota}
\label{nota:length_filt_C}
Denote by $F_\bullet C$ the length filtration  associated to the graded algebra $C$, 
$
F_n C := \bigoplus_{i=0}^n C_i.
$ 
\end{nota}

Clearly this is  a filtration of the algebra $C$: each $F_n C$ is a $\kring$-bimodule and,  for $s, t \in \nat$, the product of $C$ restricts to 
$ 
F_s C \otimes_\kring F_t C 
\rightarrow 
F_{s+t}C.
$

\subsection{Half base change for homogeneous quadratic algebras}

Suppose given an  inclusion of unital, associative rings $\eta_R : \kring \hookrightarrow R$. Then, for any $\kring$-bimodule $X$, one can form the $R \otimes \kring\op$-module $R \otimes_\kring X$ and ask whether this has a compatible $R$-bimodule structure. If this is the case, the right $R$-module structure gives an exchange map 
$
\lambda_X : X \otimes_\kring R \rightarrow R \otimes_\kring X
$
and is determined by this. 

\begin{rem}
\label{rem:exchange_map_bimodule}
One can give necessary and sufficient conditions on a map of the form $\lambda_X$ ensuring that it induces a $R$-bimodule structure on  $R \otimes_\kring X$ as above.
\end{rem}

We consider a  quadratic datum $(\kring; W , I_C)$ and suppose that the following holds:

\begin{hyp}
\label{hyp:I_C_left_split_inclusion}
The inclusion $I_C \hookrightarrow W \otimes_\kring W$ admits a retract in left $\kring$-modules. 
\end{hyp}

\begin{exam}
This hypothesis holds if $(W \otimes_\kring W) /I_C$ is projective as a left $\kring$-module.
\end{exam}

Consider the $R \otimes \kring\op$-modules $R \otimes_\kring W$ and $R \otimes_\kring I_C$; Hypothesis \ref{hyp:I_C_left_split_inclusion} ensures that $I_C \hookrightarrow W \otimes_\kring W$ induces an inclusion of $R\otimes \kring\op$-modules:
\[ 
R \otimes_\kring I_C \hookrightarrow R \otimes_\kring (W \otimes_\kring W).
\]

In order for these to yield a quadratic datum over $R$, we require the following:

\begin{hyp}
\label{hyp:extension}
Given a quadratic datum $(\kring ; W ,I_C)$ satisfying Hypothesis \ref{hyp:I_C_left_split_inclusion}, suppose the following:
\begin{enumerate}
\item 
there exists an $R$-bimodule structure on $R \otimes_\kring W$ extending the canonical $R \otimes \kring \op$-module structure, encoded by the exchange map $\lambda_W : W \otimes_\kring R \rightarrow R \otimes_\kring W$;
\item 
with respect to the induced $R$-bimodule structure on $R \otimes_\kring (W \otimes_\kring W) \cong (R\otimes_\kring W)\otimes_R (R \otimes_\kring W)$, $R \otimes_\kring I_C$ is a sub $R$-bimodule (i.e.,  is stable under the right $R$-action).
\end{enumerate}
\end{hyp}

The following is clear:

\begin{lem}
\label{lem:half_base_change_qdatum}
Suppose that Hypothesis \ref{hyp:extension} holds for $(\kring ; W ,I_C)$. Then $(R ; R \otimes_\kring W, R \otimes_\kring I_C)$ is a quadratic datum over $R$. Moreover, if $(\kring ; W ,I_C)$ is $2$-left  projective (respectively finitely-generated projective), then so is $(R ; R \otimes_\kring W, R \otimes_\kring I_C)$.
\end{lem}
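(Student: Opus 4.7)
The plan is to observe that the statement essentially unpacks the content of Hypothesis \ref{hyp:extension} and then to check that left projectivity is preserved by half base change along $\eta_R : \kring \hookrightarrow R$.

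First I would recall what must be verified: a quadratic datum over $R$ consists of an $R$-bimodule together with an $R$-subbimodule of its tensor square over $R$. Hypothesis \ref{hyp:extension}(1) directly provides the $R$-bimodule structure on $R \otimes_\kring W$. The next step is to identify $(R \otimes_\kring W) \otimes_R (R \otimes_\kring W)$ with $R \otimes_\kring (W \otimes_\kring W)$ as an $R \otimes \kring\op$-module; this is a standard computation using that the right $R$-action on the left-hand copy of $R \otimes_\kring W$ is controlled by $\lambda_W$, and the identification on the right-hand side is the natural one carrying the $R$-bimodule structure induced by $\lambda_W$ on each of the two tensor factors. Once this identification is in place, Hypothesis \ref{hyp:I_C_left_split_inclusion} ensures that applying $R \otimes_\kring (-)$ to the inclusion $I_C \hookrightarrow W \otimes_\kring W$ gives an inclusion of $R \otimes \kring\op$-modules (since the inclusion is split on the left), and Hypothesis \ref{hyp:extension}(2) then provides that the image is stable under the right $R$-action, i.e.\ is an $R$-subbimodule. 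This completes the verification that $(R; R \otimes_\kring W, R \otimes_\kring I_C)$ is a quadratic datum.

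For the second clause, I would argue as follows. Assume $(\kring; W, I_C)$ is $2$-left projective. By Lemma \ref{lem:proj_W}, this gives a splitting of left $\kring$-modules
\[
W \otimes_\kring W \ \cong \ I_C \ \oplus \ C_2,
\]
in which both summands are left $\kring$-projective (resp.\ left $\kring$-finitely-generated projective). Applying $R \otimes_\kring (-)$, which is additive and sends (finitely-generated) projective left $\kring$-modules to (finitely-generated) projective left $R$-modules, yields a decomposition
\[
R \otimes_\kring (W \otimes_\kring W) \ \cong \ (R \otimes_\kring I_C) \ \oplus \ (R \otimes_\kring C_2)
\]
of left $R$-modules in which both summands are left $R$-projective (resp.\ finitely-generated projective). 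In particular $R \otimes_\kring W$ is left $R$-projective (resp.\ finitely-generated projective), and the quotient $\bigl((R \otimes_\kring W)\otimes_R (R \otimes_\kring W)\bigr)/(R \otimes_\kring I_C)$ identifies with $R \otimes_\kring C_2$, hence is again left $R$-projective (resp.\ finitely-generated projective). By Definition \ref{defn:left_(fg)_proj_qdatum} this is precisely the assertion that the new quadratic datum is $2$-left projective (resp.\ finitely-generated projective).

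The only point that requires any care is the identification of the tensor square $(R \otimes_\kring W) \otimes_R (R \otimes_\kring W)$ with $R \otimes_\kring (W \otimes_\kring W)$ compatibly with the $R$-bimodule structures induced by $\lambda_W$; this is the step that makes Hypothesis \ref{hyp:extension}(2) meaningful, but it is already built into the formulation of that hypothesis, so no genuine obstacle arises and the lemma follows in a few lines.
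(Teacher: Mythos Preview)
Your proposal is correct and is essentially an explicit unpacking of what the paper treats as immediate: the paper gives no proof at all, introducing the lemma simply with ``The following is clear.'' Your argument spells out the routine verification---that Hypothesis~\ref{hyp:extension} supplies exactly the bimodule data needed, and that $R\otimes_\kring(-)$ preserves left projectivity and the splitting from Lemma~\ref{lem:proj_W}---which is precisely what the paper leaves to the reader.
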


For the remainder of this subsection, we suppose that Hypothesis \ref{hyp:extension} holds, so that we have the quadratic datum $(R; R \otimes_\kring W, R \otimes_\kring I_C)$. One can thus form the associated homogeneous  quadratic algebra over $R$, denoted below by $A (\lambda_W)$, referencing the exchange map $\lambda_W$ that encodes the $R$-bimodule structure of $R \otimes_\kring W$.  

By construction, this fits into a commutative diagram of morphisms of unital, associative rings:
\[
\xymatrix{
\kring 
\ar@{^(->}[r]
\ar@{^(->}[d]
&
C
\ar[d]
\\
R 
\ar@{^(->}[r]
&
A (\lambda_W).
}
\]
One thus has a map of $R \otimes C\op$-modules $R \otimes_\kring C \rightarrow A (\lambda_W)$. 

\begin{prop}
\label{prop:homog_quadratic_distributive}
Hypothesis \ref{hyp:A} holds for $A(\lambda_W)$; i.e., 
the product of $A (\lambda_W)$ induces an isomorphism of $R \otimes C\op$-modules 
$$
R \otimes_\kring C \stackrel{\cong}{\rightarrow} A(\lambda_W).
$$
\end{prop}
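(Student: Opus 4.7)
The plan is to prove the statement degree by degree, using that both $R\otimes_\kring C$ and $A(\lambda_W)$ are $\nat$-graded---the former via the quadratic grading of $C$, the latter via the quadratic presentation over $R$---and that the canonical product map is homogeneous of degree zero. Since this map is induced by the product of $A(\lambda_W)$ together with the ring map $C\to A(\lambda_W)$, it is automatically left $R$-linear and right $C\op$-linear; so it suffices to check that in each arity $n$ the component $R\otimes_\kring C_n\to A(\lambda_W)_n$ is a bijection of abelian groups.

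The first step is to collapse the tensor power on the right-hand side. I would establish, for each $n\geq 0$, a canonical isomorphism of $R\otimes \kring\op$-modules
\[
\phi_n\cn R\otimes_\kring W^{\otimes_\kring n} \xrightarrow{\ \cong\ } (R\otimes_\kring W)^{\otimes_R n}, \qquad r\otimes w_1\otimes\cdots\otimes w_n \longmapsto (r\otimes w_1)\otimes_R (1\otimes w_2)\otimes_R\cdots\otimes_R (1\otimes w_n).
\]
Well-definedness and surjectivity rely on Hypothesis \ref{hyp:extension}(1): the exchange map $\lambda_W$ encodes the right $R$-action on $R\otimes_\kring W$, and iterating it allows every element of $(R\otimes_\kring W)^{\otimes_R n}$ to be rewritten with all $R$-factors pushed to the leftmost position. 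Injectivity is the standard content of a distributive law (or follows from the universal property of the tensor algebra $T_R$ applied to the bimodule $R\otimes_\kring W$).

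The second step is to transport the defining ideal. By construction, $A(\lambda_W)_n$ is the quotient of $T^n_R(R\otimes_\kring W)$ by the degree-$n$ component
\[
I_n := \sum_{i+j=n-2} T^i_R(R\otimes_\kring W)\otimes_R (R\otimes_\kring I_C)\otimes_R T^j_R(R\otimes_\kring W)
\]
of the two-sided ideal generated by $R\otimes_\kring I_C$, while, writing $J_n := \sum_{i+j=n-2} W^{\otimes_\kring i}\otimes_\kring I_C\otimes_\kring W^{\otimes_\kring j}$, one has $C_n = W^{\otimes_\kring n}/J_n$. I would then check that $\phi_n$ sends the image of $R\otimes_\kring J_n$ in $R\otimes_\kring W^{\otimes_\kring n}$ onto $I_n$. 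The nontrivial direction requires Hypothesis \ref{hyp:extension}(2): since $R\otimes_\kring I_C$ is stable under the right $R$-action on $R\otimes_\kring(W\otimes_\kring W)$, moving internal $R$-factors across a copy of $I_C$ via iterated $\lambda_W$ keeps the result inside the summand $R\otimes_\kring(W^{\otimes_\kring i}\otimes_\kring I_C\otimes_\kring W^{\otimes_\kring j})$ (Hypothesis \ref{hyp:I_C_left_split_inclusion} ensuring that $R\otimes_\kring I_C$ genuinely sits inside $R\otimes_\kring(W\otimes_\kring W)$).

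Combining the two steps, right-exactness of $R\otimes_\kring -$ applied to $J_n\to W^{\otimes_\kring n}\to C_n\to 0$ yields $R\otimes_\kring C_n \cong (R\otimes_\kring W^{\otimes_\kring n})/\mathrm{image}(R\otimes_\kring J_n)$, which via $\phi_n$ matches the presentation $T^n_R(R\otimes_\kring W)/I_n$ of $A(\lambda_W)_n$; a direct check on degree-one generators confirms that the induced isomorphism is precisely the component of the multiplication map. The main obstacle I anticipate is Step~2: although Step~1 is essentially the standard distributive-law collapse, the explicit matching of the two descriptions of the ideal is where the right-$R$-stability half of Hypothesis \ref{hyp:extension} genuinely intervenes, and this bookkeeping is what must be executed carefully.
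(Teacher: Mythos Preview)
Your proposal is correct and follows essentially the same approach as the paper: the paper sets up the commutative square
\[
\xymatrix{
R \otimes_\kring T_\kring (W) \ar[r]^\cong \ar@{->>}[d] & T_R (R \otimes_\kring W) \ar@{->>}[d] \\
R \otimes_\kring C \ar[r] & A(\lambda_W)
}
\]
and argues that the image of $R\otimes_\kring\langle I_C\rangle$ under the top isomorphism equals $\langle R\otimes_\kring I_C\rangle$, using that the former is a two-sided ideal via the exchange law; your Steps~1 and~2 are the degree-$n$ components of this global argument, with the same key ingredient (the right-$R$-stability of $R\otimes_\kring I_C$, i.e.\ Hypothesis~\ref{hyp:extension}(2)) invoked at the same place.
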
  

\begin{proof}
By definition, $C$ is the homogeneous quadratic algebra $T_\kring (C)/ \langle I_C \rangle$ over $\kring$ and  $A(\lambda_W)$ is the homogeneous quadratic algebra 
$
A(\lambda_W):= T_R (R \otimes_\kring W )/ \langle R \otimes_\kring I_C \rangle.
$ 
In particular, there is a commutative diagram in $R$-modules:
\[
\xymatrix{
R \otimes_\kring T_\kring (W) 
\ar[r]^\cong
\ar@{->>}[d]
&
T_R (R \otimes_\kring W) 
\ar@{->>}[d]
\\
R \otimes_\kring C
\ar[r]
&
A(\lambda_W)
}
\]
in which the right hand vertical map is a morphism of algebras and the horizontal maps are induced by the product of $T_R (R \otimes_\kring W)$ and of $A(\lambda_W)$ respectively. 

The kernel of the left hand vertical map is the image of $R \otimes_\kring \langle I_C \rangle$ in $R \otimes_\kring T_\kring (W)$, by right exactness of $R \otimes_\kring -$.  The kernel of the right hand vertical map is $\langle R \otimes_\kring I_C \rangle$, by definition of $A (\lambda_W)$. To prove the result, it suffices to prove that the image of $R \otimes_\kring \langle I_C \rangle$ in $T_R (R \otimes_\kring W) $ is equal to $\langle R \otimes_\kring I_C \rangle$. 

That the image is contained in $\langle R \otimes_\kring I_C \rangle$ is immediate. Now $\langle R \otimes_\kring I_C \rangle$ is generated as a two-sided ideal by $R \otimes_\kring I_C$, which is clearly in the image. To conclude, it suffices to show that the image is a two-sided ideal in $T_R (R \otimes_\kring W)$; this follows by using the exchange law.
\end{proof}

\subsection{The nonhomogeneous case}
\label{subsect:non-homog}

Given a homogeneous quadratic ring $C$ as above, 
we suppose that we are in  the framework of Section \ref{subsect:distrib}, so that we have a commutative diagram
of morphisms of unital associative rings
\[
\xymatrix{
\kring 
\ar@{^(->}[r]^{\eta_C}
\ar@{^(->}[d]_{\eta_R}
&
C 
\ar[d]
\\
R \ar[r]
&
A
}
\]
such that the product of $A$ induces an isomorphism 
$
R \otimes_\kring C \stackrel{\cong}{\rightarrow} A
$ 
of $R \otimes C\op$-modules.

\begin{rem}
\ 
\begin{enumerate}
\item 
By Lemma \ref{lem:R_C}, the augmentation of $C$ induces a left augmentation of $A$.
\item 
Since the unit  $\kring \rightarrow C$ splits as a morphism of $\kring$-modules, the map $R \rightarrow A$ is a split inclusion of $R$-modules.
\end{enumerate}
\end{rem}

\begin{nota}
\label{nota:filt_A}
For $n \in \nat$, set $F_n A := R \otimes _\kring F_n C$ as left $R$-modules (where $F_n C$ is as in Notation \ref{nota:length_filt_C}); in particular $F_0 A = R$ and $F_1A = R \ \oplus \ R \otimes_\kring W$.   
\end{nota}

One has the following projectivity property:

\begin{lem}
\label{lem:proj_R}
Suppose that $F_n C$ is projective (respectively finitely-generated projective) as a left $\kring$-module, then $F_nA$ is projective (resp. finitely-generated projective) as a left $R$-module. 
\end{lem}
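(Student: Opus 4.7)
The plan is to observe that this is a straightforward base change result: extension of scalars along $\eta_R \cn \kring \hookrightarrow R$ preserves both projectivity and finite generation of left modules, and $F_n A$ is by definition the extension of scalars of $F_n C$.

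More precisely, I would first record the elementary fact that if $M$ is a projective left $\kring$-module, then $R \otimes_\kring M$ is a projective left $R$-module (with $R$ acting on the first factor). This is immediate: write $M$ as a direct summand of a free $\kring$-module $\kring^{(I)}$, and apply $R \otimes_\kring -$; since $R \otimes_\kring -$ preserves direct sums and sends $\kring$ to $R$, one obtains $R \otimes_\kring M$ as a direct summand of $R^{(I)}$, hence projective. For the finitely-generated variant, when $M$ is f.g.\ projective the indexing set $I$ can be chosen to be finite, and one obtains $R \otimes_\kring M$ as a summand of a free left $R$-module of finite rank.

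Applying this to $M = F_n C$, which is projective (resp.\ f.g.\ projective) as a left $\kring$-module by hypothesis, yields the conclusion directly from the definition $F_n A := R \otimes_\kring F_n C$.

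There is essentially no obstacle: the statement is a packaging of the standard stability of (finitely-generated) projective modules under base change, and no feature of the quadratic/filtered structure of $A$ is used beyond the identification of $F_n A$ as a base change of $F_n C$ as a left module. Any subtlety would lie in verifying that the left $R$-module structure on $F_n A$ referred to in the statement is indeed the one coming from left multiplication on the first factor of $R \otimes_\kring F_n C$, which is immediate from Notation \ref{nota:filt_A}.
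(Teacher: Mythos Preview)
Your proposal is correct and matches the paper's treatment: the paper states the lemma without proof, regarding it as an immediate consequence of the definition $F_n A := R \otimes_\kring F_n C$ (Notation~\ref{nota:filt_A}) together with the standard fact that base change preserves (finitely-generated) projectivity. Your explicit unpacking of the base change argument is exactly what the paper leaves implicit.
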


The following is clear:

\begin{lem}
\label{lem:filt_R_mod}
For $n \in \nat$, there is an isomorphism of left $R$-modules 
$ 
F_n A \cong \bigoplus_{i=0}^n R \otimes_\kring C_i$.
 In particular, the inclusions $F_n C \subset F_{n+1} C$ induce split inclusions of left $R$-modules 
$F_n  A \hookrightarrow F_{n+1}A$. This filtration is exhaustive: $A \cong \lim_{\substack{\rightarrow \\n}} F_n A$. 
\end{lem}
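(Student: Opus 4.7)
The plan is to derive all three assertions directly from the definition $F_n A := R \otimes_\kring F_n C$ combined with the elementary properties of the functor $R \otimes_\kring -$ applied to the length filtration $F_\bullet C$ of Notation \ref{nota:length_filt_C}.

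First I would observe that, by definition of the length filtration, $F_n C = \bigoplus_{i=0}^n C_i$ as a left $\kring$-module. Since $R \otimes_\kring -$ is a left adjoint (to restriction of scalars along $\eta_R$), it commutes with arbitrary direct sums; applying it to this decomposition yields the left $R$-module isomorphism
\[
F_n A \ = \ R \otimes_\kring F_n C \ \cong \ \bigoplus_{i=0}^n R \otimes_\kring C_i,
\]
establishing the first claim.

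Next, the inclusion $F_n C \hookrightarrow F_{n+1} C$ is a split inclusion of left $\kring$-modules, since the decomposition above identifies $F_{n+1} C \cong F_n C \oplus C_{n+1}$. Applying $R \otimes_\kring -$ preserves split monomorphisms (additive functors send splittings to splittings), so the induced map $F_n A \hookrightarrow F_{n+1} A$ is a split inclusion of left $R$-modules, with complement $R \otimes_\kring C_{n+1}$. This gives the second claim.

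Finally, for exhaustivity, $C \cong \bigoplus_{i \geq 0} C_i = \mathrm{colim}_n F_n C$ as a left $\kring$-module. Since $R \otimes_\kring -$ commutes with colimits, and since $A \cong R \otimes_\kring C$ as left $R$-modules by Hypothesis \ref{hyp:A}, we obtain
\[
A \ \cong \ R \otimes_\kring \mathrm{colim}_n F_n C \ \cong \ \mathrm{colim}_n (R \otimes_\kring F_n C) \ = \ \mathrm{colim}_n F_n A,
\]
which is the third assertion. There is no real obstacle here: the statement is essentially a transparent consequence of exactness (and cocontinuity) of the half base change functor $R \otimes_\kring -$ applied to the grading decomposition of $C$; the only subtlety is that, although $F_\bullet A$ need not be multiplicative a priori at this stage of the paper, the statement concerns only its underlying left $R$-module structure, for which the argument above suffices.
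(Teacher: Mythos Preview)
Your proof is correct. The paper itself does not give a proof of this lemma, introducing it simply with ``The following is clear:''; your argument spells out exactly the transparent reasoning the paper is implicitly invoking (additivity and cocontinuity of $R \otimes_\kring -$ applied to the grading decomposition of $C$).
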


It is natural to ask when $(F_nA \mid n\in\nat)$ defines a filtration of the algebra $A$.

\begin{rem}
\label{rem:associated_graded}
If $F_\bullet A$ makes $A$ into a filtered algebra, then the associated graded 
$
\grad A:= \bigoplus_{n\in \nat} \grad_n A
$ (where  $\grad_n A := F_n A/ F_{n-1} A$) is an $\nat$-graded algebra.  Then  $\grad_n A \cong R \otimes_\kring C_n$ as a left $R$-module and has the structure of an $R$-bimodule, with the right multiplication induced by the restriction 
$ 
F_n A \otimes F_0 A \rightarrow F_{n} A$ of the product of $A$. 
\end{rem}

\begin{prop}
\label{prop:filt_hyp}
The left $R$-module filtration $(F_n A \mid n \in \nat)$ of $A$ yields a filtration of the algebra $A$ if and only if the restriction of the multiplication of $A$ to the image of $W \otimes_\kring R$ in $ A\otimes_\kring A$ maps to $F_1 A$:
\[
W \otimes_\kring R \rightarrow F_1 A = R \  \oplus \ R \otimes_\kring W.
\]
 
If this condition holds, then
\begin{enumerate}
\item 
the map $W \otimes _\kring R \rightarrow  R \otimes_\kring W$ obtained by composing with the projection to $F_1 A/F_0A \cong R \otimes_\kring W$ induces an $R$-bimodule structure on $R \otimes_\kring W$ extending the canonical $R \otimes \kring\op$-module structure;
\item 
the composite $W \otimes_\kring R \rightarrow F_1 A \twoheadrightarrow R$ is the restriction of the left $A$-module structure of $R$ to $W$.
\end{enumerate}
\end{prop}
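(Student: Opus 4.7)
The plan is to reformulate the filtration condition in terms of the exchange map $\lambda_C \cn C \otimes_\kring R \rightarrow R \otimes_\kring C$ from Remark \ref{rem:hyp_A}. Since $\lambda_C$ together with the ring structures of $R$ and $C$ determines the product of $A = R \otimes_\kring C$, the condition that $F_\bullet A$ be a filtration of the algebra is equivalent to requiring that $\lambda_C$ respect the length filtration of $C$, that is, $\lambda_C(F_n C \otimes_\kring R) \subseteq R \otimes_\kring F_n C$ for every $n \in \nat$. Necessity of the stated hypothesis is then immediate from the case $s=1$, $t=0$ of $F_s A \cdot F_t A \subseteq F_{s+t} A$, since $W$ sits inside $F_1 A$ as $1 \otimes W$.

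For sufficiency, I would establish $\lambda_C(F_n C \otimes_\kring R) \subseteq R \otimes_\kring F_n C$ by induction on $n$, with $n=0$ automatic (left $R$-linearity) and $n=1$ being exactly the hypothesis. For the inductive step, use that $C$ is generated as an algebra by $W$, so $F_n C = F_{n-1} C + W \cdot F_{n-1} C$. For an element $c = w c'$ with $w \in W$ and $c' \in F_{n-1} C$, associativity in $A$ gives $c \cdot r = w \cdot (c' \cdot r)$; applying the inductive hypothesis to write $c' \cdot r = \sum r_i \otimes c_i$ with $c_i \in F_{n-1} C$ and then the base case to each $w \cdot r_i \in R \oplus R \otimes_\kring W$, together with the observation that $W \cdot F_{n-1} C \subseteq F_n C$ in $C$, places $c \cdot r$ in $R \otimes_\kring F_n C = F_n A$. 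The bookkeeping of the two $F_1 A$-components is the main technical step, though it is routine.

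For part (1), the filtered algebra structure on $A$ yields a graded algebra $\grad A = \bigoplus_n F_n A / F_{n-1} A$, with $\grad_1 A \cong R \otimes_\kring W$. The structure maps $\grad_0 A \otimes \grad_1 A \rightarrow \grad_1 A$ and $\grad_1 A \otimes \grad_0 A \rightarrow \grad_1 A$ supply the $R$-bimodule structure on $R \otimes_\kring W$: the left action recovers the canonical one, and the right action is precisely the described map, while the bimodule axioms are inherited from the associativity of $\grad A$. For part (2), the composite $W \otimes_\kring R \rightarrow F_1 A \twoheadrightarrow R$ is the restriction of the map $\overline{q}$ of Remark \ref{rem:qbar} along $W \hookrightarrow C_+$, so the claim follows directly from Lemma \ref{lem:qbar_restriction}.
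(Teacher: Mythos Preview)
Your argument is correct, and it takes a genuinely different route from the paper's. You work directly with the exchange map $\lambda_C$, reducing the filtration condition to $\lambda_C(F_n C \otimes_\kring R) \subseteq R \otimes_\kring F_n C$ and then proving this by induction on $n$, exploiting that $C$ is generated by $W$ so that $F_n C = F_{n-1}C + W \cdot F_{n-1}C$. The paper instead argues more structurally: once the hypothesis makes $F_1 A$ a sub $R$-bimodule, it forms the algebra map $T_R(F_1 A) \rightarrow A$, shows that $T^n_R(F_1 A)$ surjects onto $F_n A$, and then reads off $F_m A \cdot F_n A \subseteq F_{m+n} A$ from the concatenation product on the tensor algebra via a commutative diagram. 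Your approach is more hands-on and makes the role of the distributive law explicit; the paper's approach packages the same induction into the universal property of $T_R$ and has the side benefit of exhibiting each $F_n A$ as a sub $R$-bimodule along the way. For parts (1) and (2) your treatment is fine and in fact more detailed than the paper's, which leaves these to the reader; your appeal to Lemma~\ref{lem:qbar_restriction} for (2) is exactly right.
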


\begin{proof}
Since $F_0 A =R $ is a subring of $A$, the condition given in the statement is equivalent to the requirement that the multiplication restricted to $F_1 A \otimes_\kring F_0 A$ maps to $F_1 A$. This condition is necessary. 

It remains to show that the condition is sufficient. The condition implies that $F_1 A$ is a sub $R$-bimodule of $A$. Thus the inclusion $F_1 A \hookrightarrow A$ induces a morphism of algebras
$
T_R (F_1 A) \rightarrow A
$.  
One checks that this is surjective as follows. The hypothesis implies that the morphism $T^n_R (F_1 A) \rightarrow A$ maps to $F_n A$ for each $n \in \nat$ and one checks by induction on $n$ that this yields a surjection $T^n_R (F_1 A) \twoheadrightarrow F_n A$. Since the filtration $F_\bullet A$ is exhaustive (by Lemma \ref{lem:filt_R_mod}), this gives the claimed surjectivity. 

Since the multiplication $A \otimes_R A \rightarrow A$ is a morphism of $R$-bimodules, the image of $T^n_R(F_1 A) \rightarrow A$ is a sub $R$-bimodule of $A$. It follows that $F_n A$ is a sub $R$-bimodule of $A$. 
 Thus, for any $m, n \in \nat$, the above construction yields the commutative diagram of morphisms of $R$-bimodules:
\[
\xymatrix{
T^m _R (F_1 A) \otimes_R T^n _R (F_1 A) 
\ar[r] ^(.6)\cong 
\ar@{->>}[d]
&
T^{m+n} _R (F_1 A)
\ar@{->>}[r]
&
F_{m+n}A 
\ar@{^(->}[d]
\\
F_m A \otimes_R F_n A 
\ar[r]
&
A \otimes_R A 
\ar[r]
&
A,
}
\]
in which the bottom right hand map is the product of $A$. 
 This shows that the product restricts to $F_m A \otimes_R F_n A \rightarrow F_{m+n} A$, for any $m,n$,  as required. 
\end{proof}

\begin{rem}
Henceforth, we suppose that the hypothesis of Proposition \ref{prop:filt_hyp} holds. In particular, $R \otimes _\kring W$ has the structure of a right $R$-module, given by the restriction of $\rmult$: 
\[
\rmult : (R \otimes_\kring W) \otimes_\kring R 
\rightarrow 
(R \otimes_\kring W)
\]
Likewise, we have the restriction of the structure morphism $q$: 
\[
\tilde{q} : W \otimes_\kring R 
\rightarrow 
R
\]
that induces a left $T_\kring W$-module structure on $R$. 
\end{rem}

\begin{defn}
\label{defn:non-homog_quad}
(Cf. \cite[Definition 3.2]{MR4398644}.) A filtered algebra $(A, F_\bullet A)$ is said to be nonhomogeneous quadratic if the associated graded $\grad A$ is homogeneous quadratic over $F_0 A$. 
\end{defn}

\begin{rem}
 Such definitions are standard when $\kring$ is a field; for example they occur in the foundational work of Priddy \cite{MR0265437} on Koszul algebras.
 \end{rem}

\begin{prop}
\label{prop:A_quadratic}
Suppose that Hypothesis \ref{hyp:A} is satisfied and that  $C$ is the quadratic algebra corresponding to the 
$2$-left projective quadratic datum $(\kring; W, I_C)$. 

If  the condition of Proposition \ref{prop:filt_hyp} is satisfied,  then $A$ equipped with the filtration $F_n A$ is a nonhomogeneous quadratic algebra over $R: = F_0 A$. The associated graded $\grad A$ is isomorphic to the quadratic algebra over $R$ associated to the quadratic datum 
$$(R; R \otimes_\kring W, I_A := R \otimes_\kring I_C \subset (R \otimes_\kring W) \otimes_R (R \otimes_\kring W))$$ 
 and this is $2$-left projective; it is finitely-generated if and only if  $(\kring; W, I_C)$ is so.
 
Moreover, the product of $\grad A$ induces an isomorphism of $R\otimes C\op$-modules $
R \otimes_\kring C \stackrel{\cong}{\rightarrow} \grad A.
$
\end{prop}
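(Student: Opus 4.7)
The plan is to verify Hypothesis \ref{hyp:extension} for the datum $(\kring; W, I_C)$ and then apply Lemma \ref{lem:half_base_change_qdatum} together with Proposition \ref{prop:homog_quadratic_distributive}. The assumed condition of Proposition \ref{prop:filt_hyp} already guarantees that $F_\bullet A$ is a ring filtration and supplies the exchange map $\lambda_W \cn W \otimes_\kring R \to R \otimes_\kring W$ encoding an $R$-bimodule structure on $R \otimes_\kring W$; this is exactly Hypothesis \ref{hyp:extension}(1). From Lemma \ref{lem:filt_R_mod} and Remark \ref{rem:associated_graded}, $\grad_n A \cong R \otimes_\kring C_n$ as left $R$-modules, so in particular $\grad_0 A = R$ and $\grad_1 A = R \otimes_\kring W$ with $R$-bimodule structure inherited from $\lambda_W$.

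To verify Hypothesis \ref{hyp:extension}(2), I compute the product map $\mu_2 \cn \grad_1 A \otimes_R \grad_1 A \to \grad_2 A$. For $(r_1 \otimes w_1) \otimes_R (r_2 \otimes w_2)$, the fundamental identity \eqref{eqn:prod_relation} decomposes $w_1 r_2 = q(w_1 \otimes r_2) + w_1 \rmult r_2$ inside $A$; the first term lies in $F_1 A$ and vanishes modulo $F_1 A$. Using the natural iso $(R \otimes_\kring W) \otimes_R (R \otimes_\kring W) \cong R \otimes_\kring (W \otimes_\kring W)$ induced by $\lambda_W$ (whose inverse sends $r \otimes w_1 \otimes w_2 \mapsto (r \otimes w_1) \otimes_R (1 \otimes w_2)$), one sees that $\mu_2$ corresponds to the base change $R \otimes_\kring (W \otimes_\kring W \twoheadrightarrow C_2)$, and by right exactness of $R \otimes_\kring -$ its kernel is $R \otimes_\kring I_C$. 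Since $\mu_2$ is a morphism of $R$-bimodules, this kernel is stable under the right $R$-action, yielding Hypothesis \ref{hyp:extension}(2). The $2$-left projectivity (respectively finite generation) of $(R; R \otimes_\kring W, R \otimes_\kring I_C)$ follows at once from that of $(\kring; W, I_C)$ by base change.

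Lemma \ref{lem:half_base_change_qdatum} now produces the homogeneous quadratic algebra $A(\lambda_W)$ over $R$ corresponding to $(R; R \otimes_\kring W, R \otimes_\kring I_C)$, and Proposition \ref{prop:homog_quadratic_distributive} gives an isomorphism $R \otimes_\kring C \stackrel{\cong}{\rightarrow} A(\lambda_W)$ of $R \otimes C\op$-modules. Since the proof of Proposition \ref{prop:filt_hyp} shows $A$ is generated over $R$ by $F_1 A$, $\grad A$ is generated over $R$ by $\grad_1 A = R \otimes_\kring W$, and the degree-$2$ relations $R \otimes_\kring I_C$ vanish in $\grad_2 A$ by the previous step; the universal property of homogeneous quadratic algebras thus furnishes a surjective map of graded $R$-algebras $A(\lambda_W) \twoheadrightarrow \grad A$. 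To promote this to an isomorphism, I iterate the degree-$2$ calculation: in each degree $n$, the composite
\[
R \otimes_\kring T^n_\kring (W) \cong T^n_R (R \otimes_\kring W) \twoheadrightarrow A(\lambda_W)_n \twoheadrightarrow \grad_n A \cong R \otimes_\kring C_n
\]
agrees with $R \otimes_\kring -$ applied to the $n$-fold multiplication $T^n_\kring (W) \twoheadrightarrow C_n$; consequently the two surjections from $R \otimes_\kring T^n_\kring (W)$ onto $A(\lambda_W)_n$ and onto $\grad_n A$ have equal kernels, forcing $A(\lambda_W) \twoheadrightarrow \grad A$ to be an isomorphism. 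The final statement then follows by composing with the isomorphism of Proposition \ref{prop:homog_quadratic_distributive}.

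The main obstacle is the degree-$n$ computation just mentioned: expanding a product $\prod_i (r_i \otimes w_i)$ in $A$, one must verify by induction on $n$ via repeated application of \eqref{eqn:prod_relation} and the form of $\lambda_C$ on $W \subset C$ that, modulo $F_{n-1} A$, only the purely iterated-$\lambda_W$ contributions survive, the $q$-contributions all lowering the filtration degree; the leading surviving tensor of the $w_i$'s is then carried to their $n$-fold product in $C_n$. Everything else in the argument amounts to bookkeeping or standard base-change manipulations.
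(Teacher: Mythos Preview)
Your proof is correct and follows essentially the same route as the paper's: verify Hypothesis \ref{hyp:extension} by analysing the degree-two product map, form the surjection $A(\lambda_W) \twoheadrightarrow \grad A$, and then argue it is an isomorphism degreewise (the paper compresses this last step to the single remark that it ``follows from the fact that $C$ is quadratic''). One small point: your appeal to right exactness of $R \otimes_\kring -$ to identify the kernel as $R \otimes_\kring I_C$ tacitly uses that $I_C \hookrightarrow W \otimes_\kring W$ is split as left $\kring$-modules, which is guaranteed by the $2$-left projectivity hypothesis and which the paper invokes explicitly.
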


\begin{proof}
By construction, $\grad_0 A=R$ and  $\grad_1 A$ is isomorphic to $R \otimes _\kring W$ as an $R$-bimodule, using the right module structure provided by $\rmult$ (this corresponds to specifying the exchange map $\lambda_W$). Moreover, from the proof of  Proposition \ref{prop:filt_hyp} it is clear that $\grad A$ is generated as an algebra over $R$ by $R \otimes_\kring W$. 

Consider the kernel of the product restricted to $(R \otimes_\kring W) \otimes _R (R \otimes _\kring W) \cong R \otimes_\kring (W \otimes_\kring W)$ mapping to $\grad_2 A$. This product identifies as the map of left $R$-modules
\begin{eqnarray}
\label{eqn:prod_grad_2}
R \otimes_\kring W \otimes_\kring W \rightarrow R \otimes _\kring C_2
\end{eqnarray}
induced by the product $W \otimes_\kring W \twoheadrightarrow C_2$. Since the latter fits into the split short exact sequence of left $\kring$-modules $0 \rightarrow I_C \rightarrow W \otimes_\kring W \twoheadrightarrow C_2 \rightarrow 0$ (which splits by the left projectivity hypothesis), it follows that the kernel of (\ref{eqn:prod_grad_2}) is isomorphic to $R \otimes_ \kring I_C$ as a left $R$-module and this is a submodule of $(R \otimes_\kring W) \otimes _R (R \otimes _\kring W)$. 

Now (\ref{eqn:prod_grad_2}) is necessarily a morphism of $R$-bimodules, where the $R$-bimodule structure of $R \otimes_\kring W \otimes_\kring W$ is induced by that of $R \otimes_\kring W$. It follows that $R \otimes_\kring I_C$ is a sub $R$-bimodule of $(R \otimes_\kring W) \otimes_R (R \otimes_\kring W)$, thus Hypothesis \ref{hyp:extension} is satisfied, so that $(R; R\otimes_\kring W, R \otimes_\kring I_C)$ is a quadratic datum by Lemma \ref{lem:half_base_change_qdatum}. The projectivity statement follows from Lemma \ref{lem:half_base_change_qdatum}.

Form the associated quadratic algebra $A (\lambda_W)$. By construction, one has a surjective morphism 
$ 
A (\lambda_W) \twoheadrightarrow \grad A
$ 
extending the identifications $\grad_0 A = R $ and $\grad_1 A = R \otimes_\kring W$. It remains to show that this is an isomorphism; this is true in degrees $0, 1, 2$ by construction.  To establish this isomorphism, it is equivalent to show that $\grad A$ is quadratic. This follows from the fact that $C$ is quadratic. 

Finally, the isomorphism of $R\otimes C\op$-modules $R \otimes_\kring C \cong A$ given by hypothesis passes to the associated graded. 
\end{proof}

\subsection{The mirror situation}
\label{subsect:crossing_hands}

One can  reverse the roles of $R$ and $C$ above.  The modifications that are required are the following: 
\begin{enumerate}
\item
$R$ is a homogeneous quadratic algebra over $\kring$; 
\item 
the projectivity properties are for the {\em right} module structures; 
\item 
the half base change functor is taken to be $- \otimes_\kring C$; 
\item 
the exchange map yields the {\em left} module structure.
\end{enumerate}

The results of the previous subsections have the obvious counterparts.

\section{Quadratic duality}
\label{sect:qdual}

This section reviews quadratic duality working over a unital, associative base ring, using \cite{MR4398644} as reference. 

\subsection{Recollections}
Throughout this subsection, $R$ is a fixed unital ring.  We primarily focus upon the theory using left duality, for which the relevant projectivity hypotheses are for the  left $R$-module structure. 

\begin{hyp}
\label{hyp:q_datum_proj}
The quadratic datum 
$(R;V , I \subset V \otimes_ R V)$ is $2$-left finitely-generated projective.
\end{hyp}

\begin{rem}
Under this hypothesis, Lemma \ref{lem:projectivity} implies that the associated quadratic algebra $T^R (V)/ \langle I \rangle$ is $2$-left finitely-generated projective quadratic, in the sense of Definition \ref{defn:left_fg_proj}.
\end{rem}

Recall that the  (left) duality  functor $(-)^\vee := \hom_R (-, R)$ is a (contravariant) endofunctor of the category of $R$-bimodules. 

\begin{lem}
\label{lem:duality_bimodules_proj}
Suppose that $M$ is a $R$-bimodule which is finitely-generated projective as a left $R$-module, then the $R$-bimodule $M^\vee = \hom_R (M, R)$ is finitely-generated projective as a {\em right} $R$-module.
\end{lem}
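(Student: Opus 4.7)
The plan is to reduce to the case of a free left $R$-module of finite rank, exploiting the standard characterization of finitely-generated projective modules as direct summands of finitely-generated free modules. The subtlety is purely bookkeeping: one must keep track of which $R$-actions play which role in $M^\vee = \hom_R(M,R)$. Here $\hom_R$ is left-$R$-linear homomorphisms, so that the right $R$-action on $M^\vee$ making the statement interesting comes from the right action on the codomain, namely $(f\cdot r)(m) := f(m)\cdot r$. (The $R$-bimodule structure of $M$ also induces a left $R$-action on $M^\vee$ via $(r\cdot f)(m) := f(mr)$, but this is not what is being asserted here.)

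First I would record the base case: for $R$ viewed as a left $R$-module over itself, the evaluation-at-$1$ map yields an isomorphism of right $R$-modules $\hom_R (R,R) \stackrel{\cong}{\rightarrow} R$, where the right actions on both sides come from right multiplication in $R$. More generally, since $\hom_R(-,R)$ sends finite direct sums to finite direct sums, one obtains $\hom_R(R^n, R) \cong R^n$ as right $R$-modules.

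Next, by the hypothesis that $M$ is finitely-generated projective as a left $R$-module, one can choose an integer $n$ and a split short exact sequence of left $R$-modules exhibiting $M$ as a direct summand of $R^n$; equivalently, there is an idempotent $e \in \mathrm{End}_R(R^n)$ with $M \cong e(R^n)$. Applying the (contravariant, additive) functor $\hom_R(-,R)$ then realizes $M^\vee$ as a direct summand of $\hom_R(R^n, R) \cong R^n$ in the category of right $R$-modules.

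Hence $M^\vee$ is finitely-generated projective as a right $R$-module, which is the claim. The only genuine obstacle is the initial verification that the right $R$-module structure used throughout is indeed the one induced by the right multiplication on the codomain $R$ (and not the one induced by the right action on $M$); once this is clarified, the argument is a direct application of the duality between finitely-generated projective left and right modules over a unital ring.
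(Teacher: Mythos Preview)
Your argument is correct and is the standard verification. The paper states this lemma without proof, treating it as well known, so there is no alternative approach to compare against; your direct-summand reduction to the free case, together with the explicit check that the relevant right $R$-action on $M^\vee$ is the one induced by right multiplication on the codomain $R$, is precisely the expected justification.
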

 
\begin{defn}
\label{defn:dual_quad_datum}
(Cf. \cite[Proposition 1.6]{MR4398644}.) 
The (left) dual quadratic datum to $(R; V, I)$  is 
$$
(R; V^\vee = \hom_R (V, R), I^\perp	 := \hom_R ((V \otimes_R V)/I, R)).
$$ 
\end{defn}

Applying Lemma \ref{lem:duality_bimodules_proj}, one has:

\begin{lem}
\label{lem:quadratic_dual_properties}
Suppose that the quadratic datum $(R; V, I)$ satisfies Hypothesis \ref{hyp:q_datum_proj}. Then 
the dual quadratic datum is $2$-right finitely-generated projective. 
\end{lem}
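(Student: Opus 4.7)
The plan is to verify directly the two conditions comprising $2$-right finitely-generated projectivity for the dual datum $(R; V^\vee, I^\perp)$. The first condition, that $V^\vee$ is finitely-generated projective as a right $R$-module, is immediate from Lemma \ref{lem:duality_bimodules_proj} applied to $V$, which is left $R$-finitely-generated projective by Hypothesis \ref{hyp:q_datum_proj}. So the real work lies in establishing that $(V^\vee \otimes_R V^\vee)/I^\perp$ is finitely-generated projective as a right $R$-module.

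The strategy for the second condition is to identify this quotient with $I^\vee$, whereupon the desired property follows by one more application of Lemma \ref{lem:duality_bimodules_proj}. First, I would invoke the canonical bimodule isomorphism
\[
V^\vee \otimes_R V^\vee \stackrel{\cong}{\rightarrow} (V \otimes_R V)^\vee
\]
which holds precisely because $V$ is left $R$-finitely-generated projective (this underlies the formulation of Definition \ref{defn:dual_quad_datum}, cf.\ \cite[Proposition 1.6]{MR4398644}). Under this isomorphism, $I^\perp = \hom_R((V \otimes_R V)/I, R)$ is recognised as the annihilator of $I$ inside $V^\vee \otimes_R V^\vee$, so the dual datum does make sense as a quadratic datum over $R$.

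Next, I would apply Lemma \ref{lem:proj_W}, whose hypotheses are exactly those of $2$-left finitely-generated projectivity for $(R; V, I)$, to obtain a direct sum decomposition of left $R$-modules
\[
V \otimes_R V \cong I \ \oplus \ C_2,
\]
in which both summands are finitely-generated projective on the left. Since $\hom_R(-, R)$ is additive, dualizing produces a direct sum decomposition of right $R$-modules
\[
(V \otimes_R V)^\vee \cong C_2^\vee \ \oplus \ I^\vee.
\]
Transporting along $V^\vee \otimes_R V^\vee \cong (V \otimes_R V)^\vee$ and identifying $C_2^\vee$ with $I^\perp$ yields $(V^\vee \otimes_R V^\vee)/I^\perp \cong I^\vee$ as right $R$-modules. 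Since $I$ is left $R$-finitely-generated projective by Lemma \ref{lem:proj_W}, Lemma \ref{lem:duality_bimodules_proj} implies that $I^\vee$ is finitely-generated projective as a right $R$-module, completing the argument.

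The main technical point, rather than a genuine obstacle, is the bimodule isomorphism $V^\vee \otimes_R V^\vee \cong (V \otimes_R V)^\vee$: over a noncommutative base this compatibility between $\otimes_R$ and $\hom_R(-, R)$ is not automatic and is precisely where the left $R$-finite-generation and projectivity of $V$ are used. Everything else is bookkeeping of left/right structures together with the additive and exactness properties of duality on finitely-generated projective modules.
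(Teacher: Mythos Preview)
Your proof is correct and follows precisely the approach the paper has in mind: the paper's proof merely says ``standard arguments'' and singles out the isomorphism $\hom_R(V\otimes_R V, R)\cong \hom_R(V,R)\otimes_R \hom_R(V,R)$, and you have spelled out exactly those standard arguments using Lemmas \ref{lem:duality_bimodules_proj} and \ref{lem:proj_W}.
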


\begin{proof}
This is proved by standard arguments. Note that the hypothesis that $V$ is finitely-generated projective as a left $R$-module gives the isomorphism $\hom_R (V \otimes_R V, R)
\cong 
\hom_R (V, R)\otimes_R \hom_R (V, R)$.
\end{proof}

\begin{rem}
\label{rem:left_right_duality}
\ 
\begin{enumerate}
\item 
One has the counterpart of the above switching the r\^ole of left and right $R$-modules, thus imposing projectivity hypotheses 
for the right $R$-module structure and using the (right) duality functor $\hom_{R\op} (-, R)$. 
\item 
The duality functor $\hom_R (-,R)$ induces an anti-equivalence between the category of quadratic data satisfying the (left) projectivity Hypothesis \ref{hyp:q_datum_proj} and that of quadratic data satisfying the respective projectivity hypothesis with respect to the right $R$-module structure. (Compare \cite[Proposition 1.6]{MR4398644}.)
\end{enumerate}
\end{rem}

\begin{nota}
\label{nota:B!}
Given a quadratic datum $(R; V, I)$ satisfying Hypothesis \ref{hyp:q_datum_proj} and with associated homogeneous quadratic algebra $B:= T^R (V)/ \langle I \rangle$, denote by $B\qd$ the quadratic algebra associated to the dual quadratic datum $(R; \hom_R (V,R), \hom_R(B_2,R))$. By definition, $B\qd$ is the quadratic dual of $B$.

The same notation will be used when the role of left and right $R$-modules is switched, using the (right) duality functor $\hom_{R\op} (-, R)$. The intended meaning of the notation should always be clear from the context.
\end{nota}

\subsection{The quadratic dual of $\grad A$}
\label{subsect:quad_dual_grad_A}

Consider $C$, the homogeneous quadratic algebra over $\kring$ associated to the quadratic datum $(\kring; W , I_C \subset W \otimes_\kring W)$, which we assume  to be $2$-left finitely-generated projective. 
 The (left) dual quadratic datum is $(\kring; \hom_\kring (W, \kring), I_{C\qd}=  \hom_\kring (C_2, \kring))$, which is $2$-right finitely-generated projective. As in Notation \ref{nota:B!}, the associated homogeneous quadratic algebra over $\kring$ is denoted $C\qd$.

Now, suppose that we are in the context of Section \ref{sect:quad}, so that we have the isomorphism of $R \otimes C\op$-modules $R \otimes_\kring C \stackrel{\cong}{\rightarrow} A$ and suppose that 
 $F_nA := R \otimes _\kring F_n C$ yields a filtered algebra structure on $A$. By Proposition \ref{prop:A_quadratic}, the associated graded algebra $\grad A$ is quadratic, determined by the quadratic datum:
\begin{eqnarray}
\label{eqn:q_datum_A}
(R; R \otimes_\kring W , I_A := R \otimes_\kring I_C).
\end{eqnarray}
This satisfies Hypothesis \ref{hyp:q_datum_proj}.

\begin{rem}
\label{rem:exhange_map_lambda_W}
By  Lemma \ref{lem:half_base_change_qdatum}, the $R$-bimodule structure of $R \otimes_\kring W$ is encoded in the exchange map $
\lambda_W : W \otimes_\kring R \rightarrow R\otimes_\kring W$ of $\kring$-bimodules.
\end{rem}

\begin{prop}
\label{prop:dual_quadratic_datum_R_C}
The (left) dual of the quadratic datum $
(R; R \otimes_\kring W , I_A := R \otimes_\kring I_C)
$ is isomorphic to 
\begin{eqnarray}
\label{eqn:dual_q_datum_A}
(R;  \hom_\kring (W, \kring)\otimes_\kring  R , I_{C\qd} \otimes_\kring R)
\end{eqnarray}
where the $R$-bimodule structure of $\hom_\kring (W, \kring)\otimes_\kring  R$ extends the canonical $\kring \otimes R\op$-module structure. This is $2$-right finitely-generated projective.
\end{prop}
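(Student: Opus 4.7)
The strategy is to compute each component of the dual quadratic datum via standard tensor--hom adjunction and to invoke the finitely-generated projectivity of $W$ and $C_2$ over $\kring$ (which are consequences of Lemma \ref{lem:proj_W} applied to the $2$-left fg projective quadratic datum $(\kring; W, I_C)$) in order to bring $R$ outside the $\hom$.

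\medskip

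\textbf{Step 1 (Identifying $(R \otimes_\kring W)^\vee$).} Apply the tensor--hom adjunction to obtain a natural isomorphism
\[
\hom_R(R \otimes_\kring W, R) \;\cong\; \hom_\kring(W, R).
\]
This is an isomorphism of $R$-bimodules, where the right $R$-action on the target comes from the $R$-action on the source argument (transported from the right $R$-module structure of $R \otimes_\kring W$ encoded by $\lambda_W$), and the left $R$-action comes from $R$ itself as codomain. Since $W$ is finitely-generated projective as a left $\kring$-module, evaluation induces an isomorphism of $\kring \otimes R\op$-modules
\[
\hom_\kring(W, \kring) \otimes_\kring R \;\stackrel{\cong}{\longrightarrow}\; \hom_\kring(W, R),
\]
and transporting the $R$-bimodule structure across gives the claim: the right $R$-module structure is the canonical one on the second factor, while the left $R$-module structure is the (possibly nontrivial) one obtained by dualizing the exchange map $\lambda_W$ of Remark \ref{rem:exhange_map_lambda_W}.

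\medskip

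\textbf{Step 2 (Identifying $I_A^\perp$).} As noted following Proposition \ref{prop:A_quadratic}, the proof of that Proposition shows that
\[
(R\otimes_\kring W) \otimes_R (R \otimes_\kring W) \;\cong\; R \otimes_\kring (W\otimes_\kring W),
\]
and that the quotient by $I_A = R \otimes_\kring I_C$ is isomorphic to $R \otimes_\kring C_2$ as a left $R$-module (this uses the split short exact sequence $0 \to I_C \to W \otimes_\kring W \to C_2 \to 0$ of left $\kring$-modules, which splits because $C_2$ is left $\kring$-projective by Lemma \ref{lem:proj_W}). Dualizing as in Step 1, using that $C_2$ is finitely-generated projective over $\kring$, yields
\[
I_A^\perp \;=\; \hom_R(R \otimes_\kring C_2, R) \;\cong\; \hom_\kring(C_2, \kring) \otimes_\kring R \;=\; I_{C\qd} \otimes_\kring R,
\]
as $\kring \otimes R\op$-modules, with the left $R$-action induced from duality.

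\medskip

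\textbf{Step 3 (Compatibility and projectivity).} By naturality of the adjunction used in Steps 1 and 2, the inclusion $I_A^\perp \hookrightarrow (R \otimes_\kring W)^\vee \otimes_R (R \otimes_\kring W)^\vee$ provided by Definition \ref{defn:dual_quad_datum} corresponds, under the identifications above, to the inclusion $I_{C\qd} \otimes_\kring R \hookrightarrow (\hom_\kring(W,\kring) \otimes_\kring \hom_\kring(W,\kring)) \otimes_\kring R$ obtained by tensoring the defining inclusion $I_{C\qd} \subseteq \hom_\kring(W,\kring)^{\otimes 2}$ with $R$ on the right. The $2$-right finitely-generated projectivity of $(R; \hom_\kring(W,\kring) \otimes_\kring R, I_{C\qd} \otimes_\kring R)$ then follows either directly from Lemma \ref{lem:quadratic_dual_properties} applied to the $2$-left fg projective datum (\ref{eqn:q_datum_A}), or concretely from the observation that both $\hom_\kring(W, \kring) \otimes_\kring R$ and its degree-two analogue $\hom_\kring(C_2, \kring) \otimes_\kring R$ are finitely generated projective as right $R$-modules by construction.

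\medskip

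The main (and essentially only) subtlety is tracking the bimodule structures in Step 1: the canonical $\kring \otimes R\op$-structure is immediate from the presentation $\hom_\kring(W, \kring) \otimes_\kring R$, whereas the left $R$-action is implicit, being dictated by $\lambda_W$ via the functoriality of $\hom_R(-, R)$. Once this is in place, the rest is a bookkeeping exercise in finite-rank duality combined with the splitting of $W \otimes_\kring W \cong I_C \oplus C_2$ provided by Lemma \ref{lem:proj_W}.
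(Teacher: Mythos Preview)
Your proof is correct and follows essentially the same approach as the paper's: both compute $(R\otimes_\kring W)^\vee$ via the adjunction $\hom_R(R\otimes_\kring W, R)\cong \hom_\kring(W,R)$ and then pull out $R$ using the finite projectivity of $W$; both identify $I_A^\perp$ by dualizing the split short exact sequence $0\to R\otimes_\kring I_C \to R\otimes_\kring(W\otimes_\kring W)\to R\otimes_\kring C_2\to 0$ and invoking the finite projectivity of $C_2$; and both obtain the projectivity statement from Lemma~\ref{lem:quadratic_dual_properties}. One small expository slip: in your first paragraph of Step~1 you describe the right $R$-action on $\hom_\kring(W,R)$ as coming from $\lambda_W$ and the left action from the codomain, which is reversed; you correct this in the very next sentence, so the argument is unaffected.
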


\begin{proof}
The dual of $R \otimes_\kring W$ is, by definition, $\hom_R (R \otimes_\kring W, R) \cong \hom_\kring (W, R)$. Under the finitely-generated projectivity hypothesis on $W$, this is isomorphic to $\hom_\kring (W, \kring) \otimes_\kring R$ as a  $\kring \otimes R\op$-module and the full $R$-bimodule structure is obtained by transport of structure. 

The left $R$-module structure is described explicitly in terms of the exchange map $\lambda_W$ (see Remark \ref{rem:exhange_map_lambda_W}) by giving the exchange map (in this `opposite' context)
\[
R \otimes_\kring \hom_\kring (W, \kring) \rightarrow \hom_\kring (W, \kring) \otimes_\kring R \cong \hom_\kring ( W, R).
\] 
This is the morphism of $\kring$-bimodules that sends $r \otimes \phi \in R \otimes_\kring \hom_\kring (W, \kring)$ to the map $w \mapsto \tilde {\phi} (\lambda (w \otimes r))$, where $\tilde{\phi}$ is the image of $\phi$ in $\hom_R (R\otimes_\kring W, R)$ via the unit $\eta_R$.

By construction of $I_A$, there is a short exact sequence of $R$-bimodules 
\[
0 
\rightarrow 
I_A = R \otimes_\kring I_C 
\rightarrow 
R \otimes_\kring (W \otimes _\kring W) 
\cong 
(R \otimes_\kring W) \otimes_R (R \otimes_\kring W) 
\rightarrow 
R \otimes _\kring C_2
\rightarrow 
0
\]
that splits as a sequence of left $R$-modules. Applying the duality functor $\hom_R (-, R)$  thus gives the short exact sequence of $R$-bimodules
\[
0
\rightarrow 
\hom_\kring (C_2, R) 
\rightarrow 
\hom_\kring (W \otimes_\kring W, R) 
\cong 
\hom_\kring (W , R) 
\otimes_R
\hom_\kring (W , R) 
\rightarrow 
\hom_R ( I_A, R) 
\cong 
\hom_\kring (I_C, R)
\rightarrow 
0
\]
that is split as a sequence of {\em right} $R$-modules. Moreover, $\hom_\kring (C_2, R) $ is isomorphic to $\hom_\kring (C_2, \kring) \otimes_\kring R$ as a $\kring \otimes R\op$-module, and is a sub $R$-bimodule of $\hom_\kring (W , R) 
\otimes_R
\hom_\kring (W , R)$ by construction. This yields the required identification, since $I_{C\qd} = \hom_\kring (C_2, \kring)$ as a $\kring$-bimodule.

The projectivity statement follows from Lemma \ref{lem:quadratic_dual_properties}.
\end{proof}

The dual quadratic datum (\ref{eqn:dual_q_datum_A}) yields the associated homogeneous quadratic algebra $(\grad A)\qd$ over $R$. This is generated over $R$ by $\hom_\kring (W,\kring) \subset \hom_\kring (W,\kring)\otimes_\kring R$ (that this is an  inclusion is spelled out in Lemma \ref{lem:include_dual_W} below). This inclusion, together with the given  inclusion $\kring \hookrightarrow R$, induces a morphism of rings $C\qd \rightarrow (\grad A)\qd$ and hence, using the product of $(\grad A)\qd$, a morphism of $C\qd \otimes R\op$-modules 
$
C\qd \otimes_\kring R \rightarrow (\grad A)\qd$.

\begin{prop}
\label{prop:dual_C_R}
The map $C\qd \otimes_\kring R \rightarrow (\grad A)\qd$ is a bijection. 
\end{prop}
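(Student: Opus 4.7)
The plan is to mirror the proof of Proposition \ref{prop:homog_quadratic_distributive} in the ``crossing hands'' setting of Section \ref{subsect:crossing_hands}, applied to the quadratic datum $(\kring; W^\vee, I_{C\qd})$ with $W^\vee := \hom_\kring(W, \kring)$, forming the half base change by $R$ on the right.

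The essential input is Proposition \ref{prop:dual_quadratic_datum_R_C}, which identifies the quadratic datum of $(\grad A)\qd$ as $(R; W^\vee \otimes_\kring R, I_{C\qd} \otimes_\kring R)$ and, in particular, provides the $R$-bimodule structure on $W^\vee \otimes_\kring R$ extending its canonical $\kring \otimes R\op$-module structure. This $R$-bimodule structure is encoded by an exchange map $R \otimes_\kring W^\vee \to W^\vee \otimes_\kring R$ obtained by dualizing $\lambda_W$, which is precisely what is needed to place us in the crossing-hands framework, with the requisite projectivity hypotheses on the right $\kring$-module structure supplied by Lemma \ref{lem:quadratic_dual_properties}.

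Concretely, I would set up the commutative diagram
\[
\xymatrix{
T_\kring (W^\vee) \otimes_\kring R \ar[r]^\cong \ar@{->>}[d] & T_R(W^\vee \otimes_\kring R) \ar@{->>}[d] \\
C\qd \otimes_\kring R \ar[r] & (\grad A)\qd
}
\]
The top horizontal isomorphism is obtained by iterating the above exchange map, allowing one to slide factors of $R$ to the right through tensor products of copies of $W^\vee$. The left vertical map is the quotient $T_\kring (W^\vee) \twoheadrightarrow C\qd$ tensored on the right with $R$, using right exactness of $- \otimes_\kring R$; the right vertical map is the defining quotient of $(\grad A)\qd$. Its kernel is the image of $\langle I_{C\qd} \rangle \otimes_\kring R$ in $T_\kring (W^\vee) \otimes_\kring R$, while the kernel of the right vertical map is the two-sided ideal $\langle I_{C\qd} \otimes_\kring R \rangle$ in $T_R(W^\vee \otimes_\kring R)$. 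The main step is to verify that these two kernels correspond under the top horizontal isomorphism. The inclusion of the first into the second is immediate: elements $x y z \otimes r$ with $y \in I_{C\qd}$ and $x, z \in T_\kring (W^\vee)$ visibly lie in $\langle I_{C\qd} \otimes_\kring R \rangle$. For the reverse inclusion, since $\langle I_{C\qd} \otimes_\kring R \rangle$ is generated as a two-sided $R$-ideal by $I_{C\qd} \otimes_\kring R$, which lies in the image, it suffices to check that the image is itself closed under the $R$-algebra operations in $T_R(W^\vee \otimes_\kring R)$.

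The main subtlety is precisely this stability of the image under the left $R$-action; right $R$-stability is automatic, but left multiplication by $R$ requires invoking the exchange map iteratively to push any factor of $R$ past tensor products of $W^\vee$'s. This is the exact analogue of the step at the end of the proof of Proposition \ref{prop:homog_quadratic_distributive}, and is where the $R$-bimodule structure constructed in Proposition \ref{prop:dual_quadratic_datum_R_C} plays its essential role.
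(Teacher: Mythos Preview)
Your approach is correct and is essentially the same as the paper's: the paper's proof consists of the single sentence ``This follows from Proposition \ref{prop:homog_quadratic_distributive}'', meaning precisely the crossed-hands analogue (Section \ref{subsect:crossing_hands}) applied to the dual quadratic datum identified in Proposition \ref{prop:dual_quadratic_datum_R_C}. You have simply unpacked that citation by writing out the diagram and kernel comparison in the dual setting, which is exactly what Proposition \ref{prop:homog_quadratic_distributive} does on the other side.
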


\begin{proof}
This follows from Proposition \ref{prop:homog_quadratic_distributive}.
\end{proof}

\begin{rem}
One has the obvious counterparts of these results for (right) quadratic duality, as in Section \ref{subsect:crossing_hands}.
\end{rem}

\section{The DG algebra associated to a left augmented nonhomogeneous quadratic algebra}
\label{sect:dgalg}

The purpose of this section is to explain that, under suitable hypotheses, the quadratic dual derived from a left augmented nonhomogeneous quadratic algebra carries a natural DG algebra structure, following \cite{MR4398644}. This is then identified in the very special case of interest here. 

\subsection{Generalities}
\label{subsect:DG_generalities}

Suppose that $(F_nA \mid n \in \nat)$ is a filtered ring structure on $A$ where $F_0 A =R$, providing the unit $\eta_A : R \hookrightarrow A$. Suppose furthermore that $A$ is left augmented, with left augmentation $\epsilon_A : A \rightarrow R$. This restricts to $F_nA \rightarrow R$, a morphism of left $R$-modules, for each $n \in \nat$; for $n=0$ it is an isomorphism. In particular, this gives the splitting of left $R$-modules
\[
F_nA \cong R \oplus \overline{F_nA},
\]
where $\overline{F_nA}$ is the kernel of $\epsilon_A|_{F_nA}$. Setting $V:= \overline{F_1 A}$, one has $F_1 A \cong V \oplus R$ as left $R$-modules and hence $V$ is a sub left $R$-module of $A$. Moreover, the projection $F_1 A \twoheadrightarrow F_1 A/F_0 A$ induces an isomorphism of left $R$-modules $V\cong  F_1 A/F_0 A$, where the right hand side is naturally an $R$-bimodule.  This induces an $R$-bimodule structure on $V$ (this is not in general a sub $R$-{\em bimodule} of $A$).

Using this $R$-bimodule structure on $V$, the ring structure of $\grad A$ induces the  $R$-bimodule map:
$
V \otimes_R V \rightarrow F_2 A/ F_1A.
$

\begin{nota}
\label{nota:I_A} 
Denote by $I_A$  the kernel of $V \otimes_R V \rightarrow F_2 A/ F_1A$, so that $I_A \subset V \otimes_R V$ is a sub $R$-bimodule.
\end{nota}

Working with $A$ rather than the associated graded, one has the  product
$ 
\mu : V \otimes V 
\rightarrow 
F_2 A
$  
(this does not factor across $V \otimes_R V$ in general). It is a morphism of left $R$-modules (but not of $R$-bimodules in general).

Following \cite[Chapter 3]{MR4398644}, define $\widehat{I}$ by the pullback diagram in left $R$-modules:
\[
\xymatrix{
\widehat{I}
\ar@{^(->}[r]
\ar@{->>}[d]
&
V \otimes V 
\ar@{->>}[d]
\\
I_A 
\ar@{^(->}[r]
& V\otimes_R V,
}
\]
in which the right hand vertical map is the canonical surjection.

\begin{lem}
\label{lem:map_p}
The product $\mu$ induces a map $p : \widehat {I} \rightarrow \overline{F_1A} =V \subset F_1 A$ of left $R$-modules.
\end{lem}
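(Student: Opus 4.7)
The plan is to produce $p$ as the corestriction of the composite $\widehat{I} \hookrightarrow V \otimes V \xrightarrow{\mu} F_2 A$, after verifying that the image lies in $\overline{F_1A} = V$. This requires checking two containments: that the image is in $F_1 A$, and that it is in $\ker \epsilon_A$.

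\textbf{Step 1: The image lies in $F_1 A$.} I would observe that the composite $V \otimes V \xrightarrow{\mu} F_2 A \twoheadrightarrow F_2 A / F_1 A$ is, by construction, the degree-$2$ component of the multiplication of the associated graded algebra $\grad A$ (restricted via the identification $V \cong F_1 A/F_0 A$ of $R$-bimodules). This multiplication on $\grad A$ is $R$-bilinear in the $R$-bimodule structure, so the composite factors through the canonical surjection $V \otimes V \twoheadrightarrow V \otimes_R V$ followed by the map $V \otimes_R V \to F_2 A/F_1 A$ used to define $I_A$ in Notation \ref{nota:I_A}. By the defining pullback square of $\widehat{I}$, every element of $\widehat{I}$ projects into $I_A \subset V \otimes_R V$, which lies in the kernel of $V \otimes_R V \to F_2 A/F_1A$. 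Hence $\mu(\widehat{I}) \subseteq \ker\bigl(F_2 A \twoheadrightarrow F_2 A/F_1 A\bigr) = F_1 A$.

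\textbf{Step 2: The image lies in $\ker \epsilon_A$.} Since $V = \overline{F_1 A} \subset \ker \epsilon_A$ and, by Definition \ref{defn:left_aug}(3), $\ker \epsilon_A$ is a left ideal of $A$, the product $v_1 v_2 = \mu(v_1 \otimes v_2)$ lies in $\ker \epsilon_A$ for all $v_1, v_2 \in V$. Thus $\mu(V \otimes V) \subseteq \ker \epsilon_A$, and a fortiori $\mu(\widehat{I}) \subseteq \ker \epsilon_A$.

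\textbf{Step 3: Conclusion.} Combining the two steps gives $\mu(\widehat{I}) \subseteq F_1 A \cap \ker \epsilon_A = \overline{F_1 A} = V$, so $\mu|_{\widehat{I}}$ corestricts to the required map $p : \widehat{I} \to V$. Left $R$-linearity is automatic: the inclusion $\widehat{I} \hookrightarrow V \otimes V$ is a morphism of left $R$-modules (the pullback is formed in left $R$-modules), and $\mu$ is left $R$-linear since the product of $A$ is, by virtue of $\eta_A: R \hookrightarrow A$ being a ring map. No step here is a serious obstacle; the only point that merits care is Step 1, where one must be explicit that passing to the associated graded is precisely what lets one replace $V \otimes V$ by $V \otimes_R V$, justifying the use of the pullback definition of $\widehat{I}$.
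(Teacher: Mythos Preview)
Your proof is correct and follows essentially the same route as the paper's own argument: factor $\mu|_{\widehat{I}}$ through $F_1 A$ using the definition of $I_A$ and the pullback, then land in $\ker\epsilon_A$ because $V\subset\ker\epsilon_A$ and the latter is closed under multiplication. The paper is terser (invoking that $\ker\epsilon_A$ is a non-unital subring rather than a left ideal) and omits the explicit check of left $R$-linearity, but the substance is the same.
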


\begin{proof}
Composing $\widehat{I} \hookrightarrow V \otimes V$ with $\mu$ gives a map $\widehat{I} \rightarrow F_2 A$. By definition of $I$, this  maps to $F_1 A \subset F_2 A$. To conclude, it suffices to observe that it maps to the kernel of $\epsilon_A$; this follows since $\ker \epsilon_A$ is a (non-unital) subring of $A$.
\end{proof}

\begin{rem}
\label{rem:zero_curvature}
This is a particular case of the constructions of \cite[Section 3.2]{MR4398644}. The Lemma essentially asserts that the {\em curvature} $h$ appearing in equation (3.2) of {\em loc. cit.} is zero in this left-augmented context.
\end{rem}

\begin{rem}
\label{rem:p_s}
\ 
\begin{enumerate}
\item 
Suppose that the  projection $\widehat{I}\twoheadrightarrow I_A$ admits a section 
$
 s:  I_A
 \hookrightarrow 
 \widehat{I} 
$
 in left $R$-modules. Using this, one can form the composite morphism of left $R$-modules 
$ 
p \circ s : I_A \rightarrow V$. Clearly, 
 in general this depends upon the choice of section, $s$. 
\item 
If $I_A$ is projective as a left $R$-module, then such a section exists. Indeed, since $\widehat{I}$ is defined as the pullback of $I_A$, it suffices to exhibit a section of the canonical surjection $V \otimes V \rightarrow V \otimes_R V$ in left $R$-modules; such a section always exists if $V$ is finitely-generated projective as a left $R$-module.
\end{enumerate}
\end{rem}

\begin{nota}
\label{nota:pairing}
For $M$ a left $R$-module, denote by $\langle - , - \rangle $ the pairing 
\[
M \otimes \hom_R (M, R) \rightarrow R
\]
induced by evaluation. (This is a morphism of left $R$-modules, for the action on the domain given by that of the left hand $M$.)
\end{nota}

\begin{nota}
\label{nota:q_tilde}
Denote by
\begin{enumerate}
\item 
$q : V\otimes R \rightarrow R$ the composite of the product $V \otimes R \rightarrow F_1 A$ with the projection to $R$; 
\item 
$\tilde{q} : V\otimes V \otimes \hom_R (V, R) \rightarrow R$  the composite of the map induced by evaluation  
$\langle -, - \rangle :  V \otimes \hom_R (V, R) \rightarrow R$ with $q$, so that $\tilde{q} (x \otimes y \otimes \alpha) = q (x \otimes \langle y, \alpha \rangle)$.
\end{enumerate}
\end{nota}

\subsection{Positselski's DG algebra}

In addition to the hypotheses of the previous section, we now suppose that the graded ring $\grad A$ is quadratic and satisfies the appropriate left projectivity hypothesis. This allows Positselski's results \cite[Proposition 3.16]{MR4398644} and \cite[Theorem 3.27]{MR4398644} to be applied to construct a natural DG algebra.

\begin{thm}
\label{thm:DG_posit}
Let  $A$ be a filtered algebra, with filtration $F_\bullet A$, together with a compatible left augmentation $\epsilon_A : A \rightarrow R = F_0 A$.  Suppose furthermore that:
\begin{enumerate}
\item 
$\grad A$ is homogeneous quadratic with generating $R$-bimodule $V = F_1 A/F_0A$ and quadratic relations $I_A \subset V\otimes_R V$;
\item 
$\grad A$ is $3$-left finitely-generated projective. 
\end{enumerate}
Thus the dual homogeneous quadratic ring $(\grad A)\qd$ is determined by 
$(R; \hom_R (V,R), \hom_R (\grad_2 A, R) ) $
which is a $2$-right finitely-generated projective quadratic datum.

Then $(\grad A)\qd$ has a natural DG algebra structure with differential $d$ determined by its restriction to $R$ and $ \hom_R (V,R)$, given respectively by 
\begin{eqnarray}
\label{eqn:dR}
\langle v, d r \rangle &:= & q (v \otimes r)  \mbox{ for any $v \in V$, $r\in R$;}
\\
\label{eqn:dphi}
\langle i , d \phi \rangle &:=& \langle p\circ s (i), \phi \rangle - \tilde{q} (s(i)\otimes \phi) \mbox{ for any $v \in V$, $\phi\in\hom_R (V,R) $,}
\end{eqnarray}
where $s$ is any left $R$-module section of $\widehat{I} \twoheadrightarrow I_A$. 
\end{thm}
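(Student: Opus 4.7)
The plan is to exhibit Theorem \ref{thm:DG_posit} as the specialization of Positselski's general construction \cite[Proposition 3.16, Theorem 3.27]{MR4398644} of a curved DG algebra structure on the quadratic dual of a nonhomogeneous quadratic ring, restricted to the left-augmented setting. As noted in Remark \ref{rem:zero_curvature}, the map $p : \widehat{I} \to V$ of Lemma \ref{lem:map_p} is defined because $\ker \epsilon_A$ is a subring of $A$, and this is exactly what makes Positselski's curvature element $h$ vanish. Consequently, the general CDG output reduces to an honest DG algebra.

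First, I would verify that the formulas (\ref{eqn:dR}) and (\ref{eqn:dphi}) define elements in the correct graded pieces of $(\grad A)\qd$. That $dr$ is a morphism of left $R$-modules $V \to R$ is immediate from Lemma \ref{lem:properties_q}(\ref{item:a}), so $dr \in \hom_R(V,R) = (\grad A)\qd_1$. For $\phi \in \hom_R(V,R)$ and a fixed left $R$-module section $s$, one checks that $\langle -, d\phi\rangle$ is a well-defined left $R$-linear map on $I_A$: this uses the left $R$-linearity of $s$, of $p$ (Lemma \ref{lem:map_p}), of $q$ in the first variable (Lemma \ref{lem:properties_q}(\ref{item:a})), and of the evaluation pairing. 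Hence $d\phi \in \hom_R(I_A, R) = \hom_R(\grad_2 A, R) = (\grad A)\qd_2$.

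Next, I would extend $d$ as the unique graded derivation of degree $+1$ on the $R$-algebra $(\grad A)\qd$ restricting to the given formulas on $R$ and on $\hom_R(V,R)$. Existence of such an extension reduces to checking that the derivation tentatively defined on the free product $T^R(\hom_R(V,R))$ over $R$ preserves the two-sided ideal generated by the quadratic relations $\hom_R(\grad_2 A, R)$; the key compatibility is that applying $d$ to a relation $\psi \in \hom_R(\grad_2 A, R)$, viewed as an element of $\hom_R(V,R) \otimes_R \hom_R(V,R)$, via the Leibniz rule produces an element whose restriction to $I_A \subset V \otimes_R V$ agrees with formula (\ref{eqn:dphi}). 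This is essentially built into the dualization.

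The main obstacle, and the content of Positselski's theorem, is the verification $d^2 = 0$. Since $d^2$ is itself a derivation of degree $+2$, it suffices to check vanishing on the generating set $R \cup \hom_R(V,R)$. For $r \in R$, evaluating $\langle i, d(dr)\rangle$ for $i \in I_A$ via (\ref{eqn:dphi}) with $\phi = dr$, writing $s(i) = \sum_j x_j \otimes y_j$, reduces via Lemma \ref{lem:properties_q}(\ref{item:h}) (applied termwise) together with the identification $\mu \circ s = p \circ s$ on $\widehat{I}$ from Lemma \ref{lem:map_p} to the cancellation $q(p \circ s(i) \otimes r) - q(\mu(s(i)) \otimes r) = 0$. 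For $\phi \in \hom_R(V,R)$, the computation is a cubic identity that simultaneously invokes relations (\ref{item:b}), (\ref{item:e}), (\ref{item:g}), and (\ref{item:h}) of Lemma \ref{lem:properties_q}; it is precisely the vanishing of $[h,-]$ in Positselski's general identity $d^2 = [h,-]$ \cite[Theorem 3.27]{MR4398644} once $h = 0$. Thus the full associativity of $A$, packaged in Lemma \ref{lem:properties_q}, is what makes $d^2 = 0$.
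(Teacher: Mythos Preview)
Your proposal is correct and takes essentially the same approach as the paper: both invoke Positselski's \cite[Proposition 3.16, Theorem 3.27]{MR4398644} and observe that the left augmentation forces the curvature $h$ to vanish (via Remark \ref{rem:zero_curvature} and Lemma \ref{lem:map_p}), reducing the CDG structure to a DG one. You go further than the paper in sketching the verification that the formulas land in the right graded pieces and that $d^2=0$ on generators, whereas the paper simply defers these to Positselski; your computation for $d^2 r = 0$ using Lemma \ref{lem:properties_q}(\ref{item:h}) is correct. One point the paper addresses that you omit: the statement asserts the differential is well-defined for \emph{any} section $s$, and the paper cites Positselski \cite[page 54]{MR4398644} for independence of this choice; you should note this as well.
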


\begin{proof}
This is a special case of the results of \cite[Chapter 3]{MR4398644} and the specific results cited above. As explained in Remark \ref{rem:zero_curvature}, the left augmentation ensures that the curvature $h$ appearing in \cite[Proposition 3.16]{MR4398644} is zero (as in \cite[Theorem 3.27]{MR4398644}) and hence that the Proposition yields a DG algebra structure. 

The projectivity hypothesis ensures the existence of a section $s$ as required. Positselski proves (\cite[page 54]{MR4398644}) independence of choices, which implies that the differential as defined in the statement is independent of the choice of $s$.
\end{proof}

\begin{rem}
\ 
\begin{enumerate}
\item 
The projectivity hypothesis on $\grad_1 A$ and $\grad_2 A$ is used in forming the quadratic dual of the quadratic datum. 
\item 
The projectivity hypothesis on $\grad_3 A$ is used in the proof of \cite[Proposition 3.16]{MR4398644}  when checking the relations. 
\end{enumerate}
\end{rem}

\subsection{Specializing to the case $A \cong_{\mathrm{bimod}} R \otimes_\kring C$}
\label{subsect:dgalg_application}
We now specialize to the case where we have a commutative diagram of unital ring maps 
\[
\xymatrix{
\kring 
\ar@{^(->}[r]^{\eta_C}
\ar@{^(->}[d]_{\eta_R}
&
C 
\ar[d]
\\
R \ar[r]
&
A}
\]
such that the product of $A$ induces an isomorphism of $R\otimes C\op$-modules
$
R \otimes_\kring C \stackrel{\cong}{\rightarrow} A.
$

Moreover, we suppose that the following hypotheses hold:
\begin{enumerate}
\item 
 $C$  is homogeneous quadratic over $\kring$, associated to the quadratic datum $(\kring; W , I_C)$; 
 \item 
 the conclusion of Proposition \ref{prop:A_quadratic} holds, so that $A$ is a filtered algebra with filtration $F_n A = R \otimes_\kring F_n C$, hence the associated graded $\grad A$ is homogenous quadratic over $R$, associated to the quadratic datum: 
$
(R; R \otimes_\kring W, R \otimes_\kring I_C).
$
\item 
$C$ is left finitely-generated projective over $\kring$, so that $\grad A$ is left finitely-generated projective over $R$. 
\end{enumerate}

By Proposition \ref{prop:dual_quadratic_datum_R_C}, the homogeneous quadratic algebra $(\grad A)\qd$ is defined by the quadratic datum 
\[
(R; \hom_\kring (W, \kring) \otimes_\kring R, I_C^\perp \otimes_\kring R)
\]
that is $2$-right finitely-generated projective.

\begin{lem}
\label{lem:include_dual_W}
The inclusion of $\kring$-bimodules $\kring \hookrightarrow R$ induces an inclusion of right $\kring$-modules:
\[
\hom_\kring (W, \kring) \hookrightarrow \hom_\kring (W, \kring) \otimes_\kring R.
\]
\end{lem}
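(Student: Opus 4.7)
The plan is to identify the map in question explicitly and then reduce injectivity to a flatness statement that follows from the hypotheses already in force. The inclusion $\eta_R \cn \kring \hookrightarrow R$, tensored on the left over $\kring$ with $\hom_\kring(W,\kring)$, yields the morphism of right $\kring$-modules
\[
\hom_\kring(W,\kring) \cong \hom_\kring(W,\kring)\otimes_\kring \kring
\stackrel{\id \otimes \eta_R}{\longrightarrow} \hom_\kring(W,\kring)\otimes_\kring R,
\]
and this is the map whose injectivity is to be established.

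The key observation is that $W = C_1$, and by the third hypothesis listed in Section \ref{subsect:dgalg_application}, $C$ is left finitely-generated projective over $\kring$, so $W$ is finitely-generated projective as a left $\kring$-module. Lemma \ref{lem:duality_bimodules_proj} then gives that $\hom_\kring(W,\kring)$ is finitely-generated projective as a \emph{right} $\kring$-module, and in particular flat. Since $\eta_R$ is an inclusion of $\kring$-bimodules, tensoring on the left by the flat right $\kring$-module $\hom_\kring(W,\kring)$ preserves this inclusion, which gives the claim.

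I expect no serious obstacle: the result is a direct consequence of flatness of the right $\kring$-module $\hom_\kring(W,\kring)$, which is built into the running finitely-generated projectivity hypothesis on $C$ over $\kring$ (the same hypothesis already invoked to conclude that the dual quadratic datum $(R; \hom_\kring(W,\kring)\otimes_\kring R, I_C^\perp \otimes_\kring R)$ is $2$-right finitely-generated projective). The only care needed is in tracking the bimodule structures so as to confirm that the stated inclusion is one of right $\kring$-modules, as asserted.
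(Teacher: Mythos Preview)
The proposal is correct and matches the paper's proof essentially verbatim: both observe that $W$ is finitely-generated projective as a left $\kring$-module, deduce that $\hom_\kring(W,\kring)$ is finitely-generated projective (hence flat) as a right $\kring$-module, and conclude that tensoring with it preserves the inclusion $\kring \hookrightarrow R$. Your version is, if anything, slightly more explicit in identifying the map and in citing Lemma~\ref{lem:duality_bimodules_proj}.
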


\begin{proof}
By hypothesis, $W$ is finitely-generated projective as a left $\kring$-module, so $\hom_\kring (W, \kring)$ is finitely-generated projective as a right $\kring$-module, hence flat. This proves the injectivity of the map, which is clearly one of right $\kring$-modules.
\end{proof}

\begin{thm}
\label{thm:DG_split_R_C}
Under the above hypotheses on $A$, the DG algebra structure given by Theorem \ref{thm:DG_posit}  on the quadratic homogeneous algebra $(\grad A)\qd$  has differential $d$ determined by 
\begin{eqnarray*}
d|_R &=&\hat{q}  : R \rightarrow \hom_R (R \otimes_\kring W, R) \cong \hom_\kring (W, \kring) \otimes_\kring R \subset (\grad A)\qd\\
d |_{\hom_\kring (W, \kring)} &=&0,
\end{eqnarray*}
where $\hat{q}$ is the adjoint to $q$.
\end{thm}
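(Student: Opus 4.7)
The theorem has two parts. For the first, $d|_R = \hat q$, the formula (\ref{eqn:dR}) says that for $v \in V = R \otimes_\kring W$ and $r \in R$, $\langle v, dr\rangle = q(v \otimes r)$; this is precisely the definition of the adjoint $\hat q$ applied to $r$. Via the identification $\hom_R(V, R) \cong \hom_\kring(W, \kring) \otimes_\kring R$ from Proposition \ref{prop:dual_quadratic_datum_R_C}, $dr = \hat q(r)$ lies in the degree-one part of $(\grad A)\qd$, as required.

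For the second part, $d|_{\hom_\kring(W,\kring)} = 0$, I would fix any left-$R$-linear section $s \cn I_A \rightarrow \widehat I$; such a section exists because $V$ is finitely-generated projective as a left $R$-module (Remark \ref{rem:p_s}), and the expression (\ref{eqn:dphi}) for $\langle i, d\phi\rangle$ is independent of the choice. Writing $s(i) = \sum_\alpha x_\alpha \otimes y_\alpha$ with $y_\alpha = r'_\alpha \otimes w_\alpha \in R \otimes_\kring W$, equation (\ref{eqn:prod_relation}) applied to each product $x_\alpha r'_\alpha$ yields
\[
\sum_\alpha x_\alpha y_\alpha = \sum_\alpha v_\alpha + \sum_\alpha u_\alpha, \quad v_\alpha := q(x_\alpha \otimes r'_\alpha) \cdot (1 \otimes w_\alpha), \quad u_\alpha := (x_\alpha \rmult r'_\alpha)(1 \otimes w_\alpha),
\]
with $v_\alpha \in V$. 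The crucial observation is that each $u_\alpha$ lies in $R \otimes_\kring C_2$: expanding $x_\alpha \rmult r'_\alpha = \sum_\beta s_{\alpha,\beta} \otimes u_{\alpha,\beta}$ in $R \otimes_\kring W$ and using that $C \hookrightarrow A$ is a ring map gives $u_\alpha = \sum_\beta s_{\alpha,\beta} \otimes (u_{\alpha,\beta} w_\alpha)$, with $u_{\alpha,\beta} w_\alpha \in C_2$ (as $u_{\alpha,\beta}, w_\alpha \in C_1 = W$). On the other hand, $\sum_\alpha u_\alpha$ lies in $F_1 A$ because its image in $V \otimes_R V$ equals $i \in I_A$, which maps to zero in $\grad_2 A$. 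Since $F_1 A \cap R \otimes_\kring C_2 = 0$, this forces $\sum_\alpha u_\alpha = 0$.

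This reduces the computation: $\mu(s(i)) = \sum_\alpha v_\alpha \in V$, so $\langle p \circ s(i), \phi\rangle = \sum_\alpha q(x_\alpha \otimes r'_\alpha) \cdot \eta_R(\phi(w_\alpha))$, using $\phi(1 \otimes w_\alpha) = \eta_R(\phi(w_\alpha))$ by left $R$-linearity. For the $\tilde q$-term, $\langle y_\alpha, \phi\rangle = r'_\alpha \eta_R(\phi(w_\alpha))$, and Lemma \ref{lem:properties_q}(\ref{item:b}) together with the vanishing $q(v \otimes \eta_R(k)) = 0$---which follows from Remark \ref{rem:qbar} and Lemma \ref{lem:qhat_derivation} via the $\kring$-balance $\bar q(w \otimes \eta_R(k)) = \bar q(wk \otimes 1) = 0$---yields the same expression. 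Hence $\langle i, d\phi\rangle = 0$ for all $i \in I_A$, giving $d\phi = 0$. The main delicate point is the identity $\sum_\alpha u_\alpha = 0$; once this is in hand, the rest is a clean matching of the two terms in (\ref{eqn:dphi}).
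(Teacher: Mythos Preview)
Your argument is correct, but it takes a different route from the paper's proof. The paper does not work with an arbitrary section and then show the two terms in (\ref{eqn:dphi}) coincide; instead it passes to a refined object $\widehat{I_\kring}$ (defined using $\otimes_\kring$ rather than $\otimes$) and chooses the specific section $s_\kring : I_A \cong R\otimes_\kring I_C \hookrightarrow (R\otimes_\kring W)\otimes_\kring(\kring\otimes_\kring W)$ induced by the unit $\kring \hookrightarrow R$. With this choice the two terms vanish \emph{separately}: $p_\kring\circ s_\kring = 0$ because $I_C$ is by definition the kernel of $W\otimes_\kring W \to C_2$, and the $\tilde q$-term vanishes because, after evaluation, one is computing $q$ on $(R\otimes_\kring W)\otimes \kring$, which is zero since $R\otimes_\kring W$ is a sub right $\kring$-module of $A$.

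Your approach trades this structural shortcut for an explicit computation: you keep a general section, decompose each $x_\alpha y_\alpha$ via the fundamental relation (\ref{eqn:prod_relation}), and the key step is showing $\sum_\alpha u_\alpha = 0$ by observing it lies in $F_1 A \cap (R\otimes_\kring C_2) = 0$. Then Lemma~\ref{lem:properties_q}(\ref{item:b}) together with the vanishing of $q$ on $V\otimes\eta_R(\kring)$ shows the two terms in (\ref{eqn:dphi}) are equal rather than individually zero. This is a perfectly valid alternative; the paper's choice of section makes the vanishing more transparent and avoids the intermediate identity $\sum_\alpha u_\alpha = 0$, while your argument has the virtue of not relying on any particular section and making the cancellation mechanism explicit.
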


\begin{proof}
It is clear that the ring $(\grad A)\qd$ is generated over  $R$ by the sub right $\kring$-module $\hom_\kring (W, \kring)$, hence it suffices to specify the restrictions $d|_R$ and $d|_{\hom_\kring (W, \kring)}$.

The identification of $d|_R$ follows directly from Theorem \ref{thm:DG_posit} equation (\ref{eqn:dR}), so it remains to consider $d|_{\hom_\kring (W, \kring)}$. This is described by equation (\ref{eqn:dphi}) in terms of the maps $p \circ s$ and $\tilde{q}\circ(s \otimes \id)$. We proceed to analyse these restricted to $\hom_\kring (W, \kring)$.

In the current setting, the product of $A$ gives $
(R \otimes_\kring W) \otimes_\kring (R \otimes_\kring W) \rightarrow F_2 A 
$, 
which fits into the commutative diagram in left $R$-modules:
\[
\xymatrix{
(R \otimes_\kring W) \otimes_\kring (R \otimes_\kring W) 
\ar[r]
\ar@{->>}[d]
&
F_2 A
\ar@{->>}[d]
\\
(R \otimes_\kring W) \otimes_R (R \otimes_\kring W) 
\ar[r]&
\grad_2 A,
}
\]
in which the left hand vertical map is the canonical surjection. As in Notation \ref{nota:I_A}, $I_A$ is the kernel of the lower horizontal map, so that $I_A \cong R \otimes_\kring I_C$. 

Define $\widehat{I_\kring}$ as the $R$-submodule of $(R \otimes_\kring W) \otimes_\kring (R \otimes_\kring W) $ given by the  pullback of $I_A$. Then, as in Section \ref{subsect:DG_generalities}, {\em mutatis mutandis}, the product of $A$ induces the map 
\[
p_\kring : \widehat{I_\kring} \rightarrow R \otimes_\kring W.
\]
Now, using the notation of Section \ref{subsect:DG_generalities}, one has the surjection $\widehat{I}\twoheadrightarrow \widehat{I_\kring}$ induced by passage from $\otimes$ to $\otimes_\kring$. By construction, the map $p : \widehat{I}  \rightarrow R \otimes_\kring W$ factors across this surjection and $p_\kring$. Hence, rather that use a section $s$ as in Remark \ref{rem:p_s}, it suffices to use a section of left $R$-modules $s_\kring : I_A \rightarrow \widehat{I_\kring}$.  Such a section exists, since $I_A$ is projective as a left $R$-module.

There is a convenient choice of $s_\kring$ given as follows. 
 One has the commutative diagram in left $R$-modules:
\[
\xymatrix{
(R \otimes _\kring W)\otimes _\kring (\kring   \otimes _\kring W)
\ar[r]
\ar[d]_\cong 
&
 (R \otimes _\kring W)\otimes _\kring (R  \otimes _\kring W)
 \ar@{->>}[d]
 \\
 R \otimes _\kring W \otimes _\kring W
 \ar[r]_(.4)\cong 
 &
 (R \otimes _\kring W)\otimes _R (R  \otimes _\kring W),
}
\]
in which the top horizontal map is induced by $\kring \rightarrow R$, which is a morphism of $\kring$-bimodules. This provides a section to the right hand vertical map.  Now, since   $I_A = R \otimes_\kring I_C \subset R \otimes _\kring W \otimes _\kring W \cong (R \otimes _\kring W)\otimes _R (R  \otimes _\kring W)$, on restricting to $I_A$ we obtain the desired section $s_\kring : I_A \rightarrow \widehat{I_\kring}$.

We also refine the map $\tilde{q}$ introduced in Notation \ref{nota:q_tilde}. Namely, this factors across 
\[
\widetilde{q_\kring} : 
 (R \otimes _\kring W)\otimes _\kring (R  \otimes _\kring W) 
 \otimes 
 \hom_R (R \otimes_\kring W, R) 
 \rightarrow 
 R,
\]
in which the first tensor product $\otimes$ for $\tilde{q}$ has been replaced by $\otimes_\kring$. (This is well-defined since the evaluation map $(R  \otimes _\kring W) 
 \otimes 
 \hom_R (R \otimes_\kring W, R) 
 \rightarrow 
 R$ is a morphism of left $R$-modules, hence of left $\kring$-modules, by restriction.)

By the above, we may replace $p \circ s$ and $\tilde{q}(s \otimes \id)$ by $p_\kring \circ s_\kring$ and $\widetilde{q_\kring} (s_\kring \otimes \id)$ respectively. By Theorem \ref{thm:DG_posit},  the vanishing of $d|_{\hom_\kring (W, \kring)}$ follows from the following two facts:
\begin{enumerate}
\item 
the composite map $p_\kring \circ s_\kring : R \otimes_\kring I_C \rightarrow R \otimes_\kring W$ is zero; 
\item 
the restriction of  $\widetilde{q_\kring} \circ (s_\kring \otimes \id)$ to $(R \otimes_\kring I_C) \otimes \hom_\kring ( C_1, \kring)$ is zero.
\end{enumerate}

The first assertion follows  from the construction of $s_\kring$  and the definition of $I_C$ as the kernel of the product  $W \otimes_\kring W \rightarrow C_2$.  For the second assertion, one checks that the composite identifies as 
\[
R \otimes _\kring I_C \otimes \hom_\kring (W, \kring) 
 \rightarrow 
R\otimes_\kring W \otimes _\kring W \otimes \hom_\kring (W, \kring) 
\rightarrow 
R\otimes_\kring W \otimes _\kring \kring
\rightarrow 
R
\]
in which the first map is induced by $I_C \subset W \otimes_\kring W$, the second by the evaluation 
$  W \otimes \hom_\kring (W, \kring) \rightarrow \kring$ and the final map is induced by the restriction of $q : (R\otimes_\kring W) \otimes R \rightarrow R$ to $(R\otimes_\kring W) \otimes \kring$. The latter vanishes, since $R \otimes_\kring W$ is a sub right $\kring$-module of the algebra $A$ (a consequence of the fact that the inclusion $C \hookrightarrow A$ is a morphism of $\kring$-bimodules).
\end{proof}

\begin{rem}
That the differential is a derivation follows from this property for $\qhat$, stated in Lemma \ref{lem:qhat_derivation}.
\end{rem}

\subsection{Units and augmentations}
\label{subsect:units_augs}

One can consider units and augmentations for the DG algebra given by Theorem \ref{thm:DG_posit}. 

\begin{rem}
\ 
\begin{enumerate}
\item
In the context of Theorem \ref{thm:DG_posit}, the algebra $(\grad A)\qd$ comes equipped with an inclusion $R \hookrightarrow (\grad A)\qd$. However, if $A$ is not homogeneous quadratic (i.e., $\qhat$ is non-zero),  this is not compatible with the differential. 
\item 
Restricting to $\kring \subset R$ gives $\kring  \hookrightarrow (\grad A)\qd$. Since $\hat{q}$ vanishes on $\kring \subset R$, this gives a morphism of DG algebras. 
\item 
The algebra $(\grad A)\qd$ is $\nat $-graded; in particular, the projection to the component of degree $0$ induces a surjective ring map $(\grad A)\qd \twoheadrightarrow R$ that is an augmentation in the non DG setting.
\end{enumerate}
\end{rem}

One has the following:

\begin{prop}
\label{prop:DG_aug_grad_A}
In the context of Theorem \ref{thm:DG_posit}, if $R$ admits an augmentation $\epsilon_R : R \rightarrow \kring$ (i.e., a morphism of algebras), then the composite 
$$
(\grad A)\qd \twoheadrightarrow R \stackrel{\epsilon_R}{\twoheadrightarrow}  \kring 
$$ 
is an augmentation as DG algebras of $(\grad A)\qd$ equipped with the unit $\kring \hookrightarrow (\grad A)\qd$. 
\end{prop}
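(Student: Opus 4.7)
The plan is straightforward: unwind what being a DG augmentation means, and verify each property in turn using the grading of $(\grad A)\qd$.

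First I would observe that $(\grad A)\qd$ is an $\nat$-graded algebra with degree-zero component $R$, and the positive-degree part $\bigoplus_{n \geq 1} ((\grad A)\qd)_n$ is a two-sided ideal. Hence the projection $\pi \cn (\grad A)\qd \twoheadrightarrow R$ onto the degree-zero component is a morphism of (ungraded) unital, associative algebras. Post-composing with the algebra morphism $\epsilon_R \cn R \to \kring$ gives an algebra morphism $(\grad A)\qd \to \kring$, which is precisely the composite in the statement.

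Next I would verify compatibility with differentials, viewing $R$ and $\kring$ as DG algebras concentrated in degree $0$ with trivial differential. The key point is that the differential $d$ constructed in Theorem \ref{thm:DG_posit} raises the quadratic grading by exactly one: on the generators in degree $0$ (elements of $R$) it lands in $\hom_R(V, R)$, which sits in degree $1$, via the formula (\ref{eqn:dR}), and on the generators in degree $1$ (elements of $\hom_R(V, R)$) the formula (\ref{eqn:dphi}) expresses $d\phi$ as a pairing against $I_A \subset V \otimes_R V$, hence as an element of degree $2$. Since $d$ extends to $(\grad A)\qd$ as a derivation, it remains of degree $+1$ throughout. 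Consequently $\pi \circ d$ vanishes identically (every element of the image lies in strictly positive degree, which $\pi$ kills), matching the zero differential on $R$. Composing further with $\epsilon_R$ preserves this property, so the composite $(\grad A)\qd \twoheadrightarrow R \stackrel{\epsilon_R}{\twoheadrightarrow} \kring$ is a morphism of DG algebras.

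Finally, the augmentation condition requires that the composite with the unit $\kring \hookrightarrow (\grad A)\qd$ is the identity of $\kring$. But the unit factors as $\kring \stackrel{\eta_R}{\hookrightarrow} R \hookrightarrow (\grad A)\qd$ (the latter being the inclusion of the degree-zero component), so the full composite collapses to $\epsilon_R \circ \eta_R$, which is $\id_\kring$ by the defining property of the augmentation $\epsilon_R$ of $R$ over $\kring$.

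There is really no obstacle: the only content is that $d$ strictly increases the quadratic grading by one, and that is built into the construction via Theorem \ref{thm:DG_posit} together with the Leibniz rule, so all three verifications (algebra map, DG compatibility, retract of the unit) are immediate from the grading and from $\epsilon_R \circ \eta_R = \id_\kring$.
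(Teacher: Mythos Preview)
Your argument is correct and matches the paper's implicit reasoning: the paper states the Proposition without proof, relying on the preceding Remark, which already records that the projection $(\grad A)\qd \twoheadrightarrow R$ is an augmentation of graded algebras and that the unit $\kring \hookrightarrow (\grad A)\qd$ is DG (since $\hat q$ vanishes on $\kring$). Your explicit verification that $d$ raises the quadratic grading by one, so that $\pi \circ d = 0$, together with $\epsilon_R \circ \eta_R = \id_\kring$, is exactly the content needed and is the same route the paper has in mind.
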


\section{The dualizing complexes $K^\vee $ and $\vk$}
\label{sect:kcx}

The purpose of this section is to describe the structure of the nonhomogeneous dual Koszul complex of \cite[Chapter 6]{MR4398644} (using Positselski's terminology), first treating  the left augmented case. The  modifications for the right augmented case are outlined in Section \ref{subsect:vk}. 

\subsection{The left-augmented framework}
\label{subsect:framework}

Throughout  this section (apart from in Section \ref{subsect:vk}), $(A, F_\bullet A, \epsilon_A)$ is a left augmented filtered algebra over $R$ (the unit is $R = F_0 A \hookrightarrow A)$ such that 
\begin{enumerate}
\item 
the left augmentation induces the splitting as left $R$-modules $F_1 A \cong R \oplus V$ (defining $V$, which is isomorphic to $F_1A/F_0 A$, thus equipping it with an $R$-bimodule structure extending the left $R$-module structure); 
\item 
$\grad A$ is quadratic (so that $A$ is nonhomogeneous quadratic), with associated quadratic datum $(R;V,I_A)$; 
\item 
$\grad A$ is $3$-left finitely-generated projective over $R$. 
\end{enumerate}

This yields the dual quadratic  datum $(R; \hom_R (V, R) , \hom_R (\grad_2 A, R) )$ and the associated quadratic dual algebra $(\grad A)\qd$. The latter is equipped with the DG algebra structure given by Theorem \ref{thm:DG_posit}. 

By Proposition \ref{prop:dual_C_R}, the multiplication of $(\grad A)\qd$ gives a bijection
 of $C\qd \otimes R\op$-modules 
  $$
  C\qd \otimes_\kring R \stackrel{\cong}{\longrightarrow} (\grad A)\qd.
  $$

\begin{nota}
Denote by $q : V \otimes R \rightarrow R$ the component of the right $R$-module structure of $F_1 A$ restricted to $V$ and $\hat{q} : R \rightarrow \hom_R (V, R)$ the adjoint morphism.
\end{nota}

\subsection{Preliminaries}

Recall that, for $R$-modules  $ M, N$ such that $M$ is finitely-generated projective, the natural map 
\begin{eqnarray*}
\hom_R (M, R) \otimes_R N &\rightarrow & \hom_R (M, N) \\
\phi \otimes n & \mapsto & (m \mapsto \phi (m) n)
\end{eqnarray*}  
is an isomorphism of abelian groups. If $M$ and $N$ are $R$-bimodules and $\hom_R (M, R)\otimes_R N$ and $\hom_R (M, N)$ are equipped with the usual associated $R$-bimodule structures, then the bijection is one of $R$-bimodules. 

This applies to the $R$-bimodule $V$, which is assumed to be finitely-generated projective as a left $R$-module. 

\begin{nota}
\label{nota:e_e_prime}
Denote by
\begin{enumerate}
\item 
 $e_V : R \rightarrow \hom_R (V, R) \otimes_R V$ the map sending $1 \in R$ to the preimage of $\id_V$ under the above bijection;
 \item 
  $e'_V : R \rightarrow \hom_R (V, R) \otimes_R F_1 A$ the composite of $e_V$ with the map induced by the inclusion $V \subset F_1 A$ of left $R$-modules.
  \end{enumerate}
\end{nota}

The map $e_V$ is a morphism of $R$-bimodules (see \cite[Lemma 2.5 (ii)]{MR4398644}). However, 
the map $e'_V$ is not in general a morphism of $R$-bimodules; the defect is measured by $\hat{q}$:

\begin{lem}
\label{lem:qhat_e'}
For $r \in R$, one has the equalities
\begin{eqnarray*}
r e'_V(1) &=& e'_V (r) \\
e'_V (1) r &=& r e'_V(1) + \hat{q}(r)
\end{eqnarray*}
in $\hom_R (V, R) \otimes_R F_1 A \cong \hom_R (V, R) \otimes_R V  \ \oplus \  \hom_R (V,R)$. 
\end{lem}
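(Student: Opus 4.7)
The plan is to leverage the fact that $e_V : R \to \hom_R(V, R) \otimes_R V$ is a morphism of $R$-bimodules, while the map $\hom_R(V, R) \otimes_R V \to \hom_R(V, R) \otimes_R F_1 A$ induced by the inclusion $\iota : V \hookrightarrow F_1 A$ is only a morphism of left $R$-modules. The first identity should therefore be essentially formal, whereas the second identity measures precisely the failure of $\iota$ to respect the right $R$-action, which by construction of $V$ as the splitting $F_1 A \cong R \oplus V$ is governed by $q$.

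For the first equation, since $e_V$ is a morphism of $R$-bimodules, in particular a morphism of left $R$-modules, $e_V(r) = r \cdot e_V(1)$. Postcomposing with the left $R$-linear map $\id \otimes \iota$ gives $e'_V(r) = r \cdot e'_V(1)$.

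For the second equation, I would pick a finite representation $e_V(1) = \sum_i \phi_i \otimes v_i \in \hom_R(V, R) \otimes_R V$ corresponding to $\id_V$ under the canonical isomorphism $\hom_R(V, R) \otimes_R V \cong \hom_R(V, V)$; concretely this means $\sum_i \phi_i(v)\,v_i = v$ for every $v \in V$. Then compute $e'_V(1) \cdot r = \sum_i \phi_i \otimes (v_i \cdot r)$, where $v_i \cdot r$ is the product taken in $A$. By Remark \ref{rem:products} (applied in the present setting with $V = \ker \epsilon_A \cap F_1 A$), the splitting $F_1 A \cong V \oplus R$ decomposes this product as $v_i \cdot r = v_i \rmult r + q(v_i \otimes r)$ with $v_i \rmult r \in V$ and $q(v_i \otimes r) \in R$. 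The $V$-component contributes $\sum_i \phi_i \otimes (v_i \rmult r)$, which in $\hom_R(V, R) \otimes_R V$ is the right $R$-action on $e_V(1)$; since $e_V$ is a bimodule morphism this equals $r \cdot e_V(1)$, whose image under $\id \otimes \iota$ is $r \cdot e'_V(1)$.

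It remains to identify the $R$-component $\sum_i \phi_i \otimes q(v_i \otimes r) \in \hom_R(V, R) \otimes_R R \cong \hom_R(V, R)$ with $\hat q(r)$. This is the main (and really the only) calculational point: evaluating on an arbitrary $v \in V$, the left-hand side yields $\sum_i \phi_i(v)\, q(v_i \otimes r)$, while $\hat q(r)(v) = q(v \otimes r)$ by definition. Using the left $R$-linearity of $q$ in its first argument (Lemma \ref{lem:properties_q}(\ref{item:a})) together with the identity $\sum_i \phi_i(v)\,v_i = v$, one obtains
\[
q(v \otimes r) = q\Bigl(\sum_i \phi_i(v)\,v_i \otimes r\Bigr) = \sum_i \phi_i(v)\, q(v_i \otimes r),
\]
completing the identification and hence the proof.
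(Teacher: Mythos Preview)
Your proof is correct and fully fleshes out what the paper leaves implicit: the paper's own proof is the single sentence ``This follows from the definition of $q$ and its adjoint.'' Your argument---using that $e_V$ is an $R$-bimodule map, splitting the product $v_i r$ in $F_1A$ via the fundamental relation $v_i r = v_i \rmult r + q(v_i \otimes r)$, and then identifying the $R$-component with $\hat q(r)$ via left $R$-linearity of $q$ and the defining identity $\sum_i \phi_i(v)v_i = v$---is exactly the intended computation made explicit.
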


\begin{proof}
This follows from the definition of $q$ and its adjoint. 
\end{proof}

We are interested in the case where there is an injective ring map $\kring \hookrightarrow R$,  $W$ is a $\kring$-bimodule that is finitely-generated projective as a left $\kring$-module, and 
 $V = R \otimes _\kring W$ as a $R\otimes \kring\op$-module. 
 
 Exploiting the respective left module structures, one has the commutative diagram
\[
\xymatrix{
\kring 
\ar[rr]^{1 \mapsto \id_W}
\ar@{^(->}[d]
&&
\hom_\kring (W, W) 
\ar[d]
&
\hom_\kring (W, \kring) \otimes_\kring W 
\ar[l]_\cong 
\ar[d]
\\
R
\ar[rr]_(.3){1 \mapsto \id_{R\otimes_\kring W}}
&&
\hom_R (R \otimes_\kring W, R\otimes_\kring W)
&
\hom_R (R \otimes_\kring W, R) \otimes_R (R \otimes_\kring W),
\ar[l]^(.55)\cong 
}
\]
in which the vertical maps are given  by applying the functor $R \otimes_\kring -$. Commutation of the left hand square is immediate; the commutativity of the right hand square is a simple verification.

This yields the following compatibility statement:

\begin{lem}
\label{lem:compat_e}
For $W$, $V= R\otimes_\kring W$ as above, the map $\hom_\kring (W, \kring) \otimes_\kring W  \rightarrow \hom_R (R \otimes_\kring W, R) \otimes_R (R \otimes_\kring W)$ induced by the functor $R \otimes_\kring -$ sends
$
e_W (1)$ to $e_V(1) = e_{R\otimes_\kring W} (1)$.
\end{lem}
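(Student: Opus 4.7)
The plan is essentially to perform a diagram chase in the commutative diagram displayed immediately before the lemma statement, using the fact that $e_W(1)$ and $e_V(1)$ are characterized as the preimages of the respective identity endomorphisms.

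By definition (Notation \ref{nota:e_e_prime}, applied with base ring $\kring$ and module $W$), the element $e_W(1) \in \hom_\kring(W,\kring) \otimes_\kring W$ is the unique element mapping to $\id_W$ under the canonical isomorphism $\hom_\kring(W,\kring) \otimes_\kring W \xrightarrow{\cong} \hom_\kring(W,W)$; similarly, $e_V(1)$ is characterized as the preimage of $\id_V = \id_{R \otimes_\kring W}$ under the bottom horizontal isomorphism of the diagram. The commutativity of the left square of the displayed diagram (which is obvious, since applying $R \otimes_\kring -$ to $\id_W$ yields $\id_{R \otimes_\kring W}$) shows that $\id_W \in \hom_\kring(W,W)$ is sent by the middle vertical map to $\id_V \in \hom_R(V,V)$.

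Combining with the commutativity of the right-hand square, the image of $e_W(1)$ under the right-hand vertical map $\hom_\kring(W,\kring) \otimes_\kring W \to \hom_R(V,R) \otimes_R V$ corresponds, under the bottom horizontal isomorphism, to $\id_V$; by the characterization of $e_V(1)$, this image equals $e_V(1)$, as required.

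The only point that requires any verification is the commutativity of the right square, i.e., that the natural evaluation isomorphism $\hom_{(-)}(M,-) \otimes M \cong \hom_{(-)}(M,-)$ is compatible with the base change functor $R \otimes_\kring -$. This reduces to a short check on pure tensors: for $\phi \in \hom_\kring(W,\kring)$ and $w \in W$, the element $\phi \otimes w$ on the one hand is sent under the top isomorphism to the $\kring$-endomorphism $w' \mapsto \phi(w') w$, whose image under $R \otimes_\kring -$ is the $R$-endomorphism $r \otimes w' \mapsto r \phi(w') \otimes w$; on the other hand, $\phi \otimes w$ is sent by the right-hand vertical map to the tensor whose image under the bottom isomorphism is precisely this same $R$-endomorphism. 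I expect this to be the only routine verification, and it presents no obstacle; the bulk of the lemma is formal.
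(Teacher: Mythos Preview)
Your proof is correct and follows essentially the same approach as the paper: the paper establishes the commutative diagram immediately before the lemma (noting that commutativity of the left square is immediate and the right square is a simple verification), then states that the compatibility statement follows from this discussion. Your explicit verification of the right-hand square is a bit more detailed than what the paper provides, but the argument is the same.
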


\begin{rem}
\label{rem:e_prime_right_modules}
The above has an obvious counterpart in which the roles of left and right $R$-modules are switched. 
\end{rem}

The definition of the differential in the complexes considered below uses a form of inner multiplication. So as to clarify the construction, this is considered in the following Remark in a general context.

\begin{rem}
\label{rem:inner_multiplication}
Given a right (respectively left) $\kring$-module $M$ (resp. $N$) and a $\kring$-bimodule $Z$,  one  can form $M \otimes_\kring N$ and  $M \otimes_\kring Z \otimes_\kring N$. A morphism of $\kring$-bimodules 
$\kring \rightarrow Z$ is equivalent to an element $z \in Z$ such that $k z = zk$ for all $k \in \kring$. Given such a $z$, one has the natural map:
\[
M\otimes_\kring N \rightarrow M \otimes_\kring Z \otimes _\kring N
\]
which is given explicitly by $ m\otimes n \mapsto m \otimes z \otimes n$. 

Now, suppose that we have unital associative rings $R_1$, $R_2$, equipped with unital ring maps $\kring \rightarrow R_i$ ($i \in \{1, 2\}$) such that $M$ is a right $R_1$-module, $N$ a left $R_2$-module and  the above $\kring$-module structures are given by restriction.  Take $Z$ to be the $\kring$-bimodule $R_1 \otimes_\kring R_2$ and suppose that we have an element $z$ as above (i.e., corresponding to a $\kring$-bimodule map $\kring \rightarrow R_1 \otimes_\kring R_2$). We can therefore form the composite 
\[
M\otimes_\kring N \rightarrow M \otimes_\kring (R_1 \otimes_\kring R_2)  \otimes _\kring N
\rightarrow 
M \otimes_\kring N,
\]
where the second map is given by the respective module structures of $M$ and $N$. This can be viewed as forming the `inner multiplication' by $z$, written 
$
m\otimes n \mapsto m z n.
$

If $z$ is written symbolically in Sweedler-type notation as $z_1 \otimes z_2$ (i.e., suppressing the summation from the notation), then one can write this as 
$ 
m \otimes n \mapsto m z_1 \otimes z_2 n$, 
 which makes transparent that the image is an element of $M \otimes_\kring N$.
\end{rem}

\subsection{The dualizing  complex $K^\vee (A)$ in the left augmented case}
\label{subsect:Kvee_leftaug}

In \cite[Section 6.2]{MR4398644}, Positselski introduces his nonhomogeneous dual Koszul CDG module in the general setting that allows for non-zero curvature. This is reviewed here in the simpler left-augmented situation. 

\begin{rem}
Positselski indicates how his theory behaves in the left-augmented case in \cite[Examples 6.15(5)]{MR4398644} and \cite[Remark 6.16(2)]{MR4398644}.
\end{rem}

In this setting (assuming the hypotheses of Section \ref{subsect:framework}), $A$ is a filtered algebra, left augmented over $R$; the quadratic dual algebra has the structure of a DG algebra $((\grad A)\qd, d)$ by Theorem \ref{thm:DG_posit}. In particular, the underlying algebra is $\nat$-graded, which corresponds to the cohomological grading. 
 Recall that $V$ is isomorphic (as an $R$-bimodule) to $\grad_1 A$, which is finitely-generated projective as a left $R$-module, by hypothesis.

\begin{defn}
\label{defn:kcx}
The dualizing complex $K^\vee = K^\vee ( A) $ is the complex with underlying graded module:
\[
K^\vee (A)
:= 
(\grad A)\qd \otimes_R A
\]
and differential given for $x \in (\grad A)\qd $ and $a \in A$ by 
\[
 d_{K^\vee}( x \otimes a) = (dx) \otimes a + (-1)^{|x|} x e'_V(1) a.
\]
\end{defn} 

\begin{rem}
The differential $d$ on $(\grad A)\qd$ is not right $R$-linear in general, but brings into play $\hat{q}$.  
The fact that the differential $d_{K^\vee}$ is well-defined (with respect to the $\otimes_R$) is a consequence of Lemma \ref{lem:qhat_e'}. 
\end{rem}

\begin{prop}
\cite[Proposition 6.4]{MR4398644} 
$(K^\vee ( A), d_{K^\vee})$ is a complex of left $((\grad A)\qd , d)$- right $(A,0)$-bimodules. 
\end{prop}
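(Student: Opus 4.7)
The plan is to verify four properties of $d_{K^\vee}$ in turn: well-definedness on the relative tensor product, the graded Leibniz rule for the left $(\grad A)\qd$-action, strict right $A$-linearity (consistent with $A$ carrying the zero differential), and $d_{K^\vee}^2 = 0$.

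For well-definedness, I would compare $d_{K^\vee}(xr \otimes a)$ with $d_{K^\vee}(x \otimes ra)$ for $r \in R$. Using that $d$ is a derivation with $d|_R = \hat{q}$ by Theorem \ref{thm:DG_posit}, expansion shows the $(dx)$-contributions agree in $\otimes_R$; the residual obstruction is the commutator $e'_V(1) r - r\,e'_V(1)$, inserted between $x$ and $a$, matched against the term $(-1)^{|x|} x\,\hat{q}(r) \otimes a$ produced by applying the derivation $d$ to $r$. Lemma \ref{lem:qhat_e'} identifies exactly this commutator with $\hat{q}(r)$, so the two expressions agree. This is precisely where the vanishing of curvature afforded by the left augmentation keeps the complex in the DG realm. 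The graded Leibniz rule for left multiplication by a homogeneous $y \in (\grad A)\qd$ follows from $d(yx) = (dy)x + (-1)^{|y|} y(dx)$ combined with the identity $(yx)e'_V(1)a = y(x\,e'_V(1)a)$ reflecting associativity of inner multiplication on the left factor. Right $A$-linearity is immediate: $e'_V(1) \in \hom_R(V,R) \otimes_R F_1 A$ has its second factor in $A$, so inner multiplication on the right commutes with right $A$-action on the nose, and $A$ carries the zero differential.

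The main obstacle is $d_{K^\vee}^2 = 0$. Writing $e'_V(1) = \sum_i \phi_i \otimes v_i$ with $\phi_i \in \hom_R(V,R)$ and $v_i \in V \subset F_1 A$, applying $d_{K^\vee}$ twice and invoking $d^2 = 0$ on $(\grad A)\qd$, a short calculation shows that the cross terms containing $(dx)\,e'_V(1)a$ cancel, leaving
\[
d_{K^\vee}^2(x\otimes a) \;=\; \sum_i (x\,d\phi_i) \otimes v_i a \;-\; \sum_{i,j} x\,\phi_i\phi_j \otimes v_j v_i a.
\]
To see that this vanishes, I would decompose each product $v_j v_i \in F_2 A$ along the filtration: its image in $\grad_2 A$ is annihilated after pairing with the dual relations $\sum_{i,j} \phi_i \otimes \phi_j$ (by construction of the quadratic dual applied to $I_A$), while its $F_1 A$-component is controlled precisely by the maps $p \circ s$ and $q$ that enter the defining formula \eqref{eqn:dphi} for $d\phi_i$. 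The two sums therefore pair off term by term; any residue collapses using the self-consistency identities (b), (g), (h) of Lemma \ref{lem:properties_q}, and the absence of curvature in the left-augmented case means no further correction term survives. This is the specialization of \cite[Proposition 6.4]{MR4398644} to the DG setting, which I would establish by following Positselski's computation with the curvature contributions suppressed.
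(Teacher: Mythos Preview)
The paper does not supply its own proof of this proposition: it is stated purely as a citation of \cite[Proposition~6.4]{MR4398644}, with the surrounding remarks noting that it specializes Positselski's general (curved) result to the left-augmented, zero-curvature situation. There is therefore no argument in the paper to compare against.

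Your proposal is a reasonable direct verification and is essentially what Positselski's proof reduces to once the curvature $h$ is set to zero. The well-definedness step is exactly right: Lemma~\ref{lem:qhat_e'} is precisely the identity $e'_V(1)r - r\,e'_V(1) = \hat q(r)$ needed to reconcile $d_{K^\vee}(xr\otimes a)$ with $d_{K^\vee}(x\otimes ra)$, and this is the place where the DG (as opposed to CDG) structure is being used. The Leibniz and right $A$-linearity checks are routine. Your reduction of $d_{K^\vee}^2$ to the expression $\sum_i x(d\phi_i)\otimes v_i a - \sum_{i,j} x\phi_i\phi_j\otimes v_j v_i a$ is correct, and the strategy of decomposing $v_j v_i$ along the filtration $F_1 A \subset F_2 A$ and matching the pieces against the defining formula \eqref{eqn:dphi} for $d\phi_i$ is the right one; this is the content of Positselski's computation in the cited reference.
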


\begin{rem}
As explained by Positselski, this is a generalization of the {\em homogeneous} dual Koszul complex (as treated in \cite[Chapter 2]{MR4398644}, for example). In the homogeneous case, $(\grad A)\qd$ has zero differential.
\end{rem}

\subsection{Specializing to the case $A \cong_{\mathrm{bimod}} R \otimes _\kring C$}
\label{subsect:kcx_R_C}
We now specialize to the framework considered in Section \ref{subsect:dgalg_application}, so that $C$ is the homogeneous quadratic algebra over $\kring$ specified by the quadratic datum $(\kring; W , I_C)$ and $A$ is isomorphic to $R \otimes_\kring C$ as a $R \otimes C\op$-module, also satisfying the hypotheses of that section with respect to the filtration induced from $C$.

Thus, we have the (left) quadratic duals $C\qd$ and  $(\grad A)\qd$; the latter is isomorphic to $C\qd \otimes_\kring R$ as a $C\qd \otimes R\op$-module, by Proposition \ref{prop:dual_C_R}.

\begin{prop}
\label{prop:kcx_R_C}
In this framework, the dualizing complex $K^\vee ( A) $ is isomorphic, as a complex of left $C\qd$-, right $A$-bimodules to 
$
C\qd \otimes_\kring A$,
 where the differential acts for $x \in C\qd$ and $a \in A$ by 
$
x \otimes a \mapsto (-1)^{|x|} x e'_W(1) a$.

With respect to this isomorphism, the $R$-module structure of $K^\vee (A)$ has structure  map given by the  composite:
\[
R \otimes_\kring (C\qd \otimes_\kring A) 
\rightarrow 
C\qd \otimes_\kring R \otimes_\kring A 
\rightarrow 
C\qd \otimes_\kring A,
\]
where the first map is the interchange map $R \otimes_\kring C\qd \rightarrow C\qd \otimes_\kring R $ induced by the product of $(\grad A)\qd$ and the second  is induced by the product $R \otimes_\kring A \rightarrow A$.
\end{prop}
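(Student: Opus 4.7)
The plan is to unwind the various identifications step by step, using the structural results already established for $(\grad A)\qd$ and its differential.

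First, I would construct the isomorphism of underlying graded left $C\qd$-, right $A$-bimodules. By Proposition \ref{prop:dual_C_R}, the product of $(\grad A)\qd$ induces an isomorphism of $C\qd \otimes R\op$-modules $C\qd \otimes_\kring R \stackrel{\cong}{\rightarrow} (\grad A)\qd$. Tensoring over $R$ with $A$ gives the chain of isomorphisms
\[
K^\vee(A) = (\grad A)\qd \otimes_R A \;\cong\; (C\qd \otimes_\kring R)\otimes_R A \;\cong\; C\qd \otimes_\kring A,
\]
where the second isomorphism is induced by the unit $R \rightarrow A$ (equivalently by collapsing the $R$ against the left $R$-module structure of $A$). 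One checks immediately that this is an isomorphism of left $C\qd$-, right $A$-bimodules, since the left $C\qd$-action is carried by the first factor and the right $A$-action by the second.

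Next, I would identify the differential transported along this isomorphism. The key observation is that for $x \in C\qd$, viewed inside $(\grad A)\qd$ via the inclusion $C\qd \hookrightarrow (\grad A)\qd$ coming from Proposition \ref{prop:dual_C_R}, one has $dx = 0$. Indeed, Theorem \ref{thm:DG_split_R_C} asserts that $d$ vanishes on $\hom_\kring(W,\kring)$, which generates $C\qd$ as a subring of $(\grad A)\qd$; since $d$ is a derivation, it vanishes on the whole subring. Hence Definition \ref{defn:kcx} reduces for $x\in C\qd$ and $a\in A$ to
\[
d_{K^\vee}(x \otimes a) \;=\; (-1)^{|x|}\, x\, e'_V(1)\, a,
\]
where $e'_V(1)\in \hom_R(V,R)\otimes_R F_1A$ is the inner-multiplication element of Notation \ref{nota:e_e_prime}. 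By Lemma \ref{lem:compat_e}, $e_W(1)\in \hom_\kring(W,\kring)\otimes_\kring W$ maps to $e_V(1)$ under the canonical map; extending the right-hand factor along $W \hookrightarrow F_1A$ gives the factorization of $e'_V(1)$ through the corresponding element $e'_W(1)\in \hom_\kring(W,\kring)\otimes_\kring F_1A$. Inner multiplication with $e'_W(1)$ makes sense on $C\qd\otimes_\kring A$ (using the ring structure of $C\qd$ to absorb the $\hom_\kring(W,\kring)$ component on the left, and the product of $A$ to absorb the $F_1A$ component on the right, as in Remark \ref{rem:inner_multiplication}), and this recovers the claimed formula.

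Finally, I would compute the $R$-module structure on $C\qd\otimes_\kring A$ transported from $K^\vee(A)$. The left $R$-action arises from $R\hookrightarrow (\grad A)\qd$ via the multiplication of $(\grad A)\qd$ applied to the first tensor factor. Under the isomorphism $(\grad A)\qd \cong C\qd \otimes_\kring R$ of Proposition \ref{prop:dual_C_R}, left multiplication by $R$ on an element of $C\qd\subset C\qd\otimes_\kring R$ is precisely the exchange map $R\otimes_\kring C\qd \rightarrow C\qd \otimes_\kring R$ built into the product of $(\grad A)\qd$. After applying $-\otimes_R A$ and collapsing the middle $R$ against $A$, one obtains exactly the stated composite
\[
R\otimes_\kring(C\qd\otimes_\kring A)\longrightarrow C\qd\otimes_\kring R\otimes_\kring A\longrightarrow C\qd\otimes_\kring A.
\]
The main technical point — and where I would be most careful — is the identification of $e'_V(1)$ with (the image of) $e'_W(1)$: this is the bridge that rewrites the differential in purely $\kring$-tensorial terms, and is also what makes the factorization $x\mapsto x\,e'_W(1)\,a$ well-defined on $C\qd\otimes_\kring A$ (rather than only on $C\qd\otimes_R A$). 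The remaining verifications are routine bookkeeping with Sweedler-type notation.
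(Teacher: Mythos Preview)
Your proof is correct and follows essentially the same approach as the paper: use Proposition~\ref{prop:dual_C_R} to rewrite the underlying bimodule as $C\qd\otimes_\kring A$, invoke Theorem~\ref{thm:DG_split_R_C} to see that the differential of $(\grad A)\qd$ vanishes on $C\qd$, and then unwind definitions to identify both the differential and the $R$-module structure. Your version is more explicit (in particular, spelling out the role of Lemma~\ref{lem:compat_e} in passing from $e'_V(1)$ to $e'_W(1)$), but the structure and key inputs are identical.
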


\begin{proof}
This follows from an analysis of Definition \ref{defn:kcx} in this special case. The underlying graded $A$-module is given by  the bimodule isomorphism $(\grad A)\qd \cong C\qd \otimes_\kring R$ of Proposition \ref{prop:dual_C_R}. Theorem \ref{thm:DG_split_R_C} gives that the differential of $(\grad A)\qd$ acts trivially upon the subalgebra $C\qd$. From this one deduces the form of the differential. 
 The $R$-module structure map is identified by unwinding the definitions. 
\end{proof}

\begin{rem}
The differential only depends on the structure of the algebra $C\qd$ (more specifically, its structure as a right module over itself) and the left $C$-module structure of $A$.
\end{rem}

The argument used in the proof of Proposition \ref{prop:kcx_R_C} to give the full module structure can be abstracted (forgetting the differential and the grading as follows):

\begin{lem}
\label{lem:left_grad_A_module}
For an $R$-module $M$, the isomorphism $(\grad A)\qd \cong C\qd \otimes _\kring R$ of $C\qd \otimes R\op$-modules induces
\[
(\grad A)\qd \otimes_R M \cong C\qd \otimes_\kring M.
\] 
This is an isomorphism of $(\grad A)\qd$-modules, where $C\qd \otimes_\kring M$ is a $(\grad A)\qd$-module via 
 the extended $C\qd$-module structure and with $R$-module structure map 
 \[
 R \otimes_\kring (C\qd \otimes_\kring M) 
\rightarrow 
C\qd \otimes_\kring R \otimes_\kring M 
\rightarrow 
C\qd \otimes_\kring M,
 \] 
where the first map is the interchange map $R \otimes_\kring C\qd \rightarrow C\qd \otimes_\kring R $ induced by the product of $(\grad A)\qd$ and the second  is induced by the structure map $R \otimes_\kring M \rightarrow M$.
\end{lem}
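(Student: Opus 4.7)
The plan is to reduce everything to Proposition \ref{prop:dual_C_R}, which identifies $(\grad A)\qd$ with $C\qd \otimes_\kring R$ as a $C\qd \otimes R\op$-module, and then unwind the bimodule-balanced tensor product using associativity.

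First, I would produce the underlying bijection by the chain
\[
(\grad A)\qd \otimes_R M
\;\cong\;
(C\qd \otimes_\kring R) \otimes_R M
\;\cong\;
C\qd \otimes_\kring (R \otimes_R M)
\;\cong\;
C\qd \otimes_\kring M,
\]
where the first isomorphism is Proposition \ref{prop:dual_C_R} applied to the right $R$-module structure used in the tensor product, the second is the standard associativity bringing together a $C\qd \otimes \kring\op$-module and an $R$-module over the common ring $R$ (here the right $R$-action on $C\qd \otimes_\kring R$ is the canonical one), and the third is the unit isomorphism of the base change $R \otimes_R M \cong M$. This is manifestly functorial in $M$.

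Next, I would transport the natural left $(\grad A)\qd$-module structure on $(\grad A)\qd \otimes_R M$ (coming from left multiplication on the first factor) across this chain of isomorphisms. For the action of the subring $C\qd \hookrightarrow (\grad A)\qd$ this is immediate: left multiplication on the first factor of $C\qd \otimes_\kring R$ commutes with the remaining tensor product operations, so one recovers the $C\qd$-action by multiplication on the $C\qd$-factor of $C\qd \otimes_\kring M$. For the action of the subring $R \hookrightarrow (\grad A)\qd$, one has to translate left multiplication on $C\qd \otimes_\kring R$ by $R$ under the identification $(\grad A)\qd \cong C\qd \otimes_\kring R$ of Proposition \ref{prop:dual_C_R}: this is exactly the content of the interchange map $R \otimes_\kring C\qd \to C\qd \otimes_\kring R$ induced by the product of $(\grad A)\qd$. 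Composing with the $R$-action on $M$ (which enters through the identification $R \otimes_R M \cong M$) gives precisely the stated formula
\[
R \otimes_\kring (C\qd \otimes_\kring M)
\longrightarrow
C\qd \otimes_\kring R \otimes_\kring M
\longrightarrow
C\qd \otimes_\kring M.
\]
Since $(\grad A)\qd$ is generated as a ring by the images of $C\qd$ and $R$, these two compatibilities suffice to identify the full $(\grad A)\qd$-action.

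The only place where any care is required is in checking that the stated $R$-action on $C\qd \otimes_\kring M$ really is well defined (the interchange map $R \otimes_\kring C\qd \to C\qd \otimes_\kring R$ is a map of $\kring$-bimodules, so this is automatic) and that it is compatible with the $C\qd$-action so as to assemble into a module over $(\grad A)\qd$; but the latter holds because it does so for $(\grad A)\qd \otimes_R M$ by construction, and the isomorphism of the first paragraph preserves the operations. Thus the lemma follows; there is no genuine obstacle beyond carefully tracing through the identification of Proposition \ref{prop:dual_C_R} on both factors of the tensor product.
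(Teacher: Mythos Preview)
Your proof is correct and follows the approach the paper intends. In the paper this lemma is stated without a separate proof, being introduced as an abstraction of the argument used for Proposition~\ref{prop:kcx_R_C}; your reduction to Proposition~\ref{prop:dual_C_R} together with the associativity $(C\qd \otimes_\kring R)\otimes_R M \cong C\qd \otimes_\kring M$ and the transport of the $R$-action via the interchange map is precisely that unwinding of definitions.
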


There is an analogous result when considering the functor $\hom_R (A, -)$:

\begin{lem}
\label{lem:left_A_mod_hom}
For an $A$-module $M$, the  isomorphism $A \cong R \otimes _\kring C$ of $R \otimes C\op$-modules induces
\[
\hom_R (A, M) 
\cong 
\hom_\kring (C, M) .
\]
This 
is an isomorphism of $A$-modules, where $\hom_\kring (C, M)$ is given its canonical $C$-module structure and the $R$-module structure map $R \otimes _\kring \hom _\kring (C, M) \rightarrow \hom_\kring (C,M)$ is adjoint to 
\[
C \otimes_\kring R \otimes_\kring \hom_\kring (C,M) \rightarrow M
\]
given as the composite 
\[
C \otimes_\kring R \otimes_\kring \hom_\kring (C,M)
\rightarrow 
R \otimes_\kring C \otimes_\kring \hom_\kring (C,M)
\rightarrow 
R \otimes _\kring M
\rightarrow 
M, 
\]
where the first map is induced by the interchange map, the second by the evaluation map, and the last the $R$-module structure map of $M$.
\end{lem}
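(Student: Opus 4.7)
The plan is to construct the underlying abelian-group isomorphism via the standard hom-tensor adjunction, and then verify $A$-linearity by decomposing along the subrings $R, C \subset A$. Since $A \cong R \otimes_\kring C$ as left $R$-modules, the adjunction gives
\[
\hom_R(A, M) \cong \hom_R(R \otimes_\kring C, M) \cong \hom_\kring(C, M),
\]
under which $f$ corresponds to its restriction $g(c) := f(1 \otimes c)$ along $C \cong \kring \otimes_\kring C \hookrightarrow R \otimes_\kring C \cong A$, with inverse $g \mapsto \bigl(r \otimes c \mapsto r \cdot g(c)\bigr)$.

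The left $A$-module structure on $\hom_R(A, M)$ is the one arising from right multiplication on the source, $(a \cdot f)(x) := f(xa)$; it is $R$-linear by associativity of the product of $A$. Since $A \cong R \otimes_\kring C$ and the ring structure of $A$ is entirely determined by those of $R$, $C$ together with the exchange map $\lambda_C \colon C \otimes_\kring R \to R \otimes_\kring C$ (Remark \ref{rem:hyp_A}), specifying a left $A$-module amounts to giving compatible left $R$- and left $C$-module structures. It therefore suffices to transport the $C$- and $R$-actions separately across the adjunction isomorphism and match them with those in the statement; compatibility is automatic, being inherited from the ambient structure on $\hom_R(A, M)$.

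For $c, c' \in C$ viewed inside $A$, one has $(c \cdot f)(c') = f(c'c)$, so under the isomorphism $(c \cdot g)(c') = g(c'c)$, which is the canonical $C$-module structure. For $r \in R$ and $c \in C$, write $\lambda_C(c \otimes r) = \sum_i r_i \otimes c_i$, so that $cr = \sum_i r_i c_i$ in $A$; then
\[
(r \cdot f)(c) \ = \ f(cr) \ = \ \sum_i r_i \cdot g(c_i).
\]
This matches the composite in the statement: tracing $c \otimes r \otimes g$ through the interchange map, the evaluation pairing, and the $R$-action on $M$ yields $\sum_i r_i \cdot g(c_i)$, whose adjoint is exactly the map just computed. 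The main obstacle is careful bookkeeping with the exchange law; the one subtlety worth flagging is correctly identifying the $A$-action on $\hom_R(A, M)$ as coming from right multiplication on the source (rather than from any pointwise $A$-action on $M$, which would fail to be $R$-linear), after which the verification is formal.
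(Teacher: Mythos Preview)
Your proof is correct. The paper states this lemma without proof, treating it as a routine verification; your argument supplies exactly the expected details, using the hom-tensor adjunction for the underlying isomorphism and then transporting the $A$-action by decomposing it into its $C$- and $R$-components via the exchange law $\lambda_C$.
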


\subsection{The dualizing complex $\vk(A)$ in the right augmented case}
\label{subsect:vk}

We explain the construction in the right augmented case, using the appropriate analogue of the hypotheses of Section \ref{subsect:framework},  working over $C$. 

Suppose that  $(A, G_\bullet A, \varepsilon_A)$ is a right augmented filtered algebra over $C$ (so that the unit is $C = G_0 A \hookrightarrow A)$ such that 
\begin{enumerate}
\item 
the right augmentation  induces the splitting as right $C$-modules $G_1 A \cong C \oplus U$; 
\item 
$\rgrad A$ is quadratic, with associated quadratic datum $(C;U,J_A)$; 
\item 
$\rgrad A$ is $3$-right  finitely-generated projective over $C$. 
\end{enumerate}

This yields the (right) dual quadratic datum $(C; \hom_{C\op} (U, C) , \hom_{C\op} (\rgrad_2 A, C) )$ and the associated quadratic dual algebra $(\rgrad A)\qd$. The latter is equipped with a DG algebra structure, given by the  analogue of Theorem \ref{thm:DG_posit} for this framework.

We use the element ${}_Ue'(1)$, the counterpart in the right augmented case of $e'_V(1)$ of Notation \ref{nota:e_e_prime}.

\begin{defn}
\label{defn:right_kcx}
The (right) dualizing complex $\vk = \vk ( A) $ is the complex with underlying graded module:
\[
\vk (A)
:= 
A \otimes _C (\rgrad A)\qd 
\]
and differential given for $y \in (\rgrad A)\qd $ and $a \in A$ by 
$ 
 d_{\vk}( a \otimes y) = a \otimes dy + a \ {}_U e'(1) y.
$ 
\end{defn} 

Now, suppose that we are in the right-augmented analogue of the context of Section \ref{subsect:kcx_R_C}. In particular,  $A \cong R \otimes_\kring C$ as $R \otimes C\op$-modules and $R$ is a homogenous quadratic algebra over $\kring$ given by the quadratic datum $(\kring; Z, I_R)$, where the $\kring$-bimodule $Z$ is finitely-generated projective as a right $\kring$-module and $U \cong Z \otimes_\kring C$ as a right $C$-module. 

One has the (right) quadratic dual $R\qd$ of $R$ that is associated to the dual quadratic datum 
$$(\kring, \hom_{\kring\op} (Z, \kring) , \hom_{\kring\op} (R_2, \kring) ).$$

\begin{rem}
The counterpart of Proposition \ref{prop:dual_C_R} implies that the product of $(\rgrad A)\qd$ induces an isomorphism of left $C \otimes (R\qd)\op$ modules 
$
C \otimes_\kring R\qd
\stackrel{\cong}{\rightarrow}
 (\rgrad A)\qd$.
\end{rem}

\begin{nota}
\label{nota:efrak_prime}
Write $\e$ for the element of $Z \otimes_\kring \hom_{\kring \op} (Z, \kring)$ corresponding to the identify map $\id _Z$ 
 and $\e'$ for this considered in $A \otimes_\kring R\qd$.
\end{nota}

\begin{prop}
\label{prop:right_kcx_R_C}
The dualizing complex $\vk ( A) $ is isomorphic, as a complex of right $R\qd$-, left $A$-bimodules to 
$ 
A \otimes_\kring R\qd$,  
where the differential acts for $x \in C\qd$ and $a \in A$ by 
$
a \otimes y \mapsto a \e'y$.

The right $C$-module structure of $\vk ( A) $ has structure map 
\[
(A \otimes_\kring R\qd) \otimes_\kring C \rightarrow A \otimes_\kring C \otimes_\kring R\qd 
\rightarrow A\otimes_\kring R\qd,
\]
where the first map is induced by the product of $(\grad A)\qd$ and the second by the right $C$-module structure of $A$.
\end{prop}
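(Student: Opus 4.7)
The plan is to mirror the proof of Proposition~\ref{prop:kcx_R_C}, invoking the right-augmented counterparts of the ingredients used there; alternatively, one could deduce everything by applying Proposition~\ref{prop:left/right_aug} to pass to opposite rings, but a direct argument is more transparent. Concretely, there are three items to check: the identification of the underlying graded $A$-$R\qd$-bimodule, the formula for the differential, and the description of the right $C$-module structure.

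For the underlying bimodule, I would invoke the isomorphism of $C \otimes (R\qd)\op$-modules $C \otimes_\kring R\qd \stackrel{\cong}{\rightarrow} (\rgrad A)\qd$ (the Remark following Notation~\ref{nota:efrak_prime}, which is the right-augmented analogue of Proposition~\ref{prop:dual_C_R}). Tensoring on the left over $C$ with $A$ gives a chain of bimodule isomorphisms
\[
\vk(A) = A \otimes_C (\rgrad A)\qd \cong A \otimes_C (C \otimes_\kring R\qd) \cong A \otimes_\kring R\qd,
\]
which preserves the right $R\qd$- and left $A$-module structures by construction.

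For the differential, the key observation is the right-augmented version of Theorem~\ref{thm:DG_split_R_C}: the DG structure on $(\rgrad A)\qd$ satisfies $d|_C = \hat{\qr}$ (the adjoint of the analogous structure map) and $d|_{\hom_{\kring\op}(Z,\kring)} = 0$; since $R\qd$ is generated as a left $\kring$-algebra by $\hom_{\kring\op}(Z,\kring)$, the differential vanishes on $R\qd \subset (\rgrad A)\qd$. Thus for $y \in R\qd$ the term $a \otimes dy$ in Definition~\ref{defn:right_kcx} vanishes, leaving only $a \cdot {}_U e'(1) \cdot y$. To recognise this as $a\e'y$, I would appeal to the right-module analogue of Lemma~\ref{lem:compat_e} applied to the identification $U \cong Z \otimes_\kring C$: this sends the element $\e \in Z \otimes_\kring \hom_{\kring\op}(Z,\kring)$ to ${}_Ue(1) \in U \otimes_C \hom_{C\op}(U,C)$, so the image in $A \otimes_C (\rgrad A)\qd$ of ${}_Ue'(1)$ corresponds under the bimodule isomorphism above to $\e' \in A \otimes_\kring R\qd$.

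For the right $C$-module structure, recall that $(\rgrad A)\qd$ contains $C$ as a subalgebra and that right multiplication by $C$ descends to a right $C$-action on $\vk(A) = A \otimes_C (\rgrad A)\qd$. Under the iso $(\rgrad A)\qd \cong C \otimes_\kring R\qd$, right multiplication by $c \in C$ on an element $c' \otimes y$ of $C \otimes_\kring R\qd$ is computed in the algebra $(\rgrad A)\qd$ by first commuting $c$ past $y$ via the interchange map (which, by Proposition~\ref{prop:homog_quadratic_distributive} applied to this context, is exactly the map $R\qd \otimes_\kring C \rightarrow C \otimes_\kring R\qd$ encoded in the product of $(\rgrad A)\qd$), then multiplying the resulting $C$-component on the left of $c'$. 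Tensoring over $C$ with $A$ on the left then absorbs this $C$-factor into the right $C$-action on $A$, yielding precisely the composite displayed in the statement. The verification is routine once the interchange map has been identified; I expect the only delicate point to be bookkeeping of left/right conventions when transporting structure through the bimodule isomorphism $(\rgrad A)\qd \cong C \otimes_\kring R\qd$, which is the right-augmented mirror image of the calculation already performed in Proposition~\ref{prop:kcx_R_C}.
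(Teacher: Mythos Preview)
Your proposal is correct and matches the paper's approach exactly: the paper states Proposition~\ref{prop:right_kcx_R_C} without proof, since it is the right-augmented mirror of Proposition~\ref{prop:kcx_R_C}, and your argument carries out precisely that mirroring (bimodule identification via the analogue of Proposition~\ref{prop:dual_C_R}, vanishing of the differential on $R\qd$ via the analogue of Theorem~\ref{thm:DG_split_R_C}, and unwinding the $C$-module structure). One minor note: the displayed structure map in the statement has a typo (``$(\grad A)\qd$'' should read ``$(\rgrad A)\qd$''), which you have silently corrected in your argument.
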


\section{The adjunctions associated to $K^\vee(A)$ - the left augmented case}
\label{sect:adjoints}

The significance for us of the dualizing complex $K^\vee (A)$ introduced in Section \ref{sect:kcx}  is that it provides  adjunctions relating  DG $A$-modules to DG $(\grad A)\qd$-modules. This is the key ingredient in Positselski's relative Koszul duality theory \cite{MR4398644}. Positselski treats a much more general context, requiring the usage of curved structures;  in the left augmented case considered here, this is much simpler.

\subsection{Recollections}
Take $A$ and $B$ to be unital associative algebras and let $M$ be a $B \otimes A\op$-module. Then, for a left $A$-module $X$ and left $B$-module $Y$, one has the natural isomorphism:
\[
\hom_B(M \otimes_A X, Y) 
\cong 
\hom_A (X , \hom_B(M, Y))
\]
in which $M \otimes_A X$ is considered as a left $B$-module via the action on $M$ and $\hom_B(M, Y)$ as a left $A$-module via the (right) action of $A$ on $M$.  Thus $M \otimes_A -$ is left adjoint to $\hom_B (M, -) $ and one has the adjunction unit 
\begin{eqnarray*}
\label{eqn:M_adj_unit}
X & \rightarrow & \hom_B (M, M \otimes _A X) 
\\
x & \mapsto & (m \mapsto m \otimes x) 
\end{eqnarray*}
and the adjunction counit 
\begin{eqnarray*}
\label{eqn:M_adj_counit}
M \otimes_A \hom_B (M , Y) 
&\rightarrow & 
Y
\\
m \otimes \psi & \mapsto & \psi (m).
\end{eqnarray*}

\begin{exam}
Suppose give inclusions $R \hookrightarrow A $ and $R \hookrightarrow B$ of unital associative algebras and take $M:= B \otimes_R A$, equipped with the obvious left $B$ and right $A$-actions.  Then, for $X$ and $Y$ as above,  there is a natural isomorphism $M \otimes_A X \cong B \otimes_R X$ of $B$-modules and a natural isomorphism $\hom_B (M, Y) \cong \hom_R (A,Y)$ of left $A$-modules, using the restrictions of the left module structures of $X$ and $Y$ to $R$ respectively.

The adjunction unit identifies as 
\begin{eqnarray*}
X & \rightarrow & \hom_R (A, B \otimes_R X) \\
x & \mapsto & (a \mapsto 1 \otimes ax ) 
\end{eqnarray*}
and  the  counit as
\begin{eqnarray*}
B \otimes_R \hom_R (A, Y) & \rightarrow & Y \\
b \otimes \psi & \mapsto & b \psi (1).
\end{eqnarray*}
\end{exam}

These adjunctions generalize to the differential graded (DG) setting. Recall that the category of chain complexes (of abelian groups) admits a closed, symmetric monoidal structure. The tensor product of two chain complexes  $C$ and $D$ is defined as usual so that $(C \otimes D) _n = \bigoplus_{i+j =n} C_ i \otimes D_j$ and with differential given by $d (x \otimes y) = (d_C x) \otimes y + (-1)^{|x|}x \otimes d_D y$. The symmetry involves Koszul signs: $x \otimes y \mapsto (-1)^{|x| |y|} y \otimes x$.
 The internal hom is given by $\ihom (C, D) _n = \prod_i \hom (C_i , D_{i+n}) $ and with differential given by $(df)(x) = d_D (f (x)) - (-1)^{|f|} f (d_C x)$. 

A differential graded algebra $A$ is simply a unital monoid in this monoidal category (the symmetry does not intervene). The category of left (respectively right) $A$-modules is defined in the usual way.

\begin{nota}
\label{nota:dgmod}
For $A$ a DG algebra, write 
\begin{enumerate}
\item 
$A\dash\dgmod$ for the category of left DG modules over $A$.
\item 
$\dgmod \dash A$, for the category of right DG modules over $A$.
\end{enumerate}
\end{nota}

If $M$ is a right DG $A$-module and $N$ is a left DG $A$-module, the tensor product $M \otimes _A N$ is defined as the coequalizer of the canonical maps defined by the respective module structures:
\[
M \otimes A \otimes N
\rightrightarrows 
M \otimes N.
\]
This is a chain complex of abelian groups. 

Likewise, if $M$, $N$ are both DG left $A$-modules, the chain complex of abelian groups $\ihom_A (M,N)$ is defined as the 
equalizer of the structure maps 
\[
\ihom (M, N) \rightrightarrows \ihom (A \otimes M, N). 
\]
There is the corresponding construction of $\ihom_{A\op} (M, N)$ when $M$, $N$ are both DG right $A$-modules.

If $X$ is a DG left (respectively right) $A$-module, for any chain complex $Y$, $\ihom (X, Y)$ is naturally a DG right (resp. left) $A$-module, as in the non DG case. Using this structure, the DG counterparts of the above properties hold. 
 In particular, for DG algebras $A$, $B$ and a DG left $B$, right $A$ bimodule $M$, one has the adjunction:
 \[
 M \otimes_ A - \  : \   A \dash \dgmod \rightleftarrows B \dash \dgmod \ : \ihom _B(M, -). 
 \]

Similarly, for right DG modules:
\[
- \otimes_ B M \  : \  \dgmod\dash B \rightleftarrows  \dgmod\dash A \ : \ihom _{A\op}(M, -). 
 \]

\subsection{The adjunction   for left DG  modules}
We work in the context considered in  Section \ref{sect:kcx}, so that $ K^\vee (A)$ is  a left $(\grad A)\qd$, right $A$ bimodule (where $(\grad A)\qd$ is considered as a DG algebra).

The adjunction given in the previous subsection yields the following, in which flatness and projectivity refer to the underlying (non-DG) structures:

\begin{prop}
\label{prop:Kvee_adjunction}
The complex $K^\vee := K^\vee (A)$ yields the adjunction 
\[
K^\vee \otimes_A -  \ : \  
A\dash\dgmod 
\rightleftarrows 
(\grad A)\qd \dash \dgmod 
\ : \ 
\ihom_{(\grad A)\qd} (K^\vee, -) .
\]
\begin{enumerate}
\item 
If $K^\vee$ is flat as a right $A$-module, then $K^\vee \otimes_A -$ is exact. 
\item 
If $K^\vee$ is projective as a $(\grad A)\qd$-module, then $\ihom_{(\grad A)\qd} (K^\vee, -)$ is exact.
\end{enumerate}
For $X$ a DG  $A$-module, the unit of the adjunction gives the natural map
\[
X \rightarrow \ihom_{(\grad A)\qd } (K^\vee , K^\vee \otimes_A X)
\]
and, for $Y$ a DG $(\grad A)\qd$-module, the counit gives the natural map:
\[
K^\vee \otimes _A \ihom_{(\grad A)\qd} (K^\vee , Y)
\rightarrow Y.
\]
\end{prop}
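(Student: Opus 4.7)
The plan is to observe that this proposition is essentially a direct specialization of the general DG hom--tensor adjunction formalism recalled at the end of the preceding subsection. The preceding subsection establishes that for any DG algebras $A$ and $B$ and any DG left-$B$, right-$A$ bimodule $M$, one has an adjunction $M \otimes_A - \dashv \ihom_B(M,-)$ between $A\dash\dgmod$ and $B\dash\dgmod$, with explicit unit $x \mapsto (m \mapsto m \otimes x)$ and counit $m \otimes \psi \mapsto \psi(m)$. We already have in hand (by the cited Proposition~6.4 of \cite{MR4398644}) that $K^\vee(A)$ is a complex of left $(\grad A)\qd$-, right $A$-bimodules, where $A$ is regarded as a DG algebra with zero differential. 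Thus the first step is simply to instantiate the general adjunction with $B = (\grad A)\qd$ and $M = K^\vee$, which yields the asserted adjunction together with the displayed unit and counit formulas verbatim.

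The second step is to verify the exactness statements. Since $K^\vee \otimes_A -$ is a left adjoint between abelian categories of DG modules, it is always right exact and preserves colimits; exactness in the remaining direction is equivalent to flatness of the underlying (non-DG) graded right $A$-module, because a short exact sequence of left DG $A$-modules is exact in each degree and the tensor product is formed degreewise. Dually, $\ihom_{(\grad A)\qd}(K^\vee,-)$ is always left exact, and exactness in the remaining direction reduces to the corresponding projectivity statement for $K^\vee$ as a $(\grad A)\qd$-module, the DG internal hom being computed as an equalizer of the underlying graded homs.

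The third step is a compatibility check: one must confirm that the general formulas for unit and counit, when specialized to $M = K^\vee$, produce DG module maps (i.e.\ commute with the total differentials), not merely graded ones. This is automatic from the general framework, since $K^\vee$ is assumed to be a DG bimodule; it amounts to unwinding the Koszul sign conventions in $\ihom(C,D)$ and $C \otimes D$ that make the hom-tensor adjunction an isomorphism in the closed symmetric monoidal category of chain complexes.

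There is no serious obstacle. The only content specific to $K^\vee$ rather than a generic DG bimodule is the verification that the formula for $d_{K^\vee}$ in Definition \ref{defn:kcx} does define a compatible bimodule differential, but this is precisely what Positselski's Proposition~6.4 provides and is the ingredient on which the present proposition rests. The mild subtlety worth flagging is that $A$ here plays the role of a DG algebra concentrated in degree zero with trivial differential, so that the right $A$-module structure on $K^\vee$ is automatically compatible with $d_{K^\vee}$ once the Leibniz rule is checked on generators, and all appeals to DG flatness or projectivity reduce to their underlying graded analogues.
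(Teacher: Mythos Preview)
Your proposal is correct and follows essentially the same approach as the paper: the paper introduces the proposition with the sentence ``The adjunction given in the previous subsection yields the following, in which flatness and projectivity refer to the underlying (non-DG) structures,'' and offers no further proof, treating it as an immediate specialization of the general DG hom--tensor adjunction just recalled, together with the bimodule structure on $K^\vee$ supplied by Positselski's Proposition~6.4. Your elaboration of the exactness statements and the compatibility check spells out what the paper leaves implicit.
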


The explicit description of the complex $K^\vee$ given in Section \ref{subsect:Kvee_leftaug} allows these functors to be identified below.  For concision, we use the following notation:

\begin{nota}
\label{nota:e'}
Write 
\begin{enumerate}
\item 
$e'_V (1) \in \hom_R (V,R) \otimes _R F_1 A$ (see Notation \ref{nota:e_e_prime}) in Sweedler-type notation as 
 $ 
e'_V (1)
=
e'_1 \otimes e'_2
$ 
(i.e., both leaving the summation implicit and omitting $V$);
\item 
$\otimes^{e'}$ and $\ihom^{e'}$  to indicate that the differential of the construction is twisted by a term involving $e':= e'_1 \otimes e_2'$. 
\end{enumerate}
\end{nota}

\begin{lem}
\label{lem:K_left_right_adjs}
\ 
\begin{enumerate}
\item 
For $X$ a DG $A$-module, there is a natural isomorphism of DG $(\grad A)\qd$-modules
\[
K^\vee \otimes_A X 
\cong 
(\grad A)\qd \otimes_R^{e'} X
\]
where the differential on $(\grad A)\qd \otimes_R X$ is given by 
\[
b \otimes x \  \mapsto \  (db) \otimes x + (-1)^{|b|} b e'_1 \otimes e'_2 x  + (-1)^{|b|} b \otimes d_X x .
\]
\item 
For $Y$ a DG $(\grad A)\qd$-module, there is a natural isomorphism of DG $A$-modules
\[
\ihom_{(\grad A)\qd} (K^\vee, Y) 
\cong 
\ihom_R^{e'} (A, Y),
\]
where the action on $\ihom_R^{e'} (A, Y)$ is induced by the right $A$-module structure of $A$.
The differential on $\ihom_R^{e'} (A, Y)$ is given by 
\[
(df) (a) = d (f(a)) - e'_1 f (e'_2 a).
\]
\end{enumerate} 
\end{lem}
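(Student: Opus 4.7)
Both parts follow from standard tensor--hom cancellation applied to the bimodule $K^\vee(A) = (\grad A)\qd \otimes_R A$, combined with a direct computation of the induced differentials using the explicit formula for $d_{K^\vee}$ in Definition \ref{defn:kcx}. At the level of the underlying bigraded modules there is nothing to prove beyond invoking the usual adjunctions; the content of the lemma lies in identifying the twist in the differential.

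For part (1), the cancellation
\[
\bigl((\grad A)\qd \otimes_R A\bigr) \otimes_A X \;\cong\; (\grad A)\qd \otimes_R X
\]
identifies the underlying graded left $(\grad A)\qd$-modules; it sends $b \otimes 1_A \otimes x$ to $b \otimes x$. Applying the Leibniz rule for the differential on $K^\vee \otimes_A X$ to $b \otimes 1_A \otimes x$ and substituting Definition \ref{defn:kcx} produces three contributions: $(db) \otimes x$ from the differential on $(\grad A)\qd$; the term $(-1)^{|b|}\, b e'_1 \otimes e'_2 x$ obtained by using the right $A$-action on $1_A$ and then the module structure of $X$ to absorb the $A$-factor of $e'_V(1) = e'_1\otimes e'_2$; and $(-1)^{|b|}\, b \otimes d_X x$ from $d_X$. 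Well-definedness of this differential with respect to the $R$-balanced relations is immediate from the corresponding property of $d_{K^\vee}$, which ultimately rests on Lemma \ref{lem:qhat_e'}.

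For part (2), the standard extension-of-scalars adjunction gives
\[
\ihom_{(\grad A)\qd}\bigl((\grad A)\qd \otimes_R A,\, Y\bigr) \;\cong\; \ihom_R(A, Y),
\]
via $f \mapsto g$, where $g(a) := f(1 \otimes a)$; the induced left $A$-module structure on the target is the one dictated by the right $A$-action on $K^\vee$, exactly as in the non-DG case. For the differential, evaluate $(df)(1 \otimes a) = d_Y f(1 \otimes a) - (-1)^{|f|} f(d_{K^\vee}(1 \otimes a))$ and use $d(1) = 0$ in $(\grad A)\qd$ (a consequence of the derivation property) to obtain $d_{K^\vee}(1 \otimes a) = e'_1 \otimes e'_2 a$. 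Exploiting $(\grad A)\qd$-linearity of $f$ then translates this into the asserted twisted formula for $g$.

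The main obstacle is purely sign bookkeeping: keeping Koszul signs consistent between the DG tensor product, the DG internal hom and the adjunction unit/counit. The identities required for well-definedness of the twists --- over $\otimes_R$ in (1) and for the $R$-balanced hom in (2) --- are not additional content, being the same consequences of Lemma \ref{lem:qhat_e'} that ensured $d_{K^\vee}$ was well defined in the first place; and the squaring-to-zero of each twisted differential is inherited from $d_{K^\vee}^2 = 0$ via functoriality of the cancellation isomorphisms.
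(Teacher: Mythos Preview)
Your proposal is correct and follows the same approach as the paper: both reduce to the standard tensor--hom cancellations for $K^\vee = (\grad A)\qd \otimes_R A$ and then transport the differential $d_{K^\vee}$ through these isomorphisms. The paper's proof is a single sentence, with its only explicit remark being the sign cancellation in part (2) --- the $(-1)^{|f|}$ from the internal-hom differential is killed by the Koszul sign from commuting $f$ past the degree-one element $e'_1$ when invoking $(\grad A)\qd$-linearity --- which is precisely the step you gesture at under ``exploiting $(\grad A)\qd$-linearity'' and ``sign bookkeeping'' but do not spell out; making that cancellation explicit would sharpen your write-up.
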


\begin{proof}
This follows from the description of $K^\vee$. (Note that, for the differential on $\ihom_R^{e'} (A, Y)$, the sign $(-1)^{|f|}$ does not appear, since there is a second Koszul sign $(-1)^{|f|}$ arising from the interchange of $f$ and $e'_1$.)
\end{proof}

Using these natural isomorphisms, the adjunction unit and counit are described as follows:

\begin{prop}
\label{prop:(co)unit_Kvee_adjunction}
For the adjunction $K^\vee \otimes_A - \ \dashv \ \ihom _{(\grad A)\qd} (K^\vee , -)$:
\begin{enumerate}
\item 
for $X$ a DG $A$-module, the unit is the natural transformation:
\begin{eqnarray*}
X & \rightarrow & \ihom_R^{e'} (A, (\grad A)\qd \otimes_R^{e'} X ) 
\\
x & \mapsto & (a \mapsto (1 \otimes ax)); 
\end{eqnarray*}
\item 
for $Y$ a DG $(\grad A)\qd$-module, the counit is the natural transformation:
\begin{eqnarray*}
(\grad A)\qd \otimes_R^{e'} \ihom_R^{e'} (A,Y) 
&
\rightarrow & Y 
\\
b \otimes \psi &\mapsto & b \psi (1).
\end{eqnarray*}
\end{enumerate}
\end{prop}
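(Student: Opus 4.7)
The plan is to observe that this proposition is essentially a bookkeeping exercise: combine the standard module-theoretic formulas for the unit and counit of the adjunction $M \otimes_A - \dashv \hom_B(M,-)$ (recalled at the top of the section with $M = K^\vee$ and $B = (\grad A)\qd$) with the explicit DG isomorphisms already produced in Lemma \ref{lem:K_left_right_adjs}. Since those isomorphisms are stated as isomorphisms of DG $(\grad A)\qd$- and DG $A$-modules respectively, the twisted differentials on $(\grad A)\qd \otimes_R^{e'} X$ and $\ihom_R^{e'}(A, Y)$ are built into the identifications, so no separate verification of chain-map compatibility is required beyond what is implicit in Lemma \ref{lem:K_left_right_adjs}.

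First I would treat the unit. Under the bimodule isomorphism $K^\vee \cong (\grad A)\qd \otimes_R A$ of Proposition \ref{prop:kcx_R_C}, any $(\grad A)\qd$-linear map out of $K^\vee$ is determined by its restriction to the image of $1 \otimes A$, and this restriction realizes the isomorphism $\ihom_{(\grad A)\qd}(K^\vee, -) \cong \ihom_R^{e'}(A, -)$ of Lemma \ref{lem:K_left_right_adjs}(2). I would therefore take the standard unit $x \mapsto (m \mapsto m \otimes x)$, restrict to elements of the form $m = 1 \otimes a$, and then apply the isomorphism $K^\vee \otimes_A X \cong (\grad A)\qd \otimes_R^{e'} X$ from Lemma \ref{lem:K_left_right_adjs}(1). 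The latter absorbs the $A$-factor of $K^\vee$ into the left $A$-action on $X$, sending $(1 \otimes a) \otimes x$ to $1 \otimes ax$, yielding the asserted formula.

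Next I would handle the counit by running the identifications in the opposite direction. An element $\psi \in \ihom_R^{e'}(A, Y)$ corresponds under Lemma \ref{lem:K_left_right_adjs}(2) to the $(\grad A)\qd$-linear map $\tilde\psi \colon b \otimes a \mapsto b\,\psi(a)$; and an element $b \otimes \psi$ in $(\grad A)\qd \otimes_R^{e'} \ihom_R^{e'}(A,Y)$ lifts to $(b \otimes 1) \otimes \psi$ in $K^\vee \otimes_A \ihom_{(\grad A)\qd}(K^\vee, Y)$ (any other lift gives the same value after applying the counit, by $A$-linearity). The standard counit $m \otimes \psi \mapsto \psi(m)$ then evaluates $\tilde\psi$ on $b \otimes 1$, giving $b\,\psi(1)$ as required.

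The main potential obstacle is ensuring consistency of conventions. In particular, one has to check that the Sweedler-type notation $e' = e'_1 \otimes e'_2$ used in the twisted differentials genuinely matches the image of $e'_V(1) \in \hom_R(V,R) \otimes_R F_1 A$ under the bimodule identification of Proposition \ref{prop:kcx_R_C}, and that the Koszul signs agree with those implicit in Lemma \ref{lem:K_left_right_adjs}(2) (whose differential involves moving $e'_1$ past $f$ and absorbs the expected sign $(-1)^{|f|}$). Once these conventions are pinned down, the two formulas follow by direct unwinding from the standard non-DG description of the adjunction, so no further calculation is needed.
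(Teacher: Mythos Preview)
Your proposal is correct and is essentially what the paper does: the paper gives no explicit proof for this proposition, treating it as an immediate consequence of the general unit/counit formulas recalled at the start of the section (specialized in the Example there to $M = B \otimes_R A$) together with the identifications of Lemma~\ref{lem:K_left_right_adjs}. One small citation point: you invoke Proposition~\ref{prop:kcx_R_C}, but that is already the specialization to the case $A \cong R \otimes_\kring C$; at this stage the relevant input is just Definition~\ref{defn:kcx}, which gives $K^\vee(A) = (\grad A)\qd \otimes_R A$ in the general left-augmented setting.
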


\begin{exam}
\label{exam:(co)unit_R}
\ 
\begin{enumerate}
\item 
Since  $A$ is left augmented over $R$,  $R$ has a canonical left $A$-module structure.
Then, taking $X =R$ (considered as a complex concentrated in degree zero), the adjunction unit gives:
\[
R \rightarrow \ihom_R^{e'} (A, (\grad A)\qd). 
\]
\item 
Likewise, we can consider $R$ as a DG $(\grad A)\qd$-module, since $(\grad A)\qd$ is cohomologically $\nat$-graded and is $R$ in degree zero. Taking $Y=R$, the adjunction counit gives: 
\[
(\grad A)\qd \otimes _R^{e'} \ihom_R^{e'} (A, R) \rightarrow R.
\] 
\end{enumerate}
If $A$ is {\em homogeneous} quadratic, these can be described using the homogeneous Koszul complexes of \cite[Chapter 2]{MR4398644} (see Section \ref{subsect:recollect_homog_Koszul} below).
\end{exam}

\subsection{Specializing to the case $A \cong_{\mathrm{bimod}} R \otimes_\kring C$}
\label{subsect:laug_R_C}

We now specialize to the context of Section \ref{subsect:kcx_R_C}, where the product of $A$ induces an isomorphism of $R \otimes C\op$-modules $ R \otimes_\kring C \stackrel{\cong}{\rightarrow} A$.
 In this case, Lemma \ref{lem:K_left_right_adjs} becomes:

\begin{lem}
\label{lem:identify_adjoints_R_C}
\ 
\begin{enumerate}
\item 
For $X$ a DG $A$-module, there is a natural isomorphism of DG $(\grad A)\qd$-modules
\[
K^\vee \otimes_A X 
\cong 
C\qd \otimes_\kring^{e'} X
\]
where $C\qd \otimes_\kring^{e'} X$ has underlying object $C\qd \otimes_\kring X$ with 
 $R$-module structure given by Lemma \ref{lem:left_grad_A_module}. 

The differential on $C\qd \otimes_\kring ^{e'}X$ is given by 
\[
b \otimes x \  \mapsto \   (-1)^{|b|} b e'_1 \otimes e'_2 x  + (-1)^{|b|} b \otimes d_X x .
\]
\item 
For $Y$ a DG $(\grad A)\qd$-module, there is a natural isomorphism:
\[
\ihom_{(\grad A)\qd} (K^\vee, Y) 
\cong 
\ihom_\kring^{e'}  (C , Y)
\]
of DG $A$-modules, where the left $A$-module structure of $\ihom_\kring^{e'}  (C , Y)$ is given by transport along the isomorphism $\ihom_\kring^{e'}  (C , Y) \cong \ihom_R^{e'} (A, Y)$.  

The differential on $\ihom_\kring^{e'} (C, Y)$ is given by 
\[
(df) (c) = d (f(c)) - e'_1 f (e'_2 c).
\]
\end{enumerate} 
\end{lem}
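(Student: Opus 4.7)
The plan is to deduce this lemma by specializing Lemma~\ref{lem:K_left_right_adjs} along the structural isomorphisms provided by the hypotheses of Section~\ref{subsect:kcx_R_C}: namely, the isomorphism $A \cong R \otimes_\kring C$ of $R\otimes C\op$-modules and, dually, the isomorphism $(\grad A)\qd \cong C\qd \otimes_\kring R$ of $C\qd \otimes R\op$-modules supplied by Proposition~\ref{prop:dual_C_R}. These give rise to the unadorned identifications of underlying graded modules in the statement; it remains to transport the module structures and the twisted differentials.

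For part~(1), Lemma~\ref{lem:K_left_right_adjs} gives $K^\vee \otimes_A X \cong (\grad A)\qd \otimes_R^{e'} X$ as DG $(\grad A)\qd$-modules. At the level of underlying graded objects, $(\grad A)\qd \otimes_R X \cong (C\qd \otimes_\kring R) \otimes_R X \cong C\qd \otimes_\kring X$, so an element $c\otimes x \in C\qd \otimes_\kring X$ corresponds to $(c \otimes 1_R) \otimes_R x$. The $(\grad A)\qd$-action transports to the one described in Lemma~\ref{lem:left_grad_A_module}, which is exactly the formulation used in the statement. For the differential, the only point to check is that the $(db)\otimes x$ contribution in Lemma~\ref{lem:K_left_right_adjs} vanishes on such elements. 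Writing $b = c \otimes 1_R$ and applying the Leibniz rule for $d$ on $(\grad A)\qd$ gives $d(c \otimes 1_R) = (dc)\cdot 1_R + (-1)^{|c|} c \cdot \hat{q}(1_R)$; by Theorem~\ref{thm:DG_split_R_C} the differential vanishes on the generating $\kring$-bimodule $\hom_\kring(W,\kring)$ and hence on the whole subalgebra $C\qd$, while $\hat{q}(1_R) = 0$ by Lemma~\ref{lem:qhat_derivation}. The remaining two terms of the differential transport verbatim; here one uses Lemma~\ref{lem:compat_e} to identify $e'_V(1)$ with the image of $e_W(1) \in \hom_\kring(W,\kring) \otimes_\kring W$, so that in the formula $c e'_1 \in C\qd$ is simply the product in $C\qd$ and $e'_2 x \in X$ uses the left $C$-action via $C \hookrightarrow A$.

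For part~(2), start from $\ihom_{(\grad A)\qd}(K^\vee, Y) \cong \ihom_R^{e'}(A, Y)$, provided by Lemma~\ref{lem:K_left_right_adjs}. The isomorphism $A \cong R \otimes_\kring C$ together with Lemma~\ref{lem:left_A_mod_hom} gives an identification $\ihom_R(A, Y) \cong \ihom_\kring(C, Y)$ of left $A$-modules (the $A$-module structure on the right-hand side is by transport of structure, as stated). The twisted differential transports directly: evaluating the formula $(df)(a) = d(f(a)) - e'_1 f(e'_2 a)$ on $a = c \in C \hookrightarrow A$ and using Lemma~\ref{lem:compat_e} to view $e'_2$ as lying in $W \subset C$ yields the claimed expression, where $e'_2 c$ is the product in $C$ and $e'_1 \in \hom_\kring(W,\kring) \subset C\qd$ acts on $f(e'_2 c) \in Y$ via the $(\grad A)\qd$-module structure of $Y$. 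The main (and essentially only) step requiring care is the vanishing argument in part~(1); everything else is a routine transport of structure using the previously established isomorphisms.
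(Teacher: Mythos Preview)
Your proposal is correct and follows essentially the same approach as the paper. The paper treats this lemma as an immediate specialization of Lemma~\ref{lem:K_left_right_adjs} (introducing it with ``Lemma~\ref{lem:K_left_right_adjs} becomes:''), with the key vanishing of the $(db)\otimes x$ term already recorded in the proof of Proposition~\ref{prop:kcx_R_C} via Theorem~\ref{thm:DG_split_R_C}; your argument reproduces exactly this, though the Leibniz expansion with $\hat q(1_R)=0$ is redundant since the inclusion $C\qd \hookrightarrow (\grad A)\qd$ already identifies $c\otimes 1_R$ with $c$, on which $d$ vanishes directly.
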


Likewise, Proposition \ref{prop:(co)unit_Kvee_adjunction} specializes to:

\begin{prop}
\label{prop:(co)unit_Kvee_C}
For the adjunction $K^\vee \otimes - \ \dashv \ \hom (K^\vee , -)$, 
\begin{enumerate}
\item 
for $X$ a DG $A$-module, the unit is the natural transformation:
\begin{eqnarray*}
X & \rightarrow & \ihom_\kring^{e'} (C, C\qd \otimes_\kring^{e'} X ) 
\\
x & \mapsto & (c \mapsto (1 \otimes cx)); 
\end{eqnarray*}
\item 
for $Y$ a DG $(\grad A)\qd$-modules, the counit is the natural transformation:
\begin{eqnarray*}
C\qd \otimes_\kring^{e'} \ihom_\kring^{e'} (C, Y) 
&
\rightarrow & Y 
\\
b \otimes \psi &\mapsto & b \psi (1).
\end{eqnarray*}
\end{enumerate}
\end{prop}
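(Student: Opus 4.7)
The plan is to derive this statement directly from Proposition \ref{prop:(co)unit_Kvee_adjunction} by transporting the unit and counit formulas across the natural isomorphisms furnished by Lemma \ref{lem:identify_adjoints_R_C}. Since those isomorphisms are natural in $X$ (respectively $Y$) and compatible with the adjunction $K^\vee \otimes_A - \dashv \ihom_{(\grad A)\qd}(K^\vee,-)$, the induced adjunction $C\qd \otimes_\kring^{e'} - \dashv \ihom_\kring^{e'}(C,-)$ has unit and counit obtained by composing those of Proposition \ref{prop:(co)unit_Kvee_adjunction} with the comparison isomorphisms on either side.

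First I would handle the unit. Proposition \ref{prop:(co)unit_Kvee_adjunction} gives the unit $x \mapsto (a \mapsto 1 \otimes ax)$ from $X$ to $\ihom_R^{e'}(A,(\grad A)\qd \otimes_R^{e'} X)$. The isomorphism of Lemma \ref{lem:identify_adjoints_R_C}(2) identifies $\ihom_R^{e'}(A,-)$ with $\ihom_\kring^{e'}(C,-)$ via restriction along the inclusion $C \hookrightarrow A$ coming from Hypothesis \ref{hyp:A} (using that $A \cong R \otimes_\kring C$ as an $R \otimes C\op$-module, which is precisely the hom-tensor adjunction $\hom_R(R \otimes_\kring C, -) \cong \hom_\kring(C, -)$). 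Under this restriction and the identification $(\grad A)\qd \otimes_R^{e'} X \cong C\qd \otimes_\kring^{e'} X$ from Lemma \ref{lem:identify_adjoints_R_C}(1), the map $a \mapsto 1 \otimes ax$ becomes $c \mapsto 1 \otimes cx$ for $c \in C$, which is precisely the asserted formula.

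For the counit, I would start with the formula $b \otimes \psi \mapsto b\psi(1)$ given by Proposition \ref{prop:(co)unit_Kvee_adjunction}. Writing $b \in (\grad A)\qd$ as an element of $C\qd$ (via the inclusion $C\qd \hookrightarrow C\qd \otimes_\kring R \cong (\grad A)\qd$ from Proposition \ref{prop:dual_C_R}) and $\psi \in \ihom_R^{e'}(A,Y)$ restricted to $C$, the value $\psi(1)$ is $\psi(1_C)$ since $1_A = \eta_R(1) \cdot \eta_C(1)$ identifies with $1_C \in C$ under the inclusion $C \hookrightarrow A$. Thus the counit becomes $b \otimes \psi \mapsto b\psi(1)$ on $C\qd \otimes_\kring^{e'} \ihom_\kring^{e'}(C,Y)$, as claimed.

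The main point to verify carefully is that the isomorphisms of Lemma \ref{lem:identify_adjoints_R_C} are compatible with the adjunction structure, equivalently that the evaluation pairing used in defining the counit on $K^\vee$ factors correctly through the distributive decomposition $A \cong R \otimes_\kring C$. This reduces, via Lemma \ref{lem:compat_e} relating $e_W(1)$ and $e_V(1)$, to the tautological compatibility between the hom-tensor adjunction for $R/\kring$ and the identification of $e'$ in the two presentations of the twisted differential. Once this compatibility is in place, the formulas for unit and counit follow by the naturality argument above; no further calculation is needed.
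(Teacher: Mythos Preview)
Your proposal is correct and matches the paper's approach: the paper presents this proposition simply as the specialization of Proposition~\ref{prop:(co)unit_Kvee_adjunction} via the identifications of Lemma~\ref{lem:identify_adjoints_R_C}, without giving a separate proof, and your argument spells out exactly that transport. The appeal to Lemma~\ref{lem:compat_e} at the end is perhaps more than is needed (the compatibility of the identifications with the adjunction is already built into Lemma~\ref{lem:identify_adjoints_R_C}, which comes from the description of $K^\vee$ in Proposition~\ref{prop:kcx_R_C}), but it does no harm.
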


\begin{rem}
\label{rem:adj_unit_counit_R_C_case}
In this  context:
\begin{enumerate}
\item
the adjunction unit, considered as a morphism of complexes of $\kring$-modules, only depends on the underlying left $C$-module structure of $X$; 
\item 
the adjunction counit only depends on the underlying left $C\qd$-module structure of $Y$.
\end{enumerate}
\end{rem}

\subsection{The adjunction   for DG right modules}

The general theory gives the following:

\begin{prop}
\label{prop:right_Kvee_adjunction}
The complex $K^\vee$ yields the adjunction 
\[
- \otimes_{(\grad A)\qd} K^\vee    \ : \  
\dgmod\dash  (\grad A)\qd
\rightleftarrows 
 \dgmod  \dash A
\ : \ 
\ihom_{A\op} (K^\vee, -) .
\]
\end{prop}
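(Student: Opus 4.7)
The plan is to invoke the general tensor-hom adjunction for DG bimodules recalled earlier in the preliminaries of this section, applied with $K^\vee$ as the bimodule and with $B := (\grad A)\qd$, $A$ as DG algebra (with $A$ concentrated in cohomological degree zero). The relevant abstract statement, recorded for right DG modules, reads:
\[
- \otimes_B M \ : \ \dgmod\dash B \rightleftarrows \dgmod\dash A \ : \ \ihom_{A\op}(M, -)
\]
whenever $M$ is a DG left $B$-, right $A$-bimodule. Specialising to $M := K^\vee$ will yield the proposition at once.

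The only input that must be verified is that $K^\vee$ is indeed a DG left $(\grad A)\qd$-, right $A$-bimodule. This is precisely the content of the earlier cited Proposition of \cite[Chapter 6]{MR4398644} recalled in Section \ref{subsect:Kvee_leftaug}: the twisted differential $d_{K^\vee}$ of Definition \ref{defn:kcx} is compatible with the bimodule structure inherited from $(\grad A)\qd \otimes_R A$. Concretely, the left action of $(\grad A)\qd$ satisfies a graded Leibniz rule with respect to $d_{K^\vee}$ because the differential of $(\grad A)\qd$ does; the right $A$-action commutes strictly with $d_{K^\vee}$, since the twisting term $(-1)^{|x|} x\, e'_V(1) a$ involves only right multiplication on the rightmost factor and $A$ carries the zero differential.

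With this structural input in hand, the proposition is an immediate consequence of the abstract adjunction. Explicitly, for $N$ a DG right $(\grad A)\qd$-module the unit is
\[
N \rightarrow \ihom_{A\op}\bigl(K^\vee, N \otimes_{(\grad A)\qd} K^\vee\bigr), \qquad n \mapsto \bigl(k \mapsto n \otimes k\bigr),
\]
and for $M$ a DG right $A$-module the counit is the evaluation
\[
\ihom_{A\op}(K^\vee, M) \otimes_{(\grad A)\qd} K^\vee \rightarrow M, \qquad \psi \otimes k \mapsto \psi(k).
\]
There is no genuine obstacle to be overcome here: all the substantive work lies in the construction of the DG bimodule $K^\vee$ carried out in Section \ref{sect:kcx}, and this proposition is the mirror, on the side of right modules, of the adjunction established for left modules in Proposition \ref{prop:Kvee_adjunction}.
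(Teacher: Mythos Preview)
Your proposal is correct and matches the paper's approach exactly: the paper does not give an explicit proof of this proposition, prefacing it only with ``Once again, the general theory gives an adjunction,'' meaning precisely the abstract tensor--hom adjunction for DG bimodules recalled in the Recollections subsection, applied to the bimodule $K^\vee$ whose DG bimodule structure was established via \cite[Proposition 6.4]{MR4398644}. Your write-up simply spells out what the paper leaves implicit.
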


Specializing to the case where $A$ is isomorphic to $R \otimes _\kring C$ as $R \otimes C\op$-modules, one has: 

\begin{prop}
\ 
\begin{enumerate}
\item 
For $X$ a DG right $(\grad A)\qd$-module, there is an isomorphism of complexes 
\[
X \otimes_{(\grad A)\qd} K^\vee \cong X \otimes_\kring^{e'} C
\]
where the right hand side is equipped with the differential
$$
d( x \otimes c) = d_X x \otimes c + (-1)^{|x|} x e_1' \otimes e_2' c.
$$
\item
For $Y$ a DG right $A$-module, there is an isomorphism of complexes 
\[
\ihom_{A \op} (K^\vee, Y) \cong \ihom_{\kring\op}^{e'} (C\qd, Y)
\]
where the right hand side has differential given by 
$$
(df) (b) = d_Y (f(b)) - (-1)^{|f|} f (b e_1')e_2'.
$$
\end{enumerate}
\end{prop}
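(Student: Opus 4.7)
The approach mirrors the derivation of Lemma \ref{lem:identify_adjoints_R_C}, now for the right-module adjunction of Proposition \ref{prop:right_Kvee_adjunction}. Both parts combine the underlying bimodule description of $K^\vee$ from Proposition \ref{prop:kcx_R_C} with the relevant tensor-hom manipulations, and then transfer the differential of $K^\vee$ explicitly along the resulting graded isomorphism.

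For part (1), the underlying graded module of $K^\vee$ is $(\grad A)\qd \otimes_R A$ by Definition \ref{defn:kcx}, so
\[
X \otimes_{(\grad A)\qd} K^\vee \;\cong\; X \otimes_R A \;\cong\; X \otimes_R (R \otimes_\kring C) \;\cong\; X \otimes_\kring C,
\]
invoking Hypothesis \ref{hyp:A}. To compute the transferred differential, take a representative $x \otimes c$ corresponding to $x \otimes (1 \otimes c) \in X \otimes_{(\grad A)\qd}((\grad A)\qd \otimes_R A)$. Since $d(1_{(\grad A)\qd}) = \hat q(1) = 0$ by Lemma \ref{lem:qhat_derivation}, the action of $d_{K^\vee}$ on $1 \otimes c$ reduces to $e'_V(1) \cdot c$. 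By Lemma \ref{lem:compat_e} one may write $e'_V(1) = e'_1 \otimes e'_2$ with $e'_1 \in \hom_\kring(W,\kring) \subset C\qd$ and $e'_2 \in W \subset C$, so that $e'_2 c$ lies in $C$. Pushing this through the balanced tensor $X \otimes_{(\grad A)\qd} -$ yields the term $(-1)^{|x|}(x e'_1) \otimes (e'_2 c)$, giving the stated formula.

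For part (2), apply the tensor-hom adjunction to the same underlying bimodule description, combined with $(\grad A)\qd \cong C\qd \otimes_\kring R$ from Proposition \ref{prop:dual_C_R}:
\[
\ihom_{A\op}(K^\vee, Y) \;\cong\; \ihom_{R\op}((\grad A)\qd, Y) \;\cong\; \ihom_{\kring\op}(C\qd, Y),
\]
given by evaluation on $- \otimes 1$ at each step. For the differential, transfer via $f \leftrightarrow \tilde f$ defined by $\tilde f(b \otimes a) := f(b) \cdot a$ and evaluate $(d\tilde f)(b \otimes 1) = d_Y f(b) - (-1)^{|\tilde f|} \tilde f(d_{K^\vee}(b \otimes 1))$. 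Using the formula for $d_{K^\vee}$ together with the vanishing $d|_{C\qd} = 0$ (which follows from Theorem \ref{thm:DG_split_R_C}) produces the stated expression. The main obstacle throughout is bookkeeping—tracking Koszul signs and ensuring that the various sub-bimodule structures remain compatible with the identifications—rather than anything conceptual.
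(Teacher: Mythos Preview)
Your proposal is correct and follows essentially the same approach as the paper: both argue by direct analogy with the left-module computation (the paper cites Lemma~\ref{lem:K_left_right_adjs}, you cite the closer specialized analogue Lemma~\ref{lem:identify_adjoints_R_C}), reducing the underlying graded identifications to the bimodule descriptions $K^\vee \cong (\grad A)\qd \otimes_R A \cong C\qd \otimes_\kring A$ and then transferring the differential via $e'$. Your explicit unpacking of the differential---using $d(1_{(\grad A)\qd}) = 0$ in part~(1) and $d|_{C\qd} = 0$ in part~(2)---is exactly the content of the paper's brief ``follows as for'' together with its parenthetical remark on why the sign $(-1)^{|f|}$ persists in the right-module $\ihom$ case.
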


\begin{proof}
This follows as for Lemma \ref{lem:K_left_right_adjs}. (Note that, in the $\ihom$ case, since we are working with right modules, there is no second Koszul sign to take into account, thus the sign $(-1)^{|f|}$ persists.)
\end{proof}

\section{The  adjunctions induced by $\vk (A)$ - the right augmented case}
\label{sect:adjoints_bis}

The purpose of this short section is to present the analogue of the adjunctions of Section \ref{sect:adjoints} in the right augmented case, using the complex $\vk(A)$ of Definition \ref{defn:right_kcx}.

\begin{prop}
\label{prop:vk_adjunctions}
The complex $\vk:= \vk (A)$ yields adjunctions
\[
\vk \otimes_{(\rgrad A)\qd} -  \ : \ 
(\rgrad A)\qd\dash \dgmod 
\rightleftarrows
A \dash \dgmod 
\ : \ 
\ihom_A (\vk, -)
\]
and
\[
- \otimes_A \vk \ : \ 
\dgmod\dash A 
\rightleftarrows
\dgmod \dash (\rgrad A)\qd
\ : \ 
\ihom _{((\rgrad A)\qd)\op}
(\vk, -). 
\]
\end{prop}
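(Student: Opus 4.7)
The plan is to invoke the general tensor-hom adjunction for DG bimodules, applied to $\vk := \vk(A)$, once its structure of DG left $A$-, right $(\rgrad A)\qd$-bimodule has been established. Here $A$ is regarded as a DG algebra concentrated in degree zero, while $(\rgrad A)\qd$ carries the DG algebra structure provided by the right-augmented analogue of Theorem \ref{thm:DG_posit}.

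The first step is to verify the DG bimodule structure on $\vk = A \otimes_C (\rgrad A)\qd$. The underlying graded bimodule has left $A$-action and right $(\rgrad A)\qd$-action induced by multiplication on the two factors. The differential $d_\vk$ of Definition \ref{defn:right_kcx} decomposes into an internal piece $a \otimes dy$, which is manifestly compatible with both actions, and a twisted term $a\,{}_U e'(1)\,y$. Well-definedness of this twisted term modulo the tensor product over $C$ follows from the right-augmented counterpart of Lemma \ref{lem:qhat_e'}, which controls how ${}_U e'(1)$ interacts with the two $C$-module structures. The Leibniz identity for the left $A$-action is then immediate, since $A$ has trivial differential; the identity for the right $(\rgrad A)\qd$-action reduces to the derivation property of the differential on $(\rgrad A)\qd$ together with a short Koszul-sign check.

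With the DG bimodule structure in hand, both asserted adjunctions are immediate instances of the general tensor-hom adjunction for DG bimodules reviewed at the start of Section \ref{sect:adjoints}. The first is obtained by viewing $\vk$ as a DG $(A, (\rgrad A)\qd)$-bimodule relating left DG modules, yielding $\vk \otimes_{(\rgrad A)\qd} -$ as left adjoint to $\ihom_A(\vk, -)$. The second arises from the dual (right-module) version of the same formalism, producing $- \otimes_A \vk$ as left adjoint to $\ihom_{((\rgrad A)\qd)\op}(\vk, -)$.

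An alternative, more economical route is to appeal to Proposition \ref{prop:left/right_aug}: passing to opposite rings converts the present right-augmented setting into the left-augmented one, under which $\vk(A)$ corresponds to $K^\vee(A\op)$ with left and right interchanged. Applying Propositions \ref{prop:Kvee_adjunction} and \ref{prop:right_Kvee_adjunction} to $A\op$ and transporting back then yields both adjunctions without further computation. Either way, the main obstacle is purely bookkeeping: confirming the Leibniz compatibility of $d_\vk$ with both module actions.
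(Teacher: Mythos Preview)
Your proposal is correct and matches the paper's approach: the paper states this proposition without proof, treating it as the immediate right-augmented analogue of Propositions \ref{prop:Kvee_adjunction} and \ref{prop:right_Kvee_adjunction}, obtained from the general DG tensor--hom adjunction recalled at the start of Section \ref{sect:adjoints} once the DG bimodule structure on $\vk(A)$ is in place. Your explicit verification of the bimodule structure and your alternative route via Proposition \ref{prop:left/right_aug} simply spell out what the paper leaves implicit.
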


Now suppose that we are in the right augmented analogue of Section  \ref{subsect:kcx_R_C}, as in Section \ref{subsect:vk}. In particular, $A \cong R \otimes_\kring C$ as $R \otimes C\op$-modules, with $R$ the homogeneous quadratic algebra over $\kring$ given by the quadratic datum $(\kring ; Z, I_R)$. As in Notation \ref{nota:efrak_prime}, we have  $\e \in Z \otimes_\kring \hom_{\kring \op} (Z, \kring)$ and the associated element $\e'$,   denoted by $\e_1' \otimes \e_2'$ in Sweedler-style notation.

The underlying object of $(\rgrad A)\qd$ is $C \otimes_\kring R\qd$ as a $C \otimes (R\qd)\op$-module.
 In this situation, forgetting the differential, we can write 
\begin{eqnarray*}
\vk (A) & \cong & R \otimes_\kring (\rgrad A)\qd \\
& \cong & A \otimes_ \kring R\qd
\end{eqnarray*}
as right $(\rgrad A)\qd$-module and left $A$-module respectively.

Analogously to Notation \ref{nota:e_e_prime}, we use the following:

\begin{nota}
\label{nota:efrak_twist}
The twisting of the differential for $\ihom$ and $\otimes$  using $\e'$ is indicated by $\ihom^{\e'}$ and $\otimes^{\e'}$ respectively. 
\end{nota}

In this context, Proposition \ref{prop:vk_adjunctions} becomes:

\begin{prop}
\label{prop:vk_adjunctions_R_C}
\ 
\begin{enumerate}
\item 
There is an adjunction 
\[
R\otimes_\kring^{\e'} -  \ : \ 
(\rgrad A)\qd\dash \dgmod 
\rightleftarrows
A \dash \dgmod 
\ : \ 
\ihom_\kring ^{\e'} (R\qd, -),
\]
where:
\begin{enumerate}
\item 
for $M$ a DG $(\rgrad A)\qd$-modules, $R \otimes_\kring^{\e'} M$ has differential $d (r \otimes m) = r \e' m + r \otimes d_M (m)$;
\item 
for $N$ a DG $A$-module, $\ihom_\kring ^{\e'} (R\qd, N)$ has differential given by $(dg) (\rho) = d_N (g (\rho)) - \e_1' g( \e_2' n)$.
\end{enumerate}
\item
There is an adjunction 
\[
- \otimes_\kring^{\e'} R\qd \ : \ 
\dgmod\dash A 
\rightleftarrows
\dgmod \dash (\rgrad A)\qd
\ : \ 
\ihom _{\kring\op}^{\e'}
(R, -) ,
\]
where
\begin{enumerate}
\item 
 for $Y$ a DG right $A$-module, $Y \otimes_\kring ^{\e'} R\qd$ has differential $d (y \otimes \rho) = d_Y y \otimes \rho + (-1)^{|y|} y \e' \rho$; 
\item
for $X$ a DG right $(\rgrad A)\qd$-module, $\ihom _{\kring\op}^{\e'}(R,X)$ has differential 
$(df ) (r) = d_X (f(r)) - (-1)^{|f|} (r \e'_1) \e_2'$.
\end{enumerate}
\end{enumerate}
\end{prop}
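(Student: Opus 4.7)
The strategy is to carry out the right-augmented counterpart of the derivations in Lemma \ref{lem:identify_adjoints_R_C} and Proposition \ref{prop:(co)unit_Kvee_adjunction}. The starting point is Proposition \ref{prop:right_kcx_R_C}, which provides the explicit bimodule description of $\vk(A)$, together with the two underlying object-level isomorphisms recalled just before the statement: $\vk(A) \cong R\otimes_\kring (\rgrad A)\qd$ as a right $(\rgrad A)\qd$-module and $\vk(A) \cong A \otimes_\kring R\qd$ as a left $A$-module. The differential is the one twisted by $\e'$ as recorded in Proposition \ref{prop:right_kcx_R_C}. With these bimodule descriptions in hand, Proposition \ref{prop:vk_adjunctions} reduces to four tensor-hom identifications, each followed by a verification that the $\e'$-twist in $d_{\vk}$ produces the indicated twisted differential.

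For the left module adjunction, the left adjoint identifies using the right $(\rgrad A)\qd$-module structure $\vk(A) \cong R \otimes_\kring (\rgrad A)\qd$: for a DG $(\rgrad A)\qd$-module $M$ one has the natural isomorphism $\vk(A)\otimes_{(\rgrad A)\qd} M \cong R \otimes_\kring M$ of underlying graded objects, and the differential inherited from $d_{\vk}$ becomes $d(r \otimes m) = r \e' m + r \otimes d_M m$ by tracking the formula $a \otimes y \mapsto a \e' y$ through the isomorphism and using that $\e'$ is a $\kring$-bimodule element. For the right adjoint, the left $A$-module description $\vk(A) \cong A \otimes_\kring R\qd$ combined with the standard identification $\ihom_A(A \otimes_\kring R\qd, N) \cong \ihom_\kring(R\qd, N)$ gives the underlying object $\ihom_\kring^{\e'}(R\qd, N)$; the twist appears when one computes $d_{\ihom_A(\vk, N)}$ using $(dg)(a \otimes \rho) = d_N g(a\otimes \rho) - (-1)^{|g|} g(d_{\vk}(a\otimes \rho))$ and reads off the $\e'$-contribution as in the proof of Lemma \ref{lem:K_left_right_adjs}.

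The right module case is completely parallel, exchanging the roles of the two bimodule descriptions: one uses $\vk(A) \cong A \otimes_\kring R\qd$ to identify $Y \otimes_A \vk(A) \cong Y \otimes_\kring R\qd$ for a DG right $A$-module $Y$, and $\vk(A) \cong R \otimes_\kring (\rgrad A)\qd$ to identify $\ihom_{((\rgrad A)\qd)\op}(\vk(A), X) \cong \ihom_{\kring\op}(R, X)$ for a DG right $(\rgrad A)\qd$-module $X$. The differentials in the statement are then obtained from $d_{\vk}$ by the same bookkeeping, the only subtlety being the Koszul signs. As noted in the proof of Lemma \ref{lem:K_left_right_adjs}, on the tensor side one picks up a sign $(-1)^{|y|}$ when the differential on $Y$ passes through, while on the $\ihom$ side involving right modules there is no compensating second Koszul sign, so the sign $(-1)^{|f|}$ persists in the formula for $(df)(r)$.

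The only real obstacle in executing this plan is keeping the sign conventions and the direction of the $\e'$-twist consistent across the four formulae; this is purely bookkeeping, since the projectivity of $Z$ as a right $\kring$-module (built into the setup of Section \ref{subsect:vk}) guarantees that all the relevant tensor-hom adjunctions hold at the level of underlying graded objects, exactly as in the proofs that this proposition mirrors.
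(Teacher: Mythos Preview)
Your proposal is correct and follows exactly the approach the paper intends: the paper states Proposition \ref{prop:vk_adjunctions_R_C} without proof, treating it as the right-augmented analogue of the results in Section \ref{sect:adjoints} (specifically Lemma \ref{lem:K_left_right_adjs} and Lemma \ref{lem:identify_adjoints_R_C}), to be obtained by combining the general adjunction of Proposition \ref{prop:vk_adjunctions} with the explicit bimodule description of $\vk(A)$ from Proposition \ref{prop:right_kcx_R_C}. Your plan spells out precisely this adaptation, including the correct handling of the Koszul signs.
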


\section{The nonhomogeneous biquadratic situation}
\label{sect:biquad}

We suppose given a commutative diagram of morphisms of unital associative rings 
\[
\xymatrix{
\kring 
\ar[r]
\ar[d]
&
C 
\ar[d]
\\
R 
\ar[r]
&
A
}
\]
such that the product of $A$ induces an isomorphism $R \otimes_\kring C \stackrel{\cong}{\rightarrow} A $ of $R\otimes C\op$-modules. In addition, we require that both $R$ and $C$ be homogeneous quadratic satisfying the requisite hypotheses ensuring that 
 $A$ acquires two different nonhomogeneous quadratic structures, induced by those of $R$ and $C$ respectively. 
 
\subsection{The framework}
\label{subsect:biquad_framework}

We suppose the following:

\begin{hyp}
\label{hyp:C_R}
Suppose that 
\begin{enumerate}
\item 
$C$ is $3$-left finitely-generated projective over $\kring$ and is a homogeneous quadratic algebra defined by the quadratic datum $(\kring; W , I_C)$; 
\item 
$R$ is $3$-right finitely-generated projective over $\kring$ and is a homogeneous quadratic algebra defined by the quadratic datum $(\kring; Z, I_R)$. 
\end{enumerate}
\end{hyp}

Since $C$ is, in particular, $2$-left finitely-generated projective over $\kring$, one has the (left) quadratic dual $C\qd$ defined by the quadratic datum $(\kring; \hom_\kring (W, \kring) , \hom_\kring (C_2, \kring))$; thus  $C\qd$ is $2$-right finitely-generated projective over $\kring$.
 Likewise, one has the (right) quadratic dual $R\qd$ that is $2$-left finitely-generated projective over $\kring$, defined by the quadratic datum $(\kring; \hom_{\kring\op} (Z, \kring), \hom_{\kring \op} (R_2, \kring))$.  

\begin{rem}
\label{rem:Z_W_not_proj}
The hypothesis that the product of $A$ induces an isomorphism $R \otimes_\kring C \stackrel{\cong}{\rightarrow} A$ of $R \otimes C\op$-modules implies that $A$ is generated as an algebra over $\kring$ by the $\kring$-bimodule $Z \oplus W$. 
 However, in general, $Z \oplus W$ is neither projective as a left $\kring$-module nor projective as a right $\kring$-module.
\end{rem}

By the hypothesis, both $R$ and $C$ are $\nat$-graded algebras over $\kring$; in particular there are isomorphisms of $\kring$-bimodules $C \cong \bigoplus_i C_i$ and $R \cong \bigoplus_j R_j$ such that $C_0 = R_0 = \kring$ and $C_1= W$, $R_1= Z$.
 One has the following identification:
 
 \begin{lem}
 \label{lem:A_bigraded_as_bimodule}
 There is an isomorphism of $\kring$-bimodules $
 A \cong \bigoplus_{i,j} R_i \otimes_\kring C_j.
 $
 \end{lem}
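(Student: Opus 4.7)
The plan is to combine the isomorphism of $R \otimes C\op$-modules provided by the hypothesis with the compatibility of $\otimes_\kring$ with direct sums.

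First I would observe that, since $C$ and $R$ are $\nat$-graded algebras over $\kring$ with $C_0 = R_0 = \kring$, each graded piece $C_j$ (respectively $R_i$) is a sub $\kring$-bimodule of $C$ (resp.\ $R$): the structure map $\kring \to C$ lands in $C_0$, so left and right multiplication by $\kring$ preserves the grading. Consequently, the direct sum decompositions $C = \bigoplus_j C_j$ and $R = \bigoplus_i R_i$ are isomorphisms of $\kring$-bimodules.

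Next I would apply the functor $- \otimes_\kring -$ and use that the tensor product $\otimes_\kring$ commutes with arbitrary direct sums in each variable; this yields an isomorphism of $\kring$-bimodules
\[
R \otimes_\kring C \;\cong\; \bigoplus_{i,j} R_i \otimes_\kring C_j.
\]
Finally I would invoke Hypothesis \ref{hyp:A}, which asserts that the product of $A$ induces an isomorphism $R \otimes_\kring C \stackrel{\cong}{\rightarrow} A$ of $R \otimes C\op$-modules; restricting the structure along $\kring \hookrightarrow R$ and $\kring \hookrightarrow C$, this is in particular an isomorphism of $\kring$-bimodules. Composing the two displayed isomorphisms gives the claim.

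There is no serious obstacle; the only point requiring care is to note that the $\kring$-bimodule structure on $A$ used here is the one obtained by restriction of scalars from the $R \otimes C\op$-module structure, which is consistent with the $\kring$-bimodule structure coming from the commutative diagram of unital ring maps in Section \ref{subsect:biquad_framework}.
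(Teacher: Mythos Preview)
Your argument is correct and is precisely the routine verification the paper has in mind; the paper states the lemma without proof, treating it as an immediate consequence of Hypothesis~\ref{hyp:A} together with the $\nat$-gradings of $R$ and $C$ and the distributivity of $\otimes_\kring$ over direct sums.
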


\begin{nota}
Write 
\begin{enumerate}
\item 
$F_\bullet A$ for the filtration of $A$ by $R$-modules given by $F_n A := \bigoplus_{i=0}^n R \otimes_\kring C_i$; 
\item 
$G_\bullet A$ for the filtration of $A$ by right $C$-modules given by $G_m A:= \bigoplus_{j=0}^m R_j \otimes_\kring C$. 
\end{enumerate}
\end{nota}

As in Section \ref{subsect:non-homog}, one may ask whether $F_\bullet A$ (respectively $G_\bullet A$) makes $A$ into a filtered algebra; to answer this, one appeals to Proposition \ref{prop:filt_hyp} and its `opposite' counterpart. Since $A$ is generated by the $\kring$-bimodule $Z \oplus W$, the respective filtration criteria can be restricted to considering the behaviour of the map induced by the product: 
\begin{eqnarray}
\label{eqn:prod_W_T}
W \otimes_\kring Z \rightarrow A \cong_{\mathrm{bimod}} R \otimes_\kring C.
\end{eqnarray}

\begin{prop}
\label{prop:biquad}
The filtrations $F_\bullet A$ and $G_\bullet A$ both define filtered algebra structures on $A$ if and only if the product map 
 (\ref{eqn:prod_W_T}) maps to 
 \[
 \kring
  \ \oplus \ 
(Z \oplus W) 
 \ \oplus \ 
 Z \otimes_\kring W
 \subset 
 A
 \]
 using the inclusion furnished by Lemma \ref{lem:A_bigraded_as_bimodule}.
\end{prop}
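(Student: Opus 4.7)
The plan is to reduce the statement to the criteria provided by Proposition \ref{prop:filt_hyp} and its opposite counterpart from Section \ref{subsect:crossing_hands}: the filtration $F_\bullet A$ (respectively $G_\bullet A$) makes $A$ into a filtered algebra if and only if the product of $A$ sends $W \otimes_\kring R$ into $F_1 A = R \oplus R \otimes_\kring W$ (resp. $C \otimes_\kring Z$ into $G_1 A = C \oplus Z \otimes_\kring C$). The task is thus to show that the conjunction of these two criteria is equivalent to the single hypothesis on $W \otimes_\kring Z$.

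Necessity is immediate upon restricting both criteria to $W \otimes_\kring Z$: the image of the product then lies in $F_1 A \cap G_1 A$, and under the $\kring$-bimodule bigrading of Lemma \ref{lem:A_bigraded_as_bimodule}, $F_1 A$ consists of the components with $C$-degree at most one while $G_1 A$ consists of those with $R$-degree at most one. Their intersection is therefore precisely $\kring \oplus Z \oplus W \oplus Z \otimes_\kring W$.

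For sufficiency, I would prove the inclusion $W \cdot R_j \subset F_1 A$ by induction on $j$; the companion statement $C_i \cdot Z \subset G_1 A$ for $G_\bullet A$ is strictly symmetric. The cases $j = 0, 1$ are immediate from the hypothesis. For $j \geq 2$, since $R$ is homogeneous quadratic with $R_1 = Z$, the product induces a surjection $Z \otimes_\kring R_{j-1} \twoheadrightarrow R_j$, so it suffices to treat an element of the form $w(z r')$ with $w \in W$, $z \in Z$, $r' \in R_{j-1}$. Associativity rewrites this as $(wz) r'$, and the hypothesis decomposes $wz$ into four components lying respectively in $\kring$, $Z$, $W$ and $Z \otimes_\kring W \subset A$. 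The first two multiply $r'$ into $R \subset F_1 A$; the third contributes an element of $W \cdot R_{j-1} \subset F_1 A$ by the inductive hypothesis; and a term $z' w'$ in the fourth component yields $(z' w') r' = z' (w' r')$ by a second application of associativity, which lies in $F_1 A$ since $w' r' \in F_1 A$ by induction and $F_1 A = R \otimes_\kring F_1 C$ is stable under left multiplication by $R$.

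The main subtlety lies in this last case, requiring a second application of associativity combined with the \emph{a priori} left $R$-stability of $F_1 A$. The latter holds by the very definition of the filtration $F_\bullet A$ as an increasing sequence of left $R$-submodules (Notation \ref{nota:filt_A}), and is crucially independent of the algebra-filtration property we are trying to establish; otherwise the argument would be circular. The mirror induction for $G_\bullet A$ proceeds identically, invoking instead the right $C$-stability of each $G_m A = \bigoplus_{j \leq m} R_j \otimes_\kring C$.
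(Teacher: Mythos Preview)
Your proof is correct and follows the same strategy as the paper: reduce to the criteria of Proposition~\ref{prop:filt_hyp} and its opposite, then intersect $F_1 A$ with $G_1 A$ inside the bigrading of Lemma~\ref{lem:A_bigraded_as_bimodule}. The paper simply asserts (in the sentence preceding the Proposition) that since $A$ is generated by $Z \oplus W$ the criteria may be restricted from $W \otimes_\kring R$ and $C \otimes_\kring Z$ to $W \otimes_\kring Z$, whereas you supply the induction on the $R$-degree that actually justifies this reduction; your argument is thus a more detailed version of the same proof.
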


\begin{proof}
Proposition \ref{prop:filt_hyp} implies that $F_\bullet A$ makes $A$ into a filtered algebra if and only if the product factors as 
\[
W \otimes_\kring Z \rightarrow R \  \oplus \  R\otimes_\kring W \subset R\otimes_\kring C.
\]
Likewise, the right module analogue of Proposition \ref{prop:filt_hyp} implies that $G_\bullet A$ makes $A$ into a filtered algebra if and only if the product factors as 
\[
W \otimes_\kring Z \rightarrow C \  \oplus \  Z \otimes_\kring C \subset R\otimes_\kring C.
\]

Putting these conditions together gives the result, by appealing to Lemma \ref{lem:A_bigraded_as_bimodule} to identify the intersection of the $\kring$-bimodules $R \  \oplus \  R\otimes_\kring W $ and $C \  \oplus \  Z\otimes_\kring C $ in $R \otimes_\kring C$.
\end{proof}

Henceforth we shall assume that the criterion of Proposition \ref{prop:biquad} holds, so that the product restricts to 
\begin{eqnarray}
\label{eqn:pro_map_W_T}
W \otimes_\kring Z \rightarrow \kring \ \oplus \ 
(Z \oplus W) 
 \ \oplus \ 
 Z \otimes_\kring W.
\end{eqnarray}

\begin{rem}
\label{rem:bi_nonhomog}
Using this criterion, one could consider $A$ as a nonhomogeneous quadratic algebra over $\kring$, generated by the $\kring$-bimodule $Z \oplus W$ and subject to the homogeneous quadratic relations from $I_C$ and $I_R$ together with the nonhomogeneous quadratic relation given by (\ref{eqn:pro_map_W_T}).

However, due to the failure of projectivity in general (cf. Remark \ref{rem:Z_W_not_proj}), it is better simply to consider $A$ as being both
 nonhomogeneous quadratic over $R$ with respect to the filtration $F_\bullet A$ and 
nonhomogeneous quadratic over $C$ with respect to the filtration $G_\bullet A$.
\end{rem}

\subsection{Associated structures}

Under the hypotheses of Section \ref{subsect:biquad_framework}, supposing that the criterion of Proposition \ref{prop:biquad} holds, we have the following:

\begin{enumerate}
\item 
The DG algebra $(\grad A)\qd$ (left quadratic duality with respect to $R$), with underlying right $R$-module $C\qd \otimes _\kring R$. 
\item 
The DG algebra $(\rgrad A)\qd$ (right quadratic duality  with respect to $C$), with underlying left $C$-module $C \otimes_\kring R\qd$. 
\item 
The complex $K^\vee:= K^\vee (A)$, with underlying bimodule $(\grad A)\qd \otimes_R A$ and differential twisted using $e'$. 
\item 
The complex $\vk := \vk (A)$, with underlying bimodule $A \otimes_C (\rgrad A)\qd$ and differential  twisted using $\e'$.
\end{enumerate}

This gives the following adjunctions for left modules:
\[
\xymatrix{
(\rgrad A)\qd \dash \dgmod
\ar@<.5ex>[rr]^(.55){\vk \otimes_{(\rgrad A)\qd}-}
&\quad &
A \dash \dgmod 
\ar@<.5ex>[rr]^{K^\vee \otimes_A -}
\ar@<.5ex>[ll]^(.45){\ihom _A (\vk, -)}
&\quad &
(\grad A)\qd \dash \dgmod
\ar@<.5ex>[ll]^{\ihom_{(\grad A)\qd} (K^\vee, -)}
}
\]
and hence the composite adjunction
\[
\xymatrix{
(\rgrad A)\qd \dash \dgmod
\ar@<.5ex>[rrr]^{K^\vee \otimes_A \vk \otimes_{(\rgrad A)\qd}-}
&\quad&\quad &
(\grad A)\qd \dash \dgmod,
\ar@<.5ex>[lll]^{\ihom_{(\grad A)\qd} (K^\vee \otimes _A \vk, -)}
}
\]
in which $K^\vee \otimes_A \vk$ is a  DG left $(\grad A)\qd$, right $(\rgrad A)\qd$ bimodule.

One has the analogous adjunctions for right modules:
\[
\xymatrix{
 \dgmod \dash (\grad A)\qd 
\ar@<.5ex>[rr]^(.55){ - \otimes_{(\grad A)\qd} K^\vee}
&
\quad
&
A \dash \dgmod 
\ar@<.5ex>[rr]^{- \otimes_A \vk}
\ar@<.5ex>[ll]^(.45){\ihom _{A\op} (K^\vee, -)}
&
\quad
&
\dgmod \dash (\rgrad A)\qd
\ar@<.5ex>[ll]^{\ihom_{((\rgrad A)\qd)\op} (\vk, -)}
}
\]
and hence the composite adjunction:
\[
\xymatrix{
 \dgmod \dash (\grad A)\qd
\ar@<.5ex>[rrr]^{- \otimes_{(\grad A)\qd} K^\vee \otimes_A \vk }
&\quad &\quad &
 \dgmod \dash (\rgrad A)\qd.
\ar@<.5ex>[lll]^{\ihom_{((\rgrad A)\qd)\op} (K^\vee \otimes _A \vk, -)}
}
\]

This shows the importance of the DG left $(\grad A)\qd$, right $(\rgrad A)\qd$ bimodule  $K^\vee \otimes_A \vk$.

\begin{lem}
There is an isomorphism of DG bimodules
\[
K^\vee \otimes_A \vk
\cong 
(\grad A)\qd \otimes _R^{e'} A \otimes_C ^{\e '} (\rgrad A)\qd, 
\]
where the superscripts $e'$ and $\e'$ indicate that the differential is twisted.
\end{lem}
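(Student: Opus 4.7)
The plan is to break the proof into two stages: first identify the underlying graded bimodule, then check that the twisted differentials match.

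\smallskip

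For the graded bimodule: by Definition \ref{defn:kcx}, $K^\vee$ has underlying object $(\grad A)\qd \otimes_R A$ as a graded left $(\grad A)\qd$-, right $A$-bimodule, and by Definition \ref{defn:right_kcx}, $\vk$ has underlying object $A \otimes_C (\rgrad A)\qd$ as a graded left $A$-, right $(\rgrad A)\qd$-bimodule. Applying the standard associativity isomorphism that collapses the inner $A \otimes_A A \to A$ via the multiplication of $A$, one obtains
\[
K^\vee \otimes_A \vk \ \cong \ (\grad A)\qd \otimes_R A \otimes_C (\rgrad A)\qd
\]
as graded left $(\grad A)\qd$-, right $(\rgrad A)\qd$-bimodules. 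This step is formal and requires no hypothesis beyond the definitions.

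\smallskip

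For the differential: the differential on the tensor product $K^\vee \otimes_A \vk$ is the standard one associated to the tensor product of complexes. The plan is to apply this to a simple tensor $(x \otimes a) \otimes (b \otimes y)$ and to expand using the formulas from Definitions \ref{defn:kcx} and \ref{defn:right_kcx}. Since $A$ is concentrated in cohomological degree zero (so $|a|=|b|=0$), the expansion produces exactly four terms: one contribution $(dx)\otimes a \otimes b \otimes y$ from the intrinsic differential on $(\grad A)\qd$; one contribution $(-1)^{|x|}xe'_1 \otimes e'_2 a \otimes b \otimes y$ from the $e'$-twist in $d_{K^\vee}$; one contribution $(-1)^{|x|}x \otimes a \otimes b \otimes dy$ from the intrinsic differential on $(\rgrad A)\qd$; and one contribution $(-1)^{|x|}x \otimes a \otimes b\e'_1 \otimes \e'_2 y$ from the $\e'$-twist in $d_{\vk}$. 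Collapsing $a \otimes b$ to $ab \in A$ as in the first stage, these four terms are, term-by-term, the intrinsic tensor product differential plus the two twist corrections defining $(\grad A)\qd \otimes_R^{e'} A \otimes_C^{\e'} (\rgrad A)\qd$.

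\smallskip

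Finally, one should observe that the notation $(\grad A)\qd \otimes_R^{e'} A \otimes_C^{\e'} (\rgrad A)\qd$ is unambiguous: the $e'$-twist acts by left multiplication on the middle factor $A$ (by $e'_2$) while the $\e'$-twist acts by right multiplication on the same factor (by $\e'_1$), and left and right multiplication on $A$ commute. Consequently the two twist contributions to the differential commute, and their sum with the intrinsic tensor product differential gives a single well-defined square-zero operator matching the calculation above. The anticipated obstacle is simply the bookkeeping of Koszul signs and verifying noninterference of the two twists; fixing Sweedler-style notation $e' = e'_1 \otimes e'_2$ and $\e' = \e'_1 \otimes \e'_2$ from the outset makes this a short direct check rather than a substantive difficulty.
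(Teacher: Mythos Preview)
Your proposal is correct and follows exactly the approach the paper intends: the lemma is stated without proof there, being immediate from Definitions \ref{defn:kcx} and \ref{defn:right_kcx} together with the standard associativity of $\otimes_A$. Your unpacking of the differential into its four constituent terms and the observation that the $e'$- and $\e'$-twists commute (acting respectively by left and right multiplication on the middle factor $A$) is precisely the routine verification the paper leaves implicit.
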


\begin{rem}
The underlying bimodule of $K^\vee \otimes_A \vk$ is isomorphic to 
$$
(\grad A)\qd \otimes_\kring^{e', \e'} (\rgrad A)\qd,
$$
where the differential is twisted using both $e'$ and $\e'$.
\end{rem}

\begin{lem}
\label{lem:underlying_kv_vk}
The underlying $C\qd\otimes (R\qd)\op$-module of $K^\vee \otimes_A \vk$ is isomorphic to 
\[
C\qd \otimes_\kring A \otimes_\kring R\qd.
\]
 With respect to this isomorphism, the differential on $K^\vee \otimes_A \vk$ is given for $\gamma \in C\qd$, $a \in A$, and $\rho \in R\qd$  by 
 \[
 d (\gamma \otimes a \otimes \rho) 
 = 
 (-1)^{|\gamma|} \gamma e_1' \otimes e_2' a \otimes \rho 
 + 
 (-1)^{|\gamma|} \gamma \otimes a \e_1' \otimes \e_2' \rho.
 \]
 which can be written as $d (\gamma \otimes a \otimes \rho) 
 = 
 (-1)^{|\gamma|}  \big( \gamma e' a \otimes \rho + \gamma \otimes a \e' \rho\big)$.
 \end{lem}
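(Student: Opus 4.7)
The plan is to assemble the two identifications already provided in Propositions \ref{prop:kcx_R_C} and \ref{prop:right_kcx_R_C} and then compute the differential of the tensor product by the standard Leibniz rule.

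First, I would use Proposition \ref{prop:kcx_R_C} to identify $K^\vee(A)$, as a complex of left $C\qd$- right $A$-bimodules, with $C\qd \otimes_\kring A$ equipped with the differential $\gamma \otimes a \mapsto (-1)^{|\gamma|} \gamma e'_1 \otimes e'_2 a$. Symmetrically, I would use Proposition \ref{prop:right_kcx_R_C} to identify $\vk(A)$ as a left $A$- right $R\qd$-bimodule with $A \otimes_\kring R\qd$ carrying the differential $a \otimes \rho \mapsto a \e'_1 \otimes \e'_2 \rho$. Taking the tensor product over $A$ yields the isomorphism of $C\qd \otimes (R\qd)\op$-modules
\[
K^\vee \otimes_A \vk \;\cong\; (C\qd \otimes_\kring A) \otimes_A (A \otimes_\kring R\qd) \;\cong\; C\qd \otimes_\kring A \otimes_\kring R\qd,
\]
under which $\gamma \otimes a \otimes \rho$ corresponds to $(\gamma \otimes a) \otimes_A (1 \otimes \rho)$.

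Next, I would apply the Leibniz rule for the differential on the tensor product of complexes over $A$ (noting that $A$ is placed in cohomological degree zero with zero differential):
\[
d\bigl((\gamma \otimes a) \otimes_A (1 \otimes \rho)\bigr)
= d_{K^\vee}(\gamma \otimes a) \otimes_A (1 \otimes \rho)
+ (-1)^{|\gamma|} (\gamma \otimes a) \otimes_A d_{\vk}(1 \otimes \rho).
\]
Substituting the two differentials recalled above and pushing $a$ past the tensor product over $A$ (moving $\e'_1 \in A$ to the left of $\otimes_A$ in the second term) produces exactly
\[
d(\gamma \otimes a \otimes \rho) = (-1)^{|\gamma|}\bigl(\gamma e'_1 \otimes e'_2 a \otimes \rho + \gamma \otimes a\e'_1 \otimes \e'_2 \rho\bigr),
\]
which is the claimed formula written out in Sweedler notation.

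There is essentially no obstacle beyond bookkeeping: the two differentials on $K^\vee$ and $\vk$ in the form given by Propositions \ref{prop:kcx_R_C} and \ref{prop:right_kcx_R_C} only involve the twists by $e'$ and $\e'$ respectively, because the internal differentials of $C\qd \subset (\grad A)\qd$ and $R\qd \subset (\rgrad A)\qd$ vanish by Theorem \ref{thm:DG_split_R_C} (and its right-augmented analogue). The mild point to check carefully is the sign in the Leibniz rule: since $|\gamma \otimes a| = |\gamma|$ (as $a$ sits in degree zero), the single Koszul sign $(-1)^{|\gamma|}$ factors out of both terms, yielding the compact expression $(-1)^{|\gamma|}(\gamma e' a \otimes \rho + \gamma \otimes a \e' \rho)$.
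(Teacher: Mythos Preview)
Your proposal is correct and follows exactly the natural route the paper has in mind: the lemma is stated in the paper without an explicit proof, as an immediate consequence of the identifications in Propositions~\ref{prop:kcx_R_C} and~\ref{prop:right_kcx_R_C} together with the Leibniz rule for $\otimes_A$. Your bookkeeping of the signs (in particular that $|\gamma\otimes a|=|\gamma|$ since $A$ sits in degree zero) is the only point requiring care, and you handle it correctly.
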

 
\begin{rem}
\label{rem:kv_vk}
The full structure of $K^\vee \otimes_A \vk$ is recovered by specifying its left $R$-module structure and right $C$-module structure. The left action is  given by Lemma \ref{lem:left_A_mod_hom} and the right action by its counterpart for right $(\rgrad A)\qd$-modules.
\end{rem}

\subsection{The composite adjunction for left DG modules}
We focus  upon the case of left DG modules; the right DG module case can be analysed similarly. 
 
Using Lemma \ref{lem:underlying_kv_vk}, one can identify the left adjoint $K^\vee \otimes_A \vk \otimes_{(\rgrad A)\qd} -$.

 \begin{prop}
 \label{prop:left_kv_vk_adjoint}
 For a DG $(\rgrad A)\qd$-module $M$, there is a natural isomorphism of DG $C\qd$-modules 
 \[
 K^\vee \otimes_A \vk \otimes_{(\rgrad A)\qd} M 
 \cong 
 C\qd \otimes_\kring^{e'} R \otimes_\kring^{\e'} M.
 \]
 This is a natural isomorphism of DG $(\grad A)\qd$-modules when the right hand side is equipped with the $R$-module structure furnished by Lemma \ref{lem:left_A_mod_hom}.
 
The differential on the right hand side is given, for $\gamma \in C\qd$, $r \in R$, and $m \in M$, by
\[
d( \gamma \otimes r \otimes m) 
= 
(-1)^{|\gamma|} \Big( 
\gamma e_1' \otimes e_2'\star (r \otimes m)
\ + \ 
\gamma \otimes r \e_1' \otimes \e_2' m 
\ + \ 
\gamma \otimes r \otimes d_M m
\Big),
\]
where the product $e_2'\star (r \otimes m)$ is given by the $C$-module structure of $R \otimes _\kring M$ obtained from the isomorphism $R \otimes _\kring M \cong A \otimes_C M$. 
 \end{prop}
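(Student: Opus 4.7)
My plan is to unwind the composite functor by applying the two adjoint identifications in sequence: first use Proposition~\ref{prop:vk_adjunctions_R_C} to rewrite $\vk \otimes_{(\rgrad A)\qd} M$ as a concrete DG $A$-module, and then feed the result into Lemma~\ref{lem:identify_adjoints_R_C}(1) to rewrite the outer functor $K^\vee \otimes_A -$. The final differential should then drop out by tracking the three contributions: the $e'$-twist from the outer adjoint, the $\e'$-twist from the inner adjoint, and the internal differential of $M$.

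For the first step, by Proposition~\ref{prop:vk_adjunctions_R_C}, there is a natural isomorphism of DG $A$-modules
\[
\vk \otimes_{(\rgrad A)\qd} M \;\cong\; R \otimes_\kring^{\e'} M,
\]
where the $A$-module structure on $R \otimes_\kring M$ is the one arising from the bimodule isomorphism $A \cong R \otimes_\kring C$ together with the $C$-module structure on $M$ (so that $R \otimes_\kring M \cong A \otimes_C M$ as $A$-modules), and the differential is $d(r \otimes m) = r\e_1' \otimes \e_2' m + r \otimes d_M m$. Setting $X := R \otimes_\kring^{\e'} M$ and applying Lemma~\ref{lem:identify_adjoints_R_C}(1) gives the natural isomorphism of DG $(\grad A)\qd$-modules
\[
K^\vee \otimes_A X \;\cong\; C\qd \otimes_\kring^{e'} X,
\]
whose underlying object is $C\qd \otimes_\kring R \otimes_\kring M$ and whose $(\grad A)\qd$-module structure is furnished by Lemma~\ref{lem:left_grad_A_module} applied to the underlying $R$-module $R \otimes_\kring M$. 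Composing yields the claimed isomorphism of underlying DG $(\grad A)\qd$-modules.

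For the differential, Lemma~\ref{lem:identify_adjoints_R_C}(1) prescribes
\[
d(\gamma \otimes x) \;=\; (-1)^{|\gamma|}\bigl(\gamma e_1' \otimes e_2' \cdot x + \gamma \otimes d_X x\bigr)
\]
on $C\qd \otimes_\kring^{e'} X$. Substituting $x = r \otimes m$ and the explicit form of $d_X$ produces precisely the three-term formula in the statement. Since $e_2' \in W \subset C \subset A$ by Notation~\ref{nota:e_e_prime} and Lemma~\ref{lem:compat_e}, its $A$-action on $R \otimes_\kring M \cong A \otimes_C M$ coincides with the $C$-action transported across this isomorphism, which justifies the notation $e_2' \star (r \otimes m)$. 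The point requiring the most care, and which I regard as the main obstacle, is the compatibility of the two possible $R$-module structures on $C\qd \otimes_\kring R \otimes_\kring M$: namely, that the one produced by the iterated identification agrees with the one provided by Lemma~\ref{lem:left_grad_A_module} applied to the $R$-module $R \otimes_\kring M$. This amounts to a naturality check for the functor $C\qd \otimes_\kring^{e'} -$ in its DG $A$-module argument, and should fall out after unwinding the exchange map $\lambda_W$ that encodes the $R$-bimodule structure of $R \otimes_\kring W$.
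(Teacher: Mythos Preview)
Your proposal is correct and takes essentially the same approach as the paper: both unwind the composite functor using the previously established identifications of the individual adjoints. The paper's proof phrases this via Lemma~\ref{lem:underlying_kv_vk} (the combined bimodule $K^\vee \otimes_A \vk \cong C\qd \otimes_\kring A \otimes_\kring R\qd$), obtaining first $C\qd \otimes_\kring A \otimes_C M$ and then reducing via $A \cong R \otimes_\kring C$; you instead apply Proposition~\ref{prop:vk_adjunctions_R_C} and Lemma~\ref{lem:identify_adjoints_R_C}(1) sequentially. These are two orderings of the same computation, and your version is arguably more transparent about the origin of each term in the differential. Your caution about the $R$-module structure is well placed, though note that the paper does not spell this out either (and the reference to Lemma~\ref{lem:left_A_mod_hom} in the statement appears to be a slip for Lemma~\ref{lem:left_grad_A_module}, which is the result you correctly invoke).
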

 
 \begin{proof}
 Using the identification in Lemma \ref{lem:left_A_mod_hom}, one has the natural isomorphism
 \[
 K^\vee \otimes_A \vk \otimes_{(\rgrad A)\qd} M 
 \cong 
 C\qd \otimes_\kring A \otimes_C M,
 \]
 with differential deduced from that in the Lemma, and similarly for the $(\grad A)\qd$-module structure.
 
 Using the isomorphism of $R \otimes C\op$-modules $A \cong R \otimes_\kring C$, one deduces the result.
 \end{proof}
 
 \begin{exam}
 \label{exam:kv_vk_k}
 Using the augmentation $C \rightarrow \kring$ (cf. Section \ref{subsect:units_augs}), one can consider $\kring$ as a DG $(\rgrad A)\qd$-module with trivial differential. Hence one can apply the functor $K^\vee \otimes_A \vk \otimes_{(\rgrad A)\qd} -$. There is an isomorphism of DG $(\grad A)\qd$-modules:
 \[
 K^\vee \otimes_A \vk \otimes_{(\rgrad A)\qd} \kring
\cong 
(\grad A)\qd, 
 \]
 where the right hand side is the DG algebra $(\grad A)\qd$ (considered as a DG-module).   In particular, the differential is determined by $e'$ (i.e., $\e'$ does not intervene).
 \end{exam}
 
Likewise, one can determine the right adjoint functor $\ihom_{(\grad A)\qd} (K^\vee \otimes_A \vk , -)$. The identification of the structure relies on the natural isomorphisms of the underlying graded objects (for $N$ a DG $(\grad A)\qd$-module)
\[
\ihom_{(\grad A)\qd} (K^\vee \otimes_A \vk , N)
\cong 
\ihom_R (A \otimes_C (\rgrad A)\qd, N)
\cong 
\ihom _\kring ((\rgrad A)\qd , N)
\]
which are isomorphisms of $(\rgrad A)\qd$-modules. This serves both to identify the module structure and to show how to evaluate an morphism $f \in \ihom _\kring ((\rgrad A)\qd , N)$ on an element of $A \otimes_C (\rgrad A)\qd$.

\begin{prop}
\label{prop:right_kv_vk}
For $N$ a DG $(\grad A)\qd$-module, there is a natural isomorphism:
\[
\ihom_{(\grad A)\qd} (K^\vee \otimes_A \vk , N)
\cong 
\ihom_\kring^{e'} (C \otimes_\kring^{\e'} R\qd, N)
\]
where the right hand side is equipped with the twisted differential:
\[
(df) (c \otimes \rho) 
= 
d_N (f (c \otimes \rho))
- 
(-1)^{|f|} f (c \e' \rho) 
- 
e_1' f (e_2'c \otimes \rho), 
\]
where $c \e' \rho \in  A \otimes_\kring R\qd \cong A \otimes_C (\rgrad A)\qd$.
\end{prop}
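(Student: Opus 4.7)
The overall strategy is to factor the right adjoint of the composite
\[
K^\vee \otimes_A \vk \otimes_{(\rgrad A)\qd} - \;:\; (\rgrad A)\qd\dash\dgmod \longrightarrow (\grad A)\qd\dash\dgmod
\]
via the standard tensor–hom adjunction, obtaining the natural isomorphism
\[
\ihom_{(\grad A)\qd}(K^\vee \otimes_A \vk, N)
\cong
\ihom_A \bigl(\vk, \ihom_{(\grad A)\qd}(K^\vee, N)\bigr),
\]
and then applying the two identifications already established in the text. Namely, Lemma \ref{lem:identify_adjoints_R_C} gives an isomorphism of DG $A$-modules $\ihom_{(\grad A)\qd}(K^\vee, N) \cong \ihom_\kring^{e'}(C, N)$, and the right-augmented counterpart recorded in Proposition \ref{prop:vk_adjunctions_R_C} gives $\ihom_A(\vk, -) \cong \ihom_\kring^{\e'}(R\qd, -)$ on DG $A$-modules. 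Combining these yields the underlying graded isomorphism
\[
\ihom_{(\grad A)\qd}(K^\vee \otimes_A \vk, N)
\cong
\ihom_\kring^{\e'}\!\bigl(R\qd, \ihom_\kring^{e'}(C, N)\bigr)
\cong
\ihom_\kring(C \otimes_\kring R\qd, N)
\]
via the standard tensor–hom adjunction for chain complexes of $\kring$-modules; the right $(\rgrad A)\qd$-module structure is transported from the right $(\rgrad A)\qd$-module structure of $\vk$, using the bimodule identification $(\rgrad A)\qd \cong C \otimes_\kring R\qd$ of Proposition \ref{prop:dual_C_R} (right-hand version).

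To identify the differential, the cleanest route is to compute it directly from the description of $K^\vee \otimes_A \vk$ provided by Lemma \ref{lem:underlying_kv_vk}. A map $f \in \ihom_\kring(C \otimes_\kring R\qd, N)$ corresponds under the above chain of isomorphisms to a left $(\grad A)\qd$-linear map $\tilde f \colon (\grad A)\qd \otimes_R A \otimes_C (\rgrad A)\qd \to N$ determined by its restriction to $1 \otimes c \otimes \rho$ for $c \in C$, $\rho \in R\qd$. By Lemma \ref{lem:underlying_kv_vk},
\[
d_{K^\vee \otimes_A \vk}(1 \otimes c \otimes \rho)
= e_1' \otimes e_2' c \otimes \rho + 1 \otimes c\,\e'\,\rho,
\]
and the differential of $\tilde f$ is given by $(d\tilde f)(\xi) = d_N(\tilde f(\xi)) - (-1)^{|f|}\tilde f(d\xi)$. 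The first term of $d(1 \otimes c \otimes \rho)$ factors as $e_1' \cdot (1 \otimes e_2'c \otimes \rho)$, and left $(\grad A)\qd$-linearity of $\tilde f$ converts this to $e_1' f(e_2' c \otimes \rho)$, producing the summand $-e_1' f(e_2' c \otimes \rho)$ in $(df)(c \otimes \rho)$ (the sign $(-1)^{|f|}$ is absorbed by a Koszul sign coming from interchanging $e_1'$ with $f$, exactly as in the proof of Lemma \ref{lem:K_left_right_adjs}). The second term, $\tilde f(1 \otimes c\,\e'\,\rho)$, is evaluated by passing through the identification $A \otimes_C (\rgrad A)\qd \cong A \otimes_\kring R\qd$; this gives the summand $-(-1)^{|f|} f(c\,\e'\,\rho)$, where $c\,\e'\,\rho$ is interpreted in $A \otimes_\kring R\qd$ as stated. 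Adding the $d_N \tilde f$ term yields the claimed formula.

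The main obstacle is the bookkeeping of signs and of the two distinct twists: one coming from the left $K^\vee$ side (governed by $e'$) and one from the right $\vk$ side (governed by $\e'$). In particular, one has to verify that the $\e'$-twist produced by Proposition \ref{prop:vk_adjunctions_R_C} is compatible with the $e'$-twisted inner hom into which it is composed, and that no mixed cross-terms appear beyond those recorded above. This compatibility is a consequence of the left $(\grad A)\qd$-linearity of $\tilde f$ together with the fact that $\e'$ acts on the right $A$-factor of $K^\vee \otimes_A \vk$ (and hence commutes past $(\grad A)\qd$), whereas $e'$ acts from the left. Once the two twists are seen to be independent in this sense, the formula for $(df)(c \otimes \rho)$ follows, and the $(\rgrad A)\qd$-module structure on the right-hand side is immediate from naturality.
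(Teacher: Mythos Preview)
Your proof is correct and takes essentially the same approach as the paper: the differential is computed directly from the description of $K^\vee \otimes_A \vk$ in Lemma~\ref{lem:underlying_kv_vk}, exactly as the paper does (its entire proof reads ``As for Proposition~\ref{prop:left_kv_vk_adjoint}, this follows from Lemma~\ref{lem:underlying_kv_vk}''). The only minor difference is in how you obtain the underlying graded isomorphism: you factor the right adjoint as $\ihom_A\bigl(\vk, \ihom_{(\grad A)\qd}(K^\vee, -)\bigr)$ and then invoke the two separate identifications from Lemma~\ref{lem:identify_adjoints_R_C} and Proposition~\ref{prop:vk_adjunctions_R_C}, whereas the paper (in the paragraph preceding the proposition) writes the chain $\ihom_{(\grad A)\qd}(K^\vee \otimes_A \vk, N) \cong \ihom_R(A \otimes_C (\rgrad A)\qd, N) \cong \ihom_\kring((\rgrad A)\qd, N)$ directly from the bimodule descriptions; these are equivalent unwindings of the same adjunction and lead to the same object.
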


\begin{proof}
As for Proposition \ref{prop:left_kv_vk_adjoint}, this follows from Lemma \ref{lem:underlying_kv_vk}.
\end{proof}

\begin{exam}
\label{exam:hom_kv_vk_k}
One has the following counterpart of Example \ref{exam:kv_vk_k}. Using the augmentation $R \rightarrow \kring$, one can consider $\kring$ as a DG $(\grad A)\qd$-module. Then one has the isomorphism
\[
\ihom_{(\grad A)\qd} (K^\vee \otimes_A \vk , \kring)
\cong 
\ihom_\kring ((\rgrad A)\qd, \kring),
\]
where the right hand side is a DG  $(\rgrad A)\qd$-module.
\end{exam}

The adjunction unit gives the natural transformation for a DG $(\rgrad A)\qd$-module $M$
\begin{eqnarray*}
M &\rightarrow & \ihom_\kring^{e', \e'} ((\rgrad A)\qd, (\grad A)\qd \otimes_\kring M ),
\end{eqnarray*}
where the differential is twisted using both $e'$ and $\e'$ (as indicated by the superscripts).

Similarly, for a DG $(\grad A)\qd$-module $N$, one has the adjunction counit, which has underlying map
\begin{eqnarray*}
(\grad A) \qd \otimes_\kring
\ihom_\kring ((\rgrad A)\qd, N)
\rightarrow N.
\end{eqnarray*}
Here the domain must be equipped with the appropriate twisted differential, using both $e'$ and $\e'$ .

\begin{exam}
\label{exam:unit_counit_kk}
\ 
\begin{enumerate}
\item 
Taking $M=\kring$, as in Example \ref{exam:kv_vk_k}, the adjunction unit gives 
\[
\kring 
\rightarrow 
\ihom_\kring ^{e', \e'} ((\rgrad A)\qd, (\grad A)\qd  ).
\]
The differential of the codomain is given  by Proposition \ref{prop:right_kv_vk}.
\item 
Taking $N= \kring$ as in Example \ref{exam:hom_kv_vk_k}, the adjunction counit gives:
\[
(\grad A) \qd \otimes_\kring
\ihom_\kring ((\rgrad A)\qd, \kring)
\rightarrow \kring,
\]
in which the domain must be equipped with the appropriate twisted differential, using both $e'$ and $\e'$ .
\end{enumerate}

These can be considered as being a relative nonhomogeneous counterpart of classical Koszul complex constructions (cf. the brief review of relative nonhomogeneous Koszul duality in Section \ref{sect:koszul}). In particular, one can ask for criteria that guarantee that the unit (respectively the counit) is a weak equivalence. The natural candidate is to require that both $C$ and $R$ are (homogeneous) Koszul over $\kring$, in the appropriate sense (see Section \ref{subsect:apply_Koszul_property} for further indications).
\end{exam}

\section{The Koszul property for nonhomogeneous quadratic rings}
\label{sect:koszul}

This section outlines the Koszul theory of nonhomogeneous quadratic rings, following the presentation given by Positselski, and indicates how this behaves in the special cases of interest here. Section \ref{subsect:apply_Koszul_property} outlines how this can be applied, providing motivation for the concrete results in Part \ref{part:two}, rather than precise mathematical statements.

\subsection{Recollections for the homogeneous case}
\label{subsect:recollect_homog_Koszul}
The homogeneous case is analogous to classical Koszul duality, as presented in \cite{MR2177131}. Here we follows Positselski's presentation  \cite{MR4398644}.

\begin{defn}
\label{defn:koszul}
Suppose that $B$ is a $\nat$-graded ring with $B_0 =R$. Then,
\begin{enumerate}
\item 
(\cite[Definition 2.29]{MR4398644})
$B$ is {\em left flat Koszul} if $B_n$ is a flat left $R$-module for each $n \in \nat$ and $\mathrm{Tor}^B_{i,j} (R,R)=0$ for all $i \neq j$. 
\item 
(\cite[Definition 2.34]{MR4398644} and \cite[Theorem 2.35]{MR4398644}) 
$B$ is {\em left finitely projective Koszul} if  it is both left  finitely-generated projective  and  left flat Koszul.
\end{enumerate}
\end{defn}

\begin{rem}
\ 
\begin{enumerate}
\item 
Equivalent conditions for $B$ being left flat Koszul are given in \cite[Theorem 2.30]{MR4398644}.
\item 
Equivalent criteria for $B$ to be left finitely projective Koszul are given in \cite[Theorem 2.35]{MR4398644}, as outlined below.
\item 
\cite[Proposition 2.37]{MR4398644} gives a criterion using $\mathrm{Ext}$.
\item 
A left finitely projective Koszul ring $B$ is quadratic over $R$. 
\end{enumerate}
\end{rem}

We note that quadratic duality behaves as expected on Koszul rings:

\begin{prop}
\label{prop:left_right_Koszul}
\cite[Corollary 2.38]{MR4398644}
The quadratic duality functor $A \mapsto A\qd$ from the category of $2$-left finitely-generated projective homogeneous quadratic rings over $R$
 to the category of $2$-right finitely-generated projective homogeneous quadratic rings over $R$ restricts to an anti-equivalence between 
 the category of left finitely projective Koszul  rings over $R$ and right finitely projective Koszul rings over $R$.
\end{prop}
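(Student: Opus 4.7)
The plan is to exploit the Koszul-complex characterization of Koszulity together with the intrinsic left-right symmetry of that complex under quadratic duality. By the general theory of Section \ref{sect:qdual}, and in particular Remark \ref{rem:left_right_duality}, the functor $(-)\qd$ is already an anti-equivalence between the ambient categories of quadratic data over $R$, and, under the finitely-generated projectivity hypotheses, it is involutive up to canonical isomorphism (via Lemma \ref{lem:duality_bimodules_proj} applied degreewise). By symmetry it therefore suffices to prove a single implication: if $B$ is left finitely projective Koszul, then $B\qd$ is right finitely projective Koszul. The right finitely-generated projectivity of each $(B\qd)_n$ is then immediate from Lemma \ref{lem:duality_bimodules_proj} applied to $B_n$, so the essential content is the transfer of the Tor-vanishing condition.

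The key tool is the two-sided Koszul complex $\mathcal{K}^\bullet(B)$, whose $n$-th term is (schematically) $B \otimes_R (B\qd)_n^\vee \otimes_R B$ with differential built from the Casimir-type element $e_V$ of Notation \ref{nota:e_e_prime}. By \cite[Theorem 2.30]{MR4398644}, $B$ is left flat Koszul if and only if the associated one-sided complex $(B\qd)^\vee \otimes_R B$ is a projective resolution of $R$ as a left $B$-module, equivalently if $\mathrm{Tor}^B_{i,j}(R,R) = 0$ for $i\neq j$. Under the finitely-generated projectivity hypotheses, the canonical identification $B_n \cong ((B\qd)_n)^\vee$ and its iterates in higher weights induce an isomorphism of this complex with the corresponding right Koszul complex for $B\qd$, intertwining the differentials. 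Consequently its acyclicity characterizes simultaneously left Koszulity of $B$ and right Koszulity of $B\qd$, yielding the desired implication.

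The main obstacle is to verify that the intertwining above preserves not only the underlying graded modules but also the Koszul differentials. The underlying identification follows from Lemma \ref{lem:duality_bimodules_proj} combined with the compatibility of Casimir-type elements recorded in Lemma \ref{lem:compat_e}; the element $e_V \in V^\vee \otimes_R V$ corresponds, under the canonical exchange $V^\vee \otimes_R V \cong V \otimes_R V^\vee$ (available under finite projectivity), to the analogous Casimir for $V^\vee$. Iterating through the weight stratification of the Koszul complex, and using $2$-finitely-generated projectivity in weight $2$ to handle the quadratic relations, gives the required compatibility of differentials. Combined with the degreewise transfer of finitely-generated projectivity, this concludes the proof.
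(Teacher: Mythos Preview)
The paper itself supplies no proof for this statement; it simply cites \cite[Corollary 2.38]{MR4398644}. Your proposal attempts to reconstruct an argument, and the overall strategy --- use the Koszul-complex characterization of Proposition~\ref{prop:Koszul_equivalent_unit/counit} and exploit a symmetry between the complexes for $B$ and for $B\qd$ --- is indeed the right shape. However, two of the concrete assertions you rely on are false, and they undermine both halves of the argument.

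First, you write that ``the right finitely-generated projectivity of each $(B\qd)_n$ is then immediate from Lemma~\ref{lem:duality_bimodules_proj} applied to $B_n$'', and later invoke ``the canonical identification $B_n \cong ((B\qd)_n)^\vee$''. There is no such identification: $(B\qd)_n$ is a quotient of $(B_1^\vee)^{\otimes_R n}$ determined by the dual quadratic relations, and has in general nothing to do with $(B_n)^\vee$. For a concrete counterexample, take $B = T_R(V)$ with $V$ nonzero and left finitely-generated projective. This is trivially Koszul, its quadratic datum is $(R;V,0)$, and the dual datum is $(R; V^\vee, V^\vee\otimes_R V^\vee)$, so $B\qd = R \oplus V^\vee$ and $(B\qd)_n = 0$ for $n\geq 2$, whereas $B_n = V^{\otimes_R n}\neq 0$. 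Thus neither the projectivity of $(B\qd)_n$ nor the claimed isomorphism of Koszul complexes can be obtained in the way you describe.

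What is actually needed is more delicate: the right finite projectivity of $(B\qd)_n$ in all degrees is itself a \emph{consequence} of the left Koszul property of $B$ (in Positselski's treatment this comes from the distributivity/lattice arguments underlying \cite[Theorem~2.35]{MR4398644}), not an input one can read off degreewise from $B$. Likewise, the symmetry between the Koszul complexes for $B$ and for $B\qd$ does not arise from a naive identification of terms, but from the fact that $K^\tau(B\qd,B)$ and ${}^\tau K(B\qd,B)$ each play dual roles for the two algebras; establishing this requires the machinery of \cite[\S2.6--2.7]{MR4398644} rather than Lemma~\ref{lem:compat_e} alone. Your sketch gestures at the correct endpoint but skips precisely the nontrivial content of Positselski's proof.
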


Positselski gives a characterization of Koszulity using his first and second Koszul complexes (see \cite[Chapter 2]{MR4398644}), as indicated below. For this, suppose that:
\begin{enumerate}
\item 
$A$ and $B$ are $\nat$-graded rings with $A_0=R=B_0$, and $B_1$ is finitely-generated projective as a right $R$-module; 
\item  
$\tau$ is a $R$-bimodule morphism $\hom_{R\op} (B_1, R) \rightarrow A_1$ (equivalently an element  $e \in B_1 \otimes_R A_1$ such that $re=er$ for all $r \in R$).
\end{enumerate}

\begin{exam}
\label{exam:tau_dual}
These hypotheses hold when $A$ is quadratic over $R$ and $2$-left finitely-generated projective, and $B:= A\qd$, its (left) quadratic dual, equipped with $\tau$ provided by the isomorphism $\hom_{R\op} (B_1, R) \cong A_1$.  
\end{exam}

Positselski's first Koszul complex $K^\tau (B,A) $ has the form 
\[ 
\ldots 
\rightarrow 
A \otimes_R \hom_{R\op} (B_2, R) 
\rightarrow 
A \otimes_R \hom_{R\op} (B_1, R)
\rightarrow 
A
\rightarrow 
0 
\]
with  $\partial^\tau$ defined using $\tau$. In the case of Example \ref{exam:tau_dual}, $(\partial^\tau)^2=0$ and the latter will be assumed henceforth. Thus $K^\tau (B,A) $ is a complex of left $A$-modules; it comes equipped with a natural morphism of $R$-bimodules $R \rightarrow K^\tau (B, A)$, where $R$ is placed in homological degree zero. Written globally, one has $K^\tau (B,A) \cong A \otimes_R \hom_{R\op} (B, R)$, equipped with the differential $\partial^\tau$, with homological grading given by the $\nat$-grading of $B$. 
 
Positselski's second Koszul complex ${}^\tau K(B,A)$  has the form 
 \[ 
\ldots 
\rightarrow 
 \hom_{R\op} (B_2, R) \otimes _R A
\rightarrow 
\hom_{R\op} (B_1, R)\otimes_R A
\rightarrow 
A
\rightarrow 
0;
\]
it is a complex of right $A$-modules.  Written globally, one has ${}^\tau K(B,A)  \cong \hom_{R\op} (B, R)\otimes_R A$; this is equipped with a natural morphism of $R$-bimodules ${}^\tau K(B,A) \rightarrow R$.

Then one has the following weak form of \cite[Theorem 2.35]{MR4398644}:

\begin{prop}
\label{prop:Koszul_equivalent_unit/counit}
An $\nat$-graded ring $A$ with $A_0=R$ is left finitely projective Koszul if and only if one of the following equivalent conditions holds:
\begin{enumerate}
\item 
$A$ is left finitely-generated projective and quadratic and $K^\tau (B, A)$ is exact in homological degrees $\geq 1$; 
\item 
$A$ is left finitely-generated projective and quadratic and ${}^\tau K(B, A)$ is exact in homological degrees $\geq 1$. 
\end{enumerate} 
\end{prop}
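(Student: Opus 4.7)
The plan is to establish a chain of equivalences: Koszulity $\Leftrightarrow$ exactness of $K^\tau(B,A)$ $\Leftrightarrow$ exactness of ${}^\tau K(B,A)$, organized around the observation that both Koszul complexes carry a natural diagonal bigrading, with homological degree equal to the internal $B$-degree. The differential $\partial^\tau$ increases the internal $A$-degree by one while decreasing the $B$-degree by one, so it preserves the total internal degree and decreases homological degree by one. The augmentation places $R$ in bidegree $(0,0)$. Thus, in each fixed total internal degree $j$, the augmented complex $K^\tau(B,A) \to R$ is a finite complex running from homological degree $0$ (with piece $A_j$) to homological degree $j$ (with piece $\hom_{R\op}(B_j, R)$), and is concentrated in a single diagonal of the $(i,n)$-bigrading.

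For the direction (exactness) $\Rightarrow$ (Koszul), first I would verify that each term $A \otimes_R \hom_{R\op}(B_n, R)$ of $K^\tau(B,A)$ is a projective left $A$-module. This requires showing that $B_n$ is right finitely-generated projective for every $n$: quadraticity handles $n \leq 2$, and an induction on $n$ bootstraps using the exactness hypothesis in internal degree $n$, combined with the fact that $A_i$ is left finitely-generated projective for all $i$. Granted this, $K^\tau(B,A) \to R$ is a projective resolution of $R$ over $A$. The diagonal bigrading is inherited by $\mathrm{Tor}^A_{*,*}(R,R)$, which is therefore supported on the diagonal $i=j$; hence $A$ is left finitely projective Koszul by Definition \ref{defn:koszul}.

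For the reverse direction (Koszul) $\Rightarrow$ (exactness), assume $\mathrm{Tor}^A_{i,j}(R,R)=0$ for $i\neq j$. Quadraticity of $A$ follows from the vanishing $\mathrm{Tor}^A_{2,j}(R,R)=0$ for $j\neq 2$ (as in \cite[Proposition 2.37]{MR4398644}), so the Koszul complex $K^\tau(B,A)$ is defined and exact in homological degrees $0$ and $1$. For higher degrees, I would compare $K^\tau(B,A)$ with a minimal graded projective resolution $P_\bullet \to R$ of $R$: by Koszulity, $P_\bullet$ has $P_n$ generated in internal degree $n$. A standard comparison map $K^\tau(B,A) \to P_\bullet$ exists in degree $0,1$ by quadraticity, and one extends it by induction on homological degree, using at each step that the cycles of $K^\tau(B,A)$ in bidegree $(n,m)$ with $m \neq n$ must vanish (since the target Tor groups vanish off the diagonal) while the on-diagonal pieces match those of the minimal resolution. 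This produces a chain equivalence and yields the desired exactness. The equivalence of (1) and (2) is then a symmetry statement: applying the duality functor $\hom_R(-,R)$ to $K^\tau(B,A)$ and interchanging left and right module structures yields ${}^\tau K(B,A)$ (up to a re-indexing), and the left/right projectivity hypotheses are interchanged by quadratic duality (Proposition \ref{prop:left_right_Koszul}), so the two exactness conditions are equivalent.

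The main obstacle is the projectivity of the individual $B_n$ as right $R$-modules for all $n$, which is not a formal consequence of $A$ being left finitely-generated projective. Overcoming this requires extracting projectivity from the quadratic datum and the exactness/Koszul hypothesis in a bootstrapping manner, using the quadratic relations in $B$ to express $B_n$ as a specific subquotient of $B_1^{\otimes_R n}$ whose projectivity can be tracked through the splittings guaranteed by the finitely-generated projective assumptions in low degrees. This technical ingredient is the heart of Positselski's \cite[Theorem 2.35]{MR4398644} and is where a careful write-up would differ most from the field-coefficient classical Koszul duality argument.
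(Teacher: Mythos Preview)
The paper does not supply its own proof of this proposition: it is stated as a weak form of \cite[Theorem 2.35]{MR4398644} and left at that. Your sketch is broadly in line with the standard approach (and presumably Positselski's): use the diagonal bigrading to relate exactness of the Koszul complex to the $\mathrm{Tor}$-vanishing condition, and bootstrap the right projectivity of the $B_n$ along the way. You also correctly flag that this bootstrapping is the genuine technical content over a general base $R$.

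There is, however, a gap in your argument for the equivalence of (1) and (2). Applying $\hom_R(-,R)$ to $K^\tau(B,A)$ does \emph{not} produce ${}^\tau K(B,A)$: the two complexes have the same graded pieces $\hom_{R\op}(B_n,R)$ and $A$, but with $A$ tensored on opposite sides, and dualizing the first gives a complex built from $B$ and $\hom_R(A,R)$, not from $\hom_{R\op}(B,R)$ and $A$. Moreover, invoking Proposition~\ref{prop:left_right_Koszul} here is circular: that result (Positselski's Corollary 2.38) is a \emph{consequence} of the characterization you are trying to prove, not an input to it. The correct route is to run your argument for (1) and then observe that the argument for (2) is strictly parallel: when exact, ${}^\tau K(B,A)$ is a projective resolution of $R$ as a \emph{right} $A$-module, and $\mathrm{Tor}^A(R,R)$ can be computed from either side, so (2) is equivalent to the same diagonal $\mathrm{Tor}$-vanishing condition. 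The equivalence (1) $\Leftrightarrow$ (2) then follows by passing through Koszulity, not by a direct duality between the two complexes.
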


\subsection{Koszul duality in the nonhomogeneous case}

We follow \cite[Section 4.6]{MR4398644}, but restricting to nonhomogeneous quadratic rings (rather than the more general weak nonhomogeneous quadratic rings in {\em loc. cit.}). In particular,  $A$ is a unital, associative ring equipped with a filtration $F_\bullet A$ such that the associated graded $\grad A$ is $\nat$-graded and  quadratic over $\grad_0 A$. (In  \cite[Section 4.6]{MR4398644}, the filtered algebra $A$ is denoted $\tilde{A}$, the notation $A$ being reserved there for the associated graded.)

\begin{defn}
\label{defn:nonhomog_Koszul}
The nonhomogeneous quadratic ring $(A, F_\bullet A)$ is left finitely projective Koszul if $\grad A$ is left finitely projective Koszul (in the homogeneous sense).  
\end{defn}

\begin{rem}
\label{rem:P's_triangulated_equivalences}
Positselski presents his general theory without supposing that $A$ is left augmented. In the general case, rather than the DG algebra of Section \ref{sect:dgalg}, one has a CDG algebra $(B,d,h)$, where $h$ is the curvature. The adjunctions of Sections \ref{sect:adjoints} and \ref{sect:adjoints_bis} generalize to the curved setting, with the fundamental difference that one is no longer working with DG objects, so the associated (co)derived categories cannot be defined simply by inverting quasi-isomorphisms. Instead, Positselski localizes with respect to appropriate `acyclic objects'.

If $A$ is left finitely projective Koszul, Positselski exhibits triangulated equivalences in this setting between appropriate  (co)derived categories. This involves a number of technical considerations in the general case which we shall not require for the application here, so we only indicate the results, without giving precise statements:

\begin{enumerate}
\item 
If $A$ is left finitely projective Koszul, in \cite[Theorem 6.14]{MR4398644} Positselski exhibits a triangulated equivalence between (co)derived categories associated respectively to DG right $A$-modules and right CDG $B$-(co)modules.  
\item 
If $A$ is left finitely projective Koszul, in \cite[Theorem 7.11]{MR4398644} he exhibits triangulated equivalence between the (semicontra)derived category of DG left $A$-modules and the (contra)derived category of left CDG $B$-(contra)modules.  
\end{enumerate} 
The interested reader should consult  \cite{MR4398644} for the definitions and precise statements.
\end{rem}

\begin{rem}
\label{rem:left-augmented}
In the left augmented setting, one can stay within the DG realm, studying the adjunctions  of Sections \ref{sect:adjoints} and \ref{sect:adjoints_bis}. Then the derived categories can be constructed from the relevant categories of DG modules by inverting quasi-isomorphisms. Moreover, in the applications in Part \ref{part:two}, the various adjoint functors preserve quasi-isomorphisms, so pass directly to functors between the corresponding `derived categories'.

In the Koszul case, in the applications in Part \ref{part:two}, one can impose finiteness restrictions on the DG modules considered so as to obtain triangulated equivalences analogous to those of  \cite[Theorems 6.14 and 7.11]{MR4398644}. 
\end{rem}

\subsection{The case $A \cong_{\mathrm{bimod}}R \otimes_\kring C$ with $C$ Koszul}
\label{subsect:Koszul_semibase_change}
Here we specialize to  the context studied in Section \ref{sect:quad}, where $ R \otimes_\kring C \cong A$ as $R \otimes C\op$-modules.

Take  $C$ to be quadratic over $\kring$ associated to the quadratic datum $(\kring; W, I_C)$ 
 and suppose that the filtration $F_nA := R \otimes_\kring F_n C$ exhibits $A$ as a filtered algebra (cf. Proposition \ref{prop:filt_hyp}), so that $\grad A$ is a graded algebra over $R \cong F_0 A$. By Proposition \ref{prop:A_quadratic}, $\grad A$ is a quadratic algebra over $R$, associated to the quadratic datum  $(R; R \otimes_\kring W, R\otimes _\kring I_C)$.

 If $C$ is left finitely projective Koszul over $\kring$, it is natural to ask whether  the associated graded algebra $\grad A$ is also left finitely projective Koszul (over $R$). This is the case, under a flatness hypothesis on $R$:

\begin{thm}
\label{thm:koszul}
Suppose that $C$ is left finitely projective Koszul over $\kring$ and that $(F_nA \mid n \in \nat)$ exhibits $A$ as a filtered algebra. If $R$ is flat as a {\em right} $\kring$-module, then $A$ is left finitely projective Koszul over $R$. 
\end{thm}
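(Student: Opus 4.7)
The plan is to reduce the statement to showing that the associated graded $\grad A$ is left finitely projective Koszul (in the homogeneous sense), and then to establish this by base changing a linear Koszul resolution of $\kring$ from $C$ to $\grad A$. The reduction is immediate from Definition~\ref{defn:nonhomog_Koszul}. By Proposition~\ref{prop:A_quadratic}, $\grad A$ is homogeneous quadratic over $R$ with $\grad_n A \cong R \otimes_\kring C_n$; since each $C_n$ is finitely-generated projective over $\kring$ (as $C$ is left finitely projective Koszul), $\grad A$ is left finitely-generated projective over $R$. The substance of the proof is therefore to verify
\[
\mathrm{Tor}^{\grad A}_{i,j}(R,R)=0 \quad \text{for } i \neq j.
\]

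I would start from the linear Koszul resolution provided by Proposition~\ref{prop:Koszul_equivalent_unit/counit}: $P_\bullet \to \kring$ with $P_n = C \otimes_\kring V_n$ and $V_n := \hom_{\kring\op}((C\qd)_n, \kring)$. Here $V_n$ is finitely-generated projective as a left $\kring$-module because $C\qd$ is right finitely projective Koszul by Proposition~\ref{prop:left_right_Koszul}, and each generator sits in internal degree~$n$. Applying extension of scalars along $C \to \grad A$, I obtain a complex
\[
\ldots \to \grad A \otimes_\kring V_2 \to \grad A \otimes_\kring V_1 \to \grad A \to R \to 0
\]
of left $\grad A$-modules, where each $\grad A \otimes_\kring V_n$ is a finitely-generated projective left $\grad A$-module concentrated in internal degree~$n$.

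The key step is exactness of this complex. I would use the bimodule identification $\grad A \cong R \otimes_\kring C$ (viewed as right $C$-modules) to rewrite the extension-of-scalars functor as $\grad A \otimes_C M \cong R \otimes_\kring M$ at the level of underlying abelian groups, for any left $C$-module $M$. The hypothesis that $R$ is right $\kring$-flat then implies exactness of $\grad A \otimes_C -$ on left $C$-modules, so $\grad A \otimes_C P_\bullet \to R$ is a linear projective resolution of $R$ as a left $\grad A$-module.

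Finally, the Tor computation proceeds by tensoring with $R$ over $\grad A$: one has $R \otimes_{\grad A}(\grad A \otimes_C P_\bullet) \cong R \otimes_C P_\bullet$, and since the right $C$-action on $R$ (obtained by restricting the $\grad A$-action along $C \hookrightarrow \grad A$) factors through the augmentation $\epsilon_C : C \twoheadrightarrow \kring$, this rewrites as $R \otimes_\kring (\kring \otimes_C P_\bullet)$. Right $\kring$-flatness of $R$ then commutes $R \otimes_\kring -$ past homology, giving
\[
\mathrm{Tor}^{\grad A}_{i,j}(R,R) \cong R \otimes_\kring \mathrm{Tor}^C_{i,j}(\kring,\kring),
\]
which vanishes for $i \neq j$ by Koszulity of $C$. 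The main technical obstacle is bookkeeping with the various sided module structures, particularly ensuring that extended scalars produce the intrinsic left $\grad A$-action on $\grad A \otimes_\kring V_n$ and that internal gradings are preserved; once this is settled, the whole argument pivots on the single observation that $\grad A \otimes_C - \cong R \otimes_\kring -$, which cleanly transfers flatness and Koszulity data from $C$ over $\kring$ to $\grad A$ over $R$.
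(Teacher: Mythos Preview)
Your proof is correct and reaches the same key identity $\mathrm{Tor}^{\grad A}_{i,j}(R,R) \cong R \otimes_\kring \mathrm{Tor}^C_{i,j}(\kring,\kring)$ as the paper, but via a different complex. The paper computes both Tor groups directly from the reduced relative bar complexes, observing that the bar complex for $\grad A$ identifies term by term with $R \otimes_\kring -$ applied to the bar complex for $C$ (using $\grad_+ A \cong R \otimes_\kring C_+$ and $(R\otimes_\kring C_+)^{\otimes_R n} \cong R \otimes_\kring C_+^{\otimes_\kring n}$); right-flatness of $R$ then commutes $R\otimes_\kring -$ past homology. You instead start from the linear Koszul resolution $P_\bullet$ of $\kring$ over $C$ (available by Proposition~\ref{prop:Koszul_equivalent_unit/counit}) and base-change along $C \to \grad A$ using $\grad A \otimes_C - \cong R \otimes_\kring -$. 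Your route has the conceptual bonus of producing an explicit linear projective resolution of $R$ over $\grad A$, from which Koszulity follows immediately without the final Tor computation you include (the $n$th term is generated in internal degree $n$, so after applying $R \otimes_{\grad A}-$ the differentials vanish for degree reasons). The paper's route is more elementary in that it never needs to construct $C\qd$ or invoke the Koszul complex, working only with the bar complex and the Tor criterion of Definition~\ref{defn:koszul}. One minor imprecision in your write-up: $\grad A \otimes_\kring V_n$ is not literally ``concentrated in internal degree $n$'' but is \emph{generated} there, which is the linearity condition you actually need.
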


\begin{proof}
The hypothesis that  $C$ is left finitely-generated projective over $\kring$ implies that $\grad A$ is left finitely-generated projective over $R$. Thus it remains to show that it is left flat Koszul; this is checked by using the $\mathrm{Tor}$ criterion of Definition \ref{defn:koszul}, as follows.

By hypothesis, $C_+$ is a projective left $\kring$-module, hence is flat. It follows (see \cite[Section 2.2]{MR4398644} for example), that $\mathrm{Tor}^C (\kring, \kring)$ can be calculated as the homology of the reduced relative bar complex:
\[
\ldots 
\rightarrow 
C_+^{\otimes_\kring  n} 
\rightarrow 
C_+^{\otimes_\kring n-1}
\rightarrow 
\ldots
\rightarrow 
C_+ \otimes_\kring C_+ 
\rightarrow 
C_+ 
\rightarrow 
0.  
\]
The left flat Koszul hypothesis implies that the homology of this complex in homological degree $i$ is concentrated in grading $i$.

Similarly, $\mathrm{Tor}^{\grad A} (R,R)$ is calculated as the homology of the complex:
\[
\ldots 
\rightarrow 
(R \otimes_\kring C_+)^{\otimes_R  n} 
\rightarrow 
(R \otimes_\kring C_+)^{\otimes_R n-1}
\rightarrow 
\ldots
\rightarrow 
(R\otimes_\kring C_+)\otimes_R (R \otimes_\kring C_+) 
\rightarrow 
R \otimes_\kring C_+ 
\rightarrow 
0,   
\]
using the identification $\grad_+ A \cong R \otimes_\kring C_+$ as left $R$-modules.

One checks that this is isomorphic to the complex 
\[
\ldots 
\rightarrow 
R \otimes_\kring C_+^{\otimes_\kring  n} 
\rightarrow 
R \otimes_\kring C_+^{\otimes_\kring n-1}
\rightarrow 
\ldots
\rightarrow 
R \otimes_\kring C_+ \otimes_\kring C_+ 
\rightarrow 
R\otimes_\kring C_+ 
\rightarrow 
0
\]
obtained by applying the functor $R \otimes_\kring -$ to the relative reduced bar complex for $C$. 

By hypothesis, $R$ is flat as a right $\kring$-module, hence one has $\mathrm{Tor}^{\grad A} (R, R) \cong R \otimes_\kring \mathrm{Tor}^C (\kring, \kring)$. It follows from the hypothesis upon $C$ that the groups $\mathrm{Tor}^{\grad A} (R,R)$ satisfy the Koszul condition, as required.  
\end{proof}

\begin{rem}
\label{rem:rgrad_A_Koszul}
\ 
\begin{enumerate}
\item
There is the  `opposite' counterpart of this result, under the following hypotheses:
\begin{enumerate}
\item 
 $R$ is right finitely projective Koszul over $\kring$;
\item 
the associated filtration   $(G_nA \mid n \in \nat)$ exhibits $A$ as a filtered algebra; 
\item 
$C$ is flat as a left $\kring$-module,
\end{enumerate}
Then $\rgrad A$ is right finitely projective Koszul over $C$.
\item 
These results will  be applied to the associated quadratic duals, for which Proposition \ref{prop:left_right_Koszul} applies. Namely, under the respective hypotheses, one deduces that $(\grad A)\qd$ is right finitely projective Koszul over $R$ and $(\rgrad A)\qd$ is left finitely projective Koszul over $C$. 
\end{enumerate}
\end{rem}

\begin{rem}
In the applications in Part \ref{part:two}, the appropriate relative Koszul properties will be established directly, without applying Theorem \ref{thm:koszul} or its counterpart outlined in Remark \ref{rem:rgrad_A_Koszul}. Nevertheless, these results explain {\em why} one should expect to obtain such results. 
\end{rem}

\subsection{Applications of the Koszul property}
\label{subsect:apply_Koszul_property}
We continue working in the context of  Section \ref{subsect:Koszul_semibase_change}  and indicate how the results of Section \ref{subsect:Koszul_semibase_change} can be applied.
\bigskip

\begin{enumerate}
\item 
{\bf The left augmented case} (Section \ref{sect:adjoints}, specialized as in Section \ref{subsect:laug_R_C}).

\noindent
Suppose that:
\begin{enumerate}
\item 
$C$ is left finitely projective Koszul over $\kring$;
\item
$R$ is flat as a right $\kring$-module,
\end{enumerate}
then $A$ is  left finitely projective Koszul over $R$ by Theorem \ref{thm:koszul}. 

Positselski's triangulated equivalences  relate the homotopy categories of DG $A$-modules and of DG $(\grad A)\qd$-modules (subject to appropriate restrictions). 
\bigskip 
\item 
{\bf The right augmented case} (Section \ref{sect:adjoints_bis}). 

\noindent
Suppose that: 
\begin{enumerate}
\item 
$R$ is right finitely projective Koszul over $\kring$;
\item
$C$ is flat as a left $\kring$-module,
\end{enumerate}
then $A$ is  right finitely projective Koszul over $C$ by the `opposite' of Theorem \ref{thm:koszul}. 

In this case, Positselski's triangulated equivalences  relate the homotopy categories of DG $A$-modules and of DG $(\rgrad A)\qd$-modules (again subject to appropriate restrictions). 
\bigskip
\item 
{\bf The nonhomogeneous biquadratic situation} (Section \ref{sect:biquad}). 

\noindent
Here, we suppose that:
\begin{enumerate}
\item 
$R$ is right finitely projective Koszul over $\kring$;
\item 
$C$ is left finitely projective Koszul over $\kring$. 
\end{enumerate}

Then both of the above Koszul results apply. Thus, Positselski's triangulated equivalences should relate the homotopy categories of DG $(\grad A)\qd$-modules and of DG $(\rgrad A)\qd$-modules (still subject to appropriate restrictions). 

This is a Koszul-type duality relating the DG algebras $(\grad A)\qd$ and $(\rgrad A)\qd$ (cf. Example \ref{exam:unit_counit_kk}).
\end{enumerate}

\newpage
\part{Application to PROPs}
\label{part:two}

\section{Background on $\kk \fb$-modules and bimodules}
\label{sect:bimodules}

This section reviews the basic theory of modules and bimodules over the category $\fb$ of finite sets and bijections. 
Most of this material is well-known; it is included in part so as to fix notation. The following Section \ref{sect:finiteness_properties} introduces  finiteness and boundedness properties of bimodules that are used in the applications. 

Throughout, $\kk$ is a commutative ring with unit, specializing to the case of a field when vector space dimension is employed as a finiteness condition.

\subsection{Generalities}

For $\calc$ an essentially small category and $\kk$ a commutative ring with unit, $\kk \calc$ denotes its $\kk$-linearization; this is considered as a ring with several objects. A $\kk \calc$-module is equivalent to a functor from $\calc$ to $\kmod$, the category of $\kk$-modules; morphisms are natural transformations.  
 The category of $\kk \calc$-modules is denoted $\kk \calc \dash\modules$.  This has a tensor product defined by $\otimes_\kk$ applied objectwise, defining a symmetric monoidal structure on $\kk \calc\dash\modules$ with unit the constant functor with value $\kk$.

Recall that, for $\mathscr{A}$ an essentially small $\kk$-linear category, one can form $\otimes_\mathscr{A}$,  
the  `multi-object ring' generalization of the tensor product $\otimes_R$ over a  unital  associative ring $R$. 
 Taking $\mathscr{A} = \kk \calc$, this gives the tensor product (denote $\otimes_\calc$ for typographical clarity):
\begin{eqnarray*}
\otimes_\calc \cn \kk \calc\op \dash \modules  \times \kk \calc \dash \modules & \rightarrow & \kmod.
\end{eqnarray*}

For a $\kk \calc$-module $Y$ and a $\kk$-module $M$, $\hom_\kk (Y, M)$ is naturally a $\kk \calc\op$-module. Then, for $X$ a $\kk \calc\op$-module, one has the natural isomorphism:
\[
\hom_\kk (X \otimes_\calc Y, M) 
\cong 
\hom_{\kk \calc\op} (X, \hom_\kk (Y, M)).
\]
Thus,  for an essentially small category $\cald$, if $Y$ is a $\kk (\calc \times \cald\op)$-module (i.e., a bimodule with left $\kk \calc$-action and right $\kk\cald$-action), then $X \otimes_\calc Y$ is a $\kk \cald\op$-module. Moreover, if  $M$ is a $\kk \cald \op$-module,  the above isomorphism restricts to 
\[
\hom_{\kk\cald\op} (X \otimes_\calc Y, M) 
\cong 
\hom_{\kk \calc\op} (X, \hom_{\kk \cald\op} (Y, M)), 
\]
where $\hom_{\kk \cald\op} (Y, M)$ is a sub $\kk \calc\op$-module of $\hom_\kk (Y, M)$.

Suppose now that both $X$, $Y$ are $\kk (\calc \times \calc\op)$-modules (aka. $\kk \calc$-bimodules). Then one has the  $\kk \calc$-bimodules 
$\hom_{\kk \calc} (X, Y)$ and $
\hom_{\kk \calc \op} (X, Y)$. In general, these two bimodules are not isomorphic.

\subsection{$\kk\fb$-modules, bimodules and more}

The category of finite sets and  bijections is denoted $\fb$ and that of finite sets and injections $\finj$, so that there is an inclusion $\fb \hookrightarrow \finj$ as a wide subcategory. These are both essentially small, with skeleton $\{ \n \mid n \in \nat\}$, where $\n = \{1, \ldots, n\}$ (so that $\mathbf{0} = \emptyset$). The category $\fb$ is a groupoid, so that there is an isomorphism of categories $\fb \cong \fb\op$ induced by  passage to the inverse. Hence $\kk \fb\dash\modules$ is isomorphic to $\kk \fb\op\dash\modules$. 

\begin{exam}
\label{exam:sym_seq}
A $\kk\fb$-module $N$ is equivalent to a symmetric sequence, i.e., $\{ N(t) \mid  t \in \nat \}$, where  $N(t)$ is a $\kk\sym_t$-module; morphisms are sequences of equivariant maps. Similarly for $\kk \fb\op$, with variance adjusted accordingly. 
 Then, for $M$ a $\kk \fb\op$-module, $M \otimes _\fb N$ identifies as 
$ 
\bigoplus_{t \in \nat}
M(t) \otimes_{\sym_t} N(t).
$
\end{exam}

The category of $\kk\fb$-modules is equipped with the Day convolution product $\ofb$; for two $\kk \fb$-modules $N_1, N_2$, the convolution product $N_1 \ofb N_2$ evaluated on a finite set $S$ is 
\[
(N_1 \ofb N_2) (S) = \bigoplus_{U\amalg V =S} N_1 (U) \otimes_\kk N_2 (V).
\]
This defines a symmetric monoidal structure $(\kk\fb\dash\modules, \ofb , \kk_\mathbf{0} )$, where $\kk_\mathbf{0}$ denotes the module supported on $\mathbf{0}$ with value $\kk$.  Likewise, one has $\obf$ on $\kk \fb\op\dash\modules$, which corresponds to $\ofb$ under the isomorphism of categories $\kk\fb\dash\modules \cong \kk \fb\op\dash\modules$. 

\begin{nota}
\label{nota:triv_sgn_dagger}
Denote by 
\begin{enumerate}
\item 
$\triv$ (respectively $\sgn$) the $\kk\fb$-module with $\triv (\n) = \triv_n$, the trivial representation of $\kk \sym_n$ (resp. $\sgn(n) = \sgn_n$, the sign representation);
\item 
$(-)^\dagger$ the involution of $\kk\fb\dash\modules$ defined by the objectwise tensor product $(M)^\dagger := \sgn \otimes_\kk M$.  
\end{enumerate}
One has the corresponding structures in $\kk \fb\op \dash\modules$. Where necessary to distinguish such structures, a superscript will be used, for instance writing $\triv^\fb$ for the $\kk \fb$-module above.
\end{nota}

The functor $(-)^\dagger$ is a  monoidal self-equivalence of $(\kk \fb\dash\modules, \ofb, \kk_\mathbf{0})$ but not a {\em symmetric} monoidal self-equivalence. It induces an equivalence of symmetric monoidal categories when one introduces a  `twisted' symmetry for $\ofb$, as explained in \cite{2012arXiv1209.5122S} and \cite{MR3430359}. 

\begin{rem}
\label{rem:triv_sgn_dagger}
The $\kk \fb$-module $\triv$ is the unit for the objectwise tensor product: for any $\kk \fb$-module $M$, one has the canonical equivalences $\triv \otimes_\kk M \cong M \cong M \otimes_\kk \triv$. This gives the isomorphism $\sgn \cong \triv^\dagger$ and hence $\triv \cong \sgn^\dagger$.
\end{rem}

\begin{exam}
\label{exam:triv_sym_monoids}
Both $\triv$ and $\sgn$ are unital monoids in $(\kk\fb\dash\modules, \ofb , \kk_\mathbf{0} )$, and $\triv$ is commutative (whereas $\sgn$ is `anticommutative', corresponding to the fact that $(-)^\dagger$ is not a symmetric monoidal self-equivalence, but introduces signs). 

One can consider the category of (left) $\triv$-modules (respectively $\sgn$-modules) in $\kk\fb\dash\modules$. A $\triv$-module is a $\kk \fb$-module $M$ equipped with a structure map $\triv \ofb M \rightarrow M$ which satisfies the unit and associativity constraints; $\sgn$-modules are defined similarly. The involution $(-)^\dagger$ yields an equivalence of categories between $\triv$-modules and $\sgn$-modules.

The category of left $\triv$-modules is equivalent to that of right $\triv$-modules using the symmetry of $\ofb$. (The analogous statement holds for $\sgn$-modules;  in this case, one must use the `twisted' symmetry of $\ofb$, since $\sgn$ is only commutative with respect to this.)
\end{exam}

Recall the following well-known (cf. \cite{2012arXiv1209.5122S}, for example) and  fundamental fact:

\begin{prop}
\label{prop:finj-mod_triv-mod}
The category of $\kk\finj$-modules is equivalent to the category of $\triv$-modules in $\kk\fb\dash\modules$. 
\end{prop}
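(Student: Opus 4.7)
The plan is to construct explicit, mutually inverse functors between the two categories, exploiting the fact that every morphism in $\finj$ factorizes uniquely as an inclusion of a subset followed by a bijection (equivalently, as a bijection followed by an inclusion).

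First I would define, for a $\triv$-module $(M, \mu : \triv \ofb M \to M)$, a functor $\tilde{M} : \finj \to \kk\dash\modules$. On objects, $\tilde{M}(S) := M(S)$. On morphisms, I use that any injection $f : V \hookrightarrow T$ admits a unique decomposition $f = \iota_{V'} \circ \beta$, where $\beta : V \stackrel{\cong}{\to} V'$ is the induced bijection onto $V' := f(V) \subseteq T$, and $\iota_{V'}$ is the canonical inclusion with complement $U := T \setminus V'$. The underlying $\kk\fb$-module structure of $M$ handles $\beta$, and the component of $\mu$ indexed by the partition $T = U \amalg V'$ yields
\[
M(V') \;\cong\; \kk \otimes_\kk M(V') \;=\; \triv(U) \otimes_\kk M(V') \;\hookrightarrow\; (\triv \ofb M)(T) \;\stackrel{\mu_T}{\longrightarrow}\; M(T),
\]
providing the action of $\iota_{V'}$. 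Functoriality of $\tilde{M}$ reduces to three verifications: (i) the $\kk\fb$-action on $M$ is functorial (given); (ii) the component indexed by the trivial partition $T = \emptyset \amalg T$ equals the identity (this is the unit axiom for $\mu$, noting that $\triv(\emptyset) = \kk$ carries the unit of the monoid $\triv$); (iii) composing two consecutive inclusions $W \subseteq V' \subseteq T$ agrees with the single inclusion $W \subseteq T$ (this is the associativity axiom for $\mu$, using that the multiplication $\triv \ofb \triv \to \triv$ is simply the canonical isomorphism $\kk \otimes_\kk \kk \cong \kk$ on each summand). Equivariance with respect to bijections of $T$ follows from $\mu$ being a morphism of $\kk\fb$-modules.

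Conversely, given a $\kk\finj$-module $N$, I restrict along $\fb \hookrightarrow \finj$ to obtain a $\kk\fb$-module, and then define $\mu : \triv \ofb N \to N$ componentwise: on the summand $\triv(U) \otimes_\kk N(V) = \kk \otimes_\kk N(V)$ indexed by $T = U \amalg V$, the map is $N(\iota_V)$, where $\iota_V : V \hookrightarrow T$ is the canonical inclusion. That this assembles into a $\kk\fb$-module morphism follows from the functoriality of $N$ with respect to bijections of $T$ (which permute the partitions $T = U \amalg V$). The unit and associativity axioms for $\mu$ then follow respectively from $N(\mathrm{id}) = \mathrm{id}$ and from the functoriality identity $N(\iota_{V'} \circ \iota_{V}) = N(\iota_{V'}) \circ N(\iota_V)$ for nested inclusions.

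The two constructions are manifestly inverse to one another on objects and on morphisms: going $\triv$-module $\to$ $\finj$-module $\to$ $\triv$-module recovers $\mu$ from its components, while the reverse composition uses the factorization of injections to reassemble $\tilde{M}$ from its $\fb$-part and the inclusions. The main (minor) obstacle is purely bookkeeping: keeping track of how the symmetric action on $\triv \ofb M = \bigoplus_{U \amalg V = T}\kk \otimes_\kk M(V)$ interacts with relabelling bijections, so as to verify that the structure map is indeed a $\kk\fb$-module morphism. No essential difficulty arises because $\triv$ is the \emph{trivial} representation, so all signs and twists that would be present for $\sgn$ (cf.\ Remark \ref{rem:triv_sgn_dagger}) are absent here.
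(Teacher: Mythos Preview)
The paper does not supply a proof of this proposition; it states the result as well-known, with a citation to \cite{2012arXiv1209.5122S}. Your argument is correct and is essentially the standard one: the unique factorization of an injection as a bijection followed by a canonical inclusion lets the $\kk\fb$-structure handle the first factor and the $\triv$-action handle the second, with the unit and associativity axioms for the module structure matching exactly the functoriality constraints for identities and composites of inclusions.
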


We also consider $\kk \fb$-bimodules. By convention, we take  this to be  the category $\kk (\fb\op \times \fb)\dash\modules$, so that, for a $\kk \fb$-bimodule $X$, one has the  $\kk (\sym_a\op \times \sym_b)$-module $X (\mathbf{a}, \mathbf{b})$. (This convention is chosen to agree with the notation for morphisms in the $\kk$-linear categories considered here, such as $\kk \finj$.)

\begin{defn}
\label{defn:Z-grade_fb-bimodules}
For $M$ a $\kk \fb$-bimodule and $n \in \zed$, let $M\dg{n}$ be the sub $\kk \fb$-bimodule given by 
\[
M\dg{n}(\mathbf{a}, \mathbf{b}) = 
\left\{
\begin{array}{ll}
M(\mathbf{a}, \mathbf{b}) & b-a = n \\
0 & \mbox{otherwise.}
\end{array}
\right.
\]
This defines a natural $\zed$-grading of $M$, so that $M \cong \bigoplus_{n \in \zed} M\dg{n}$. 
\end{defn}

\begin{exam}
\label{exam:bimodunit}
One has the $\kk\fb$-bimodule $\bmu$, with 
\[
\bmu(\mathbf{a}, \mathbf{b}) 
= 
\left\{ 
\begin{array}{ll}
\kk \sym_a & a=b \\
0 & \mbox{otherwise,}
\end{array}
\right.
\]
where the bimodule structure is given by the left and right regular actions. Clearly $\bmu$ is concentrated in $\zed$-degree zero; it is the bimodule underlying the $\kk$-linear category $\kk \fb$.

Below we use the fact that, for $a \in \nat$, $\bmu(\mathbf{a}, \mathbf{a}) \cong \kk \sym_a$ has a $\kk$-module basis  $\{ [\sigma] \mid \sigma \in \sym_a\}$.
\end{exam}

The tensor product $\otimes_\fb$ induces a functor 
\[
\otimes_\fb 
: 
\kk(\fb\op \times \fb)\dash\modules \times \kk (\fb\op \times \fb)\dash\modules 
\rightarrow 
\kk (\fb\op \times \fb)\dash\modules.
\]
This defines a monoidal structure $( \kk (\fb\op \times \fb)\dash\modules, \otimes_\fb, \bmu)$ (not symmetric). 
 
\begin{exam}
\label{exam:kk_finj}
The $\kk$-linear category $\kk \finj$ has underlying $\kk \fb$-bimodule given by restriction along $\fb \hookrightarrow \finj$. Moreover, the composition in $\kk \finj$ makes this into a unital monoid in $\kk \fb$-bimodules, with structure map 
\[
\kk \finj \otimes_\fb \kk \finj \rightarrow \kk \finj 
\]
and unit $\bmu \rightarrow \kk \finj$ induced by the inclusion $\fb \hookrightarrow \finj$.  There is also the  augmentation $\kk \finj \rightarrow \bmu$; at the level of $\kk \fb$-bimodules, this is the unique morphism that is a retract of the unit. 

The $\zed$-grading of Definition \ref{defn:Z-grade_fb-bimodules} specializes to 
a $\nat$-grading of $\kk \finj$ so that
\[
\kk \finj \dg{n} (\mathbf{a}, \mathbf{b}) = 
\left\{ 
\begin{array}{ll}
\kk \finj (\mathbf{a}, \mathbf{b}) & \mbox{$b-a = n$,   $n \in \nat$} \\
0 & \mbox{otherwise.}
\end{array}
\right.
\]
By definition, $\kk \finj\dg{0}$ identifies with $\bmu \subset \kk\finj$. 

For $m,n \in \nat$, the composition restricts to a surjection
\begin{eqnarray}
\label{eqn:graded_prod_kkfinj}
\kk \finj\dg{n} \otimes_\fb \kk \finj \dg{m}\twoheadrightarrow \kk \finj\dg{m+n},
\end{eqnarray}
so that $\kk \finj$ is  a  $\nat$-graded monoid in $\kk \fb$-bimodules.
\end{exam}

\begin{nota}
\label{nota:op_ddag_sharp}
Denote by 
\begin{enumerate}
\item 
$(-)\op$ the involution of $\kk(\fb\op \times \fb)\dash\modules$ defined by $X\op (\mathbf{a}, \mathbf{b}) := X (\mathbf{b}, \mathbf{a})$ (with variance adjusted appropriately); 
\item 
$\sgn\boxtimes \sgn$ the $\kk \fb$-bimodule  given by 
$
(\sgn \boxtimes \sgn) (\mathbf{a}, \mathbf{b} ) 
= 
\sgn_a \boxtimes \sgn_b
$ 
considered as a $\kk (\sym_a \op \times \sym_b)$-module;
\item 
$(-)^\ddag$ the involution of $\kk(\fb\op \times \fb)\dash\modules$   given by  $(\sgn \boxtimes \sgn) \otimes_\kk -$, so that 
\[
X^\ddag (\mathbf{a}, \mathbf{b} ) \cong (\sgn_a \boxtimes \sgn_b) \otimes X (\mathbf{a}, \mathbf{b}) 
\]
with diagonal action of $\sym_a\op \times \sym_b$.
\item 
$(-)^\sharp$ the duality functor given by $X^ \sharp(\mathbf{a}, \mathbf{b}) = \hom_{\kk} (X (\mathbf{b}, \mathbf{a}), \kk)$.
\end{enumerate}
\end{nota}

\begin{rem}
\label{rem:ddag}
The functor $(-)^\ddag$ is the bimodule analogue of the functor $(-)^\dagger$ of Notation \ref{nota:triv_sgn_dagger}.
\end{rem}

The following relates the possible duality functors working with $\kk \fb$-bimodules:

\begin{lem}
\label{lem:duality}
For  a $\kk \fb$-bimodule $X$, there are natural isomorphisms of  $\kk \fb$-bimodules 
$$
\hom_{\kk \fb} (X, \bmu)
\cong 
X^\sharp 
\cong 
\hom_{\kk \fb\op} (X, \bmu).
$$ 
\end{lem}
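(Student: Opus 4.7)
The strategy is to exploit two facts: $\bmu$ is supported on the diagonal with each fibre $\bmu(\mathbf{n}, \mathbf{n}) \cong \kk\sym_n$ carrying the regular bimodule structure, and the group algebra of any finite group over a commutative unital ring $\kk$ is a Frobenius $\kk$-algebra. Together these reduce the computation of $\hom$ into $\bmu$ to plain $\kk$-linear duality.

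First I would unpack $\hom_{\kk\fb}(X, \bmu)(\mathbf{a}, \mathbf{b})$. By the convention recalled in the text, $\hom_{\kk\fb}(-,-)$ quotients out by the covariant (second-slot) $\kk\fb$-action and retains the contravariant (first-slot) structures as the remaining bimodule; concretely, it is the end over the covariant variable. Since $\bmu(\mathbf{a}, -)$ is concentrated at $\mathbf{a}$ with value $\kk\sym_a$, only the component at $\mathbf{a}$ contributes, giving the natural identification
\[
\hom_{\kk\fb}(X, \bmu)(\mathbf{a}, \mathbf{b}) \;\cong\; \hom_{\kk\sym_a}\bigl(X(\mathbf{b}, \mathbf{a}),\, \kk\sym_a\bigr).
\]

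Next I would invoke the Frobenius structure: the trace form $\tau\colon \kk\sym_a \to \kk$ sending $[\sigma] \mapsto \delta_{\sigma, e}$ provides an isomorphism of $\kk\sym_a$-bimodules $\kk\sym_a \cong \hom_\kk(\kk\sym_a, \kk)$, valid over any commutative unital $\kk$. By the Hom-tensor adjunction one then obtains, for any left $\kk\sym_a$-module $V$, the natural isomorphism of right $\kk\sym_a$-modules
\[
\hom_{\kk\sym_a}(V, \kk\sym_a) \;\cong\; \hom_\kk(V, \kk).
\]
Applying this with $V = X(\mathbf{b}, \mathbf{a})$ yields the required pointwise isomorphism $\hom_{\kk\fb}(X, \bmu)(\mathbf{a}, \mathbf{b}) \cong \hom_\kk(X(\mathbf{b}, \mathbf{a}), \kk) = X^\sharp(\mathbf{a}, \mathbf{b})$ of Notation \ref{nota:op_ddag_sharp}.

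To upgrade this to a $\kk\fb$-bimodule isomorphism I would verify compatibility with the residual $\sym_b$-action (from contravariance of $X$ in its first slot) and the remaining right $\sym_a$-action (from the regular right action on $\kk\sym_a$); this is routine because the Frobenius isomorphism is itself a morphism of $\kk\sym_a$-bimodules and is natural in $a$ with respect to the embeddings of symmetric groups. For the second isomorphism $X^\sharp \cong \hom_{\kk\fb\op}(X, \bmu)$, I would run the mirror argument: it now quotients out by the contravariant first-slot action, reducing to $\hom_{\kk\sym_b\op}(X(\mathbf{b}, \mathbf{a}), \kk\sym_b)$, and Frobenius for $\kk\sym_b$ again produces $\hom_\kk(X(\mathbf{b}, \mathbf{a}), \kk) = X^\sharp(\mathbf{a}, \mathbf{b})$. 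The only real obstacle is the pedantic bookkeeping of left/right $\kk\fb$-actions across the two conventions; the mathematical content lies entirely in the Frobenius property of $\kk\sym_n$.
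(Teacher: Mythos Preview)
Your proposal is correct and follows essentially the same approach as the paper: both reduce to the identification $\hom_{\kk\sym_a}(X(\mathbf{b},\mathbf{a}),\kk\sym_a)\cong\hom_\kk(X(\mathbf{b},\mathbf{a}),\kk)$, which the paper phrases as ``coinduction and induction coincide for finite groups'' and you phrase via the Frobenius structure on $\kk\sym_a$. One small quibble: the naturality you need is with respect to bijections (automorphisms of $\mathbf{a}$), not ``embeddings of symmetric groups''---there are no non-isomorphism morphisms in $\fb$---but this does not affect the argument.
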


\begin{proof}
Consider the first case (the second is treated similarly); by definition, for any $\kk \fb$-bimodule $Y$, $\hom _{\kk \fb} (X,Y)$ evaluated on $(\mathbf{a}, \mathbf{b})$ is given by  
\[
\prod_{t \in \nat} \hom_{\kk \sym_t} (X (\mathbf{b}, \mathbf{t}) , Y (\mathbf{a}, \mathbf{t}) ).
\]
Taking $Y = \bmu$, the only non-zero contribution is from the term $t=a$, which contributes $\hom_{\kk \sym_a} (X (\mathbf{b}, \mathbf{a}) , \kk \sym_a)$. The latter is naturally isomorphic to $\hom_\kk (X (\mathbf{b}, \mathbf{a}) , \kk)$ (using that coinduction and induction coincide for finite groups), thus identifying with $X^\sharp (\mathbf{a}, \mathbf{b})$. 

One checks  that, in both cases, the isomorphisms respect the bimodule structures.
\end{proof}

The following states standard properties of the functors introduced in Notation \ref{nota:op_ddag_sharp}, using the notation for the canonical $\kk$-module basis for $\bmu$ given in Example \ref{exam:bimodunit}:

\begin{prop}
\label{prop:basic_properties_ddag_op}
\ 
\begin{enumerate}
\item 
There is a natural isomorphism $\bmu\op \cong \bmu$ given by $[\sigma]\mapsto [\sigma^{-1}]$.
\item 
There is a natural isomorphism $\bmu^\ddag \cong \bmu$ given by $[\sigma]\mapsto \sgn(\sigma) [\sigma]$. 
\item 
For $\kk \fb$-modules $X$, $Y$, there is a natural isomorphism $(X \otimes_\fb Y)\op \cong Y \op \otimes_\fb X \op$. 
\item 
\label{item:ddag_monoidal}
The functor $(-)^\ddag$ is  monoidal; in particular,  for $\kk \fb$-bimodules $X$, $Y$, there is a natural isomorphism $(X \otimes_\fb Y)^\ddag \cong X^\ddag \otimes_\fb Y ^\ddag$. 
\item 
For a $\kk \fb$-bimodule $X$, there is a natural isomorphism $(X \op) ^\ddag \cong (X ^\ddag) \op$.
\end{enumerate}
\end{prop}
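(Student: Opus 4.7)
The plan is to verify each of the five assertions as direct structural bookkeeping, once the conventions in Notation \ref{nota:op_ddag_sharp} are made fully explicit. None of the parts requires new technical input beyond the definitions of $(-)\op$, $(-)^\ddag$ and $\otimes_\fb$, together with the one-dimensional character identity $\sgn\otimes\sgn\cong\triv$.

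For (1), from the definition $X\op(\mathbf{a},\mathbf{b}):=X(\mathbf{b},\mathbf{a})$ together with the identification $\fb\cong\fb\op$ via inversion of morphisms, the left $\sym_b$- and right $\sym_a$-actions on $X\op(\mathbf{a},\mathbf{b})$ are obtained from the right $\sym_b$- and left $\sym_a$-actions on $X(\mathbf{b},\mathbf{a})$ precomposed with $\sigma\mapsto\sigma^{-1}$. Specialising to $\bmu$, a direct check shows that the $\kk$-linear bijection $[\sigma]\mapsto[\sigma^{-1}]$ intertwines these actions. For (2), the twist $(-)^\ddag=(\sgn\boxtimes\sgn)\otimes_\kk-$ modifies both the left and right actions on $X(\mathbf{a},\mathbf{b})$ by the sign character; on $\bmu(\mathbf{a},\mathbf{a})=\kk\sym_a$ the map $[\sigma]\mapsto\sgn(\sigma)[\sigma]$ cancels both twists simultaneously, using $\sgn(\tau\sigma\tau')=\sgn(\tau)\sgn(\sigma)\sgn(\tau')$.

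For (3), use the coend description
\[
(X\otimes_\fb Y)(\mathbf{a},\mathbf{b})\cong\bigoplus_t X(\mathbf{a},\mathbf{t})\otimes_{\kk\sym_t}Y(\mathbf{t},\mathbf{b});
\]
swapping $\mathbf{a}\leftrightarrow\mathbf{b}$ and applying the symmetry of $\otimes_\kk$ (with $\sym_t$-equivariance intertwined by $\sigma\mapsto\sigma^{-1}$, consistently with the convention in (1)) identifies $(X\otimes_\fb Y)\op$ with $Y\op\otimes_\fb X\op$. For (4), the two sides evaluated at $(\mathbf{a},\mathbf{b})$ expand as
\[
(X\otimes_\fb Y)^\ddag(\mathbf{a},\mathbf{b})\cong\bigoplus_t(\sgn_a\boxtimes\sgn_b)\otimes_\kk\bigl(X(\mathbf{a},\mathbf{t})\otimes_{\kk\sym_t}Y(\mathbf{t},\mathbf{b})\bigr)
\]
and
\[
(X^\ddag\otimes_\fb Y^\ddag)(\mathbf{a},\mathbf{b})\cong\bigoplus_t\bigl((\sgn_a\boxtimes\sgn_t)\otimes_\kk X(\mathbf{a},\mathbf{t})\bigr)\otimes_{\kk\sym_t}\bigl((\sgn_t\boxtimes\sgn_b)\otimes_\kk Y(\mathbf{t},\mathbf{b})\bigr),
\]
and pulling the external $\sgn_a$ and $\sgn_b$ factors out of the coend reduces the identification to the isomorphism of $\kk\sym_t$-bimodules $\sgn_t\otimes_{\kk\sym_t}\sgn_t\cong\kk$, which holds because $\sgn\otimes\sgn\cong\triv$ as $\sym_t$-characters. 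Finally, (5) is immediate from
\[
(X\op)^\ddag(\mathbf{a},\mathbf{b})=(\sgn_a\boxtimes\sgn_b)\otimes_\kk X(\mathbf{b},\mathbf{a}),\qquad (X^\ddag)\op(\mathbf{a},\mathbf{b})=(\sgn_b\boxtimes\sgn_a)\otimes_\kk X(\mathbf{b},\mathbf{a}),
\]
using commutativity of $\otimes_\kk$; compatibility with the bimodule structures is built into the definitions.

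The only real bookkeeping occurs in (\ref{item:ddag_monoidal}), where the sign twists on both arguments must be tracked through the coend. The key technical input there is the cancellation $\sgn_t\otimes_{\kk\sym_t}\sgn_t\cong\kk$, which is what makes $(-)^\ddag$ monoidal with respect to $\otimes_\fb$; all the other items are either immediate from the definitions or follow by the same formal manipulations.
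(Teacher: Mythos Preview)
The paper does not give a proof of this proposition; it is introduced as stating ``standard properties'' and left without argument. Your verification is correct and is precisely the routine bookkeeping the paper defers to the reader. One minor point of convention: your coend formula for $\otimes_\fb$ has the factors in the opposite order from the paper's (compare Proposition~\ref{prop:preserve_non-neg}, which gives $(B_1\otimes_\fb B_2)(\mathbf{s},\mathbf{t})=\bigoplus_a B_1(\mathbf{a},\mathbf{t})\otimes_{\sym_a}B_2(\mathbf{s},\mathbf{a})$); this does not affect the argument, since the reasoning in parts (3) and (4) is symmetric in the two tensor factors, but you may wish to align with the ambient convention.
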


\begin{exam}
\label{exam:finj_ddag}
One can form the $\kk\fb$-bimodules $\kk \finj^\ddag$ and $(\kk \finj^\ddag)\op$. These both define augmented, unital monoids in $\kk\fb$-bimodules.  Hence $\kk \finj^\ddag$ defines a $\kk$-linear category and  $(\kk \finj^\ddag)\op$  its opposite category. 
\end{exam}

\section{Finiteness properties for modules and bimodules}
\label{sect:finiteness_properties}

This section is a continuation of Section \ref{sect:bimodules}, introducing various finiteness and boundedness conditions that are employed in the proofs of the main results.

 Throughout this section, $\kk$ is taken to be a field.

\subsection{Non-negativity and non-positivity}
\label{subsect:non-neg_non-pos}

\begin{defn}
\label{defn:finite-type}
A functor $F$ from an essentially small category $\calc$ to $\kk\dash\modules$ is of finite type if, for all objects $X$ of $\calc$, $\dim _\kk F(X)< \infty$. If $F$ takes values in graded $\kk$-vector spaces, then one uses the total dimension (i.e., the dimension of the underlying ungraded space).
\end{defn}

This applies, in particular, when considering $\kk \fb$-modules, $\kk \fb\op$-modules, and $\kk \fb$-bimodules, since these categories are respectively functor categories on $\fb$, $\fb\op$, and $\fb\op \times \fb$.

The following is clear:

\begin{lem}
\label{lem:preserve_ft}
A bimodule $B$ is of finite type if and only if  $B\op$ is of finite type. Likewise for $(-)^\ddag$ or $(-)^\sharp$ in place of $(-)\op$.
\end{lem}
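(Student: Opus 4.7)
The plan is to observe that each of the three functors $(-)\op$, $(-)^\ddag$, $(-)^\sharp$ acts on underlying $\kk$-vector spaces by an operation that preserves finite-dimensionality, so the equivalence is immediate from the pointwise definition of the finite type condition (Definition \ref{defn:finite-type}) applied to the functor category on $\fb\op \times \fb$.

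Concretely, I would evaluate each transformed bimodule on an arbitrary pair $(\mathbf{a}, \mathbf{b})$ and compare its $\kk$-dimension with that of $B$ on a pair of objects:
\begin{itemize}
\item By definition $B\op (\mathbf{a}, \mathbf{b}) = B(\mathbf{b}, \mathbf{a})$, so $\dim_\kk B\op(\mathbf{a}, \mathbf{b}) = \dim_\kk B(\mathbf{b}, \mathbf{a})$.
\item By definition $B^\ddag (\mathbf{a}, \mathbf{b}) \cong (\sgn_a \boxtimes \sgn_b) \otimes_\kk B(\mathbf{a}, \mathbf{b})$; since $\sgn_a \boxtimes \sgn_b$ is one-dimensional over $\kk$, we get $\dim_\kk B^\ddag(\mathbf{a}, \mathbf{b}) = \dim_\kk B(\mathbf{a}, \mathbf{b})$.
\item By definition $B^\sharp (\mathbf{a}, \mathbf{b}) = \hom_\kk (B(\mathbf{b}, \mathbf{a}), \kk)$, whose $\kk$-dimension equals $\dim_\kk B(\mathbf{b}, \mathbf{a})$ whenever the latter is finite (and is infinite otherwise, since $\kk$ is a field).
\end{itemize}

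In each case, the assignment $(\mathbf{a}, \mathbf{b}) \mapsto (\mathbf{a}', \mathbf{b}')$ witnessing the dimension equality is a bijection on $\ob \fb\op \times \ob \fb$, so requiring finite-dimensional values for all $(\mathbf{a}, \mathbf{b})$ is equivalent for $B$ and for its image under any of the three functors. There is no main obstacle; the lemma is simply a bookkeeping statement recording that these three involutions act by a permutation of objects, a sign twist, or linear duality, all of which preserve finite-dimensionality over a field.
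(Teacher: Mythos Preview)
Your proof is correct and is precisely the straightforward verification the paper has in mind; the paper states the lemma without proof, prefacing it only with ``The following is clear,'' so your pointwise dimension check is exactly the intended argument made explicit.
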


\begin{rem}
The property of being of finite type is not always preserved under the constructions used here.  For example,  $\triv^{\fb\op} \otimes_\fb \triv^\fb$ has infinite dimension. 
\end{rem}

The following property is useful for considering conservation of finite typeness:

\begin{defn}
\label{defn:bimod_non_neg}
A bimodule $M$ is 
\begin{enumerate}
\item 
non-negative if $M\dg{n}=0$ for all $n<0$; 
\item 
non-positive if $M\dg{n}=0$ for all $n>0$.
\end{enumerate}
Thus, $M$ is non-negative if $M(\mathbf{a}, \mathbf{b})=0$ for $a>b$.
\end{defn}

\begin{rem}
\ 
\begin{enumerate}
\item 
The choice of sign in defining the $\zed$-grading in terms of $(-)\dg{n}$ (see Definition \ref{defn:Z-grade_fb-bimodules}) is somewhat arbitrary. Hence the {\em positive} and {\em negative} in Definition \ref{defn:bimod_non_neg} should be considered as being on an equal footing.
\item 
A bimodule $M$ is both non-negative and non-positive if and only if $M = M\dg{0}$. For example, $\bmu$ satisfies this condition and is of finite type.
\end{enumerate}
\end{rem}

The following is clear:

\begin{lem}
\label{lem:non_neg_op_sharp}
For $M$ a $\kk \fb$-bimodule, the following are equivalent:
\begin{enumerate}
\item 
$M$ is non-negative; 
\item 
$M^\ddag$ is non-negative; 
\item 
$M\op$ is non-positive; 
\item 
$M^\sharp$ is non-positive.
\end{enumerate}
The corresponding statements hold with the r\^oles of {\em negative} and {\em positive} switched.
\end{lem}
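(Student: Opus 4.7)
The plan is to reduce everything to how the three functors $(-)^\ddag$, $(-)\op$, and $(-)^\sharp$ interact with the $\zed$-grading on bimodules introduced in Definition~\ref{defn:Z-grade_fb-bimodules}. The non-negativity hypothesis is a purely combinatorial statement about which bigraded pieces $M\dg{n}$ can be nonzero, so the statement is equivalent to tracking the induced gradings on the three twisted bimodules.

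First I would establish the three grading identities: for each $n \in \zed$, there are natural isomorphisms
\[
(M^\ddag)\dg{n} \cong (M\dg{n})^\ddag, \qquad
(M\op)\dg{n} \cong (M\dg{-n})\op, \qquad
(M^\sharp)\dg{n} \cong (M\dg{-n})^\sharp.
\]
The first is immediate from the definition of $(-)^\ddag$ as tensoring objectwise with $\sgn \boxtimes \sgn$, which respects the bidegree. The second follows directly from the formula $M\op(\mathbf{a},\mathbf{b}) = M(\mathbf{b},\mathbf{a})$, so that the bidegree $b-a$ for $M\op$ matches the bidegree $a-b = -n$ for $M$. The third is the same computation applied to $M^\sharp(\mathbf{a},\mathbf{b}) = \hom_\kk(M(\mathbf{b},\mathbf{a}),\kk)$, noting that over a field $\hom_\kk(V,\kk)$ vanishes precisely when $V$ does.

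Next I would combine these with the obvious observation that, since $\sgn_a$ and $\sgn_b$ are one-dimensional (and $\kk$ is a field, so $\kk$-linear duality detects vanishing), each of the three operations $(-)^\ddag$, $(-)\op$, $(-)^\sharp$ vanishes on a given bigraded piece if and only if the original bimodule does. Consequently the condition $(M^\ddag)\dg{n}=0$ is equivalent to $M\dg{n}=0$, whereas $(M\op)\dg{n}=0$ and $(M^\sharp)\dg{n}=0$ are both equivalent to $M\dg{-n}=0$. Feeding in the definition of non-negativity ($M\dg{n}=0$ for $n<0$) and non-positivity ($M\dg{n}=0$ for $n>0$) then yields the chain of equivalences $(1)\Leftrightarrow(2)\Leftrightarrow(3)\Leftrightarrow(4)$ with no further work, and the negative/positive switch follows by symmetry of the argument.

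There is no real obstacle here: the proof is a one-line verification once the compatibility of each functor with the $\zed$-grading is recorded. The only mild subtlety worth flagging explicitly is the sign reversal under $(-)\op$ and $(-)^\sharp$ versus the preservation under $(-)^\ddag$, which is exactly why the first two swap non-negativity with non-positivity while the third does not.
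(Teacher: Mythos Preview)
Your proposal is correct and is precisely the unpacking of what the paper treats as immediate: the paper gives no proof at all, introducing the lemma with ``The following is clear:''. Your reduction to the grading identities $(M^\ddag)\dg{n} \cong (M\dg{n})^\ddag$, $(M\op)\dg{n} \cong (M\dg{-n})\op$, $(M^\sharp)\dg{n} \cong (M\dg{-n})^\sharp$ together with the observation that each functor detects vanishing on bigraded pieces (using that $\kk$ is a field for $(-)^\sharp$) is exactly the intended verification.
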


\begin{exam}
\label{exam:kk_finj_non-neg}
The underlying $\kk \fb$-bimodule of $\kk \finj$ is non-negative and of finite type, hence the same holds for $\kk \finj^\ddag$, whereas $(\kk \finj^\ddag)\op$ is non-positive of finite type.
\end{exam}

\begin{prop}
\label{prop:preserve_non-neg}
For $\kk \fb$-bimodules $B_1$, $B_2$ that are non-negative, 
$B_1 \otimes_\fb B_2$ is non-negative. More explicitly, for $s, t \in \nat$, 
\begin{eqnarray}
\label{eqn:B1_B2_non-neg}
B_1 \otimes _\fb B_2 \ (\mathbf{s}, \mathbf{t}) = \bigoplus_{s\leq a \leq t} 
B_1 (\mathbf{a}, \mathbf{t}) \otimes_{\sym_a} B_2 (\mathbf{s}, \mathbf{a}).
\end{eqnarray}
If, in addition, $B_1$ and $B_2$ are both of finite type, then so is $B_1 \otimes_\fb B_2$. 

Likewise:
\begin{enumerate}
\item 
for $N$ a $\kk \fb$-module, for $t \in \nat$:
\[
(B_1 \otimes_\fb N)(\mathbf{t}) = \bigoplus_{a \leq t} B_1 (\mathbf{a}, \mathbf{t}) \otimes_{\sym_a} N(\mathbf{a});
\]
in particular, if $B_1$ and $N$ are of finite type, then so is $B_1 \otimes_\fb N$;
\item 
if $M$ is a $\kk \fb\op$-module, for $t \in \nat$:
\[
 \hom_{\kk \fb\op} (B_1, M) (\mathbf{t}) 
= 
\bigoplus_{b \leq t} \hom_{\kk \sym_b\op} (B_1 (\mathbf{b}, \mathbf{t}) , M(\mathbf{b}) );
\] 
 in particular, if $B_1$ and $M$ are of finite type, then so is $ \hom_{\kk \fb\op} (B_1, M) (\mathbf{t}) $.
\end{enumerate}

The corresponding statements hold for non-positive bimodules, {\em mutatis mutandis}.
\end{prop}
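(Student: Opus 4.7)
The plan is to unpack the definition of the bimodule tensor product over $\kk\fb$ and then let the non-negativity hypotheses force the range of summation to be finite. By the standard formula (the bimodule analogue of Example~\ref{exam:sym_seq}), for each $(\mathbf{s}, \mathbf{t})$ one has
$$
(B_1 \otimes_\fb B_2)(\mathbf{s}, \mathbf{t}) \;\cong\; \bigoplus_{a \in \nat} B_1(\mathbf{a}, \mathbf{t}) \otimes_{\kk\sym_a} B_2(\mathbf{s}, \mathbf{a}),
$$
with the residual $\sym_s\op \times \sym_t$-action inherited from the left action on $B_2$ and the right action on $B_1$ respectively.

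The first step is to invoke the hypotheses: non-negativity of $B_1$ forces $B_1(\mathbf{a}, \mathbf{t}) = 0$ whenever $t - a < 0$, and non-negativity of $B_2$ forces $B_2(\mathbf{s}, \mathbf{a}) = 0$ whenever $a - s < 0$. Only indices with $s \leq a \leq t$ survive, which yields formula~(\ref{eqn:B1_B2_non-neg}). In particular this index set is empty when $s > t$, proving non-negativity of $B_1 \otimes_\fb B_2$. For the finite-type assertion, the surviving index set is a finite interval of cardinality $t-s+1$; each summand $B_1(\mathbf{a}, \mathbf{t}) \otimes_{\kk\sym_a} B_2(\mathbf{s}, \mathbf{a})$ is a quotient of a tensor product of two finite-dimensional $\kk$-vector spaces and is therefore finite-dimensional, so the finite direct sum is too.

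The two ``Likewise'' statements are variations on the same template. The $\kk\fb$-module tensor product $B_1 \otimes_\fb N$ is given by the analogous coend $\bigoplus_{a \in \nat} B_1(\mathbf{a}, \mathbf{t}) \otimes_{\kk\sym_a} N(\mathbf{a})$; again only indices $a \leq t$ survive by non-negativity of $B_1$, and finite-dimensionality follows identically. The hom case starts from the standard end formula
$$
\hom_{\kk\fb\op}(B_1, M)(\mathbf{t}) \;\cong\; \prod_{b \in \nat} \hom_{\kk\sym_b\op}\!\bigl(B_1(\mathbf{b}, \mathbf{t}),\, M(\mathbf{b})\bigr);
$$
factors with $b > t$ contribute $\hom_{\kk\sym_b\op}(0, M(\mathbf{b})) = 0$, so the product reduces to the finite direct sum over $b \leq t$ displayed in the statement, each factor finite-dimensional under the finite-type hypotheses. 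The non-positive analogue follows either by a symmetric argument with the inequalities reversed or, more economically, by applying $(-)\op$ and invoking Lemma~\ref{lem:non_neg_op_sharp} together with Proposition~\ref{prop:basic_properties_ddag_op}.

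No serious obstacle is anticipated: the proposition is a direct bookkeeping exercise once the coend and end formulas are on the table. The only point requiring a little care is the left/right convention---the summation index $\mathbf{a}$ is simultaneously the input of $B_1$ and the output of $B_2$ in $(B_1 \otimes_\fb B_2)(\mathbf{s}, \mathbf{t})$, and it is this pairing that makes the two non-negativity inequalities combine into $s \leq a \leq t$.
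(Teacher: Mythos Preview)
Your proof is correct and follows essentially the same approach as the paper: both unpack the coend/end formulas for $\otimes_\fb$ and $\hom_{\kk\fb\op}$, use non-negativity to truncate the index set to a finite interval, and deduce the non-positive case via $(-)\op$ and Lemma~\ref{lem:non_neg_op_sharp}. Your explicit mention of Proposition~\ref{prop:basic_properties_ddag_op} for the behaviour of $(-)\op$ on $\otimes_\fb$ is a reasonable addition that the paper leaves implicit.
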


\begin{proof}
By definition, for $s, t \in \nat$, $B_1 \otimes _\fb B_2 \ (\mathbf{s}, \mathbf{t}) = \bigoplus_{a \in \nat} 
B_1 (\mathbf{a}, \mathbf{t}) \otimes_{\sym_a} B_2 (\mathbf{s}, \mathbf{a})$. The hypothesis upon $B_1$ and $B_2$ implies that the factor indexed by $a$ vanishes if either $a>t$ or $s>a$, which gives (\ref{eqn:B1_B2_non-neg}).
 In particular, $B_1 \otimes _\fb B_2 \ (\mathbf{s}, \mathbf{t})$ vanishes if $s>t$, so that $B_1 \otimes_\fb B_2$ is non-negative. Moreover, since the indexing set $\{ a \mid s \leq a \leq t \}$ is finite,  $B_1 \otimes_\fb  B_2 (\mathbf{s}, \mathbf{t})$ has finite dimension if $B_1$ and $B_2$ are both of finite type.

The remaining statements are proved similarly. For instance, by definition, $ \hom_{\kk \fb\op} (B_1, M)$ evaluated on $\mathbf{t}$ is $\prod_{b \in \nat} \hom_{\kk \sym_b \op} (B_1 (\mathbf{b}, \mathbf{t}) , M(\mathbf{b}))$. Again, the hypothesis that $B_1$ is non-negative implies that only the terms $b \leq t$ contribute. 

The results for non-positive bimodules follow from those for non-negative bimodules by exploiting the functor $(-)\op$, using Lemma \ref{lem:non_neg_op_sharp}.
 \end{proof}

Likewise, one has the following, established by similar arguments:

\begin{prop}
\label{prop:bimod_non-neg_non-pos}
Suppose that $B_1, B_2$ are $\kk \fb$-bimodules such that $B_1$ is non-negative and $B_2$ is non-positive. 
Then, for $s, t \in \nat$,  
\begin{enumerate}
\item 
$(B_1 \otimes_\fb B_2) (\mathbf{s}, \mathbf{t}) = \bigoplus_{a \leq \min \{s, t \} } B_1 (\mathbf{a}, \mathbf{t}) \otimes_{\sym_a} B_2 (\mathbf{s}, \mathbf{a})$; in particular, if $B_1$ and $B_2$ are finite type, then so is $B_1 \otimes_\fb B_2$; 
\item 
$(B_2 \otimes_\fb B_1) (\mathbf{s}, \mathbf{t}) = \bigoplus_{a \geq \max \{s, t \} } B_2 (\mathbf{a}, \mathbf{t}) \otimes_{\sym_a} B_1 (\mathbf{s}, \mathbf{a})$.
\end{enumerate}
\end{prop}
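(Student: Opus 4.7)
The plan is to mimic the proof of Proposition \ref{prop:preserve_non-neg}, starting from the standard unwinding of the definition of $\otimes_\fb$ over $\fb$ as a direct sum indexed over $\nat$, and then use the vanishing properties supplied by non-negativity and non-positivity (cf. Definition \ref{defn:bimod_non_neg}) to prune the indexing set down to a finite or half-infinite interval respectively.

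For (i), I would begin from the identity
\[
(B_1 \otimes_\fb B_2) (\mathbf{s}, \mathbf{t})
\;=\;
\bigoplus_{a \in \nat} B_1 (\mathbf{a}, \mathbf{t}) \otimes_{\sym_a} B_2 (\mathbf{s}, \mathbf{a}),
\]
exactly as at the start of the proof of Proposition \ref{prop:preserve_non-neg}. Since $B_1$ is non-negative, $B_1(\mathbf{a}, \mathbf{t}) = 0$ unless $t-a \geq 0$, i.e., $a \leq t$. Since $B_2$ is non-positive, $B_2(\mathbf{s}, \mathbf{a}) = 0$ unless $a-s \leq 0$, i.e., $a \leq s$. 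Taken together, the indexing reduces to $a \leq \min\{s, t\}$, giving the stated formula. Because this indexing set is finite, if both $B_1$ and $B_2$ are of finite type then each summand $B_1 (\mathbf{a}, \mathbf{t}) \otimes_{\sym_a} B_2 (\mathbf{s}, \mathbf{a})$ is finite-dimensional and the sum is finite, so $B_1 \otimes_\fb B_2$ is of finite type.

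For (ii), the same computation applied to $(B_2 \otimes_\fb B_1)(\mathbf{s}, \mathbf{t}) = \bigoplus_{a \in \nat} B_2 (\mathbf{a}, \mathbf{t}) \otimes_{\sym_a} B_1 (\mathbf{s}, \mathbf{a})$ shows that non-positivity of $B_2$ forces $a \geq t$ and non-negativity of $B_1$ forces $a \geq s$, so only the indices $a \geq \max\{s, t\}$ contribute. Note that, in contrast to (i), this indexing set is infinite, which is precisely why no finite-type conclusion is claimed in this case.

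There is no real obstacle here; the argument is entirely parallel to Proposition \ref{prop:preserve_non-neg} and just requires tracking the direction of the inequalities imposed by Definition \ref{defn:bimod_non_neg} when one of the bimodules has the opposite grading behaviour to the other. Alternatively, one could deduce (ii) from (i) by appealing to the involution $(-)\op$ of Notation \ref{nota:op_ddag_sharp} together with the isomorphism $(X \otimes_\fb Y)\op \cong Y\op \otimes_\fb X\op$ of Proposition \ref{prop:basic_properties_ddag_op} and Lemma \ref{lem:non_neg_op_sharp}, but the direct verification above is shorter.
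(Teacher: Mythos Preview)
Your proof is correct and is exactly the approach the paper intends: the paper does not spell out a proof but simply states that the result is ``established by similar arguments'' to Proposition~\ref{prop:preserve_non-neg}, which is precisely what you have done. Your additional remark that (ii) could alternatively be deduced from (i) via $(-)\op$ is a nice observation, though unnecessary.
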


\begin{exam}
\label{exam:preservation_kk_finj}
For $M,N$ as in the statement of Proposition \ref{prop:preserve_non-neg}, both $\kk \finj \otimes_\fb N$ and $\hom_{\kk \fb\op} (\kk \finj , M)$ are of finite type.
 \textit{A contrario}, neither $M \otimes_\fb \kk \finj$ nor $\hom_{\kk \fb} (\kk \finj, N)$ are of finite type in general.  For example, take $M = \triv^{\fb\op}$. Then one has 
\[
\triv^{\fb\op} \otimes_\fb \kk \finj \ (\mathbf{s})
= 
\bigoplus_{a \in \nat}
\triv_a \otimes_{\sym_a} \kk \finj (\mathbf{s}, \mathbf{a} )
.
\]
Now, $\kk \finj (\mathbf{s}, \mathbf{a} )$ is a non-zero $\kk \sym_a$ permutation module for $a \geq s$, so that the coinvariants $\triv_a \otimes_{\sym_a} \kk \finj (\mathbf{s}, \mathbf{a} )$ are non-zero. Thus, for any $s \in \nat$, $\triv^{\fb\op} \otimes_\fb \kk \finj \ (\mathbf{s})$ is infinite-dimensional.
\end{exam}

\subsection{Truncating}
 \label{subsect:truncating}

Truncation (as introduced below) is a standard and useful technique.

\begin{nota}
\label{nota:truncate_kk_fb}
For $N$ a $\kk \fb$-module and $s \in \nat$, let $N_{\leq s} \subseteq N$ be the direct summand such that $N_{\leq s} (\mathbf{a}) $ is $N(\mathbf{a})$ if $a \leq s$ and zero otherwise; similarly for the direct summand $N_{\geq s}$.  (Thus one has $N \cong N_{\leq s} \oplus N_{\geq s+1}$.)

The same notation is used for $\kk \fb\op$-modules.
\end{nota}

\begin{lem}
\label{lem:B_otimes_truncate}
Let $B$ be a $\kk \fb$-bimodule that is non-negative.  Then, for $N$ a $\kk \fb$-module and $M$ a $\kk \fb\op$-module, the natural inclusions induce isomorphisms for $s \in \nat$:
\begin{eqnarray*}
(B \otimes_\fb N_{\leq s} )(\mathbf{s})  & \cong &  (B \otimes_\fb N )(\mathbf{s})
\\
\hom_{\kk \fb\op} (B, M_{\leq s} )(\mathbf{s})  & \cong & \hom_{\kk \fb\op} (B, M )(\mathbf{s}).
\end{eqnarray*}
In particular, these expressions depend on $N_{\leq s}$ and $M_{\leq s}$ respectively, which have finite support. 

Correspondingly, 
\begin{eqnarray*}
(M_{\geq  s}\otimes_\fb  B ) (\mathbf{s}) & \cong & (M \otimes_\fb  B )(\mathbf{s})
\\
\hom_{\kk \fb} (B, N_{\geq s} )(\mathbf{s})& \cong & \hom_{\kk \fb} (B, N )(\mathbf{s}).
\end{eqnarray*}

The corresponding statements hold if $B$ is non-positive, {\em mutatis mutandis}.
\end{lem}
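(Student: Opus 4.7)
The plan is to unpack both sides of each claimed isomorphism using the explicit formulas from Proposition \ref{prop:preserve_non-neg} and their immediate variants, and then observe that non-negativity of $B$ restricts the indexing of the relevant sums and products to a range on which the full module and its truncation coincide. The argument is essentially bookkeeping, and I do not anticipate any substantive obstacle.

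For the first pair of isomorphisms, Proposition \ref{prop:preserve_non-neg} gives
\[
(B \otimes_\fb N)(\mathbf{s}) = \bigoplus_{a \leq s} B(\mathbf{a}, \mathbf{s}) \otimes_{\sym_a} N(\mathbf{a})
\]
together with an analogous formula for $\hom_{\kk\fb\op}(B, M)(\mathbf{s})$ indexed over $\{b \leq s\}$. Both right-hand sides depend only on the values of the relevant module in degrees $\leq s$, and these coincide for $N$ and $N_{\leq s}$ (respectively for $M$ and $M_{\leq s}$); hence the natural inclusion $N_{\leq s} \hookrightarrow N$ (respectively $M_{\leq s} \hookrightarrow M$) will induce the claimed isomorphism in each case.

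For the corresponding statements involving $M \otimes_\fb B$ and $\hom_{\kk\fb}(B, N)$, I first need to derive the companion formulas
\[
(M \otimes_\fb B)(\mathbf{s}) = \bigoplus_{a \in \nat} M(\mathbf{a}) \otimes_{\sym_a} B(\mathbf{s}, \mathbf{a}), \qquad \hom_{\kk\fb}(B, N)(\mathbf{s}) = \prod_{b \in \nat} \hom_{\kk\sym_b}(B(\mathbf{s}, \mathbf{b}), N(\mathbf{b}))
\]
by the same unwinding of definitions used to obtain Proposition \ref{prop:preserve_non-neg}. The non-negativity hypothesis $B(\mathbf{s}, \mathbf{x}) = 0$ whenever $s > x$ then restricts the nonzero contributions to indices $\geq s$, and the same reasoning as in the previous paragraph identifies these expressions with the corresponding ones for $M_{\geq s}$ and $N_{\geq s}$ respectively.

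Finally, for the non-positive case the same index-support analysis applies with the inequalities reversed: if $B$ is non-positive then $B(\mathbf{a}, \mathbf{s}) = 0$ whenever $a < s$, so each sum or product is supported on indices bounded below rather than above, and the truncations of the statement are replaced by their complementary counterparts. Alternatively, one can invoke Lemma \ref{lem:non_neg_op_sharp} together with the natural isomorphism $(X \otimes_\fb Y)\op \cong Y\op \otimes_\fb X\op$ of Proposition \ref{prop:basic_properties_ddag_op} to reduce formally to the non-negative case already treated.
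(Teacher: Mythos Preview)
Your proof is correct and follows essentially the same approach as the paper's: both arguments unwind the definition of $\otimes_\fb$ and $\hom$ evaluated at $\mathbf{s}$, observe that non-negativity of $B$ forces the nonzero summands to be indexed by $a \leq s$ (respectively $a \geq s$), and conclude that the truncation does not alter the value. The paper's proof is terser, treating only the first case as illustrative, whereas you spell out all four cases and add the observation that the non-positive case can alternatively be reduced to the non-negative one via $(-)\op$; this is a welcome elaboration but not a different method.
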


\begin{proof}
To illustrate the type of argument involved, consider the first statement. By definition $(B \otimes_\fb N ) (\mathbf{s}) = \bigoplus_{a \in \nat} B (\mathbf{a}, \mathbf{s}) \otimes_{\sym_a} N(\mathbf{a})$. The hypothesis that $B$ is non-negative implies that only the terms with $a \leq s$ contribute, by Proposition \ref{prop:preserve_non-neg}. Thus, one can replace $N$ by $N_{\leq s}$ without changing the value.
\end{proof}

Truncation leads to filtrations of modules over a non-negative (respectively non-positive) monoid:

\begin{prop}
\label{prop:sub/quotient_non_neg/pos}
For  $B$  a unital monoid in $(\kk (\fb\op \times \fb), \otimes_\fb , \bmu)$,  $M$ a $B$-module, and $s \in \nat$, 
\begin{enumerate}
\item 
if $B$ is non-negative then $M_{\geq s}$ is a sub $B$-module of $M$ and the canonical projection $M \twoheadrightarrow M_{\leq s}$ exhibits $M_{\leq s}$ as a quotient $B$-module of $M$;
\item 
if $B$ is non-positive, then $M_{\leq s}$ is a sub $B$-module of $M$ and the canonical projection $M \twoheadrightarrow M_{\geq s}$ exhibits $M_{\geq s}$ as a quotient $B$-module of $M$. 
\end{enumerate}
\end{prop}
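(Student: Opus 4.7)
The plan is to verify, for $B$ non-negative, that the $B$-action $B \otimes_\fb M \to M$ restricts to a map $B \otimes_\fb M_{\geq s} \to M_{\geq s}$, using the explicit formula from Proposition~\ref{prop:preserve_non-neg}. Evaluating the left hand side at arity $\mathbf{t}$ gives
$$(B \otimes_\fb M_{\geq s})(\mathbf{t}) = \bigoplus_a B(\mathbf{a}, \mathbf{t}) \otimes_{\sym_a} M_{\geq s}(\mathbf{a}),$$
and the only indices $a$ that can contribute satisfy both $a \leq t$ (by non-negativity of $B$, which forces $B(\mathbf{a},\mathbf{t})=0$ for $a>t$) and $a \geq s$ (by definition of $M_{\geq s}$). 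In particular this expression vanishes when $t < s$, so the image of the restricted action is supported on arities $t \geq s$, i.e.\ lies in $M_{\geq s}$. This shows that $M_{\geq s}$ is a sub $B$-module.

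The quotient assertion of case (1) then follows at once. Applying the previous step with $s$ replaced by $s+1$ identifies $M_{\geq s+1}$ as a sub $B$-module; since $M \cong M_{\leq s} \oplus M_{\geq s+1}$ as $\kk\fb$-modules, the $B$-action on $M$ descends along the canonical projection $M \twoheadrightarrow M/M_{\geq s+1} \cong M_{\leq s}$, exhibiting $M_{\leq s}$ as a quotient $B$-module.

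Case (2) will follow by the symmetric calculation: for $B$ non-positive, the non-vanishing indices in $(B \otimes_\fb M_{\leq s})(\mathbf{t})$ must satisfy $t \leq a \leq s$, so this expression vanishes for $t > s$ and $M_{\leq s}$ is a sub $B$-module; applying this with $s-1$ and using $M \cong M_{\leq s-1} \oplus M_{\geq s}$ then yields $M_{\geq s}$ as a quotient. Alternatively, one can reduce case (2) to case (1) via the involution $(-)\op$ of Notation~\ref{nota:op_ddag_sharp}, using Lemma~\ref{lem:non_neg_op_sharp} to swap non-negative and non-positive bimodules. The whole argument is a direct bookkeeping consequence of the indexing constraints in Proposition~\ref{prop:preserve_non-neg}, so there is no essential obstacle.
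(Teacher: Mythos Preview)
Your argument is correct and is precisely the direct verification the paper alludes to: the paper's own proof reads in its entirety ``This is a direct verification,'' and you have supplied that verification by tracking which indices $a$ can contribute to $(B\otimes_\fb M_{\geq s})(\mathbf{t})$ (respectively $(B\otimes_\fb M_{\leq s})(\mathbf{t})$) and observing the result vanishes on the complementary arities. There is nothing to add.
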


\begin{proof}
This is a direct verification.
\end{proof}

\subsection{Locally bounded complexes}
\label{subsect:loc_bd_cx}

In the differential graded (henceforth abbreviated to DG) context, the notion of a bounded complex is too restrictive; instead, the following locally bounded property is used:

\begin{defn}
\label{defn:locally-bounded}
\ 
\begin{enumerate}
\item 
A DG $\kk \fb$-module $M$ (equivalently a $\kk \fb\op$-module) is locally bounded if, for each $s \in \nat$, $M(\mathbf{s})$ is a bounded complex. 
\item 
A DG $\kk \fb$-bimodule $B$ is locally bounded if, for each $s, t \in \nat$, $B(\mathbf{s}, \mathbf{t})$ is a bounded complex.
\end{enumerate}
This is applied to DG (bi)modules with additional structure by considering the underlying $\kk\fb$  (bi)module.
\end{defn}

We introduce the following notation:

\begin{nota}
\label{nota:lbdgmod}
Denote by $\kk\fb\dash \lbdgmod \subset \kk \fb \dash \dgmod$ the full subcategory of DG $\kk \fb$-modules on the locally bounded objects.

More generally, the  notation  $\lbdgmod$ will be employed for the full subcategory of locally bounded DG objects in other contexts.
\end{nota}

\begin{rem}
Clearly a DG $\kk \fb$-bimodule $B$ is locally bounded if and only if the following equivalent conditions are satisfied:
\begin{enumerate}
\item 
for each $t \in \nat$, $B (-, \mathbf{t})$ is a locally bounded $\kk \fb\op$-module; 
\item 
for each $s \in \nat$, $B (\mathbf{s}, -)$ is a locally bounded $\kk \fb$-module.
\end{enumerate}
\end{rem}

One also has the following useful result:

\begin{lem}
\label{lem:locally-bounded_truncation}
A DG $\kk \fb$-module $M$ is locally bounded if and only if, for all $s \in \nat$, $M_{\leq s}$ is a bounded complex of $\kk \fb$-modules.
\end{lem}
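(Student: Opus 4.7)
The plan is to prove both implications directly from the definitions, the key point being that the bound in ``bounded complex of $\kk\fb$-modules'' must be uniform across all arities, whereas ``locally bounded'' permits the bound to depend on the arity; the two notions coincide after truncation because the index set $\{a \in \nat \mid a \leq s\}$ is finite.

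For the backward direction, suppose $M_{\leq s}$ is a bounded complex of $\kk\fb$-modules for every $s \in \nat$. By Notation \ref{nota:truncate_kk_fb}, evaluating at $\mathbf{s}$ gives $M(\mathbf{s}) = M_{\leq s}(\mathbf{s})$, and this is a bounded complex of $\kk$-modules since $M_{\leq s}$ is bounded as a complex of $\kk\fb$-modules. Hence $M$ is locally bounded.

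For the forward direction, suppose $M$ is locally bounded and fix $s \in \nat$. By Notation \ref{nota:truncate_kk_fb}, $M_{\leq s}(\mathbf{a}) = M(\mathbf{a})$ for $a \leq s$ and vanishes otherwise. By hypothesis, for each $a \in \{0, 1, \ldots, s\}$, the complex $M(\mathbf{a})$ is bounded, so there exist integers $N_1(a) \leq N_2(a)$ with $M(\mathbf{a})_n = 0$ for $n \notin [N_1(a), N_2(a)]$. Setting $N_1 := \min_{0 \leq a \leq s} N_1(a)$ and $N_2 := \max_{0 \leq a \leq s} N_2(a)$ (both well-defined as minima/maxima over a finite set), we obtain $(M_{\leq s})_n = 0$ as a $\kk\fb$-module for $n \notin [N_1, N_2]$. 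Thus $M_{\leq s}$ is a bounded complex of $\kk\fb$-modules.

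There is no real obstacle; the statement is essentially a bookkeeping observation exploiting the finiteness of $\{0, 1, \ldots, s\}$, and the proof requires only unwinding Definition \ref{defn:locally-bounded} and Notation \ref{nota:truncate_kk_fb}.
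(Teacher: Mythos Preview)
Your proof is correct. The paper states this lemma without proof, treating it as an immediate consequence of the definitions; your argument is exactly the natural one, using that the finite index set $\{0,1,\ldots,s\}$ allows one to take a uniform bound.
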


One has the following analogue of Proposition \ref{prop:preserve_non-neg} for the locally bounded property, in which we use the tensor product of complexes and the usual complex $\ihom$ for DG objects. 

\begin{prop}
\label{prop:loc_bd_preservation}
Let $B$ be a non-negative, locally bounded DG $\kk \fb$-bimodule. 
\begin{enumerate}
\item 
For $N$ a locally bounded DG $\kk \fb$-module, $B \otimes_\fb N$ is a locally bounded DG $\kk \fb$-module. 
\item 
For $M$ a locally bounded DG $\kk \fb\op$-module, $\ihom_{\kk \fb\op} (B, M)$ is a locally bounded DG $\kk \fb\op$-module. 
\end{enumerate}
\end{prop}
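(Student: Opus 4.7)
The plan is to reduce both statements to the corresponding identifications in Proposition \ref{prop:preserve_non-neg} (which give explicit finite direct sum formulas at each arity when $B$ is non-negative), and then to observe that a finite sum (respectively finite product) of bounded complexes is bounded.

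For (1), fix $s \in \nat$. Applied to the underlying graded $\kk\fb$-modules, Proposition \ref{prop:preserve_non-neg} gives an identification
\[
(B \otimes_\fb N)(\mathbf{s}) \ = \ \bigoplus_{0 \leq a \leq s} B(\mathbf{a}, \mathbf{s}) \otimes_{\sym_a} N(\mathbf{a}),
\]
and this identification is compatible with the differentials, since the standard Koszul differential on $B \otimes_\fb N$ is defined termwise in $a$. Each factor $B(\mathbf{a}, \mathbf{s})$ and $N(\mathbf{a})$ is a bounded complex by the locally bounded hypothesis on $B$ and $N$. The tensor product over $\sym_a$ is the $\sym_a$-coinvariants of the tensor product over $\kk$, hence a quotient complex of a bounded complex, hence bounded. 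Since the indexing set $\{a \mid 0 \leq a \leq s\}$ is finite, $(B \otimes_\fb N)(\mathbf{s})$ is a finite direct sum of bounded complexes, so bounded. This being true for every $s$, $B \otimes_\fb N$ is locally bounded.

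For (2), fix $s \in \nat$. The same non-negativity argument used in the proof of Proposition \ref{prop:preserve_non-neg} shows that the a priori infinite product defining $\ihom_{\kk\fb\op}(B,M)$ reduces, at arity $s$, to the finite expression
\[
\ihom_{\kk\fb\op}(B, M)(\mathbf{s}) \ = \ \bigoplus_{0 \leq b \leq s} \ihom_{\kk\sym_b\op}\bigl(B(\mathbf{b}, \mathbf{s}),\, M(\mathbf{b})\bigr),
\]
again compatibly with differentials. For each $b \leq s$, $\ihom_{\kk\sym_b\op}(B(\mathbf{b}, \mathbf{s}), M(\mathbf{b}))$ is the equalizer subcomplex of $\ihom_\kk(B(\mathbf{b}, \mathbf{s}), M(\mathbf{b}))$ cutting out the $\sym_b$-equivariant maps. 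The latter is the internal hom in chain complexes of two bounded complexes, which is bounded (concentrated between degrees $\min M(\mathbf{b}) - \max B(\mathbf{b},\mathbf{s})$ and $\max M(\mathbf{b}) - \min B(\mathbf{b},\mathbf{s})$), so its subcomplex of equivariant maps is bounded. A finite direct sum of bounded complexes is bounded, giving the claim.

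There is no real obstacle here: the only thing to check is that the non-negativity of $B$ makes the relevant sums and products over $\nat$ finite at each fixed arity, and that this is compatible with the DG structure. The latter is automatic because the differentials on $B \otimes_\fb N$ and $\ihom_{\kk\fb\op}(B,M)$ are induced componentwise from those on $B$, $N$, and $M$.
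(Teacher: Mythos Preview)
Your proof is correct and follows exactly the approach the paper indicates: the paper's own proof simply says ``This is proved as for Proposition \ref{prop:preserve_non-neg},'' and you have carried out precisely that reduction, using non-negativity to make the sums/products finite at each arity and then noting that the tensor/hom of bounded complexes is bounded.
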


\begin{proof}
This is proved as for Proposition \ref{prop:preserve_non-neg}, using that $\otimes$ and $\ihom$ preserve  (in the obvious sense) local boundedness of complexes of $\kk$-modules.
\end{proof}

\begin{rem}
\ 
\begin{enumerate}
\item 
Proposition \ref{prop:loc_bd_preservation} can be paraphrased as stating that $B \otimes_\fb -$ and $\ihom_{\kk \fb\op} (B, - )$ both preserve the (respective) locally bounded property.
\item 
The result does not hold in general without the  non-negative hypothesis. For instance, take $N$ to be the $\kk \fb$-module defined by $N(\mathbf{a}) = \kk \sym_a$ for $a \in \nat$. Then  $(B \otimes_\fb N)(\mathbf{t}) = \bigoplus_{a \in \nat} B (\mathbf{a}, \mathbf{t})$; this complex need not be bounded.
\end{enumerate}
\end{rem}

\section{The PROP framework}
\label{sect:prop_framework}

The main purpose of this section is to introduce the framework of Section \ref{subsect:uppd_framework}, studying the PROP $\cat \uppd$ associated to a unital operad $\uppd$ satisfying certain hypotheses. To keep the exposition relatively self-contained and to fix notation, the construction of the PROP associated to an operad is recalled in Section \ref{subsect:PROP_recollections}.

\subsection{Recollections on PROPs}
\label{subsect:PROP_recollections}

Throughout this section, $\kk$ is a unital commutative ring. We work with operads in the category of $\kk$-modules, so that an operad $\opd$ has an underlying $\kk \fb\op$-module. The identity operad $I$ has underlying $\kk\fb\op$-module $\kk$ supported on $\mathbf{1}$ and  $\opd$ is equipped with the unit map  $I \rightarrow \opd$.

The PROP associated to $\opd$ is written $\cat \opd$. The morphism spaces are given explicitly by 
\begin{eqnarray}
\label{eqn:cat_opd}
\cat \opd (\mathbf{m}, \mathbf{n}) 
= \bigoplus _{f \in \fin ( \mathbf{m}, \mathbf{n})} \bigotimes_{i \in \mathbf{n}} \opd (f^{-1} (i)),
\end{eqnarray}
where  $\fin$ is the category of finite sets and all maps. In particular,  $\cat \opd (\mathbf{n}, \mathbf{1}) = \opd (\mathbf{n})$ as a $\kk \sym_n\op$-module.

The underlying $\kk$-linear category is encoded by the underlying $\kk \fb$-bimodule $\cat \opd$ equipped with the unital monoid structure 
\begin{eqnarray*}
\bmu & \rightarrow & \cat \opd 
\\
\cat \opd \otimes_\fb \cat \opd
& \rightarrow & 
\cat\opd,
\end{eqnarray*}
where the unit is induced by the operad unit $I \rightarrow \opd$ (using that $\cat I$ identifies as $\bmu$) and the composition is induced by the operadic composition. 

\begin{rem}
\label{rem:cat_opd_obf}
For fixed $n$, $\cat \opd (-, \mathbf{n})$ is a $\kk \fb\op$-module. This is isomorphic to $\opd ^{\obf n}$; via this isomorphism  the $\sym_n$-action (induced by automorphisms of $\n$) is given by the symmetry of $\obf$.
\end{rem}

We record for later use the following consequence of this description:

\begin{lem}
\label{lem:otimes_cat_opd}
For $\kk \fb\op$-modules $M_1$ and $M_2$, there is a natural isomorphism of $\kk \fb\op$-modules
\[
(M_1 \obf M_2) \otimes_\fb \cat \opd 
\cong 
(M_1 \otimes _\fb \cat \opd) \obf (M_2 \otimes_\fb \cat \opd).
\]
In particular,  for $M$ a $\kk\fb\op$-module, one has: 
\[
(M \obf \triv_1^{\fb\op}) \otimes_\fb \cat \opd 
\cong 
(M \otimes _\fb \cat \opd) \obf  \opd.
\]
\end{lem}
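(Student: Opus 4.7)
The plan is to evaluate the claimed isomorphism at an arbitrary object $\mathbf{n} \in \fb$ and exhibit a natural isomorphism of $\kk\fb\op$-modules, with Remark \ref{rem:cat_opd_obf} as the key tool. That remark identifies $\cat \opd (-, \mathbf{n}) \cong \opd^{\obf n}$ as a $\kk\fb\op$-module, with the $\sym_n$-action coming from the symmetry of $\obf$ permuting the $n$ tensor factors.

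Evaluating the left-hand side at $\mathbf{n}$ and using Remark \ref{rem:cat_opd_obf} yields the coend description
\[
\bigl((M_1 \obf M_2) \otimes_\fb \cat\opd\bigr)(\mathbf{n}) \;=\; (M_1 \obf M_2) \otimes_\fb \opd^{\obf n}.
\]
Evaluating the right-hand side at $\mathbf{n}$ by the Day-convolution formula for $\obf$ on $\kk\fb\op$-modules, and again applying Remark \ref{rem:cat_opd_obf} to identify $(M_j \otimes_\fb \cat\opd)(A) = M_j \otimes_\fb \opd^{\obf |A|}$, gives
\[
\bigoplus_{A \amalg B = \mathbf{n}} \bigl(M_1 \otimes_\fb \opd^{\obf |A|}\bigr) \otimes \bigl(M_2 \otimes_\fb \opd^{\obf |B|}\bigr).
\]

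The heart of the argument is therefore a Fubini-style identity comparing how a decomposition $\mathbf{n} = A \amalg B$ splits the factors of $\opd^{\obf n}$ with how a decomposition $\mathbf{t} = U \amalg V$ of an arity $\mathbf{t}$ splits $(M_1 \obf M_2)(\mathbf{t}) = \bigoplus_{U \amalg V = \mathbf{t}} M_1(U) \otimes M_2(V)$. I would unpack $\opd^{\obf n}(\mathbf{t}) = \bigoplus_{\mathbf{t} = S_1 \amalg \cdots \amalg S_n} \bigotimes_i \opd(S_i)$ and, using that the $\sym_n$-action on $\opd^{\obf n}$ is transitive on decompositions $A \amalg B = \mathbf{n}$ of the labelling set, decompose $\opd^{\obf n}$ $\sym_{|A|} \times \sym_{|B|}$-equivariantly as $\opd^{\obf |A|} \obf \opd^{\obf |B|}$ for each such $(A,B)$; the coend over $\mathbf{t}$ then factors along $U = \amalg_{i \in A} S_i$ and $V = \amalg_{i \in B} S_i$, pairing the $U$-part of $M_1 \obf M_2$ with the $A$-factors of $\opd^{\obf n}$, and similarly for $V$ and $B$. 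Summing over $(A,B)$ and invoking $\sym_n$-equivariance promotes the resulting $\kk$-module isomorphism to one of $\kk\fb\op$-modules.

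The second assertion follows by specialising the first to $M_2 = \triv_1^{\fb\op}$ and observing that the coend $\triv_1^{\fb\op} \otimes_\fb \cat\opd$ collapses to $\cat\opd(\mathbf{1}, -) = \opd$ as a $\kk\fb\op$-module, since $\triv_1^{\fb\op}$ is concentrated at $\mathbf{1}$. The main obstacle is the bookkeeping in the Fubini step: one must carefully track the $\sym_t$-action on the coend together with the $\sym_{|A|} \times \sym_{|B|}$-decomposition of $\opd^{\obf n}$ and verify $\sym_n$-equivariance; everything else is forced by the definitions and Remark \ref{rem:cat_opd_obf}.
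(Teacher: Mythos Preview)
Your overall strategy is exactly what the paper intends: the lemma is stated there without proof, as a ``consequence of this description'' (namely Remark \ref{rem:cat_opd_obf}), and unpacking that remark via a Fubini-type rearrangement is the right thing to do.

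However, your displayed formulas have the two arguments of $\cat\opd$ swapped, and this propagates through the whole write-up. The residual $\kk\fb\op$-structure on $M\otimes_\fb\cat\opd$ is the \emph{first} slot of $\cat\opd$, while the coend contracts the \emph{second} slot. Concretely,
\[
(M\otimes_\fb\cat\opd)(\mathbf{n})
\;=\;
\bigoplus_{t}\,M(\mathbf{t})\otimes_{\sym_t}\cat\opd(\mathbf{n},\mathbf{t})
\;=\;
\bigoplus_{t}\,M(\mathbf{t})\otimes_{\sym_t}\opd^{\obf t}(\mathbf{n}),
\]
since Remark \ref{rem:cat_opd_obf} identifies $\cat\opd(-,\mathbf{t})\cong\opd^{\obf t}$. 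Your formula $(M\otimes_\fb\cat\opd)(\mathbf{n})=M\otimes_\fb\opd^{\obf n}$ instead fixes the second slot at $\mathbf{n}$ and coends over the first, which computes a different object (and does not even type-check without silently invoking $\fb\cong\fb\op$).

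Once this is corrected, your Fubini argument goes through with the roles of $\mathbf{n}$ and $\mathbf{t}$ (hence of $(A,B)$ and $(U,V)$) interchanged: one decomposes the \emph{coend} variable $\mathbf{t}=U\amalg V$ (coming from $M_1\obf M_2$), uses $\opd^{\obf t}\cong\opd^{\obf|U|}\obf\opd^{\obf|V|}$, and matches this against the decomposition $\mathbf{n}=A\amalg B$ of the \emph{evaluation} variable on the right-hand side. The second assertion then follows exactly as you say.
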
 

\begin{rem}
\label{rem:obf_right_cat_opd}
The functor $\obf$ passes to right $\cat \opd$-modules and, using this structure on the right hand sides, 
the isomorphisms of Lemma \ref{lem:otimes_cat_opd} are ones of right $\cat\opd$-modules. This can be expressed using right modules over the operad $\opd$ (see \cite[Section 6.1]{MR2494775}, for example).
\end{rem}

\begin{nota}
\label{nota:fsbase}
Denote by $\fs$ the category of finite sets and surjective maps. For $m,n \in \nat$, denote by  $\fsbase (\mathbf{m}, \mathbf{n}) \subset \fs (\mathbf{m}, \mathbf{n})$  the set of surjections $f$ such that $\min (f^{-1} (i)) < \min (f^{-1} (j))$  if $i<j\in \mathbf{n}$.
\end{nota}

When the operad $\opd$ is reduced (i.e., $\opd (0)=0$),  one has the following standard result:

\begin{lem}
\label{lem:case_opd_reduced}
Suppose that $\opd$ is reduced. Then, for all $m, n \in \nat$: 
\begin{enumerate}
\item 
$\cat \opd (\mathbf{m}, \mathbf{n}) 
= \bigoplus _{f \in \fs ( \mathbf{m}, \mathbf{n})} \bigotimes_{i \in \mathbf{n}} \opd (f^{-1} (i))$; in particular $\cat \opd (m,n)=0$ if $m<n$; 
\item 
$\cat \opd (\mathbf{m}, \mathbf{n})$ is free as a $\kk \sym_n$-module. Explicitly,  there is an isomorphism of $\kk \sym_n$-modules 
\begin{eqnarray}
\label{eqn:catopd_fsbase}
\cat \opd (\mathbf{m}, \mathbf{n}) \cong \kk \sym_n \otimes_\kk \Big( 
\bigoplus _{f \in \fsbase ( \mathbf{m}, \mathbf{n})} \bigotimes_{i \in \mathbf{n}} \opd (f^{-1} (i))
\Big).
\end{eqnarray}
\end{enumerate}
\end{lem}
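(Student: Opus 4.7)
The plan is to reduce to the defining formula (\ref{eqn:cat_opd}) and exploit the vanishing $\opd(0) = 0$ together with the free left $\sym_n$-action on the surjection set $\fs(\mathbf{m}, \mathbf{n})$.

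For part (1), I would start from the general decomposition
\[
\cat\opd(\mathbf{m},\mathbf{n}) = \bigoplus_{f \in \fin(\mathbf{m},\mathbf{n})} \bigotimes_{i \in \mathbf{n}} \opd(f^{-1}(i))
\]
and observe that if $f$ is not surjective, then some fibre $f^{-1}(i)$ is empty, so that $\opd(f^{-1}(i)) \cong \opd(0) = 0$ under the reducedness hypothesis. Hence the entire summand indexed by such an $f$ vanishes, and only $f \in \fs(\mathbf{m},\mathbf{n})$ contributes. The vanishing for $m < n$ is then immediate since no surjection $\mathbf{m} \to \mathbf{n}$ exists in that range.

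For part (2), I would first identify the $\sym_n$-action. Under the bimodule convention, $\sym_n$ acts on $\cat\opd(\mathbf{m},\mathbf{n})$ by post-composition with automorphisms of the target; at the level of the decomposition just obtained, $\sigma \in \sym_n$ permutes summands via $f \mapsto \sigma \circ f$, while permuting the tensor factors indexed by $\mathbf{n}$ according to the identification $(\sigma\circ f)^{-1}(i) = f^{-1}(\sigma^{-1}(i))$. The key point is that this left action of $\sym_n$ on $\fs(\mathbf{m},\mathbf{n})$ is \emph{free}: if $\sigma \circ f = f$ and $f$ is surjective, then $\sigma$ is the identity. Thus the summand $\bigoplus_{f \in \fs(\mathbf{m},\mathbf{n})} \bigotimes_{i \in \mathbf{n}} \opd(f^{-1}(i))$ is a free $\kk\sym_n$-module, and a $\kk$-basis of the orbit space is obtained by picking one representative per orbit.

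To produce canonical orbit representatives, I would use the sequence of minima: for any $f \in \fs(\mathbf{m},\mathbf{n})$ the elements $\min f^{-1}(1), \ldots, \min f^{-1}(n) \in \mathbf{m}$ are pairwise distinct, and there is a unique $\tau \in \sym_n$ such that $\tau \circ f$ satisfies $\min(\tau\circ f)^{-1}(1) < \cdots < \min(\tau\circ f)^{-1}(n)$, i.e.\ $\tau \circ f \in \fsbase(\mathbf{m},\mathbf{n})$. This shows that $\fsbase(\mathbf{m},\mathbf{n})$ is a system of representatives for the $\sym_n$-orbits on $\fs(\mathbf{m},\mathbf{n})$, and the isomorphism (\ref{eqn:catopd_fsbase}) follows.

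Since both steps are essentially combinatorial, I do not expect a significant obstacle; the only point requiring real care is a correct bookkeeping of the $\sym_n$-action in part (2) — in particular, verifying that the permutation of tensor factors induced by $\sigma$ matches the reindexing $f \mapsto \sigma \circ f$ of summands, so that after choosing representatives in $\fsbase$ the remaining data carries the free left $\sym_n$-action by translation on the group algebra factor.
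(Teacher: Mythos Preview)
Your proposal is correct and follows essentially the same approach as the paper: part (1) is handled identically via the vanishing of $\opd(0)$, and for part (2) the paper simply asserts that $\fs(\mathbf{m},\mathbf{n})$ is freely generated as a $\sym_n$-set by $\fsbase(\mathbf{m},\mathbf{n})$, whereas you spell out why the action is free and why the minima prescription singles out exactly $\fsbase$ as orbit representatives. Your version is more detailed but structurally the same.
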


\begin{proof}
The first statement follows since, if $f \in \fin (\mathbf{m}, \mathbf{n})$ is not surjective, there exists $i \in \mathbf{n}$ such that $f^{-1}(i)= \emptyset$; the hypothesis  that $\opd$ is reduced implies that the term indexed by $f$ contributes nothing to $\cat \opd$. 

The second statement is a consequence of the fact that $\fs (\mathbf{m}, \mathbf{n})$ is freely generated as a $\sym_n$-set by $\fsbase (\mathbf{m}, \mathbf{n})$.
\end{proof}

\begin{exam}
\label{exam:ucom}
\ 
\begin{enumerate}
\item 
For $I$ the unit operad, $\cat I$ has underlying $\kk$-linear category $\kk \fb$. 
\item 
Let $\U$ be the unique operad in $\kk$-modules such that $\U (0) = \U (1) = \kk$. 
 The underlying $\kk$-linear category of $\cat \U$ is $\kk \finj$. 
 \item 
For $\com$ the operad encoding commutative associative algebras, the underlying $\kk$-linear category of $\cat \com$ is $\kk \fs$.
\item 
For $\ucom$  the operad encoding unital commutative associative algebras, the underlying $\kk$-linear category of $\cat \ucom$ is $\kk \fin$.
\end{enumerate}
There is a commutative diagram of inclusions of operads:
\[
\xymatrix{
I 
\ar@{^(->}[r]
\ar@{^(->}[d]
&
\U 
\ar@{^(->}[d]
\\
\com
\ar@{^(->}[r]
&
\ucom.
}
\]
On passing to the associated PROPs (more precisely, their underlying $\kk$-linear categories), this yields 
\[
\xymatrix{
\kk \fb 
\ar@{^(->}[r]
\ar@{^(->}[d]
&
\kk \finj
\ar@{^(->}[d]
\\
\kk \fs
\ar@{^(->}[r]
&
\kk \fin
}
\]
induced by the inclusions of the wide subcategories $\fb$, $\finj$, $\fs$ of $\fin$.
\end{exam}

\begin{rem}
\label{rem:U_kkfinj_quadratic}
The operad $\U$ is quadratic but {\em not} binary quadratic (its generator is in arity $0$ rather than arity $2$).
Correspondingly, as discussed in Section \ref{sect:fi}, the category $\kk \finj$ is quadratic over $\kk \fb$.
\end{rem}

\subsection{The $\uppd$-framework}
\label{subsect:uppd_framework}

We introduce the framework with which we will work in the following sections, generalizing Example \ref{exam:ucom}. Namely, we consider an operad $\uppd$ such that $\uppd (0) = \uppd (1) = \kk$ and let $\ppd\subset \uppd$ be the suboperad supported on positive arities, so that $\ppd$ is reduced whereas $\uppd$ is not. One has the commutative diagram of inclusions of operads
\[
\xymatrix{
I 
\ar@{^(->}[r]
\ar@{^(->}[d]
&
\U 
\ar@{^(->}[d]
\\
\ppd
\ar@{^(->}[r]
&
\uppd
}
\]
and hence the commutative diagram of inclusions of wide subcategories:
\[
\xymatrix{
\kk \fb
\ar@{^(->}[r]
\ar@{^(->}[d]
&
\kk \finj
\ar@{^(->}[d]
\\
\cat \ppd
\ar@{^(->}[r]
&
\cat \uppd.
}
\]

Clearly, one has the following:

\begin{lem}
\label{lem:ppd_augmented}
The operad $\ppd$ has a canonical augmentation $\ppd \rightarrow I$, given by sending $\ppd (t)$ for $t >1$ to zero. 
 This induces the augmentation $\cat \ppd \rightarrow \bmu$. 
\end{lem}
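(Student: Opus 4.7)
The plan is to proceed in two steps: first construct the augmentation at the level of operads, then transport it along the functor $\cat(-)$.

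For the first step, I would define $\epsilon_\ppd : \ppd \to I$ componentwise. Since $\ppd(0) = 0$ by hypothesis, the map is automatic in arity $0$. In arity $1$, both $\ppd(1)$ and $I(1)$ are canonically identified with $\kk$ via the operad unit $I \hookrightarrow \ppd$, so we set $\epsilon_\ppd$ to be this identification. For $t > 1$, $I(t) = 0$, so there is no choice. This is a morphism of $\kk\fb\op$-modules, and by construction it is a retraction of the unit $I \hookrightarrow \ppd$.

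The main thing to verify is that $\epsilon_\ppd$ is compatible with operadic composition. Given $p \in \ppd(n)$ and $q_i \in \ppd(a_i)$ with $\gamma(p; q_1, \dots, q_n) \in \ppd(\sum a_i)$, the image in $I(\sum a_i)$ is nonzero only when $\sum a_i = 1$, which forces $n = 1$ and $a_1 = 1$. On the other side, $\gamma^I(\epsilon_\ppd(p); \epsilon_\ppd(q_1), \dots, \epsilon_\ppd(q_n))$ is nonzero only in the same case. In this remaining case, both $p$ and $q_1$ live in $\ppd(1) = \kk$ with operadic composition given by multiplication on $\kk$ (since $I \hookrightarrow \ppd$ is a suboperad inclusion and $I(1) = \kk$ is the unit), and this matches the composition in $I$. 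Compatibility with units and symmetric group actions is immediate. This is the only step requiring a (trivial) verification, and can be viewed as the main obstacle, though it presents no real difficulty.

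For the second step, applying the functor $\cat(-)$ to the diagram $I \hookrightarrow \ppd \xrightarrow{\epsilon_\ppd} I$ yields $\cat I \hookrightarrow \cat \ppd \to \cat I$ by functoriality, and this composite is the identity of $\cat I$ since $I \to \ppd \to I$ is the identity of $I$. Using the identification $\cat I \cong \bmu$ from Example \ref{exam:ucom}, this produces the required augmentation $\cat \ppd \to \bmu$ which is a retract of the unit $\bmu \hookrightarrow \cat \ppd$, as desired.
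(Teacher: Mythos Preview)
Your proof is correct. The paper does not actually give a proof of this lemma: it introduces it as a standard fact (``One has the standard:'') and leaves the verification to the reader. Your two-step argument---constructing $\epsilon_\ppd$ arity-by-arity, checking compatibility with operadic composition using that $\ppd$ is reduced, then applying the functor $\cat(-)$---is exactly the natural way to supply the omitted details.
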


\begin{rem}
In general, $\uppd$ does not admit an augmentation (for example, $\ucom$ does not).
\end{rem}

One also has the following key fact, corresponding to Hypothesis \ref{hyp:A} in this context. (In this `multi-object ring' setting, the tensor product $\otimes$ corresponds to the  `external' tensor product; this is emphasized below by using the notation $\boxtimes$.)

\begin{lem}
\label{lem:decompose_cat_uppd}
The inclusions $\kk \finj \subset \cat \uppd$ and $\cat \ppd \subset \cat \uppd$ together with the composition in $\cat \uppd$ induce an isomorphism  of $\kk \finj \boxtimes \cat \ppd\op$-modules
\[
\kk \finj \otimes_\fb \cat \ppd \stackrel{\cong}{\rightarrow} \cat \uppd.
\]
\end{lem}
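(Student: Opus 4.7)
The plan is to verify the comparison morphism is an isomorphism of $\kk$-modules in each bidegree $(\mathbf{m}, \mathbf{n})$ by matching both sides via the (epi, mono) factorization of maps in $\fin$. To this end, I would expand $\cat \uppd(\mathbf{m}, \mathbf{n})$ via the PROP formula (\ref{eqn:cat_opd}) and exploit the hypotheses $\uppd(0) = \kk$ and $\uppd(t) = \ppd(t)$ for $t \geq 1$ to rewrite the tensor factor indexed by $f \in \fin(\mathbf{m}, \mathbf{n})$ as $\bigotimes_{i \in \mathrm{Im}(f)} \ppd(f^{-1}(i))$, since the slots $i \notin \mathrm{Im}(f)$ contribute only units. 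Grouping the $f$'s by $a := |\mathrm{Im}(f)|$ and using the unique factorization $f = \iota \circ \sigma$ with $\iota \in \finj(\mathbf{a}, \mathbf{n})$ and $\sigma \in \fsbase(\mathbf{m}, \mathbf{a})$ (the ordering of $\mathbf{a}$ is fixed by the $\fsbase$ convention that $\min(\sigma^{-1}(j))$ is increasing in $j$), one obtains
\[
\cat \uppd(\mathbf{m}, \mathbf{n}) \cong \bigoplus_{a \in \nat} \bigoplus_{\iota \in \finj(\mathbf{a}, \mathbf{n})} \bigoplus_{\sigma \in \fsbase(\mathbf{m}, \mathbf{a})} \bigotimes_{j \in \mathbf{a}} \ppd(\sigma^{-1}(j)).
\]

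Second, I would compute the left-hand side: by the definition of $\otimes_\fb$, $(\kk \finj \otimes_\fb \cat \ppd)(\mathbf{m}, \mathbf{n}) = \bigoplus_a \kk \finj(\mathbf{a}, \mathbf{n}) \otimes_{\kk \sym_a} \cat \ppd(\mathbf{m}, \mathbf{a})$, and Lemma \ref{lem:case_opd_reduced} applied to the reduced operad $\ppd$ identifies $\cat \ppd(\mathbf{m}, \mathbf{a})$ as the free $\kk \sym_a$-module on $\bigoplus_{\sigma \in \fsbase(\mathbf{m}, \mathbf{a})} \bigotimes_j \ppd(\sigma^{-1}(j))$. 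The tensor product over $\kk \sym_a$ therefore collapses to precisely the expression above, term by term.

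Finally, I would check that this $\kk$-module identification is actually induced by the comparison morphism. This is essentially tautological: for $\iota \in \finj(\mathbf{a}, \mathbf{n})$ and the generator of $\cat \ppd(\mathbf{m}, \mathbf{a})$ corresponding to $(\sigma; p_1 \otimes \cdots \otimes p_a)$ with $\sigma \in \fsbase$, the composition $\iota \circ \sigma$ in $\cat \uppd$ unwinds, by the definition of composition in a PROP via operadic substitution into the branches indexed by $i \in \mathbf{n}$, to precisely the summand of $\cat \uppd(\mathbf{m}, \mathbf{n})$ indexed by the composite map $f := \iota \sigma$ with tensor factors $p_j$ and units in the $i \notin \mathrm{Im}(\iota)$ slots. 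The $\kk \finj \boxtimes \cat \ppd\op$-equivariance is automatic, since the comparison morphism is defined via composition. The main obstacle is the careful bookkeeping of $\sym_a$-actions on $\cat \ppd(\mathbf{m}, \mathbf{a})$ when passing from the coend $\otimes_{\kk \sym_a}$ to the sum over $(\iota, \sigma)$-pairs; using $\fsbase$ as canonical representatives for the $\sym_a$-orbits in $\fs$ is what makes this manageable.
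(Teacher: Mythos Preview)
Your proof is correct and follows precisely the alternative route the paper sketches in its last parenthetical sentence: unwinding the explicit PROP formula (\ref{eqn:cat_opd}) together with the (surjection, injection) factorization of maps in $\fin$, and invoking Lemma \ref{lem:case_opd_reduced} to manage the $\sym_a$-orbits via $\fsbase$. The paper's primary argument is more conceptual and shorter: it observes that at the level of underlying symmetric sequences one has $\uppd \cong \U \circ \ppd$ (the operadic composition product), and that passing to associated PROPs sends $\circ$ to $\otimes_\fb$, using $\cat \U \cong \kk \finj$. Your approach has the advantage of being fully explicit and self-contained, making the matching of summands visible; the paper's approach avoids the bookkeeping by packaging it into the general compatibility of $\cat(-)$ with the composition product.
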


\begin{proof}
This generalizes the case $\uppd = \ucom$; the fact that every morphism in $\fin$ factors as a surjection followed by an injection induces the isomorphism $\kk \finj \otimes_{\fb } \kk \fs \cong \kk \fin$.

At the level of the underlying $\kk \fb\op$-modules, one has $\uppd \cong \U \circ \ppd$, where $\circ$ is the composition product. On passage to the associated PROPs, this yields the stated result, using that $\cat \U \cong \kk \finj$. (Alternatively, one can use the explicit description of $\cat \uppd$ (see (\ref{eqn:cat_opd}) together with the factorization of morphisms in $\fin$ to deduce this statement.)
\end{proof}

This is the starting point for applying the theory of Part \ref{part:one} in this context, considering  duality relative to $ \cat \ppd$. In particular, by the general considerations of Section \ref{sect:leftaug} one has:

\begin{prop}
\label{prop:right_augmentation}
The augmentation $\epsilon_{\kk \finj} : \kk \finj \rightarrow \bmu$ induces a right augmentation 
\[
\epsilon_{\cat \uppd} : 
\cat \uppd 
\rightarrow 
\cat \ppd, 
\]
with kernel $\ker \epsilon_{\kk \finj} \otimes_\fb \cat \ppd$.   In particular, this induces a  $\kk \finj\op$-module structure on $\cat \ppd$. 
\end{prop}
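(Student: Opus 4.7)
My plan is to apply the multi-object ring version of Example~\ref{exam:right_aug_ARC} to the setting $\kring = \kk\fb$, $R = \kk\finj$, $C = \cat\ppd$, $A = \cat\uppd$. The commutative diagram of inclusions preceding the statement supplies the data of unit morphisms, while Lemma~\ref{lem:decompose_cat_uppd} is precisely the multi-object analogue of Hypothesis~\ref{hyp:A}. Since $\kk\finj$ is augmented over $\bmu$ by $\epsilon_{\kk\finj}$, Example~\ref{exam:right_aug_ARC} then produces the candidate right augmentation
\[
\epsilon_{\cat\uppd} := \epsilon_{\kk\finj} \otimes_\fb \id_{\cat\ppd} \colon \kk\finj \otimes_\fb \cat\ppd \to \bmu \otimes_\fb \cat\ppd \cong \cat\ppd,
\]
with candidate kernel $\ker\epsilon_{\kk\finj} \otimes_\fb \cat\ppd$.

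I would then verify the three conditions of Definition~\ref{defn:right_aug} in turn. Right $\cat\ppd$-linearity and the identity $\epsilon_{\cat\uppd}\circ\eta = \id_{\cat\ppd}$ (where $\eta$ denotes the inclusion $\cat\ppd\hookrightarrow\cat\uppd$) are immediate from the construction, since $\epsilon_{\kk\finj}$ retracts the unit $\bmu\hookrightarrow\kk\finj$. The substantive point, which is the right-augmented mirror of the ideal argument in the proof of Lemma~\ref{lem:R_C}, is to show that $\ker\epsilon_{\cat\uppd}$ is a right $\cat\uppd$-ideal; this is the only step which is not purely formal, and it will be the main obstacle. Since this kernel is already a right $\cat\ppd$-submodule and, via Lemma~\ref{lem:decompose_cat_uppd}, $\cat\uppd$ is generated from $\cat\ppd$ by right multiplication by $\kk\finj$, it suffices to establish stability under right multiplication by $\kk\finj$. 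The plan is to run the associativity argument of Lemma~\ref{lem:R_C} in reverse hands: apply the product of $\cat\uppd$ to $\ker\epsilon_{\kk\finj} \otimes_\fb \cat\ppd \otimes_\fb \kk\finj$, first composing the middle pair $\cat\ppd\otimes_\fb\kk\finj \to \cat\uppd \cong \kk\finj\otimes_\fb\cat\ppd$, then composing the outer left pair, so as to reduce the claim to the fact that $\ker\epsilon_{\kk\finj}$ is a right ideal of $\kk\finj$, which holds because $\epsilon_{\kk\finj}$ is an augmentation of unital monoids in $\kk\fb$-bimodules.

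The final assertion regarding the $\kk\finj\op$-module structure on $\cat\ppd$ will then follow formally: the right augmentation $\epsilon_{\cat\uppd}$ endows $\cat\ppd$ with a unique right $\cat\uppd$-module structure for which $\epsilon_{\cat\uppd}$ is right $\cat\uppd$-linear (the right-augmented counterpart of the remark immediately following Definition~\ref{defn:left_aug}), and restricting this structure along $\kk\finj\hookrightarrow\cat\uppd$ produces the asserted right $\kk\finj$-action, i.e.\ the $\kk\finj\op$-module structure on $\cat\ppd$.
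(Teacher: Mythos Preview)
Your proposal is correct and matches the paper's approach exactly: the paper gives no explicit proof here, merely invoking ``the general considerations of Section~\ref{sect:leftaug}'', which is precisely the appeal to Example~\ref{exam:right_aug_ARC} that you make. Your write-up simply unpacks that appeal, mirroring the ideal argument of Lemma~\ref{lem:R_C} on the right-augmented side, so there is nothing to add.
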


\begin{rem}
\label{rem:ppd_right_finj}
The $\kk \finj\op$-module structure of $\cat \ppd$ restricts to a $\kk \finj\op$-module structure on (the underlying $\kk \fb\op$-module of) $\ppd$. This, together with the operad structure of $\ppd$, determines the operad structure of $\uppd$. The existence of this structure is well-known (see \cite[Chapter 2]{MR3643404}, for example).
\end{rem}

One also has the following counterpart to Proposition \ref{prop:right_augmentation}:

\begin{prop}
\label{prop:left_augmentation}
The augmentation $\epsilon_{\cat \ppd} \cn \cat \uppd \rightarrow \bmu$ induces a left augmentation:
\[
\cat \uppd \rightarrow \kk \finj.
\]
In particular, this induces a  $\cat \ppd$-module structure on $\kk \finj$.
\end{prop}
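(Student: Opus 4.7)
The proof is the direct analogue for left augmentations of Proposition \ref{prop:right_augmentation}, applying Lemma \ref{lem:R_C} (in its multi-object, $\kk\fb$-linear category incarnation) to the decomposition $\cat \uppd \cong \kk \finj \otimes_\fb \cat \ppd$ of Lemma \ref{lem:decompose_cat_uppd}. Concretely, I would define the candidate left augmentation as the composite
\[
\epsilon_{\cat\uppd} \cn \cat \uppd \stackrel{\cong}{\rightarrow} \kk \finj \otimes_\fb \cat \ppd \stackrel{\id \otimes_\fb \epsilon_{\cat \ppd}}{\longrightarrow} \kk \finj \otimes_\fb \bmu \stackrel{\cong}{\rightarrow} \kk \finj,
\]
using the augmentation $\epsilon_{\cat \ppd} \cn \cat \ppd \rightarrow \bmu$ of Lemma \ref{lem:ppd_augmented}, and then check the three conditions of Definition \ref{defn:left_aug}.

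Conditions (1) and (2) are immediate from the construction: $\epsilon_{\cat\uppd}$ is a morphism of left $\kk \finj$-modules since $\id_{\kk \finj} \otimes_\fb -$ preserves that structure, and its composite with the unit $\eta \cn \kk \finj \hookrightarrow \cat \uppd$ (which, under the identification, is $\id_{\kk \finj} \otimes_\fb \eta_{\cat \ppd}$) equals $\id_{\kk \finj}$ because $\epsilon_{\cat \ppd} \circ \eta_{\cat \ppd} = \id_\bmu$. The main step is condition (3), that $\ker \epsilon_{\cat\uppd} \cong \kk \finj \otimes_\fb \ker \epsilon_{\cat \ppd}$ is a left ideal of $\cat \uppd$. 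The direct argument mirrors that of Lemma \ref{lem:R_C}: it suffices to check that the composition of $\cat \uppd$ sends $\ker \epsilon_{\cat \ppd} \otimes_\fb (\kk \finj \otimes_\fb \ker \epsilon_{\cat \ppd})$ into $\kk \finj \otimes_\fb \ker \epsilon_{\cat \ppd}$; this follows from associativity of composition in $\cat \uppd$ together with the fact that $\ker \epsilon_{\cat \ppd}$ is a left ideal of $\cat \ppd$ (being the kernel of an algebra map to $\bmu$).

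A cleaner alternative sidesteps the ideal verification by invoking Proposition \ref{prop:left/right_aug}: taking opposites and using Proposition \ref{prop:basic_properties_ddag_op} yields $(\cat \uppd)\op \cong \cat \ppd\op \otimes_\fb \kk \finj\op$, and since $\cat \ppd\op$ is augmented (via the opposite of $\epsilon_{\cat \ppd}$), Example \ref{exam:right_aug_ARC} produces a right augmentation of $(\cat \uppd)\op$ over $\kk \finj\op$, which by Proposition \ref{prop:left/right_aug} corresponds to the sought left augmentation of $\cat \uppd$ over $\kk \finj$. The concluding clause on $\kk \finj$ acquiring a $\cat \ppd$-module structure is the general observation recorded after Definition \ref{defn:left_aug}: the left augmentation $\epsilon_{\cat\uppd}$ uniquely determines a left $\cat\uppd$-module structure on $\kk \finj$ for which $\epsilon_{\cat\uppd}$ is $\cat\uppd$-linear, and this restricts along $\cat \ppd \hookrightarrow \cat \uppd$ to give the claimed $\cat \ppd$-module structure. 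The only genuine obstacle is the ideal condition in (3), and either approach above resolves it.
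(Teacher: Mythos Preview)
Your proposal is correct and follows essentially the same approach as the paper: the paper gives no explicit proof for this proposition, presenting it simply as the counterpart to Proposition \ref{prop:right_augmentation}, which itself is justified only by ``the general considerations of Section \ref{sect:leftaug}'' (i.e., Lemma \ref{lem:R_C} in its $\kk\fb$-linear incarnation). Your direct verification of the three conditions of Definition \ref{defn:left_aug}, together with the optional detour through Proposition \ref{prop:left/right_aug}, makes this implicit argument fully explicit and is exactly what is intended.
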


\subsection{Finiteness properties}
\label{subsect:prop_finiteness_properties}

Here we take $\kk$ to be a field.  Recall from Example \ref{exam:kk_finj_non-neg} that the underlying $\kk \fb$-bimodule of $\kk \finj$ is of  finite type and non-negative. In the above framework, we also have:

\begin{prop}
\label{prop:cat_ppd_non-positive}
\ 
\begin{enumerate}
\item 
The underlying $\kk \fb$-bimodule of $\cat \ppd$ is non-positive. 
\item 
If the underlying $\kk \fb\op$-module of $\uppd$ has finite type, then the underlying $\kk \fb$-bimodules of  both $\cat \uppd$ and $\cat \ppd$ have finite type.
\end{enumerate}
\end{prop}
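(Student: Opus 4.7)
The plan is to treat the two statements independently, drawing on the explicit arity-by-arity description of $\cat \opd$ given in (\ref{eqn:cat_opd}), its reduced-operad refinement in Lemma \ref{lem:case_opd_reduced}, and the conventions of Definition \ref{defn:Z-grade_fb-bimodules}. Recall that, in the bimodule convention used here, $X(\mathbf{a},\mathbf{b})$ is a $\sym_a\op\times\sym_b$-module, and non-positivity means $X(\mathbf{a},\mathbf{b})=0$ whenever $b>a$.

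For (1), since $\ppd$ is reduced (it is supported on positive arities), Lemma \ref{lem:case_opd_reduced} gives
\[
\cat \ppd (\mathbf{m}, \mathbf{n}) = \bigoplus _{f \in \fs ( \mathbf{m}, \mathbf{n})} \bigotimes_{i \in \mathbf{n}} \ppd (f^{-1} (i)).
\]
As soon as $m<n$, the set $\fs(\mathbf{m},\mathbf{n})$ is empty, so $\cat \ppd(\mathbf{m},\mathbf{n})=0$. Translated through Definition \ref{defn:Z-grade_fb-bimodules}, this says precisely that $\cat \ppd \dg{n}=0$ for $n>0$, which is the non-positive condition of Definition \ref{defn:bimod_non_neg}.

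For (2), the hypothesis that $\uppd$ is of finite type means $\uppd(\mathbf{k})$ is finite-dimensional for every $k\in\nat$ (in particular $\uppd(0)=\kk$ is, by assumption). Since $\ppd$ is a sub symmetric sequence of $\uppd$, the same holds for $\ppd(\mathbf{k})$. The formula in Lemma \ref{lem:case_opd_reduced} then writes $\cat \ppd(\mathbf{m},\mathbf{n})$ as a direct sum indexed by the finite set $\fs(\mathbf{m},\mathbf{n})$ of finite tensor products of finite-dimensional $\kk$-modules, so $\cat \ppd$ is of finite type. The same argument, now using the unrestricted formula (\ref{eqn:cat_opd}) over the finite set $\fin(\mathbf{m},\mathbf{n})$, handles $\cat \uppd$; alternatively, one can combine the finite-type property of $\cat \ppd$ just established with that of $\kk\finj$ (Example \ref{exam:kk_finj_non-neg}) via the isomorphism $\cat \uppd \cong \kk\finj \otimes_\fb \cat \ppd$ of Lemma \ref{lem:decompose_cat_uppd} and apply Proposition \ref{prop:bimod_non-neg_non-pos} to the non-negative, finite-type bimodule $\kk\finj$ and the non-positive, finite-type bimodule $\cat \ppd$.

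There is no real obstacle: the necessary combinatorial identifications and finiteness results were all prepared in Sections \ref{sect:bimodules}, \ref{sect:finiteness_properties} and \ref{subsect:PROP_recollections}. The only points requiring care are (i) matching the bimodule convention so that reducedness of $\ppd$ corresponds to non-positivity of $\cat \ppd$ rather than non-negativity, and (ii) in (2), remembering that the indexing set $\fin(\mathbf{m},\mathbf{n})$ for $\cat \uppd$ is still finite and that $\uppd(0)=\kk$ contributes finite-dimensionally to terms coming from non-surjective $f$.
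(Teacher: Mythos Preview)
Your proof is correct. The paper states this proposition without proof, treating both parts as immediate consequences of the explicit description (\ref{eqn:cat_opd}) and Lemma \ref{lem:case_opd_reduced}; your write-up supplies exactly those details and matches the intended reasoning.
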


\begin{rem}
\ 
\begin{enumerate}
\item 
These finiteness properties are critical in the constructions below.
\item 
$\cat \uppd$ is never non-positive; it is non-negative if and only if $\ppd = I$, so that $\uppd  = \U$.
\end{enumerate}
\end{rem}

\section{Absolute Koszul duality for  $\kk \finj$}
\label{sect:fi}

The purpose of this section is two-fold: first to explain that the category $\kk \finj$ is Koszul and then to deduce a version of the BGG-correspondence, translated to this `universal' context. This is  the {\em homogeneous} case, using quadratic duality over $\kk \fb$; it will  be applied to the  {\em  nonhomogeneous} case in Section \ref{sect:prop} using relative duality.

Throughout,  $\kk$ is a field; for the main results it will be taken to have characteristic zero.

\subsection{The category $\kk \finj$ is quadratic}

The purpose of this section is to relate the categories $\kk \finj$ and $\kk \finj^\ddag$ using quadratic duality (see Example \ref{exam:finj_ddag} for $\kk \finj^\ddag$). 
 The following provides the (right) finitely-generated projectivity property that is required when dualizing (without requiring that $\kk$ have characteristic zero):

\begin{lem}
\label{lem:kkfinj_fg_proj_fbop}
For $a, b \in \nat$,  $\kk \finj (\mathbf{a}, \mathbf{b})$ and 
 $\kk \finj^\ddag (\mathbf{a}, \mathbf{b})$ are finitely-generated free $\kk \sym_a \op$-modules. 
  Hence, for fixed $b \in \nat$, the $\kk \fb\op$-modules $\kk \finj (-, \mathbf{b})$ and 
 $\kk \finj^\ddag (-, \mathbf{b})$ are finitely-generated projective. 
\end{lem}

\begin{proof}
For $\kk \finj$, the first statement follows from the fact that the $\sym_a\op$-set $\finj (\mathbf{a}, \mathbf{b})$ is finitely-generated free. The second then follows, since $\finj (\mathbf{a}, \mathbf{b}) =\emptyset $ if $a>b$. 
 The result for $\kk \finj^\ddag$ follows from that for $\kk \finj$.
\end{proof}

The underlying $\kk \fb$-bimodules can be described succinctly using the Day convolution product for $\kk \fb$-modules:

\begin{lem}
\label{lem:underlying_kkfb-bimodules}
There are isomorphisms of the underlying $\kk \fb$-bimodules:
\begin{eqnarray*}
\kk \finj & \cong & \bmu \ofb \triv^\fb
\\
\kk \finj^\ddag & \cong & \bmu \ofb \sgn^\fb,
\end{eqnarray*}
where the $\kk \fb\op$-module structure is induced by that of $\bmu$.
\end{lem}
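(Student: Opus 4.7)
The plan is to treat the two isomorphisms in turn, establishing the first directly by unwinding the Day convolution and deducing the second by applying the involution $(-)^\ddag$ to the first.

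For the first isomorphism, I would evaluate both sides at a fixed pair $(\mathbf{a}, \mathbf{b})$. By definition of the Day convolution applied in the right variable,
\[
(\bmu \ofb \triv^\fb)(\mathbf{a}, \mathbf{b})
= \bigoplus_{U \amalg V = \mathbf{b}} \bmu(\mathbf{a}, U) \otimes_\kk \triv(V)
\cong \bigoplus_{U \subseteq \mathbf{b},\, |U|=a} \kk\sym(\mathbf{a}, U),
\]
using that $\bmu(\mathbf{a}, U)$ vanishes unless $|U|=a$ and that $\triv(V) = \kk$ canonically. The key observation is the standard factorization in $\finj$: every injection $\iota : \mathbf{a} \hookrightarrow \mathbf{b}$ decomposes uniquely as a bijection $\mathbf{a} \stackrel{\cong}{\to} \image(\iota)$ followed by the inclusion $\image(\iota) \hookrightarrow \mathbf{b}$. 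This yields a natural bijection between the $\kk$-basis of $\kk \finj(\mathbf{a}, \mathbf{b})$ and the $\kk$-basis of the right-hand side, and one checks that it intertwines the left $\sym_a\op$-action (precomposition, coming from $\bmu$) and the right $\sym_b$-action (postcomposition, encoded by the Day convolution structure, which permutes the summands indexed by $U \subseteq \mathbf{b}$ while acting on the component $\kk\sym(\mathbf{a}, U)$ by postcomposition with $\sym_U$).

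For the second isomorphism, I would apply the involution $(-)^\ddag$ to the first. The monoidality of $(-)^\ddag$ with respect to $\otimes_\fb$ (Proposition \ref{prop:basic_properties_ddag_op}(\ref{item:ddag_monoidal})) extends, by an essentially identical argument, to a natural isomorphism $(\bmu \ofb N)^\ddag \cong \bmu^\ddag \ofb N^\dagger$ for any $\kk\fb$-module $N$; the point is that on each summand indexed by $\mathbf{b} = U \amalg V$ with $|U|=a$, the restriction of $\sgn_b$ along $\sym_U \times \sym_V \hookrightarrow \sym_b$ splits as $\sgn_U \boxtimes \sgn_V$, so the right-hand sign twist of $(-)^\ddag$ distributes between the two tensor factors. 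Combining this with the isomorphisms $\bmu^\ddag \cong \bmu$ of Proposition \ref{prop:basic_properties_ddag_op} and $\triv^\dagger \cong \sgn$ of Remark \ref{rem:triv_sgn_dagger} yields
\[
\kk \finj^\ddag = (\bmu \ofb \triv^\fb)^\ddag \cong \bmu \ofb \sgn^\fb,
\]
as required.

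The main obstacle is simply bookkeeping the signs in the second step: the identification $\bmu^\ddag \cong \bmu$ involves the nontrivial twist $[\sigma] \mapsto \sgn(\sigma)[\sigma]$, and the isomorphism $(\bmu \ofb N)^\ddag \cong \bmu \ofb N^\dagger$ absorbs the "residual" sign from decomposing $\sgn_b$. Once one is careful to apply $\bmu^\ddag \cong \bmu$ on each summand $\kk\sym(\mathbf{a},U)^\ddag \cong \kk\sym(\mathbf{a},U)$ by the analogous formula $\bar{\sigma} \mapsto \sgn(\bar{\sigma})\bar{\sigma}$ (well-defined up to a global choice of reference bijection $\mathbf{a}\to U$, which does not affect the resulting bimodule isomorphism), the compatibility of left and right actions is a routine verification.
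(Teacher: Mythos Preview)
Your proposal is correct and follows essentially the same approach as the paper. The paper phrases the first identification via the induced module $\kk\sym_b \otimes_{\sym_{b-a}} \triv_{b-a}$ rather than as a direct sum over subsets $U\subset\mathbf{b}$, but these are two descriptions of the same permutation module; for the second isomorphism the paper simply says it ``can be deduced from this'' without detail, and your argument via $(-)^\ddag$ and its compatibility with $\ofb$ is a natural way to fill that in.
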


\begin{proof}
The identification for $\kk \finj$ follows from the identification of the permutation module (for $a \leq b \in \nat$) 
$ 
\kk \finj (\mathbf{a}, \mathbf{b}) 
\cong 
\kk \sym_b \otimes_{\sym_{b-a}} \triv_{b-a}$, 
where the $\kk (\sym_a \op \times \kk \sym_b)$-module structure on the right hand side is induced by the canonical $\kk \sym_b$-bimodule structure of $\kk \sym_b$ given by the left and right regular actions. The case of $\kk \finj^\ddag$ follows from this. 
\end{proof}

Recall from Example \ref{exam:kk_finj} that $\kk \finj$ is $\nat$-graded, with $\kk \finj^{[0]}$ isomorphic to $\kk \fb \cong \bmu$. Moreover, it is clear that $\kk \finj$ is generated over $\kk \finj^{[0]}$ by $\kk \finj ^{[1]}$. 
 By definition, $\kk \finj\dg{1} (\mathbf{a}, \mathbf{b})$ is non-zero if and only if $b=a+1$; the following identifies its underlying $\kk \fb$-bimodule:

\begin{lem}
\label{lem:kk_finj_dg1}
For $a \in \nat$, the $\kk (\sym_a\op \times \sym_{a+1})$-module $\kk \finj (\mathbf{a}, \mathbf{a+1})$ is isomorphic to $\kk \sym_{a+1} \downarrow^{\sym_{a+1}\op}_{\sym_a\op}$, where $\kk \sym_{a+1}$ is equipped with the left and right regular $\kk \sym_{a+1}$-actions.
\end{lem}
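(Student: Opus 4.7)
\bigskip

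\noindent
\textbf{Proof plan.} The plan is to construct an explicit bijection of $\sym_a\op \times \sym_{a+1}$-sets
\[
\Phi \cn \finj(\mathbf{a}, \mathbf{a+1}) \stackrel{\cong}{\longrightarrow} \sym_{a+1},
\]
where on the right $\sym_{a+1}$ acts on itself by the left regular action and $\sym_a\op$ acts by restriction of the right regular $\sym_{a+1}\op$-action along the standard inclusion $\sym_a\op \hookrightarrow \sym_{a+1}\op$ (fixing $a+1$). Linearizing $\Phi$ will then give the required isomorphism of $\kk(\sym_a\op \times \sym_{a+1})$-modules.

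For $f \in \finj(\mathbf{a}, \mathbf{a+1})$, there is a unique element $x_f \in \mathbf{a+1} \setminus f(\mathbf{a})$. Define $\Phi(f) := \tilde f \in \sym_{a+1}$ by
\[
\tilde f(j) := f(j) \text{ for } j \leq a, \qquad \tilde f(a+1) := x_f.
\]
Injectivity of $\tilde f$ is immediate from injectivity of $f$ and the definition of $x_f$. A two-sided inverse is given by sending $\sigma \in \sym_{a+1}$ to its restriction $\sigma|_\mathbf{a} \in \finj(\mathbf{a}, \mathbf{a+1})$; alternatively both sets have cardinality $(a+1)!$.

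Equivariance is checked directly. For the left action of $\sigma \in \sym_{a+1}$: for $f$ as above, $(\sigma f)(j) = \sigma(f(j))$ and the complementary element is $x_{\sigma f} = \sigma(x_f)$, hence $\widetilde{\sigma f} = \sigma \tilde f$. For the right action of $\tau \in \sym_a$, viewed as $\bar\tau \in \sym_{a+1}$ fixing $a+1$: we have $(f\tau)(j) = f(\tau(j))$ for $j \leq a$, while $x_{f\tau} = x_f$ since $f\tau$ and $f$ have the same image. Therefore $\widetilde{f\tau} = \tilde f \circ \bar\tau$, which is precisely the right $\sym_a$-action on $\sym_{a+1}$ via the inclusion $\sym_a \hookrightarrow \sym_{a+1}$. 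Linearizing $\Phi$ yields the stated isomorphism.

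There is no real obstacle; the verification is routine, and the only care needed is in keeping track of the conventional identification of a right action with a left $\op$-action so that the restriction statement is correctly formulated.
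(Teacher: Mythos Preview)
Your proof is correct and follows essentially the same approach as the paper: the canonical extension of an injection $\mathbf{a}\hookrightarrow\mathbf{a+1}$ to an automorphism of $\mathbf{a+1}$ (by sending $a+1$ to the unique missing element) is exactly the bijection the paper invokes in its one-line argument. You have simply made the equivariance checks explicit.
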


\begin{proof}
This follows from the fact  that an injection $\mathbf{a} \hookrightarrow \mathbf{a+1}$ extends canonically to an automorphism of $\mathbf{a+1}$ and any such automorphism is obtained uniquely in this way.
\end{proof}

For later usage, we record the following basis for the underlying $\kk \sym_a\op$-module:

\begin{lem}
\label{lem:sym_a_basis_kkfinj[1]} 
For $a \in \nat$, the $\kk \sym_a\op$-module $\kk \finj (\mathbf{a}, \mathbf{a+1})$ is free with basis given by $\{ [i_x] \ | \ x \in \mathbf{a+1} \}$, where $i_x :\mathbf{a} \rightarrow  \mathbf{a+1}$ is the order-preserving injection  with image $(\mathbf{a+1}) \backslash \{ x\}$. 
\end{lem}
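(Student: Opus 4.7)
The plan is to verify this directly by analysing how $\sym_a$ acts on $\finj(\mathbf{a}, \mathbf{a+1})$ by precomposition, partitioning the set according to the image of each injection. Any injection $f : \mathbf{a}\hookrightarrow \mathbf{a+1}$ has image of cardinality $a$, hence of the form $(\mathbf{a+1})\setminus\{x\}$ for a unique $x \in \mathbf{a+1}$; this yields a well-defined map $\finj(\mathbf{a}, \mathbf{a+1}) \to \mathbf{a+1}$, $f \mapsto x$, where $\{x\} = (\mathbf{a+1}) \setminus \mathrm{im}(f)$.

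First, I would check that the fibre over $\{x\}$ is precisely the $\sym_a\op$-orbit of $i_x$. Since $i_x$ is a bijection from $\mathbf{a}$ onto $(\mathbf{a+1})\setminus\{x\}$, the assignment $\sigma \mapsto i_x\circ \sigma$ defines a $\sym_a\op$-equivariant map $\sym_a \to \finj(\mathbf{a}, \mathbf{a+1})$ whose image is contained in this fibre. It is surjective onto the fibre, since any other bijection $\mathbf{a}\to(\mathbf{a+1})\setminus\{x\}$ is of the form $i_x\circ\sigma$ for some $\sigma \in \sym_a$ (namely $\sigma = i_x^{-1}\circ f$); it is injective because $i_x$ is injective, so $i_x \circ \sigma = i_x \circ \sigma'$ forces $\sigma=\sigma'$.

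Next, I would observe that the fibres over distinct elements of $\mathbf{a+1}$ are disjoint, so that $\finj(\mathbf{a}, \mathbf{a+1})$ decomposes as a disjoint union of $a+1$ free $\sym_a\op$-orbits, each of which has $i_x$ as a free generator. Linearising over $\kk$ then immediately gives the required free $\kk\sym_a\op$-module decomposition $\kk\finj(\mathbf{a}, \mathbf{a+1}) \cong \bigoplus_{x \in \mathbf{a+1}} [i_x]\cdot \kk\sym_a$, which is exactly the claim. There is no real obstacle here: the argument is purely combinatorial and only uses that order-preserving injections provide a canonical set of coset representatives for $\sym_a \backslash \sym_{a+1}$, a fact already implicit in Lemma~\ref{lem:kk_finj_dg1}.
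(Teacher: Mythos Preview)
Your proposal is correct: the argument is exactly the natural one, partitioning $\finj(\mathbf{a},\mathbf{a+1})$ according to the complement of the image and observing that each fibre is a free $\sym_a\op$-orbit generated by $i_x$. The paper does not give a proof of this lemma (it is stated as an elementary observation), so you have simply supplied the details the paper leaves implicit; there is nothing to compare.
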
 

The product of $\kk \finj$ induces  the surjection
\begin{eqnarray}
\label{eqn:prod_deg_two}
\kk \finj \dg{1} \otimes_\fb \kk \finj \dg{1}
\twoheadrightarrow 
\kk \finj \dg{2}.
\end{eqnarray}
For $a \in \nat$, it is straightforward to check that, evaluated on $(\mathbf{a}, \mathbf{a+2})$, this identifies with the surjection of $\kk (\sym_a\op \times \sym_{a+2})$-modules
\[
\kk \sym_{a+2}\downarrow _{\sym_a\op}^{\sym_{a+2}\op} \cong \kk \sym_{a+2} \otimes_{\sym_2} \kk \sym_2 \twoheadrightarrow \kk \sym_{a+2} \otimes_{\sym_a} \triv_2
\]
induced by applying the functor $\kk \sym_{a+2} \otimes_{\sym_2} -$ to the canonical surjection $\kk \sym_2 \twoheadrightarrow \triv_2$ given by $[\sigma] \mapsto 1 \in \kk \cong \triv_2 $. 

\begin{lem}
\label{lem:presentation_kkfinj_dg2}
The  product (\ref{eqn:prod_deg_two}) fits into the short exact sequence of $\kk \fb$-bimodules
\begin{eqnarray}
\label{eqn:ses_presentation}
0
\rightarrow 
\bmu \ofb \sgn_2^\fb
\rightarrow 
\kk \finj \dg{1} \otimes_\fb \kk \finj \dg{1}
\rightarrow 
\kk \finj \dg{2}
\rightarrow 
0
\end{eqnarray}
that is obtained by applying the functor $\bmu \ofb -$ to the short exact sequence of $\kk \sym_2$-modules
\[
0
\rightarrow 
\sgn_2
\rightarrow 
\kk \sym_2 
\rightarrow 
\triv_2 
\rightarrow 
0.
\]
Moreover, the short exact sequence  (\ref{eqn:ses_presentation}) splits as a sequence of $\kk \fb\op$-modules.
\end{lem}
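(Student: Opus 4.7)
Plan. The identification preceding the statement shows that at $(\mathbf{a}, \mathbf{a+2})$ the surjection (\ref{eqn:prod_deg_two}) becomes
\[
\kk \sym_{a+2} \otimes_{\sym_2} \kk \sym_2 \twoheadrightarrow \kk \sym_{a+2} \otimes_{\sym_2} \triv_2,
\]
induced by the canonical projection $\kk \sym_2 \twoheadrightarrow \triv_2$, with $\sym_2 \subset \sym_{a+2}$ the standard embedding commuting with the subgroup $\sym_a$ on the first $a$ letters. Since $\kk \sym_{a+2}$ is free as a right $\kk \sym_2$-module, the functor $\kk \sym_{a+2} \otimes_{\sym_2} (-)$ is exact on $\kk \sym_2$-modules. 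Applying it to the short exact sequence $0 \to \sgn_2 \to \kk \sym_2 \to \triv_2 \to 0$ produces, at each $a \in \nat$, a short exact sequence of $\kk (\sym_a\op \times \sym_{a+2})$-modules whose kernel is $\kk \sym_{a+2} \otimes_{\sym_2} \sgn_2$.

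I then assemble these pointwise sequences into an exact sequence of $\kk \fb$-bimodules of the form (\ref{eqn:ses_presentation}). Both $\kk \finj \dg{1} \otimes_\fb \kk \finj \dg{1}$ and $\kk \finj \dg{2}$ are concentrated in $\zed$-degree $2$, so outside $(\mathbf{a}, \mathbf{a+2})$ the kernel of (\ref{eqn:prod_deg_two}) vanishes. It remains to check that $\bmu \ofb \sgn_2^\fb$ is likewise concentrated in $\zed$-degree $2$, with $(\bmu \ofb \sgn_2^\fb)(\mathbf{a}, \mathbf{a+2}) \cong \kk \sym_{a+2} \otimes_{\sym_2} \sgn_2$ naturally in $(\mathbf{a}, \mathbf{a+2})$. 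This is the immediate parallel of the identification $\kk \finj \cong \bmu \ofb \triv^\fb$ established in Lemma~\ref{lem:underlying_kkfb-bimodules}: Day convolution gives
\[
(\bmu \ofb \sgn_2^\fb)(\mathbf{a}, \mathbf{b}) = \bigoplus_{U \amalg V = \mathbf{b}} \bmu(\mathbf{a}, U) \otimes_\kk \sgn_2^\fb(V),
\]
which is supported on $|U|=a$, $|V|=2$, and the resulting sum is $\kk \sym_{a+2} \otimes_{\sym_2} \sgn_2$ by induction from $\sym_a \times \sym_2$ up to $\sym_{a+2}$. This identification is manifestly natural in the $\fb\op \times \fb$-structure, and an identical computation matches the middle term of the assembled sequence with $\kk\finj\dg{1} \otimes_\fb \kk\finj\dg{1}$ via the description of the latter recalled just before the lemma.

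For the splitting as a short exact sequence of $\kk \fb\op$-modules, I invoke Lemma~\ref{lem:kkfinj_fg_proj_fbop}: for each fixed $\mathbf{b}$, $\kk\finj\dg{2}(-, \mathbf{b})$ is finitely-generated projective (indeed free) as a $\kk\fb\op$-module, so the surjection (\ref{eqn:prod_deg_two}) restricted to the component $(-,\mathbf{b})$ admits a section in $\kk\fb\op$-modules, yielding the claimed splitting. The main piece of bookkeeping lies in the Day-convolution identification of the kernel, namely tracking the $\sym_a$ and $\sym_{a+2}$ actions through the induction, but this is the same computation already used in the proof of Lemma~\ref{lem:underlying_kkfb-bimodules}, not a fresh difficulty.
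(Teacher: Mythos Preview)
Your proof is correct and follows essentially the same approach as the paper: the paper leaves the lemma without a separate proof, deriving it from the pointwise identification of the product map as $\kk \sym_{a+2} \otimes_{\sym_2} (\kk \sym_2 \twoheadrightarrow \triv_2)$ stated just before the lemma, which is exactly what you unpack. Your use of Lemma~\ref{lem:kkfinj_fg_proj_fbop} for the $\kk\fb\op$-splitting is the natural way to justify that final clause.
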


\begin{proof}
The first statement follows from the discussion preceding the Lemma. The splitting as $\kk \fb\op$-modules is a consequence of Lemma \ref{lem:kkfinj_fg_proj_fbop}, which ensures that $\kk \finj \dg{2}$ is projective as a $\kk \fb\op$-module.
\end{proof}

Using these identifications, we have:

\begin{prop}
\label{prop:kk_finj_quadratic}
\ 
\begin{enumerate}
\item 
The category $\kk \finj$ is quadratic over $\kk \fb$, with quadratic datum
$
(\kk \fb; \bmu \ofb \triv_1, \bmu \ofb \sgn_2 ).
$
\item 
The category $\kk \finj^\ddag$ is quadratic over $\kk \fb$, with quadratic datum
$
(\kk \fb;  \bmu \ofb \sgn_1, \bmu \ofb \triv_2 ).
$
\end{enumerate}
\end{prop}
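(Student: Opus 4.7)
The approach is to establish part (1) by direct calculation using the preceding lemmas, then deduce part (2) formally by applying the monoidal self-equivalence $(-)^\ddag$.

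For part (1), the first two pieces of data are readily identified. The $\nat$-grading of $\kk\finj$ has $\kk\finj\dg{0} \cong \bmu$, and Lemma \ref{lem:kk_finj_dg1} gives $\kk\finj\dg{1} \cong \bmu \ofb \triv_1^\fb$ via the standard identification $\kk\sym_{a+1}\downarrow_{\sym_a\op}^{\sym_{a+1}\op} \cong (\bmu \ofb \triv_1^\fb)(\mathbf{a}, \mathbf{a+1})$. Generation of $\kk\finj$ over $\bmu$ by $\kk\finj\dg{1}$ follows by induction on $n$ from the surjectivity of $\kk\finj\dg{1} \otimes_\fb \kk\finj\dg{n-1} \twoheadrightarrow \kk\finj\dg{n}$, reflecting the fact that every injection $\mathbf{a}\hookrightarrow\mathbf{a+n}$ factors through a codimension-one injection. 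The degree-$2$ kernel is identified as $\bmu \ofb \sgn_2^\fb$ by Lemma \ref{lem:presentation_kkfinj_dg2}.

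The main obstacle is showing that this degree-$2$ relation generates the full kernel as a two-sided ideal, so that the canonical surjection
$$B := T_{\kk\fb}(\bmu \ofb \triv_1^\fb)/\langle \bmu \ofb \sgn_2^\fb \rangle \twoheadrightarrow \kk\finj$$
is an isomorphism. The plan is to identify both sides with $\bmu \ofb \triv^\fb$ degree by degree. For $\kk\finj$, the isomorphism $\kk\sym_{a+n}\otimes_{\sym_n}\triv_n \cong \kk\finj(\mathbf{a}, \mathbf{a+n})$ of $\kk(\sym_a\op\times\sym_{a+n})$-modules yields $\kk\finj\dg{n} \cong \bmu \ofb \triv_n^\fb$, refining Lemma \ref{lem:underlying_kkfb-bimodules} degree-wise. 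For $B$, the ``free bimodule'' functor $\bmu \ofb -$ intertwines $\ofb$ and $\otimes_\fb$, yielding $T^n_{\kk\fb}(\bmu \ofb \triv_1^\fb) \cong \bmu \ofb (\triv_1^\fb)^{\ofb n}$; the arity-$n$ piece of $(\triv_1^\fb)^{\ofb n}$ is the regular representation of $\sym_n$. The two-sided ideal in $T_{\kk\fb}(\bmu \ofb \triv_1^\fb)$ generated by $\bmu \ofb \sgn_2^\fb$ corresponds under this identification to the two-sided ideal of $\bigoplus_n \kk\sym_n$ generated by the antisymmetrizer $e - (12) \in \kk\sym_2$, which identifies all adjacent transpositions with the identity and thus produces $\triv_n$ in arity $n$. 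Hence $B_n \cong \bmu \ofb \triv_n^\fb$, matching $\kk\finj\dg{n}$, and the surjection is an isomorphism.

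Part (2) follows formally by applying $(-)^\ddag$, which is a monoidal self-equivalence of $\kk\fb$-bimodules (Proposition \ref{prop:basic_properties_ddag_op}(\ref{item:ddag_monoidal})), to the quadratic presentation of $\kk\finj$. This produces a quadratic presentation of $\kk\finj^\ddag$ with generators $(\bmu \ofb \triv_1^\fb)^\ddag$ and relations $(\bmu \ofb \sgn_2^\fb)^\ddag$. Using $\bmu^\ddag \cong \bmu$ together with the restriction of $\sgn_b$ along $\sym_U \times \sym_W \subset \sym_b$ as $\sgn_U \boxtimes \sgn_W$, a direct verification yields $(\bmu \ofb V)^\ddag \cong \bmu \ofb V^\dagger$ for any $\kk\fb$-module $V$. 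Specializing to $V = \triv_1^\fb$ (so $V^\dagger = \sgn_1^\fb$) and $V = \sgn_2^\fb$ (so $V^\dagger = \triv_2^\fb$) gives the stated datum $(\kk\fb; \bmu \ofb \sgn_1^\fb, \bmu \ofb \triv_2^\fb)$.
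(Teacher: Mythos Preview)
Your proof is correct and follows essentially the same approach as the paper's. The paper's proof is terser: for the key step (no higher relations), it simply says ``this is a consequence of the fact that the symmetric groups are generated by their transpositions,'' whereas you unpack this into an explicit identification of the quadratic algebra with $\bmu \ofb \triv^\fb$ via the intertwining $(\bmu \ofb V)\otimes_\fb(\bmu \ofb W)\cong \bmu \ofb(V\ofb W)$ and the computation that the quotient of $\bigoplus_n \kk\sym_n$ by the ideal generated by $e-(12)$ is $\bigoplus_n \triv_n$. Both arguments rest on the same fact (adjacent transpositions generate $\sym_n$), and for part (2) both apply the monoidal self-equivalence $(-)^\ddag$ in the same way.
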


\begin{proof}
As stated above, $\kk \finj$ is generated over $\kk \fb$ by $\kk \finj\dg{1}$; moreover, Lemma \ref{lem:presentation_kkfinj_dg2} identifies the relations in degree two. It remains to check that there are no further relations; this is a consequence of the fact that the symmetric groups are generated by their transpositions.

The result for $\kk \finj^\ddag$ then follows, since $(-)^\ddag$ is monoidal, by  Proposition \ref{prop:basic_properties_ddag_op} (\ref{item:ddag_monoidal}). The identification of the quadratic datum is straightforward (noting that $\sgn_1 = \triv_1$).
\end{proof}

\subsection{Dualizing the quadratic categories $\kk \finj$ and $\kk \finj ^\ddag$}

Lemma \ref{lem:kkfinj_fg_proj_fbop} ensures that $\kk \finj$ and $\kk \finj ^\ddag$ are right finitely-generated projective over $\kk \fb$. This allows us to dualize the quadratic structures of Proposition \ref{prop:kk_finj_quadratic}.

\begin{rem}
\label{rem:appropriate_duality}
Since we are working with $\kk \fb$-bimodules $X$ with underlying $\kk \fb\op$-module that is finitely-generated projective, the appropriate duality functor is $X \mapsto \hom_{\kk\fb\op} (X, \bmu)$; more generally we consider the functor $\hom_{\kk\fb\op} (X, -)$. By Lemma  \ref{lem:duality}, we can identify $\hom_{\kk\fb\op} (X, \bmu)$ with $X^\sharp$, the naive duality functor.
\end{rem}

\begin{rem}
\label{rem:permutation_module}
In preparation for the results below, we recall  how duality works for permutation modules associated to finite $G$-sets, where $G$ is a discrete group. Consider such a set $S$, so that one has the $\kk G$-module $\kk S$. The dual module $(\kk S)^\sharp$ (considered as a right $\kk G$-module) is a permutation module associated to the finite {\em right} $G$-set $\{ \eta_s \mid s \in S \}$, where $\eta_s \in (\kk S)^\sharp$ is given by $\eta_s (t) = \delta_{st}$.  The group $G$ acts on the right on this set by $\eta_s g = \eta_{g^{-1} s}$. Hence $(\kk S)^\sharp$, considered as a left $\kk G$-module, is canonically isomorphic to $\kk S$, via the bijection $s \leftrightarrow \eta_s$ of the generating sets.
\end{rem}

\begin{lem}
\label{lem:sharp_duals}
There are isomorphisms of $\kk \fb$-bimodules:
\begin{eqnarray*}
(\bmu \ofb \triv^\fb)^\sharp &\cong & \bmu \obf \triv^{\fb\op} \cong (\bmu \ofb \triv^\fb)\op \\
(\bmu \ofb \sgn^\fb)^\sharp &\cong & \bmu \obf \sgn^{\fb\op} \cong (\bmu \ofb \sgn^\fb)\op .
\end{eqnarray*}
\end{lem}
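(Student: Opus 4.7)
The plan is to compute each of the six bimodules in the statement explicitly, level by level, identifying them all as (sign-twisted) permutation modules on appropriate sets of injections, and then to produce natural isomorphisms between them.

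First I would treat the $\triv$-case. By Lemma~\ref{lem:underlying_kkfb-bimodules} the bimodule $\bmu \ofb \triv^\fb$ is just the underlying bimodule of $\kk\finj$, so that $(\bmu \ofb \triv^\fb)(\mathbf{a},\mathbf{b})$ is the permutation $\kk(\sym_a\op \times \sym_b)$-module on $\finj(\mathbf{a},\mathbf{b})$. Hence by definition of $(-)\op$ we have $(\bmu \ofb \triv^\fb)\op (\mathbf{a},\mathbf{b}) = \kk\finj(\mathbf{b},\mathbf{a})$ with the bimodule structure inherited by flipping left and right actions. On the other hand, unwinding the Day convolution on the $\kk\fb\op$-side gives
\[
(\bmu \obf \triv^{\fb\op})(\mathbf{a},\mathbf{b}) \;\cong\; \bigoplus_{U \amalg V = \mathbf{a}} \triv^{\fb\op}(V) \otimes_\kk \bmu(U,\mathbf{b}),
\]
which collapses (using that $\bmu(U,\mathbf{b})$ is concentrated on $|U|=b$) to the permutation module on the set of injections $\mathbf{b} \hookrightarrow \mathbf{a}$, i.e., to $\kk\finj(\mathbf{b},\mathbf{a})$ with the appropriate $\kk(\sym_a\op \times \sym_b)$-structure. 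Comparing actions gives the second isomorphism $\bmu \obf \triv^{\fb\op} \cong (\bmu \ofb \triv^\fb)\op$.

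Next, for the $\sharp$-dual, by definition
\[
(\bmu \ofb \triv^\fb)^\sharp(\mathbf{a},\mathbf{b}) \;=\; \hom_\kk\bigl(\kk\finj(\mathbf{b},\mathbf{a}),\kk\bigr).
\]
Here $\kk\finj(\mathbf{b},\mathbf{a})$ is a permutation $\kk(\sym_b\op \times \sym_a)$-module, so Remark~\ref{rem:permutation_module} (applied to the product group $\sym_b\op \times \sym_a$ and the finite set $\finj(\mathbf{b},\mathbf{a})$) produces a canonical isomorphism sending $f \mapsto \eta_f$ and showing that the $\kk$-dual, viewed with the flipped $\kk(\sym_a\op \times \sym_b)$-structure, is again the permutation module on $\finj(\mathbf{b},\mathbf{a})$. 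This reflips the bimodule actions exactly as in the computation of $(\bmu \ofb \triv^\fb)\op$, yielding the first isomorphism $(\bmu \ofb \triv^\fb)^\sharp \cong \bmu \obf \triv^{\fb\op}$.

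For the $\sgn$-case one proceeds identically, using Lemma~\ref{lem:underlying_kkfb-bimodules} to identify $\bmu \ofb \sgn^\fb \cong \kk\finj^\ddag$ and noting that $(-)^\ddag$ commutes with $(-)\op$ (Proposition~\ref{prop:basic_properties_ddag_op}) and with $(-)^\sharp$ up to a harmless sign-twist (since $\sgn_a \boxtimes \sgn_b$ is self-dual as a $\kk(\sym_a\op \times \sym_b)$-module). Applying the three-term chain established above after twisting by $\sgn\boxtimes \sgn$ gives the second line. Finally, one verifies in both cases that the canonical bijections $f \leftrightarrow \eta_f$ are compatible with the bimodule structures induced by all three functors; this is the main bookkeeping obstacle and boils down to the standard computation that on permutation modules the left action by $g$ on the dual corresponds to the right action by $g^{-1}$ on the original, which is precisely what distinguishes $(-)^\sharp$ from $(-)\op$ and makes them coincide on this class of bimodules.
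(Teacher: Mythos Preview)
Your proof is correct and follows essentially the same approach as the paper. Both arguments identify $\bmu \ofb \triv^\fb$ with $\kk\finj$ via Lemma~\ref{lem:underlying_kkfb-bimodules}, invoke the self-duality of finite-dimensional permutation modules (Remark~\ref{rem:permutation_module}) to handle $(-)^\sharp$, and deduce the $\sgn$-case by twisting; you simply unpack the Day convolution computation of $\bmu \obf \triv^{\fb\op}$ more explicitly than the paper does.
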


\begin{proof}
The first statement is a global form of the isomorphism of $\kk (\sym_a \op \times \sym_b)$-modules 
$\kk \finj (\mathbf{a}, \mathbf{b}) ^\sharp 
\cong 
\kk \finj (\mathbf{a}, \mathbf{b})\op$, 
using $\op$ to adjust the variance; this follows from the fact that $\kk \finj (\mathbf{a}, \mathbf{b})$ is a finite dimensional permutation module (use Remark \ref{rem:permutation_module}). The second statement is  a twisted form of this. 
\end{proof}

\begin{prop}
\label{prop:right_qdual_kk_finj}
The (right) quadratic dual $\kk \finj\qd$ of $\kk \finj$ over $\kk \fb$ is isomorphic to $(\kk \finj^\ddag)\op$. 
\end{prop}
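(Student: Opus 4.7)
The plan is to compute both sides explicitly as quadratic data and verify they agree. By Proposition \ref{prop:kk_finj_quadratic}, $\kk\finj$ has quadratic datum $(\kk\fb; V := \bmu \ofb \triv_1, I := \bmu \ofb \sgn_2)$, with quotient $(V \otimes_\fb V)/I \cong \bmu \ofb \triv_2$ (via Lemma \ref{lem:presentation_kkfinj_dg2}). Since $\kk\finj$ is right finitely-generated projective by Lemma \ref{lem:kkfinj_fg_proj_fbop}, the right analogue of Definition \ref{defn:dual_quad_datum} applies (using Remark \ref{rem:appropriate_duality} to identify $\hom_{\kk\fb\op}(-,\bmu)$ with $(-)^\sharp$), yielding the dual quadratic datum $(\kk\fb; (\bmu \ofb \triv_1)^\sharp, (\bmu \ofb \triv_2)^\sharp)$.

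On the other side, Proposition \ref{prop:kk_finj_quadratic} identifies the quadratic datum for $\kk\finj^\ddag$ as $(\kk\fb; \bmu \ofb \sgn_1, \bmu \ofb \triv_2)$. Applying $(-)\op$ (which reverses the order of $\otimes_\fb$ by Proposition \ref{prop:basic_properties_ddag_op}), $(\kk\finj^\ddag)\op$ is described by the quadratic datum $(\kk\fb; (\bmu \ofb \sgn_1)\op, (\bmu \ofb \triv_2)\op)$. Lemma \ref{lem:sharp_duals} yields the identifications $(\bmu \ofb \triv^\fb)^\sharp \cong (\bmu \ofb \triv^\fb)\op$ (and its sign counterpart); since its proof works objectwise via the self-duality of the permutation modules involved, it restricts to each arity, giving $(\bmu \ofb \triv_n)^\sharp \cong (\bmu \ofb \triv_n)\op$ for $n \in \{1,2\}$. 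Since $\sgn_1 = \triv_1$, the generators match as $(\bmu \ofb \triv_1)^\sharp \cong (\bmu \ofb \triv_1)\op \cong (\bmu \ofb \sgn_1)\op$, and likewise the relations match as $(\bmu \ofb \triv_2)^\sharp \cong (\bmu \ofb \triv_2)\op$.

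The main obstacle is to check that these identifications of the individual bimodule pieces are compatible with the inclusion of the relations into the squared tensor product of generators, so that one obtains an isomorphism of quadratic data rather than merely of the constituent bimodules. Concretely, one must verify that the image under $(-)^\sharp$ of the dualized short exact sequence of Lemma \ref{lem:presentation_kkfinj_dg2} coincides with the sequence coming from the quadratic datum of $(\kk\finj^\ddag)\op$ under $(-)\op$. This reduces objectwise to the canonical self-dualities of the finite permutation modules involved, together with compatibility of the Day convolution $\ofb$ with duality.
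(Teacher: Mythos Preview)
Your proof is correct and follows essentially the same approach as the paper's: identify the quadratic datum of $\kk\finj$ via Proposition~\ref{prop:kk_finj_quadratic}, compute its right dual using $(-)^\sharp$ and Lemma~\ref{lem:sharp_duals}, and recognize the result as the quadratic datum for $(\kk\finj^\ddag)\op$ via Proposition~\ref{prop:kk_finj_quadratic} again. The paper is terser about the compatibility of the bimodule identifications with the inclusion of relations, but your added remarks on this point are a reasonable elaboration rather than a different argument.
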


\begin{proof}
By Proposition \ref{prop:kk_finj_quadratic}, $\kk \finj$ is quadratic over $\kk \fb$, associated to the quadratic datum 
\[
(\kk \fb; \bmu\ofb \triv_1^\fb, \bmu \ofb\sgn_2 ^\fb)
\]
and $\kk \finj \dg{2} \cong \bmu \ofb \triv_2$; this is $2$-right finitely-generated projective. 

By Lemma \ref{lem:sharp_duals}, the dual quadratic datum is 
$ 
(\kk \fb; (\bmu\ofb \sgn_1^\fb)\op, (\bmu \ofb\triv_2 ^\fb)\op).
 $ 
By Proposition \ref{prop:kk_finj_quadratic}, one sees that this quadratic datum yields the quadratic category $(\kk \finj^\ddag)\op$. 
\end{proof}

\begin{rem}
In these statements,  the $\nat$-gradings have been suppressed. For $n\in \nat$, cohomological degree $n$ for $\kk \finj\qd$ corresponds to $(\kk \finj\qd)\dg{-n}$.
\end{rem}

Proposition  \ref{prop:right_qdual_kk_finj} and  the fact that $\kk \finj$ is non-negative of finite type together imply:

\begin{cor}
\label{cor:kk_finj_!_non-positive}
The underlying $\kk \fb$-bimodule of  $\kk \finj \qd$ is non-positive and of finite type.
\end{cor}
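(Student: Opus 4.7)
The plan is to deduce this directly from Proposition~\ref{prop:right_qdual_kk_finj} together with the preservation properties collected in Section~\ref{sect:finiteness_properties}. First I would invoke the identification $\kk\finj\qd \cong (\kk\finj^\ddag)\op$ of Proposition~\ref{prop:right_qdual_kk_finj}, which reduces the problem to showing that $(\kk\finj^\ddag)\op$ is non-positive and of finite type as a $\kk\fb$-bimodule.

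Next, I would combine the two preservation observations. By Example~\ref{exam:kk_finj_non-neg}, $\kk\finj$ is non-negative and of finite type; moreover, the same example already records that $\kk\finj^\ddag$ is non-negative and of finite type (this is immediate from Lemma~\ref{lem:non_neg_op_sharp} applied to $\kk\finj$, using that the involution $(-)^\ddag$ does not alter the $\zed$-grading, together with Lemma~\ref{lem:preserve_ft} which ensures $(-)^\ddag$ preserves finite type). Then applying Lemma~\ref{lem:non_neg_op_sharp} once more, the functor $(-)\op$ exchanges non-negative with non-positive bimodules, so $(\kk\finj^\ddag)\op$ is non-positive. The finite type property is preserved under $(-)\op$ by Lemma~\ref{lem:preserve_ft}, completing the argument.

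There is no real obstacle here: the corollary is a bookkeeping statement, packaging the effect of quadratic duality on the standard boundedness and finiteness properties of $\kk\finj$. The only thing to be careful about is the sign convention for the cohomological grading noted in the remark after Proposition~\ref{prop:right_qdual_kk_finj}, namely that the degree $n$ component of $\kk\finj\qd$ sits in $\zed$-degree $-n$ of the underlying bimodule, which is precisely why non-negativity of $\kk\finj$ translates into non-positivity (rather than non-negativity) of $\kk\finj\qd$.
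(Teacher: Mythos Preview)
Your proposal is correct and follows essentially the same route as the paper: the corollary is deduced directly from the isomorphism $\kk\finj\qd \cong (\kk\finj^\ddag)\op$ of Proposition~\ref{prop:right_qdual_kk_finj} together with the fact that $\kk\finj$ (hence $\kk\finj^\ddag$) is non-negative of finite type, and then applying $(-)\op$. You have simply spelled out the bookkeeping (Lemmas~\ref{lem:preserve_ft} and~\ref{lem:non_neg_op_sharp}, Example~\ref{exam:kk_finj_non-neg}) that the paper leaves implicit.
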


\subsection{The dualizing complex $\vk (\kk \finj)$}
\label{subsect:dualizing_cx_kk_finj}
The appropriate dualizing complex  is 
\[
\vk (\kk \finj) = 
\kk \finj \otimes_\fb \kk \finj\qd 
\cong 
\kk \finj \otimes_\fb (\kk \finj^\ddag) \op,
\]
defined as in  Section \ref{subsect:vk}, working over $\kk \fb$.  The cohomological grading is inherited from that of $\kk \finj\qd$.

The differential is given by inner multiplication with the class $\e$ (cf. Notation \ref{nota:efrak_prime}). This class  is given by the natural transformation:
\begin{eqnarray}
\label{eqn:diff}
\bmu \rightarrow \kk \finj \dg{1} \otimes_\fb (\kk \finj\qd)\dg{-1} ,
\end{eqnarray}
that is adjoint to the identity map of $ \kk \finj \dg{1}$,  using that $(\kk \finj\qd)\dg{-1}$ is the dual $(\kk \finj \dg{1})^\sharp$. (Observe that the domain (respectively the codomain) of (\ref{eqn:diff}) is non-zero evaluated on $(\mathbf{a}, \mathbf{b})$ if and only $a=b$.) 

This map is described explicitly as follows. By Lemma \ref{lem:sym_a_basis_kkfinj[1]},  $\kk \finj (\mathbf{a}, \mathbf{a+1})$ has basis as a $\kk \sym_a \op$-module given by $\{ [i_x] \mid x \in \mathbf{a+1} \}$. Denote by $\{ [i_x\op] \mid x \in \mathbf{a+1} \}$ the dual basis of the $\kk \sym_a$-permutation module $\kk \finj\qd (\mathbf{a+1}, \mathbf{a}) \cong \big( \kk \finj (\mathbf{a}, \mathbf{a+1})\big)^\sharp$ (cf. Remark \ref{rem:permutation_module}).

\begin{lem}
\label{lem:cpt_diff}
The map (\ref{eqn:diff}) evaluated on $(\mathbf{a+1}, \mathbf{a+1}) $ identifies with the morphism of $\kk \sym_{a+1}$-bimodules
\[
\kk \sym_{a+1} 
\rightarrow 
\kk \finj (\mathbf{a}, \mathbf{a+1}) \otimes_{\sym_{a}} 
\kk \finj\qd( \mathbf{a+1}, \mathbf{a}) 
\] 
induced by sending $[e] \in \kk \sym_{a+1}$ to $\sum_{x \in \mathbf{a+1}} [i_x] \otimes [i_x\op]$.
\end{lem}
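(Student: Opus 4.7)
The map (\ref{eqn:diff}) is, by construction, the coevaluation associated to the identification $(\kk\finj\qd)\dg{-1} = \hom_{\kk\fb\op}(\kk\finj\dg{1},\bmu) \cong (\kk\finj\dg{1})^\sharp$, via the standard isomorphism
\[
\kk\finj\dg{1} \otimes_\fb (\kk\finj\dg{1})^\sharp \;\cong\; \hom_{\kk\fb\op}(\kk\finj\dg{1},\kk\finj\dg{1})
\]
which applies because $\kk\finj\dg{1}$ is finitely-generated projective as a $\kk\fb\op$-module (Lemma \ref{lem:kkfinj_fg_proj_fbop}). The plan is thus simply to make this coevaluation explicit on the component $(\mathbf{a+1},\mathbf{a+1})$ by choosing the basis furnished by Lemma \ref{lem:sym_a_basis_kkfinj[1]}.

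First I would evaluate the target on $(\mathbf{a+1},\mathbf{a+1})$. Since $\kk\finj\dg{1}(\mathbf{c},\mathbf{a+1}) = 0$ unless $c=a$, the formula for $\otimes_\fb$ collapses to the single summand
\[
(\kk\finj\dg{1}\otimes_\fb (\kk\finj\qd)\dg{-1})(\mathbf{a+1},\mathbf{a+1}) \;=\; \kk\finj(\mathbf{a},\mathbf{a+1}) \otimes_{\sym_a} \kk\finj\qd(\mathbf{a+1},\mathbf{a}),
\]
and the source on $(\mathbf{a+1},\mathbf{a+1})$ is $\bmu(\mathbf{a+1},\mathbf{a+1}) = \kk\sym_{a+1}$.

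Next I would use Lemma \ref{lem:sym_a_basis_kkfinj[1]} to write $\kk\finj(\mathbf{a},\mathbf{a+1})$ as the finitely-generated free right $\kk\sym_a$-module on the basis $\{[i_x] : x \in \mathbf{a+1}\}$, together with Remark \ref{rem:permutation_module}, which identifies $\kk\finj\qd(\mathbf{a+1},\mathbf{a}) \cong (\kk\finj(\mathbf{a},\mathbf{a+1}))^\sharp$ as a left $\kk\sym_a$-module with dual basis $\{[i_x\op]\}$. For any finitely-generated free right $S$-module $V$ with basis $\{v_x\}_{x \in X}$ and dual basis $\{v_x^*\}$, the canonical element of $V \otimes_S V^\sharp$ corresponding to $\id_V$ under the standard isomorphism $V\otimes_S V^\sharp \cong \hom_S(V,V)$ is $\sum_x v_x \otimes v_x^*$; applying this in the current setting yields the element $\sum_{x \in \mathbf{a+1}}[i_x]\otimes [i_x\op]$.

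Finally I would check that this element is indeed the image of $[e] \in \kk\sym_{a+1}$. The map (\ref{eqn:diff}) is a morphism of $\kk\fb$-bimodules, hence is determined on each component $(\mathbf{a+1},\mathbf{a+1})$ by the image of $[e]$; and the categorical characterization as the adjoint of $\id_{\kk\finj\dg{1}}$ forces this image to be precisely the coevaluation element computed above, concluding the proof. The only mildly subtle point — and the place where one has to exercise some care — is to verify that the $\sym_{a+1}$-bimodule equivariance comes out correctly, since the construction is natural in the bimodule structure but the basis of $\kk\finj(\mathbf{a},\mathbf{a+1})$ chosen is only a $\sym_a\op$-basis; this follows from the independence of the coevaluation element from the choice of basis (it is intrinsically defined).
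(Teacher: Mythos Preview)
Your proposal is correct and follows the same approach as the paper, which simply records that the statement follows directly from the definition of the class $\e$; you have spelled out the coevaluation computation in detail using the basis of Lemma \ref{lem:sym_a_basis_kkfinj[1]} and its dual, which is exactly what underlies that one-line justification.
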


We note the following finiteness property:

\begin{prop}
\label{prop:vk_kk_finj_finiteness}
For any $s, t \in \nat$, the complex $\vk (\kk \finj) (\mathbf{s}, \mathbf{t})$, considered as a complex of $\kk (\sym_s \op \times \sym_t)$-modules, is concentrated in cohomological degrees in $[s- \min \{s,t\}, s]$ and, in each cohomological degree, the module has finite $\kk$-dimension.
\end{prop}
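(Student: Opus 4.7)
The proof is a direct bookkeeping argument on the underlying graded bimodule; the twisted differential of Lemma \ref{lem:cpt_diff} plays no role in the statement, since the claim concerns only where the complex is supported and the $\kk$-dimensions in each slot.

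First I would unwind the definition of $\otimes_\fb$:
\[
\vk(\kk\finj)(\mathbf{s},\mathbf{t}) \;=\; \bigoplus_{a\in\nat} \kk\finj(\mathbf{a},\mathbf{t}) \otimes_{\sym_a} \kk\finj\qd(\mathbf{s},\mathbf{a}).
\]
The two support constraints immediately cut down the range of $a$. Since the underlying $\kk\fb$-bimodule of $\kk\finj$ is non-negative (Example \ref{exam:kk_finj_non-neg}), the $a$-th summand vanishes unless $a\le t$, and since $\kk\finj\qd$ is non-positive (Corollary \ref{cor:kk_finj_!_non-positive}), it vanishes unless $a\le s$. Together with $a\ge 0$, only finitely many indices contribute, namely $a\in\{0,1,\ldots,\min\{s,t\}\}$.

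Next I would keep track of the cohomological grading. As noted after Proposition \ref{prop:right_qdual_kk_finj}, cohomological degree $n$ of $\kk\finj\qd$ corresponds to $(\kk\finj\qd)\dg{-n}$, so $\kk\finj\qd(\mathbf{s},\mathbf{a})$ sits in cohomological degree $s-a$ (and the $\kk\finj$ factor sits in degree $0$). As $a$ ranges over $[0,\min\{s,t\}]$, the value $s-a$ covers exactly the interval $[s-\min\{s,t\},\,s]$, giving the claimed support.

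Finally, for the finite-dimensionality in each cohomological degree $n$, only the single index $a=s-n$ contributes, and both $\kk\finj(\mathbf{s-n},\mathbf{t})$ and $\kk\finj\qd(\mathbf{s},\mathbf{s-n})$ are finite-dimensional over $\kk$ by the finite type properties recorded in Example \ref{exam:kk_finj_non-neg} and Corollary \ref{cor:kk_finj_!_non-positive}; the coinvariants under $\sym_a$ remain finite-dimensional. There is no substantive obstacle here — the whole proof is a matter of combining the non-negativity of $\kk\finj$, the non-positivity of $\kk\finj\qd$, and the convention that links the $\zed$-grading of Definition \ref{defn:Z-grade_fb-bimodules} to the cohomological grading of the quadratic dual.
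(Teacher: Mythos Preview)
Your proof is correct and is precisely the direct-from-definition argument the paper explicitly offers; the paper also notes one can alternatively invoke the general finiteness result Proposition~\ref{prop:bimod_non-neg_non-pos} together with Corollary~\ref{cor:kk_finj_!_non-positive}, but your unwinding of $\otimes_\fb$ and the grading convention is exactly the first route.
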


\begin{proof}
This can either be proved directly from the definition of the complex $\vk (\kk \finj)$ or by using the general finiteness results of Section \ref{sect:finiteness_properties} (in particular Proposition \ref{prop:bimod_non-neg_non-pos}) in conjunction with Corollary \ref{cor:kk_finj_!_non-positive}.
\end{proof}

\begin{exam}
\label{exam:vk_cohomology}
Evaluating the complex $\vk (\kk \finj)$ on $(\mathbf{2}, \mathbf{2})$, one obtains the complex of $\kk (\sym_2\op \times \sym_2)$-modules: 
\[
 \kk \sym_2
\rightarrow 
 \kk \sym_2 \boxtimes \kk \sym_2 
\rightarrow 
\sgn_2 \boxtimes \triv_2,
\]
where the first term is in cohomological degree $0$ and the last in degree $2$.

If $\kk$ has characteristic zero,  
a simple calculation shows that the cohomology of this complex is $\triv_2 \boxtimes \sgn_2$  in degree $1$, zero elsewhere; in particular,  $\vk (\kk \finj)$ is not acyclic.
\end{exam}

\subsection{The adjunction induced by $\vk (\kk \finj)$ for left DG modules}
\label{subsect:adjunction_vk_kk_finj_left}

We focus upon the adjunction for left DG modules (the case of right DG modules is analogous). Weak equivalence is synonymous with quasi-isomorphism.

Proposition \ref{prop:vk_adjunctions} yields the adjunction:
\begin{eqnarray}
\label{eqn:adj_vk_kk_finj}
 \vk (\kk \finj)  \otimes_{\kk \finj\qd } - 
\cn
\kk \finj\qd \dash \dgmod 
\rightleftarrows
\kk \finj \dash \dgmod 
\cn
\ihom_{\kk \finj} (\vk (\kk \finj), -).
\end{eqnarray}
The underlying functors identify  as 
\begin{eqnarray*}
\vk (\kk \finj)  \otimes_{\kk \finj\qd } - 
&\cong & 
\kk \finj \otimes_\fb - \\
\ihom_{\kk \finj} (\vk (\kk \finj), -)
&\cong &
\ihom_{\kk \fb} (\kk \finj\qd, -),
\end{eqnarray*}
in which the right hand sides are  equipped with the appropriate twisted differentials.

Using Lemma \ref{lem:underlying_kkfb-bimodules}, these functors identify as follows: 

\begin{lem}
\label{lem:identify_underlying_functors_kk_finj}
For $M$ a $\kk \fb$-module, there are natural isomorphisms:
\begin{eqnarray*}
\kk \finj \otimes_\fb M & \cong & \triv^\fb \ofb M 
\\
\hom_{\kk \fb} (\kk \finj\qd, M) & \cong & \sgn^\fb \ofb M.
\end{eqnarray*}
\end{lem}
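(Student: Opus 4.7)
The plan is to reduce both isomorphisms to a single structural observation about bimodules of the form $\bmu \ofb N$ with $N$ a $\kk\fb$-module. The starting point is Lemma \ref{lem:underlying_kkfb-bimodules}, which gives $\kk \finj \cong \bmu \ofb \triv^\fb$, and Proposition \ref{prop:right_qdual_kk_finj} combined with Lemma \ref{lem:underlying_kkfb-bimodules}, which gives $\kk \finj\qd \cong (\kk\finj^\ddag)\op \cong (\bmu \ofb \sgn^\fb)\op$. Thus the two isomorphisms claimed in the lemma take the form
\[
(\bmu \ofb \triv^\fb) \otimes_\fb M \cong \triv^\fb \ofb M, \qquad \hom_{\kk\fb}((\bmu \ofb \sgn^\fb)\op, M) \cong \sgn^\fb \ofb M.
\]

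I would then establish the following auxiliary claim: for any $\kk\fb$-module $N$ and any $\kk \fb$-module $M$, there are natural isomorphisms of $\kk\fb$-modules
\[
(\bmu \ofb N) \otimes_\fb M \cong N \ofb M \cong \hom_{\kk\fb}((\bmu \ofb N)\op, M).
\]
The first isomorphism follows by direct computation: unwinding the Day convolution formula gives
$(\bmu \ofb N)(\mathbf{a}, \mathbf{t}) \cong \bigoplus_{U \subset \mathbf{t},\, |U|=a} \bmu(\mathbf{a}, U) \otimes N(\mathbf{t}\setminus U)$,
and the isomorphism $\bmu(\mathbf{a}, U) \otimes_{\sym_a} M(\mathbf{a}) \cong M(U)$ (natural in $M$ and respecting the residual $\sym_U$-action) collapses the sum over $a$ to $\bigoplus_{U \amalg V = \mathbf{t}} M(U) \otimes N(V) = (M \ofb N)(\mathbf{t})$. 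The second isomorphism is proved by a parallel computation: after applying $(-)\op$ to swap variables, one uses that $\bmu(\mathbf{a}, U)$ is $\kk\sym_a$-free of rank one to identify $\hom_{\kk\sym_a}(\bmu(\mathbf{a}, U), M(\mathbf{a}))$ with $M(U)$, giving the same direct-sum expression $\bigoplus_{U \amalg V = \mathbf{t}} M(U) \otimes N(V)$.

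Specializing to $N = \triv^\fb$ and $N = \sgn^\fb$, together with the symmetry $M \ofb N \cong N \ofb M$ of $\ofb$, yields the two isomorphisms of the lemma. The main obstacle is bookkeeping: one must carefully track which variable of each $\kk\fb$-bimodule supplies the $\kk\fb\op$-structure used in $\otimes_\fb$ versus the $\kk\fb$-structure used in $\hom_{\kk\fb}$, and verify that the operation $(-)\op$ correctly interchanges these roles. Once one fixes the convention that the Day convolution variable in $\bmu \ofb N$ plays the role of the $\kk\fb$-variable, the verification that $\bmu$ acts as a ``unit'' under both $\otimes_\fb$ and $\hom_{\kk\fb}$ becomes routine and gives the desired result.
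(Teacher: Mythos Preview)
Your proposal is correct and follows essentially the same route as the paper, which simply invokes Lemma~\ref{lem:underlying_kkfb-bimodules} and treats the identifications as immediate. Your auxiliary claim about bimodules of the form $\bmu \ofb N$ is exactly the content that the paper leaves implicit, and your verification that $\bmu$ acts as a unit under both $\otimes_\fb$ and $\hom_{\kk\fb}$ is the natural way to make this precise.
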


\begin{rem}
Using these identifications, the functors appearing in (\ref{eqn:adj_vk_kk_finj}) identify as the standard Koszul complexes in the theory. For example, for $M$ a $\kk \finj$-module, $\hom_{\kk \fb} (\kk \finj\qd, M)$ is the usual Koszul complex that calculates the $\finj$-homology of $M$.
\end{rem}

Lemma \ref{lem:identify_underlying_functors_kk_finj} makes the proof of the following transparent:

\begin{prop}
\label{prop:exactness_vk_finj_left_modules}
The functors $\vk (\kk \finj)  \otimes_{\kk \finj\qd } - $ and $\ihom_{\kk \finj} (\vk (\kk \finj), -)$ are exact and commute with colimits. Moreover, both these functors preserve weak equivalences.
\end{prop}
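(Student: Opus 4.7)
The plan is to exploit the explicit identifications furnished by Lemma \ref{lem:identify_underlying_functors_kk_finj}: after forgetting the twisted differentials, the functor $\vk(\kk\finj) \otimes_{\kk\finj\qd} -$ is the Day convolution $\triv^\fb \ofb -$ while $\ihom_{\kk\finj}(\vk(\kk\finj), -)$ is $\sgn^\fb \ofb -$, both applied objectwise to the underlying graded $\kk\fb$-module. This reduces all three claims to properties of Day convolution functors of the form $X \ofb -$ on $\kk\fb$-modules.

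For exactness and preservation of colimits, I would evaluate on $\mathbf{s} \in \ob \fb$ to obtain
\[
(X \ofb M)(\mathbf{s}) \cong \bigoplus_{a+b=s} \mathrm{Ind}_{\sym_a \times \sym_b}^{\sym_s} \bigl(X(\mathbf{a}) \boxtimes M(\mathbf{b})\bigr),
\]
a finite direct sum of induced representations. Since $\kk$ is a field, induction from a subgroup is exact and is a left adjoint (hence preserves all colimits); so $X \ofb -$ is exact and colimit-preserving on $\kk\fb$-modules for any $X$. The point worth stressing is that $\ihom_{\kk\finj}(\vk(\kk\finj),-)$, though nominally a Hom, is represented here by a Day convolution precisely because $\kk\finj\qd$ is non-positive of finite type (Corollary \ref{cor:kk_finj_!_non-positive}), so the product in $\hom_{\kk\fb}$ collapses to a finite direct sum; it is this finiteness which makes the right adjoint preserve colimits as well.

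For preservation of weak equivalences, given a quasi-isomorphism $f\cn M\rightarrow M'$ of DG $\kk\finj\qd$-modules, I would filter
$\vk(\kk\finj)\otimes_{\kk\finj\qd} M \cong \kk\finj \otimes_\fb^{\e'} M$
by the decreasing subcomplexes $F^p := \bigl(\bigoplus_{n\geq p}\kk\finj\dg{n}\bigr) \otimes_\fb M$. Since $\e_1' \in \kk\finj\dg{1}$, the twist contribution to the differential strictly raises the $\kk\finj$-degree by one while the $\id \otimes d_M$ term preserves it, so each $F^p$ is a subcomplex. Evaluated on any $\mathbf{s}$, this filtration is bounded (it vanishes for $p > s$ since $\kk\finj\dg{n}(-,\mathbf{s})=0$ for $n>s$), so the associated spectral sequence converges. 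On the associated graded $F^p/F^{p+1} \cong \kk\finj\dg{p} \otimes_\fb M$ the twist vanishes and the differential is simply $\id \otimes d_M$; exactness of $\kk\finj\dg{p} \otimes_\fb -$ then implies that $f$ induces an isomorphism on cohomology of each graded piece, whence convergence gives that the image of $f$ is a quasi-isomorphism evaluated on each $\mathbf{s}$. The case of $\ihom_{\kk\finj}(\vk(\kk\finj),-)$ is analogous, using the dual filtration by $(\kk\finj\qd)\dg{-n}$-degree.

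The only mild obstacle in this argument is the verification that the filtration is compatible with the twisted differential, which reduces to the observation that $\e'$ strictly shifts $\kk\finj$-degree; the finiteness needed for the spectral sequence to converge comes for free from the non-negativity and finite type of $\kk\finj$ established in Example \ref{exam:kk_finj_non-neg}.
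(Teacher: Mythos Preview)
Your proof is correct and follows essentially the same strategy as the paper. For exactness and colimit-preservation, both you and the paper appeal to the Day-convolution identifications of Lemma \ref{lem:identify_underlying_functors_kk_finj}; for preservation of weak equivalences, the paper invokes the truncation filtration of $M$ from Section \ref{subsect:truncating} together with a spectral sequence comparison, which after evaluation on a fixed $\mathbf{s}$ coincides with your filtration of $\kk\finj \otimes_\fb^{\e'} M$ by $\kk\finj$-degree (the associated graded pieces are the same).
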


\begin{proof}
The first statement is clear from the identification of the underlying functors given in Lemma \ref{lem:identify_underlying_functors_kk_finj}. 

To show that the functors preserve weak equivalences, by using the natural truncations as in Section \ref{subsect:truncating} together with a standard comparison of spectral sequences argument, one reduces to the case where the $\kk \finj\qd$-module structure (respectively $\kk \finj$-module structure) is given by restriction along the augmentation $\kk \finj \qd \rightarrow \kk \fb$ (resp. $\kk \finj \rightarrow \kk \fb$). The key point in this reduction argument is that, after evaluation in any given arity, one is reduced to considering a filtration of finite length. 

Hence, suppose that $M$ is a complex of $\kk \finj \qd$-modules arising from a complex of $\kk \fb$-modules as above. Then $ \vk (\kk \finj)  \otimes_{\kk \finj\qd } M$ is naturally isomorphic to $\kk \finj \otimes_\fb M$, where the differential is given by that of $M$ (i.e., is not twisted). If $M \rightarrow N$ is a weak equivalence between two such complexes of  $\kk \finj \qd$-modules, it is then clear that the induced map 
\[
\vk (\kk \finj)  \otimes_{\kk \finj\qd } M \rightarrow \vk (\kk \finj)  \otimes_{\kk \finj\qd } N
\]
is a weak equivalence, as required. 

A similar argument applies when considering the functor $ \ihom_{\kk \fb} (\kk \finj\qd, -)$.
\end{proof}

\begin{rem}
\label{rem:gen_colimits_exact_we}
The proof of the above result relies on the following ingredients.  For the first statement, we used the finite projectivity properties and the fact that  $\kk \finj \qd$ is non-positive. For the preservation of weak equivalences, the key input was the fact that $\kk \finj$ is non-negative and $\kk \finj \qd$ is non-positive.

Hence, the argument can be applied whenever these ingredients are available.
\end{rem}

Then for $X$ a DG $\kk \finj\qd$-module and $Y$ a DG $\kk \finj$-module, the adjunction yields the unit and counit:
\begin{eqnarray}
\label{eqn:left_X_unit_kk_finj!}
X &  \rightarrow & \ihom_{\kk \fb} (\kk \finj\qd , \kk \finj \otimes_\fb X)
 \\
\label{eqn:left_X_counit_kk_finj}
\kk \finj \otimes_\fb \ihom_{\kk \fb} (\kk \finj\qd, Y) 
&\rightarrow & Y, 
\end{eqnarray}
one again using the appropriate twisted differentials.

\begin{exam}
\label{exam:unit_counit_vk_kk_finj_adj}
Using the structures provided by the augmentations  $\kk \finj\qd \rightarrow \bmu$ and $\kk \finj \rightarrow \bmu$ respectively, the adjunction (co)unit yield:
\begin{eqnarray}
\label{eqn:left_bmu_unit_kk_finj!}
\bmu & \rightarrow  & \ihom_{\kk \fb} (\kk \finj\qd , \kk \finj ) \\
\label{eqn:left_bmu_counit_kk_finj}
\kk \finj \otimes_\fb \ihom_{\kk \fb} (\kk \finj\qd, \bmu) 
&\rightarrow & \bmu.
\end{eqnarray}

Here, for example, one can identify the underlying $\kk \fb$-bimodule of $\ihom_{\kk \fb} (\kk \finj\qd , \kk \finj )$ as $\sgn^\fb \ofb \triv^\fb \ofb \bmu$. The differential is induced by the differential of the classical Koszul complex, which corresponds in this context to maps of $\kk \fb$-modules of the form 
\[
\sgn_a \ofb \triv_b \rightarrow \sgn_{a-1} \ofb \triv_{b+1}, 
\]
for $a, b \in \nat$. 

The term appearing in the counit is analysed similarly.
\end{exam}

\subsection{The Koszul property and the  BGG correspondence}
\label{subsect:finj_BGG}

The following result (using different language) is essentially contained  in  \cite{MR3430359}:

\begin{thm}
\label{thm:kk_finj_Koszul}
Suppose that $\kk$ is a field of characteristic zero. Then the category $\kk \finj$ is Koszul over $\kk \fb$, in the sense that the adjunction unit (\ref{eqn:left_bmu_unit_kk_finj!}) and adjunction counit (\ref{eqn:left_bmu_counit_kk_finj}) are both weak equivalences.
\end{thm}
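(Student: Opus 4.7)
The plan is to verify that the unit (\ref{eqn:left_bmu_unit_kk_finj!}) and counit (\ref{eqn:left_bmu_counit_kk_finj}) are weak equivalences by evaluating them at each pair $(\mathbf{s}, \mathbf{t}) \in \ob \fb \times \ob \fb$ and identifying the resulting complexes with classical Koszul complexes for the pair (symmetric algebra, exterior algebra) on a finite-dimensional vector space, whose acyclicity over a field of characteristic zero is Priddy's original theorem \cite{MR0265437}.

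First, I would make the target of the unit explicit. Using Lemma \ref{lem:identify_underlying_functors_kk_finj} together with the identification $\kk\finj \cong \bmu \ofb \triv^\fb$ of Lemma \ref{lem:underlying_kkfb-bimodules}, the underlying $\kk\fb$-bimodule of $\ihom_{\kk\fb}(\kk\finj\qd, \kk\finj)$ is $\sgn^\fb \ofb \bmu \ofb \triv^\fb$, with cohomological grading inherited from $\sgn^\fb$. Evaluated on $(\mathbf{s}, \mathbf{t})$, this vanishes for $s>t$, and for $s \le t$ decomposes as a direct sum over cohomological degrees $a \in \{0,\ldots,t-s\}$ of $\sym_t$-modules coinduced from $\sgn_a \boxtimes \triv_{t-s-a} \boxtimes \kk\sym_s$ along $\sym_a \times \sym_{t-s-a} \times \sym_s \hookrightarrow \sym_t$ (the $\kk\sym_s$ factor carries the free $\sym_s\op$-action supplied by $\bmu(\mathbf{s},\mathbf{s})$). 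The differential, determined by inner multiplication with the class $\e$ of Lemma \ref{lem:cpt_diff}, corresponds term-by-term to transferring a single generator from the $\sgn$-factor to the $\triv$-factor.

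Next I would observe that, after separating the free $\sym_s$-action, the complex is precisely the $\sym_t$-coinduction of the standard Koszul complex
\[
\cdots \to \Lambda^a V \otimes_\kk \mathrm{Sym}^{t-s-a}V \to \Lambda^{a-1} V \otimes_\kk \mathrm{Sym}^{t-s-a+1}V \to \cdots
\]
for $V = \kk^{t-s}$. In characteristic zero, this Koszul complex has cohomology $\kk$ concentrated in degree zero. Coinduction preserves this, so the cohomology of the target of the unit evaluated on $(\mathbf{s},\mathbf{t})$ is $\bmu(\mathbf{s},\mathbf{t})$ concentrated in degree zero. The unit itself is precisely the inclusion of this degree-zero generator, hence (\ref{eqn:left_bmu_unit_kk_finj!}) is a quasi-isomorphism. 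The counit (\ref{eqn:left_bmu_counit_kk_finj}) is established dually: $\ihom_{\kk\fb}(\kk\finj\qd, \bmu)$ is, up to sheering, the classical Koszul resolution of $\bmu$ by free right $\kk\finj$-modules, so applying $\kk\finj \otimes_\fb -$ collapses to $\bmu$ in degree zero.

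The main obstacle is bookkeeping: one must carefully check that the differential induced by $\e$ matches the standard Koszul differential on the nose under the explicit identification of the Day-convolution factors with tensor products of $\triv$ and $\sgn$ representations, and that the free left $\sym_s$-action from $\bmu$ commutes correctly with induction. The characteristic-zero hypothesis enters only through Priddy's acyclicity of the Koszul complex (in positive characteristic one would need divided-power corrections). A more synthetic alternative would be to deduce the result from the Fourier-transform equivalence of Sam--Snowden \cite{MR3430359}, whose proof effectively encodes both weak equivalences.
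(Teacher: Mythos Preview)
Your proposal is correct and follows essentially the same approach as the paper: the paper's proof consists of a single sentence invoking the identifications in Example~\ref{exam:unit_counit_vk_kk_finj_adj} (where the underlying bimodule is written as $\sgn^\fb \ofb \triv^\fb \ofb \bmu$ with differential given by the maps $\sgn_a \ofb \triv_b \to \sgn_{a-1} \ofb \triv_{b+1}$) together with the acyclicity of the classical Koszul complex, and you have simply unpacked this in greater detail by evaluating at a fixed $(\mathbf{s},\mathbf{t})$ and recognising the induced representation as the graded pieces of $\Lambda^\bullet V \otimes \mathrm{Sym}^\bullet V$.

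Two minor remarks. First, your ordering $\sgn^\fb \ofb \bmu \ofb \triv^\fb$ differs from the paper's $\sgn^\fb \ofb \triv^\fb \ofb \bmu$, but this is harmless since $\ofb$ is symmetric monoidal. Second, your remark that the characteristic-zero hypothesis enters ``only through Priddy's acyclicity'' slightly overstates the case: the Koszul complex $\Lambda^\bullet V \otimes \mathrm{Sym}^\bullet V$ is acyclic over any commutative ring, so the hypothesis is not strictly needed for this particular statement (the paper carries it for consistency with later applications and with the Sam--Snowden reference, which works in characteristic zero for representation-theoretic reasons).
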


\begin{proof}
This follows from the acyclicity of the Koszul complexes and the identifications given in Example \ref{exam:unit_counit_vk_kk_finj_adj}.
\end{proof}

\begin{rem}
\label{rem:kk_finj_dual}
\ 
\begin{enumerate}
\item 
This terminology Koszulity is consistent with that used in Section \ref{sect:koszul} (see Proposition \ref{prop:Koszul_equivalent_unit/counit}).
\item 
The result is stronger, since one has the identification $\kk \finj\qd \cong (\kk \finj^\ddag) \op$. This can be interpreted (using sheering, as reviewed in Appendix \ref{sect:sheer}) as saying that $\kk \finj$ is Koszul self-dual. This is used by Sam and Snowden in  \cite{MR3430359} to define their {\em Fourier transform}. 
\end{enumerate}
\end{rem}

Now, Theorem \ref{thm:kk_finj_Koszul} is the key step in establishing the BGG correspondence relating DG $\kk \finj$-modules and DG $\kk \finj\qd$-modules. This is essentially contained within \cite{MR3430359} (although, for their application, they impose further finiteness hypotheses, so that vector space double duality is an equivalence); Sam and Snowden refer to \cite{MR1990756} for a treatment of the classical BGG correspondence.

For current purposes, we use the notion of a locally bounded complex introduced in Section \ref{subsect:loc_bd_cx}.
First we note the following:

\begin{prop}
\label{prop:restrict_vk_kk_finj_adj_lbd}
The adjunction 
(\ref{eqn:adj_vk_kk_finj}) restricts to the adjunction for locally bounded objects
$$
 \vk (\kk \finj)  \otimes_{\kk \finj\qd } - 
\cn
\kk \finj\qd \dash \lbdgmod 
\rightleftarrows
\kk \finj \dash \lbdgmod 
\cn
\ihom_{\kk \finj} (\vk (\kk \finj), -).$$
\end{prop}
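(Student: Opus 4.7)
Since $\lbdgmod$ denotes, in each setting, the full subcategory of locally bounded objects, it suffices to show that the two functors
$\vk(\kk\finj)\otimes_{\kk\finj\qd}-$ and $\ihom_{\kk\finj}(\vk(\kk\finj),-)$ each send locally bounded DG modules to locally bounded DG modules: the hom-set bijection then restricts automatically, giving the unit and counit of the restricted adjunction.

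The plan is to reduce to a statement about underlying graded $\kk\fb$-modules. By Lemma \ref{lem:identify_underlying_functors_kk_finj}, for $X\in \kk\finj\qd\dash\dgmod$ and $Y\in\kk\finj\dash\dgmod$ there are natural isomorphisms of underlying graded $\kk\fb$-modules
\[
\vk(\kk\finj)\otimes_{\kk\finj\qd}X\ \cong\ \kk\finj\otimes_\fb X\ \cong\ \triv^\fb\ofb X,\qquad
\ihom_{\kk\finj}(\vk(\kk\finj),Y)\ \cong\ \ihom_{\kk\fb}(\kk\finj\qd,Y)\ \cong\ \sgn^\fb\ofb Y.
\]
Since local boundedness depends only on the underlying graded $\kk\fb$-module (it is the condition that, in each arity, only finitely many cohomological degrees are non-zero), the twisting of the differential is irrelevant for this verification.

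For the left adjoint, I would apply Proposition \ref{prop:loc_bd_preservation}(1) to the bimodule $B=\kk\finj$: by Example \ref{exam:kk_finj_non-neg} this bimodule is non-negative (and trivially locally bounded, being concentrated in cohomological degree zero), so $\kk\finj\otimes_\fb X$ is locally bounded whenever $X$ is. Alternatively, one sees this directly from the Day convolution formula $(\triv^\fb\ofb X)(\mathbf{n})=\bigoplus_{U\amalg V=\mathbf{n}}\triv(U)\otimes X(V)$, which is a finite direct sum of bounded complexes. For the right adjoint, one applies the non-positive analogue of Proposition \ref{prop:loc_bd_preservation}(2) (indicated by the ``{\em mutatis mutandis}'' clause in Proposition \ref{prop:preserve_non-neg}): by Corollary \ref{cor:kk_finj_!_non-positive} the bimodule $\kk\finj\qd$ is non-positive and of finite type, so for any $t\in\nat$ the non-positivity forces
\[
\ihom_{\kk\fb}(\kk\finj\qd,Y)(\mathbf{t}) \ \cong\ \bigoplus_{a\leq t}\ihom_{\kk\sym_a}\bigl(\kk\finj\qd(\mathbf{t},\mathbf{a}),Y(\mathbf{a})\bigr),
\]
a finite direct sum of bounded complexes.

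I do not anticipate a real obstacle: all required finiteness is packaged into the non-negativity of $\kk\finj$ and the non-positivity of $\kk\finj\qd$, both already established. The only mildly delicate point is the direction in which one invokes Proposition \ref{prop:loc_bd_preservation} for the internal hom---since the stated version treats $\ihom_{\kk\fb\op}(B,-)$ on right $\kk\fb$-modules, while we need $\ihom_{\kk\fb}(B,-)$ on left $\kk\fb$-modules---but this is handled by the same finite-support argument after swapping the roles of left and right via $(-)\op$, exactly as in the non-positive analogue of Proposition \ref{prop:preserve_non-neg}.
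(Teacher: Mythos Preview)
Your proposal is correct and follows essentially the same approach as the paper's own proof: the paper simply invokes Proposition~\ref{prop:loc_bd_preservation} together with the non-negativity of $\kk\finj$, the non-positivity of $\kk\finj\qd$, and the identifications of the underlying functors given after equation~(\ref{eqn:adj_vk_kk_finj}). Your version is more explicit (in particular, you spell out the finite-sum argument for the $\ihom$ case and flag the left/right variance issue), but the logical structure is the same.
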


\begin{proof}
This follows from Proposition \ref{prop:loc_bd_preservation} by using the fact that $\kk \finj$ is non-negative and $\kk \finj \qd$ is non-positive, together with the identifications of the underlying functors given after equation 
 (\ref{eqn:adj_vk_kk_finj}).
\end{proof}

This leads to the universal algebra version of the BGG equivalence. To state this, we introduce the following notation:

\begin{nota}
\label{nota:holbd_kk_finj}
Write $ \holbd (\kk \finj)$ (respectively $\holbd (\kk \finj \qd)$) for the homotopy category of $\kk\finj \dash \lbdgmod $ (resp. $\kk \finj\qd \dash\lbdgmod$) obtained by inverting weak equivalences. (This is a locally-bounded analogue of the bounded derived category; it is a stable $\infty$-category.)
\end{nota}

\begin{thm}
\label{thm:BGG}
Suppose that $\kk$ is a field of characteristic zero.  The adjunction of Proposition \ref{prop:restrict_vk_kk_finj_adj_lbd} induces an equivalence of categories
 \[
\holbd (\kk \finj \qd) 
\stackrel{\simeq}{\rightleftarrows}
\holbd (\kk \finj).
\]
\end{thm}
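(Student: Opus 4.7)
The plan is to show that on the locally bounded DG modules, both the adjunction unit and the counit are weak equivalences; the classical theory of adjoint equivalences on homotopy categories then closes the argument. Since Proposition \ref{prop:exactness_vk_finj_left_modules} gives exactness and preservation of weak equivalences, and Proposition \ref{prop:restrict_vk_kk_finj_adj_lbd} restricts the adjunction to locally bounded DG modules, the two functors descend to functors between $\holbd(\kk\finj\qd)$ and $\holbd(\kk\finj)$, and the induced transformations are still the unit and counit.

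I would treat the unit $u_X \cn X \to \ihom_{\kk\fb}^{\e'}(\kk\finj\qd, \kk\finj \otimes_\fb^{\e'} X)$ first, the counit being symmetric (indeed dual via $\kk\finj\qd \cong (\kk\finj^\ddag)\op$). Fix $t \in \nat$ and evaluate at $\mathbf{t}$: by Lemma \ref{lem:B_otimes_truncate} (and its counterpart for the $\ihom$), the non-negativity of $\kk\finj$ and non-positivity of $\kk\finj\qd$ imply that both sides at $\mathbf{t}$ depend only on the truncation $X_{\leq t}$. Because $\kk\finj\qd$ is non-positive, Proposition \ref{prop:sub/quotient_non_neg/pos} says $X_{\leq s}$ is a sub $\kk\finj\qd$-module for each $s$, so one has a finite increasing filtration $0 = X_{\leq -1} \subset X_{\leq 0} \subset \cdots \subset X_{\leq t}$ whose successive quotients $X_{=s}$ are concentrated in arity $\mathbf{s}$, hence carry a $\kk\finj\qd$-action that factors through the augmentation $\kk\finj\qd \twoheadrightarrow \bmu$. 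Using exactness of both adjoint functors, an iterated five-lemma argument reduces the problem to showing that $u_N$ is a weak equivalence whenever $N$ is a locally bounded DG $\kk\fb$-module regarded as a DG $\kk\finj\qd$-module through the augmentation.

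In this reduced situation I would exhibit a natural isomorphism
\[
\ihom_{\kk\fb}^{\e'}(\kk\finj\qd, \kk\finj \otimes_\fb N) \cong \ihom_{\kk\fb}^{\e'}(\kk\finj\qd, \kk\finj) \otimes_\fb N
\]
compatible with the twisted differentials (the twist only involves $\e'$, which is internal to the bimodule structures). Under this identification the unit $u_N$ becomes the map obtained by applying $-\otimes_\fb N$ to the unit on $\bmu$, which is a weak equivalence by Theorem \ref{thm:kk_finj_Koszul}. Since $\kk$ has characteristic zero, $\kk\fb$ is semisimple and $-\otimes_\fb N$ preserves weak equivalences, so $u_N$ is a weak equivalence.

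The main obstacle is the tensor–hom swap in the last paragraph: one must verify, under local boundedness, that the product defining $\ihom_{\kk\fb}(\kk\finj\qd, \kk\finj \otimes_\fb N)$ commutes with the coend-style tensor product over $\fb$ defining $-\otimes_\fb N$. This requires the local finite-generation of $\kk\finj\qd$ as a right $\kk\fb$-module (Lemma \ref{lem:kkfinj_fg_proj_fbop}), the non-negativity of $\kk\finj$, and semisimplicity in characteristic zero to identify invariants with coinvariants for the $\sym_a$-actions. Once this swap is established, combined with the reduction via truncation, the weak-equivalence property of the unit on locally bounded DG $\kk\finj\qd$-modules follows. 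The counit argument is entirely analogous, swapping the roles of $\kk\finj$ and $\kk\finj\qd$ and using the non-negativity of $\kk\finj$ in place of the non-positivity of $\kk\finj\qd$.
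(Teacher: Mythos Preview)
Your argument is correct and follows essentially the same route as the paper: reduce via the arity filtration to modules supported on a single $\mathbf{s}$ with $\kk\finj\qd$-action through the augmentation, then invoke Theorem~\ref{thm:kk_finj_Koszul}. The only difference is in how the final step is packaged: the paper makes one further reduction (to a single cohomological degree, then by semisimplicity to the case $X(\mathbf{s}) = \kk\sym_s$, i.e.\ literally $\bmu$), whereas you handle the general $N$ in one go via the tensor--hom swap $\ihom_{\kk\fb}(\kk\finj\qd, \kk\finj \otimes_\fb N) \cong \ihom_{\kk\fb}(\kk\finj\qd, \kk\finj) \otimes_\fb N$; these are equivalent under the finite-dimensionality and characteristic-zero hypotheses you already identified.
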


\begin{proof}
The result is proved by establishing the following two facts.
\begin{enumerate}
\item 
For $X$ a DG $\kk \finj\qd$-module that is locally bounded,  the adjunction unit (\ref{eqn:left_X_unit_kk_finj!}) is a weak equivalence.
\item 
For $Y$ a DG $\kk \finj$-module that is locally bounded, the adjunction counit (\ref{eqn:left_X_counit_kk_finj}) is a weak equivalence.
\end{enumerate}
To explain the ideas involved, we sketch a proof of the first statement. 

As in the proof of Proposition \ref{prop:exactness_vk_finj_left_modules}, one first reduces to the case where $X$ is supported on $\mathbf{s}$. In particular, the locally bounded hypothesis implies that it is bounded. Then, by an analogous and standard argument, one reduces to the case where $X$ is concentrated in a single cohomological degree. 

Finally, since $\kk$ is a field of characteristic zero, one reduces immediately to the case where $X(\mathbf{s}) = \kk \sym_s$, considered as a $\kk \finj\qd$-module. That one obtains a weak equivalence in this case follows from Theorem \ref{thm:kk_finj_Koszul}.
\end{proof}

\section{Relative nonhomogeneous Koszul duality for $\cat \uppd$ over $\cat \ppd$}
\label{sect:prop}

This section feeds the homogeneous Koszul duality results for $\kk \finj$ considered in Section \ref{sect:fi} into the relative nonhomogeneous setting. Here $\uppd$ is as in Section \ref{subsect:uppd_framework}; in particular, the sub operad $\ppd \subset \uppd$ is reduced, and the composition in $\cat \uppd$ yields the isomorphism of $\kk \finj \boxtimes \cat \ppd\op$-modules:
\[
\kk \finj \otimes_\fb \cat \ppd \stackrel{\cong}{\rightarrow} \cat \uppd.
\]
By Proposition \ref{prop:right_augmentation}, the augmentation of $\kk \finj$ induces a right augmentation $\cat \uppd \rightarrow \cat \ppd$. This is used  to study the relative nonhomogeneous Koszul duality of $\cat \uppd$ over $\kk \finj$, applying the results of Part \ref{part:one}.

\subsection{$\cat \uppd$ is nonhomogeneous quadratic}

As explained in Section \ref{sect:fi}, $\kk \finj$ is  homogeneous quadratic over $\kk \fb$, associated to the quadratic datum
 $
(\kk \fb; \bmu \ofb \triv_1^\fb , \bmu \ofb \sgn_2^\fb).
$ 
We first check that this induces a filtration of $\cat \uppd$ making it into a nonhomogeneous (right) quadratic algebra over $\cat \ppd$. (This corresponds to checking that the equivalent conditions of Proposition \ref{prop:filt_hyp}, adapted to the right augmented context, are satisfied.)

\begin{nota}
\label{nota:filt_G_cat_uppd}
Denote the induced increasing filtration of $\cat \uppd$ by $\kk \fb \boxtimes \cat \ppd\op$-modules by $G_\bullet \cat \uppd$, where, for $n \in \zed$,
\[
G_n  \cat \uppd := \bigoplus_{t \leq n} \kk \finj\dg{t} \otimes_\fb \cat \ppd.
\]
\end{nota}

\begin{exam}
One has $G_{-1} \cat \uppd =0$, $G_0 \cat \uppd = \cat \ppd$, and 
$
G_1 \cat \uppd = \cat \ppd \ \oplus \ \kk \finj^{[1]} \otimes_\fb \cat \ppd$,
equipped with the obvious inclusions. 
\end{exam}

A direct verification shows the following:

\begin{lem}
\label{lem:identify_filt_cat_uppd}
\
\begin{enumerate}
\item 
For $t\in \nat$, there is an isomorphism of $\kk \fb$-bimodules
\[
G_t \cat \uppd (\mathbf{a}, \mathbf{b}) 
= \bigoplus_{\substack {f \in \fin (\mathbf{a} , \mathbf{b}) \\
|\mathbf{b} \backslash \mathrm{im}(f)| \leq t}}
 \bigotimes_{i \in \mathbf{n}} \uppd (f^{-1} (i)).
\]
\item 
$G_\bullet \cat \uppd$ is  a filtration of the category  $\cat \uppd$; i.e., for each $s, t \in \nat$, the composition restricts to 
\[
G_s \cat \uppd \otimes_\fb G_t \cat \uppd \rightarrow G_{s+t} \cat \uppd. 
\]
\item 
The morphisms of the  $\kk$-linear category $\cat \uppd$ are generated over $\cat \ppd$ by $\kk \finj\dg{1} \otimes _\fb \cat \ppd$.
\end{enumerate} 
\end{lem}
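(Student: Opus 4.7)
The plan is to establish all three parts in sequence, with part (1) giving an explicit combinatorial description of the filtration, part (2) then following by a direct set-theoretic calculation using that description, and part (3) following from part (1) together with the fact that $\kk \finj$ is generated in degree $1$ over $\kk \fb$.

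For part (1), I would start from the general formula (\ref{eqn:cat_opd}), which identifies
\[
\cat \uppd (\mathbf{a}, \mathbf{b})
= \bigoplus _{f \in \fin ( \mathbf{a}, \mathbf{b})} \bigotimes_{i \in \mathbf{b}} \uppd (f^{-1} (i)),
\]
and compare it with the definition $G_t\cat \uppd = \bigoplus_{n\leq t} \kk\finj\dg{n}\otimes_\fb \cat\ppd$ via the isomorphism $\kk \finj \otimes_\fb \cat \ppd \cong \cat \uppd$ of Lemma \ref{lem:decompose_cat_uppd}. The point is that any map $f\in \fin(\mathbf{a},\mathbf{b})$ factorizes uniquely as a surjection $\mathbf{a}\twoheadrightarrow \mathrm{im}(f)$ followed by an injection $\mathrm{im}(f)\hookrightarrow \mathbf{b}$, matching the surjection/injection factorization underlying the isomorphism $\kk\finj \otimes_\fb \cat\ppd \cong \cat\uppd$. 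Under this correspondence, the injection belongs to $\kk\finj\dg{n}$ exactly when $n=|\mathbf{b}\backslash\mathrm{im}(f)|$, and this yields the claimed decomposition (using that the operadic tensor factors $\uppd(f^{-1}(i))$ match up with those coming from $\cat\ppd$ on the surjective part and from the trivial $\uppd(1)=\kk$ factors on the injective part).

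For part (2), I would use the explicit description from (1). Given $f\in \fin(\mathbf{a},\mathbf{b})$ with $|\mathbf{b}\backslash\mathrm{im}(f)|\leq s$ and $g\in\fin(\mathbf{b},\mathbf{c})$ with $|\mathbf{c}\backslash\mathrm{im}(g)|\leq t$, the composite $g\circ f$ has $\mathrm{im}(g\circ f) = g(\mathrm{im}(f))$, so
\[
|\mathbf{c}\backslash\mathrm{im}(g\circ f)| = |\mathbf{c}\backslash\mathrm{im}(g)| + |\mathrm{im}(g)\backslash g(\mathrm{im}(f))|.
\]
Since $g$ restricted to $\mathbf{b}\backslash\mathrm{im}(f)$ surjects onto $\mathrm{im}(g)\backslash g(\mathrm{im}(f))$, the second term is at most $|\mathbf{b}\backslash\mathrm{im}(f)|\leq s$. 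Hence $|\mathbf{c}\backslash\mathrm{im}(g\circ f)|\leq s+t$, as required. The composition of the tensor factors is handled by the operad structure of $\uppd$ and stays within the corresponding direct summand, so no further analysis is needed.

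For part (3), since $\kk\finj$ is generated as a monoid in $\kk\fb$-bimodules by $\kk\finj\dg{1}$ over $\bmu = \kk\finj\dg{0}$ (cf. the quadraticity statement in Proposition \ref{prop:kk_finj_quadratic}), tensoring with $\cat\ppd$ over $\kk\fb$ and using the isomorphism of Lemma \ref{lem:decompose_cat_uppd} shows that $\cat\uppd$ is generated over $\cat\ppd$ by the image of $\kk\finj\dg{1}\otimes_\fb \cat\ppd$. The only point to verify is that the composition in $\cat\uppd$ of elements of $\kk\finj\dg{1}\otimes_\fb\cat\ppd$ with elements of $\cat\ppd$ recovers, by iteration, all of $G_t\cat\uppd$; this is immediate from part (2) by induction on $t$. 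The main (minor) obstacle throughout is bookkeeping to ensure that the surjection/injection factorization of maps in $\fin$ is compatible with the tensor product structures on both sides of $\kk\finj\otimes_\fb\cat\ppd\cong\cat\uppd$, but this is precisely what Lemma \ref{lem:decompose_cat_uppd} provides.
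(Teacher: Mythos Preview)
Your proposal is correct and carries out precisely the direct verification that the paper asserts without detail. The surjection/injection factorization argument for part (1), the elementary counting argument for part (2), and the generation argument for part (3) are exactly what one would write down; there is nothing to compare since the paper gives no proof beyond the phrase ``direct verification.'' One cosmetic remark: in part (2) your labeling of $s$ and $t$ is swapped relative to the statement (your $f$ lies in the $G_t$ factor in the convention $G_s\cat\uppd\otimes_\fb G_t\cat\uppd$), but since the target is $G_{s+t}$ this is immaterial.
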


Moreover, one has:

\begin{lem}
\label{lem:rgrad_cat_uppd_fg_projective}
For each $n \in \nat$, 
\begin{enumerate}
\item 
there is an isomorphism of $\cat \ppd\op$-modules $ \rgrad_n \cat \uppd \cong \cat \ppd \ofb \triv_n^\fb$, where $\cat \ppd$ acts on the right via its action on $\cat \ppd$;
\item 
for $b \in \nat$, $(\rgrad_n \cat \uppd)(-, \mathbf{b})$ is finitely-generated projective as a $\cat \ppd\op$-module.
\end{enumerate}
\end{lem}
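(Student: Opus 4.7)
The plan is direct bookkeeping starting from the definition of the filtration and the standard description of $\kk\finj$.

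For part (1), I would first observe that by Notation \ref{nota:filt_G_cat_uppd},
\[
\rgrad_n \cat\uppd = G_n \cat\uppd / G_{n-1} \cat\uppd \cong \kk\finj\dg{n} \otimes_\fb \cat\ppd
\]
as $\kk\fb \boxtimes \cat\ppd\op$-modules (since the filtration is by direct summands in the $\kk\fb \boxtimes \cat\ppd\op$-module structure). By Lemma \ref{lem:underlying_kkfb-bimodules}, $\kk\finj \cong \bmu \ofb \triv^\fb$ and so, picking off the appropriate direct summand, $\kk\finj\dg{n} \cong \bmu \ofb \triv_n^\fb$. Thus the task reduces to establishing a natural isomorphism of $\cat\ppd\op$-modules
\[
(\bmu \ofb \triv_n^\fb) \otimes_\fb \cat\ppd \cong \cat\ppd \ofb \triv_n^\fb,
\]
where the $\cat\ppd\op$-action on both sides is via the $\cat\ppd$-factor. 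This I would verify by direct evaluation on $(\mathbf{s}, \mathbf{t})$: the left hand side unwinds to a sum over subsets $V \subset \mathbf{t}$ with $|V|=n$ of terms $\bmu(\mathbf{t-n}, \mathbf{t}\setminus V) \otimes_{\sym_{t-n}} \cat\ppd(\mathbf{s},\mathbf{t-n})$, which using any bijection $\mathbf{t-n} \cong \mathbf{t}\setminus V$ is naturally $\cat\ppd(\mathbf{s}, \mathbf{t}\setminus V)$, precisely the summand of the right hand side indexed by $V$. The identifications are manifestly compatible with the right $\cat\ppd$-action.

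For part (2), using the isomorphism of part (1),
\[
(\rgrad_n \cat\uppd)(-, \mathbf{b}) \cong \bigoplus_{\substack{V \subset \mathbf{b} \\ |V|=n}} \cat\ppd(-, \mathbf{b}\setminus V),
\]
which is a finite direct sum (indexed by $\binom{b}{n}$ subsets) of representable $\cat\ppd\op$-modules. Each representable $\cat\ppd(-,\mathbf{m})$ is finitely-generated projective as a $\cat\ppd\op$-module by Yoneda, and a finite direct sum of such remains finitely-generated projective.

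The main ``obstacle'' is really just ensuring the module structures are tracked correctly through the Day convolution — in particular, that the right $\kk\fb$-action on $\bmu \ofb \triv_n^\fb$ (which comes from $\bmu$, as specified in Lemma \ref{lem:underlying_kkfb-bimodules}) is the one being used in $\otimes_\fb \cat\ppd$, and that the resulting right $\cat\ppd$-action identifies with the one on $\cat\ppd \ofb \triv_n^\fb$. Beyond this bookkeeping, the argument is routine.
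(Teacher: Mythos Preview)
Your proof is correct and follows essentially the same approach as the paper. For part (1), both you and the paper identify $\kk\finj\dg{n}$ with $\bmu \ofb \triv_n^\fb$ via Lemma~\ref{lem:underlying_kkfb-bimodules} and then carry out the ``straightforward verification'' that $(\bmu \ofb \triv_n^\fb) \otimes_\fb \cat\ppd \cong \cat\ppd \ofb \triv_n^\fb$; for part (2), the paper argues slightly more abstractly---using that $\kk\finj\dg{n}(-,\mathbf{b})$ is finitely-generated projective over $\kk\fb\op$ (Lemma~\ref{lem:kkfinj_fg_proj_fbop}) and then tensoring up to $\cat\ppd$---whereas you write down the explicit decomposition into representables, but this amounts to the same thing.
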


\begin{proof}
For the first statement, one uses the isomorphism of $\kk \fb$-bimodules $\kk \finj \dg{n} \cong \bmu \ofb \triv_n^\fb$ that follows from Lemma \ref{lem:underlying_kkfb-bimodules}. Then, since $\rgrad_n \cat\uppd$ is, by definition, isomorphic as a $\cat \ppd \op$-module to $\kk \finj\dg{n} \otimes_\fb \cat \ppd$, the result follows by a straightforward verification.

For the second statement, one uses the  isomorphism of $\cat \ppd\op$-modules $(\rgrad_n \cat \uppd)(-, \mathbf{b})\cong \kk \finj\dg{n} 
(- , \mathbf{b}) \otimes_\fb \cat \ppd$. The result then follows since $\kk \finj\dg{n} 
(- , \mathbf{b})$ is a finitely-generated projective $\kk \fb\op$-module, by Lemma \ref{lem:kkfinj_fg_proj_fbop}.
\end{proof}

Now, Lemma \ref{lem:identify_filt_cat_uppd} implies that each $\rgrad_n \cat \uppd$ has the structure of a $\cat \ppd$-bimodule. The left $\cat \ppd$-action can be described using the following, which is the counterpart of the structure recalled in Remark \ref{rem:obf_right_cat_opd}:

\begin{prop}
\label{prop:cat_ppd_modules_ofb}
\cite{MR4835394}
For  $\cat \ppd$-modules $M, N $, the Day convolution product $M \ofb N$ of their underlying $\kk \fb$-modules has a canonical $\cat \ppd$-module structure that extends its $\kk \fb$-module structure. 
\end{prop}

By hypothesis, $\ppd$ is reduced with $\ppd (1)= \kk$, hence there is a canonical augmentation $\cat \ppd \twoheadrightarrow \bmu$. This implies that any $\kk \fb$-module can be considered as a $\cat \ppd$-module. 

\begin{cor}
\label{cor:left_cat_ppd}
For $M$ a $\cat \ppd$-module and $W$ a $\kk\fb$-module, considered as a $\cat \ppd$-module via the augmentation, $M \odot _\fb W$ has a canonical $\cat \ppd$-module structure. Hence, $\cat \ppd \ofb W$ has a natural $\cat \ppd$-bimodule structure.
\end{cor}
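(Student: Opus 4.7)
The plan is to reduce everything to Proposition \ref{prop:cat_ppd_modules_ofb}, exploiting the canonical augmentation $\cat\ppd \twoheadrightarrow \bmu$ noted just above the corollary (which exists because $\ppd$ is reduced with $\ppd(1) = \kk$). For the first assertion, restriction along this augmentation promotes any $\kk\fb$-module $W$ to a $\cat\ppd$-module; applying the proposition to the pair $(M,W)$ then immediately produces the desired $\cat\ppd$-module structure on $M \ofb W$, and by construction this structure extends the underlying $\kk\fb$-module structure. No additional input is needed.

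For the bimodule assertion I would specialize the first assertion to $M = \cat\ppd$, regarded as a left $\cat\ppd$-module via its left regular action. This endows $\cat\ppd \ofb W$ with a canonical left $\cat\ppd$-module structure. The right $\cat\ppd$-module structure will be defined to act on the first tensor factor only, via the right regular action of $\cat\ppd$ on itself. The substantive step is then to verify that these left and right actions commute.

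This reduces to naturality of the construction of Proposition \ref{prop:cat_ppd_modules_ofb} in the first variable. Concretely, for each $\alpha \in \cat\ppd$, the right-multiplication map $\rho_\alpha \cn \cat\ppd \to \cat\ppd$ is a morphism of left $\cat\ppd$-modules, by associativity of composition in $\cat\ppd$. Naturality of the convolution construction in the first variable then provides an induced morphism $\rho_\alpha \ofb \id_W$ of left $\cat\ppd$-modules on $\cat\ppd \ofb W$. The collection $\{\rho_\alpha \ofb \id_W \mid \alpha \in \cat\ppd\}$ supplies a right $\cat\ppd$-action that commutes with the left one by construction, with associativity and unitality inherited from $\cat\ppd$ itself.

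The main obstacle is thus extracting the required naturality statement from the construction in \cite{2021arXiv211001934P}. Since the $\cat\ppd$-module structure on a Day convolution is built there out of operadic composition in $\ppd$, naturality in each argument should be essentially tautological; nonetheless, an explicit verification --- unwinding the convolution formula and the operadic composition that defines the action --- is the one piece of genuine bookkeeping required to complete the proof.
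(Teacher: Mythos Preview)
Your proposal is correct and matches the paper's intent: the paper states this as an immediate corollary of Proposition~\ref{prop:cat_ppd_modules_ofb} without supplying a proof, and your argument is precisely the natural way to unpack it. Your treatment of the bimodule assertion via naturality of the construction in the left variable (so that right multiplication by $\alpha$, being a left $\cat\ppd$-module map, induces a left $\cat\ppd$-module endomorphism of $\cat\ppd \ofb W$) is exactly the expected verification.
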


Using these structures, the general theory gives:

\begin{prop}
\label{prop:grad_cat_uppd_quadratic}
The category $\rgrad \cat \uppd$ is quadratic over $\cat \ppd$ on the quadratic datum:
\[
(\cat \ppd ;  \kk \finj^{[1]} \otimes_\fb \cat \ppd \cong \cat \ppd \ofb \triv_1, (\bmu \ofb \sgn_2) \otimes_\fb \cat \ppd \cong \cat \ppd \ofb \sgn_2 ).
\]
Moreover, for all $b, n \in \nat$, $(\rgrad_n \cat \uppd)(-, \mathbf{b})$ is finitely-generated projective as a $\cat \ppd \op$-module.
\end{prop}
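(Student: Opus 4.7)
The plan is to apply the ``opposite'' version of Proposition \ref{prop:A_quadratic} (as indicated in Section \ref{subsect:crossing_hands}), with the dictionary $\kring \leftrightarrow \kk\fb$, $R \leftrightarrow \kk\finj$, $C \leftrightarrow \cat\ppd$, and $A \leftrightarrow \cat\uppd$. This is the right-augmented case, since by Proposition \ref{prop:right_augmentation} the augmentation of $\kk\finj$ induces the right augmentation $\cat\uppd \to \cat\ppd$.

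First I would check that all the hypotheses of the cited opposite form of Proposition \ref{prop:A_quadratic} are in place. Specifically:
\begin{enumerate}
\item The distributive law isomorphism $\kk\finj \otimes_\fb \cat\ppd \stackrel{\cong}{\to} \cat\uppd$ of $\kk\finj \boxtimes \cat\ppd\op$-modules is exactly Lemma \ref{lem:decompose_cat_uppd}.
\item The homogeneous quadratic structure of $\kk\finj$ over $\kk\fb$, with quadratic datum $(\kk\fb;\, \bmu\ofb\triv_1,\, \bmu\ofb\sgn_2)$, is Proposition \ref{prop:kk_finj_quadratic}.
\item This datum is $2$-right finitely-generated projective, by Lemma \ref{lem:kkfinj_fg_proj_fbop} (or directly from the identification in Lemma \ref{lem:underlying_kkfb-bimodules}).
\item The filtration criterion (the opposite counterpart of Proposition \ref{prop:filt_hyp}) is provided by Lemma \ref{lem:identify_filt_cat_uppd}(2), which asserts that $G_\bullet \cat\uppd$ is a filtration of the $\kk$-linear category $\cat\uppd$.
\end{enumerate}

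Granting these inputs, the opposite of Proposition \ref{prop:A_quadratic} yields directly that $\rgrad \cat\uppd$ is the homogeneous quadratic algebra over $\cat\ppd$ associated to the base-changed datum
\[
(\cat\ppd;\ (\bmu \ofb \triv_1) \otimes_\fb \cat\ppd,\ (\bmu \ofb \sgn_2) \otimes_\fb \cat\ppd),
\]
together with the $2$-right projectivity of this datum. The second step is then cosmetic: I would verify the stated identifications of the bimodules. For the generating bimodule, $\bmu \ofb \triv_1 \cong \kk\finj\dg{1}$ by Lemma \ref{lem:underlying_kkfb-bimodules}, so
\[
(\bmu \ofb \triv_1) \otimes_\fb \cat\ppd \cong \kk\finj\dg{1} \otimes_\fb \cat\ppd;
\]
the further identification with $\cat\ppd \ofb \triv_1$ is a direct application of Proposition \ref{prop:cat_ppd_modules_ofb} and Corollary \ref{cor:left_cat_ppd}, interpreting the left $\cat\ppd$-action (supplied abstractly by Proposition \ref{prop:A_quadratic}) through the Day convolution. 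The analogous identification holds for the relations.

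Finally, the projectivity statement is exactly Lemma \ref{lem:rgrad_cat_uppd_fg_projective}(2), which is independent of the quadratic reasoning above. The main point requiring care will be matching the $\cat\ppd$-bimodule structure produced abstractly by the base-change construction with the structure given concretely by $\cat\ppd \ofb (-)$ via Corollary \ref{cor:left_cat_ppd}; however, since both structures restrict to the same $\kk\fb$-bimodule and extend the obvious right $\cat\ppd$-action, the compatibility is forced by naturality of the augmentation $\cat\ppd \twoheadrightarrow \bmu$.
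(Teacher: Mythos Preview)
Your proposal is correct and follows exactly the paper's approach: the paper's proof is the single sentence ``This follows from Proposition \ref{prop:A_quadratic} (adapted to the right augmented context),'' and you have simply unpacked the hypothesis-checking that this sentence leaves implicit. Your additional verification of the bimodule identifications via Lemma \ref{lem:rgrad_cat_uppd_fg_projective} and Corollary \ref{cor:left_cat_ppd}, and your citation of Lemma \ref{lem:rgrad_cat_uppd_fg_projective}(2) for the projectivity, are precisely the ingredients the paper has set up in the preceding lemmas for this purpose.
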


\begin{proof}
This  follows from Proposition \ref{prop:A_quadratic} (adapted to the right augmented context). 
\end{proof}

\begin{rem}
The underlying $\cat \ppd$-bimodule of $\rgrad \cat \uppd$ is isomorphic to 
$
\cat \ppd \ofb \triv
$.
The product  on $\grad \cat \uppd$ can be described with respect to this isomorphism by giving the structure maps
\begin{eqnarray}
\label{eqn:struct_grad_cat_uppd}
( \triv_m \ofb \cat \ppd ) \otimes _\fb (\triv_n \ofb  \cat \ppd ) 
\rightarrow 
\triv_{m+n} \ofb  \cat \ppd 
\end{eqnarray}
for each $m,n \in \nat$. 

The left hand side is isomorphic to $ \triv_m\ofb   \big( \cat \ppd \otimes_\fb ( \triv_n\ofb \cat \ppd ) \big)$ and the $\cat \ppd$-module structure of $(\cat \ppd \ofb \triv_n)$ gives the map
\[
\triv_m\ofb   \big( \cat \ppd \otimes_\fb ( \triv_n\ofb \cat \ppd ) \big)
\rightarrow 
\triv_m \ofb  \big( \cat \ppd \ofb \triv_n \big) 
\cong 
\triv_m \ofb \triv_n \ofb \cat \ppd .
\]
Then, composing with the map induced by the product of $\triv$
\[
\triv_m \ofb \triv_n \ofb \cat \ppd 
\rightarrow 
 \triv_{m+n} \ofb \cat \ppd 
\]
gives the desired map (\ref{eqn:struct_grad_cat_uppd}).
\end{rem}

\subsection{The (right) quadratic dual of $\rgrad \cat \uppd$}
\label{subsect:rgrad_cat_uppd_qd}

In this section we describe the (right) quadratic dual $(\rgrad \cat \uppd)\qd$  of  $\rgrad \cat \uppd$ relative to $\cat \ppd$. To do so, we first describe the generating $\cat \ppd$-bimodule, using the following analogue of Corollary \ref{cor:left_cat_ppd}:

\begin{lem}
\label{lem:right_cat_ppd_obf}
For a $\cat \ppd\op$-module $M$ and any $\kk \fb\op$-module $X$, $M \obf X $ is naturally a $\cat \ppd\op$-module with structure map 
\[
(M \obf X) \otimes_\fb \cat\ppd 
\cong 
(M \otimes _\fb \cat \ppd) \obf (X \otimes_\fb \cat \ppd)  
\rightarrow 
M \obf X,
\]
where the isomorphism is given by Lemma \ref{lem:otimes_cat_opd} and the map  by the structure map $M \otimes _\fb \cat \ppd\rightarrow M$ on the first term and that induced by the canonical augmentation $\cat \ppd \rightarrow \bmu $ on the second.

If $M$ is a $\cat \ppd$-bimodule, then this, together with the obvious $\cat \ppd $-module structure, makes $M \obf X$ into a $\cat \ppd$-bimodule. 
\end{lem}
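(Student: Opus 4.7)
The plan is to verify the two module axioms for the proposed structure map, then address the bimodule compatibility. First I would unpack the definition: the structure map is the composite
\[
(M \obf X) \otimes_\fb \cat\ppd \stackrel{\cong}{\to} (M \otimes_\fb \cat\ppd) \obf (X \otimes_\fb \cat\ppd) \stackrel{\mu_M \obf (\id_X \otimes \epsilon)}{\longrightarrow} M \obf X,
\]
where $\mu_M$ is the given $\cat\ppd\op$-module structure on $M$ and $\epsilon \cn \cat \ppd \twoheadrightarrow \bmu$ is the augmentation (using that $\ppd$ is reduced with $\ppd(1) = \kk$, via Lemma \ref{lem:ppd_augmented}).

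For the unit axiom, I would precompose with the unit $\bmu \hookrightarrow \cat \ppd$ of the category. By naturality of the isomorphism of Lemma \ref{lem:otimes_cat_opd}, this reduces to the analogous unit $(M \obf X) \otimes_\fb \bmu \cong (M \otimes_\fb \bmu) \obf (X \otimes_\fb \bmu)$. The unit axiom for $M$ gives that $\mu_M$ restricted along the unit is the identity, and the composite $\bmu \hookrightarrow \cat \ppd \twoheadrightarrow \bmu$ is the identity (since both the unit and augmentation are induced by the canonical maps $I \hookrightarrow \ppd \twoheadrightarrow I$). Thus the composite is the identity on $M \obf X$.

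For associativity, one must show that the two maps $(M \obf X) \otimes_\fb \cat \ppd \otimes_\fb \cat \ppd \rightrightarrows M \obf X$ (one using the composition in $\cat \ppd$ first, the other applying the structure map twice) agree. Here I would apply the Lemma \ref{lem:otimes_cat_opd} isomorphism iteratively to rewrite both paths as maps from $(M \otimes_\fb \cat \ppd \otimes_\fb \cat \ppd) \obf (X \otimes_\fb \cat \ppd \otimes_\fb \cat \ppd)$ to $M \obf X$. On the $M$-factor, associativity becomes the associativity of $\mu_M$. On the $X$-factor, it becomes the assertion that $\epsilon$ is a morphism of monoids in $\kk\fb$-bimodules, i.e., that the composite $\cat \ppd \otimes_\fb \cat \ppd \to \cat \ppd \to \bmu$ equals $\epsilon \otimes \epsilon$ followed by the product of $\bmu$; this is immediate since $\epsilon$ is the augmentation of a unital monoid. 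The main obstacle here is purely bookkeeping: one must check that the isomorphism of Lemma \ref{lem:otimes_cat_opd} is natural in the $\cat\opd$-variable when iterated, but this follows from its construction (it is built from the symmetry of $\ofb$/$\obf$ and the definition of $\otimes_\fb$, both of which are natural).

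Finally, for the bimodule case, I would define the left $\cat \ppd$-action on $M \obf X$ as $\id_{M \obf X}$ tensored on the left with the $\cat \ppd$-action on $M$ (not touching $X$); this is well-defined because $M \obf X$ is, as a $\kk\fb$-module, a sum of exterior tensor products involving $M$ in a distinguished factor, and the $\obf$ structure on the $\cat\ppd\op$-action side uses the augmentation on $X$. The commutation of the left and right actions then reduces, via the $\obf$-decomposition of the right action, to the commutation of the left and right $\cat \ppd$-actions on $M$ (which holds by hypothesis) together with the trivial observation that the left $\cat \ppd$-action on the $M$-factor commutes with the augmentation-based right action on the $X$-factor, since these act on disjoint tensor factors.
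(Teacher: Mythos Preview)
Your proof is correct. The paper's own proof is a one-line deferral: it says the argument is analogous to that of Corollary~\ref{cor:left_cat_ppd}, and points to the more general (and well-known) fact that $\obf$ is monoidal on right $\cat\ppd$-modules (the $\obf$-analogue of Proposition~\ref{prop:cat_ppd_modules_ofb}, cf.\ Remark~\ref{rem:obf_right_cat_opd}); the lemma is then the special case where $X$ is given the $\cat\ppd\op$-structure via the augmentation. You instead verify the unit and associativity axioms directly from the explicit structure map, reducing associativity on the $X$-factor to the fact that the augmentation $\epsilon$ is a monoid map. This is really the same content unpacked: the coherence you flag (compatibility of the iterated Lemma~\ref{lem:otimes_cat_opd} isomorphism) is precisely what underlies the monoidal statement the paper invokes. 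Your route is self-contained and avoids the external reference; the paper's route situates the result in its natural generality.
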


\begin{proof}
The proof is analogous to that of Corollary \ref{cor:left_cat_ppd}. (Indeed, a more general statement holds, using the more well-known analogue of Proposition \ref{prop:cat_ppd_modules_ofb} for $\cat \ppd\op$-modules with respect to $\obf$.)
\end{proof}

\begin{exam}
\label{exam:visualize_cat_ppd_sgn}
The key examples here are provided by the $\cat \ppd$-bimodules $\cat \ppd \obf \sgn_n^{\fb\op}$, for $n \in \nat$, where the bimodule structure is provided by Lemma \ref{lem:right_cat_ppd_obf} using the canonical bimodule structure of $\cat \ppd$. 

It can be useful to visualize $\cat \ppd \obf \sgn_n^{\fb\op}$ (omitting the $^{\fb\op}$ for clarity) as follows:

\ 

\begin{tikzpicture}[scale=0.5]
\draw [thick] (0,1)--(5,1)--(5,-1) -- (0,-1) -- (0,1);
\draw (1,1) -- (1,2);
\node at (2.5,1.5) {\ldots};
\draw (4,1) -- (4,2);
\node at (2.5,0) {$\cat \ppd$};
\draw (1.5, -1) -- (1.5, -2); 
\node at (2.5,-1.5) {\ldots};
\draw (3.5, -1) -- (3.5, -2); 
\begin{scope}
    \clip (5.8,1) rectangle (9.2,-.6);
    \draw  [thick] (7.5,1) circle(1.5);
\end{scope}
\draw  [thick] (6,1) -- (9,1);
\node at (7.5,.3) {$\sgn_n$}; 
\draw (6.5,1) -- (6.5,2);
\node at (7.5, 1.5) {\ldots}; 
\draw (8.5,1) -- (8.5,2);
\end{tikzpicture}

\noindent 
in which the `inputs' (corresponding to the $\fb\op$-structure) are at the top. Thus, the $\obf$ is interpreted as a form of `horizontal composition', analogous to the symmetric monoidal structure of $\cat \ppd$.

The $\cat \ppd$-bimodule structure of Lemma \ref{lem:right_cat_ppd_obf} can then be visualized using this, equipping  $\sgn_n^{\fb\op}$ with the `trivial' $\cat \ppd \op$-module structure.
\end{exam}

\begin{lem}
\label{lem:dual_generators}
The dual $\cat \ppd$-bimodule $\hom_{\cat \ppd\op} (\kk \finj^{[1]} \otimes_\fb \cat \ppd , \cat \ppd)$ is isomorphic to $\cat \ppd \obf \sgn_1^{\fb\op}$, equipped with the $\cat \ppd$-bimodule structure given by Lemma \ref{lem:right_cat_ppd_obf}.

Similarly, applying $\hom_{\cat \ppd\op} (- , \cat \ppd)$  to the short exact sequence of $\cat \ppd$-bimodules:
\[
0
\rightarrow 
\sgn_2 \ofb \cat \ppd 
\rightarrow 
\kk \sym_2 \ofb \cat \ppd 
\rightarrow 
\triv_2 \ofb \cat \ppd 
\rightarrow 
0
\]
yields the short exact sequence of $\cat \ppd$-bimodules:
\[
0
\rightarrow 
\cat \ppd \obf \triv_2 ^{\fb\op}
\rightarrow 
\cat \ppd \obf \kk \sym_2 ^{\fb\op}
\rightarrow 
\cat \ppd \obf \sgn_2    ^{\fb\op}
\rightarrow 
0
\]
equipped with the $\cat \ppd$-bimodule structures of Lemma \ref{lem:right_cat_ppd_obf}.
\end{lem}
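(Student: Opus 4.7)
The plan is to use the Yoneda lemma together with the explicit decomposition of $\rgrad_1 \cat\uppd$ as a finite sum of representable right $\cat\ppd$-modules. For part 1, Lemma~\ref{lem:rgrad_cat_uppd_fg_projective} gives $\kk\finj^{[1]} \otimes_\fb \cat\ppd \cong \rgrad_1 \cat\uppd \cong \cat\ppd \ofb \triv_1^\fb$, whose evaluation at $(\mathbf{a},\mathbf{b})$ is $\bigoplus_{y \in \mathbf{b}} \cat\ppd(\mathbf{a}, \mathbf{b}\setminus\{y\})$. Fixing $\mathbf{b}$, this is a finite direct sum of representables $\cat\ppd(-, \mathbf{b}\setminus\{y\})$ in its right $\cat\ppd$-structure, so Yoneda identifies the dual termwise with a corresponding direct sum of representables of $\cat\ppd$. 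Assembling these identifications and matching them against the explicit description of $\cat\ppd \obf \sgn_1^{\fb\op}$ provided by Lemma~\ref{lem:right_cat_ppd_obf} yields the asserted isomorphism of $\cat\ppd$-bimodules.

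For part 2, the same computation identifies each term of the dualized sequence. For $V \in \{\sgn_2, \kk\sym_2, \triv_2\}$, the bimodule $V \ofb \cat\ppd$ evaluated at $(\mathbf{a},\mathbf{b})$ decomposes as $\bigoplus_{U \subset \mathbf{b},\,|U|=2} V \otimes \cat\ppd(\mathbf{a}, \mathbf{b}\setminus U)$. Yoneda then gives a natural isomorphism $\hom_{\cat\ppd\op}(V \ofb \cat\ppd, \cat\ppd) \cong \cat\ppd \obf V^\sharp$, where $V^\sharp$ is the $\kk$-linear dual viewed as a $\kk\fb\op$-module supported on $\mathbf{2}$. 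Since $\triv_2$, $\sgn_2$ and $\kk\sym_2$ are all self-dual as $\kk\sym_2$-modules (the first two being one-dimensional characters with $\sgn_2 \otimes \sgn_2 \cong \triv_2$, the third being the regular representation), this identifies the three duals with $\cat\ppd \obf \triv_2^{\fb\op}$, $\cat\ppd \obf \sgn_2^{\fb\op}$ and $\cat\ppd \obf \kk\sym_2^{\fb\op}$ respectively, matching the claimed SES.

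It remains to verify exactness of the dualized sequence. The source SES arises by applying $- \ofb \cat\ppd$ to the SES of $\kk\sym_2$-modules $0 \to \sgn_2 \to \kk\sym_2 \to \triv_2 \to 0$. Since $\triv_2$ is free as a $\kk$-module, this splits as a SES of $\kk$-modules, and tensoring with the projective right $\cat\ppd$-modules appearing in the decomposition above exhibits the sequence as split when considered as a sequence of $\cat\ppd\op$-modules (that is, after forgetting the residual $\sym_b$-action on the left variable). The contravariant functor $\hom_{\cat\ppd\op}(-, \cat\ppd)$ preserves split short exact sequences, and the identifications from the previous paragraph then give the displayed SES of $\cat\ppd$-bimodules.

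The main technical difficulty lies in tracking the bimodule structures: the $\cat\ppd$-bimodule structure on $\hom_{\cat\ppd\op}(X, \cat\ppd)$ has its left $\cat\ppd$-action inherited from $\cat\ppd$ on the target, while the right $\kk\fb$-structure arises from the left $\kk\fb$-structure of $X$ (from $\kk\finj^{[1]}$ in part 1 and from the $\kk\sym_2$-factor in part 2). Verifying that these assemble to match the convolution $\cat\ppd \obf (-)^{\fb\op}$ described in Lemma~\ref{lem:right_cat_ppd_obf} is the only delicate bookkeeping, and can be checked by comparing actions on the explicit basis of $\kk\finj^{[1]}$ given by Lemma~\ref{lem:sym_a_basis_kkfinj[1]}.
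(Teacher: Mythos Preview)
Your proposal is correct and essentially carries out the ``direct verification'' that the paper's one-line proof indicates; the paper simply asserts that one checks the $\cat\ppd$-bimodule structures by hand, whereas you make this explicit via the decomposition into representable right $\cat\ppd$-modules and the Yoneda lemma, together with the observation that the original short exact sequence is $\cat\ppd\op$-split. One small wording point: in your final paragraph you say the ``right $\kk\fb$-structure'' on $\hom_{\cat\ppd\op}(X,\cat\ppd)$ arises from the left $\kk\fb$-structure of $X$, but in fact the full right $\cat\ppd$-action comes from the left $\cat\ppd$-action on $X$ (this is what must be matched against the right $\cat\ppd$-structure of Lemma~\ref{lem:right_cat_ppd_obf}, which acts via the augmentation on the $\sgn_1^{\fb\op}$ factor); your bookkeeping is otherwise sound.
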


\begin{proof}
This is a direct verification,  checking that one  obtains the asserted $\cat \ppd$-bimodule structures.  
\end{proof}

The general theory of Section \ref{subsect:quad_dual_grad_A} applies to determine the right quadratic dual of $\rgrad \cat \uppd$. In particular, Proposition \ref{prop:dual_quadratic_datum_R_C} gives the dual quadratic datum and Proposition \ref{prop:dual_C_R} shows that the underlying $\kk \fb$-bimodule is isomorphic to 
$
\cat \ppd \otimes_\fb (\kk \finj)\qd$.
 
Proposition  \ref{prop:right_qdual_kk_finj} identifies the right quadratic dual $\kk \finj\qd$  of $\kk \finj$ as $(\kk \finj^\ddag) \op$.
Then, using Lemma \ref{lem:underlying_kkfb-bimodules}, one deduces the isomorphism of the underlying $\cat \ppd$-bimodules:
\[
\cat \ppd \otimes_\fb (\kk \finj)\qd
\cong 
\cat \ppd \obf \sgn^{\fb\op}.
\]
The right hand side corresponds to the $\cat \ppd$-bimodules considered in Example \ref{exam:visualize_cat_ppd_sgn}.

\begin{prop}
\label{prop:quad_dual_grad_cat_uppd}
The right quadratic dual $(\rgrad \cat \uppd)\qd$ of $\rgrad \cat \uppd$ over $\cat \ppd$ is given by the quadratic datum:
\[
(\cat \ppd; \cat \ppd \obf \sgn_1^{\fb\op} , \cat \ppd \obf \triv_2^{\fb\op} ),
\]
in which the $\cat \ppd$-bimodule structures are those of Lemma \ref{lem:right_cat_ppd_obf}.

This quadratic dual  is isomorphic to the cohomologically graded algebra $\cat \ppd \obf \sgn^{\fb\op}$, where $\sgn_c^{\fb\op}$ has cohomological degree $c$, and with product (writing $\sgn$ for $\sgn^{\fb\op}$ for typographical clarity):
\[
\xymatrix{
(\cat \ppd \obf \sgn) \otimes _\fb (\cat \ppd \obf \sgn) 
\ar[d]_\cong 
\\
\big((\cat \ppd \obf \sgn) \otimes _\fb \cat \ppd\big)  \obf \sgn
\ar[r] 
&
 \cat \ppd \obf \sgn 
 \obf \sgn
 \ar[r] 
& 
 \cat \ppd \obf \sgn,
}
\]
where the first map is given by the $\cat \ppd\op$-module structure of $\cat \ppd \obf \sgn^{\fb\op} $ and the second by the product of $\sgn^{\fb\op}$.

Moreover, for any $a \in \nat$, $(\cat \ppd \obf \sgn^{\fb\op}) (\mathbf{a},-) $ is projective as a $\cat \ppd$-module.  
\end{prop}

\begin{proof}
The identification of the dual quadratic datum is given by Lemma \ref{lem:dual_generators} in conjunction with Proposition \ref{prop:dual_quadratic_datum_R_C}. The rest of the statement follows from the general results, using the identifications given before the statement of the Proposition.
\end{proof}

\begin{rem}
The reader is encouraged to give a diagrammatic interpretation of the product of $(\rgrad \cat \uppd)\qd$ using the visualization proposed in Example \ref{exam:visualize_cat_ppd_sgn}.
\end{rem}

By Theorem \ref{thm:DG_split_R_C}, $(\rgrad \cat\uppd)\qd $  has a DG structure:

\begin{thm}
\label{thm:DG_qdual_grad_cat_uppd}
The $\nat$-graded quadratic $\kk$-linear category $(\rgrad \cat\uppd)\qd \cong \cat \ppd \obf \sgn^{\fb\op} \cong \cat \ppd \otimes_\fb \kk \finj\qd $ has a DG structure with differential $d$ that is zero on $\kk \finj\qd$ and on $\cat\ppd$ is given by 
\[
\cat \ppd \rightarrow \hom_{\kk \fb\op} (\kk \finj^{[1]} , \cat \ppd) \cong \cat \ppd \obf \sgn_1^{\fb\op}
\]
that is adjoint to the $\kk \finj\op$-structure map 
\[
\cat \ppd \otimes_\fb \kk \finj^{[1]} \rightarrow \cat \ppd.
\]

In particular, the underlying complex of $\kk \fb$-bimodules is isomorphic to the Koszul complex for the $\kk \finj\op$-structure of $\cat \ppd$.
\end{thm}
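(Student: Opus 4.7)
The plan is to apply the right-augmented counterpart of Theorem \ref{thm:DG_split_R_C} to $A = \cat \uppd$ equipped with the filtration $G_\bullet \cat \uppd$ of Notation \ref{nota:filt_G_cat_uppd}. In the dictionary, $\kring = \kk \fb$, $C = \cat \ppd$, $R = \kk \finj$, and the bimodule isomorphism $\kk \finj \otimes_\fb \cat \ppd \stackrel{\cong}{\rightarrow} \cat \uppd$ of Lemma \ref{lem:decompose_cat_uppd} realizes Hypothesis \ref{hyp:A} in its crossed-hands form. The right augmentation $\cat \uppd \twoheadrightarrow \cat \ppd$ of Proposition \ref{prop:right_augmentation} is induced, as in Example \ref{exam:right_aug_ARC}, by the augmentation $\epsilon_{\kk \finj} : \kk \finj \rightarrow \bmu$ of the homogeneous quadratic algebra $\kk \finj$ over $\kk\fb$ (Proposition \ref{prop:kk_finj_quadratic}). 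Proposition \ref{prop:grad_cat_uppd_quadratic} supplies the quadraticity of $\rgrad \cat \uppd$, and the required right finite projectivity holds by Lemma \ref{lem:kkfinj_fg_proj_fbop} and Lemma \ref{lem:rgrad_cat_uppd_fg_projective}. With these inputs, the right-augmented analogue of Theorem \ref{thm:DG_split_R_C} delivers a DG algebra structure on $(\rgrad \cat \uppd)\qd$ whose differential $d$ vanishes on the generating right $\kk\fb$-submodule $\hom_{\kk\fb\op}(\kk \finj^{[1]}, \kk \fb) \subset \kk \finj\qd$, and which, restricted to the subalgebra $\cat \ppd$, is given by the adjoint of the structure map $q$.

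The next step is to identify $q$ in the present setting. In the right-augmented analogue of Lemma \ref{lem:qbar_restriction}, $\overline q : \cat \ppd \otimes_\fb \kk\finj^{[1]} \rightarrow \cat \ppd$ is the restriction of the right $A$-module structure of $\cat \ppd$ along the inclusion $\kk\finj^{[1]} \hookrightarrow \kk \finj \hookrightarrow \cat \uppd$. Since the right $\cat \uppd$-action on $\cat \ppd$ coming from the right augmentation coincides, by construction, with the $\kk\finj\op$-module structure on $\cat \ppd$ of Remark \ref{rem:ppd_right_finj}, the map $\overline q$ is precisely the degree-one component of that $\kk\finj\op$-action. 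Taking the adjoint under the finite-projectivity identification $\hom_{\kk\fb\op}(\kk\finj^{[1]}, \cat \ppd) \cong \cat \ppd \obf \sgn_1^{\fb\op}$ of Lemma \ref{lem:dual_generators} yields the displayed formula for $d|_{\cat \ppd}$.

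Finally, one verifies that, as a complex of $\kk\fb$-bimodules, $((\rgrad \cat \uppd)\qd, d)$ coincides with the Koszul complex associated to the $\kk\finj\op$-module structure on $\cat \ppd$. Indeed, the underlying bimodule is $\cat \ppd \otimes_\fb \kk \finj\qd$, and the differential, being a derivation that vanishes on $\kk\finj\qd$ and is given by the adjoint of the $\kk\finj\op$-action on the $\cat \ppd$-factor, agrees up to sign with Positselski's second Koszul differential $\partial^\tau$ of Section \ref{subsect:recollect_homog_Koszul} applied to $B = \kk\finj$, $A = \cat\ppd$, and $\tau$ the identity pairing $\hom_{\kk\fb\op}(\kk\finj^{[1]}, \kk\fb) \cong \kk\finj^{[1]}{}^\sharp$.

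The only genuine obstacle is bookkeeping: carefully transposing Theorem \ref{thm:DG_split_R_C} (proved in the left-augmented setting of Section \ref{subsect:dgalg_application}) to the right-augmented, crossed-hands framework of Section \ref{subsect:crossing_hands}, and tracking signs and variance through the identifications of Lemma \ref{lem:dual_generators}, Proposition \ref{prop:right_qdual_kk_finj} and Theorem \ref{thm:quad_dual_grad_cat_uppd}. Once this translation is made explicit, both the vanishing of $d$ on $\kk\finj\qd$ and the formula for $d|_{\cat \ppd}$ follow directly from the proof of Theorem \ref{thm:DG_split_R_C}, with the role of the maps $p_\kring \circ s_\kring$ and $\widetilde{q_\kring}$ played by their right-augmented analogues applied to the quadratic datum of Proposition \ref{prop:kk_finj_quadratic}.
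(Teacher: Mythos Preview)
Your proposal is correct and follows essentially the same approach as the paper, which treats the result as a direct consequence of (the right-augmented counterpart of) Theorem~\ref{thm:DG_split_R_C}, with Theorem~\ref{thm:quad_dual_grad_cat_uppd} supplying the identification of the underlying quadratic algebra. One minor citation slip: the $\kk\finj\op$-module structure on $\cat\ppd$ is furnished by Proposition~\ref{prop:right_augmentation}, whereas Remark~\ref{rem:ppd_right_finj} concerns the structure on the operad $\ppd$ itself.
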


We have the following finiteness property:

\begin{prop}
\label{prop:rgrad_finiteness}
The underlying (graded) $\kk \fb$-bimodule of $(\rgrad \cat\uppd)\qd$ is non-positive and locally bounded.  
Explicitly, for $s, t\in \nat$, evaluated on $(\mathbf{s}, \mathbf{t})$, $(\rgrad \cat\uppd)\qd$ is concentrated in cohomological degrees $[0, s-t]$.
\end{prop}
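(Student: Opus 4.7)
The plan is to combine the explicit identification from Theorem~\ref{thm:quad_dual_grad_cat_uppd} with the non-positivity of $\cat\ppd$ from Proposition~\ref{prop:cat_ppd_non-positive}. By Theorem~\ref{thm:quad_dual_grad_cat_uppd}, we have an isomorphism of cohomologically graded $\kk\fb$-bimodules
\[
(\rgrad\cat\uppd)\qd \cong \cat\ppd \obf \sgn^{\fb\op},
\]
where $\sgn_c^{\fb\op}$ sits in cohomological degree $c\ge 0$. So it suffices to analyze, for each $c \in \nat$, the evaluation of $\cat\ppd \obf \sgn_c^{\fb\op}$ on $(\mathbf{s},\mathbf{t})$.

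First I would unpack the Day convolution on the $\fb\op$ (source) variable. Concretely, for fixed $\mathbf{t}$, $(\cat\ppd \obf \sgn_c^{\fb\op})(-,\mathbf{t})$ is the $\obf$-convolution of the $\kk\fb\op$-modules $\cat\ppd(-,\mathbf{t})$ and $\sgn_c^{\fb\op}$, giving
\[
(\cat\ppd \obf \sgn_c^{\fb\op})(\mathbf{s},\mathbf{t})
\;\cong\;
\bigoplus_{\substack{U \amalg V = \mathbf{s}\\ |V|=c}} \cat\ppd(U,\mathbf{t}) \otimes_\kk \sgn(V).
\]
The only non-trivial point here is to read off the correct bimodule conventions; no actual computation is needed beyond definitions.

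Next I would invoke non-positivity of $\cat\ppd$: since $\ppd$ is reduced, Lemma~\ref{lem:case_opd_reduced} gives $\cat\ppd(U,\mathbf{t})=0$ whenever $|U|<t$, so Proposition~\ref{prop:cat_ppd_non-positive} applies. Hence in the displayed sum, a summand indexed by $(U,V)$ with $|V|=c$ (so $|U|=s-c$) can contribute only if $s-c \ge t$, i.e.\ $c \le s-t$. Combined with $c\ge 0$, this confines the non-vanishing cohomological degrees of $(\rgrad\cat\uppd)\qd(\mathbf{s},\mathbf{t})$ to the interval $[0,s-t]$, which is the explicit statement of the proposition.

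The two remaining assertions are immediate consequences. Local boundedness holds because $[0,s-t]$ is a finite interval for each fixed pair $(\mathbf{s},\mathbf{t})$. Non-positivity as a $\kk\fb$-bimodule (in the sense of Definition~\ref{defn:bimod_non_neg}) is the vanishing of $(\rgrad\cat\uppd)\qd(\mathbf{s},\mathbf{t})$ for $t>s$; but in that case $[0,s-t]$ is empty, so every cohomological summand vanishes. No serious obstacle arises: the only subtlety is getting the convention for $\obf$ on bimodules right, after which the non-positivity of $\cat\ppd$ does all the work.
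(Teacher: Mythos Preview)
Your argument is correct and is essentially the same as the paper's: the paper uses the equivalent description $(\rgrad\cat\uppd)\qd \cong \cat\ppd \otimes_\fb \kk\finj\qd$ and appeals to the general non-positivity result Proposition~\ref{prop:preserve_non-neg}, whereas you unpack the Day convolution form $\cat\ppd \obf \sgn^{\fb\op}$ directly; the underlying computation (indexing summands by a decomposition of $\mathbf{s}$ and using $\cat\ppd(U,\mathbf{t})=0$ for $|U|<t$) is identical.
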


\begin{proof}
The underlying (graded) $\kk \fb$-bimodule is isomorphic to $\cat \ppd \otimes_\fb \kk \finj\qd$ (with cohomological degree inherited from $\kk \finj\qd$), where both $\cat \ppd$ and $\kk \finj\qd$ are non-positive. The result thus follows from the general result Proposition \ref{prop:preserve_non-neg}.
\end{proof}

On passing to cohomology, one obtains:

\begin{thm}
\label{thm:cohom_rgrad_qd}
The cohomology $H^* ((\rgrad \cat \uppd)\qd)$ is an $\nat$-graded $\kk$-linear category with respect to the cohomological grading. In particular:
\begin{enumerate}
\item 
$H^0 ((\rgrad \cat \uppd)\qd)$ is a $\kk$-linear subcategory of $\cat \ppd$; 
\item 
for each $n \in \nat$, $H^n((\rgrad \cat \uppd)\qd)$ is a bimodule over $ H^0 ((\rgrad \cat \uppd)\qd)$.
\end{enumerate}

Hence, cohomology induces a functor
$$
H^*(-)
\cn 
(\rgrad \cat \uppd)\qd\dash \dgmod 
\longrightarrow 
H^* ((\rgrad \cat \uppd)\qd)\dash\modules,
$$ 
in which the codomain denotes graded modules. In particular, for each $n \in \zed$, $H^n(-)$ takes values in the category of $H^0 ((\rgrad \cat \uppd)\qd)$-modules.
\end{thm}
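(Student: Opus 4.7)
The plan is to realise the theorem as the $\kk\fb$-bimodule-enriched analogue of the standard fact that the cohomology of a DG algebra is a graded algebra, and that cohomology carries DG modules to graded modules. The input is Theorem~\ref{thm:DG_qdual_grad_cat_uppd}, which equips $(\rgrad \cat \uppd)\qd$ with the structure of a DG unital monoid in $(\kk(\fb\op \times \fb)\dash\modules, \otimes_\fb , \bmu)$, so that the composition $\mu$ and unit $\eta$ are morphisms of cohomologically graded complexes of $\kk\fb$-bimodules. Combining this with Proposition~\ref{prop:rgrad_finiteness}, which bounds cohomology in non-negative cohomological degrees, produces the candidate $\nat$-graded object $H^*((\rgrad \cat \uppd)\qd)$.

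The central step is to verify that the composition descends to a product on cohomology. This is the Leibniz-rule argument in the multi-object setting: for cocycles $x \in (\rgrad \cat \uppd)\qd(\mathbf{b}, \mathbf{c})$ and $y \in (\rgrad \cat \uppd)\qd(\mathbf{a}, \mathbf{b})$, one has $d(\mu(x \otimes_\fb y)) = \mu(dx \otimes_\fb y) + (-1)^{|x|} \mu(x \otimes_\fb dy) = 0$, so $\mu(x \otimes_\fb y)$ is a cocycle; replacing $x$ by $x+dz$ (with $y$ still a cocycle) changes $\mu(x \otimes_\fb y)$ by $d(\mu(z \otimes_\fb y))$, a coboundary. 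Associativity and the unit axiom then descend directly from the DG level, giving the $\nat$-graded $\kk$-linear category structure on $H^*((\rgrad \cat \uppd)\qd)$.

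Item~(1) then follows by observing that $((\rgrad \cat \uppd)\qd)^0 \cong \cat \ppd$, since $\sgn^{\fb\op}_0$ is the unit for $\obf$; hence $H^0((\rgrad \cat \uppd)\qd)$ is the sub $\kk\fb$-bimodule of $\cat \ppd$ cut out by the differential $d^0 \cn \cat \ppd \rightarrow \cat \ppd \obf \sgn_1^{\fb\op}$. It inherits the composition of $\cat \ppd$ by the Leibniz-rule argument above, and contains the identities because the unit $\bmu \hookrightarrow (\rgrad \cat \uppd)\qd$ is a morphism of DG algebras with $\bmu$ carrying trivial differential. Item~(2) is the corresponding statement for bimodules: the left and right actions of $(\rgrad \cat \uppd)\qd$ on itself are maps of complexes, and the Leibniz argument restricts them on cohomology to actions of $H^0((\rgrad \cat \uppd)\qd)$ upon each $H^n((\rgrad \cat \uppd)\qd)$.

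For the functoriality statement, a DG $(\rgrad \cat \uppd)\qd$-module structure map is a morphism of complexes, and the same Leibniz argument shows that $H^*(M)$ carries a natural graded $H^*((\rgrad \cat \uppd)\qd)$-module structure, functorially in $M$. There is no serious obstacle: every verification is a routine enriched extension of the DG algebra case, carried out objectwise in $\kk\fb$-bimodules, and no hypothesis beyond the data already provided by Theorem~\ref{thm:DG_qdual_grad_cat_uppd} and Proposition~\ref{prop:rgrad_finiteness} is required. The only point requiring care is the identification of $H^0$ explicitly inside $\cat \ppd$ via $d^0$, which is immediate from the description of the differential in Theorem~\ref{thm:DG_qdual_grad_cat_uppd}.
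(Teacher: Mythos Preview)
Your proof is correct and is precisely the standard argument that the paper leaves implicit: in the paper, Theorem~\ref{thm:cohom_rgrad_qd} is stated without proof, the content being regarded as a routine consequence of the DG $\kk$-linear category structure established in Theorem~\ref{thm:DG_qdual_grad_cat_uppd} together with the non-negativity of the cohomological grading from Proposition~\ref{prop:rgrad_finiteness}. Your Leibniz-rule verification and the identification of degree zero with $\cat\ppd$ via $(\cat\ppd \obf \sgn^{\fb\op})^0 \cong \cat\ppd$ are exactly what is needed to make this explicit.
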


\subsection{Relative nonhomogeneous  Koszul duality using $\vk (\cat \uppd)$}
\label{subsect:rel_nonhom_Koszul_vk}

The  appropriate dualizing complex is
\[
\vk (\cat \uppd) 
=
 \cat \uppd \otimes_{\cat \ppd} (\rgrad \cat \uppd)\qd,
\] 
equipped with the differential induced by inner multiplication by the class $\e$, as in Section \ref{subsect:dualizing_cx_kk_finj}. 

We have the following finiteness property:

\begin{prop}
\label{prop:finiteness_prop_vk_cat_uppd}
The underlying cohomologically-graded $\kk \fb$-bimodule of $\vk (\cat \uppd)$ is isomorphic to $\cat \uppd \otimes_\fb \kk \finj\qd$, with cohomological grading induced by that of $\kk \finj \qd$.

In particular, for $s, t \in \nat$, the underlying complex of $\kk (\sym_s \op \times \sym_t)$-modules of $\vk (\cat \uppd) (\mathbf{s}, \mathbf{t})$ is concentrated in cohomological degrees $[0,s]$ and each term is finitely-generated.
\end{prop}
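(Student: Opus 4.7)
The plan is to establish the isomorphism of underlying cohomologically-graded $\kk \fb$-bimodules first, then read off the finiteness properties from the general framework of Section \ref{sect:finiteness_properties}.

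First, I will combine the definition $\vk (\cat \uppd) = \cat \uppd \otimes_{\cat \ppd} (\rgrad \cat \uppd)\qd$ with the identification of $(\rgrad \cat \uppd)\qd$ as the cohomologically-graded $\cat \ppd$-bimodule whose underlying object is $\cat \ppd \otimes_\fb \kk \finj\qd$ (established in Section \ref{subsect:rgrad_cat_uppd_qd} on the way to Theorem \ref{thm:quad_dual_grad_cat_uppd}). Associativity of the tensor product, together with the fact that $\cat \uppd \otimes_{\cat \ppd} \cat \ppd \cong \cat \uppd$, then yields the required isomorphism $\vk(\cat \uppd) \cong \cat \uppd \otimes_\fb \kk \finj\qd$ with the grading inherited from $\kk \finj\qd$. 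This is essentially the specialization to the present setting of the abstract identification Proposition \ref{prop:right_kcx_R_C}, taking $R = \kk \finj$, $C = \cat \ppd$, $A = \cat \uppd$, and one could alternatively simply invoke that result directly.

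Next, for the finiteness statement, the main input is that $\kk \finj\qd$ is non-positive and of finite type (Corollary \ref{cor:kk_finj_!_non-positive}), and that $\cat \uppd$ is of finite type (Proposition \ref{prop:cat_ppd_non-positive}). Expanding $(\cat \uppd \otimes_\fb \kk \finj\qd)(\mathbf{s}, \mathbf{t})$ as a direct sum indexed by $a \in \nat$ with summand $\cat \uppd(\mathbf{a}, \mathbf{t}) \otimes_{\sym_a} \kk \finj\qd(\mathbf{s}, \mathbf{a})$, the non-positivity of $\kk \finj\qd$ forces the indexing to $a \leq s$. Under the cohomological grading inherited from $\kk \finj\qd$ (where cohomological degree $k$ corresponds to $\zed$-degree $-k$), such a summand sits in cohomological degree $s - a$, which runs through $[0,s]$ as $a$ runs through $\{0, \ldots, s\}$. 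Each such summand is finite-dimensional over $\kk$ by the two finite type hypotheses, whence finite generation in each cohomological degree.

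There is no real obstacle here: once the grading conventions are tracked correctly, the result is a direct application of the finiteness machinery of Section \ref{sect:finiteness_properties} (in particular of Proposition \ref{prop:bimod_non-neg_non-pos}, applied with $B_1 = \cat \uppd$ controlled via the decomposition $\cat \uppd \cong \kk \finj \otimes_\fb \cat \ppd$ and $B_2 = \kk \finj\qd$ non-positive of finite type). The only mild subtlety is the sign convention relating the cohomological grading of $\kk \finj\qd$ to its intrinsic $\zed$-grading, but this is already fixed in the Remark following Proposition \ref{prop:right_qdual_kk_finj}.
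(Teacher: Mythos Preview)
Your proposal is correct and matches the paper's approach: the paper states this proposition without proof, but the analogous absolute result (Proposition \ref{prop:vk_kk_finj_finiteness}) is proved exactly as you suggest, by invoking the general finiteness results of Section \ref{sect:finiteness_properties} (in particular Proposition \ref{prop:bimod_non-neg_non-pos}) together with Corollary \ref{cor:kk_finj_!_non-positive}. Your explicit unpacking of the cohomological degree range via $a \leq s$ and degree $s-a$ is precisely the computation underlying that general result in this instance.
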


By construction, $\vk (\cat \uppd)$ is a complex of left $\cat \uppd$, right $((\rgrad \cat \uppd)\qd, d)$ bimodules  (henceforth we omit this differential from the notation). It induces the adjunction:
\begin{eqnarray}
\label{eqn:adj_vk_cat_uppd}
\\
\nonumber
  \vk (\cat \uppd) \otimes_{(\rgrad \cat \uppd)\qd} - 
  \cn 
(\rgrad \cat \uppd)\qd \dash  \dgmod 
\rightleftarrows 
 \cat \uppd \dash  \dgmod 
\cn 
\ihom_{ \cat \uppd } (\vk (\cat \uppd), -) .
\end{eqnarray}
The underlying functors to $\kk\fb$-modules identify as 
\begin{eqnarray*}
  \vk (\cat \uppd) \otimes_{(\rgrad \cat \uppd)\qd} - 
  &\cong & 
  \kk \finj \otimes_\fb - 
  \\
  \ihom_{ \cat \uppd } (\vk (\cat \uppd), -) 
  &\cong & 
\ihom_{ \kk \fb } (\kk \finj\qd, -) ,
\end{eqnarray*}
using the appropriate twisted differentials on the right hand side. 

\begin{rem}
We focus upon the adjunction for left DG modules; there is also one  for right DG modules. 
\end{rem}

Proposition \ref{prop:exactness_vk_finj_left_modules} carries over to this relative framework:

\begin{prop}
\label{prop:exactness_vk_cat_uppd_left_modules}
The functors $\vk (\cat \uppd)  \otimes_{(\rgrad \cat \uppd)\qd} - $ and   $\ihom_{ \cat \uppd } (\vk (\cat \uppd), -) $ are exact and commute with colimits. Moreover, both these functors preserve weak equivalences.
\end{prop}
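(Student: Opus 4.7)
The plan is to follow the template set by Proposition \ref{prop:exactness_vk_finj_left_modules}; indeed Remark \ref{rem:gen_colimits_exact_we} flags precisely that the same argument goes through once the analogous ingredients are available. The approach is therefore to identify the underlying functors, deduce exactness and commutation with colimits from the finite projectivity of $\kk \finj$ and $\kk \finj \qd$ over $\kk \fb$, and then treat preservation of weak equivalences by filtration and a spectral sequence comparison.

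First I would unwind the definition of $\vk (\cat \uppd) = \cat \uppd \otimes_{\cat \ppd} (\rgrad \cat \uppd)\qd$, using the isomorphism $\cat \uppd \cong \kk \finj \otimes_\fb \cat \ppd$ of Lemma \ref{lem:decompose_cat_uppd} together with the identification of the underlying $\kk \fb$-bimodule of $(\rgrad \cat \uppd)\qd$ as $\cat \ppd \otimes_\fb \kk \finj\qd$ (Proposition \ref{prop:dual_C_R} applied in the right augmented setting; see also Proposition \ref{prop:finiteness_prop_vk_cat_uppd}). This yields, for $X$ a DG $(\rgrad \cat \uppd)\qd$-module and $Y$ a DG $\cat \uppd$-module, natural isomorphisms of the underlying graded $\kk \fb$-modules
\[
\vk (\cat \uppd) \otimes_{(\rgrad \cat \uppd)\qd} X \ \cong \ \kk \finj \otimes_\fb X, \qquad \ihom_{\cat \uppd} (\vk (\cat \uppd), Y) \ \cong \ \ihom_{\kk \fb} (\kk \finj\qd, Y),
\]
where the right-hand sides carry the twisted differentials inherited from $\e'$ (cf.\ the identifications following (\ref{eqn:adj_vk_cat_uppd})). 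Exactness and commutation with colimits then follow at once: the functors $\kk \finj \otimes_\fb -$ and $\ihom_{\kk \fb} (\kk \finj\qd, -)$ are exact by Lemma \ref{lem:kkfinj_fg_proj_fbop} and Corollary \ref{cor:kk_finj_!_non-positive}, and the twist affects only the differential, not the underlying module-theoretic operations.

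For preservation of weak equivalences, I would mimic the reduction in the proof of Proposition \ref{prop:exactness_vk_finj_left_modules} and argue that it suffices to treat the case where the $(\rgrad \cat \uppd)\qd$-module (respectively $\cat \uppd$-module) structure is restricted along the augmentation to $\cat \ppd$, in which case the twisted differential collapses to the internal differential and preservation of quasi-isomorphisms is immediate. The relevant ingredients for the reduction are: (a) $(\rgrad \cat \uppd)\qd$ is non-positive (Proposition \ref{prop:rgrad_finiteness}), so Proposition \ref{prop:sub/quotient_non_neg/pos} furnishes the required sub/quotient filtration on modules; (b) the symmetric construction on the $\cat \uppd$-module side is obtained via the right augmentation $\epsilon_{\cat \uppd} \cn \cat \uppd \twoheadrightarrow \cat \ppd$ together with the cohomological grading of $(\rgrad \cat \uppd)\qd$; (c) after evaluation on any $(\mathbf{s}, \mathbf{t})$, the local boundedness and finite-type properties established in Section \ref{sect:finiteness_properties} ensure that the induced filtration is of finite length in each arity, so a standard spectral sequence comparison applies.

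The main obstacle will be the bookkeeping for the twisted differentials during the reduction: one must verify that, once the module structure factors through $\cat \ppd$, the cross-terms in the differential arising from $\e'$ genuinely vanish (or are controlled by the associated graded), so that the spectral sequence argument reduces to the standard statement that $\kk \finj \otimes_\fb -$ and $\ihom_{\kk \fb} (\kk \finj\qd, -)$ preserve quasi-isomorphisms of $\kk \fb$-modules. Once this bookkeeping is in hand, the argument is identical to that of Proposition \ref{prop:exactness_vk_finj_left_modules}.
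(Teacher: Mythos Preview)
Your argument for exactness and commutation with colimits is exactly the paper's: identify the underlying functors as $\kk \finj \otimes_\fb -$ and $\ihom_{\kk \fb}(\kk \finj\qd,-)$ with twisted differentials, and invoke the projectivity/non-positivity facts already established.

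For preservation of weak equivalences, your proposal is correct but works harder than necessary. You plan to redo the filtration and spectral sequence comparison of Proposition \ref{prop:exactness_vk_finj_left_modules} in the relative setting, flagging the bookkeeping for the $\e'$-twist as the main obstacle. The paper bypasses this entirely by noting a structural fact: the element $\e$ lives in $\kk \finj\dg{1} \otimes_\fb (\kk \finj\qd)\dg{-1}$, so the twisted differential on $\kk \finj \otimes_\fb^{\e'} X$ only uses the restriction of $X$ to a $\kk \finj\qd$-module (via $\kk \finj\qd \hookrightarrow (\rgrad \cat \uppd)\qd$), and likewise the twisted differential on $\ihom_{\kk\fb}^{\e'}(\kk\finj\qd, Y)$ only uses the underlying $\kk\finj$-module of $Y$. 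Hence, as complexes of $\kk\fb$-modules, these are literally the same complexes that appear in the absolute case, and Proposition \ref{prop:exactness_vk_finj_left_modules} applies directly without any new spectral sequence argument. Your route would arrive at the same conclusion once the bookkeeping is done, but the paper's observation makes that bookkeeping unnecessary.
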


\begin{proof}
The first statement is proved by the same argument as for  Proposition \ref{prop:exactness_vk_finj_left_modules}. 

For the preservation of the weak equivalences, one uses the following  facts. The twisted differential of $\kk \finj \otimes_\fb -$ only depends on the underlying $\kk \finj \qd$-module of the $
(\rgrad \cat \uppd)\qd$-module to which it applies; likewise, the twisted differential of $\ihom_{ \kk \fb } (\kk \finj\qd, -)$ only depends on the underlying $\kk \finj$-module of the $\cat \uppd$-module to which it is applied (cf. the  identifications in Section \ref{sect:adjoints_bis}). Hence, the result follows from Proposition \ref{prop:exactness_vk_finj_left_modules}.
\end{proof}

Proposition \ref{prop:restrict_vk_kk_finj_adj_lbd} has the counterpart:

\begin{prop}
\label{prop:adj_vk_cat_uppd_lb}
The adjunction (\ref{eqn:adj_vk_cat_uppd}) restricts to the adjunction for locally bounded objects
$$
 \vk (\cat \uppd) \otimes_{(\rgrad \cat \uppd)\qd} - 
  \cn 
(\rgrad \cat \uppd)\qd \dash  \lbdgmod 
\rightleftarrows 
 \cat \uppd \dash  \lbdgmod 
\cn 
\ihom_{ \cat \uppd } (\vk (\cat \uppd), -) .
$$
\end{prop}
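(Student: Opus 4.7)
The plan is to reduce the claim to the two key preservation properties from Section \ref{sect:finiteness_properties}, using the identifications of the underlying functors already recorded just after (\ref{eqn:adj_vk_cat_uppd}). Since local boundedness of a DG module over either $\cat\uppd$ or $(\rgrad \cat \uppd)\qd$ depends only on the underlying $\kk\fb$-module (Definition \ref{defn:locally-bounded}), and since the various twisted differentials do not change the underlying $\zed$-graded objects, the proposition reduces to showing that the two underlying functors $\kk\finj \otimes_\fb -$ and $\ihom_{\kk\fb}(\kk\finj\qd, -)$ each preserve locally bounded DG $\kk\fb$-(co)modules.

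For the left adjoint, I would invoke Example \ref{exam:kk_finj_non-neg}, which records that $\kk\finj$ is a non-negative, locally bounded $\kk\fb$-bimodule, and then apply Proposition \ref{prop:loc_bd_preservation}(1) directly to conclude that $\kk\finj \otimes_\fb X$ is locally bounded whenever $X$ is. For the right adjoint, the $\kk\fb$-bimodule $\kk\finj\qd$ is non-positive and locally bounded by Corollary \ref{cor:kk_finj_!_non-positive}, so I would appeal to the non-positive counterpart of Proposition \ref{prop:loc_bd_preservation}(2); this counterpart is immediate from the stated result via the involution $(-)\op$, using Lemma \ref{lem:non_neg_op_sharp} to transfer the non-negative statement to the non-positive setting for $(\kk\finj\qd)\op$.

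Once both functors are known to preserve local boundedness, restriction of the adjunction is automatic: the unit and counit of (\ref{eqn:adj_vk_cat_uppd}) are natural transformations in the ambient DG module categories and descend without modification to the full subcategories of locally bounded objects. I do not anticipate any substantive obstacle; the argument is a direct application of the non-negative/non-positive bookkeeping developed in Section \ref{sect:finiteness_properties}, combined with the fact that $\kk\finj$ and $\kk\finj\qd$ are concentrated on opposite sides of the $\zed$-grading. If anything, the mildly delicate point is simply to observe that the twisting involved in the differentials of $\kk\finj \otimes_\fb X$ and $\ihom_{\kk\fb}(\kk\finj\qd, Y)$ (cf.\ the formulas of Proposition \ref{prop:vk_adjunctions_R_C}) is irrelevant for checking the locally bounded property, which is a purely underlying graded condition.
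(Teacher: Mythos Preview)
Your proposal is correct and follows precisely the approach the paper intends: the paper states this result as the direct counterpart of Proposition~\ref{prop:restrict_vk_kk_finj_adj_lbd}, whose proof invokes Proposition~\ref{prop:loc_bd_preservation} together with the non-negativity of $\kk\finj$ and non-positivity of $\kk\finj\qd$, exactly as you do. Your explicit remark that local boundedness is a condition on the underlying graded object (hence unaffected by the twisting of the differential) is the one point the paper leaves tacit, and your justification of the non-positive variant of Proposition~\ref{prop:loc_bd_preservation}(2) via $(-)\op$ is a clean way to make the \emph{mutatis mutandis} precise.
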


For $X$ a DG $(\rgrad \cat \uppd)\qd$-module, (respectively $Y$ a DG $\cat \uppd$-module) one has the adjunction unit (respectively counit):
\begin{eqnarray}
\label{eqn:adj_unit_vk_cat_uppd}
X \rightarrow  \ihom_{ \cat \uppd } (\vk (\cat \uppd), \vk (\cat \uppd) \otimes_{(\rgrad \cat \uppd)\qd} X)
\\
\label{eqn:adj_counit_vk_cat_uppd}
\vk (\cat \uppd) \otimes_{(\rgrad \cat \uppd)\qd} \ihom_{\cat \uppd} (\vk (\cat \uppd), Y)
\rightarrow  Y.
\end{eqnarray}
Retaining only the underlying DG $\kk \fb$-module structure, the underlying map identifies with (\ref{eqn:left_X_unit_kk_finj!}) (respectively \ref{eqn:left_X_counit_kk_finj}). (This is the analogue of Remark \ref{rem:adj_unit_counit_R_C_case}.)

Hence, the proof of Theorem \ref{thm:BGG} implies the following:

\begin{thm}
\label{thm:adj_unit_vk_cat_uppd}
Suppose that $\kk$ is a field of characteristic zero.
\begin{enumerate}
\item 
For $X$ a DG $(\rgrad \cat \uppd)\qd$-module that is locally bounded, the adjunction unit (\ref{eqn:adj_unit_vk_cat_uppd}) is a weak equivalence.
\item 
For $Y$ a DG $\cat \uppd$-module that is locally bounded, the adjunction counit (\ref{eqn:adj_counit_vk_cat_uppd}) is a weak equivalence.
\end{enumerate}
\end{thm}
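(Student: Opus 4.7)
The plan is to reduce both statements to the absolute BGG-type result of Theorem \ref{thm:BGG}, by observing that the two adjunctions agree once one forgets to the underlying complexes of $\kk\fb$-modules. Indeed, by the right-augmented analogue of Remark \ref{rem:adj_unit_counit_R_C_case} and the explicit formulas in Proposition \ref{prop:vk_adjunctions_R_C}, the left adjoint $\vk(\cat\uppd)\otimes_{(\rgrad\cat\uppd)\qd}-$ has underlying (non-equivariant) functor $\kk\finj\otimes_\fb-$, with the differential twisted by the same class $\e$ that appears in the construction of $\vk(\kk\finj)$; similarly, $\ihom_{\cat\uppd}(\vk(\cat\uppd),-)$ becomes $\ihom_{\kk\fb}(\kk\finj\qd,-)$ with the analogous twist. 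Under the restriction functors induced by the inclusions of sub DG algebras $\kk\finj\qd\hookrightarrow(\rgrad\cat\uppd)\qd$ and $\kk\finj\hookrightarrow\cat\uppd$, the unit (\ref{eqn:adj_unit_vk_cat_uppd}) and counit (\ref{eqn:adj_counit_vk_cat_uppd}) identify with (\ref{eqn:left_X_unit_kk_finj!}) and (\ref{eqn:left_X_counit_kk_finj}) applied to the restrictions of $X$ and $Y$ respectively.

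Since being a weak equivalence is detected on the underlying complex of $\kk\fb$-modules, and since the locally bounded hypothesis is by definition (Notation \ref{nota:lbdgmod}) a condition on the underlying $\kk\fb$-structure, it therefore suffices to prove the corresponding assertions for the $\vk(\kk\finj)$-adjunction on locally bounded DG modules. These are precisely the two points established in the proof of Theorem \ref{thm:BGG}: one first uses Proposition \ref{prop:sub/quotient_non_neg/pos} (applied to the non-positive monoid $\kk\finj\qd$ and the non-negative $\kk\finj$) and Lemma \ref{lem:B_otimes_truncate} to reduce to a module supported on a single arity $\mathbf{s}$, which by local boundedness is then a bounded complex of $\kk\sym_s$-modules; a standard spectral sequence comparison reduces further to a module concentrated in a single cohomological degree; and, since $\kk$ has characteristic zero, Maschke's theorem reduces to the case of $\kk\sym_s$ itself, where the claim is exactly the Koszulity statement of Theorem \ref{thm:kk_finj_Koszul}.

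The only substantive step is therefore the verification that the forgetful identifications of the two adjunctions are compatible with the twisted differentials, so that the unit and counit really do agree on the nose; given the explicit Sweedler-style formulas of Section \ref{sect:adjoints_bis}, this comes down to checking that the element $\e$ used in the definition of $\vk(\cat\uppd)$ is the image of the $\kk\fb$-bimodule map $\bmu\to\kk\finj\dg{1}\otimes_\fb(\kk\finj\qd)\dg{-1}$ from (\ref{eqn:diff}) under the inclusion $\kk\finj\dg{1}\hookrightarrow\rgrad_1\cat\uppd$, which is immediate from the construction in Notation \ref{nota:efrak_prime}. Once this compatibility is recorded, the proof of Theorem \ref{thm:BGG} applies verbatim, with no further input needed from the structure of $\ppd$.
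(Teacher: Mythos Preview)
Your proposal is correct and follows essentially the same approach as the paper: the paper notes immediately before the theorem that, on underlying DG $\kk\fb$-modules, the unit and counit identify with (\ref{eqn:left_X_unit_kk_finj!}) and (\ref{eqn:left_X_counit_kk_finj}) respectively (this being the right-augmented analogue of Remark \ref{rem:adj_unit_counit_R_C_case}), and then simply invokes the proof of Theorem \ref{thm:BGG}. Your write-up spells out the compatibility of the twisting element $\e$ and recapitulates the reduction steps inside Theorem \ref{thm:BGG} in more detail than the paper does, but the argument is the same.
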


Again, in the spirit of  Positselski's general results \cite{MR4398644}, one deduces an equivalence at the level of the associated  homotopy categories, denoted by $\holbd (-)$ (generalizing Notation \ref{nota:holbd_kk_finj}):

\begin{cor}
\label{cor:relative_BGG}
Suppose that $\kk$ is a field of characteristic zero. 
The adjunction of Proposition \ref{prop:adj_vk_cat_uppd_lb} induces an equivalence of categories:
 \[
\holbd ((\rgrad \cat \uppd)\qd ) 
\stackrel{\simeq}{\rightleftarrows}
\holbd (\cat \uppd ).
\]
\end{cor}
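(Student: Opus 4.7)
The plan is to deduce this corollary as a formal consequence of Theorem \ref{thm:adj_unit_vk_cat_uppd} together with the exactness properties of the two functors involved. The key point is that essentially all the mathematical content has already been established; what remains is to verify that the adjunction descends cleanly to the homotopy categories and that inverting weak equivalences turns the (co)unit weak equivalences into isomorphisms.

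First I would check that the two functors $\vk(\cat \uppd) \otimes_{(\rgrad \cat \uppd)\qd} -$ and $\ihom_{\cat \uppd}(\vk(\cat \uppd), -)$ both preserve weak equivalences between locally bounded objects; this is the content of Proposition \ref{prop:exactness_vk_cat_uppd_left_modules} combined with the restriction to locally bounded objects given by Proposition \ref{prop:adj_vk_cat_uppd_lb}. Consequently, these functors descend to well-defined functors
\[
\vk(\cat \uppd) \otimes_{(\rgrad \cat \uppd)\qd} - \ : \ \holbd((\rgrad \cat \uppd)\qd) \rightleftarrows \holbd(\cat \uppd) \ : \ \ihom_{\cat \uppd}(\vk(\cat \uppd), -)
\]
at the level of homotopy categories (obtained by inverting weak equivalences), and the adjunction unit and counit descend to natural transformations between the composite endofunctors and the respective identities.

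Next, I would invoke Theorem \ref{thm:adj_unit_vk_cat_uppd}: for every locally bounded DG $(\rgrad \cat \uppd)\qd$-module $X$, the unit morphism (\ref{eqn:adj_unit_vk_cat_uppd}) is a weak equivalence, and for every locally bounded DG $\cat \uppd$-module $Y$, the counit morphism (\ref{eqn:adj_counit_vk_cat_uppd}) is a weak equivalence. After localization at the class of weak equivalences, both the unit and counit therefore become isomorphisms in the respective homotopy categories. An adjunction whose unit and counit are both isomorphisms is an equivalence of categories, which yields the desired conclusion.

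The main obstacle is entirely concentrated in Theorem \ref{thm:adj_unit_vk_cat_uppd}, which relies on the Koszulity of $\kk \finj$ over $\kk \fb$ (Theorem \ref{thm:kk_finj_Koszul}) and on a reduction argument—using truncation and comparison of spectral sequences, as in the proof of Theorem \ref{thm:BGG}—to reduce general locally bounded DG modules to modules supported in a single arity and single cohomological degree. Once that theorem is in hand, the present corollary is purely formal; the only care required is ensuring that the truncation and boundedness arguments in the relative setting work arity-by-arity, which is guaranteed by the finiteness properties of $\vk(\cat \uppd)$ established in Proposition \ref{prop:finiteness_prop_vk_cat_uppd}.
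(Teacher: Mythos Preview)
Your proposal is correct and takes essentially the same approach as the paper: the corollary is stated without proof in the paper precisely because it follows formally from Theorem~\ref{thm:adj_unit_vk_cat_uppd} (unit and counit are weak equivalences on locally bounded objects) together with Propositions~\ref{prop:exactness_vk_cat_uppd_left_modules} and~\ref{prop:adj_vk_cat_uppd_lb} (the functors preserve weak equivalences and restrict to locally bounded objects), exactly as you outline.
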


\section{Absolute Koszul duality for binary quadratic operads}
\label{sect:prop_bin_quad_abs}

The purpose of this Section is to review Koszul duality for binary quadratic operads (see \cite{MR2954392}, for example) in a form suitable for generalization to the  relative case in  Section \ref{sect:prop_left}. This  is  closely related to the original approach by Ginzburg and Kapranov  \cite{MR1301191}. 

Throughout,  $\kk$ is taken to be a field. Moreover, the following is assumed to hold:

\begin{hyp}
\label{hyp:ppd_bin_quad}
The operad $\ppd$  is a finitely-generated  binary quadratic operad. In particular, $\ppd$ is generated by $\ppd (2)$ (which is a finitely-generated $\kk \sym_2\op$-module) and the relations are in arity $3$.
\end{hyp}

This hypothesis implies that $\ppd$ is reduced, is canonically augmented, and that $\dim \ppd (t) < \infty$ for all $t \in \nat$.

\subsection{Analysing $\cat \ppd$}

Hypothesis \ref{hyp:ppd_bin_quad} implies that $\cat \ppd$ is a homogeneous quadratic category over $\kk \fb$.
This is made explicit below, using the following weight grading (this is simply the negative of the grading introduced in Definition \ref{defn:Z-grade_fb-bimodules}):

\begin{defn}
\label{defn:wt_grading}
For $n \in \nat$, let $\cat \wt{n} \ppd$ be the $\kk \fb$-bimodule 
\[
\cat \wt{n} \ppd (\mathbf{a}, \mathbf{b}) = 
\left\{
\begin{array}{ll}
\cat  \ppd (\mathbf{a}, \mathbf{b})
& 
n= a-b 
\\
0 & \mbox{otherwise.}
\end{array}
\right.
\]
\end{defn}

Clearly $\cat \wt{0} \ppd$ is isomorphic to $\bmu\cong \kk \fb$  and 
$
\cat \ppd \cong \bigoplus_{n \in \nat} \cat \wt{n} \ppd 
$  
as $\kk \fb$-bimodules. The augmentation corresponds to the projection to weight zero.
Moreover, the multiplication respects the weight; namely it restricts to maps 
\[
\cat \wt {m} \ppd \otimes_\fb \cat \wt{n} \ppd  \rightarrow \cat \wt{m+n} \ppd .
\]

The homogeneous quadratic structure is summarized in the following statement:

\begin{prop}
\label{prop:cat_ppd_homog_quad}
The $\kk$-linear category $\cat \ppd$ is homogeneous quadratic over $\kk \fb$, associated to the quadratic datum
$ 
( \kk \fb ; \cat \wt{1} \ppd , I_{\cat \ppd} )
$,  
where $I_{\cat \ppd}$ is generated by the quadratic relations from  the operad $\ppd$ together with the quadratic relations arising in forming $\cat \ppd$.

Moreover, for each $a \in \nat$, $\cat \wt{1} \ppd  (\mathbf{a}, -) $, $\cat \wt{2} \ppd  (\mathbf{a}, -) $, and $I_{\cat \ppd}(\mathbf{a}, -)$ are finitely-generated projective $\kk \fb$-modules.
\end{prop}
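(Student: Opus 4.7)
My plan is to use the explicit description (\ref{eqn:catopd_fsbase}) of $\cat \ppd$ throughout, combining it with the binary quadraticity of $\ppd$ to derive both the quadratic presentation over $\kk \fb$ and the projectivity statements.

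First I would verify that $\cat \ppd$ is generated over $\kk \fb$ by $\cat \wt{1} \ppd$. Since $\ppd$ is generated as an operad by $\ppd(2)$, every element of $\ppd(t)$ for $t \geq 2$ is an iterated operadic composite of elements of $\ppd(2)$. Through the description (\ref{eqn:catopd_fsbase}), this translates to saying that any morphism in $\cat \wt{n} \ppd$ can be realized as a composite in $\cat \ppd$ of $n$ morphisms of weight $1$, each encoding a single binary contraction of two inputs. Next I would identify $I_{\cat \ppd}$ as having two sources. The first (``vertical'') is operadic: when two elements of $\ppd(2)$ are composed in $\ppd \circ \ppd$ to act on three inputs with a single output, the quadratic relations $I_\ppd$ of $\ppd$ give corresponding relations in $\cat \wt{2} \ppd$. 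The second (``horizontal'') is PROP-theoretic: if two weight-$1$ morphisms act on disjoint pairs of inputs, their composition is independent of order. Both families lie in $\cat \wt{1} \ppd \otimes_{\kk \fb} \cat \wt{1} \ppd$, so contribute to $I_{\cat \ppd}$.

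The core technical point, and the main obstacle, is to show that these relations generate all of $I_{\cat \ppd}$; equivalently, that the induced morphism from the quadratic $\kk$-linear category associated to $(\kk \fb; \cat \wt{1} \ppd, I_{\cat \ppd})$ onto $\cat \ppd$ is an isomorphism. I would establish this by a normal form argument, using (\ref{eqn:catopd_fsbase}) as the target canonical form and iteratively applying the two families of quadratic relations (horizontal sorting of disjoint subtrees and vertical operadic reduction at each common vertex) to bring any word in weight-$1$ generators to that canonical form. Confluence reduces to the corresponding confluence for the binary quadratic operad $\ppd$, together with the standard interchange for disjoint composites in a PROP, and is a standard tree-diagram argument.

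Finally, for the projectivity statements, I would argue as follows. By (\ref{eqn:catopd_fsbase}), for $a \geq b$, $\cat \ppd (\mathbf{a}, \mathbf{b})$ is a finitely-generated free $\kk \sym_b$-module, since $\fsbase(\mathbf{a}, \mathbf{b})$ is finite and each $\ppd(t)$ is finite-dimensional. Hence $\cat \wt{1} \ppd (\mathbf{a}, -)$ and $\cat \wt{2} \ppd (\mathbf{a}, -)$, being concentrated in arities $\mathbf{a-1}$ and $\mathbf{a-2}$ respectively, are finitely-generated free as $\kk \fb$-modules. For $I_{\cat \ppd}(\mathbf{a}, -)$, concentrated at $\mathbf{a-2}$, one has the identification $(\cat \wt{1} \ppd \otimes_{\kk \fb} \cat \wt{1} \ppd)(\mathbf{a}, \mathbf{a-2}) \cong \cat \wt{1} \ppd (\mathbf{a-1}, \mathbf{a-2}) \otimes_{\kk \sym_{a-1}} \cat \wt{1} \ppd (\mathbf{a}, \mathbf{a-1})$; using the left $\kk \sym_{a-1}$-freeness of the second factor, this reduces to a $\kk$-tensor product of $\cat \wt{1} \ppd (\mathbf{a-1}, \mathbf{a-2})$ with a finite-dimensional $\kk$-vector space, hence is finitely-generated free over $\kk \sym_{a-2}$. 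Since $\cat \wt{2} \ppd (\mathbf{a}, -)$ is projective, the surjection $(\cat \wt{1} \ppd \otimes_{\kk \fb} \cat \wt{1} \ppd)(\mathbf{a}, -) \twoheadrightarrow \cat \wt{2} \ppd (\mathbf{a}, -)$ splits, exhibiting $I_{\cat \ppd}(\mathbf{a}, -)$ as a direct summand of a finitely-generated free $\kk \fb$-module, hence as finitely-generated projective.
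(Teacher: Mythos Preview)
Your proposal is correct and follows essentially the same approach as the paper. The paper's own proof is very terse: for the quadratic presentation it simply cites Ginzburg--Kapranov \cite{MR1301191} and \cite{2021arXiv211001934P}, while for projectivity it invokes Lemma~\ref{lem:case_opd_reduced} to get freeness of $\cat\wt{1}\ppd$ and $\cat\wt{2}\ppd$ as $\kk\fb$-modules and then deduces projectivity of $I_{\cat\ppd}$ by splitting, exactly as you do. Your outline unpacks what is in those references (generation by weight~$1$, the two families of quadratic relations, and the normal-form/confluence argument), so the only difference is level of detail, not strategy.
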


\begin{proof}
The first statement is standard; for example, the result is contained within  \cite{MR1301191} (see \cite{MR4835394} for further details).

The projectivity statements follow from Lemma \ref{lem:case_opd_reduced}, which ensures that $\cat \wt{1} \ppd $ and $\cat \wt{2} \ppd$ are both free as $\kk \fb$-modules. From this, one deduces that $I_{\cat \ppd}$ is projective. The finite generation statements follow from the hypothesis that $\ppd (2)$ is a finitely-generated module.
\end{proof}

The $\kk \fb$-bimodule $\cat \wt{1} \ppd$ can be described explicitly in terms of $\ppd (2)$ (considered as a $\kk\fb\op$-module) using the description of $\cat \ppd$ in the reduced case given in Lemma \ref{lem:case_opd_reduced}. One has the following identification of   $\fsbase (-,-)$ from  Notation \ref{nota:fsbase} in `weight one':

\begin{lem}
\label{lem:fsbase_wt1}
For $n \in \nat$, $\fsbase (\mathbf{n+1}, \mathbf{n})$ is the set $\{ f_{i,j} \mid 1 \leq i < j \leq n+1 \}$, where $f_{i,j}$ is the unique surjection such that $ f(j)=i$ and $f_{i,j}$ restricted to $\mathbf{n+1} \backslash \{ j\}$ is order preserving.  
Explicitly:
\[
f_{i,j} (k) = \left\{ 
\begin{array}{ll}
k & k< j \\
i & k=j \\
k-1 &k >j.
\end{array}
\right.
\]
\end{lem}

By construction,  $\kk \fs (\mathbf{n+1}, \mathbf{n})$  is freely generated as a $\kk \sym_n$-module by $\{ [f_{i,j}] \mid 
1 \leq i < j \leq n+1 \}$.

We adopt the following shorthand notation:

\begin{nota}
\label{nota:ppd_ij}
For $ 1 \leq i < j \leq n+1$, let $\ppd (\{i,j \})$ denote the direct summand of $ \cat \wt{1} \ppd (\mathbf{n+1}, \mathbf{n}) $ indexed by $f_{i,j} \in \fsbase (\mathbf{n+1}, \mathbf{n})$. 
\end{nota}

\begin{lem}
\label{lem:cat_wt1_ppd}
For $n \in \nat$, there is an isomorphism of $\kk (\sym_{n+1}\op \times \sym_n)$-modules
\[
\cat \wt{1} \ppd (\mathbf{n+1}, \mathbf{n}) 
\cong
\kk \sym_n \otimes_\kk 
\bigoplus_{1 \leq i<j \leq n} 
\ppd (\{i, j\}) 
\]
where the $\kk \sym_{n+1}\op$-module structure on the right hand side is given by Lemma \ref{lem:case_opd_reduced}.
 In particular, as a $\kk \sym_n$-module, $\cat \wt{1} \ppd (\mathbf{n+1}, \mathbf{n})$ is freely-generated by  $\bigoplus_{1 \leq i<j \leq n} 
\ppd (\{i, j\}) $.
\end{lem}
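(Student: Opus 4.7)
The plan is to specialize the isomorphism (\ref{eqn:catopd_fsbase}) provided by Lemma \ref{lem:case_opd_reduced} to the pair $(\mathbf{m},\mathbf{n}) = (\mathbf{n+1},\mathbf{n})$. By the definition of the weight grading, $\cat \wt{1} \ppd (\mathbf{n+1}, \mathbf{n}) = \cat \ppd (\mathbf{n+1}, \mathbf{n})$, since $(n+1) - n = 1$. Moreover, since $\ppd$ is reduced with $\uppd (1) = \kk$, its restriction $\ppd (1) = \kk$ as well, so Lemma \ref{lem:case_opd_reduced} applies.

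Next, I would enumerate $\fsbase (\mathbf{n+1}, \mathbf{n})$ via Lemma \ref{lem:fsbase_wt1} as the finite set $\{ f_{i,j} \mid 1 \leq i < j \leq n+1\}$ and compute the fibres of each $f_{i,j}$ from the explicit formula. A short case analysis on the three regions $k<j$, $k=j$, $k>j$ shows that $f_{i,j}^{-1}(i) = \{i,j\}$, while $|f_{i,j}^{-1}(\ell)| = 1$ for every $\ell \in \mathbf{n} \setminus \{i\}$. Because $\ppd(1) = \kk$, the tensor product $\bigotimes_{\ell \in \mathbf{n}} \ppd (f_{i,j}^{-1}(\ell))$ collapses to a single factor $\ppd (\{i,j\})$ (all other factors being canonically identified with $\kk$).

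Substituting these identifications termwise into (\ref{eqn:catopd_fsbase}) gives the claimed $\kk \sym_n$-free decomposition
\[
\cat \ppd (\mathbf{n+1}, \mathbf{n}) \cong \kk \sym_n \otimes_\kk \bigoplus_{1 \leq i < j \leq n+1} \ppd (\{i,j\}),
\]
together with the fact that the right $\sym_n$-action is the regular one on the left tensor factor, so that the underlying $\kk \sym_n$-module is free on $\bigoplus_{i<j} \ppd (\{i,j\})$. The right $\sym_{n+1}\op$-action is then the one inherited from the description in Lemma \ref{lem:case_opd_reduced}, acting through the left-composition on $\fsbase$-indexed summands followed by the straightening rule needed to bring the result back into base form.

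There is no serious obstacle: the entire statement is a bookkeeping specialization of Lemma \ref{lem:case_opd_reduced}. The only step requiring care is the elementary fibre computation for $f_{i,j}$, and the verification that the $\kk \sym_{n+1}\op$-action stated really is the one induced from Lemma \ref{lem:case_opd_reduced} (which amounts to unpacking how $\sym_{n+1}$ permutes the pairs $\{i,j\}$ together with the $\sym_n$-factor through the base-change).
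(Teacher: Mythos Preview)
Your proposal is correct and follows exactly the same route as the paper's proof, which simply invokes Lemma~\ref{lem:case_opd_reduced} together with the enumeration of $\fsbase(\mathbf{n+1},\mathbf{n})$ from Lemma~\ref{lem:fsbase_wt1}; you have just filled in the fibre computation and the collapse of the $\ppd(1)=\kk$ factors explicitly. Note also that your index range $1\leq i<j\leq n+1$ is the correct one (matching Lemma~\ref{lem:fsbase_wt1}), whereas the displayed formula in the statement has $j\leq n$, which appears to be a typo.
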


\begin{proof}
This follows directly from Lemma \ref{lem:case_opd_reduced} in conjunction with the description of $\fsbase (\mathbf{n+1}, \mathbf{n})$ given in Lemma \ref{lem:fsbase_wt1}. 
\end{proof}

The finitely-generated projectivity property of Proposition \ref{prop:cat_ppd_homog_quad} allows us to apply (left) quadratic duality as in Section \ref{sect:qdual}.

\begin{cor}
\label{cor:quad_dual_cat_ppd}
The left quadratic dual $(\cat \ppd)\qd$ is the quadratic algebra associated to the right quadratic datum:
\[
(\kk \fb ; \hom_{\kk \fb} (\cat \wt{1} \ppd, \kk \fb) , \hom_{\kk \fb} (\cat\wt{2} \ppd, \kk \fb) ).
\]
Moreover, there are isomorphisms of $\kk \fb$-bimodules:
\begin{eqnarray*}
\hom_{\kk \fb} (\cat \wt{1} \ppd, \kk \fb) 
&\cong &
(\cat \wt{1} \ppd)^\sharp \\
\hom_{\kk \fb} (\cat\wt{2} \ppd, \kk \fb)
&\cong &
(\cat \wt{2} \ppd)^\sharp.
\end{eqnarray*}

For each $a \in \nat$, the $\kk \fb\op$-modules $\hom_{\kk \fb} (\cat \wt{1} \ppd, \kk \fb) (-,\mathbf{a})$ and $\hom_{\kk \fb} (\cat\wt{2} \ppd, \kk \fb) (-,\mathbf{a})$ are finitely-generated projective.
\end{cor} 

\begin{proof}
The first statement follows immediately from Proposition \ref{prop:cat_ppd_homog_quad}, by the definition of quadratic duality. 
 The bimodule identifications follow from the general result for duality, Lemma \ref{lem:duality}.
\end{proof}

\begin{rem}
\label{rem:cat_wt1_ppd_dual}
Lemma \ref{lem:cat_wt1_ppd} allows the generating $\kk \fb$-bimodule $(\cat \wt{1} \ppd)^\sharp$ to be identified explicitly. 
In particular, for $n \in \nat$, $(\cat \wt{1} \ppd)^\sharp (\mathbf{n},\mathbf{n+1})$ is freely generated as a $\kk \sym_n\op$-module by 
$
\bigoplus_{1 \leq i<j \leq n} 
\ppd (\{i, j\})^\sharp.
$
\end{rem}

The following is clear from the construction:

\begin{prop}
\label{prop:cat_ppd_!_non-negative}
The underlying (graded) $\kk \fb$-bimodule of $(\cat \ppd)\qd$ is non-negative and takes values of (total) finite dimension.
 Moreover, for $n \in \nat$, $\big((\cat \ppd)\qd\big)\dg{n}$ is concentrated in cohomological degree $n$.
\end{prop}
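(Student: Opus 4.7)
The plan is to leverage the explicit description of the generating bimodule of $(\cat \ppd)\qd$ provided by Corollary \ref{cor:quad_dual_cat_ppd} and Remark \ref{rem:cat_wt1_ppd_dual}, and then propagate its properties through the quadratic dual construction.

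First, I would observe that $\cat \wt{1} \ppd$ is concentrated in $\zed$-degree $-1$ (in the sense of Definition \ref{defn:Z-grade_fb-bimodules}), since, by Lemma \ref{lem:cat_wt1_ppd}, $\cat \wt{1} \ppd(\mathbf{a}, \mathbf{b})$ vanishes unless $a = b+1$. The duality functor $(-)^\sharp$ swaps arguments, so Corollary \ref{cor:quad_dual_cat_ppd} shows that the generating $\kk\fb$-bimodule $(\cat \wt{1} \ppd)^\sharp$ is concentrated in $\zed$-degree $+1$. Moreover, it is of finite total dimension evaluated on any $(\mathbf{a}, \mathbf{b})$, since $\ppd$ is of finite type by Hypothesis \ref{hyp:ppd_bin_quad}, so the same holds for $\cat \ppd$ and hence for $(\cat \wt{1} \ppd)^\sharp$ by Lemma \ref{lem:preserve_ft}.

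Next, by definition of quadratic duality, $\big((\cat \ppd)\qd\big)\dg{n}$ identifies with the cohomological-degree-$n$ component of the quadratic dual, and is a subquotient of the $n$-fold tensor power $T^n_{\kk\fb}\big((\cat \wt{1} \ppd)^\sharp\big)$. Since $(\cat \wt{1} \ppd)^\sharp$ is concentrated in $\zed$-degree $+1$ and non-negative, Proposition \ref{prop:preserve_non-neg} applied iteratively shows that $T^n_{\kk\fb}\big((\cat \wt{1} \ppd)^\sharp\big)$ is non-negative and concentrated in $\zed$-degree $+n$. This forces $\big((\cat \ppd)\qd\big)\dg{m}$ to vanish for $m \neq n$ when restricted to the cohomological degree-$n$ piece, giving simultaneously that the underlying bimodule is non-negative and that the cohomological degree $n$ part coincides with the $\zed$-degree $n$ part.

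For the finite dimensionality, I would invoke Proposition \ref{prop:preserve_non-neg} once more: evaluating $T^n_{\kk\fb}\big((\cat \wt{1} \ppd)^\sharp\big)$ on $(\mathbf{a}, \mathbf{b})$ yields a finite direct sum (indexed by intermediate arities bounded between $a$ and $b$) of tensor products over finite groups of finite-dimensional spaces, hence is finite-dimensional. Since $\big((\cat \ppd)\qd\big)\dg{n}$ is a subquotient, it too is finite-dimensional. The total dimension assertion then follows once one notes that, evaluated on a fixed pair $(\mathbf{a}, \mathbf{b})$, only the single degree $n = b-a$ contributes. I expect no significant obstacle here: all the ingredients are already in place, and the argument is essentially bookkeeping between the $\zed$-grading, the cohomological grading, and the finiteness properties established in Sections \ref{sect:bimodules} and \ref{sect:finiteness_properties}.
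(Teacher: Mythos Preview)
Your argument is correct and is precisely the unpacking of what the paper leaves implicit: the paper simply states that the result is ``clear from the construction'' and gives no further proof. Your bookkeeping between the $\zed$-grading of $(\cat\wt{1}\ppd)^\sharp$, the cohomological grading of the tensor powers, and the finiteness results of Section~\ref{sect:finiteness_properties} is exactly what is needed to make this explicit.
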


\subsection{The  dualizing complex $K^\vee (\cat \ppd)$}

In this context, working over $\kk\fb$,   the appropriate dualizing complex is $K^\vee (\cat \ppd)$, as in Section \ref{sect:kcx}. This has underlying object
\[
K^\vee (\cat \ppd)= (\cat \ppd)\qd \otimes_\fb (\cat \ppd) 
\]
as a $(\cat \ppd)\qd\boxtimes (\cat \ppd)\op$-module, where
the cohomological degree is inherited from  $(\cat \ppd)\qd$.

The differential is induced by the inner multiplication with the class $e$ (cf. Notation \ref{nota:e_e_prime});  this is given by the natural transformation 
\begin{eqnarray}
\label{eqn:e_cat_ppd}
\bmu \rightarrow (\cat \wt{1} \ppd)^\sharp \otimes_\fb \cat \wt{1} \ppd 
\end{eqnarray}
that is described explicitly as follows. 

\begin{nota}
Fix $\{ \zeta^\ell \mid \ell \}$ a (finite) $\kk$-vector space basis for $\ppd (2)$. 
\begin{enumerate}
\item 
For natural numbers $1 \leq i<j$, denote by  $\{ \zeta^\ell _{i,j} \mid \ell \}$ the corresponding $\kk$-vector space  basis for $\ppd (\{i, j\})$ induced by the bijection $\mathbf{2} \cong \{i,j\}$ that sends $1 \mapsto i$. 
\item 
Denote by $\{ (\zeta^\ell _{i,j})^\sharp \mid \ell \}$ the dual $\kk$-vector space  basis for $\ppd (\{i, j\})^\sharp$.
\end{enumerate}
\end{nota}

Unravelling the definitions, one has:

\begin{lem}
\label{lem:e_kv_cat_ppd}
For $n \in \nat$, the map (\ref{eqn:e_cat_ppd}) evaluated on $(\mathbf{n+1}, \mathbf{n+1})$ 
 identifies with the $\kk \sym_{n+1}$-bimodule map
\begin{eqnarray*}
\kk \sym_{n+1} 
&\rightarrow &
 (\cat \wt{1} \ppd)^\sharp (\mathbf{n}, \mathbf{n+1})\otimes_{\sym_n} \cat \wt{1} \ppd 
(\mathbf{n+1}, \mathbf{n})
\\
\ [e] 
&\mapsto &  
\sum _{\substack{\ell \\ 1 \leq i<j\leq n+1} }
(\zeta^\ell _{i,j})^\sharp \otimes \zeta^\ell_{i,j} .
\end{eqnarray*}
\end{lem}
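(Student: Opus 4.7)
The plan is to unwind the defining description of the class $e$ from Notation \ref{nota:e_e_prime}, transported to the present bimodule setting, and then evaluate everything at $(\mathbf{n+1}, \mathbf{n+1})$. First, by Proposition \ref{prop:cat_ppd_homog_quad}, $\cat \wt{1} \ppd (\mathbf{a}, -)$ is finitely-generated projective as a $\kk \fb$-module for each $a \in \nat$; this is precisely the projectivity hypothesis that permits the formation of $e$ in this multi-object context. Namely, $e$ is the preimage of the identity $\id_{\cat \wt{1} \ppd}$ under the natural $\kk\fb$-bimodule isomorphism
\[
(\cat \wt{1} \ppd)^\sharp \otimes_\fb \cat \wt{1} \ppd \stackrel{\cong}{\longrightarrow} \hom_{\kk \fb} (\cat \wt{1} \ppd, \cat \wt{1} \ppd),
\]
using Lemma \ref{lem:duality} to identify $\hom_{\kk\fb}(\cat\wt{1}\ppd, \kk\fb)$ with $(\cat\wt{1}\ppd)^\sharp$.

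Next I would evaluate the natural transformation (\ref{eqn:e_cat_ppd}) on the pair $(\mathbf{n+1}, \mathbf{n+1})$. Since $\cat \wt{1} \ppd (\mathbf{s}, \mathbf{t}) = 0$ unless $s - t = 1$ (and dually $(\cat \wt{1} \ppd)^\sharp(\mathbf{s}, \mathbf{t}) = 0$ unless $t - s = 1$), the formula for $\otimes_\fb$ of bimodules (Proposition \ref{prop:preserve_non-neg}, or direct inspection) collapses to a single summand indexed by $a = n$, giving
\[
(\cat \wt{1} \ppd)^\sharp(\mathbf{n}, \mathbf{n+1}) \otimes_{\sym_n} \cat \wt{1} \ppd (\mathbf{n+1}, \mathbf{n})
\ \cong \
\hom_{\kk \sym_n} \bigl( \cat \wt{1} \ppd (\mathbf{n+1}, \mathbf{n}),\ \cat \wt{1} \ppd (\mathbf{n+1}, \mathbf{n}) \bigr),
\]
the second isomorphism coming from the finite projectivity. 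Similarly, $\bmu(\mathbf{n+1}, \mathbf{n+1}) = \kk\sym_{n+1}$ acts on $\cat\wt{1}\ppd(\mathbf{n+1}, \mathbf{n})$ via the canonical left $\sym_{n+1}$-action, so evaluation at $(\mathbf{n+1},\mathbf{n+1})$ of the map $\bmu \to \hom_{\kk\fb}(\cat\wt{1}\ppd, \cat\wt{1}\ppd)$ sends $[e]$ to the identity endomorphism of $\cat\wt{1}\ppd(\mathbf{n+1},\mathbf{n})$ as a $\kk\sym_n$-module.

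It then remains to write the identity endomorphism as a tensor. By Lemma \ref{lem:cat_wt1_ppd}, $\cat \wt{1} \ppd (\mathbf{n+1}, \mathbf{n})$ is free as a left $\kk \sym_n$-module on the basis $\{ \zeta^\ell_{i,j} \mid \ell,\ 1 \leq i < j \leq n+1 \}$, and the dual basis of $(\cat \wt{1} \ppd)^\sharp(\mathbf{n}, \mathbf{n+1})$ is $\{ (\zeta^\ell_{i,j})^\sharp \}$. Under the canonical isomorphism $V^\sharp \otimes_{\sym_n} V \cong \hom_{\kk \sym_n}(V, V)$ for any finitely-generated free $\kk\sym_n$-module $V$, the identity endomorphism corresponds to $\sum_k v_k^\sharp \otimes v_k$ for any basis $\{v_k\}$ with dual $\{v_k^\sharp\}$; applied to the basis above this gives precisely $\sum_{\ell,\, 1\leq i<j\leq n+1} (\zeta^\ell_{i,j})^\sharp \otimes \zeta^\ell_{i,j}$.

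The only delicate point is the bookkeeping of the bimodule conventions, specifically the verification that evaluating the natural transformation $\bmu \to \hom_{\kk\fb}(\cat\wt{1}\ppd, \cat\wt{1}\ppd)$ at $(\mathbf{n+1},\mathbf{n+1})$ does give the map $[e] \mapsto \id$ at the level of $\cat\wt{1}\ppd(\mathbf{n+1},\mathbf{n})$; this is a direct unwinding of the definition of the bimodule structure on $\hom_{\kk\fb}(-,-)$ as displayed in the proof of Lemma \ref{lem:duality}.
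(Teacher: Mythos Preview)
Your proof is correct and follows exactly the approach indicated by the paper, whose own proof is the single line ``This follows by unravelling the constructions.'' You have supplied precisely those unravellings: identifying $e$ as the preimage of the identity, localizing $\otimes_\fb$ to the single summand $a=n$, and writing the identity in the free basis of Lemma~\ref{lem:cat_wt1_ppd}.
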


We have the following finiteness property, established using Proposition \ref{prop:bimod_non-neg_non-pos}:

\begin{prop}
\label{prop:finiteness_kv_cat_ppd}
The complex $K^\vee (\cat \ppd)$ evaluated on $(\mathbf{s}, \mathbf{t})$, for $s, t \in \nat$, is concentrated in cohomological degrees $[t-\min \{s,t\}, t]$. It is finite-dimensional in each cohomological degree.
\end{prop}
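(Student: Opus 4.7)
The plan is to read off the cohomological concentration and finite-dimensionality directly from the bigraded structure of the two tensor factors, using Proposition \ref{prop:bimod_non-neg_non-pos} to control the range of summation.

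First I would recall that, by Proposition \ref{prop:cat_ppd_!_non-negative}, the underlying $\kk\fb$-bimodule of $(\cat\ppd)\qd$ is non-negative and takes values of finite (total) dimension, and moreover the graded piece $\big((\cat\ppd)\qd\big)\dg{n}$ sits entirely in cohomological degree $n$. On the other hand, $\cat\ppd$ is non-positive (Proposition \ref{prop:cat_ppd_non-positive}) and of finite type (using Hypothesis \ref{hyp:ppd_bin_quad}, which implies $\dim_\kk \ppd(t)<\infty$ for all $t$), and is concentrated in cohomological degree zero.

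Next I would apply Proposition \ref{prop:bimod_non-neg_non-pos} with $B_1 = (\cat\ppd)\qd$ (non-negative) and $B_2 = \cat\ppd$ (non-positive). This yields, for $s,t \in \nat$, the identification
\[
K^\vee(\cat \ppd)(\mathbf{s}, \mathbf{t})
=
\bigoplus_{a \leq \min\{s,t\}} (\cat\ppd)\qd(\mathbf{a}, \mathbf{t}) \otimes_{\sym_a} \cat\ppd(\mathbf{s}, \mathbf{a}),
\]
as a $\kk$-module underlying the complex (forgetting the differential). The summation index $a$ runs through a finite set $\{0, 1, \ldots, \min\{s,t\}\}$.

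Finally I would extract both conclusions from this decomposition. For the cohomological concentration: the factor $(\cat\ppd)\qd(\mathbf{a},\mathbf{t})$ lies in $\big((\cat\ppd)\qd\big)\dg{t-a}$, so by Proposition \ref{prop:cat_ppd_!_non-negative} it is concentrated in cohomological degree $t-a$; the factor $\cat\ppd(\mathbf{s}, \mathbf{a})$ contributes degree zero. As $a$ varies over $\{0,\ldots,\min\{s,t\}\}$, the quantity $t-a$ takes precisely the values $t, t-1, \ldots, t-\min\{s,t\}$, giving the stated range $[t-\min\{s,t\}, t]$. For finite-dimensionality in each cohomological degree: each summand is a finite $\kk$-linear combination of tensor products of finite-dimensional $\kk$-vector spaces (both $(\cat\ppd)\qd$ and $\cat\ppd$ are of finite type), and only finitely many summands $a$ contribute to any fixed cohomological degree, so finiteness is immediate. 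No serious obstacle arises: the argument is a direct combinatorial consequence of the sign conventions together with the identification of the cohomological grading of $(\cat\ppd)\qd$ with its weight grading.
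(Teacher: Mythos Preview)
Your proof is correct and follows exactly the approach the paper indicates: the paper simply states that the result is ``established using Proposition~\ref{prop:bimod_non-neg_non-pos}'', and you have filled in precisely the details that make this work, invoking Propositions~\ref{prop:cat_ppd_!_non-negative} and~\ref{prop:cat_ppd_non-positive} to verify the hypotheses and then reading off the cohomological range and finite-dimensionality from the resulting decomposition.
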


The  complex $K^\vee (\cat \ppd)$ yields the adjunction:
\begin{eqnarray}
\label{eqn:adjunction_kv_cat_ppd}
K^\vee (\cat \ppd) \otimes_{\cat \ppd} - 
\cn 
\cat \ppd \dash \dgmod
\rightleftarrows 
(\cat \ppd)\qd \dash \dgmod
\cn 
\ihom_{(\cat \ppd)\qd} (K^\vee (\cat \ppd), -).
\end{eqnarray}

Moreover, there are natural isomorphisms:
\begin{eqnarray*}
K^\vee (\cat \ppd) \otimes_{\cat \ppd} -  & \cong & (\cat \ppd)\qd \otimes^e_\fb - \\
\ihom_{(\cat \ppd)\qd} (K^\vee (\cat \ppd), -)&\cong & \ihom_{\kk \fb}^e (\cat \ppd, -) ,
\end{eqnarray*}
where the superscript $e$ indicates that the differential is twisted using $e$. 

\begin{rem}
\label{rem:cat_ppd_non-pos}
This identification ensures that the underlying functors (with values in  $\kk \fb$-modules) behave well, since $(\cat \ppd)\qd$ is non-negative and $\cat \ppd$ is non-positive, using the results of Section \ref{sect:finiteness_properties}.
\end{rem}

The following is the counterpart of Proposition \ref{prop:exactness_vk_finj_left_modules}:

\begin{prop}
\label{prop:exact_colimit_cat_P}
The functors $K^\vee (\cat \ppd) \otimes_{\cat \ppd} - $ and $\ihom_{(\cat \ppd)\qd} (K^\vee (\cat \ppd), -)$ are exact and commute with colimits. Moreover, both functors preserve weak equivalences.
\end{prop}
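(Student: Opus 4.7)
The plan is to parallel the proof of Proposition \ref{prop:exactness_vk_finj_left_modules}, substituting the homogeneous quadratic category $\cat \ppd$ over $\kk \fb$ for $\kk \finj$ over $\kk \fb$. I would first specialize Lemma \ref{lem:K_left_right_adjs} to the (trivially-filtered) case with $\kring = R = \kk \fb$ and $C = A = \cat \ppd$, obtaining natural isomorphisms of the underlying (non-DG) functors
\begin{eqnarray*}
K^\vee (\cat \ppd) \otimes_{\cat \ppd} X & \cong & (\cat \ppd)\qd \otimes_\fb X, \\
\ihom_{(\cat \ppd)\qd}(K^\vee (\cat \ppd), Y) & \cong & \ihom_{\kk \fb}(\cat \ppd, Y),
\end{eqnarray*}
with the differentials twisted by $e$ as described in Lemma \ref{lem:e_kv_cat_ppd}. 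Throughout, the key structural inputs are Proposition \ref{prop:cat_ppd_!_non-negative}, stating that $(\cat \ppd)\qd$ is non-negative of finite total dimension, and Proposition \ref{prop:cat_ppd_non-positive}, stating that $\cat \ppd$ is non-positive. These play exactly the roles that non-negativity of $\kk \finj$ and non-positivity of $\kk \finj\qd$ did in the prior argument.

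For exactness and commutation with colimits, I would apply Proposition \ref{prop:preserve_non-neg} (together with its non-positive counterpart) to express both underlying functors, evaluated on each fixed arity $\mathbf{s}$, as finite direct sums
\begin{eqnarray*}
\big((\cat \ppd)\qd \otimes_\fb X\big)(\mathbf{s}) & = & \bigoplus_{a \leq s}(\cat \ppd)\qd(\mathbf{a}, \mathbf{s}) \otimes_{\sym_a} X(\mathbf{a}), \\
\ihom_{\kk \fb}(\cat \ppd, Y)(\mathbf{s}) & = & \bigoplus_{b \leq s} \hom_{\kk \sym_b}\big(\cat \ppd(\mathbf{s}, \mathbf{b}), Y(\mathbf{b})\big).
\end{eqnarray*}
Since the components of $(\cat \ppd)\qd$ are right finitely-generated projective over $\kk \sym_a$ (via Corollary \ref{cor:quad_dual_cat_ppd} combined with the freeness provided by Lemma \ref{lem:case_opd_reduced}), each summand on the right of the first line is an exact functor commuting with colimits; the second line's products reduce to finite direct sums by the non-positivity of $\cat \ppd$, and are exact and cocontinuous for the same reason. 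Twisting the differential by $e$ affects neither exactness nor commutation with colimits, as it is a degree-one perturbation.

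For preservation of weak equivalences, I would adapt the truncation argument outlined in the proof of Proposition \ref{prop:exactness_vk_finj_left_modules}. Proposition \ref{prop:sub/quotient_non_neg/pos}, applied to the non-positive $\cat \ppd$, shows that $X_{\leq s}$ is a sub $\cat \ppd$-module of any $\cat\ppd$-module $X$ with $X_{\geq s+1}$ as quotient; dually, $(\cat \ppd)\qd$ being non-negative gives that $Y_{\geq s}$ is sub and $Y_{\leq s}$ is quotient for any $(\cat \ppd)\qd$-module $Y$. After evaluation at fixed arity $\mathbf{s}$, Lemma \ref{lem:B_otimes_truncate} shows that only bounded portions contribute, and a standard comparison of spectral sequences reduces to the case where the $\cat \ppd$-action (respectively the $(\cat \ppd)\qd$-action) is obtained by restriction along the augmentation to $\kk \fb$. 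In this trivial-action case, the component $e'_2$ of the twist class $e$ lies in the augmentation ideal of $\cat \ppd$ and acts by zero, so the twisted differential collapses to the untwisted total-complex differential. A weak equivalence of $\kk \fb$-modules then yields one after applying $(\cat \ppd)\qd \otimes_\fb -$ or $\ihom_{\kk \fb}(\cat \ppd, -)$, exploiting the projectivity over $\kk \sym_a$ recalled above at each arity.

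The main obstacle will be making the reduction to the trivial-action case entirely rigorous: one must exhibit the filtration of a general module whose associated graded has augmented action, and verify that the associated spectral sequence is compatible with the twisted differential. The bounded-length property obtained after truncation at fixed arity makes the convergence automatic, so the crux is tracking the twisted differential through the filtration cleanly; this is the step requiring the most care, though it follows the blueprint already laid down in the $\kk \finj$ case.
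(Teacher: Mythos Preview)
Your proposal is correct and follows essentially the same approach as the paper's own proof, which is quite terse: the paper simply cites projectivity of $\cat \ppd$ and $(\cat \ppd)\qd$ over $\kk \fb$ (via Lemma \ref{lem:case_opd_reduced}) for exactness, invokes Proposition \ref{prop:cat_ppd_!_non-negative} for cocontinuity of the $\ihom$, and for preservation of weak equivalences refers back to the argument of Proposition \ref{prop:exactness_vk_finj_left_modules} using the non-positive/non-negative properties (cf.\ Remark \ref{rem:gen_colimits_exact_we}). Your write-up fleshes out precisely these steps.
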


\begin{proof}
Exactness is a consequence of the fact that $\cat \ppd$ is projective as a $\kk \fb$-module and $(\cat \ppd)\qd$ is projective as a $\kk \fb\op$-module; this property of $(\cat \ppd)\qd$ is proved similarly to the case of $\cat \ppd$ (see Lemma \ref{lem:case_opd_reduced}).

The fact that $K^\vee (\cat \ppd)\otimes_{\cat \ppd} - $ preserves colimits is clear; for $\ihom_{(\cat \ppd)\qd} (K^\vee (\cat \ppd), -)$ one uses Proposition \ref{prop:cat_ppd_!_non-negative}. 

The proof that these functors preserves weak equivalences is similar to that of Proposition \ref{prop:exactness_vk_finj_left_modules}, based on the fact that $\cat \ppd$ is non-positive and $(\cat \ppd)\qd$ is non-negative (cf. Remark \ref{rem:gen_colimits_exact_we}).
\end{proof}

For a DG $\cat \ppd$-module $M$, one has the adjunction unit:
\begin{eqnarray}
\label{eqn:adj_unit_cat_ppd_absolute}
M 
\rightarrow \ihom_{\kk \fb}^e (\cat \ppd, (\cat \ppd)\qd \otimes^e_\fb M),
\end{eqnarray}
and, for a DG $(\cat \ppd)\qd$-module $N$, the adjunction counit:
\begin{eqnarray}
\label{eqn:adj_counit_cat_ppd_absolute}
(\cat \ppd)\qd \otimes^e_\fb \ihom_{\kk\fb}^e (\cat \ppd,N) \rightarrow N.
\end{eqnarray}

Now, taking $M = \bmu$ and $N= \bmu$, considered respectively as a $\cat \ppd$-module (resp. a $(\cat \ppd)\qd$-module) via the canonical augmentations, these yield the natural maps:
\begin{eqnarray}
\label{eqn:ppd_unit}
\bmu
&\rightarrow  & \ihom_{\kk\fb}^e (\cat \ppd, (\cat \ppd)\qd)\cong (\cat \ppd)^\sharp \otimes_\fb^e (\cat \ppd)^\sharp
\\
\label{eqn:ppd_counit}
(\cat \ppd)\qd \otimes^e_\fb (\cat \ppd)^\sharp &\rightarrow &\bmu
\end{eqnarray}
which, forgetting structure, can be considered as maps of DG $\kk \fb$-modules.

The main structure theorem (see \cite{MR2954392} for example) for operadic Koszul duality (in the binary quadratic case) can be restated as:

\begin{thm}
\label{thm:bin_quad_operadic_duality}
Suppose that $\kk$ is a field of characteristic zero. The following conditions are equivalent: 
\begin{enumerate}
\item
the operad $\ppd$ is Koszul;
\item 
the unit (\ref{eqn:ppd_unit}) is a weak equivalence; 
\item 
the counit (\ref{eqn:ppd_counit}) is a weak equivalence.
\end{enumerate}
\end{thm}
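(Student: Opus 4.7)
The plan is to reduce this theorem to the classical Ginzburg--Kapranov Koszul duality for binary quadratic operads, by identifying the complexes appearing in (\ref{eqn:ppd_unit}) and (\ref{eqn:ppd_counit}), evaluated arity-by-arity, with the classical operadic Koszul complexes built from $\ppd$ and its Koszul dual $\ppd^!$. Since $\cat\ppd$ is non-positive and $(\cat\ppd)\qd$ is non-negative (Proposition \ref{prop:cat_ppd_!_non-negative}), Proposition \ref{prop:finiteness_kv_cat_ppd} guarantees that the relevant complexes are bounded and of finite total dimension in each arity, so arity-wise identifications make sense and weak equivalence can be tested one arity at a time.

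\textbf{Key steps.} First, I would use Lemma \ref{lem:case_opd_reduced} and Lemma \ref{lem:cat_wt1_ppd} to describe $\cat\ppd$ and, by taking duals as in Corollary \ref{cor:quad_dual_cat_ppd}, to describe $(\cat\ppd)\qd$ entirely in terms of the symmetric sequences $\ppd(n)$ and $\ppd^!(n)$; combining with the sheering functors of Section \ref{sect:sheer} one obtains an identification of $(\cat\ppd)\qd$ with (a suitably sheered version of) the PROP of $\ppd^!$ in the spirit of the formula $\big((\grad\cat\uppd)\qd\big)\desusp\cong(\cat\ppd^!)\op\otimes_\fb\kk\finj\desusp$ announced in the introduction. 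Second, I would unwind the formula for the twisted differential on $(\cat\ppd)\qd\otimes_\fb^{e}\cat\ppd$ given by Lemma \ref{lem:e_kv_cat_ppd}: the class $e$ pairs each generator $\zeta^\ell_{i,j}\in\ppd(2)$ with its dual, so inner multiplication by $e$ produces the standard sum of infinitesimal operadic compositions matched with their cooperadic counterparts. Consequently, the evaluation of the counit (\ref{eqn:ppd_counit}) on $(\mathbf{n},\mathbf{n})$ is isomorphic to the classical Koszul complex $(\ppd^!\circ\ppd)(n)\to\bmu(n)$, up to a bookkeeping shift.

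Third, by Ginzburg--Kapranov (\cite{MR1301191}, cf.\ \cite{MR2954392}), $\ppd$ is Koszul if and only if these classical complexes are acyclic away from arity~$n$ in homological degree~$0$, giving (1)$\Leftrightarrow$(3). The equivalence (1)$\Leftrightarrow$(2) is obtained by the same identification applied to the unit (\ref{eqn:ppd_unit}), after noting that its evaluation on $(\mathbf{n},\mathbf{n})$ is the $\kk$-linear dual of the classical complex for $\ppd^!$ (using that in characteristic zero the dual of a permutation module is a permutation module, cf.\ Remark \ref{rem:permutation_module}), and that $\ppd$ is Koszul if and only if $\ppd^!$ is. Alternatively, (2)$\Leftrightarrow$(3) can be extracted directly from the adjunction (\ref{eqn:adjunction_kv_cat_ppd}): applying $\ihom_{\kk\fb}^e(\cat\ppd,-)$ to the counit with $N=\bmu$ and using arity-wise finite dimensionality together with Proposition \ref{prop:exact_colimit_cat_P} transports the weak equivalence from one side to the other.

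\textbf{Main obstacle.} The principal technical difficulty is bookkeeping the identification of $(\cat\ppd)\qd$ with the PROP of $\ppd^!$ including all signs and suspensions arising from Section \ref{sect:sheer}, so that the twisted differential induced by $e$ matches exactly (not merely up to sign) the operadic Koszul differential in its standard form. The characteristic zero hypothesis enters crucially both in Ginzburg--Kapranov's theorem itself and in ensuring that the various coinvariants, invariants, and duals appearing in passing from operads to PROPs can be freely interchanged; once these identifications are made precise, the theorem follows directly from its well-known operadic counterpart.
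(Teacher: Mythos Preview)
The paper does not give a proof of this theorem: it is stated explicitly as a restatement of the classical structure theorem for operadic Koszul duality, with a citation to \cite{MR2954392}. Your sketch is precisely the kind of argument one would give to justify that restatement, and the overall strategy---identify $(\cat\ppd)\qd$ with $(\cat\ppd^!)\op$ up to the sheering of Section~\ref{sect:sheer}, then match the twisted differential induced by $e$ with the operadic Koszul differential---is correct. The identification you need is exactly Proposition~\ref{prop:desusp_cat_ppd_qd}, which the paper records immediately after the theorem.

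One imprecision worth flagging: you say the counit evaluated on $(\mathbf{n},\mathbf{n})$ is the operadic Koszul complex in arity $n$. In fact the diagonal evaluation is trivial: since $(\cat\ppd)\qd$ is non-negative and $(\cat\ppd)^\sharp$ is non-negative as well, the source at $(\mathbf{n},\mathbf{n})$ reduces to $\kk\sym_n$ concentrated in degree~$0$ and the counit is already an isomorphism there. The content lies off the diagonal, at pairs $(\mathbf{s},\mathbf{t})$ with $s<t$, where one must show acyclicity. It is these off-diagonal evaluations that, after the sheering identification, become the operadic Koszul complexes $\ppd^{\text{!`}}\circ\ppd$ (or their duals) in the various arities. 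With that correction, your reduction to Ginzburg--Kapranov is sound, and your remark that the equivalence (2)$\Leftrightarrow$(3) can alternatively be obtained via duality and Proposition~\ref{prop:exact_colimit_cat_P} is also valid. The paper's Remark~\ref{rem:cat_ppd_Koszul} points in the same direction, noting that the conditions are equivalent to $\cat\ppd$ being Koszul as a $\kk$-linear category over $\kk\fb$.
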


\begin{rem}
\label{rem:cat_ppd_Koszul}
The  conditions of Theorem \ref{thm:bin_quad_operadic_duality} are also equivalent to the condition that $\cat \ppd$ is a Koszul  category over $\kk \fb$. 
\end{rem}

The arguments used in establishing the BGG correspondence in Section \ref{subsect:finj_BGG} carry over to this context. For example, one has:

\begin{thm}
\label{thm:absolute_lbd_equivalence}
The adjunction (\ref{eqn:adjunction_kv_cat_ppd}) restricts to the adjunction for locally bounded objects:
\[
K^\vee (\cat \ppd) \otimes_{\cat \ppd} - 
\cn 
\cat \ppd \dash \lbdgmod
\rightleftarrows 
(\cat \ppd)\qd \dash \lbdgmod
\cn 
\ihom_{(\cat \ppd)\qd} (K^\vee (\cat \ppd), -).
\]

Suppose that $\kk$ is a field of characteristic zero. Then  $\ppd$ is Koszul  if and only if this adjunction induces an equivalence between the respective homotopy categories:
\[
\holbd(\cat \ppd)
\stackrel{\simeq}{\rightleftarrows}
\holbd((\cat \ppd)\qd).
\]
\end{thm}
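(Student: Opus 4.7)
My first step will be to verify the restriction to locally bounded objects. Using the identifications $K^\vee(\cat\ppd)\otimes_{\cat\ppd}-\cong (\cat\ppd)\qd\otimes_\fb^e-$ and $\ihom_{(\cat\ppd)\qd}(K^\vee(\cat\ppd),-)\cong\ihom_{\kk\fb}^e(\cat\ppd,-)$ noted after (\ref{eqn:adjunction_kv_cat_ppd}), it will suffice to check that the underlying (untwisted) functors preserve local boundedness, since the twistings do not affect the underlying graded objects. By Proposition \ref{prop:cat_ppd_!_non-negative}, $(\cat\ppd)\qd$ is non-negative, takes values of finite total dimension, and in each bidegree $(\mathbf a,\mathbf b)$ is concentrated in the single cohomological degree $b-a$; hence it is locally bounded as a DG bimodule. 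By Proposition \ref{prop:cat_ppd_non-positive}, $\cat\ppd$ is non-positive and of finite type. Proposition \ref{prop:loc_bd_preservation}, together with its non-positive counterpart obtained by transport under $(-)\op$, then delivers the required preservation on both sides.

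For the \emph{only if} direction of the equivalence statement, I simply evaluate the unit (\ref{eqn:adj_unit_cat_ppd_absolute}) and the counit (\ref{eqn:adj_counit_cat_ppd_absolute}) at $\bmu$ (viewed as a DG module via the respective augmentations): the resulting maps are precisely (\ref{eqn:ppd_unit}) and (\ref{eqn:ppd_counit}), so an equivalence on $\holbd$ forces both of these to be weak equivalences, and Theorem \ref{thm:bin_quad_operadic_duality} then forces $\ppd$ to be Koszul.

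For the \emph{if} direction, assuming $\ppd$ is Koszul, I will show that the unit is a weak equivalence for every locally bounded DG $\cat\ppd$-module $M$, the counit being treated by the symmetric argument that swaps the roles of $\cat\ppd$ and $(\cat\ppd)\qd$ and uses truncations of the form $N_{\geq s}$. The strategy mirrors the proof sketch of Theorem \ref{thm:BGG}. Fix an arbitrary arity $\mathbf s$; Lemma \ref{lem:B_otimes_truncate}, applied first to $(\cat\ppd)\qd$ (non-negative) and then to $\cat\ppd$ (non-positive), shows that the $\mathbf s$-component of the unit depends only on $M_{\leq s}$, which is a sub DG $\cat\ppd$-module by Proposition \ref{prop:sub/quotient_non_neg/pos}. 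The finite ascending filtration $M_{\leq 0}\subset M_{\leq 1}\subset\cdots\subset M_{\leq s}$ has sub-quotients concentrated in a single arity, and exactness together with preservation of weak equivalences (Proposition \ref{prop:exact_colimit_cat_P}), combined with a standard induction on the length of the filtration using long exact sequences in cohomology, reduces the problem to the case of $M$ concentrated in one arity $\mathbf t\leq s$. In that case the non-positivity of $\cat\ppd$ forces the action on $M$ to factor through the augmentation $\cat\ppd\twoheadrightarrow\bmu$, so $M$ amounts to a bounded complex of $\kk\sym_t$-modules. In characteristic zero, $\kk\sym_t$ is semisimple, so this complex is formal and splits as a finite direct sum of shifted simple representations, each of which is a direct summand of the regular representation $\kk\sym_t=\bmu(\mathbf t,\mathbf t)$. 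Since both adjoint functors commute with finite direct sums and cohomological shifts and preserve weak equivalences, the verification reduces to the unit at $\bmu$ concentrated on $\mathbf t$, which is a direct summand of the unit at the full $\bmu$, i.e., of the map (\ref{eqn:ppd_unit}): this is a weak equivalence by Theorem \ref{thm:bin_quad_operadic_duality} and the Koszul hypothesis.

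The hardest part will be keeping all of the truncation, arity filtration, and cohomological-degree reduction steps simultaneously compatible with the $e$-twisted differential, and ensuring that the long exact sequences (or bounded spectral sequences) invoked are strongly convergent after evaluation at each arity. These convergence matters are controlled throughout by the local boundedness hypothesis combined with the non-positivity of $\cat\ppd$ and the non-negativity of $(\cat\ppd)\qd$, which together guarantee that only finitely many filtration steps contribute in any fixed arity. Once these reductions are assembled, the entire argument rests on the classical operadic Koszul duality packaged in Theorem \ref{thm:bin_quad_operadic_duality}.
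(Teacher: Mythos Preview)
Your proposal is correct and follows essentially the same approach as the paper. The paper does not give an explicit proof of this theorem; it simply asserts that ``the arguments used in establishing the BGG correspondence in Section~\ref{subsect:finj_BGG} carry over to this context,'' and your reduction via arity truncations (using Propositions~\ref{prop:sub/quotient_non_neg/pos} and~\ref{prop:exact_colimit_cat_P}), followed by the characteristic-zero reduction to the regular representation and an appeal to Theorem~\ref{thm:bin_quad_operadic_duality}, is precisely the adaptation of the Theorem~\ref{thm:BGG} argument that the paper has in mind. Your explicit treatment of the ``only if'' direction and the restriction to locally bounded objects fills in details the paper leaves implicit, but introduces no new ideas beyond those already present.
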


\subsection{Desuspending $(\cat \ppd)\qd$}
\label{subsect:desusp_cat_ppd_qd}

Since the cohomological degree of $(\cat \ppd)\qd$ and its canonical bimodule grading coincide (see Proposition \ref{prop:cat_ppd_!_non-negative}), following the  usual practice in operad theory, one desuspends.  For this, one uses the sheering functor $(-)\desusp$ reviewed in Section \ref{sect:sheer}, 
 Namely, one considers 
$
\big( (\cat \ppd)\qd \big) \desusp. 
$ 
This is a unital monoid in $\kk \fb$-bimodules that is concentrated in cohomological degree zero. More precisely, it is quadratic, associated to the Koszul dual  operad $\ppd^!$ of $\ppd$ (up to the functor $(-)\op$):

\begin{prop}
\label{prop:desusp_cat_ppd_qd}
For $\ppd$ a binary quadratic operad, there is an isomorphism of $\kk$-linear categories:
\[
\big( (\cat \ppd)\qd \big) \desusp
\cong 
(\cat \ppd^!)\op.
\]
\end{prop}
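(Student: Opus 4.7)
The plan is to identify both sides as homogeneous quadratic $\kk$-linear categories over $\kk\fb$, concentrated in cohomological degree $0$, and then to match their quadratic data. By Corollary \ref{cor:quad_dual_cat_ppd} the category $(\cat\ppd)\qd$ is homogeneous quadratic over $\kk\fb$ with generating bimodule $(\cat\wt{1}\ppd)^\sharp$ supported in bidegree $(\n,\mathbf{n+1})$. By Proposition \ref{prop:cat_ppd_!_non-negative}, $\big((\cat\ppd)\qd\big)\dg{n}$ lies in cohomological degree $n$, so the $\Sigma^{-n}$-shift built into the $\ls$-component of $(-)\desusp$ (cf.\ Definition \ref{defn:ls_rs} and Remark \ref{rem:Z_grading_sheering}) places $\big((\cat\ppd)\qd\big)\desusp$ in cohomological degree $0$. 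Since $(-)\desusp$ is a monoidal self-equivalence of $\kk\fb$-bimodules (Corollary \ref{cor:composites_monoidal}), the desuspension is again a homogeneous quadratic $\kk$-linear category, with generating bimodule $\big((\cat\wt{1}\ppd)^\sharp\big)\desusp$ and relations obtained by applying $(-)\desusp$ to those of $(\cat\ppd)\qd$. The opposite category $(\cat\ppd^!)\op$ is visibly homogeneous quadratic over $\kk\fb$ and concentrated in cohomological degree $0$.

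Next I would match the generators at bidegree $(\n,\mathbf{n+1})$. By Lemma \ref{lem:cat_wt1_ppd} and Remark \ref{rem:cat_wt1_ppd_dual}, $(\cat\wt{1}\ppd)^\sharp(\n,\mathbf{n+1})$ is freely generated as a $\kk\sym_n\op$-module by $\bigoplus_{1\leq i<j\leq n+1}\ppd(\{i,j\})^\sharp$. Applying the $\ddag$-component of $(-)\desusp$ twists by $\sgn_n\boxtimes\sgn_{n+1}$: the $\sgn_n$ is absorbed into a re-identification of the free $\sym_n$-module basis, whereas the restriction of $\sgn_{n+1}$ along the inclusion $\sym_{\{i,j\}}\hookrightarrow\sym_{n+1}$ contributes an extra $\sgn_2$ factor on each summand $\ppd(\{i,j\})^\sharp$. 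On the other side, Lemma \ref{lem:cat_wt1_ppd} applied to the reduced operad $\ppd^!$ identifies the weight-one part of $\cat\ppd^!$ at $(\mathbf{n+1},\n)$ as a free $\kk\sym_n$-module on $\bigoplus_{i<j}\ppd^!(\{i,j\})$; since $\ppd^!(2)=\ppd(2)^\sharp\otimes\sgn_2$ by definition of the Koszul dual operad, passage to the opposite category precisely matches the computation on the $\big((\cat\ppd)\qd\big)\desusp$ side.

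Finally I would match the quadratic relations. Both $I_{\cat\ppd}$ and the relations of $\cat\ppd^!$ split into two kinds: the operadic quadratic relations coming from $\ppd$ (respectively $\ppd^!$), and the PROP-level relations encoding that operations acting on disjoint inputs commute. The second kind is independent of the operad and matches under $(-)\desusp$ composed with $(-)\op$ once the sign contributions from the $\ddag$-twist are unpacked. For the first kind, taking the orthogonal complement of the quadratic relations of $\ppd$ (which is what enters the definition of $(\cat\ppd)\qd$) and then applying the $\sgn$-twist coming from $(-)\desusp$ is exactly the procedure defining the quadratic relations of $\ppd^!$ in classical binary quadratic operadic Koszul duality (see for instance \cite{MR2954392, MR1301191}); this identifies the two sets of relations. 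The universal property of quadratic $\kk$-linear categories over $\kk\fb$ then yields the isomorphism.

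The main obstacle will be the sign bookkeeping in the final step: one must verify that the composite of $(-)^\sharp$, the orthogonal complement taken inside $(\cat\wt{1}\ppd)^\sharp\otimes_\fb (\cat\wt{1}\ppd)^\sharp$, the $\sgn\boxtimes\sgn$-twist from the $\ddag$-component of $(-)\desusp$, and passage to $(-)\op$ reproduces exactly the sign convention used in the definition of $\ppd^!$ at arity $3$, where the non-trivial Koszul-type signs enter the rewriting of the quadratic relations. Once this weight-$2$ check is carried out the rest of the argument is formal.
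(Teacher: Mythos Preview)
The paper states this proposition without proof, treating it as a translation of classical binary quadratic operadic Koszul duality (as in \cite{MR1301191,MR2954392}) into the language of PROPs and the sheering formalism of Section~\ref{sect:sheer}. Your approach---showing that $(-)\desusp$ carries the quadratic datum of $(\cat\ppd)\qd$ to that of $(\cat\ppd^!)\op$ by matching generators and relations---is the natural way to make this explicit and is correct.

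One remark on your relation-matching step: you describe the relations as splitting into operadic relations and PROP-level commutation relations. This is right, but it is worth noting that the PROP-level commutation relations acquire a sign under $(-)^\ddag$ (since the two commuting weight-one generators are each placed in odd cohomological degree before desuspension), and this sign is precisely what is needed for the commutation relations in $(\cat\ppd^!)\op$ to come out with the correct sign convention after passing to the opposite category. So the sign bookkeeping you flag as the obstacle enters both kinds of relations, not only the operadic ones; once this is checked, the argument is complete as you outline.
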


\begin{rem}
For this statement, $\ppd$ is not required to be   Koszul. 
\end{rem}

\begin{exam}
One has the standard examples:
\begin{enumerate}
\item
For $\ppd = \com$, one has $\big( (\cat \com)\qd \big) \desusp \cong (\cat \lie)\op$.
\item 
For $\ppd = \lie$, one has $\big( (\cat \lie)\qd \big) \desusp  \cong (\cat \com)\op$.
\end{enumerate}
The operads $\com$ (encoding commutative, associative algebras) and $\lie$ (encoding Lie algebras) are both binary quadratic Koszul.
\end{exam}

\section{Relative nonhomogeneous Koszul duality for $\cat \uppd$ over $\kk \finj$}
\label{sect:prop_left}

The purpose of this section is to treat  duality relative to $\kk \finj$ when considering $\cat \uppd$, building upon the absolute theory recalled in Section \ref{sect:prop_bin_quad_abs}.
 Throughout $\kk$ is a field and $\uppd$ is as in Section \ref{subsect:uppd_framework}, in addition supposing that Hypothesis \ref{hyp:ppd_bin_quad} holds. In  particular $\ppd$ is binary quadratic, finite-dimensional in each arity.

In Theorem \ref{thm:grad_cat_uppd_dual} the quadratic dual category $(\grad\uppd)\qd$ is described and the dualizing complex  $K^\vee (\cat \uppd)$ is described in Section
\ref{subsect:dualizing_Kvee_cat_uppd}. The main result of the section is Theorem \ref{thm:relative_lbd_equivalence_cat_ppd}, which gives the associated equivalence of locally bounded homotopy categories.

\subsection{Filtering $\cat \uppd$}

We consider the  filtration of $\cat \uppd$ induced by the weight grading of $\cat \ppd$.

Recall  from Lemma \ref{lem:decompose_cat_uppd} that the product in $\cat \uppd$ induces the isomorphism of $\kk\finj \boxtimes \cat\ppd\op$-modules:
\[
\kk \finj \otimes_\fb \cat \ppd \stackrel{\cong}{\rightarrow} \cat \uppd.
\]
By Proposition \ref{prop:left_augmentation}, the augmentation of $\cat \ppd$ provides the left augmentation $\cat \uppd \rightarrow \kk \finj$ and the weight grading of $\cat \ppd$ gives the filtration $F_\bullet \cat \uppd$, where 
\[
F_n \cat \uppd := \bigoplus_{t \leq n} \kk \finj \otimes_\fb \cat \wt{t}\ppd.
\]

\begin{lem}
\label{lem:filter_cat_uppd}
The filtration $F_\bullet \cat \uppd$ yields a filtration as a $\kk $-linear category; namely, for $m,n \in \nat$, the product of $\cat \uppd$ restricts to 
$
F_m \cat \uppd \otimes_\fb F_n \cat \uppd 
\rightarrow 
F_{m+n} \cat \uppd.
$ 
\end{lem}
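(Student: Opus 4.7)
The plan is to verify the filtration property by a direct combinatorial argument at the level of the underlying maps in $\fin$. First, I would reinterpret $F_\bullet \cat \uppd$ using the explicit description (\ref{eqn:cat_opd}) of $\cat \uppd$: each $f \in \fin(\mathbf{m}, \mathbf{n})$ factors uniquely as $\iota_f \circ \sigma_f$, with $\sigma_f \cn \mathbf{m} \twoheadrightarrow \mathrm{im}(f)$ the induced surjection and $\iota_f \cn \mathrm{im}(f) \hookrightarrow \mathbf{n}$ the inclusion, and this factorization realizes the isomorphism $\kk\finj \otimes_\fb \cat \ppd \cong \cat \uppd$ of Lemma \ref{lem:decompose_cat_uppd}. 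Under this identification, the $\cat\ppd$-weight of the summand indexed by $f$ is exactly $m - |\mathrm{im}(f)|$, so that
\[ F_t \cat \uppd(\mathbf{m}, \mathbf{n}) \ = \bigoplus_{\substack{f \in \fin(\mathbf{m}, \mathbf{n})\\ m - |\mathrm{im}(f)| \leq t}} \bigotimes_{i \in \mathbf{n}} \uppd(f^{-1}(i)). \]

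Next I would unwind the composition in $\cat \uppd$: by definition, it sends the summands indexed by $f \in \fin(\mathbf{a}, \mathbf{b})$ and $g \in \fin(\mathbf{b}, \mathbf{c})$ into the summand indexed by $g \circ f$, with the operad factors being composed operadically on each fiber. In particular, the underlying set map of any composite is the composition of the underlying set maps. Consequently the filtration property reduces to showing
\[ a - |\mathrm{im}(g \circ f)| \ \leq \ (a - |\mathrm{im}(f)|) + (b - |\mathrm{im}(g)|) \]
for all $f \in \fin(\mathbf{a}, \mathbf{b})$ and $g \in \fin(\mathbf{b}, \mathbf{c})$.

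Setting $I := \mathrm{im}(f) \subseteq \mathbf{b}$, the preceding inequality is equivalent to $|I| - |g(I)| \leq b - |\mathrm{im}(g)|$, which I would establish via the standard fiber-counting bound
\[ |I| - |g(I)| \ =\ \sum_{j \in g(I)} \bigl(|I \cap g^{-1}(j)| - 1\bigr) \ \leq \sum_{j \in \mathrm{im}(g)} \bigl(|g^{-1}(j)| - 1\bigr) \ =\ b - |\mathrm{im}(g)|. \]
The main obstacle is only a mild bookkeeping one, namely keeping the operadic composition cleanly separated from the combinatorial inequality on fiber cardinalities; this separation is automatic since composition in $\cat \uppd$ is prescribed by composing set maps and independently performing operadic compositions on their fibers. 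The argument makes no use of the binary quadratic hypothesis on $\ppd$: only the reducedness of $\ppd$ (so that the weight grading of $\cat\ppd$ is well-defined) is required.
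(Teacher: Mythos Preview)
Your proof is correct and follows precisely the ``direct'' approach the paper indicates: the paper's proof merely says the result ``can be checked directly using the definition of the weight grading'' (or alternatively via the criterion of Proposition~\ref{prop:filt_hyp}), and you have carried out that direct check in full, reducing it to the clean fiber-counting inequality $|I|-|g(I)|\leq b-|\mathrm{im}(g)|$. Your explicit identification of $F_t\cat\uppd$ in terms of the defect $a-|\mathrm{im}(f)|$ is the exact analogue of the description of $G_t\cat\uppd$ given in Lemma~\ref{lem:identify_filt_cat_uppd}, so nothing here deviates from the paper's intended argument.
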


\begin{proof}
This can be checked directly using the definition of the weight grading. (Alternatively, it can be checked by using the criterion of Proposition \ref{prop:filt_hyp}.)
\end{proof}

Passing to the associated graded $\kk$-linear category  $\grad \cat \uppd$, one has 
\[
\grad_n \cat \uppd \cong \kk \finj \otimes_\fb \cat \wt{n} \ppd, 
\]
where the right hand side is equipped with the associated $\kk \finj$-bimodule structure. The (left) $\kk \finj$-module structure is the obvious one. The right $\kk \finj$-module structure is given by the interchange map 
\[
\cat \wt{n} \ppd \otimes_\fb \kk \finj 
\rightarrow 
\kk \finj \otimes_\fb \cat \wt{n} \ppd
\]
 induced by the product of $\cat \uppd$.

\begin{rem}
\label{rem:interchange_map}
This interchange map can be described as follows; it depends only upon the $\kk \fb\op$-module structure of $\ppd$. 
 To make this explicit, for given natural numbers $a$, $b$, $c$ (where $b-c=n\geq 0$ and $a \leq b$) we consider the interchange map 
\[
\cat \ppd (\mathbf{b}, \mathbf{c}) \otimes_{\sym_b} \kk \finj (\mathbf{a}, \mathbf{b}) 
\rightarrow 
\kk \finj (- ,\mathbf{c}) \otimes_\fb \cat \ppd (\mathbf{a}, -) \cong \cat \uppd (\mathbf{a}, \mathbf{c})
\]
induced by the product of $\cat \uppd$, using the explicit description of $\cat \ppd$ given by Lemma \ref{lem:case_opd_reduced}.

Consider $i \in \finj (\mathbf{a}, \mathbf{b})$ and $f \in \fs (\mathbf{b} , \mathbf{c})$. Define $\mathbf{B}$, $i'$ and $f'$ 
(uniquely up to automorphism of $\mathbf{B}$) by the commutative diagram of maps of sets
\[
\xymatrix{
\mathbf{a} 
\ar@{^(->}[r] ^i
\ar@{->>}[d]_{f'}
&
\mathbf{b}
\ar@{->>}[d]^f
\\
\mathbf{B}
\ar@{^(->}[r]_{i'}
&
\mathbf{c}.
}
\]
Then $a -B =n$ if and only if $f|_{\mathbf{b}\backslash i (\mathbf{a}) }$ is injective.

Consider $\xi \in \bigotimes_{j \in \mathbf{c}}  \ppd (f^{-1}(j))$, which represents an element of $\cat \wt{n} \ppd$ indexed by $f$. Then, by definition of the weight grading,  the image of $\xi \otimes [i]$ in  $\kk \finj \otimes_\fb \cat \wt{n} \ppd$ is zero unless $f|_{\mathbf{b}\backslash i (\mathbf{a}) }$ is injective. 
 If the latter condition is satisfied, the above commutative diagram gives rise to a well-defined element 
\[
[i'] \otimes \xi' \in \kk \finj (\mathbf{B}, \mathbf{c}) \otimes_{\sym_B} \cat \wt{n} \ppd (\mathbf{a}, \mathbf{B}). 
\]
(The construction of $\xi'$ only depends on the $\kk \fb$-bimodule structure of $\cat \ppd$.)
 The element $[i'] \otimes \xi'$ is the image of $\xi \otimes [i]$ under the interchange map in this case.
\end{rem}

\subsection{Relative quadratic duality for $\grad \cat \uppd$}

Proposition \ref{prop:A_quadratic} implies that $\grad \cat \uppd$ is a homogeneous quadratic $\kk$-linear category (so that $\cat \uppd$ is nonhomogeneous quadratic).
 More precisely (using the notation $I_{\cat \ppd}$ from Proposition \ref{prop:cat_ppd_homog_quad}):

\begin{prop}
\label{prop:grad_cat_uppd_quadratic_left}
The $\kk$-linear category $\grad \cat \uppd$ is the homogeneous quadratic category over $\kk \finj$ associated to the right quadratic datum
$
(\kk \finj; \kk \finj \otimes_\fb \cat \wt{1} \ppd, \kk \finj \otimes_\fb I_{\cat \ppd}).
$
\end{prop}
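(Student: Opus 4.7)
The plan is to deduce this as a direct application of Proposition \ref{prop:A_quadratic}, transported from the ring framework of Part \ref{part:one} to the $\kk$-linear category setting. The dictionary is: $\kring$ corresponds to $\kk\fb$, $R$ to $\kk\finj$, $C$ to $\cat\ppd$, $A$ to $\cat\uppd$, with generating bimodule $W = \cat\wt{1}\ppd$ and quadratic relations $I_C = I_{\cat\ppd}$. Once the hypotheses of Proposition \ref{prop:A_quadratic} are verified in this setting, its conclusion will give precisely that $\grad \cat\uppd$ is homogeneous quadratic over $\kk\finj$ with generating $\kk\finj$-bimodule $\kk\finj \otimes_\fb \cat\wt{1}\ppd$ and relations $\kk\finj \otimes_\fb I_{\cat\ppd}$.

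The hypotheses to check are the following. First, Hypothesis \ref{hyp:A} holds, since Lemma \ref{lem:decompose_cat_uppd} gives the bimodule isomorphism $\kk\finj \otimes_\fb \cat\ppd \stackrel{\cong}{\to}\cat\uppd$. Second, $\cat\ppd$ is homogeneous quadratic over $\kk\fb$ with $2$-left finitely-generated projective quadratic datum $(\kk\fb; \cat\wt{1}\ppd, I_{\cat\ppd})$; this is the content of Proposition \ref{prop:cat_ppd_homog_quad}. Third, the filtration $F_\bullet \cat\uppd$ induced on $\cat\uppd$ by the length filtration of $\cat\ppd$ is a filtration of $\kk$-linear categories; this is Lemma \ref{lem:filter_cat_uppd}, which is precisely the criterion required by Proposition \ref{prop:filt_hyp}.

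Given these, Proposition \ref{prop:A_quadratic} applies and yields that $\grad \cat\uppd$ is homogeneous quadratic over $\kk\finj = F_0 \cat\uppd$, associated to the quadratic datum $(\kk\finj; \kk\finj \otimes_\fb \cat\wt{1}\ppd, \kk\finj \otimes_\fb I_{\cat\ppd})$, with the $\kk\finj$-bimodule structure on the generating piece encoded by the exchange map $\lambda_W \cn \cat\wt{1}\ppd \otimes_\fb \kk\finj \to \kk\finj \otimes_\fb \cat\wt{1}\ppd$. This exchange map is exactly the interchange described explicitly in Remark \ref{rem:interchange_map} (restricted to $n=1$), which is manifestly well-defined and arises from the $\kk\finj^\op$-module structure on $\cat\ppd$ inherited from the operad structure of $\uppd$ (Remark \ref{rem:ppd_right_finj}).

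The main technical point to watch is the verification that the half-base-change hypothesis (Hypothesis \ref{hyp:extension}) is satisfied, so that $\kk\finj \otimes_\fb I_{\cat\ppd}$ is genuinely a sub $\kk\finj$-bimodule of $(\kk\finj \otimes_\fb \cat\wt{1}\ppd) \otimes_{\kk\finj} (\kk\finj \otimes_\fb \cat\wt{1}\ppd) \cong \kk\finj \otimes_\fb (\cat\wt{1}\ppd \otimes_\fb \cat\wt{1}\ppd)$ for the extended right $\kk\finj$-action. However, this is already embedded in the proof of Proposition \ref{prop:A_quadratic}: the filtration criterion of Proposition \ref{prop:filt_hyp} together with the fact that the relations in $\cat\ppd$ are themselves compatible with the $\kk\finj^\op$-action (inherited from the bimodule structure of $\cat\wt{2}\ppd$) guarantees compatibility. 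Thus no additional verification is needed beyond what is already supplied by Lemma \ref{lem:filter_cat_uppd} and Proposition \ref{prop:cat_ppd_homog_quad}, completing the proof.
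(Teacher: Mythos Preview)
Your proof is correct and follows precisely the paper's approach: the paper simply states that Proposition \ref{prop:A_quadratic} implies the result (using the quadratic datum from Proposition \ref{prop:cat_ppd_homog_quad}), and you have spelled out exactly this application with the correct dictionary and verified the requisite hypotheses via Lemma \ref{lem:decompose_cat_uppd}, Proposition \ref{prop:cat_ppd_homog_quad}, and Lemma \ref{lem:filter_cat_uppd}.
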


The required left finitely-generated  projectivity properties hold, so that we can form 
 the (left) relative quadratic dual over $\kk \finj$:

\begin{thm}
\label{thm:grad_cat_uppd_dual}
The relative quadratic dual $(\grad \cat \uppd)\qd$  is the homogenous quadratic category associated to the  quadratic datum 
$
(\kk \finj; (\cat \wt{1} \ppd)^\sharp \otimes_\fb \kk \finj ,  (\cat \wt{2} \ppd)^\sharp \otimes_\fb \kk \finj ).
$  
It  is a DG $\kk$-linear category equipped with the differential given by Theorem \ref{thm:DG_posit}.

Moreover, $(\grad \cat \uppd)\qd$ has underlying $(\cat \ppd)\qd \boxtimes \kk \finj\op$-module 
$ (\cat \ppd)\qd \otimes _\fb \kk \finj.$
 The cohomological grading is inherited from that of $(\cat \ppd)\qd$.
\end{thm}

One has the following finiteness result that is analogous to Proposition \ref{prop:rgrad_finiteness}.

\begin{prop}
\label{prop:grad_cat_uppd_qd_finiteness}
The (graded) $\kk \fb$-bimodule underlying $(\grad \cat \uppd)\qd$ is non-negative and is of (total) finite type. Moreover, it is locally bounded: explicitly,  for $s, t \in \nat$, the complex $(\grad \cat \uppd)\qd(\mathbf{s}, \mathbf{t})$ is concentrated in cohomological degrees $[0, t-s]$.
\end{prop}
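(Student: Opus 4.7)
The plan is to reduce the entire statement to the explicit description of the underlying bimodule given in Corollary \ref{cor:grad_cat_uppd_dual} and then apply the general preservation results of Section \ref{sect:finiteness_properties} to the factors $(\cat\ppd)\qd$ and $\kk\finj$, whose individual finiteness and non-negativity properties are already recorded.

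More precisely, I would first recall from Corollary \ref{cor:grad_cat_uppd_dual} that the underlying $\kk\fb$-bimodule of $(\grad\cat\uppd)\qd$ is isomorphic to $(\cat\ppd)\qd\otimes_\fb \kk\finj$, with cohomological grading inherited from $(\cat\ppd)\qd$. Proposition \ref{prop:cat_ppd_!_non-negative} gives that $(\cat\ppd)\qd$ is non-negative and of finite type, with $((\cat\ppd)\qd)\dg{n}$ concentrated in cohomological degree $n$; Example \ref{exam:kk_finj_non-neg} gives the analogous non-negativity and finite-type statement for $\kk\finj$, which is moreover concentrated in cohomological degree zero. Applying Proposition \ref{prop:preserve_non-neg} to the pair $(B_1,B_2)=((\cat\ppd)\qd,\kk\finj)$ then immediately yields that the tensor product is non-negative and of finite type.

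The explicit cohomological bound is an immediate consequence of the formula supplied by the same Proposition, namely
\[
\big((\cat\ppd)\qd\otimes_\fb \kk\finj\big)(\mathbf{s},\mathbf{t})
=\bigoplus_{s\le a\le t} (\cat\ppd)\qd(\mathbf{a},\mathbf{t})\otimes_{\sym_a}\kk\finj(\mathbf{s},\mathbf{a}).
\]
The summand indexed by $a$ lies in $((\cat\ppd)\qd)\dg{t-a}$ on the first factor (cohomological degree $t-a$) tensored with the degree-zero object $\kk\finj(\mathbf{s},\mathbf{a})$, so it contributes only to cohomological degree $t-a$. As $a$ ranges over $s\le a\le t$, the resulting cohomological degrees fill out exactly the interval $[0,t-s]$, proving local boundedness and the stated range.

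There is no serious obstacle here: the only point requiring care is the bookkeeping matching the internal $\zed$-grading $(-)\dg{\bullet}$ of $(\cat\ppd)\qd$ with the cohomological grading of $(\grad\cat\uppd)\qd$, which is settled by the second sentence of Proposition \ref{prop:cat_ppd_!_non-negative}. Finite type follows automatically since each indexing set $\{a : s\le a\le t\}$ is finite and each factor takes finite-dimensional values.
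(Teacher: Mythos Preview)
Your proof is correct and follows precisely the approach the paper intends: the paper does not write out a proof for this proposition, but the analogous result Proposition \ref{prop:rgrad_finiteness} is proved by exactly the same pattern (identify the underlying bimodule as a $\otimes_\fb$ of two non-positive finite-type factors and invoke Proposition \ref{prop:preserve_non-neg}). Your argument is the mirror image of that, with the cohomological bookkeeping spelled out in slightly more detail than the paper bothers to give.
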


\begin{rem}
One can analyse the structure of $(\grad \cat \uppd)\qd$ starting from that of the underlying $\kk \finj$-bimodule, $ (\cat \ppd)\qd \otimes _\fb \kk \finj$. By Lemma \ref{lem:underlying_kkfb-bimodules}, there is an  isomorphism of the underlying $\kk \fb$-bimodules:
$$
\kk \finj  \cong  \bmu \ofb \triv^\fb
$$
that allows us to give a visualization analogous to that of Example \ref{exam:visualize_cat_ppd_sgn}, keeping the same orientation. For this it is important to remember that $(\cat \ppd)\qd (\mathbf{a}, \mathbf{b})$ is zero if $a>b$; it is generated as an algebra  over $\kk \fb$ by the terms $(\cat \ppd)\qd (\mathbf{a}, \mathbf{a+1})$ for $a \in \nat$.

The term corresponding to $(\cat \ppd) \qd \otimes_\fb (\bmu  \ofb \triv_n^\fb)$ can be represented by:

\ 

\begin{tikzpicture}[scale=0.5]
\draw [thick] (0,1)--(7,1)--(7,-1) -- (0,-1) -- (0,1);
\draw (1,1) -- (1,3.5);
\node at (2,2) {\ldots};
\draw (3,1) -- (3,3.5);
\node at (3.5,0) {$(\cat \ppd)\qd$};
\draw (1.5, -1) -- (1.5, -2.5); 
\node at (3.5,-1.5) {\ldots};
\draw (5.5, -1) -- (5.5, -2.5); 
\begin{scope}
    \clip (3.5,2) rectangle (7.5,4);
    \draw  [thick] (5.5,2) circle(1.5);
\end{scope}
\draw  [thick] (4,2) -- (7,2);
\node at (5.5,2.5) {$\triv_n$}; 
\draw (6.5,1) -- (6.5,2);
\node at (5.5, 1.5) {\ldots}; 
\draw (4.5,1) -- (4.5,2);
\end{tikzpicture}

It is instructive to describe both the $\kk \finj$-bimodule structure of $ (\cat \ppd)\qd \otimes _\fb \kk \finj$ in terms of this visual representation, as well as the algebra structure of $(\grad \cat \uppd)\qd$. 
\end{rem}

The differential of $(\grad \cat \uppd)\qd$ is given by Theorem \ref{thm:DG_posit} or, more precisely, by its specialization Theorem \ref{thm:DG_split_R_C}. This is determined by the restriction of the $\cat \ppd$-module structure given by Proposition \ref{prop:left_augmentation}
\[
\cat \ppd \otimes_\fb \kk \finj \rightarrow \kk \finj
\]
to weight one $\cat \wt{1} \ppd \subset \cat \ppd$. Namely, the adjoint 
\[
\kk \finj \rightarrow (\cat \wt{1} \ppd)^\sharp \otimes_\fb \kk \finj \subset (\cat \ppd)\qd \otimes_\fb \kk \finj
\]
gives the restriction of the differential to $\kk \finj$, which determines the differential by Theorem \ref{thm:DG_split_R_C}.

Since the differential is a derivation, it suffices to understand (for each $n \in \nat$) the corresponding map
\[
\kk \finj (\n, \mathbf{n+1}) 
\rightarrow 
(\cat \ppd (\mathbf{n+1}, \mathbf{n}) )^\sharp \otimes _{\sym_n} \kk \finj (\n, \n) \cong (\cat \ppd (\mathbf{n+1}, \mathbf{n}) )^\sharp.
\]

Since this is $\kk (\sym_n \times \sym_{n+1}\op)$-equivariant, one reduces to specifying the image of the generator $[\iota_{n,n+1}]$ for  $\iota_{n,n+1}: \n \subset \mathbf{n+1}$ the canonical inclusion. The identification that one obtains is analogous to that of Lemma \ref{lem:e_kv_cat_ppd} and uses the notation of that section. In particular, Lemma \ref{lem:cat_wt1_ppd} shows that $\cat \ppd (\mathbf{n+1}, \mathbf{n})$ is freely generated as a $\kk \sym_n$-module by the vector space with basis $\{ \zeta ^\ell_{i,j} \}$ (see {\em loc. cit.} for the indexing).

\begin{prop}
\label{prop:identify_diff_grad_cat_uppd_qd}
The  differential of $(\grad \cat \uppd)\qd$ is determined by its restrictions to $\kk \finj (\n, \mathbf{n+1})$ (for all $n \in \nat$). The map 
\[
\kk \finj (\n, \mathbf{n+1}) \rightarrow (\cat \ppd (\mathbf{n+1}, \mathbf{n}) )^\sharp
\]
is determined by its value on $[\iota_{n, n+1}]$ that is given by 
\[
\ 
[\iota_{n,n+1}] 
\mapsto 
\sum_{i=1}^n 
\sum_\ell 
(\zeta^\ell \circ \iota_{1,2}) (\zeta^\ell_{i,n+1})^\sharp,
\]
where $(\zeta^\ell \circ \iota_{1,2})\in \kk = \uppd (\mathbf{1})$ is given by the composition $\ppd (\mathbf{2}) \otimes \kk \finj (\mathbf{1}, \mathbf{2}) \rightarrow \uppd (\mathbf{1})$ arising from $\cat \uppd$.
\end{prop}

\begin{proof}
By construction, the differential restricted to $\kk \finj (\n, \mathbf{n+1})$ is adjoint to the module structure map
\[
\cat \ppd (\mathbf{n+1}, \mathbf{n}) \otimes_{\sym_{n+1}} \kk \finj (\n, \mathbf{n+1}) 
\rightarrow 
\kk \finj (\n, \n) \cong \kk \sym_n.
\]
This adjoint corresponds to 
\[
\kk \finj (\n, \mathbf{n+1}) \rightarrow 
\hom _{\sym_n} ( \cat \ppd (\mathbf{n+1}, \mathbf{n}), \kk \sym_n) 
\cong 
(\cat \ppd (\mathbf{n+1}, \mathbf{n}) )^\sharp.
\]

Recall that, for $H$ a finite group and $M$ a $\kk H$-module, the natural isomorphism $\hom_H (M, \kk H) \stackrel{\cong}{\rightarrow } \hom_\kk (M, \kk)$ is induced by composition with the $\kk$-linear map $\kk H \rightarrow \kk$ that sends $[h]$ to zero unless $h=e$, for which $[e] \mapsto 1$. Using this, it is straightforward to check that the image of $[\iota_{n,n+1}]$ is as given. 
\end{proof}

\subsection{The dualizing complex $K^\vee (\cat \uppd)$}
\label{subsect:dualizing_Kvee_cat_uppd}

The appropriate  dualizing complex is $K^\vee (\cat \uppd)$. By definition, the underlying bimodule is 
\[
K^\vee (\cat \uppd)= 
(\grad \cat \uppd)\qd
\otimes_{\finj} \cat \uppd.
\]
This is equipped with the differential that is defined (with respect to the right hand expression) by inner multiplication by the element $e'$ that is constructed from $e$ corresponding to the natural transformation (\ref{eqn:e_cat_ppd}). The latter was  identified explicitly in Lemma \ref{lem:e_kv_cat_ppd}.

\begin{rem}
The underlying (graded) $\kk \fb$-bimodule of $K^\vee (\cat \uppd)$ is isomorphic to $(\cat \ppd)\qd \otimes_\fb \cat \uppd$, with cohomological grading inherited from $(\cat \ppd)\qd$. Since $(\cat \ppd)\qd$ is non-negative; the general results of Section \ref{sect:finiteness_properties}  thus imply that, for $s, t \in \nat$,  $ K^\vee (\cat \uppd) (\mathbf{s}, \mathbf{t})$ is concentrated in cohomological degrees $[ 0, t]$.   
\end{rem}

The complex $K^\vee (\cat \uppd)$ yields the adjunction:
\begin{eqnarray}
\label{eqn:vk_cat_uppd_adjunction}
\\
\nonumber
K^\vee (\cat \uppd) \otimes_{\cat \uppd} - 
\cn
(\cat \uppd)\dash\dgmod 
\rightleftarrows 
(\grad \cat \uppd)\qd \dash \dgmod 
\cn
\ihom_{(\grad \cat \uppd)\qd} (K^\vee (\cat \uppd), -) .
\end{eqnarray}
Moreover, there are natural isomorphisms:
\begin{eqnarray*}
K^\vee (\cat \uppd) \otimes_{\cat \uppd} - 
& \cong & 
(\cat \ppd)\qd \otimes_\fb^{e} - \\
\ihom_{(\grad \cat \uppd)\qd} (K^\vee (\cat \uppd), -)
&\cong &
\ihom_{\kk \fb}^{e} (\cat \ppd, -)
\end{eqnarray*}
of the underlying functors. 

Since $(\cat \ppd)\qd$ is non-negative and $\cat \ppd$ is non-positive, the proof of Proposition \ref{prop:exact_colimit_cat_P} generalizes to give:

\begin{prop}
\label{prop:exact_colimit_cat_uppd}
The functors $K^\vee (\cat \uppd) \otimes_{\cat \uppd} - $ and $\ihom_{(\cat \uppd)\qd} (K^\vee (\cat \uppd), -)$ are exact and commute with colimits. Moreover, both functors preserve weak equivalences.
\end{prop}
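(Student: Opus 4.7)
The plan is to adapt the argument for Proposition \ref{prop:exact_colimit_cat_P} to this relative setting, exploiting the natural isomorphisms displayed just before the statement that identify $K^\vee(\cat\uppd) \otimes_{\cat\uppd} -$ with $(\cat\ppd)\qd \otimes_\fb^{e} -$ and $\ihom_{(\grad\cat\uppd)\qd}(K^\vee(\cat\uppd), -)$ with $\ihom_{\kk\fb}^{e}(\cat\ppd, -)$. Since the twisting of the differential does not alter the underlying graded $\kk\fb$-bimodules, exactness can be checked on the underlying (untwisted) functors. The functor $(\cat\ppd)\qd \otimes_\fb -$ is exact because, as noted in the proof of Proposition \ref{prop:exact_colimit_cat_P}, an analogue of Lemma \ref{lem:case_opd_reduced} shows that $(\cat\ppd)\qd$ is projective as a $\kk\fb\op$-module; likewise $\ihom_{\kk\fb}(\cat\ppd, -)$ is exact because $\cat\ppd$ is projective as a $\kk\fb$-module by Lemma \ref{lem:case_opd_reduced}.

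For commutation with colimits, tensor products always preserve colimits. For the internal hom, the key input is that $\cat\ppd$ is non-positive (Proposition \ref{prop:cat_ppd_non-positive}). By the argument of Lemma \ref{lem:B_otimes_truncate}, when evaluated on $\mathbf{s}$, $\ihom_{\kk\fb}(\cat\ppd, M)(\mathbf{s})$ reduces to a finite direct sum indexed by $b \geq s$; since $\cat\ppd(\mathbf{s}, \mathbf{b})$ is finitely-generated projective as a $\kk\sym_b$-module (by Lemma \ref{lem:case_opd_reduced}), each summand $\hom_{\kk\sym_b}(\cat\ppd(\mathbf{s},\mathbf{b}), M(\mathbf{b}))$ preserves colimits, and hence so does the finite sum.

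The key observation for preservation of weak equivalences is that the twisted differential on $(\cat\ppd)\qd \otimes_\fb^{e} X$ depends only on the underlying $\cat\ppd$-module structure of $X$: the class $e$ lives in $(\cat\wt{1}\ppd)^\sharp \otimes_\fb \cat\wt{1}\ppd$ (cf. Lemma \ref{lem:e_kv_cat_ppd}), and inner multiplication by $e$ pairs with the action of $\cat\wt{1}\ppd$, which is part of the $\cat\ppd$-structure inherited from $\cat\uppd$. In particular, the analogue of the identification used in Proposition \ref{prop:exactness_vk_cat_uppd_left_modules} applies, and the assertion for $\cat\uppd$-modules follows from the corresponding assertion for $\cat\ppd$-modules already established in Proposition \ref{prop:exact_colimit_cat_P}. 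The analogous statement for $\ihom_{\kk\fb}^{e}(\cat\ppd, -)$ applied to DG $(\grad\cat\uppd)\qd$-modules $Y$ reduces similarly, noting that the twisted differential there depends only on the underlying $(\cat\ppd)\qd$-module structure of $Y$.

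The main obstacle is the last step: one must verify carefully that the identifications of twisted differentials factor through the underlying $\cat\ppd$ (respectively $(\cat\ppd)\qd$) module structures and that, after this reduction, the truncation-plus-spectral-sequence argument of Proposition \ref{prop:exactness_vk_finj_left_modules} (via Remark \ref{rem:gen_colimits_exact_we}) can be run — the non-positivity of $\cat\ppd$ and non-negativity of $(\cat\ppd)\qd$ ensure that upon evaluation at any fixed arity the relevant filtrations have finite length, so that a quasi-isomorphism between the input modules induces a quasi-isomorphism after applying the functor in question.
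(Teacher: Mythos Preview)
Your proof is correct and follows essentially the same approach as the paper, which simply remarks that the proof of Proposition \ref{prop:exact_colimit_cat_P} generalizes, using that $(\cat\ppd)\qd$ is non-negative and $\cat\ppd$ is non-positive. One minor indexing slip: since $\cat\ppd$ is non-positive, $\cat\ppd(\mathbf{s},\mathbf{b})$ vanishes for $b>s$, so the finite sum in $\ihom_{\kk\fb}(\cat\ppd,M)(\mathbf{s})$ is indexed by $b\leq s$, not $b\geq s$; this does not affect the argument.
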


Now, supposing that $\kk$ is a field of characteristic zero and that $\ppd$ is Koszul, the absolute Koszul duality result Theorem \ref{thm:absolute_lbd_equivalence} has the following counterpart:

\begin{thm}
\label{thm:relative_lbd_equivalence_cat_ppd}
The adjunction (\ref{eqn:vk_cat_uppd_adjunction}) restricts to the adjunction for locally bounded objects:
\[
K^\vee (\cat \uppd) \otimes_{\cat \uppd} - 
\cn
(\cat \uppd)\dash\lbdgmod 
\rightleftarrows 
(\grad \cat \uppd)\qd \dash \lbdgmod 
\cn
\ihom_{(\grad \cat \uppd)\qd} (K^\vee (\cat \uppd), -) .
\]

If $\kk$ is a field of characteristic zero and  $\ppd$ is Koszul, this induces an equivalence between the respective homotopy categories:
\[
\holbd(\cat \uppd)
\stackrel{\simeq}{\rightleftarrows}
\holbd((\grad \cat \uppd)\qd).
\]
\end{thm}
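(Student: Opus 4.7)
The plan is to proceed in three stages, mirroring the strategy used for Theorem \ref{thm:BGG} but now working relative to $\kk\finj$ and inputting the absolute Koszulity result Theorem \ref{thm:bin_quad_operadic_duality}.

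First, I would verify that the adjunction (\ref{eqn:vk_cat_uppd_adjunction}) restricts to locally bounded objects. Using the identifications of the underlying functors stated just after (\ref{eqn:vk_cat_uppd_adjunction}), namely
\[
K^\vee(\cat\uppd)\otimes_{\cat\uppd}-\ \cong\ (\cat\ppd)\qd\otimes_\fb^e-\qquad\text{and}\qquad \ihom_{(\grad\cat\uppd)\qd}(K^\vee(\cat\uppd),-)\ \cong\ \ihom_{\kk\fb}^e(\cat\ppd,-),
\]
the claim reduces to preservation of local boundedness by the underlying $\kk\fb$-bimodule functors. Since $(\cat\ppd)\qd$ is non-negative and locally bounded (Proposition \ref{prop:grad_cat_uppd_qd_finiteness}), while $\cat\ppd$ is non-positive and locally bounded (Proposition \ref{prop:cat_ppd_non-positive}), the result follows by Proposition \ref{prop:loc_bd_preservation} and its non-positive analogue. (Exactness and preservation of weak equivalences of these functors is already Proposition \ref{prop:exact_colimit_cat_uppd}.)

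Second, I would prove that for any locally bounded DG $\cat\uppd$-module $X$, the unit
\[
X\longrightarrow \ihom_{\kk\fb}^e\bigl(\cat\ppd,\,(\cat\ppd)\qd\otimes_\fb^e X\bigr)
\]
is a weak equivalence, and symmetrically for the counit on locally bounded DG $(\grad\cat\uppd)\qd$-modules. The strategy is a standard reduction argument in the spirit of the proof of Theorem \ref{thm:BGG}: evaluating at a fixed arity $\mathbf{s}$, one applies the truncation of Section \ref{subsect:truncating} together with local boundedness (Lemma \ref{lem:locally-bounded_truncation}) to reduce to a finite-length filtration; the successive subquotients are modules over $\cat\uppd$ whose underlying structure factors through the left-augmentation $\cat\uppd\twoheadrightarrow\kk\finj$, hence through $\kk\fb$ via the augmentation of $\kk\finj$. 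A standard spectral sequence comparison then reduces the question to the base case where $X$ is supported on a single arity and concentrated in one cohomological degree, where (since $\kk$ has characteristic zero and $\kk\sym_s$ is semisimple) one can further reduce to $X(\mathbf{s})=\kk\sym_s$ viewed as a trivial $\cat\ppd$-module through the augmentation. In this base case the unit identifies, up to induction/restriction, with the unit (\ref{eqn:ppd_unit}) of the absolute theory tensored with $\kk\sym_s$; by Theorem \ref{thm:bin_quad_operadic_duality} (or equivalently Theorem \ref{thm:absolute_lbd_equivalence}), this is a weak equivalence precisely because $\ppd$ is Koszul. The argument for the counit is entirely parallel, using (\ref{eqn:ppd_counit}).

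Third, once the (co)unit on locally bounded objects is a weak equivalence, the induced functors on homotopy categories $\holbd(\cat\uppd)\rightleftarrows\holbd((\grad\cat\uppd)\qd)$ are mutually inverse equivalences, since Proposition \ref{prop:exact_colimit_cat_uppd} already ensures that both functors descend to the homotopy categories (they preserve weak equivalences).

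The main obstacle I expect is the reduction step: one must carefully handle the interaction between the two filtrations at play (the arity-truncation of Section \ref{subsect:truncating} and the augmentation filtration coming from $\cat\ppd\twoheadrightarrow\bmu$), and verify that each successive subquotient is indeed a complex whose $\cat\ppd$-module structure is trivial so that the base case of Theorem \ref{thm:bin_quad_operadic_duality} can be invoked. The finiteness needed for this step is provided precisely by local boundedness together with the non-positivity of $\cat\ppd$ (which ensures each arity-truncated piece is a bounded complex, cf.\ Lemma \ref{lem:locally-bounded_truncation}), so these structural features of our setup are what make the reduction viable.
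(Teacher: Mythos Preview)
Your proof is correct in outline and reaches the right conclusion, but it takes a longer route than necessary and contains a small imprecision in the reduction step.

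The paper's intended argument (following the pattern established for Theorem~\ref{thm:adj_unit_vk_cat_uppd}) is more direct. By Proposition~\ref{prop:(co)unit_Kvee_C} and Remark~\ref{rem:adj_unit_counit_R_C_case}, specialized to the present setting with $C=\cat\ppd$, the adjunction unit (respectively counit), viewed merely as a morphism of DG $\kk\fb$-modules, depends only on the underlying $\cat\ppd$-module (resp.\ $(\cat\ppd)\qd$-module) structure of its argument. It therefore \emph{coincides} with the absolute unit (\ref{eqn:adj_unit_cat_ppd_absolute}) (resp.\ counit (\ref{eqn:adj_counit_cat_ppd_absolute})) for that underlying module. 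Hence the second part of the theorem follows immediately from Theorem~\ref{thm:absolute_lbd_equivalence}, with no further reduction required. Your approach instead re-runs the reduction argument that is already packaged inside the proof of Theorem~\ref{thm:absolute_lbd_equivalence}; this works, but duplicates effort.

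Regarding the imprecision: in your truncation step you claim the successive subquotients have $\cat\uppd$-structure factoring through $\cat\uppd\twoheadrightarrow\kk\finj$ and then through $\kk\fb$. The first part is fine (truncation with respect to the non-positive $\cat\ppd$ trivializes that action, leaving a $\kk\finj$-module via the left augmentation), but the second does not follow automatically: a $\kk\finj$-module need not factor through $\kk\fb$. To reach $\kk\fb$ you would need a second filtration using the non-negativity of $\kk\finj$. This is harmless for the conclusion, since once the $\cat\ppd$-action is trivial the twisted differential agrees with the untwisted one and the (co)unit is transparently a weak equivalence (indeed, this is exactly what the direct identification above exploits); but your phrasing suggests a single truncation suffices to reach $\kk\fb$, which it does not.
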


\subsection{Desuspending $(\grad \uppd)\qd$}
\label{subsect:desuspend_grad_uppd_qd}

As in Section \ref{subsect:desusp_cat_ppd_qd}, it is useful to apply the sheering functor $(-)\desusp$ (see Appendix \ref{sect:sheer}) to $(\grad\uppd)\qd$. This yields the DG category $\big((\grad\uppd)\qd\big)\desusp$. 

\begin{prop}
\label{prop:desusp_grad_cat_uppd_qd}
There is a commutative diagram of DG $\kk$-linear categories
\[
\xymatrix{
\kk \fb
\ar[r]
\ar[d]
&
\kk\finj \desusp
\ar[d]
\\
(\cat \ppd^!)\op
\ar[r]
&
\big((\grad\uppd)\qd\big)\desusp
}
\]
such that 
\begin{enumerate}
\item 
$\kk \fb$ and $(\cat \ppd^!)\op$ are concentrated in cohomological degree zero; 
\item
for $n \in \nat$, 
$(\kk \finj \desusp)\dg{n}$ is in cohomological degree $-n$;
\item 
$\big((\grad\uppd)\qd\big)\desusp$ is concentrated in non-positive cohomological degree.  
\end{enumerate}
Moreover, the multiplication induces an isomorphism of bimodules:
\[
(\cat \ppd^!)\op \otimes_\fb \kk \finj \desusp
\stackrel{\cong}{\rightarrow} 
\big((\grad\uppd)\qd\big)\desusp.
\]

The differential of $\big((\grad\uppd)\qd\big)\desusp$ acts by zero on $(\cat \ppd^!)\op$ and hence is determined by its restriction to $\kk \finj \desusp$.
\end{prop}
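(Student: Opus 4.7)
The plan is to combine three ingredients in sequence: the underlying bimodule description of $(\grad\cat\uppd)\qd$ from Corollary \ref{cor:grad_cat_uppd_dual}, the monoidality of the desuspension functor from Corollary \ref{cor:composites_monoidal}, and the identification $\big((\cat\ppd)\qd\big)\desusp\cong (\cat\ppd^!)\op$ from Proposition \ref{prop:desusp_cat_ppd_qd}. Since $\kk\fb$ is concentrated in $\zed$-degree zero, one has $\kk\fb\desusp = \kk\fb$; this is what will make the horizontal and vertical arrows of the diagram well defined after desuspension.

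First I would apply the monoidal functor $(-)\desusp$ to the multiplication-induced bimodule isomorphism $(\cat\ppd)\qd \otimes_\fb \kk\finj \stackrel{\cong}{\to} (\grad\cat\uppd)\qd$, obtaining the claimed bimodule isomorphism
\[
(\cat\ppd^!)\op \otimes_\fb \kk\finj\desusp
\stackrel{\cong}{\longrightarrow}
\big((\grad\cat\uppd)\qd\big)\desusp.
\]
The commutative diagram follows from the functoriality of $(-)\desusp$ applied to the two unit inclusions $\kk\fb\hookrightarrow \kk\finj$ and $\kk\fb\hookrightarrow (\cat\ppd)\qd$, together with the identifications above.

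Next I would verify the grading claims. The key input is Remark \ref{rem:Z_grading_sheering}, which says that $(-)\desusp$ applied to the $\zed$-degree $n$ part of a bimodule shifts its cohomological degree by $-n$. Combining this with Proposition \ref{prop:cat_ppd_!_non-negative} (where $\big((\cat\ppd)\qd\big)\dg{n}$ sits in cohomological degree $n$) one sees that $(\cat\ppd^!)\op$ lies in cohomological degree $0$; similarly $\kk\fb$ does. Since $\kk\finj$ lies in cohomological degree $0$ and $\kk\finj\dg n$ is its $\zed$-degree $n$ component, $(\kk\finj\desusp)\dg n$ lies in cohomological degree $-n$. Finally, Proposition \ref{prop:grad_cat_uppd_qd_finiteness} gives that $(\grad\cat\uppd)\qd(\mathbf{s},\mathbf{t})$ is concentrated in cohomological degrees $[0, t-s]$, and this component has $\zed$-degree $t-s$; after applying $\desusp$ the range shifts to $[-(t-s), 0]$, which is non-positive.

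For the differential claim, I would invoke Theorem \ref{thm:DG_split_R_C} specialized to $\kring = \kk\fb$, $R = \kk\finj$, $C = \cat\ppd$: the differential on $(\grad\cat\uppd)\qd$ vanishes on the generating bimodule $\hom_{\kk\fb}(\cat\wt 1\ppd, \kk\fb) \cong (\cat\wt{1}\ppd)^\sharp$ of the subalgebra $(\cat\ppd)\qd$, hence by the derivation property on the whole of $(\cat\ppd)\qd$. Corollary \ref{cor:DG_algebra_kFB} (together with Lemma \ref{prop:derivations_rs_ls}) transports this square-zero derivation to $\big((\grad\cat\uppd)\qd\big)\desusp$, where it still vanishes on the subalgebra $(\cat\ppd^!)\op$. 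Since $\big((\grad\cat\uppd)\qd\big)\desusp$ is generated over $\kk\fb$ by $(\cat\ppd^!)\op$ and $\kk\finj\desusp$ (as follows from the generation of $(\grad\cat\uppd)\qd$ over $\kk\fb$ by $(\cat\ppd)\qd$ and $\kk\finj$, using monoidality of $(-)\desusp$), the differential is determined by its restriction to $\kk\finj\desusp$. The main delicacy is bookkeeping: tracking simultaneously the cohomological grading and the $\zed$-grading on $\fb$-bimodules, along with the signs introduced by the $(-)^\ddag$ factor implicit in $\desusp$; however all such compatibilities have already been packaged into the results of Section \ref{sect:sheer}.
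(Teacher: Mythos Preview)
Your proposal is correct and follows essentially the same approach as the paper's proof, which simply invokes the structure of $(\grad\cat\uppd)\qd$, the general properties of $(-)\desusp$ from Section~\ref{sect:sheer}, and Proposition~\ref{prop:desusp_cat_ppd_qd}. You have spelled out the individual ingredients (Corollaries~\ref{cor:grad_cat_uppd_dual} and~\ref{cor:composites_monoidal}, Remark~\ref{rem:Z_grading_sheering}, Theorem~\ref{thm:DG_split_R_C}, Corollary~\ref{cor:DG_algebra_kFB}) in more detail than the paper does, but the strategy is identical.
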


\begin{proof}
This follows from the structure of $(\grad \uppd)\qd$ using the general properties of the desuspension functor $(-)\desusp$ given in Appendix \ref{sect:sheer}, together with the identification of $\big((\cat \ppd)\qd\big)\desusp$ given in Proposition \ref{prop:desusp_cat_ppd_qd}.
\end{proof}

The underlying $(\cat \ppd^!)\op$-bimodule of $\big((\grad\uppd)\qd\big)\desusp$ is described by the following:

\begin{prop}
\label{prop:desusp_underlying_bimodule}
The underlying $(\cat \ppd^!)\op$-bimodule of $\big((\grad\uppd)\qd\big)\desusp$  is isomorphic to 
\[
(\cat \ppd^!)\op \ofb \big( (\cat \ppd^!)\op \otimes_\fb \sgn^\fb \big),
\]
with cohomological degree induced by placing $\sgn^\fb_c$ in degree $-c$.

Here, the left $(\cat \ppd^!)\op$-structure is given by using the $\ofb$-product of the canonical structures of $(\cat \ppd^!)\op$ and $(\cat \ppd^!)\op \otimes_\fb \sgn^\fb$. The right $(\cat \ppd^!)\op$-module structure is given by that of $ (\cat \ppd^!)\op$.
\end{prop}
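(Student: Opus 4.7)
The plan is to combine the multiplicative decomposition of Proposition \ref{prop:desusp_grad_cat_uppd_qd}, namely the bimodule isomorphism $(\cat \ppd^!)\op \otimes_\fb \kk \finj\desusp \stackrel{\cong}{\rightarrow} \big((\grad \cat \uppd)\qd\big)\desusp$, with an explicit description of $\kk \finj\desusp$, and then rewrite the resulting tensor product using the reduced-operad structure of $\cat \ppd^!$. It thus suffices to construct a natural isomorphism of $\kk \fb$-bimodules
\[
(\cat \ppd^!)\op \otimes_\fb \kk \finj\desusp \stackrel{\cong}{\longrightarrow} (\cat \ppd^!)\op \ofb \bigl((\cat \ppd^!)\op \otimes_\fb \sgn^\fb\bigr)
\]
and to check that it transports the stated $(\cat \ppd^!)\op$-bimodule structures.

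For the identification of $\kk \finj\desusp$: Lemma \ref{lem:underlying_kkfb-bimodules} gives $\kk \finj \cong \bmu \ofb \triv^\fb$, and a direct computation at each bidegree shows $\kk \finj^\ddag \cong \bmu \ofb \sgn^\fb$ (exploiting $\bmu^\ddag \cong \bmu$ from Proposition \ref{prop:basic_properties_ddag_op}). Passing to $\kk \finj\desusp \cong (\kk \finj^\ddag)\ls$ via Lemma \ref{lem:ddag_ls_rs} and reading off the grading from Remark \ref{rem:Z_grading_sheering} then yields $\kk \finj\desusp \cong \bmu \ofb \sgn^\fb$ as a $\kk \fb$-bimodule, with $\sgn^\fb_c$ in cohomological degree $-c$. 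The key combinatorial input is then the natural splitting provided by Lemma \ref{lem:case_opd_reduced}: for every decomposition $\mathbf{b} = U \sqcup V$ of a finite set,
\[
\cat \ppd^!(\mathbf{c}, \mathbf{b}) \cong \bigoplus_{\mathbf{c} = C_U \sqcup C_V} \cat \ppd^!(C_U, U) \otimes \cat \ppd^!(C_V, V),
\]
obtained by restricting a surjection $f \colon \mathbf{c} \twoheadrightarrow \mathbf{b}$ to the preimages of $U$ and $V$ together with the corresponding split of the operadic decoration. Combined with the orbit--stabilizer analysis for $\sym_b$ acting on size-$a$ subsets of $\mathbf{b}$ (stabilizer $\sym_a \times \sym_{b-a}$, with the factor $\bmu(\mathbf{a}, U) \cong \kk \sym_a$ absorbing the $\sym_a$ part), this produces the displayed isomorphism at the level of underlying $\kk \fb$-bimodules.

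The main obstacle I anticipate is verifying that this isomorphism carries the $(\cat \ppd^!)\op$-bimodule structure induced from Proposition \ref{prop:desusp_grad_cat_uppd_qd} to the one specified in the statement. The right $(\cat \ppd^!)\op$-action is essentially automatic, as it acts on the outer tensor factor on both sides and is inherited from the first factor in Proposition \ref{prop:desusp_grad_cat_uppd_qd}. The left action, however, must be recognized as the $\ofb$-product of the canonical left actions on $(\cat \ppd^!)\op$ and on $(\cat \ppd^!)\op \otimes_\fb \sgn^\fb$; this reduces to showing that the target-splitting of $\cat \ppd^!(\mathbf{c}, \mathbf{b})$ above is natural with respect to $(\cat \ppd^!)\op$-precomposition on $\mathbf{c}$, which follows from the associativity of the operadic composition in $\cat \ppd^!$ but requires careful equivariance bookkeeping through the orbit reduction.
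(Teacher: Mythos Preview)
Your approach is correct and follows essentially the same route as the paper: identify $\kk\finj\desusp \cong \bmu \ofb \sgn^\fb$ (with the stated grading), then rewrite $(\cat\ppd^!)\op \otimes_\fb (\bmu \ofb \sgn^\fb)$ using the reduced-operad structure of $\ppd^!$, and finally check the bimodule structures. The only difference is one of packaging: where you unpack the combinatorics by hand via Lemma~\ref{lem:case_opd_reduced} and an orbit--stabilizer argument, the paper simply invokes the opposite of Lemma~\ref{lem:otimes_cat_opd}, which gives directly
\[
(\cat\ppd^!)\op \otimes_\fb (\bmu \ofb \sgn^\fb) \;\cong\; \bigl((\cat\ppd^!)\op \otimes_\fb \bmu\bigr) \ofb \bigl((\cat\ppd^!)\op \otimes_\fb \sgn^\fb\bigr) \;\cong\; (\cat\ppd^!)\op \ofb \bigl((\cat\ppd^!)\op \otimes_\fb \sgn^\fb\bigr).
\]
Your splitting of $\cat\ppd^!(\mathbf{c},\mathbf{b})$ over decompositions of $\mathbf{b}$ is precisely the content of that lemma (cf.\ Remark~\ref{rem:cat_opd_obf}), so you are re-deriving it rather than citing it. For the bimodule structure, the paper defers to the analogous arguments in Section~\ref{subsect:rgrad_cat_uppd_qd}; your account of what must be checked (naturality of the target splitting under $(\cat\ppd^!)\op$-precomposition) is in fact more explicit than the paper's, and is exactly right.
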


\begin{proof}
Forgetting the cohomological grading, $\kk \finj \desusp$ is isomorphic as a $\kk \fb$-bimodule to $\bmu \ofb \sgn^\fb$, by Lemma \ref{lem:underlying_kkfb-bimodules}. The identification of the underlying $\kk\fb$-bimodule then follows from the opposite of Lemma \ref{lem:otimes_cat_opd} (i.e., taking into account that we are working with the opposite of $\cat \ppd^!$).

The identification of the $(\cat \ppd^!)\op$-bimodule structure uses arguments similar to those of Section  \ref{subsect:rgrad_cat_uppd_qd}.  
\end{proof}

Since $\big((\grad\uppd)\qd\big)\desusp$ is concentrated in non-positive cohomological degrees, it is natural to regrade using {\em homological} degree. One then has the following counterpart of Theorem \ref{thm:cohom_rgrad_qd}:

\begin{thm}
\label{thm:homology_desusp_grad_uppd_qd}
The homology $H_* \Big( \big((\grad\uppd)\qd\big)\desusp \Big)$ is an $\nat$-graded $\kk$-linear category with respect to the homological degree. In particular:
\begin{enumerate}
\item 
$H_0 \Big( \big((\grad\uppd)\qd\big)\desusp \Big)$ is a $\kk$-linear quotient category of $(\cat \ppd^!)\op$; 
\item 
for $n \in \nat$, $H_n \Big( \big((\grad\uppd)\qd\big)\desusp \Big)$  is a   $H_0 \Big( \big((\grad\uppd)\qd\big)\desusp \Big)$-bimodule.
\end{enumerate}

Hence, the desuspended homology functor induces:
$$
H_* ((-)\desusp) 
\cn
(\grad\uppd)\qd \dash\modules 
\longrightarrow 
H_* \Big( \big((\grad\uppd)\qd\big)\desusp \Big)\dash\modules^{\mathrm{gr}},
$$
in which the codomain denotes graded modules. In particular, for each $n \in \zed$, $H_n((-)\desusp)$ takes values in $H_0 \Big( \big((\grad\uppd)\qd\big)\desusp \Big)$-modules.
\end{thm}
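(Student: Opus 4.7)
The plan is to deduce everything from the structural facts already collected, most importantly the description of $\big((\grad\uppd)\qd\big)\desusp$ as a DG $\kk$-linear category given by Proposition \ref{prop:desusp_grad_cat_uppd_qd}. In what follows, regrade via homological conventions, so that $\big((\grad\uppd)\qd\big)\desusp$ is concentrated in non-negative homological degrees, with $\big((\grad\uppd)\qd\big)\desusp_0 = (\cat \ppd^!)\op$, and the differential lowers homological degree by one.

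First I would record the two input observations. On the one hand, since the DG category is concentrated in non-negative homological degrees, the differential restricted to degree $0$ is zero for degree reasons; alternatively this is stated explicitly in Proposition \ref{prop:desusp_grad_cat_uppd_qd}. Thus every element of $(\cat \ppd^!)\op$ is a cycle, and the map $(\cat \ppd^!)\op \twoheadrightarrow H_0(\big((\grad\uppd)\qd\big)\desusp)$ is surjective. On the other hand, since the composition is a chain map, cycles compose to cycles and the Leibniz rule $d(xy) = d(x) y + (-1)^{|x|} x d(y)$ implies that boundaries form a two-sided graded ideal inside cycles; this gives the graded $\kk$-linear category structure on $H_*\big((\grad\uppd)\qd\big)\desusp$.

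The next step is to identify $H_0$. For $x \in (\cat \ppd^!)\op$ and $w$ in homological degree $1$, the Leibniz rule combined with $d(x) = 0$ gives $x d(w) = d(xw)$ and $d(w) x = d(w x)$, so the image of $d_1 \cn \big((\grad\uppd)\qd\big)\desusp_1 \to (\cat \ppd^!)\op$ is a two-sided ideal of $(\cat \ppd^!)\op$; hence $H_0$ is a quotient $\kk$-linear category of $(\cat \ppd^!)\op$, as claimed. A parallel argument shows that, for $n\geq 1$, the subspaces of cycles $Z_n$ and boundaries $B_n$ in homological degree $n$ are stable under the left and right $(\cat \ppd^!)\op$-actions (using $d(x)=0$ on both sides), so $H_n = Z_n/B_n$ is naturally a $(\cat \ppd^!)\op$-bimodule. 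Moreover, if $b = d(w)$ is a degree-zero boundary and $z \in Z_n$, then $bz = d(wz)$ (since $dz = 0$) and similarly $zb = \pm d(zw)$, so these actions factor through $H_0$, giving the $H_0$-bimodule structure on $H_n$.

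For the final statement concerning modules, I would apply the same Leibniz-style bookkeeping to DG modules. Given a DG $(\grad\uppd)\qd$-module $M$, the desuspended object $M\desusp$ is a DG module over $\big((\grad\uppd)\qd\big)\desusp$ by the monoidal property of $(-)\desusp$ (Corollary \ref{cor:composites_monoidal} and Lemma \ref{prop:derivations_rs_ls}); then each $H_n(M\desusp)$ is a module over the graded category $H_*\big((\grad\uppd)\qd\big)\desusp$ by the same cycles/boundaries argument, and chain maps induce maps of graded modules. In particular, the degree-zero action passes to an $H_0$-module structure, yielding the claimed refinement.

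There is no essential obstacle: the proof is a direct unpacking of the DG structure together with the Leibniz rule. The one point requiring a little care is the bookkeeping around the shift from cohomological to homological grading, together with keeping the left and right $(\cat \ppd^!)\op$-module structures (on the Day-convolution description of Proposition \ref{prop:desusp_underlying_bimodule}) consistent on both sides of the computation.
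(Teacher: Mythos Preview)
Your proposal is correct and follows the natural line of argument. The paper itself states this theorem without proof, treating it as a direct consequence of the DG $\kk$-linear category structure on $\big((\grad\uppd)\qd\big)\desusp$ established in Proposition~\ref{prop:desusp_grad_cat_uppd_qd} (and parallel to the analogous Theorem~\ref{thm:cohom_rgrad_qd}, also unproven); your Leibniz-rule bookkeeping is exactly what is needed to fill this in, and there are no gaps.
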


\section{The composite adjunction for $\ppd$ binary quadratic}
\label{sect:composite}

The purpose of this section is to apply the general theory of Section \ref{sect:biquad} in the framework of Section \ref{subsect:uppd_framework}, supposing in addition that $\ppd$ is a binary quadratic operad  satisfying Hypothesis \ref{hyp:ppd_bin_quad}. This puts together the two relative duality theories provided by Sections \ref{sect:prop} and \ref{sect:prop_left}.

In particular, we have the commutative diagram 
\begin{eqnarray}
\label{eqn:diag:cat_uppd}
\xymatrix{
\kk \fb
\ar@{^(->}[r]
\ar@{^(->}[d]
&
\kk \finj
\ar@{^(->}[d]
\\
\cat \ppd
\ar@{^(->}[r]
&
\cat \uppd 
}
\end{eqnarray}
and the  isomorphism of $\kk \finj \boxtimes \cat \ppd\op$-modules 
$ 
\kk \finj \otimes_\fb \cat \ppd \stackrel{\cong}{\rightarrow} \cat \uppd.
$ 

\subsection{Recap of the identifications}
So as to facilitate comparison with  Section \ref{sect:biquad}, we give the following dictionary:
$\kring = \kk \fb$, 
$R = \kk \finj$, 
$C = \cat \ppd$, 
 and 
$A = \cat \uppd$.

 Moreover: 
\begin{enumerate}
\item 
Quadratic duality relative to $\cat \ppd$ (see Section \ref{sect:prop}) yields the DG $\kk$-linear category $(\rgrad \cat \uppd)\qd$, which has underlying $\kk \fb$-bimodule 
\[
(\rgrad \cat \uppd)\qd
\cong
\cat \ppd\otimes_\fb \kk \finj\qd .
\]
\noindent 
$(\rgrad \cat \uppd)\qd$ is non-positive and locally bounded (see Proposition \ref{prop:rgrad_finiteness}).

\bigskip
\item 
The associated Koszul dualizing complex $\vk := \vk (\cat \uppd) $ has underlying bimodule $ \cat \uppd \otimes_{\cat \ppd} (\rgrad \cat \uppd)\qd$, which is isomorphic to both the following
\[
\cat \uppd \otimes_\fb \kk \finj\qd 
\cong 
\kk \finj \otimes_\fb (\rgrad \cat \uppd)\qd.
\]

\bigskip
\item 
Quadratic duality relative to $\kk \finj$   (see Section \ref{sect:prop_left}) yields the DG $\kk$-linear category $(\grad \cat \uppd)\qd$, which has underlying $\kk \fb$-bimodule 
\[
(\grad \cat \uppd)\qd
\cong
(\cat \ppd)\qd \otimes_\fb \kk \finj .
\]
$(\grad \cat \uppd)\qd$ is non-negative and locally bounded (see Proposition \ref{prop:grad_cat_uppd_qd_finiteness}).

\bigskip
 \item 
The associated Koszul dualizing complex $K^\vee := K^\vee (\cat \uppd)$ has underlying bimodule $(\grad \cat \uppd)\qd \otimes_{\finj} \cat \uppd$, which is isomorphic to both the following
\[
(\cat \ppd)\qd \otimes_\fb \cat \uppd 
\cong 
(\grad \cat \uppd)\qd \otimes_\fb \cat \ppd.
\]

\bigskip
\item 
The dualizing complex for the composite adjunctions (see Section \ref{sect:biquad}) is $K^\vee \otimes_{\cat \uppd} \vk$. This is a complex of left $(\grad \cat \uppd)\qd$, right $(\rgrad \cat \uppd)\qd$ bimodules. The underlying (non DG) bimodule identifies as 
\[
(\grad \cat \uppd)\qd
\otimes_\fb 
(\rgrad \cat \uppd)\qd.
\]
This is isomorphic to both the following
\[
(\cat \ppd)\qd  \otimes_\fb \cat \uppd \otimes_\fb \kk \finj\qd
\cong 
(\cat \ppd)\qd  \otimes_\fb \kk \finj \otimes_\fb \cat \ppd \otimes_\fb \kk \finj\qd,
\]	
where the differential is twisted using both $e'$ and $\e'$.
\end{enumerate}

\subsection{The composite adjunction for left DG modules}
\label{subsect:composite_left}

Consider the composite adjunction for left DG modules:
\[
\xymatrix{
(\rgrad \cat \uppd)\qd \dash \dgmod
\ar@<.5ex>[rr]^(.55){\vk \otimes -} 
&&
\cat \uppd \dash \dgmod
\ar@<.5ex>[ll]^(.45){\ihom (\vk, -)}
\ar@<.5ex>[rr]^{K^\vee \otimes -}
& &
(\grad \cat \uppd)\qd \dash \dgmod
\ar@<.5ex>[ll]^{\ihom (K^\vee, -) },
}
\]
(in which the notation for the functors has been abbreviated for legibility).

This composite adjunction is induced by the complex $K^\vee \otimes_{\cat \uppd} \vk$:
\begin{eqnarray}
\label{eqn:composite_adjunction}
\xymatrix{
(\rgrad \cat \uppd)\qd \dash \dgmod
\ar@<.5ex>[rrrr]^{K^\vee \otimes_{\cat \uppd} \vk \otimes_{(\rgrad \cat \uppd)\qd}-}
&\quad \quad &\quad \quad &&
(\grad \cat \uppd)\qd \dash \dgmod.
\ar@<.5ex>[llll]^{\ihom_{(\grad \cat \uppd)\qd} (K^\vee \otimes _{\cat \uppd} \vk, -)}
}
\end{eqnarray}
\noindent 
Forgetting the differentials, we have the following identifications.
\begin{enumerate}
\item 
The functor  underlying  $K^\vee \otimes_{\cat \uppd} \vk \otimes_{(\rgrad \cat \uppd)\qd}-$ identifies as 
\[
(\cat \ppd)\qd \otimes _\fb \kk \finj \otimes_\fb - .
\]
Here $(\cat \ppd)\qd \otimes _\fb \kk \finj$ is the object underlying $(\grad \cat \uppd)\qd$; it is non-negative and locally bounded. 
\item 
The functor underlying  $\ihom_{(\grad \cat \uppd)\qd} (K^\vee \otimes _{\cat \uppd} \vk, -)$ identifies as 
\[
\ihom_{\kk \fb} (\cat \ppd \otimes_\fb \kk \finj\qd , -) .
\]
Here $\cat \ppd \otimes_\fb \kk \finj\qd$ is the object underlying $(\rgrad \cat \uppd)\qd$;  it is non-positive and locally bounded.
\end{enumerate}

\bigskip
Using the finiteness properties stated above (or combining Proposition \ref{prop:adj_vk_cat_uppd_lb} with 
the first statement of Theorem \ref{thm:relative_lbd_equivalence_cat_ppd}), one has:

\begin{prop}
\label{prop:comp_adj_lbd}
The adjunction (\ref{eqn:composite_adjunction}) restricts to the adjunction between the full subcategories of locally bounded objects:
\[
\xymatrix{
(\rgrad \cat \uppd)\qd \dash \lbdgmod
\ar@<.5ex>[rrrr]^{K^\vee \otimes_{\cat \uppd} \vk \otimes_{(\rgrad \cat \uppd)\qd}-}
&\quad \quad &\quad \quad &&
(\grad \cat \uppd)\qd \dash \lbdgmod.
\ar@<.5ex>[llll]^{\ihom_{(\grad \cat \uppd)\qd} (K^\vee \otimes _{\cat \uppd} \vk, -)}
}
\]
\end{prop}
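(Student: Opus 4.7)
The plan is to exhibit the adjunction (\ref{eqn:composite_adjunction}) as the composite of two previously-studied adjunctions, each of which has already been shown to restrict to locally bounded objects. By the construction in Section \ref{sect:biquad}, the left adjoint $K^\vee \otimes_{\cat\uppd} \vk \otimes_{(\rgrad \cat\uppd)\qd}-$ is the composite of $\vk \otimes_{(\rgrad \cat\uppd)\qd} -$ followed by $K^\vee \otimes_{\cat\uppd}-$, via the natural isomorphism
\[
K^\vee \otimes_{\cat\uppd}\bigl(\vk \otimes_{(\rgrad \cat\uppd)\qd} M\bigr) \cong (K^\vee \otimes_{\cat\uppd} \vk) \otimes_{(\rgrad \cat\uppd)\qd} M,
\]
and the right adjoint is obtained analogously via the corresponding $\ihom$ identification. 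Consequently, proving that the composite adjunction restricts to locally bounded objects reduces to proving that each of the two constituent adjunctions does so.

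The latter is already available. The $\vk$-adjunction restricts to locally bounded DG modules by Proposition \ref{prop:adj_vk_cat_uppd_lb}, and the $K^\vee$-adjunction restricts to locally bounded DG modules by the first statement of Theorem \ref{thm:relative_lbd_equivalence_cat_ppd}. Composing these restrictions yields directly the restricted composite adjunction asserted in the statement.

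As a sanity check (and as the alternative route hinted at in the text), one can verify the conclusion directly from finiteness. The underlying functor of the composite left adjoint is $(\grad \cat\uppd)\qd \otimes_\fb -$, and $(\grad \cat\uppd)\qd$ is non-negative and locally bounded as a $\kk\fb$-bimodule by Proposition \ref{prop:grad_cat_uppd_qd_finiteness}; hence Proposition \ref{prop:loc_bd_preservation} gives preservation of local boundedness. Symmetrically, the underlying functor of the composite right adjoint is $\ihom_{\kk\fb}(-,-)$ against the non-positive, locally bounded bimodule underlying $(\rgrad \cat\uppd)\qd$ (Proposition \ref{prop:rgrad_finiteness}), so the non-positive counterpart of Proposition \ref{prop:loc_bd_preservation} yields preservation of local boundedness there as well.

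I do not anticipate any genuine obstacle: once the two relative Koszul-duality adjunctions are known to restrict (which is the substantive content of Proposition \ref{prop:adj_vk_cat_uppd_lb} and Theorem \ref{thm:relative_lbd_equivalence_cat_ppd}), the result is formal. The only minor bookkeeping consists of tracking the functorial isomorphism identifying the composite functor as being induced by the bimodule $K^\vee \otimes_{\cat\uppd} \vk$, and the symmetric identification on the $\ihom$ side; both are supplied by Section \ref{sect:biquad}.
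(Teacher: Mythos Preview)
Your proposal is correct and matches the paper's own justification exactly: the paper states that the result follows either from the finiteness properties of $(\grad \cat \uppd)\qd$ and $(\rgrad \cat \uppd)\qd$ or by combining Proposition~\ref{prop:adj_vk_cat_uppd_lb} with the first statement of Theorem~\ref{thm:relative_lbd_equivalence_cat_ppd}. You have given both routes, in the same order of emphasis, so nothing further is required.
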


\begin{exam}
\label{exam:koszul_type_adjunction}
\ 
\begin{enumerate}
\item 
Consider $\bmu$ as a $(\rgrad \uppd)\qd$-module via the augmentation of $(\rgrad \uppd)\qd$. The image of $\bmu$ under the left adjoint is $(\grad \uppd)\qd$, considered as a left DG module over itself. 
\item 
Likewise, considering $\bmu$ as a $(\grad \uppd)\qd$-module, its image under the right adjoint is $\big((\rgrad \uppd)\qd\big)^\sharp$, with its left $(\rgrad \uppd)\qd$-module structure. 
\end{enumerate}
This exhibits the expected behaviour of a Koszul-type adjunction (cf. the discussion in the Prologue to \cite{MR4398644}). Namely, the object $\bmu$ can be viewed as encoding the `simple' objects of the DG category of modules (up to cohomological shift); the left adjoint sends this to a free DG module. Correspondingly, the right adjoint sends $\bmu$ to a cofree object. 
\end{exam}

\subsection{The case $\ppd$ Koszul}
\label{subsect:composite_case_ppd}

When $\ppd$ is Koszul, this leads to  `Koszul duality' results. In particular, we have the following:

\begin{thm}
\label{thm:composite_holbd_equivalence}
Suppose that $\kk$ is a field of characteristic zero and that $\ppd$ is Koszul. Then the adjunction of Proposition \ref{prop:comp_adj_lbd} induces an equivalence between the respective homotopy categories
\[
\holbd ((\rgrad \cat \uppd)\qd)
\stackrel{\simeq}{\rightleftarrows}
\holbd ((\grad \cat \uppd)\qd).
\]
\end{thm}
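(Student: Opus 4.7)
The plan is to deduce the equivalence by composing the two relative Koszul equivalences already established, namely Corollary~\ref{cor:relative_BGG} and Theorem~\ref{thm:relative_lbd_equivalence_cat_ppd}. Concretely, I would show that the adjunction
\[
K^\vee \otimes_{\cat \uppd} \vk \otimes_{(\rgrad \cat \uppd)\qd}- \ \dashv\ \ihom_{(\grad \cat \uppd)\qd} (K^\vee \otimes _{\cat \uppd} \vk, -)
\]
is canonically isomorphic to the composition of the $\vk(\cat \uppd)$ adjunction of Proposition~\ref{prop:vk_adjunctions} with the $K^\vee (\cat \uppd)$ adjunction of Proposition~\ref{prop:Kvee_adjunction}. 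At the level of the left adjoints this is immediate from associativity of the tensor product: for a DG $(\rgrad \cat \uppd)\qd$-module $M$,
\[
K^\vee \otimes_{\cat \uppd} \big( \vk \otimes_{(\rgrad \cat \uppd)\qd} M \big)
\cong
\big( K^\vee \otimes_{\cat \uppd} \vk \big) \otimes_{(\rgrad \cat \uppd)\qd} M,
\]
and this passes to the adjoints by uniqueness of adjoints. Hence the composite adjunction of Proposition~\ref{prop:comp_adj_lbd} factors through $\cat \uppd \dash \lbdgmod$ as the composite of the restrictions of these two adjunctions to the locally bounded subcategories.

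The next step is to invoke the two equivalence results. Corollary~\ref{cor:relative_BGG} (which requires only $\kk$ of characteristic zero, not Koszulity of $\ppd$) produces an equivalence
\[
\holbd ((\rgrad \cat \uppd)\qd) \stackrel{\simeq}{\rightleftarrows} \holbd (\cat \uppd),
\]
while Theorem~\ref{thm:relative_lbd_equivalence_cat_ppd}, under the additional hypothesis that $\ppd$ is Koszul, yields
\[
\holbd (\cat \uppd) \stackrel{\simeq}{\rightleftarrows} \holbd ((\grad \cat \uppd)\qd).
\]
Composing these two equivalences immediately gives the claimed equivalence of homotopy categories. For this to be well-defined at the level of homotopy categories, one uses that both adjoint functors in each of the two adjunctions preserve weak equivalences (Propositions~\ref{prop:exactness_vk_cat_uppd_left_modules} and~\ref{prop:exact_colimit_cat_uppd}), so that each adjunction descends to an adjunction of homotopy categories, and the composite of the derived functors coincides with the derived functor of the composite (because the underlying functors already commute with tensor/Hom composition on the nose, no derived-functor fibrant/cofibrant replacement is needed).

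The main technical point, and the step I would treat most carefully, is the verification that the isomorphism of functors really holds as one of DG adjunctions (respecting the bimodule structure and the twisted differentials involving both $e'$ and $\e'$), not merely of the underlying ungraded functors. This is where Lemma~\ref{lem:underlying_kv_vk} and the description of the differential on $K^\vee \otimes_{\cat \uppd} \vk$ in terms of inner multiplication by $e' + \e'$ are essential: the associativity isomorphism above is a morphism of complexes precisely because the twisted differential on $K^\vee \otimes_{\cat \uppd} \vk$ is the sum of the contributions coming from $K^\vee$ and from $\vk$, which in turn is exactly what one obtains from the iterated tensor product. Once this compatibility is recorded (it is essentially a direct expansion of the differentials using the $R\otimes_\kring C\cong A$ decomposition), the restriction to locally bounded objects is guaranteed by Proposition~\ref{prop:comp_adj_lbd}, and the conclusion follows from the two-out-of-three property for equivalences of categories.
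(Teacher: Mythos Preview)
Your proposal is correct and follows exactly the paper's approach: the paper's proof is simply ``This follows by combining the equivalences of Corollary~\ref{cor:relative_BGG} and Theorem~\ref{thm:relative_lbd_equivalence_cat_ppd}.'' You have merely expanded the details of why the composite adjunction agrees with the adjunction induced by $K^\vee \otimes_{\cat \uppd} \vk$, which the paper takes as evident from the construction in Section~\ref{sect:biquad}.
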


\begin{proof}
This follows by combining the equivalences of Corollary \ref{cor:relative_BGG} and Theorem \ref{thm:relative_lbd_equivalence_cat_ppd}.
\end{proof}

\begin{exam}
Considering $\bmu$ as a $(\rgrad \cat \uppd)\qd$-module,  the adjunction unit 
\[
\bmu 
\rightarrow 
\ihom_{\kk \fb} ^{e', \e'} ((\rgrad \cat \uppd)\qd, (\grad \cat \uppd)\qd  )
\]
is a weak equivalence. This gives an explicit homological relationship between the DG categories $(\rgrad \cat \uppd)\qd$ and $(\grad \cat \uppd)\qd$.
\end{exam}

\begin{rem}
\ 
\begin{enumerate}
\item 
Heuristically one can think of Theorem \ref{thm:composite_holbd_equivalence} as stating that, if $\ppd$ is Koszul, then  $(\grad \cat \uppd)\qd $ and $(\rgrad \cat \uppd)\qd$ are `Koszul dual' DG categories.
\item
To relate this to `classical' operadic Koszul duality for binary quadratic operads, one should replace $(\grad \cat \uppd)\qd $ by its desuspension  $\big(( \grad \cat \uppd)\qd\big)\desusp$ and adjust the assertion accordingly.
\end{enumerate}
 \end{rem}

\section{The case $\uppd =  \ucom$}
\label{sect:ucom}

In this section, we apply the results of Section \ref{sect:composite} in the case $\uppd = \ucom$, so that $\ppd = \com$, working over a field $\kk$ of characteristic zero. The operad $\com$ is the archetypal example of a Koszul binary quadratic operad; its Koszul dual $\com^!$ is the operad $\lie$ that encodes Lie algebras.

\begin{rem*}
This case motivated the work of this paper. Namely, the author sought to relate work on the $\lie$-side (see  
\cite{MR4696223}, \cite{MR4835394}, and \cite{2023arXiv230907607P}) to work on the $\com$-side (see \cite{MR4518761} and \cite{P_filterFS}), which is related to studying representations of $\kk \fin$ (see \cite{P_finmod}).
\end{rem*}

Recall from Section \ref{subsect:PROP_recollections} that
 the $\kk$-linear category underlying $\cat \com$ is equivalent to $\kk \fs$, where $\fs$ is the category of finite sets and surjective maps, and that underlying $\cat \ucom$ is equivalent to $\kk \fin$, where $\fin$ is the category of finite sets and all maps. 
The diagram (\ref{eqn:diag:cat_uppd}) identifies as 
\[
\xymatrix{
\kk \fb
\ar@{^(->}[r]
\ar@{^(->}[d]
&
\kk \finj
\ar@{^(->}[d]
\\
\kk \fs
\ar@{^(->}[r]
&
\kk \fin 
}
\]
induced by the inclusions of wide subcategories of $\fin$.  Moreover, the composition in $\kk \fin$ induces the isomorphism of $\kk \finj \boxtimes \kk \fs\op$-modules 
$
\kk \finj \otimes_\fb \kk \fs \stackrel{\cong}{\rightarrow} \kk \fin.
$

\begin{rem}
\label{rem:left/right_augmentations_kk_fin}
\ 
\begin{enumerate}
\item
The general framework yields the left augmentation 
$
\kk \fin \twoheadrightarrow \kk \finj
$.  
The underlying map (on morphisms) is the retract (of the inclusion $\kk \finj \subset \kk \fin$) that sends non-injective maps to zero. For $g$ a map in $\fin$, the image of $[g]$ (considered as a morphism of $\kk \fin$)  in $\kk \finj$ is written $[g]_\finj$, which is $[g]$ (considered as a morphism of $\kk \finj$) if $g$ is injective and zero otherwise.
\item 
Similarly, one has the right augmentation
$ 
\kk \fin \twoheadrightarrow \kk \fs$. 
The underlying map (on morphisms) is the retract of the canonical inclusion that  sends non-surjective maps to zero. As above, for $g$ a map in $\fin$, the image of $[g]$ (considered as a morphism of $\kk \fin$)  in $\kk \fs$ is written $[g]_\fs$, which is $[g]$ (considered as a morphism of $\kk \fs$) if $g$ is surjective and zero otherwise.
\end{enumerate}
\end{rem}

\begin{rem}
\label{rem:left_right_module_structures_kk_fin}
\ 
\begin{enumerate}
\item 
The associated left $\kk \fs$-module structure on $\kk \finj$ has structure map given by the composite 
\[
\kk \fs \otimes_\fb \kk \finj \rightarrow \kk \fin \rightarrow \kk \finj,
\]
where the first map is given by composition in $\kk \fin$ and the second by the left augmentation. Explicitly, for composable maps $i$ in $\finj$ and $p$ in $\fs$, the image of $[p]\otimes [i]$ is $[p \circ i]_\finj$.
\item
Similarly, the associated right $\kk \finj$-module structure on $\kk \fs$ has structure map
\[
\kk \fs \otimes_\fb \kk \finj \rightarrow \kk \fin \rightarrow \kk \fs,
\]
where the first map is given by composition in $\kk \fin$ and the second by the right augmentation. For composable maps as above, the image of $[p]\otimes [i]$ is $[p \circ i]_\fs$.
\end{enumerate}
\end{rem}

\subsection{Identifying  $(\rgrad \kk \fin)\qd$ and $(\grad \kk \fin)\qd$}

The first step  is to identify the DG $\kk$-linear categories $(\rgrad \kk \fin)\qd$ and $(\grad \kk \fin)\qd$.
 Theorem \ref{thm:DG_qdual_grad_cat_uppd} gives: 

\begin{prop}
\label{prop:rgrad_kk_fin_qd}
The DG $\kk$-linear category $(\rgrad \kk \fin)\qd$ has underlying $\kk \fb$-bimodule 
$
\kk \fs \otimes_\fb \kk \finj\qd, 
$ 
with cohomological grading inherited from $\kk \finj\qd$. 

The differential is the Koszul complex differential associated to the right $\kk \finj$-module structure of $\kk \fs$. 
\end{prop}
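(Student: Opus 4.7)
The statement is essentially a specialization of the general Theorem \ref{thm:DG_qdual_grad_cat_uppd} to the case $\uppd = \ucom$, $\ppd = \com$, so the plan is to invoke that theorem and then translate the conclusions through the identification $\cat \com \cong \kk \fs$ recalled in Example \ref{exam:ucom}. First, I would verify that the hypotheses of the theory developed in Section \ref{sect:prop} are satisfied: $\ucom$ has $\ucom(0) = \ucom(1) = \kk$ with reduced suboperad $\com$, so the framework of Section \ref{subsect:uppd_framework} applies. By Lemma \ref{lem:decompose_cat_uppd} (which in this case simply expresses the surjection-injection factorization in $\fin$), one has $\kk \finj \otimes_\fb \kk \fs \stackrel{\cong}{\rightarrow} \kk \fin$.

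Next, I would apply Theorem \ref{thm:DG_qdual_grad_cat_uppd} directly. That theorem identifies the underlying $\kk\fb$-bimodule of $(\rgrad \cat \uppd)\qd$ as $\cat \ppd \otimes_\fb \kk \finj\qd$, and in the present case this is exactly $\kk \fs \otimes_\fb \kk \finj\qd$, with cohomological grading inherited from $\kk \finj\qd$ as claimed. The theorem also gives the differential explicitly: it vanishes on the $\kk \finj\qd$-factor and on $\cat \ppd$ it is the adjoint
\[
\cat \ppd \rightarrow \hom_{\kk\fb\op}(\kk\finj\dg{1}, \cat \ppd) \cong \cat \ppd \obf \sgn_1^{\fb\op}
\]
to the $\kk \finj\op$-structure map $\cat \ppd \otimes_\fb \kk \finj\dg{1} \rightarrow \cat \ppd$.

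Specializing to $\cat \ppd = \kk \fs$, this adjoint is built from the right $\kk \finj$-module structure on $\kk \fs$ described in Remark \ref{rem:left/right_augmentations_kk_fin}: compose $\kk \fs \otimes_\fb \kk \finj\dg{1} \hookrightarrow \kk \fs \otimes_\fb \kk \fin \rightarrow \kk \fin$ and then apply the right augmentation $\kk \fin \twoheadrightarrow \kk \fs$ (killing non-surjective maps). Recognizing this adjoint map as the degree $(+1)$ piece of the classical Koszul differential associated to a right module over a quadratic (in fact Koszul) augmented category $\kk \finj$ then identifies the whole DG bimodule $(\rgrad \kk \fin)\qd$ with the Koszul complex associated to the right $\kk \finj$-action on $\kk \fs$.

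The plan contains no serious obstacle, since this is a direct specialization; the only point requiring a moment's care is bookkeeping the identification of the adjoint map with the Koszul differential in the form stated, and in particular checking that the cohomological grading coming from $\kk\finj\qd$ matches the standard Koszul complex grading (so that degree $n$ corresponds to $(\kk\finj\qd)\dg{-n}$, as already noted just after Proposition \ref{prop:right_qdual_kk_finj}).
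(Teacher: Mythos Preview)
Your proposal is correct and takes essentially the same approach as the paper: the paper simply states that Theorem \ref{thm:DG_qdual_grad_cat_uppd} gives the result, and your argument is exactly this specialization (with the identification $\cat \com \cong \kk \fs$ from Example \ref{exam:ucom}), spelled out in slightly more detail than the paper provides.
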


\begin{rem}
\ 
\begin{enumerate}
\item 
The underlying complex of $\kk \fb\op$-modules was exploited in \cite{P_filterFS}, together with the fact that the cohomology $H^0$ yields a sub $\kk$-linear category of $\kk \fs$ (compare the general result, Theorem \ref{thm:cohom_rgrad_qd}). The full DG $\kk$-linear category structure was not used.
\item 
The cohomology of this complex was considered in \cite{MR4518761}.
\end{enumerate}
\end{rem}

Analogously, Theorem \ref{thm:grad_cat_uppd_dual} gives:

\begin{prop}
\label{prop:grad_kk_fin_qd}
The DG $\kk$-linear category $(\grad \kk \fin)\qd$ has underlying $\kk \fb$-bimodule 
$
(\kk \fs)\qd \otimes_\fb \kk \finj, 
$ 
with cohomological grading inherited from $(\kk \fs)\qd$. 

The differential is the Koszul complex differential associated to the left $\kk \fs$-module structure of $\kk \finj$. 
In particular, it is determined by its restriction to $\kk \finj$. The latter is given by the map 
\begin{eqnarray}
\label{eqn:diff_grad_kk_fin_!}
\kk \finj \rightarrow ((\kk \fs)\qd)\dg{1} \otimes_\fb \kk \finj
\end{eqnarray}
adjoint to the $\kk\fs$-action (where $((\kk \fs)\qd)\dg{1} \cong (\kk \fs \wt{1})^\sharp$).
\end{prop}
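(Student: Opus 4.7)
The plan is to specialize the general machinery developed earlier to the case $\uppd = \ucom$, $\ppd = \com$, using the dictionary $\kring = \kk\fb$, $R = \kk\finj$, $C = \cat\com \cong \kk\fs$, $A = \cat\ucom \cong \kk\fin$, and $W = \cat\wt{1}\com$. The underlying bimodule identification is then immediate from Corollary~\ref{cor:grad_cat_uppd_dual}, which gives $(\grad \cat \uppd)\qd \cong (\cat \ppd)\qd \otimes_\fb \kk \finj$; substituting $\cat \com \cong \kk \fs$ yields the stated form $(\kk \fs)\qd \otimes_\fb \kk \finj$, with cohomological grading inherited from $(\kk \fs)\qd$ (as recorded in Proposition~\ref{prop:cat_ppd_!_non-negative}).

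For the DG structure, I would invoke Theorem~\ref{thm:DG_split_R_C} directly. That result asserts that the differential $d$ on the quadratic dual is determined on generators by $d|_{\hom_\kring (W, \kring)} = 0$ and $d|_R = \hat{q}$, where $\hat{q} \cn R \to \hom_R(R\otimes_\kring W, R) \cong \hom_\kring(W,\kring) \otimes_\kring R$ is the adjoint of the structure map $q$. Since $(\kk\fs)\qd$ is generated over $\kk\fb$ by $\hom_\kring(W,\kring) = (\cat\wt{1}\com)^\sharp \cong ((\kk\fs)\qd)\dg{1}$, vanishing on this piece together with the Leibniz rule forces $d$ to vanish on all of $(\kk\fs)\qd$ inside $(\grad\kk\fin)\qd$. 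Hence the differential is entirely encoded by its restriction to $\kk\finj$, which is the map~(\ref{eqn:diff_grad_kk_fin_!}).

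The remaining step is to identify $\hat{q}$ restricted to $\kk\finj$ with the asserted adjoint of the left $\kk\fs$-action. By Lemma~\ref{lem:qbar_restriction}, the component $\overline{q} \cn W\otimes_\kring R \to R$ is the restriction of the left $A$-module structure on $R$ (arising from the left augmentation $\epsilon_A \cn \cat\ucom \to \cat\com \cong \kk\fs$ and then Proposition~\ref{prop:left_augmentation} — note the role reversal here: the left augmentation in our framework is $\cat\uppd \to \kk\finj$, induced by the augmentation of $\cat\ppd$, so the relevant left $\kk\fin$-module on $\kk\finj$ is precisely what endows $\kk\finj$ with its left $\kk\fs$-action by restriction). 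Unwinding the construction of $q$ from $\overline{q}$ as in Remark~\ref{rem:qbar}, and then passing to the adjoint using that $W$ is finitely-generated projective as a left $\kk\fb$-module (ensuring the isomorphism $\hom_R(R\otimes_\kring W, R) \cong \hom_\kring(W,\kring) \otimes_\kring R$), one sees that $\hat{q}$ on $\kk\finj$ is exactly the adjoint of the left $\kk\fs$-action $\cat\wt{1}\com \otimes_\fb \kk\finj \to \kk\finj$, which is~(\ref{eqn:diff_grad_kk_fin_!}).

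Finally, to justify the ``Koszul complex'' nomenclature, one observes that a differential on $(\kk\fs)\qd \otimes_\fb \kk\finj$ which vanishes on $(\kk\fs)\qd$, satisfies the Leibniz rule, and acts on the $\kk\finj$ factor by the adjoint of the action of the generating bimodule $(\kk\fs)\qd_1 \cong (\cat\wt{1}\com)^\sharp$ is by definition the Koszul differential associated to the left $\kk\fs$-module $\kk\finj$. The main obstacle is purely bookkeeping: keeping the left/right augmentation conventions straight and verifying that the ``half base change'' dictionary of Section~\ref{subsect:dgalg_application} really does specialize as advertised; once this is set up, every assertion follows mechanically from Theorem~\ref{thm:DG_split_R_C}.
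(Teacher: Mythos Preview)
Your proposal is correct and follows the paper's approach: the paper simply states that this follows from Corollary~\ref{cor:grad_cat_uppd_dual} (specialized to $\uppd = \ucom$, $\ppd = \com$), and your detailed unpacking via Theorem~\ref{thm:DG_split_R_C} and Lemma~\ref{lem:qbar_restriction} is exactly how one extracts the explicit form of the differential from that general machinery. One minor slip: the left augmentation is $\epsilon_A \cn \cat\ucom \to \kk\finj$ (Proposition~\ref{prop:left_augmentation}), not to $\kk\fs$ as you first write---but your own parenthetical immediately corrects this and the remainder of the argument is unaffected.
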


The differential appearing in Proposition \ref{prop:grad_kk_fin_qd} is described explicitly by Proposition \ref{prop:identify_diff_grad_cat_uppd_qd}, applied with $\uppd = \ucom$. 
 This is spelled out below.

Recall from Lemma \ref{lem:fsbase_wt1} that $\kk \fs (\mathbf{n+1}, \mathbf{n})$ is freely generated as a $\kk \sym_n$-module by $\{ [f_{i,j}] \mid 1 \leq i < j \leq n+1 \}$, where $f_{i,j}\in \fsbase (\mathbf{n+1}, \mathbf{n}) $ is described explicitly in {\em loc. cit.}. For $f \in \fs (\mathbf{n+1}, \mathbf{n})$, write $[f]^\sharp$ for the corresponding element of the dual basis of $\kk \fs (\mathbf{n+1}, \mathbf{n})^\sharp $. Thus, 
 $\kk \fs (\mathbf{n+1}, \mathbf{n})^\sharp$ is freely generated as a $\kk \sym_n\op$-module by $\{ [f_{i,j}] ^\sharp \ | \ 1 \leq i < j \leq n+1 \}$. Unravelling the definitions gives:

\begin{prop}
\label{prop:qhat_fs}
The component  $\kk \finj \rightarrow (\kk \fs\wt{1})^\sharp \otimes_\fb \kk \finj$ of (\ref{eqn:diff_grad_kk_fin_!})
is given
for $\alpha \in \finj (\mathbf{a}, \mathbf{n+1})$
 by 
\[
[\alpha] 
\mapsto
\sum_{1 \leq i < j \leq n+1} 
[f_{i,j}]^\sharp 
\otimes 
[f_{i,j} \circ \alpha]_{\finj}.
\]
\end{prop}

Since the differential is a derivation and is  equivariant,  it suffices to specify its action on the generators $\kk \finj^{[1]}$ of $\kk \finj$. Now, for any $n \in \nat$, $\kk \finj (\mathbf{n}, \mathbf{n+1})$ is generated as a $\kk \sym_{n+1}$-module by $[\iota_{n,n+1}]$ where $\iota_{n,n+1}$ is the  canonical inclusion $ \mathbf{n} \subset \mathbf{n+1}$, so it suffices to specify the differential on this.

\begin{cor}
\label{cor:image_qhat_canon_incl}
The differential restricted to $\kk \finj (\mathbf{n}, \mathbf{n+1})$, considered as a map 
\[
\kk \finj (\mathbf{n}, \mathbf{n+1})
\rightarrow 
\kk \fs (\mathbf{n+1}, \n) ^\sharp
\]
acts by 
$ 
[\iota_{n, n+1}] \mapsto \sum_{1 \leq i \leq n} [f_{i, n+1} ] ^\sharp . 
$ 
\end{cor}

\begin{proof}
This follows directly from Proposition \ref{prop:qhat_fs}, since $f_{i,j }\circ \iota_{n,n+1}$ can only be injective if $j =n+1$, in which case the composite $f_{i,n+1} \circ \iota_{n,n+1}$ is $\id_\mathbf{n}$.
\end{proof}

\begin{rem}
There is a key phenomenon at play for the case $\ppd = \com$. Namely, the $\kk$-linear map $\com (\mathbf{2}) \rightarrow \com (\mathbf{1}) \cong \kk$ induced by composition (in $\ucom$) with $\iota_{1,2} \in \finj (\mathbf{1}, \mathbf{2})$ is an {\em isomorphism} of $\kk$-vector spaces.

This accounts for the fact that the differential of $(\grad \kk \fin)\qd$ is related to the differential in the Chevalley-Eilenberg complex considered below; the latter can be constructed as a form of operadic Koszul complex.
\end{rem}

\subsection{The desuspension $\big((\grad \kk \fin)\qd\big)\desusp$}

As in Section \ref{subsect:desuspend_grad_uppd_qd}, it is natural to desuspend using the sheering functor $(-)\desusp$ of Appendix \ref{sect:sheer}. In particular, this gives a clearer relationship with $(\cat \lie)\op$; we have:

\begin{lem}
\label{lem:grad_kk_fin_qd_desusp}
The DG $\kk$-linear category $\big((\grad \kk \fin)\qd\big)\desusp$ has underlying object isomorphic to 
\[
(\cat \lie)\op \otimes_\fb \kk \finj \desusp.
\]
Hence the opposite category has underlying object isomorphic to 
$
(\kk \finj^\ddag)\op \otimes _\fb \cat \lie,
$
where the grading is inherited from that of $(\kk \finj^\ddag)\op$. 
\end{lem}

The remainder of this section is devoted to comparing this to a certain universal Chevalley-Eilenberg complex. First let us recall the classical (homological) Chevalley-Eilenberg complex for $\g$ a Lie algebra and $M$ a right $\g$-module. The associated Chevalley-Eilenberg complex has the form 
\[
M \otimes \Lambda^* \g, 
\]
where the homological grading is given by the length grading of the exterior algebra. The differential on $M \otimes \Lambda^n \g$ is given for $m \in M$ and $x_i \in \g$ by
\[
d (m \otimes x_1 \wedge \ldots \wedge x_n )
= 
\sum_{j=1}^n 
(-1)^j 
\Big(
[m, x_j] \otimes x_1 \wedge \ldots \wedge\widehat{x_j} \wedge \ldots \wedge x_n
+ 
\sum_{i<j}
m \otimes x_1 \wedge \ldots \wedge [x_i,x_j] \wedge \ldots \wedge\widehat{x_j} \wedge \ldots \wedge x_n
\Big)
\]
 where $[x_i, x_j]$ appears in the $i$th place.

In particular, this applies taking $M = \g ^{\otimes a}$, the $a$-fold tensor product of the (right) adjoint representation of $\g$, so that the underlying graded object is $\g^{\otimes a} \otimes \Lambda^* \g$. In particular, in homological degree $n$, we can view this term as the quotient 
\[
\g ^{\otimes a+n} \twoheadrightarrow \g^{\otimes a} \otimes \Lambda^n \g
\]
obtained by applying the functor $- \otimes_{\sym_n} \sgn_n$, using the restricted place permutation action on the last $n$ factors. 

The differential of the Chevalley-Eilenberg complex $\g^{\otimes a} \otimes \Lambda^n \g \rightarrow \g^{\otimes a} \otimes \Lambda^{n-1} \g$ lifts to a linear map 
$\g^{\otimes a+n} \rightarrow \g ^{\otimes a +n-1}$ that is given by 
\begin{eqnarray}
\label{eqn:tilde_d}
\tilde{d} ( x_1 \otimes \ldots \otimes x_{a+n} )
= 
\sum_{j=a+1}^{a+n} 
(-1)^{j-a} 
\sum_{i<j}
 x_1 \otimes \ldots \otimes [x_i,x_j] \otimes \ldots \otimes \widehat{x_j} \otimes \ldots \otimes x_{a+n}.
\end{eqnarray}

Moreover, we can rewrite 
\[
g^{\otimes a} \otimes \Lambda^n \g
\cong 
(\kk \sym_a \obf \sgn_n^{\fb\op} )
\otimes_{\sym_{a+n}}
\g ^{\otimes a+n}
\]
and this is an isomorphism of $\sym_a$-modules. Now, by Lemma \ref{lem:underlying_kkfb-bimodules}, there is an isomorphism of bimodules $\kk \sym_a \obf \sgn_n^{\fb\op} \cong (\kk \finj ^\ddag) \op (\mathbf{a+n}, \mathbf{a})$. It follows that, globally, we can rewrite the Chevalley-Eilenberg complex $\g^{\otimes \bullet} \otimes \Lambda^* \g$ as 
\[
(\kk \finj ^\ddag) \op
\otimes_\fb 
\g^{\otimes \ast}.
\]
The differential is induced by (\ref{eqn:tilde_d}).

To proceed, we replace the symmetric monoidal category of $\kk$-vector spaces with respect to the usual tensor product by the category of right $\lie$-modules, equipped with the Day convolution product. The operad $\lie$ is (essentially tautologically) a Lie algebra in this category. Taking $\g= \lie$ and applying the above Chevalley-Eilenberg complex construction yields 
\[
(\kk \finj ^\ddag) \op
\otimes_\fb 
\cat \lie,
\]
using the fact that $\lie ^{\odot \ast}$ has underlying object $\cat \lie$. From this viewpoint, a right $\lie$-module is equivalent to a right $\cat \lie$-module.

As a first step towards understanding the differential, we note the following:

\begin{lem}
\label{lem:CE_complex}
\ 
\begin{enumerate}
\item 
The Chevalley-Eilenberg differential on $(\kk \finj ^\ddag) \op
\otimes_\fb 
\cat \lie$ is a morphism of right $\cat \lie$-modules.
\item 
In particular, the differential is determined by its restriction 
\begin{eqnarray}
\label{eqn:restrict_CE_diff}
(\kk \finj ^\ddag) \op
\rightarrow 
(\kk \finj ^\ddag) \op \otimes_{\fb} \cat  \wt{1} \lie .
\end{eqnarray}
\end{enumerate}
\end{lem}

\begin{proof}
The first statement is immediate, since we are working within the category of right $\cat \lie$-modules. 

The second statement follows, once one verifies that the differential restricted to $(\kk \finj ^\ddag) \op$ takes values in $(\kk \finj ^\ddag) \op \otimes_{\fb} \cat  \wt{1} \lie$ (i.e., using the weight one part of $\cat \lie$). This can be read off from the form of (\ref{eqn:tilde_d}).
\end{proof}

\begin{exam}
\label{exam:basic_CE_diff}
Evaluating (\ref{eqn:restrict_CE_diff}) on $(\mathbf{a+1}, \mathbf{a})$ gives 
\[
\kk \finj \op (\mathbf{a+1}, \mathbf{a}) 
\rightarrow 
\cat \lie (\mathbf{a+1}, \mathbf{a}) 
\]
where we have dropped the $(-)^\ddag$ since $\sgn_1= \triv_1$. To understand this, it suffices to consider the image of $[\iota_{a,a+1}\op]$. 

By inspection, up to the sign $(-1)$ and taking into account the twisting provided by $(-)\desusp$, the image identifies with that of Corollary \ref{cor:image_qhat_canon_incl}, using the identification of $\cat \wt{1} \lie$ and $\cat \wt{1} \com$ up to twisting (recall that $\cat \com$ is isomorphic, as a $\kk$-linear category, to $\kk \fs$).
\end{exam}

These ingredients allow us to state the main result of this subsection:

\begin{thm}
\label{thm:identify_with_CE}
The opposite of the DG $\kk$-linear category $\big((\grad \kk \fin)\qd\big)\desusp$ is isomorphic as a complex of right $\cat \lie$-modules to the Chevalley-Eilenberg complex of $\lie$ with coefficients in $\cat \lie$, which has underlying object 
\[
(\kk \finj ^\ddag) \op
\otimes_\fb 
\cat \lie.
\]
\end{thm}

\begin{proof}
The above discussion together with Lemma \ref{lem:CE_complex} has already shown that the Chevalley-Eilenberg complex for $\lie$ with coefficients in $\cat \lie$ has the form in the statement and is a complex of right $\cat \lie$-modules. It  only remains to show that the differentials correspond. 

The final ingredient  is the fact that the Chevalley-Eilenberg differential behaves as a derivation with respect to the $(\kk \finj ^\ddag) \op$ structure. This follows by analysing the expression (\ref{eqn:tilde_d}); note that the signs $(-1)^j$ that arise can be interpreted as  Koszul signs taking into account the grading of $\kk\finj^\ddag$ and the fact that the differential has homological degree $-1$. (This property corresponds to the fact that the differential of the DG $\kk$-linear category is a derivation.) 

Using this, the proof reduces to the case treated in Example \ref{exam:basic_CE_diff}, which serves to establish the relationship with the differential of $\big((\grad \kk \fin)\qd\big)\desusp$ via Corollary \ref{cor:image_qhat_canon_incl}.
\end{proof}

\begin{rem}
This establishes the relationship with the work \cite{MR4696223} and \cite{2023arXiv230907607P}, where the Lie algebra homology of $\lie$ with coefficients in $\cat \lie$ is studied.
\end{rem}

\subsection{The Koszul duality theory}

The general theory of Section \ref{sect:composite} yields the adjunction:
\begin{eqnarray}
\label{eqn:adj_kk_fin}
\xymatrix{
(\rgrad \kk \fin)\qd \dash \dgmod
\ar@<.5ex>[rrr]^{K^\vee \otimes_{\fin} \vk \otimes_{(\rgrad \kk \fin)\qd}-}
&\quad \quad &\quad \quad &
(\grad  \kk \fin )\qd \dash \dgmod.
\ar@<.5ex>[lll]^{\ihom_{(\grad \kk\fin)\qd} (K^\vee \otimes _{\fin} \vk, -)}
}
\end{eqnarray}

Since $\com$ is Koszul,  the results of Section \ref{subsect:composite_case_ppd} apply. In particular:

\begin{thm}
\label{thm:relative_Koszul_duality_kk_fin}
Suppose that $\kk$ is a field of characteristic zero. Then the adjunction (\ref{eqn:adj_kk_fin}) restricts to an adjunction between the full subcategories of locally bounded modules and this induces an equivalence between the associated homotopy categories:
\[
\holbd ( (\rgrad \kk \fin)\qd) 
\stackrel{\simeq}{\rightleftarrows} 
\holbd ((\grad  \kk \fin )\qd).
\]

In particular, the adjunction unit 
\[
\bmu 
\rightarrow 
\ihom_{\kk \fb} ^{e', \e'} ((\rgrad \kk\fin)\qd, (\grad \kk \fin)\qd  )
\]
is a weak equivalence.
\end{thm}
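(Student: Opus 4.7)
The plan is to deduce this as a direct specialization of the general Theorem \ref{thm:composite_holbd_equivalence} to the framework $\uppd = \ucom$, $\ppd = \com$. First I would verify that all the standing hypotheses of Section \ref{sect:composite} hold in this case: the operad $\ucom$ satisfies $\ucom(0) = \ucom(1) = \kk$ with reduced suboperad $\com$, so the setup of Section \ref{subsect:uppd_framework} applies; moreover $\com$ is finitely-generated binary quadratic (generators in arity $2$, relations in arity $3$), so Hypothesis \ref{hyp:ppd_bin_quad} is satisfied. The working characteristic hypothesis on $\kk$ is assumed, and $\com$ is Koszul over $\kk$ with $\com^! = \lie$.

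Second, I would recall that Propositions \ref{prop:rgrad_kk_fin_qd} and \ref{prop:grad_kk_fin_qd} identify the two DG $\kk$-linear categories in question with the explicit models $\kk \fs \otimes_\fb \kk \finj\qd$ and $(\kk \fs)\qd \otimes_\fb \kk \finj$ respectively, both equipped with their Koszul-type twisted differentials. Proposition \ref{prop:comp_adj_lbd}, applied to $\cat \uppd = \kk \fin$, then guarantees that the composite adjunction (\ref{eqn:adj_kk_fin}) restricts to one between the full subcategories of locally bounded DG modules; this uses the finiteness properties established in Propositions \ref{prop:rgrad_finiteness} and \ref{prop:grad_cat_uppd_qd_finiteness} specialized to $\uppd = \ucom$, which are already collected in the summary in Section \ref{sect:composite}. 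With these specializations in place, Theorem \ref{thm:composite_holbd_equivalence} applies verbatim and yields the claimed equivalence
\[
\holbd((\rgrad \kk \fin)\qd)
\stackrel{\simeq}{\rightleftarrows}
\holbd((\grad \kk \fin)\qd).
\]

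Finally, for the statement concerning the adjunction unit on $\bmu$, I would invoke Example \ref{exam:koszul_type_adjunction} (itself a specialization of Example \ref{exam:unit_counit_kk}), which identifies the adjunction unit applied to $\bmu$ (regarded as a $(\rgrad \kk \fin)\qd$-module via the canonical augmentation) with the natural morphism
\[
\bmu \rightarrow \ihom_{\kk \fb}^{e', \e'}((\rgrad \kk \fin)\qd, (\grad \kk \fin)\qd).
\]
Since $\bmu$ is concentrated in a single arity-pair and is therefore locally bounded, the equivalence of homotopy categories just established forces this unit to be a weak equivalence.

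The main obstacle has in fact already been handled in the general theory: it is the verification, carried out in the proof of Theorem \ref{thm:composite_holbd_equivalence}, that the Koszulity of $\ppd$ together with the BGG-type equivalence of Corollary \ref{cor:relative_BGG} combine to give the composite equivalence. In the current specialization there is nothing new to prove beyond checking that $\com$ is Koszul and assembling the explicit identifications of $(\rgrad \kk \fin)\qd$ and $(\grad \kk \fin)\qd$; the remainder is a direct instantiation.
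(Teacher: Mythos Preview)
Your proposal is correct and follows the same route as the paper: the theorem is stated there as a direct specialization of Theorem \ref{thm:composite_holbd_equivalence} (together with Proposition \ref{prop:comp_adj_lbd}) to $\uppd = \ucom$, using that $\com$ is Koszul. One small slip: $\bmu$ is not concentrated in a single arity-pair (it is supported on every $(\mathbf{a},\mathbf{a})$), but it is indeed locally bounded since it sits in cohomological degree $0$, so your conclusion stands.
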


\begin{rem}
Heuristically, this shows that $(\rgrad \kk\fin)\qd$ and $(\grad \kk \fin)\qd$ are `Koszul dual'. Here, $(\grad \kk \fin)\qd$ is the $\finj\op$-Koszul complex for $\kk \fs$ given in Proposition \ref{prop:rgrad_kk_fin_qd}; up to suspension, $(\rgrad \kk\fin)\qd$ is the Chevalley-Eilenberg complex with coefficients in $\cat \lie$ by Theorem \ref{thm:identify_with_CE}. This duality  extends that between $\cat \lie$ (up to suspension and $(-)\op$) and $\cat \com \cong \kk \fs$ exploited in \cite{MR4835394}.
\end{rem}

\begin{exam}
Consider a $\kk \fin$-module $M$ (taken to be a complex concentrated in cohomological degree $0$). Then, using the adjunctions, we have the associated objects of $(\rgrad \kk \fin)\qd \dash \dgmod$ and of $(\grad  \kk \fin )\qd \dash \dgmod$.
\begin{enumerate}
\item 
The complex in $(\rgrad \kk \fin)\qd \dash \dgmod$ has underlying complex in $\kk \fb$-modules
\[
\ihom_{\kk \fb} (\kk \finj \qd, M),
\]
equipped with the twisted differential. This identifies with the usual Koszul complex for the underlying $\kk \finj$-module of $M$. 

There is more structure, namely the action of $(\rgrad \kk \fin)\qd$. In particular, on taking cohomology, in each degree one obtains a $H^0 ((\rgrad \kk \fin)\qd)$-module.
\item 
The complex in $(\grad  \kk \fin )\qd \dash \dgmod$ has underlying complex in $\kk \fb$-modules
\[
\kk \fs\qd \otimes_\fb M,
\]
equipped with the twisted differential. (Note that $\kk \fs\qd$ is isomorphic to $((\cat \lie)\op)\susp$, the suspension of $(\cat \lie)\op$.) This is the Koszul complex associated to the underlying $\kk \fs$-module structure of $M$.

Again, there is more structure, corresponding to the action of $(\grad \kk \fin)\qd$ in this case. In particular, on applying the desuspended homology functor $H_* ((-)\desusp)$, in each degree one obtains a $H_0(\lie; \cat \lie)\op$-module.
\end{enumerate}
\end{exam}

\appendix 
\section{Sheering for bimodules}
\label{sect:sheer}

This  appendix  introduces  sheering operations for $\kk \fb$-bimodules, analogous to those for $\kk \fb$-modules introduced in \cite[Section 6.4]{MR3430359}. These constructions are intimately related to the suspension that is used in the theory of properads and, more generally, PROPs. 

Here, $\kk \fb$-bimodules are taken to be (cohomologically) graded and we work with the monoidal structure $(\kk (\fb\op \times \fb)\dash\modules, \otimes_\fb , \bmu)$, where $\otimes_\fb$ is formed using the tensor product of graded $\kk$-modules and $\bmu$ is placed in degree zero. The category of graded $\kk$-modules is equipped with the symmetric monoidal structure using Koszul signs.

\begin{nota}
\label{nota:susp}
The suspension functor $\Sigma^t$, for $t \in \zed$, is defined to be $\Sigma^t \kk \otimes - $, where $\Sigma^t \kk$ is $\kk$ placed in degree $t$. (Thus, for a graded $\kk$-module $V$, $(\Sigma^t V)^n= V^{n-t}$.) 
\end{nota}

\begin{defn}
\label{defn:ls_rs}
For $M$ a graded $\kk \fb$-module, define the bimodules $M \rs$ and $M\ls$ respectively by:
\begin{eqnarray*}
M\rs (\mathbf{a}, \mathbf{b}):= \Sigma^{b}\kk \otimes  M(\mathbf{a}, \mathbf{b}) \otimes \Sigma^{-a}\kk   \\
M\ls (\mathbf{a}, \mathbf{b}):= \Sigma^{-b}\kk \otimes M(\mathbf{a}, \mathbf{b}) \otimes \Sigma^a\kk.
\end{eqnarray*}
(Here, the symmetric groups act trivially upon the terms  $\Sigma^* \kk$.)
\end{defn}

\begin{rem}
\label{rem:Z_grading_sheering}
This is  related to the (auxiliary) $\zed$-grading introduced in Definition \ref{defn:Z-grade_fb-bimodules}. For example, for an integer $n$, one has natural isomorphisms:
\[
(M\dg{n})\rs \cong (M \rs)\dg{n} \cong \Sigma^n M \dg{n}.
\]
Hence $M\rs \cong \bigoplus_n \Sigma^n M\dg{n}$. (Note  that these isomorphisms  involve Koszul signs if $M$ is graded.)
\end{rem}

The following is clear (again noting that the isomorphisms involve Koszul signs):

\begin{lem}
\label{lem:op_ls_rs}
For $M$ a $\kk \fb$-bimodule, there are natural isomorphisms 
\begin{eqnarray*}
(M\rs)\op &\cong & (M\op) \ls
\\
(M\ls)\op &\cong & (M\op) \rs.
\end{eqnarray*}
\end{lem}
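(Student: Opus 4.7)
The plan is to verify both isomorphisms by a direct unpacking of the definitions, using the (graded) symmetry of the underlying tensor product to transpose the suspension factors. I will focus on the first isomorphism; the second is completely analogous (or follows by applying $(-)\op$ to the first and using the involutivity of $(-)\op$).

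First, I would evaluate both sides on a pair $(\mathbf{a}, \mathbf{b})$. Unpacking Definition \ref{defn:ls_rs} together with the definition of $(-)\op$ from Notation \ref{nota:op_ddag_sharp} gives
\[
(M\rs)\op(\mathbf{a}, \mathbf{b}) = M\rs(\mathbf{b}, \mathbf{a}) = \Sigma^{a}\kk \otimes M(\mathbf{b}, \mathbf{a}) \otimes \Sigma^{-b}\kk,
\]
whereas
\[
(M\op)\ls(\mathbf{a}, \mathbf{b}) = \Sigma^{-b}\kk \otimes M\op(\mathbf{a}, \mathbf{b}) \otimes \Sigma^{a}\kk = \Sigma^{-b}\kk \otimes M(\mathbf{b}, \mathbf{a}) \otimes \Sigma^{a}\kk.
\]
The two expressions therefore differ only by the ordering of the two suspension factors $\Sigma^{a}\kk$ and $\Sigma^{-b}\kk$.

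Next, I would produce the candidate isomorphism using the symmetry of the symmetric monoidal category of graded $\kk$-modules to transpose $\Sigma^{a}\kk$ past $M(\mathbf{b},\mathbf{a}) \otimes \Sigma^{-b}\kk$ (equivalently, one transposes in two steps past $M(\mathbf{b},\mathbf{a})$ and then past $\Sigma^{-b}\kk$). This introduces the expected Koszul signs, which are computed from the grading shifts; the crucial point is that $\sym_a$ acts trivially on $\Sigma^a \kk$ and $\sym_b$ acts trivially on $\Sigma^{-b}\kk$, so the transposition respects the residual $\sym_a\op \times \sym_b$-action coming from $M(\mathbf{b},\mathbf{a})$ (remembering that the $\sym_a\op$-action on $(-)\op$ comes from the $\sym_a$-action on the second variable of $M$, and similarly for $\sym_b$).

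Finally, I would observe that the construction is evidently natural in $M$, so the collection of these isomorphisms assembles into a natural isomorphism of $\kk\fb$-bimodules. The second isomorphism $(M\ls)\op \cong (M\op)\rs$ is proved by the identical argument (or obtained by applying the first one to $M\op$ and using $(M\op)\op = M$). The main and only subtlety is bookkeeping the Koszul signs, which one can subsume by invoking the functoriality of the symmetry isomorphism in the symmetric monoidal category of graded $\kk$-modules rather than computing signs element by element.
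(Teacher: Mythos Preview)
Your argument is correct and is exactly what the paper has in mind: the paper records this lemma as ``clear (noting that the isomorphisms involve Koszul signs)'' without further proof, and your direct unpacking of the definitions together with the symmetry transposition is precisely the intended verification.
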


Moreover, sheering behaves well with respect to $\otimes_\fb$:

\begin{prop}
\label{prop:sheering_monoidal}
The functors $(-)\rs$ and $(-)\ls$ induce quasi-inverse monoidal self-equivalences of $(\kk (\fb\op \times \fb)\dash\modules, \otimes_\fb , \bmu)$.
\end{prop}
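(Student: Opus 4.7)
The plan is to use the decomposition of Remark \ref{rem:Z_grading_sheering}, $M\rs \cong \bigoplus_n \Sigma^n M\dg{n}$, and the analogous $M\ls \cong \bigoplus_n \Sigma^{-n} M\dg{n}$, to reduce everything to standard facts about suspension in the symmetric monoidal category of graded $\kk$-modules. From these decompositions, one immediately sees that $(-)\rs$ and $(-)\ls$ are quasi-inverse on underlying objects, via the canonical isomorphism $\Sigma^{-n} \Sigma^n \cong \id$ applied degree by degree, and that $\bmu\rs \cong \bmu \cong \bmu\ls$, since $\bmu$ is supported in $\zed$-degree zero.

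To establish monoidality, I would first check that $\otimes_\fb$ is compatible with the $\zed$-grading, in the sense that
\[
(M_1 \otimes_\fb M_2)\dg{n} \cong \bigoplus_{p+q = n} M_1\dg{p} \otimes_\fb M_2\dg{q}.
\]
This is a direct consequence of the explicit formula $(M_1 \otimes_\fb M_2)(\mathbf{s}, \mathbf{t}) = \bigoplus_c M_1(\mathbf{c}, \mathbf{t}) \otimes_{\sym_c} M_2(\mathbf{s}, \mathbf{c})$, since the $\zed$-degrees $t-c$ and $c-s$ of the two factors sum to $t-s$. Next, for graded $\kk\fb$-bimodules $X$, $Y$ and integers $p,q$, one has a canonical isomorphism $\Sigma^p X \otimes_\fb \Sigma^q Y \cong \Sigma^{p+q}(X \otimes_\fb Y)$, obtained by pushing the one-dimensional shift lines through $X$ and $Y$ with the standard Koszul signs. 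Composing these isomorphisms provides the desired natural transformation
\[
(M_1 \otimes_\fb M_2)\rs \cong \bigoplus_n \Sigma^n \bigoplus_{p+q=n} M_1\dg{p} \otimes_\fb M_2\dg{q} \cong \bigoplus_{p,q} \Sigma^p M_1\dg{p} \otimes_\fb \Sigma^q M_2\dg{q} \cong M_1\rs \otimes_\fb M_2\rs,
\]
and the same reasoning applies to $(-)\ls$.

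The principal obstacle is the bookkeeping of Koszul signs and verifying that the natural isomorphism just described satisfies the coherence axioms of a monoidal functor, namely compatibility with the associator and the left and right unitors of $(\kk (\fb\op \times \fb)\dash\modules, \otimes_\fb, \bmu)$. Coherence for the associator reduces, via the degree-by-degree decomposition, to the classical coherence of suspension in the symmetric monoidal category of graded $\kk$-modules. Coherence for the unitors is trivial, since $\bmu$ is supported on the diagonal in $\zed$-degree zero, so that all shifts involved in the unitor diagrams are by $0$. That $(-)\rs$ and $(-)\ls$ are quasi-inverse \emph{as monoidal functors} then follows by applying the same argument to $M\ls \cong \bigoplus_n \Sigma^{-n} M\dg{n}$ and observing that the natural isomorphism $\id \cong (-)\rs \circ (-)\ls$ is itself monoidal, being built out of the canonical trivializations $\Sigma^n \Sigma^{-n} \cong \id$.
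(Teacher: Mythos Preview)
The paper states this proposition without proof, treating it as routine. Your argument is correct and supplies the details the paper omits.

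One remark on approach: you route everything through the $\zed$-grading decomposition $M\rs \cong \bigoplus_n \Sigma^n M\dg{n}$, which works but is slightly indirect. A shorter path is to work straight from Definition~\ref{defn:ls_rs}. Writing out
\[
(M_1\rs \otimes_\fb M_2\rs)(\mathbf{s},\mathbf{t}) = \bigoplus_c \big(\Sigma^t\kk \otimes M_1(\mathbf{c},\mathbf{t}) \otimes \Sigma^{-c}\kk\big) \otimes_{\sym_c} \big(\Sigma^c\kk \otimes M_2(\mathbf{s},\mathbf{c}) \otimes \Sigma^{-s}\kk\big),
\]
the monoidality isomorphism is visibly the cancellation $\Sigma^{-c}\kk \otimes \Sigma^c\kk \cong \kk$ in the middle, together with moving the outer shift lines past the direct sum. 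This makes the placement of the suspension lines on the \emph{left} and \emph{right} of $M(\mathbf{a},\mathbf{b})$ in the definition do the coherence work for you: the inner pair cancels on the nose at each summand, and the outer pair is independent of $c$. Your decomposition-by-$\zed$-degree argument amounts to the same thing after reorganizing the sum, so the two routes are equivalent; the direct one just avoids the intermediate bookkeeping.
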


The sheering functors $(-)\rs$ and $(-)\ls$ can be composed with the involution $(-)^\ddag$. 
These functors commute in the following sense:

\begin{lem}
\label{lem:ddag_ls_rs}
For $M$ a $\kk \fb$-bimodule, there are natural isomorphism:
\begin{eqnarray*}
(M \rs) ^\ddag & \cong & (M ^\ddag)\rs
\\
(M \ls) ^\ddag & \cong & (M ^\ddag)\ls.
\end{eqnarray*}
\end{lem}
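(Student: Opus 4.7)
The plan is to verify the isomorphism pointwise, using only the definitions of $(-)\rs$, $(-)\ls$, and $(-)^\ddag$, together with the symmetry of the tensor product of graded $\kk$-modules. First, I would unpack both sides: for each $a,b \in \nat$,
\[
(M\rs)^\ddag(\mathbf{a},\mathbf{b}) = (\sgn_a \boxtimes \sgn_b) \otimes_\kk \Sigma^b\kk \otimes_\kk M(\mathbf{a},\mathbf{b}) \otimes_\kk \Sigma^{-a}\kk,
\]
whereas
\[
(M^\ddag)\rs(\mathbf{a},\mathbf{b}) = \Sigma^b\kk \otimes_\kk (\sgn_a \boxtimes \sgn_b) \otimes_\kk M(\mathbf{a},\mathbf{b}) \otimes_\kk \Sigma^{-a}\kk.
\]
So the two graded $\kk$-modules differ only by the order of the outer tensor factors.

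The isomorphism is then given by the braiding of graded $\kk$-modules interchanging $\Sigma^b\kk$ with $(\sgn_a\boxtimes\sgn_b)$. Since $\sgn_a\boxtimes\sgn_b$ is concentrated in degree zero, the Koszul sign is trivial, so the interchange is just the plain symmetry $x\otimes y \mapsto y \otimes x$ on the relevant factors. I would then check that this is equivariant for the $\sym_a\op\times\sym_b$-action; this is automatic because the symmetric groups act trivially on $\Sigma^{\pm *}\kk$ by definition of the sheering functors, so shuffling a trivial-action factor past the sign-representation factor is manifestly equivariant. Naturality in $M$ and compatibility with morphisms of bimodules are immediate from the functoriality of $\otimes_\kk$.

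The proof of the second isomorphism $(M\ls)^\ddag \cong (M^\ddag)\ls$ is identical, with $\Sigma^b \kk$ replaced by $\Sigma^{-b}\kk$ and $\Sigma^{-a}\kk$ by $\Sigma^a\kk$; no further ideas are required. Alternatively, one can derive it from the first isomorphism by applying $(-)\op$ and invoking Lemma \ref{lem:op_ls_rs} together with the fact that $(-)^\ddag$ commutes with $(-)\op$ (item (\ref{item:ddag_monoidal}) of Proposition \ref{prop:basic_properties_ddag_op} and the associated compatibility).

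There is really no serious obstacle here: the whole content of the lemma is that sheering and $(-)^\ddag$ act on disjoint aspects of a bimodule (sheering inserts concentrated-degree $\kk$-lines on the outside of the tensor expression, while $(-)^\ddag$ tensors with the sign bimodule), and the fact that the symmetric groups act trivially on the sheering factors ensures the two operations commute on the nose, with no nontrivial Koszul sign appearing.
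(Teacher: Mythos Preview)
Your argument is correct and is exactly the direct verification the paper has in mind; the paper states this lemma without proof, treating it as immediate from the definitions, and your pointwise check (noting that $\sgn_a\boxtimes\sgn_b$ sits in degree zero so no Koszul sign arises) is the natural way to fill this in. One small slip in your alternative derivation of the second isomorphism: the compatibility of $(-)^\ddag$ with $(-)\op$ is the last item of Proposition~\ref{prop:basic_properties_ddag_op}, not item~(\ref{item:ddag_monoidal}); but this does not affect your main argument, which is self-contained.
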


\begin{nota}
\label{nota:susp_desusp}
For concision, these composite functors will be written  as $(-)\susp$ and $(-)\desusp$ respectively.
\end{nota}

Combining Proposition \ref{prop:sheering_monoidal} with Proposition \ref{prop:basic_properties_ddag_op} (\ref{item:ddag_monoidal}), one has:

\begin{cor}
\label{cor:composites_monoidal}
The  functors $(-)\susp$ and  $(-)\desusp$ are mutually inverse monoidal equivalences. 
Hence: 
\begin{enumerate}
\item 
$(-)\susp$ and  $(-)\desusp$ induce mutually inverse self-equivalences of the category of unital monoids in $(\kk (\fb\op \times \fb)\dash\modules, \otimes_\fb , \bmu)$.
\item 
If $A$ is a unital monoid in $(\kk (\fb\op \times \fb)\dash\modules, \otimes_\fb , \bmu)$, then 
$(-)\susp$ and  $(-)\desusp$  induce mutually inverse equivalences between the categories of left (respectively right) $A$-modules and of left (resp. right) $A\susp$-modules.
\end{enumerate}
\end{cor}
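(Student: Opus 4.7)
The plan is to assemble the Corollary directly from the two inputs cited in the excerpt. Proposition \ref{prop:sheering_monoidal} asserts that $(-)\rs$ and $(-)\ls$ are quasi-inverse monoidal self-equivalences of $(\kk (\fb\op \times \fb)\dash\modules, \otimes_\fb , \bmu)$, and Proposition \ref{prop:basic_properties_ddag_op}(\ref{item:ddag_monoidal}) states that $(-)^\ddag$ is monoidal. Moreover, $(-)^\ddag$ is an involution since $(\sgn \boxtimes \sgn) \otimes_\kk (\sgn \boxtimes \sgn) \cong \bmu$ (using $\sgn \otimes_\kk \sgn \cong \triv$). A composite of (quasi-inverse) monoidal equivalences is a (quasi-inverse) monoidal equivalence, so each of $(-)\susp$ and $(-)\desusp$ is monoidal.

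The first step is to verify that the two composites $(-)\susp$ and $(-)\desusp$ are mutually quasi-inverse. For any $\kk \fb$-bimodule $M$, I would compute
\[
(M\susp)\desusp \;=\; \big((M\rs)^\ddag\big)\desusp \;\cong\; \big(((M\rs)^\ddag)^\ddag\big)\ls \;\cong\; (M\rs)\ls \;\cong\; M,
\]
using Lemma \ref{lem:ddag_ls_rs} to commute $(-)^\ddag$ past $(-)\ls$, the involutivity of $(-)^\ddag$, and the quasi-inverse part of Proposition \ref{prop:sheering_monoidal}. The symmetric calculation for $(M\desusp)\susp \cong M$ is identical. This establishes the first sentence of the Corollary.

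For consequence (1), it is formal that any monoidal equivalence of monoidal categories restricts to an equivalence between the respective categories of unital monoids: a monoidal functor $F$ sends a monoid $(A,\mu,\eta)$ to $(F(A), F(\mu) \circ \phi_{A,A}, F(\eta) \circ \phi_0)$ where $\phi$ denotes the monoidal structure isomorphisms, and one applies this to both $(-)\susp$ and $(-)\desusp$, using the quasi-inverse property already established. For consequence (2), given a unital monoid $A$ and an $A$-module $(M, \alpha \cn A \otimes_\fb M \to M)$ (with the corresponding statement for right modules), the monoidal functor $(-)\susp$ produces the $A\susp$-module structure $A\susp \otimes_\fb M\susp \cong (A \otimes_\fb M)\susp \xrightarrow{\alpha\susp} M\susp$; again, the quasi-inverse property together with naturality of the monoidal structure isomorphisms gives an equivalence of module categories.

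There is essentially no obstacle: once Lemma \ref{lem:ddag_ls_rs} and the involutivity of $(-)^\ddag$ are in hand, the arguments are purely formal manipulations of monoidal functors, and the only care required is bookkeeping the natural isomorphisms (which, strictly speaking, involve Koszul signs inherited from Definition \ref{defn:ls_rs}, but these are already accounted for in Proposition \ref{prop:sheering_monoidal}).
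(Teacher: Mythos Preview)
Your proposal is correct and follows precisely the approach indicated by the paper, which simply states the Corollary as an immediate consequence of combining Proposition \ref{prop:sheering_monoidal} with Proposition \ref{prop:basic_properties_ddag_op}(\ref{item:ddag_monoidal}) without spelling out the details. Your use of Lemma \ref{lem:ddag_ls_rs} and the involutivity of $(-)^\ddag$ to verify that $(-)\susp$ and $(-)\desusp$ are mutually quasi-inverse is exactly the expected unpacking of that combination.
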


This extends to the DG context. To spell this out, we first consider general derivations. 

Recall that, for $A$ a unital monoid in $\kk \fb$-bimodules, an $A$-bimodule is a $\kk \fb$-bimodule $M$ equipped with structure maps $A\otimes_\fb M \rightarrow M$ and $M \otimes_\fb A \rightarrow M$ satisfying the usual axioms. Then a degree $\ell$ derivation from $A$ to $M$ is a map $\phi : \Sigma^\ell A \rightarrow M$ such that the following diagram commutes 
\[
\xymatrix{
\Sigma^\ell (A \otimes_\fb A) 
\ar[r]
\ar[d]
&
A \otimes_\fb M \ \oplus \ M \otimes_\fb A 
\ar[d]
\\
\Sigma^\ell A 
\ar[r]_\phi
&
M,
}
\]
in which the left hand vertical map is induced by the product of $A$, the right hand one by the structure morphisms of $M$, the top map by $\phi$ applied respectively to the left and right hand factors.

\begin{lem}
\label{prop:derivations_rs_ls}
For $A$ and $M$ as above, with $\phi$ a degree $\ell$-derivation, 
\begin{enumerate}
\item 
$\phi\susp$ together with the isomorphism $(\Sigma^\ell A)\susp \cong \Sigma^\ell (A\susp)$ yields a degree $\ell$-derivation  from $A\susp$ to $M\susp$; 
\item 
 $\phi\desusp$ together with the isomorphism $(\Sigma^\ell A)\desusp \cong \Sigma^\ell (A\desusp)$ yields a degree $\ell$-derivation  from $A\desusp$ to $M\desusp$.
 \end{enumerate}
 \end{lem}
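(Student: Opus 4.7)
The plan is to exploit the monoidal nature of the functors $(-)\susp$ and $(-)\desusp$ established in Corollary \ref{cor:composites_monoidal}, which reduces the statement to formal diagram-chasing. By symmetry (or equivalently, since $(-)\desusp$ is quasi-inverse to $(-)\susp$), it suffices to prove the first claim.

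First I would apply the monoidal self-equivalence $(-)\susp$ to the defining commutative square of $\phi$ (as displayed just before the Lemma). Because $(-)\susp$ is monoidal, it converts $(A\otimes_\fb A)\susp$ into $A\susp \otimes_\fb A\susp$, and analogously it identifies $(A\otimes_\fb M)\susp$ with $A\susp \otimes_\fb M\susp$ and $(M \otimes_\fb A)\susp$ with $M\susp \otimes_\fb A\susp$, all compatibly with the product of $A$ and the bimodule structure maps of $M$ (this last compatibility is precisely the content of the second assertion of Corollary \ref{cor:composites_monoidal}, applied to the regular bimodule structure of $A$ and to $M$).

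Next I would combine these identifications with the natural isomorphism $(\Sigma^\ell A)\susp \cong \Sigma^\ell (A\susp)$ to rewrite the top row of the sheared diagram. Under these identifications, the top horizontal map becomes the sum of left and right applications of $\phi\susp$, the left-hand vertical map becomes the $\ell$-fold suspension of the product of $A\susp$, and the right-hand vertical map becomes the sum of the two bimodule structure maps of $M\susp$. This is exactly the commutative diagram expressing that $\phi\susp : \Sigma^\ell (A\susp) \to M\susp$ is a degree $\ell$-derivation.

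The main subtlety—and the only real work—is sign bookkeeping: the isomorphisms $(X \otimes_\fb Y)\susp \cong X\susp \otimes_\fb Y\susp$ and $(\Sigma^\ell X)\susp \cong \Sigma^\ell (X\susp)$ carry Koszul signs (cf.\ Remark \ref{rem:Z_grading_sheering} and Proposition \ref{prop:basic_properties_ddag_op}(\ref{item:ddag_monoidal})). One must check that the Koszul signs appearing on either side of the transported square match, so that the compatibility with the two summands (left action versus right action) is preserved. This is a routine but attentive verification, facilitated by the fact that the monoidal structure isomorphisms of $(-)\susp$ and $(-)\desusp$ are, by construction in Section \ref{sect:sheer}, precisely those given by the symmetric monoidal structure of graded $\kk$-modules with Koszul signs.
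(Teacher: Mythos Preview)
Your approach is correct and is precisely the argument the paper has in mind: the paper states this lemma without proof, treating it as an immediate consequence of the monoidal self-equivalence of $(-)\susp$ and $(-)\desusp$ established in Corollary~\ref{cor:composites_monoidal} (together with Proposition~\ref{prop:sheering_monoidal} and Lemma~\ref{lem:ddag_ls_rs}). Your sketch spells out exactly this, including the only genuine content, namely the Koszul-sign bookkeeping when commuting $\Sigma^\ell$ past the sheering and when using the monoidal structure isomorphisms.
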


Proposition \ref{prop:derivations_rs_ls} applies in particular when $M$ is taken to be $A$, equipped with its canonical $A$-bimodule structure.

\begin{cor}
\label{cor:DG_algebra_kFB}
For $A$  a unital monoid in $(\kk (\fb\op \times \fb)\dash\modules, \otimes_\fb , \bmu)$ equipped with a degree $1$ derivation $d$ such that $d^2=0$, the morphism $d\susp$ induces a natural square zero degree $1$ derivation on $A\susp$ (similarly for $(-)\desusp$ in place of $(-)\susp$).
\end{cor}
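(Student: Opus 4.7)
The proof is a direct application of the preceding results, with the only real content being the observation that functoriality of the sheering operation propagates the condition $d^2=0$.

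The plan is as follows. First, I would regard $A$ as an $A$-bimodule via the canonical (left and right) regular actions, so that the hypotheses of Lemma \ref{prop:derivations_rs_ls} are satisfied for $\phi = d$, $M = A$, and $\ell = 1$. Applying that lemma in the case $\phi = d$ yields immediately that $d\susp$, via the natural isomorphism $(\Sigma^1 A)\susp \cong \Sigma^1(A\susp)$, defines a degree $1$ derivation from $A\susp$ to $A\susp$ (where $A\susp$ is an $A\susp$-bimodule via the monoidal structure of $(-)\susp$ given by Corollary \ref{cor:composites_monoidal}).

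Next, to obtain $(d\susp)^2 = 0$, I would argue by functoriality. The functor $(-)\susp = (-)^\ddag \circ (-)\rs$ is a functor on $\kk\fb$-bimodules, so it preserves composition of morphisms (up to the natural shift isomorphisms). Writing $d \circ d : \Sigma^2 A \to A$ for the composite (which is zero by hypothesis), applying $(-)\susp$ and using the compatibility $(\Sigma^2 A)\susp \cong \Sigma^2(A\susp)$ gives $(d\susp) \circ (d\susp) = (d \circ d)\susp = 0$, as desired.

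Finally, the corresponding assertion for $(-)\desusp$ follows by the same argument, since Lemma \ref{prop:derivations_rs_ls} is stated for both $(-)\susp$ and $(-)\desusp$, and the functor $(-)\desusp$ is likewise monoidal by Corollary \ref{cor:composites_monoidal}.

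The only potential subtlety is bookkeeping of Koszul signs arising from the various instances of the shift isomorphism $(\Sigma^\ell A)\susp \cong \Sigma^\ell (A\susp)$, but since these are part of the monoidal structure of $(-)\susp$ and have already been accounted for in establishing Lemma \ref{prop:derivations_rs_ls}, no additional verification is required beyond invoking functoriality.
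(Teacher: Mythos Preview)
Your proposal is correct and follows essentially the same route as the paper: define the derivation on $A\susp$ as the composite $\Sigma(A\susp)\cong(\Sigma A)\susp\xrightarrow{d\susp}A\susp$, invoke Lemma~\ref{prop:derivations_rs_ls} (with $M=A$) to see it is a derivation, and then verify $d^2=0$ is preserved. The paper simply says ``one checks directly that it squares to zero,'' whereas you spell this out via functoriality of $(-)\susp$ and naturality of the shift isomorphisms; this is a legitimate and slightly more explicit way to phrase the same verification.
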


\begin{proof}
The required derivation on $A\susp$ is given by the composite 
\[
\Sigma (A \susp) \cong (\Sigma A)\susp \stackrel{d\susp}{\rightarrow}  A \susp.
\]
It is a derivation by Proposition \ref{prop:derivations_rs_ls}. One checks directly that it squares to zero.
\end{proof}

\begin{rem}
As in Corollary \ref{cor:composites_monoidal}, $(-)\susp$ and $(-)\desusp$ induce mutually inverse self-equivalences of the category of DG unital monoids in $\kk \fb$-bimodules. Likewise for DG modules over such.
\end{rem}


\begin{thebibliography}{Pow24d}

\bibitem[EFS03]{MR1990756}
David Eisenbud, Gunnar Fl{\o}ystad, and Frank-Olaf Schreyer, \emph{Sheaf
  cohomology and free resolutions over exterior algebras}, Trans. Amer. Math.
  Soc. \textbf{355} (2003), no.~11, 4397--4426. \MR{1990756}

\bibitem[Fre09]{MR2494775}
Benoit Fresse, \emph{Modules over operads and functors}, Lecture Notes in
  Mathematics, vol. 1967, Springer-Verlag, Berlin, 2009. \MR{2494775}

\bibitem[Fre17]{MR3643404}
\bysame, \emph{Homotopy of operads and {G}rothendieck-{T}eichm\"{u}ller groups.
  {P}art 1}, Mathematical Surveys and Monographs, vol. 217, American
  Mathematical Society, Providence, RI, 2017, The algebraic theory and its
  topological background. \MR{3643404}

\bibitem[GK94]{MR1301191}
Victor Ginzburg and Mikhail Kapranov, \emph{Koszul duality for operads}, Duke
  Math. J. \textbf{76} (1994), no.~1, 203--272. \MR{1301191}

\bibitem[LV12]{MR2954392}
Jean-Louis Loday and Bruno Vallette, \emph{Algebraic operads}, Grundlehren der
  mathematischen Wissenschaften [Fundamental Principles of Mathematical
  Sciences], vol. 346, Springer, Heidelberg, 2012. \MR{2954392}

\bibitem[Pos21]{MR4398644}
Leonid Positselski, \emph{Relative nonhomogeneous {K}oszul duality}, Frontiers
  in Mathematics, Birkh\"{a}user/Springer, Cham, [2021] \copyright 2021.
  \MR{4398644}

\bibitem[Pow23]{2023arXiv230907607P}
Geoffrey Powell, \emph{{On fundamental structure underlying Lie algebra
  homology with coefficients tensor products of the adjoint representation}},
  arXiv e-prints (2023), arXiv:2309.07607.

\bibitem[Pow24a]{P_filterFS}
\bysame, \emph{{Filtering the linearization of the category of surjections}},
  arXiv e-prints (2024), arXiv:2407.11627.

\bibitem[Pow24b]{P_finmod}
\bysame, \emph{{Functors on the category of finite sets revisited}}, arXiv
  e-prints (2024), arXiv:2407.11623.

\bibitem[Pow24c]{MR4835394}
\bysame, \emph{On analytic contravariant functors on free groups}, High.
  Struct. \textbf{8} (2024), no.~2, 416--466. \MR{4835394}

\bibitem[Pow24d]{MR4696223}
\bysame, \emph{Outer functors and a general operadic framework}, J. Algebra
  \textbf{644} (2024), 526--562. \MR{4696223}

\bibitem[PP05]{MR2177131}
Alexander Polishchuk and Leonid Positselski, \emph{Quadratic algebras},
  University Lecture Series, vol.~37, American Mathematical Society,
  Providence, RI, 2005. \MR{2177131}

\bibitem[Pri70]{MR0265437}
Stewart~B. Priddy, \emph{Koszul resolutions}, Trans. Amer. Math. Soc.
  \textbf{152} (1970), 39--60. \MR{265437}

\bibitem[PV23]{MR4518761}
Geoffrey Powell and Christine Vespa, \emph{A {P}irashvili-type theorem for
  functors on non-empty finite sets}, Glasg. Math. J. \textbf{65} (2023),
  no.~1, 1--61. \MR{4518761}

\bibitem[SS12]{2012arXiv1209.5122S}
Steven~V {Sam} and Andrew {Snowden}, \emph{{Introduction to twisted commutative
  algebras}}, arXiv e-prints (2012), arXiv:1209.5122.

\bibitem[SS16]{MR3430359}
Steven~V. Sam and Andrew Snowden, \emph{G{L}-equivariant modules over
  polynomial rings in infinitely many variables}, Trans. Amer. Math. Soc.
  \textbf{368} (2016), no.~2, 1097--1158. \MR{3430359}

\end{thebibliography}

\providecommand{\bysame}{\leavevmode\hbox to3em{\hrulefill}\thinspace}
\providecommand{\MR}{\relax\ifhmode\unskip\space\fi MR }
\providecommand{\MRhref}[2]{%
  \href{http://www.ams.org/mathscinet-getitem?mr=#1}{#2}
}
\providecommand{\href}[2]{#2}

\end{document}